\newif\ifpersonal
\newcommand{\personal}[1]{\textcolor[rgb]{0,0,1}{(Personal: #1)}}
\newcommand{\discussion}[1]{\textcolor{violet}{(Discussion: #1)}}
\newcommand{\personal}[1]{\ignorespaces}
\newcommand{\discussion}[1]{\ignorespaces}
\newcommand{\cE}{\mathcal E}
\DeclareFontFamily{U}{BOONDOX-calo}{\skewchar\font=45 }
\DeclareFontShape{U}{BOONDOX-calo}{m}{n}{<-> s*[1.05] BOONDOX-r-calo}{}
\DeclareFontShape{U}{BOONDOX-calo}{b}{n}{<-> s*[1.05] BOONDOX-b-calo}{}
\DeclareMathAlphabet{\mathcalboondox}{U}{BOONDOX-calo}{m}{n}
\let\save@mathaccent\mathaccent
\newcommand*\if@single[3]{%
	\setbox0\hbox{${\mathaccent"0362{#1}}^H$}%
	\setbox2\hbox{${\mathaccent"0362{\kern0pt#1}}^H$}%
	\ifdim\ht0=\ht2 #3\else #2\fi
}
\newcommand*\rel@kern[1]{\kern#1\dimexpr\macc@kerna}
\newcommand*\widebar[1]{\@ifnextchar^{{\wide@bar{#1}{0}}}{\wide@bar{#1}{1}}}
\newcommand*\wide@bar[2]{\if@single{#1}{\wide@bar@{#1}{#2}{1}}{\wide@bar@{#1}{#2}{2}}}
\newcommand*\wide@bar@[3]{%
	\begingroup
	\def\mathaccent##1##2{%
		\let\mathaccent\save@mathaccent
		\if#32 \let\macc@nucleus\first@char \fi
		\setbox\z@\hbox{$\macc@style{\macc@nucleus}_{}$}%
		\setbox\tw@\hbox{$\macc@style{\macc@nucleus}{}_{}$}%
		\dimen@\wd\tw@
		\advance\dimen@-\wd\z@
		\divide\dimen@ 3
		\@tempdima\wd\tw@
		\advance\@tempdima-\scriptspace
		\divide\@tempdima 10
		\advance\dimen@-\@tempdima
		\ifdim\dimen@>\z@ \dimen@0pt\fi
		\rel@kern{0.6}\kern-\dimen@
		\if#31
		\overline{\rel@kern{-0.6}\kern\dimen@\macc@nucleus\rel@kern{0.4}\kern\dimen@}%
		\advance\dimen@0.4\dimexpr\macc@kerna
		\let\final@kern#2%
		\ifdim\dimen@<\z@ \let\final@kern1\fi
		\if\final@kern1 \kern-\dimen@\fi
		\else
		\overline{\rel@kern{-0.6}\kern\dimen@#1}%
		\fi
	}%
	\macc@depth\@ne
	\let\math@bgroup\@empty \let\math@egroup\macc@set@skewchar
	\mathsurround\z@ \frozen@everymath{\mathgroup\macc@group\relax}%
	\macc@set@skewchar\relax
	\let\mathaccentV\macc@nested@a
	\if#31
	\macc@nested@a\relax111{#1}%
	\else
	\def\gobble@till@marker##1\endmarker{}%
	\futurelet\first@char\gobble@till@marker#1\endmarker
	\ifcat\noexpand\first@char A\else
	\def\first@char{}%
	\fi
	\macc@nested@a\relax111{\first@char}%
	\fi
	\endgroup
}
\renewcommand{\longrightarrow}{\to}
\newcommand{\LTop}{\categ{LTop}_{\infty}}
\renewcommand{\Open}{\mathrm{Open}}
\renewcommand{\enumref}[2]{\Cref{#1}-(\ref*{#1.#2})}
\renewcommand{\Bbar}{\overline{B}}
\renewcommand{\Ubar}{\overline{U}}
\renewcommand{\Xbar}{\overline{X}}
\renewcommand{\ibar}{\bar{\imath}}
\newcommand{\source}{\mathrm{s}}
\newcommand{\target}{\mathrm{t}}
\newcommand{\RBS}{\mathrm{RBS}}
\newcommand{\Mgnbar}{\overline{\Mcal}_{g,n}}
\newcommand{\Mgnbartop}{\overline{\Mcal} {}_{g,n}^{\mathrm{top}}}
\newcommand{\Broken}{\mathrm{Broken}}
\newcommand{\surj}{\mathrm{surj}}
\newcommand{\Deltasurj}{\DDelta_{\surj}}
\definecolor{goodred}{RGB}{216,27,96}
\definecolor{goodblue}{RGB}{30,136,229}
\definecolor{goodyellow}{RGB}{255,193,7}
\newcommand{\U}{\Ucal}
\renewcommand{\W}{\Wcal}
\renewcommand{\X}{\Xcal}
\renewcommand{\Y}{\Ycal}
\renewcommand{\Z}{\Zcal}
\newcommand{\Xhyp}{\X^{\hyp}}
\newcommand{\Yhyp}{\Y^{\hyp}}
\newcommand{\XS}{\X_{S}}
\newcommand{\XU}{\X_{U}}
\newcommand{\XZ}{\X_{Z}}
\DeclareMathOperator{\inj}{inj}
\DeclareMathOperator{\at}{at}
\DeclareMathOperator{\idem}{idem}
\newcommand{\PrLat}{\categ{Pr}^{\kern0.05em\operatorname{L},\at}}
\newcommand{\PrRat}{\categ{Pr}^{\kern0.05em\operatorname{R},\at}}
\newcommand{\PrLomega}{\categ{Pr}^{\kern0.05em\operatorname{L},\upomega}}
\DeclareMathOperator{\cosp}{cosp}
\newcommand{\CATinfty}{\categ{\textsc{Cat}}_{\infty} }
\newcommand{\Catidem}{\Cat_{\infty}^{\idem}}
\newcommand{\Catomega}{\Cat_{\infty}^{\upomega}}
\newcommand{\Catfin}{\Cat_{\infty}^{\fin}}
\newcommand{\CatomegaD}{\Cat_{\infty,/\Dcal}^{\upomega}}
\newcommand{\CatfinD}{\Cat_{\infty,/\Dcal}^{\fin}}
\newcommand{\Piinfty}{\Pi_{\infty}}
\newcommand{\Catinfty}{\Cat_{\infty}}
\newcommand{\CatP}{\Cat_{\infty,/P}}
\newcommand{\CatPcons}{\Cat_{\infty,/P}^{\cons}}
\DeclareMathOperator{\Cons}{Cons}
\newcommand{\Conshyp}{\Cons^{\hyp}}
\newcommand{\ConsP}{\Cons_{P}}
\newcommand{\ConsQ}{\Cons_{Q}}
\newcommand{\ConsR}{\Cons_{R}}
\newcommand{\ConsS}{\Cons_{S}}
\newcommand{\ConsU}{\Cons_{U}}
\newcommand{\ConsZ}{\Cons_{Z}}
\newcommand{\ConsPhyp}{\ConsP^{\hyp}}
\newcommand{\ConsQhyp}{\ConsQ^{\hyp}}
\newcommand{\Deltainj}{\DDelta_{\inj}}
\newcommand{\Deltainjop}{\DDelta_{\inj}^{\op}}
\DeclareMathOperator{\Env}{Env}
\DeclareMathOperator{\ex}{ex}
\DeclareMathOperator{\mon}{mon}
\newcommand{\RTop}{\categ{RTop}_{\infty}}
\newcommand{\RTopmon}{\categ{RTop}_{\infty}^{\mon}}
\newcommand{\StrTop}{\categ{StrTop}_{\infty}}
\newcommand{\StrTopex}{\StrTop^{\ex}}
\newcommand{\Poset}{\categ{Poset}}
\newcommand{\PiSigma}{\Piinfty}
\newcommand{\iotaup}{\mathrm{\iota}}
\newcommand{\upc}{\mathrm{c}}
\newcommand{\fcons}{f^{\upc}}
\newcommand{\fconslowerstar}{\fcons_\ast}
\newcommand{\flowerstarcons}{\fcons_\ast}
\newcommand{\iUlowerstar}{i_{U,\ast}}
\newcommand{\iSlowerstar}{i_{S,\ast}}
\newcommand{\icons}{i^{\upc}}
\newcommand{\iconsSlowerstar}{\icons_{S,\ast}}
\newcommand{\iUlowerstarcons}{\icons_{U,\ast}}
\newcommand{\flowersharpcons}{\flowersharp^{\upc}}
\newcommand{\iSlowersharpcons}{\icons_{S,\sharp}}
\newcommand{\iZlowersharpcons}{\icons_{Z,\sharp}}
\newcommand{\iUlowershriek}{i_{U,!}}
\newcommand{\scons}{s^{\upc}}
\newcommand{\slowersharpcons}{\scons_{\sharp}}
\newcommand{\supperstarhyp}{s^{\ast,\hyp}}
\newcommand{\slowerstarhyp}{\slowerstar^{\hyp}}
\newcommand{\ilowerstarhyp}{\ilowerstar^{\hyp}}
\newcommand{\iUupperstar}{\iupperstar_{U}}
\newcommand{\iZupperstar}{\iupperstar_{Z}}
\newcommand{\upperstarhyp}{^{\ast,\hyp}}
\newcommand{\fupperstarhyp}{f\upperstarhyp}
\newcommand{\iupperstarhyp}{i\upperstarhyp}
\newcommand{\jupperstarhyp}{j\upperstarhyp}
\newcommand{\inv}{^{-1}}
\theoremstyle{definition}
\newtheorem{existingresults}[equation]{Existing Results}
\newtheorem{methods}[equation]{Methods}
\def\swappedhead#1#2#3{%
  \thmnumber{\@upn{\the\thm@headfont#2\@ifnotempty{#1}{\,~}}}%
  \thmname{#1}%
  \thmnote{ {\the\thm@notefont{#3}}}}
\def\@seccntformat#1{%
  \protect\textup{\protect\@secnumfont
    \ifnum\pdfstrcmp{subsection}{#1}=0 \bfseries\fi
    \csname the#1\endcsname
    \protect{\,}
  }%
}  
\def\@tocline#1#2#3#4#5#6#7{\relax
  \ifnum #1>\c@tocdepth 
  \else
    \par \addpenalty\@secpenalty\addvspace{#2}%
    \begingroup \hyphenpenalty\@M
    \@ifempty{#4}{%
      \@tempdima\csname r@tocindent\number#1\endcsname\relax
    }{%
      \@tempdima#4\relax
    }%
    \parindent\z@ \leftskip#3\relax \advance\leftskip\@tempdima\relax
    \rightskip\@pnumwidth plus4em \parfillskip-\@pnumwidth
    #5\leavevmode\hskip-\@tempdima
      \ifcase #1
       \or\or \hskip 1em \or \hskip 2em \else \hskip 3em \fi%
      #6\nobreak\relax
    \hfill\hbox to\@pnumwidth{\@tocpagenum{#7}}\par
    \nobreak
    \endgroup
  \fi}
\numberwithin{equation}{subsection} 
\title{Exodromy beyond conicality}
\author{Peter J. Haine}
\address{Peter J. Haine, Department of Mathematics, University of Southern California, Kaprielian Hall, 3620 S Vermont Ave, Los Angeles, CA 90089, USA}
\email{phaine@usc.edu}
\author{Mauro Porta}
\address{Mauro Porta, Institut de Recherche Mathématique Avancée, 7 Rue René Descartes, 67000 Strasbourg, France}
\email{porta@math.unistra.fr}
\author{Jean-Baptiste Teyssier}
\address{Jean-Baptiste Teyssier, Institut de Mathématiques de Jussieu, 4 place Jussieu, 75005 Paris, France}
\email{jean-baptiste.teyssier@imj-prg.fr}
\date{\today}
\begin{document}


\begin{abstract}
	We show that compact subanalytic stratified spaces and algebraic stratifications of real varieties have finite exit-path \categories, refining classical theorems of Lefschetz--Whitehead, Łojasiewicz, and Hironaka on the finiteness of the underlying homotopy types of these spaces.
	These stratifications are typically not conical; hence we cannot rely on the currently available exodromy equivalence between constructible sheaves on a stratified space, which requires conicality as a fundamental hypothesis.
	Building on ideas of Clausen and Jansen, we study the class of \textit{exodromic} stratified spaces, for which the conclusion of the exodromy theorem holds.
	We prove two new fundamental properties of this class of stratified spaces: coarsenings of exodromic stratifications are exodromic, and every morphism between exodromic stratified spaces induces a functor between the associated exit path \categories.
	As a consequence, we produce many new examples of exodromic stratified spaces, including: coarsenings of conical stratifications, locally finite subanalytic stratifications of real analytic spaces, and algebraic stratifications of real varieties.
	Our proofs are at the generality of stratified \topoi, hence apply to even more general situations such as stratified topological stacks.
	In a subsequent paper, we use the previously mentioned finiteness results to construct derived moduli stacks of constructible and perverse sheaves.
\end{abstract}

\maketitle

\setcounter{tocdepth}{1}
\tableofcontents


\setcounter{section}{-1}

\section{Introduction}


\subsection{Motivation}\label{intro-subsec:motivation}

Let $ (X,P) $ be a stratified space. 
MacPherson observed the following generalization of the monodromy equivalence: provided the stratification of $ X $ is sufficiently nice, the category of constructible sheaves of sets on $ (X,P) $ is equivalent to the category of functors from the \textit{exit-path category} of $ (X,P) $ to $ \Set $.
Treumann \cite{MR2575092} provided the first general account of this phenomenon, and Treumann's result has since been generalized by Lurie \HAa{Theorem}{A.9.3}, Lejay \cite{arXiv:2102.12325}, and Porta--Teyssier \cite{arXiv:2211.05004}.
To contextualize the results of this paper, let us first recall the most general theorem of this form currently available.
Write \smash{$ \ConsPhyp(X) $} for the \category of hyperconstructible hypersheaves%
\footnote{In this introduction, the reader can safely disregard the adjective ``hyper''. 
Hypersheaves are used in \cites{HPT}{Lejay}{PortaTeyssier} to relax the geometric assumptions needed for the theorem.}
of spaces on $ X $.
If the stratification of $ (X,P) $ is \textit{conical} (see \cite[Definition 2.1.9]{arXiv:2211.05004}) and the strata are locally weakly contractible, the \textit{exodromy theorem}%
\footnote{The term `exodromy' was first introduced in \cite{arXiv:1807.03281} as a combination of `monodromy' and `exit-paths'.}
\cite[Theorem 5.4.1]{PortaTeyssier} provides an equivalence of \categories
\begin{equation}\label{eq:topological_exodromy}
	\Phi_{X,P} \colon \equivto{\ConsPhyp(X)}{\Fun(\Piinfty(X,P), \Spc)} \period
\end{equation}
Here $ \Piinfty(X,P) $ is Lurie's exit-path \category of $ (X,P) $, introduced in \HAa{Definition}{A.6.2}.
The objects of $ \Piinfty(X,P) $ are the points of $ X $.
Roughly speaking, the $ 1 $-morphisms are \textit{exit-paths} flowing from lower to higher strata (and once they exit a stratum are not allowed to return), the $ 2 $-morphisms are homotopies of exit-paths respecting stratifications, etc.
The functor $ \Phi_{X,P} $ carries a sheaf $ F $ to the functor informally described by sending a point $ x \in X $ to the stalk $ F_{x} $, and each exit-path $ \fromto{x}{y} $ to a \textit{specialization map} $ \fromto{F_x}{F_y} $, together with higher coherences relating these data.
Conicality has played an essential role in almost all exodromy theorems available in the literature.
First, it is crucial in proving that the geometrically defined object $ \Piinfty(X,P) $ is indeed \acategory \HAa{Theorem}{A.6.4}.
Second, it is used at various points in the proof of the equivalence \eqref{eq:topological_exodromy}.
Many stratifications naturally arising in geometry \textit{fail} to be conical: typical examples are general subanalytic stratifications of real analytic manifolds, such as those arising from the study of the Stokes phenomenon for algebraic differential equations (see \cites{arXiv:2401.12335}{arXiv:2504.05360}).
Deep work of Thom, Mather, and Verdier among others on analytic stratified spaces has shown that conical (in fact, Whitney) refinements are always available \cite{Goresky_MacPherson_Stratified_Morse_theory}; however, it is sometimes essential to work with a fixed stratification.
The purpose of this article is to generalize the exodromy equivalence to many naturally occurring \textit{non-conically} stratified spaces, paying particular attention to the \textit{conically refineable} situation.


\subsection{Exodromic stratified spaces}\label{intro-subsec:exodromic_stratified_spaces}

To state the results of this paper, we need to briefly introduce the concept of an exit-path \category without reference to any particular simplicial model.
As highlighted by Ayala--Francis--Rozenblyum \cites[Problem 0.0.9]{MR3941460} and explained by Clausen--Jansen \cites{arXiv:2108.01924}{arXiv:2308.09551}{arXiv:2308.09550}, one should be able to trade off the conicality of a stratified space $ (X,P) $ and Lurie's simplicial model for the exit-path \category for the following three requirements of the \category \smash{$ \ConsPhyp(X) $}:

\begin{definition}[{(cf. \cite[Definition 3.5]{arXiv:2108.01924})}]\label{intro-def:exodromic_stratified_space}
	A stratified space $ s \colon \fromto{X}{P} $ is \defn{exodromic} if the following conditions are satisfied:
	\begin{enumerate}
		\item The \category $ \ConsPhyp(X) $ is atomically generated.
		
		\item The subcategory $\ConsPhyp(X) \subset \Shhyp(X) $ is closed under both limits and colimits.
		
		\item The pullback functor $ \supperstarhyp \colon \Fun(P,\Spc) \equivalent \Shhyp(P) \to \ConsPhyp(X)$ preserves limits.
	\end{enumerate}
\end{definition}

Let us comment these requirements.
Concerning (1), note that the exodromy theorem guarantees that the \category \smash{$ \ConsPhyp(X) $} can be written as \acategory of presheaves.
\textit{Atomic generation} is an intrinsic way to formulate this property: given a presentable \category $\Ccal$, an object $ c \in \Ccal $ is \textit{atomic} if the functor
\begin{equation*} 
	\Map_{\Ccal}(c,-) \colon \Ccal \to \Spc
\end{equation*}
preserves \textit{all} colimits.
Write $ \Ccal^{\at} \subset \Ccal $ for the full subcategory spanned by the atomic objects.
Then $\Ccal$ is said to be \textit{atomically generated} if the unique colimit-preserving extension
\begin{equation*}
	\fromto{\PSh(\Ccal^{\at})}{\Ccal}
\end{equation*} 
of $ \Ccal^{\at} \subset \Ccal $ along the Yoneda embedding is an equivalence (see \cref{subsec:recollections_on_atomic_generation} for more background on this notion).
In the setting of \cref{intro-def:exodromic_stratified_space}, we write \smash{$ \Piinfty(X,P) $} for the opposite of the full subcategory of \smash{$ \ConsPhyp(X) $} spanned by atomic objects.
We refer to $ \Piinfty(X,P) $ as the \defn{exit-path \category} of $ (X,P) $.
The second feature is that \smash{$ \ConsPhyp(X) \subset \Shhyp(X) $} is closed under both limits and colimits.
This is in some sense a \textit{categorical regularity condition}, which is akin to but weaker than conicality: see \cite[Corollary 5.4.4]{PortaTeyssier} for a proof in the conical setting, and see \cref{defin:weakly_conical} and \cref{eg:weakly_conical} for other examples of regularity properties enjoyed by the conical situation.
The third feature is that, by construction, the exit-path \category of $ (X,P) $ is equipped with a functor to the stratifying poset $ P $.
Given conditions (1) and (2), condition (3) guarantees that the stratification of $ X $ equips $ \Piinfty(X,P) $ with a functor $\Piinfty(X,P) \to P $; see \Cref{recollection:atomically_generated}.


\subsection{The stability theorem}\label{intro-subsec:the_stability_theorem}

The analysis of the conical situation carried out in \cite{PortaTeyssier} shows that conically stratified spaces with locally weakly contractible strata are exodromic in the sense of \cref{intro-def:exodromic_stratified_space}.
However, the class of such stratified spaces does not have many stability properties; for example, a coarsening%
\footnote{Let $ X $ be a topological space and let $ s \colon X \to P $ and $ t \colon X \to Q $ be stratifications.
If there is a map of posets $ \phi \colon P \to Q $ such that $ s\phi = t $, we say that the stratification $ t $ is a \defn{coarsening} of the stratification $ s $ and the stratification $ s $ is a \defn{refinement} of the stratification $ t $.}
of a conical stratification need not be conical.
As previously mentioned, in subanalytic geometry and real algebraic geometry conical refinements always exist, at least locally.
The following is the main result of this paper, and in particular it implies that \textit{every} subanalytic or real analytic stratified space is exodromic:

\begin{theorem}[(stability properties of exodromic stratified spaces; \Cref{thm:stability_properties_of_exodomic_stratified_spaces})]\label{intro_thm:properties_of_exodromic_stratified_spaces}
	\noindent 
	\begin{enumerate}
		\item\label{intro_thm:properties_of_exodromic_stratified_spaces.1} \emph{Stability under pulling back to locally closed subposets:} If $ (X,P) $ is an exodromic stratified space, then for each locally closed subposet $ S \subset P $, the stratified space $ (X \cross_P S, S) $ is exodromic and the induced functor
		\begin{equation*}
			\fromto{\Piinfty(X \cross_P S,S)}{\Piinfty(X,P) \cross_P S}
		\end{equation*}
		is an equivalence.
		As a consequence, the induced functor $ \fromto{\Piinfty(X,P)}{P} $ is conservative.

		\item\label{intro_thm:properties_of_exodromic_stratified_spaces.2} \emph{Functoriality:} The exodromy equivalence is functorial in all stratified maps between exodromic stratified spaces.
		That is, for every stratified map $ f \colon \fromto{(X,P)}{(Y,Q)} $ between exodromic stratified spaces, under the exodromy equivalence the pullback functor
		\begin{equation*}
			\fupperstarhyp \colon \fromto{\ConsQhyp(Y)}{\ConsPhyp(X)}
		\end{equation*}
		is induced by a functor of exit-path \categories
		\begin{equation*}
			\fromto{\Piinfty(X,P)}{\Piinfty(Y,Q)} \period
		\end{equation*}

		\item\label{intro_thm:properties_of_exodromic_stratified_spaces.3} \emph{Stability under coarsening and localization formula:} 
		Let $ (X,R) $ be an exodromic stratified space and let $ \phi \colon \fromto{R}{P} $ be a map of posets. 
		Write $ W_P $ for the collection of morphisms in $ \Piinfty(X,R) $ that the composite $ \Piinfty(X,R) \to R \to P $ sends to equivalences.
		Then the stratified space $ (X,P) $ is exodromic and the natural functor $ \fromto{\Piinfty(X,R)}{\Piinfty(X,P)} $ induces an equivalence
		\begin{equation*}
			\equivto{\Piinfty(X,R)[W_P\inv]}{\Piinfty(X,P)} \period
		\end{equation*}

		\item\label{intro_thm:properties_of_exodromic_stratified_spaces.4} \emph{van Kampen:} The property of a stratified space being exodromic can be checked locally.

		\item\label{intro_thm:properties_of_exodromic_stratified_spaces.5} \emph{Stability of finiteness/compactness:} The property of an exit-path \category being finite (resp., compact) is stable under pulling back to a locally closed subposet, is stable under coarsening, and can be checked on a finite open cover.
	\end{enumerate}
\end{theorem}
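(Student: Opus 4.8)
The plan is to treat the three defining conditions of \Cref{intro-def:exodromic_stratified_space} as the basic invariants and to check, one operation at a time, that each is preserved — working throughout at the level of stratified \topoi, so that the geometry enters only through hyperdescent for $\Shhyp$ and the locality of constructibility. I would begin with part (1) (restriction to a locally closed subposet), since the other parts rely on it. Writing a locally closed $S \subset P$ as the intersection of an open and a closed subposet, I would handle the two cases via the recollement attached to the corresponding open--closed decomposition of $X$. For an open $U \subset P$ the restriction is a pullback, hence preserves colimits and constructibility; its adjoints $j_!$ and $j_*$ then let one match atomic objects of $\Cons_U^{\hyp}(X_U)$ with atomic objects of $\ConsPhyp(X)$ lying over $U$, which yields both atomic generation downstairs and the fiber-product formula $\Piinfty(X \cross_P U, U) \simeq \Piinfty(X,P) \cross_P U$. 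The closed case is dual, and splicing them gives the general locally closed statement; conservativity of $\fromto{\Piinfty(X,P)}{P}$ then follows by specializing $S$ to singletons and using that stalks are jointly conservative.

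For functoriality (part 2) I would invoke the general fact that a functor $\fromto{\Fun(\Piinfty(Y,Q),\Spc)}{\Fun(\Piinfty(X,P),\Spc)}$ is induced by a functor $\fromto{\Piinfty(X,P)}{\Piinfty(Y,Q)}$ precisely when it preserves both limits and colimits (idempotent completeness of the exit-path \categories, which are the atom \categories of presentable ones, makes this applicable). It therefore suffices to show that $\fupperstarhyp$ preserves limits and colimits on constructible hypersheaves. Colimit preservation is immediate from closure under colimits (condition (2)) together with the fact that $f^{\ast,\hyp}$ preserves colimits of hypersheaves and preserves constructibility. Limit preservation is the crux: pullback does not commute with limits in general, and one must exploit the poset structure. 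I would reduce to the structure map $\fromto{(X,P)}{(P,P)}$ — for which limit preservation is exactly condition (3) — by factoring $f$ through the square relating the two structure maps and the induced poset map, and then using part (1) together with the van Kampen property (part 4) to localize the claim to strata, where it becomes a base-change statement controlled by condition (3).

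The coarsening formula (part 3) I expect to be comparatively formal once part (1) is in hand. Given $\phi \colon \fromto{R}{P}$, every $P$-constructible hypersheaf is $R$-constructible, so $\ConsPhyp(X)$ sits inside $\ConsRhyp(X) = \Fun(\Piinfty(X,R),\Spc)$. Checking constructibility stratum by stratum via part (1), I would identify this subcategory with the functors inverting $W_P$, that is, with $\Fun(\Piinfty(X,R)[W_P\inv],\Spc)$. The inclusion is $q^{\ast}$ for the localization $q$, hence both reflective and coreflective; this delivers atomic generation, closure under limits and colimits, and the localization equivalence $\equivto{\Piinfty(X,R)[W_P\inv]}{\Piinfty(X,P)}$ simultaneously, while condition (3) for $P$ follows by factoring $s_P^{\ast,\hyp}$ through $\phi^{\ast}$ and $s_R^{\ast,\hyp}$.

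The van Kampen statement (part 4) would come from hyperdescent: $\Shhyp$ is a hypersheaf of \categories on $X$, constructibility is local, and the fiber-product formula of part (1) exhibits the exit-path \category as a colimit over a cover, so atomic generation and the closure conditions can be glued. Finally part (5) is a direct consequence: finiteness and compactness of $\Piinfty(X,P)$ are stable under the localizations and fiber products produced above, and the colimit description over a finite cover reduces the global statement to the pieces. The step I anticipate to be the main obstacle is the limit preservation of $\fupperstarhyp$ in part (2): upgrading condition (3), which constrains only the structure map, to an arbitrary stratified map is exactly where the failure of pullback to commute with limits must be tamed by the combinatorics of the stratifying poset, and it is here that parts (1) and (4) are indispensable as inputs.
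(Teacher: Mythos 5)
Your overall architecture coincides with the paper's: part (1) first, via the open--closed recollement and a matching of atomic objects; part (2) by reduction to strata using part (1); part (3) via the identification of $\ConsPhyp(X)$ with the $W_P$-inverting functors and the reflective--coreflective localization machinery (\Cref{prop:atomic_generation}); part (4) by descent; and part (5) as a consequence. Two steps, however, are asserted in a form that would not close up as written.

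First, in part (2) the stratum-wise statement is \emph{not} ``controlled by condition (3).'' After restricting over $q \in Q$ you must show that $f_q^{\ast} \colon \LC(\Y_q) \to \Cons_{P_q}(\X_q)$ preserves limits. Condition (3) only gives, via \Cref{lem:exodromy_implies_monodromy}, that the strata are monodromic, so that $\LC(\Y_q)$ and $\LC(\X_q)$ are presheaf \categories on spaces, closed under limits and colimits. The missing ingredient is that a left exact colimit-preserving functor $\Fun(L,\Spc) \to \Fun(K,\Spc)$ between presheaf \topoi on \emph{spaces} automatically admits a left adjoint, because such \topoi are \'etale over $\Spc$ and geometric morphisms between \'etale \topoi are \'etale (\Cref{lem:left_exact_left_adjoints_between_slices_over_Spc}, whence \Cref{cor:all_morphisms_are_monodromic}). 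Without this, your reduction gives nothing: in the commutative square relating $f$ to the structure maps and the poset map, three limit-preserving sides do not force the fourth to preserve limits. Note also that the paper reduces to a trivially stratified \emph{target} $(\Y,\pt)$, factoring as $(\X_q,P_q) \to (\X_q,\pt) \to (\Y_q,\pt)$, rather than to the structure map $\fromto{(X,P)}{(P,P)}$.

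Second, in part (5) the stability of finiteness and compactness under $\Ccal \mapsto \Ccal[W_P\inv]$ and under $\Ccal \mapsto \Ccal \cross_P S$ is exactly where the work lies, and it is not ``a direct consequence.'' The class $W_P$ is typically infinite, so the pushout presentation of the localization does not exhibit it as a finite colimit of finite \categories; and pullbacks of \categories do not preserve colimits or finiteness in general. The paper's \Cref{prop:inverting_morphisms_fiberwise_preserves_compactness,prop:base_change_lc_preserves_finite_and_compact_categories} make this work by observing that both operations preserve pushouts as endofunctors of $\CatP$ --- the localization because it is a left adjoint, the base change because a locally closed subposet inclusion is an exponentiable fibration (\Cref{lem:locally_closed_subposets_are_exponentiable}) --- and then checking the generators $\sigma \colon \fromto{[n]}{P}$ with $n \leq 1$, where the output is again a poset $[m]$ with $m \leq n$. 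You should identify this reduction explicitly rather than presenting the stability as automatic.
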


Together, the items in \Cref{intro_thm:properties_of_exodromic_stratified_spaces} provide robust techniques to produce new examples of exodromic stratified spaces starting from conically stratified spaces.
We will explain many new examples of stratified spaces momentarily.
Before proceeding further, we comment on how \Cref{intro_thm:properties_of_exodromic_stratified_spaces} relates to existing results, and our proof methods. 

\begin{existingresults}
	Item (1) was proven by Clausen--Jansen in a slightly different topological setting \cite[Proposition 3.6-(1)]{arXiv:2108.01924}, and by Jansen for topological stacks \cite[Proposition 3.13-(1)]{arXiv:2308.09550}.
	Item (4) is an easy consequence of the theory, and, in the same settings, was previously observed by Clausen--Jansen \cite[Proposition 3.6-(2)]{arXiv:2108.01924} and Jansen \cite[Proposition 3.13-(2)]{arXiv:2308.09550}.
	Two early instances of (2) were proven in the conically stratified setting by Lurie \HAa{Corollary}{A.9.4} in the case where $ P = \pt $, and Ayala--Francis--Rozenblyum \cite[Theorem 3.3.12]{MR3941460} under some additional hypotheses on the stratifying posets.
	Recently, Jansen \cite[Proposition 3.20]{arXiv:2308.09550} generalized the argument given by Ayala--Francis--Rozenblyum; however the hypotheses are still somewhat restrictive.
		
	The first main contribution of \Cref{intro_thm:properties_of_exodromic_stratified_spaces} is that our results have no restrictions on the stratifiying posets.
	The second is that we prove functoriality of the exodromy equivalence in all maps of stratified spaces. This is a new result and may be somewhat surprising; with previous methods, even functoriality in the conical case was a nontrivial result, first proven in \cite[Proposition 6.2.3]{PortaTeyssier}.
	The third is item (5) on the stability of finiteness/compactness; its proof requires a careful understanding of the localization formula from (3) and it generalizes classical finiteness results for homotopy types of real analytic manifolds.
	See \cref{intro_rem:classical_finiteness}.
\end{existingresults}

\begin{methods}
	The final main contribution is that our result is actually even more general than \Cref{intro_thm:properties_of_exodromic_stratified_spaces}.
	The point is that the conditions in \Cref{intro-def:exodromic_stratified_space} only depend on the datum of the geometric morphism of \topoi
	\begin{equation*}
		\fromto{\Shhyp(X)}{\Fun(P,\Spc)} \period
	\end{equation*}
	This is an example of a \textit{stratified \topos}, as introduced in the work of Barwick--Glasman--Haine \cite{arXiv:1807.03281}.
	Consequently, \Cref{intro-def:exodromic_stratified_space} makes sense at the generality of stratified \topoi.
	See \cref{sec:exit-path_categories}, in particular \Cref{def:exit_path}.
		
	We prove \Cref{intro_thm:properties_of_exodromic_stratified_spaces} by proving its natural generalization to stratified \topoi.
	See \Cref{thm:stability_properties_of_stratified_topoi} for a precise statement.
	This generalization gives added flexibility; for example, it immediately applies to stratified topological stacks.
	It also subsumes all results of this form that we are aware of, for example, the stability results proven by Clausen--Jansen \cite{arXiv:2108.01924} and Jansen \cite{arXiv:2308.09550}. 
	The topos-theoretic result has the added benefit of providing a common framework for the various contexts where exodromy was previously considered (e.g., sheaves vs. hypersheaves). 
\end{methods}


\subsection{Applications of the stability theorem}\label{intro-subsec:applications_of_the_stability_theorem}

We now state our main applications of \Cref{intro_thm:properties_of_exodromic_stratified_spaces}.
Since every conically stratified space with locally weakly contractible strata is exodromic and the class of conically stratified spaces with locally weakly contractible strata is stable under passing to open subsets, we deduce:

\begin{corollary}[{(\Cref{prop:properties_of_locally_conically_refineable_stratified_spaces})}]
	If a stratified space $ (X,P) $ locally admits a refinement by a conical stratification with locally weakly contractible strata, then $ (X,P) $ is exodromic.
\end{corollary}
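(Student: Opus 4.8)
The plan is to bootstrap from the conical exodromy theorem of \cite{PortaTeyssier} using two of the stability properties in \Cref{intro_thm:properties_of_exodromic_stratified_spaces}: stability under coarsening (item 3) and the van Kampen property (item 4). First I would unwind the hypothesis. To say that $ (X,P) $ \emph{locally} admits a conical refinement with locally weakly contractible strata means that there is an open cover $ \{U_i\}_{i \in I} $ of $ X $ together with, for each $ i $, a conical stratification $ (U_i, R_i) $ with locally weakly contractible strata refining $ (U_i, P|_{U_i}) $. Such a refinement is precisely a map of posets $ \phi_i \colon R_i \to P|_{U_i} $ exhibiting the $ P $-stratification on $ U_i $ as the coarsening of the $ R_i $-stratification along $ \phi_i $.

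Before applying van Kampen I would upgrade this to a statement about a basis of opens. Because the class of conically stratified spaces with locally weakly contractible strata is stable under passing to open subsets, for every open $ V \subseteq U_i $ the restriction $ (V, R_i|_V) $ is again conical with locally weakly contractible strata and refines $ (V, P|_V) $. Hence the opens contained in some $ U_i $ form a basis of $ X $, each of whose members admits a conical refinement of its $ P $-stratification. For any such $ V $, the conical case of the exodromy theorem shows that $ (V, R_i|_V) $ is exodromic, and then stability under coarsening (item 3 of \Cref{intro_thm:properties_of_exodromic_stratified_spaces}), applied to $ \phi_i|_V \colon R_i|_V \to P|_V $, shows that $ (V, P|_V) $ is exodromic.

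Finally, since being exodromic can be checked locally (item 4 of \Cref{intro_thm:properties_of_exodromic_stratified_spaces}) and we have exhibited a basis of opens on each of which the $ P $-stratification is exodromic, we conclude that $ (X,P) $ is exodromic. I do not expect a genuine conceptual obstacle, since this is a formal consequence of the stability theorem; the only points I would state carefully are the two bookkeeping matters. First, that a refinement is literally the datum of the poset map $ \phi_i $ needed to invoke coarsening, with the coarsening along $ \phi_i $ recovering $ (U_i, P|_{U_i}) $. Second, that restriction to an open subset produces a stratification over the appropriate subposet in the exact form in which the van Kampen statement is proved, so that no compatibility on overlaps is left to check beyond what the local stability under open subsets already supplies.
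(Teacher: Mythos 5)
Your proof is correct and follows essentially the same route as the paper: the paper packages the argument through the notion of a locally exodromic stratification (\Cref{prop:basic_properties_of_locally_exodromic_stratifications}), but the content is identical — exodromy in the conical case, stability of conicality under passage to opens to get a basis, stability under coarsening to descend from the refinement to the $P$-stratification, and van Kampen to globalize.
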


\noindent A theorem of Verdier guarantees that a locally finite subanalytic stratification of a real analytic space admits a refinement that is Whitney stratified \cite[Théorème 2.2]{MR481096}.
Since Whitney stratifications are conical \cites{MR2958928}{MR239613}, a little more work on top of \Cref{intro_thm:properties_of_exodromic_stratified_spaces} shows:

\begin{theorem}[{(\Cref{thm:subanalytic_stratified_spaces_are_conically_refineable})}]\label{intro_thm:subanalytic_stratifications}
	Let $ (X,P) $ be a real analytic manifold equipped with a locally finite stratification by subanalytic subsets.
	Then:
	\begin{enumerate}
		\item\label{intro_thm:subanalytic_stratifications.1} The stratified space $ (X,P) $ is exodromic.

		\item\label{intro_thm:subanalytic_stratifications.2} If $ X $ is compact, then the exit-path \category $ \Piinfty(X,P) $ is finite.
	\end{enumerate}
\end{theorem}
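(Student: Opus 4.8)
The plan is to bootstrap the subanalytic case from the conical case using the stability theorem \Cref{intro_thm:properties_of_exodromic_stratified_spaces}. For \enumref{intro_thm:subanalytic_stratifications}{1}, I would first invoke Verdier's theorem \cite{MR481096} to produce a refinement $(X,R)$ of $(X,P)$ that is Whitney stratified; concretely this supplies a map of posets $\phi \colon R \to P$ through which the stratifying map factors as $X \to R \to P$, exhibiting $(X,P)$ as the coarsening of $(X,R)$ along $\phi$. Since Whitney stratifications are conical \cites{MR2958928}{MR239613} and their strata are real analytic submanifolds, hence locally weakly contractible, the stratified space $(X,R)$ is conical with locally weakly contractible strata; the conical exodromy theorem \cite{PortaTeyssier} recalled in \cref{intro-subsec:motivation} then shows that $(X,R)$ is exodromic. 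Applying stability under coarsening, \enumref{intro_thm:properties_of_exodromic_stratified_spaces}{3}, to the map $\phi$ immediately yields that $(X,P)$ is exodromic.

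For the finiteness statement \enumref{intro_thm:subanalytic_stratifications}{2}, suppose $X$ is compact. Local finiteness of the subanalytic stratification together with compactness forces $X$ to meet only finitely many strata, so both $P$ and its Whitney refinement $R$ are finite posets. By stability of finiteness under coarsening, \enumref{intro_thm:properties_of_exodromic_stratified_spaces}{5}, applied again to $\phi \colon R \to P$, it suffices to prove that the exit-path \category $\Piinfty(X,R)$ of the Whitney refinement is finite. This reduces the problem to the compact conically stratified case.

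To establish finiteness of $\Piinfty(X,R)$ I would combine the local conical structure with the finite-cover clause of \enumref{intro_thm:properties_of_exodromic_stratified_spaces}{5}. Using compactness together with the subanalytic geometry, choose a finite cover of $X$ by conical charts, each of the form $\mathbf{R}^{k} \times \mathrm{C}(L)$ for a compact link $L$ of strictly smaller depth, arranged so that all of their intersections again have finite exit-path \categories; the existence of such a finite cover is underwritten by the classical triangulability results of Łojasiewicz and Hironaka. On a single chart the contractible Euclidean factor is inert and the cone formula presents the exit-path \category as $\Piinfty(L)$ with an initial object adjoined, so finiteness follows by induction on depth, the base case being a compact real analytic stratum whose exit-path \category is its homotopy type --- finite by Lefschetz--Whitehead. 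Extracting the links and strata as locally closed subposets via \enumref{intro_thm:properties_of_exodromic_stratified_spaces}{1} and feeding the finite cover into \enumref{intro_thm:properties_of_exodromic_stratified_spaces}{5} then shows $\Piinfty(X,R)$, and hence by coarsening $\Piinfty(X,P)$, is finite.

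The main obstacle is the finiteness in the compact conical case underlying \enumref{intro_thm:subanalytic_stratifications}{2}. Two points are delicate: establishing the cone formula that computes the exit-path \category of $\mathbf{R}^{k} \times \mathrm{C}(L)$ from that of the link $L$ inside the abstract framework, and arranging a finite cover whose every intersection has finite exit-path \category so that the finite-cover clause of \enumref{intro_thm:properties_of_exodromic_stratified_spaces}{5} applies. Both reduce the stratified finiteness statement to the classical finiteness of homotopy types of compact subanalytic sets, and both rest on the careful understanding of the localization formula from \enumref{intro_thm:properties_of_exodromic_stratified_spaces}{3} that underlies \enumref{intro_thm:properties_of_exodromic_stratified_spaces}{5}.
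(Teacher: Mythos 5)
Your argument for part (1) is correct and is essentially the paper's: refine to a conical stratification with locally weakly contractible strata (the paper uses a triangulation rather than a Whitney refinement, but either works), invoke the conical exodromy theorem, and coarsen via \enumref{intro_thm:properties_of_exodromic_stratified_spaces}{3} (more precisely, via local exodromicity and \Cref{prop:properties_of_locally_conically_refineable_stratified_spaces}).

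For part (2) you diverge from the paper, and the route you choose has a genuine gap. You reduce to the compact Whitney-stratified case and then propose to prove finiteness of $\Piinfty(X,R)$ by a finite cover by conical charts $\mathbf{R}^{k}\times\mathrm{C}(L)$ together with a ``cone formula'' and an induction on depth. Neither ingredient is available as stated: the paper establishes no formula computing the exit-path \category of a chart from $\Piinfty(L)$, and the finite-cover clause \enumref{intro_thm:properties_of_exodromic_stratified_spaces}{5} requires every finite intersection of the cover to admit a categorically finite exodromic refinement --- but intersections of conical charts need not be conical charts, and you give no construction of such refinements. Moreover, your base case already appeals to Lefschetz--Whitehead finiteness of compact real analytic sets, which is itself proved by triangulation, so you are implicitly assuming the very tool that makes the whole detour unnecessary.

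The paper's proof of \Cref{thm:subanalytic_stratified_spaces_are_conically_refineable} is much shorter: by the triangulation theorem for subanalytic sets (\cite[\S 1.7]{GM} combined with \cite{Goreski_triangulation}), the stratified space $(X,P)$ admits a refinement by a locally finite simplicial complex $(\Delta^{(V,S)},S)$, which is \emph{finite} when $X$ is compact; by \HAa{Theorem}{A.6.10} the exit-path \category of such a triangulation is the finite poset $S$ itself (\Cref{finite_geometric_realization_simplicial_complex}); and stability of finiteness under coarsening (\enumref{thm:stability_properties_of_exodomic_stratified_spaces}{6}, resting on \Cref{prop:categorical_finiteness_and_compactness_are_stable_under_coarsening}) then gives finiteness of $\Piinfty(X,P)$ in one step. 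If you wish to salvage your approach, you would need to prove the cone formula and control the intersections of your charts, both of which reduce to triangulability anyway; it is simpler to triangulate globally from the start.
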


\begin{theorem}[{(\Cref{thm:algebraic_stratified_spaces_are_conically_refineable_and_categorically_finite})}]\label{intro_thm:algebraic_stratified_spaces_are_conically_refineable_and_categorically_finite}
	Let $ X $ be an algebraic variety over $ \RR $ and let $ (X,P) $ be a finite stratification of $ X $ by Zariski locally closed subsets.
	Then:
	\begin{enumerate}
		\item\label{intro_thm:algebraic_stratified_spaces_are_conically_refineable_and_categorically_finite.1} The stratified space $ (X,P) $ is is exodromic.

		\item\label{intro_thm:algebraic_stratified_spaces_are_conically_refineable_and_categorically_finite.2} The exit-path \category $ \Piinfty(X,P) $ is finite.
	\end{enumerate}
\end{theorem}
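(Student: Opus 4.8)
The plan is to deduce both assertions from the subanalytic case (\Cref{intro_thm:subanalytic_stratifications}) by leveraging the stability properties of \Cref{intro_thm:properties_of_exodromic_stratified_spaces}, using the algebraic structure in an essential way only to remove the compactness hypothesis required for finiteness. Throughout I write $X(\RR)$ for the real points of $X$ with the analytic topology. Each stratum is the real locus of a Zariski locally closed subset, hence semialgebraic and in particular subanalytic, so $(X(\RR),P)$ is a locally finite subanalytic stratified space.

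For item (1), I would first reduce to the affine case. Since being \emph{exodromic} can be checked locally (the van Kampen property, item (4) of \Cref{intro_thm:properties_of_exodromic_stratified_spaces}) and $X$ is covered by finitely many affine opens, it suffices to treat $X \subseteq \bbA^n$ affine. Then $X(\RR)$ embeds as a closed semialgebraic subset of $\RR^n$. Adjoining the open complement $\RR^n \setminus X(\RR)$ as a single top stratum produces a locally finite subanalytic stratification of the real analytic \emph{manifold} $\RR^n$, which is exodromic by \Cref{intro_thm:subanalytic_stratifications}. The subposet cutting out $X(\RR)$ is closed, hence locally closed, so by stability under pulling back to locally closed subposets (item (1) of \Cref{intro_thm:properties_of_exodromic_stratified_spaces}) the original $(X,P)$ is exodromic. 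Passing through the ambient manifold in this way is precisely what allows me to avoid assuming $X$ smooth.

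For item (2), the key move is \emph{compactification}, which is available in the algebraic category and supplies exactly the compactness that \Cref{intro_thm:subanalytic_stratifications} demands for finiteness. By Nagata, embed $X$ as a dense open in a complete variety $\overline{X}$ over $\RR$; stratifying the boundary $\overline{X} \setminus X$ by Zariski locally closed subsets (for instance by iterated singular loci, which terminates by Noetherianity) and combining with the strata of $X$ yields a finite stratification $(\overline{X},\overline{P})$ in which $X$ is an open union of strata, so that $(X,P)$ is recovered by pulling back along the open, hence locally closed, subposet $P \subseteq \overline{P}$. Since $\overline{X}$ is complete over $\RR$, the real points $\overline{X}(\RR)$ are compact, so $(\overline{X},\overline{P})$ is a compact subanalytic stratified space and $\Piinfty(\overline{X},\overline{P})$ is finite by \Cref{intro_thm:subanalytic_stratifications}. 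Finiteness is stable under pulling back to a locally closed subposet (item (5) of \Cref{intro_thm:properties_of_exodromic_stratified_spaces}), whence
\[
	\Piinfty(X,P) \simeq \Piinfty(\overline{X},\overline{P}) \cross_{\overline{P}} P
\]
is finite as well.

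The main obstacle is the finiteness assertion, precisely because a real variety need not have compact real points; the entire force of the algebraic hypothesis is that a compactification is available, in contrast to the general subanalytic setting of \Cref{intro_thm:subanalytic_stratifications}, where compactness must be assumed. A secondary technical point is that $\overline{X}$ may be singular, so $\overline{X}(\RR)$ is not a manifold: to apply \Cref{intro_thm:subanalytic_stratifications} literally I would, exactly as in item (1), embed $\overline{X}(\RR)$ as a closed subanalytic subset of a compact ambient real analytic manifold (arranging $\overline{X}$ projective via Chow's lemma, so that $\overline{X}(\RR) \subseteq \bbP^N(\RR)$) and pull back along the resulting closed subposet, once more invoking item (5). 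Verifying that the enlarged boundary stratifications are genuine poset stratifications, and that the pullback along $P \subseteq \overline{P}$ indeed reproduces $(X,P)$, is routine but must be checked.
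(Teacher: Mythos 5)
Your overall architecture --- compactify, extend the stratification over the boundary and the complement, invoke the compact subanalytic finiteness theorem, and pull back along a locally closed subposet --- is the same as the paper's, and your treatment of item (1) via the finite affine cover and the closed embedding $X(\RR) \subset \RR^n$ is correct. The paper's proof of \Cref{thm:algebraic_stratified_spaces_are_conically_refineable_and_categorically_finite} does essentially this, but it compactifies affine-locally: it views an affine $X$ as closed in $\AA^n$, takes the closure $\overline{X}$ in $\PP^n$, extends the stratification to $(P^{\rhd})^{\lhd}$ on all of $\PP^n$, and then deduces the general case from \enumref{intro_thm:properties_of_exodromic_stratified_spaces}{5} applied to a finite affine cover with affine iterated intersections.

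The gap is in your item (2). Chow's lemma does not let you ``arrange $\overline{X}$ projective'': it produces a projective variety with a surjective birational morphism \emph{onto} the Nagata compactification $\overline{X}$, not an isomorphism, and complete non-projective varieties exist (Hironaka's threefolds). Consequently you have no embedding $\overline{X}(\RR) \subset \PP^N(\RR)$ in general, hence no ambient real analytic manifold in which $\overline{X}(\RR)$ and its strata are subanalytic, and \Cref{intro_thm:subanalytic_stratifications} (equivalently \Cref{thm:subanalytic_stratified_spaces_are_conically_refineable}, whose hypotheses per \Cref{def:subanalytic_stratified_space} require such an ambient smooth $M$) cannot be applied to $(\overline{X},\overline{P})$. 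Pulling back along the Chow modification does not help either, since that changes the underlying space. The repair is the reduction you already use for item (1): finiteness can be checked on a finite open cover by \enumref{intro_thm:properties_of_exodromic_stratified_spaces}{5}, the iterated intersections of a finite affine cover of a (separated) variety are again affine with finite Zariski locally closed stratifications, and for affine $X \subset \AA^n$ the closure in $\PP^n$ is projective, so the compactification step goes through verbatim. With that substitution your argument agrees with the paper's.
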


\begin{remark}\label{intro_rem:classical_finiteness}
	\enumref{intro_thm:subanalytic_stratifications}{2} and \enumref{intro_thm:algebraic_stratified_spaces_are_conically_refineable_and_categorically_finite}{2} extend results of Lefschetz--Whitehead \cite{MR1501698}, Łojasiewicz \cite{MR0173265}, and Hironaka \cite{MR0374131} on the finiteness of the underlying homotopy types of compact subanalytic spaces and real algebraic varieties.
\end{remark}

As an application, in a follow-up paper \cite{HPT-derived_moduli_of_perverse_sheaves}, we use \Cref{intro_thm:subanalytic_stratifications,intro_thm:algebraic_stratified_spaces_are_conically_refineable_and_categorically_finite} combined with Toën and Vaquié's theory of \textit{moduli of objects} \cite{MR2493386} to prove representability results for derived moduli stacks of constructible and perverse sheaves.


\subsection{Examples}\label{intro-subsec:examples}

We conclude the introduction with some examples of non-conical stratifications to which our results apply.
First we demonstrate how to compute the exit-path \category of a coarsening in a simple situation.

\begin{example}\label{ex:circle}
	Consider a circle stratified by a point, half-open interval, and open interval, as depicted on the right-hand side of \Cref{fig:circle}.
	This stratification of $ \Sph{1} $ is not conical.
	However, the stratification of $ \Sph{1} $ by two points and two half-open intervals appearing on the left-hand side of \Cref{fig:circle} is a conical stratification that refines the stratification on the right-hand side.
	\begin{figure}[!h]
	  \centering
	  \includegraphics[width=0.5\linewidth]{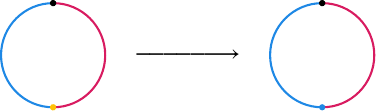}
	  \caption{A non-conical stratification of $ \Sph{1} $ is pictured on the right. 
	  On the left is a conical refinement of the right-hand stratification.}
	  \label{fig:circle}
	\end{figure}
	The exodromy theorem in the concical case shows that the exit-path \category of the left-hand stratification of $ \Sph{1} $ is equivalent to the poset 
	\begin{equation}\label{diag:noncommutative_square}
		\begin{tikzcd}[sep=1.75em]
			& \mdsmblkcircle  \arrow[dr] \arrow[dl] &  \\ 
			\textcolor{goodblue}{\mdsmblkcircle} & & \textcolor{goodred}{\mdsmblkcircle} \\
			& \textcolor{goodyellow}{\mdsmblkcircle}  \arrow[ur] \arrow[ul] & \phantom{\mdsmblkcircle} \period
		\end{tikzcd}
	\end{equation}
	Thus \enumref{intro_thm:properties_of_exodromic_stratified_spaces}{3} implies that the exit-path \category of the right-hand stratification is equivalent to the localization of the poset \eqref{diag:noncommutative_square} at the morphism $ \textcolor{goodyellow}{\bullet} \to \textcolor{goodblue}{\bullet} $.
	This localization is simply the category given by a \textit{noncommutative} triangle
	\begin{equation*}
		\begin{tikzcd}[sep=1.75em]
			& \mdsmblkcircle  \arrow[dr] \arrow[dl] &  \\ 
			\textcolor{goodblue}{\mdsmblkcircle} \arrow[rr] & & \textcolor{goodred}{\mdsmblkcircle} \period
		\end{tikzcd}
	\end{equation*}
\end{example}

\begin{example}[(see \Cref{ex:Favero-Huang_tree_stratification_is_conically_refineable,ex:Favero-Huang-Bondal-Ruan_stratification_is_locally_conically_refineable})]
	Favero and Huang \cite{arXiv:2205.03730} recently proved an exodromy result for certain non-conical stratifications naturally arising in mirror symmetry.
	Of particular interest are the \textit{tree stratification} on a finite simplicial complex \cite[\S4.4]{arXiv:2205.03730} and the \textit{Bondal--Ruan stratification} of the $ n $-torus \cites{MR2278891}[\S5.2]{arXiv:2205.03730}.
	The tree stratification is a coarsening of the natural stratification on a finite simplicial complex, which is conical.
	Moreover, the Bondal--Ruan stratification is subanalytic.
	Thus \Cref{intro_thm:properties_of_exodromic_stratified_spaces,intro_thm:subanalytic_stratifications} give an alternative perspective on Favero and Huang's exit-path description of constructible sheaves on these stratified spaces.
\end{example}

More examples arise naturally from the study of the Stokes phenomenon for algebraic differential equations.
See \cites{arXiv:2401.12335}{arXiv:2504.05360} for more on this topic, as well as a systematic use of the results of this paper.


\subsection{Linear overview}\label{intro-subsec:linear_overview}

In \cref{sec:background_on_atomic_generation_locally_constant_objects_and_monodromy}, we provide background on atomically generated \categories, locally constant objects of \topoi, and monodromy that we need for the rest of the paper.
In \cref{sec:exit-path_categories}, be begin by recalling the theory of stratifications of \topoi introduced in \cite[\S8.2]{arXiv:1807.03281} as well as constructible objects.
We then explain what it means for a stratified \topos to be \textit{exodromic}, see \Cref{def:exit_path}.
We also prove a few basic results about the class of exodromic stratified \topoi.
In \cref{sec:stability_properties_of_exodromic_stratified_topoi}, we prove a stability theorem for the class of exodromic stratified \topoi, see \Cref{thm:stability_properties_of_stratified_topoi}.
This is the main technical result of the paper, and implies the analogous result for stratified spaces stated in this introduction (\Cref{intro_thm:properties_of_exodromic_stratified_spaces}).
\Cref{sec:exodromy_with_coefficients} explains when exodromy (with coefficients in the \category of spaces) implies exodromy with coefficients in other presentable \categories. 
The key takeaway is that exodromy with coefficients in a compactly assembled \category is automatic (see \Cref{lem:exit_path_category_for_compactly_assembled_coefficients_is_automatic}).
We need this result in order to prove our representability result for the derived moduli of constructible and perverse sheaves in \cite{HPT-derived_moduli_of_perverse_sheaves}.
\Cref{sec:applications_and_examples} is dedicated to applications of our stability theorem for exodromic stratified \topoi (\Cref{thm:stability_properties_of_stratified_topoi}).
We deduce \Cref{intro_thm:properties_of_exodromic_stratified_spaces}, provide many natural examples of exodromic stratified spaces coming from geometry and topology, and prove all of the results stated in \cref{intro-subsec:applications_of_the_stability_theorem}.
In \Cref{appendix:inverting_arrows_over_a_poset}, we prove a number of categorical facts needed to control the localizations of exit-path \categories we consider.
Specifically, the results proven in \Cref{appendix:inverting_arrows_over_a_poset} are needed to prove items (\ref*{intro_thm:properties_of_exodromic_stratified_spaces.3}) and (\ref*{intro_thm:properties_of_exodromic_stratified_spaces.5}) of \Cref{intro_thm:properties_of_exodromic_stratified_spaces}.
In \Cref{appendix:complements_on_topoi}, we collect some background on open and closed subtopoi and recollements. 
We then explain the relationship between hypercompletion and recollements (see \Cref{prop:hypercompletions_of_recollements}).
We need these results in a variety of places, for example, to ensure that the definition of a constructible object of a stratified \topos recovers the more classical notion of a constructible (hyper)sheaf on a topological space.


\subsection{Notational conventions}

We use the following standard notation.
\begin{enumerate}
	\item We write $ \Catinfty $ for the large \category of small \categories, and write $ \Spc \subset \Catinfty $ for the full subcategory spanned by the spaces (i.e., \groupoids or anima).
	We write $ \CATinfty $ for the (very large) \category of large \categories.

	\item We write $ \PrR $ for the \category of presentable \categories and right adjoints and $ \PrL $ for the \category of presentable \categories and left adjoints.

	\item We write $ \RTop $ for the \category of \topoi and \textit{geometric morphisms}, i.e., right adjoints $ \flowerstar $ whose left adjoint $ \fupperstar $ is left exact.
	We write $ \LTop $ for the \category of \topoi and left exact left adjoints.

	\item Given a small \category $ \Ccal $, we write $ \PSh(\Ccal) \colonequals \Fun(\Ccal^{\op},\Spc) $ for the \category of presheaves of spaces on $ \Ccal $.

	\item For an integer $ n \geq 0 $, we write $ [n] $ for the poset $ \{0 < \cdots < n \}$
\end{enumerate}
We later introduce notational conventions for (hyper)sheaves and constructibility; these are consistent with the notational conventions in our previous works \cites{arXiv:2010.06473}{PortaTeyssier}.


\subsection{Acknowledgments}

We thank David Ayala, Clark Barwick, Marc Hoyois, Jesse Huang, Jacob Lurie, Guglielmo Nocera, Marco Volpe, and Mikala Ørsnes Jansen for enlightening discussions around the contents of this paper.

PH gratefully acknowledges support from the NSF Mathematical Sciences Postdoctoral Research Fellowship under Grant \#DMS-2102957 and a grant from the Simons Foundation (816048, LC). 


\section{Background on atomic generation, locally constant objects, and monodromy}\label{sec:background_on_atomic_generation_locally_constant_objects_and_monodromy}

In this section, we recall the necessary background on \textit{atomically generated \categories} (\cref{subsec:recollections_on_atomic_generation}), tensor products of presentable \categories (\cref{subsec:sheaves_with_coefficients_and_tensor_products_of_presentable_categories}), and locally constant objects of \topoi and monodromy (\cref{subsec:locally_constant_objects_and_monodromy}).


\subsection{Recollections on atomic generation}\label{subsec:recollections_on_atomic_generation}

We begin by recalling the background on atomically generated \categories needed in this paper.
In particular, we provide a useful way to check that a full subcategory of an atomically generated \category is also atomically generated and compute its generators (\Cref{prop:atomic_generation}).
For more on this topic, we refer the reader to \cites[\kerodontag{03WR}]{Kerodon}[\HTTsubsec{5.1.6}]{HTT}[\S2.2]{arXiv:2108.01924}.
We begin with some definitions.

\begin{definition}\label{def:atomic_object}
	Let $ \Ccal $ be a presentable \category.
	An object $ c \in \Ccal $ is \defn{atomic}%
	\footnote{Atomic objects are also referred to as \textit{completely compact} objects \HTT{Definition}{5.1.6.2}.}
	if the functor
	\begin{equation*}
		\Map_{\Ccal}(c,-) \colon \Ccal \to \Spc
	\end{equation*}
	preserves colimits.
	We write $ \Ccal^{\at} \subset \Ccal $ for the full subcategory spanned by the atomic objects.
\end{definition}

\begin{observation}
	The subcategory $ \Ccal^{\at} \subset \Ccal $ is always small and idempotent complete.
	However, contrary to what happens to the full subcategory $ \Ccal^{\upomega} \subset \Ccal $ spanned by compact objects, the \category $ \Ccal^{\at} $ typically does not have finite colimits.
\end{observation}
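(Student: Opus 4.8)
The substantive content of this observation is the smallness and idempotent completeness of $\Ccal^{\at}$; the failure to admit finite colimits is a negative statement, so I would establish it by example rather than by a universal argument. The plan is thus to first observe that atomic objects are \emph{highly} compact, which forces $\Ccal^{\at}$ to be small, and then to show that atomicity is stable under retracts, which upgrades the ambient idempotent completeness of the presentable category $\Ccal$ to idempotent completeness of $\Ccal^{\at}$.

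For smallness, I would choose a regular cardinal $\kappa$ for which $\Ccal$ is $\kappa$-accessible (possible since $\Ccal$ is presentable, hence accessible). Because an atomic object $c$ has the property that $\Map_{\Ccal}(c,-)$ preserves \emph{all} colimits, it in particular preserves $\kappa$-filtered colimits, so $c$ is $\kappa$-compact. Hence $\Ccal^{\at}$ is contained in the full subcategory of $\kappa$-compact objects, which is essentially small by accessibility; therefore $\Ccal^{\at}$ is essentially small, i.e. small up to equivalence.

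For idempotent completeness, the key step is to show that $\Ccal^{\at}$ is closed under retracts in $\Ccal$. Given a retract $d$ of an atomic object $c$, with sections $d \to c \to d$ composing to the identity, the induced natural transformations exhibit $\Map_{\Ccal}(d,-)$ as a retract of $\Map_{\Ccal}(c,-)$ in $\Fun(\Ccal,\Spc)$. I would then argue that a retract of a colimit-preserving functor again preserves colimits: for any diagram, the canonical comparison map for $\Map_{\Ccal}(d,-)$ is a retract of the corresponding comparison map for $\Map_{\Ccal}(c,-)$, and the latter is an equivalence since $c$ is atomic; as a retract of an equivalence is an equivalence, $d$ is atomic. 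Since $\Ccal$ is presentable it is idempotent complete, so any idempotent in $\Ccal^{\at}$ splits in $\Ccal$, and the resulting splitting object is a retract of an atomic object, hence atomic, and therefore lies in $\Ccal^{\at}$. This shows $\Ccal^{\at}$ is idempotent complete.

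Finally, for the contrast with the compact objects, I would record the example $\Ccal = \PSh(\Dcal)$. Here every representable presheaf is atomic, and one checks that the atomic objects are exactly the retracts of representables, i.e. the idempotent completion of $\Dcal$; this subcategory need not be closed under finite colimits inside $\PSh(\Dcal)$, since, for instance, a coproduct of two representables is in general not a retract of a single representable. The only genuine subtlety in the whole argument is the retract computation in the third step: in the $\infty$-categorical setting one cannot rely on strict identities and must instead track the canonical comparison maps and invoke stability of equivalences under retracts. Everything else is a formal consequence of accessibility and presentability, so I expect that to be the sole point requiring care.
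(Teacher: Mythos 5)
The paper states this Observation without proof, so there is no argument to compare against; your write-up supplies the standard justification and it is correct. The smallness argument (atomic $\Rightarrow$ $\kappa$-compact for any $\kappa$ with $\Ccal$ $\kappa$-accessible, and the $\kappa$-compact objects form an essentially small subcategory) and the idempotent-completeness argument (atomicity is stable under retracts because a retract of an equivalence of comparison maps is an equivalence, and idempotents already split in the presentable category $\Ccal$) are exactly what one would expect, and your retract computation is the only step needing care, as you say. One small remark on the last part: the Observation asserts that $\Ccal^{\at}$ typically fails to \emph{have} finite colimits as a category, which is slightly different from failing to be \emph{closed} under finite colimits in $\Ccal$; your example still works, but it is cleaner to take $\Dcal$ discrete on two objects, where $\PSh(\Dcal)^{\at} \simeq \Dcal$ admits no coproduct of the two objects at all (for $\Dcal = \pt$ the atomic objects form the terminal category, which trivially does have all colimits).
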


\begin{definition}\label{def:atomically_generated_category}
	Let $ \Ccal $ be a presentable \category.
	We say that a small full subcategory $ \Ccal_0 \subset \Ccal $ \defn{atomically generates} $ \Ccal $ if the unique colimit-preserving extension
	\begin{equation*}
		\fromto{\PSh(\Ccal_0)}{\Ccal}
	\end{equation*} 
	of $ \Ccal_0 \subset \Ccal $ along the Yoneda embedding is an equivalence.
	We say that $ \Ccal $ is \defn{atomically generated} if there exists a small full subcategory $ \Ccal_0 \subset \Ccal $ that atomically generates $ \Ccal $.
\end{definition}

\begin{remark}\label{rem:restricted_yoneda}
	The unique colimit-preserving extension $ \fromto{\PSh(\Ccal_0)}{\Ccal} $ of the inclusion $ \Ccal_0 \subset \Ccal $ is left adjoint to the restricted Yoneda functor
	\begin{equation*}
		y \colon \Ccal \to \PSh(\Ccal_0) \comma \qquad c  \mapsto \Map_{\Ccal}(-,c) \period
	\end{equation*}
	Hence $ \Ccal_0 $ atomically generates $ \Ccal $ if and only if $ y $ is an equivalence.
\end{remark}

\begin{example}[{\HTT{Proposition}{5.1.6.8}}] \label{atomic_presheaf}
	Let $ \Ccal_0 $ be a small \category.
	Then, the atomic objects of $\PSh(\Ccal_0)$ are the retracts of representable functors.
	In particular, the unique atomic object of $ \Spc $ is the point $ \ast $.
\end{example}

\begin{observation}\label{obs:atomic_generation_and_idempotent_completion}
	If $ \Ccal_0 \subset \Ccal $ atomically generates $ \Ccal $, then $ \Ccal_0 \subset \Ccal^{\at} $.
	Moreover, by \kerodon{040X}, the inclusion $ \Ccal_0 \subset \Ccal^{\at} $ exhibits $ \Ccal^{\at} $ as the idempotent completion of $ \Ccal_0 $.
	As a consequence, all of the functors given by extending the obvious inclusions along colimits
	\begin{equation*}
		\begin{tikzcd}
			\PSh(\Ccal_0) \arrow[rr] \arrow[dr] & & \PSh(\Ccal^{\at}) \arrow[dl] \\ 
			& \Ccal & 
		\end{tikzcd} 
	\end{equation*}
	are equivalences. 
	In particular, $ \Ccal^{\at} $ also atomically generates $ \Ccal $.
\end{observation}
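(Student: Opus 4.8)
The plan is to transport the whole statement across the equivalence furnished by atomic generation and then identify $ \Ccal^{\at} $ concretely. By \Cref{rem:restricted_yoneda}, the hypothesis that $ \Ccal_0 $ atomically generates $ \Ccal $ is equivalent to the restricted Yoneda functor $ y \colon \fromto{\Ccal}{\PSh(\Ccal_0)} $ being an equivalence, and by construction $ y $ carries each object of $ \Ccal_0 $ to the corresponding representable presheaf. First I would note that atomicity is a purely categorical condition: an equivalence of \categories preserves and reflects the colimit-preservation of mapping-space functors, so an object $ c \in \Ccal $ is atomic if and only if $ y(c) $ is atomic in $ \PSh(\Ccal_0) $. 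Combined with \Cref{atomic_presheaf}, which identifies the atomic objects of $ \PSh(\Ccal_0) $ with the retracts of representables, this simultaneously shows that each object of $ \Ccal_0 $ is atomic (hence $ \Ccal_0 \subset \Ccal^{\at} $) and that $ y $ restricts to an equivalence between $ \Ccal^{\at} $ and the full subcategory of $ \PSh(\Ccal_0) $ spanned by retracts of representables.

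Next I would identify this latter subcategory with the idempotent completion of $ \Ccal_0 $. The Yoneda embedding $ \fromto{\Ccal_0}{\PSh(\Ccal_0)} $ is fully faithful with essential image the representables, and $ \PSh(\Ccal_0) $ is idempotent complete since it is presentable; therefore the full subcategory spanned by retracts of representables is idempotent complete and each of its objects is a retract of an object of $ \Ccal_0 $. By \kerodon{040X}, this exhibits that subcategory --- and hence, via the equivalence $ y $, the \category $ \Ccal^{\at} $ --- as the idempotent completion of $ \Ccal_0 $.

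Finally I would read off the claim about the triangle of colimit-preserving extensions. The top functor $ \fromto{\PSh(\Ccal_0)}{\PSh(\Ccal^{\at})} $ is the left Kan extension along the inclusion $ \Ccal_0 \subset \Ccal^{\at} $; since passing to presheaf \categories is insensitive to idempotent completion, it is an equivalence. The triangle commutes because each of its legs is the unique colimit-preserving extension of the evident fully faithful inclusion into $ \Ccal $, and these composites agree on representables; the leg $ \fromto{\PSh(\Ccal_0)}{\Ccal} $ is an equivalence by hypothesis, so two-out-of-three forces the remaining leg $ \fromto{\PSh(\Ccal^{\at})}{\Ccal} $ to be an equivalence as well. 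Unwinding \Cref{def:atomically_generated_category}, this last equivalence is precisely the assertion that $ \Ccal^{\at} $ atomically generates $ \Ccal $. The main obstacle is the bookkeeping of the first two steps --- the clean identification of $ \Ccal^{\at} $ with the retracts of representables through $ y $, together with the idempotent-completion invariance of presheaf \categories; once these are secured, the three equivalences follow formally from two-out-of-three.
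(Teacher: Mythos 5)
Your argument is correct and follows the same route the paper intends: identify $ \Ccal^{\at} $ with the retracts of representables via the restricted Yoneda equivalence and \Cref{atomic_presheaf}, invoke \kerodon{040X} to recognize this as the idempotent completion of $ \Ccal_0 $, and conclude by the invariance of presheaf \categories under idempotent completion together with two-out-of-three. The paper states this tersely as an observation, and your write-up simply supplies the bookkeeping it leaves implicit.
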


The following is an easy reformulation of the definition:

\begin{lemma}\label{lem:atomic_generation_via_conservativity}
	Let $ \Ccal $ be a presentable \category and let $ \Ccal_0 \subset \Ccal^{\at} $ be a full subcategory of atomic objects.
	Then $ \Ccal_0 $ atomically generates $ \Ccal $ if and only if the collection of corepresentable functors \smash{$ \set{\Map_{\Ccal}(c_0,-) \colon \Ccal \to \Spc}_{c_0 \in \Ccal_0} $} is jointly conservative.
\end{lemma}

\begin{proof}
	Since $ \Ccal_0 \subset \Ccal^{\at} $, by \HTT{Proposition}{5.1.6.10} the unique colimit-preserving extension of the inclusion $ \PSh(\Ccal_0) \to \Ccal $ is fully faithful.
	Hence this functor is an equivalence if and only if its right adjoint 
	\begin{equation*}
		y \colon \Ccal \to \PSh(\Ccal_0) \comma \qquad c \mapsto \Map_{\Ccal}(-,c) 
	\end{equation*}
	is conservative.
	This is exactly the condition that the functors $ \Map_{\Ccal}(c_0,-) \colon \Ccal \to \Spc $ for $ c_0 \in \Ccal_0 $ are jointly conservative.
\end{proof}

\begin{definition}\label{def:atomic_functor}
	Let $ L \colon \fromto{\Dcal}{\Ccal} $ be a left adjoint functor of \categories.
	We say that $ L $ is \defn{atomic} if the right adjoint $ \fromto{\Ccal}{\Dcal} $ to $ L $ is also a left adjoint.
\end{definition}

\begin{observation}\label{obs:atomic_functors_preserve_atomic_objects}
	If $ L \colon \fromto{\Dcal}{\Ccal} $ is an atomic functor between presentable \categories, then $ L $ preserves atomic objects, i.e., $ L(\Dcal^{\at}) \subset \Ccal^{\at} $.
	If $R$ denotes the right adjoint to $L$, then the square 
	\begin{equation*}
		\begin{tikzcd}
			\PSh(\Ccal^{\at}) \arrow[r, "L^*"]  \arrow[d] & \PSh(\Dcal^{\at}) \arrow[d] \\
	           \Ccal \arrow[r, "R"']  & \Dcal
		\end{tikzcd}
	\end{equation*}
	commutes.
\end{observation}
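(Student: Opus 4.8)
The plan is to extract both assertions from the single adjunction equivalence $\Map_{\Ccal}(L(d), c) \simeq \Map_{\Dcal}(d, R(c))$, natural in $d \in \Dcal$ and $c \in \Ccal$, using crucially that atomicity of $L$ (\Cref{def:atomic_functor}) means exactly that $R$ is again a left adjoint, and hence preserves all colimits. First I would prove preservation of atomic objects. Fix $d \in \Dcal^{\at}$; by \Cref{def:atomic_object} it suffices to show that $\Map_{\Ccal}(L(d), -)$ preserves colimits. The adjunction identifies this functor with the composite $\Map_{\Dcal}(d, R(-))$, which preserves colimits because $R$ does (as $L$ is atomic) and because $\Map_{\Dcal}(d, -)$ does (as $d$ is atomic). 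Thus $L(d) \in \Ccal^{\at}$, establishing $L(\Dcal^{\at}) \subset \Ccal^{\at}$. In particular $L$ restricts to a functor $\fromto{\Dcal^{\at}}{\Ccal^{\at}}$, so that the functor $L^*$ appearing in the square — restriction of presheaves along this induced functor — is defined.

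For the square, I would let $y_{\Ccal}, y_{\Dcal}$ denote the restricted Yoneda functors of \Cref{rem:restricted_yoneda} and $\lambda_{\Ccal}, \lambda_{\Dcal}$ their colimit-preserving left adjoints, i.e.\ the extensions of the inclusions $\Ccal^{\at} \subset \Ccal$ and $\Dcal^{\at} \subset \Dcal$ along the Yoneda embeddings; these $\lambda$'s are the vertical functors. The crux is the computation on representables: for $c \in \Ccal^{\at}$ and $d' \in \Dcal^{\at}$,
\[
(L^* y_{\Ccal}(c))(d') = \Map_{\Ccal}(L(d'), c) \simeq \Map_{\Dcal}(d', R(c)) = (y_{\Dcal}(R(c)))(d') ,
\]
so that $L^* y_{\Ccal}(c) \simeq y_{\Dcal}(R(c))$. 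This is the adjunction isomorphism, and it carries the entire content of the statement. Since $\lambda_{\Ccal}$ carries the representable $y_{\Ccal}(c)$ back to $c$, the two composites agree on representables: $R \lambda_{\Ccal}(y_{\Ccal}(c)) \simeq R(c)$ and $\lambda_{\Dcal} L^*(y_{\Ccal}(c)) \simeq \lambda_{\Dcal} y_{\Dcal}(R(c))$. To promote this to all objects I would observe that both $R \circ \lambda_{\Ccal}$ and $\lambda_{\Dcal} \circ L^*$ preserve colimits — $R$ as a left adjoint, the $\lambda$'s by construction, and $L^*$ because presheaf colimits are computed objectwise — so it is enough that they agree on the representables, which generate $\PSh(\Ccal^{\at})$ under colimits.

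The main point to handle with care is the final identification $\lambda_{\Dcal} y_{\Dcal}(R(c)) \simeq R(c)$, i.e.\ that the realization functor inverts the restricted Yoneda embedding on the objects $R(c)$; this holds because the counit of the adjunction $\lambda_{\Dcal} \dashv y_{\Dcal}$ is an equivalence on such objects (automatically so once $\Dcal$ is atomically generated, which is the situation of interest, cf.\ \Cref{rem:restricted_yoneda}). Apart from this, the only things to keep straight are the directions of the various adjunctions — which functor is a left adjoint and therefore colimit-preserving — together with the fact that atomicity of $L$ is exactly what makes $R$ colimit-preserving and simultaneously guarantees that $L^*$ is well defined; everything else is the bare adjunction isomorphism.
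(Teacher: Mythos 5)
Your argument is correct, and since the paper states this as an unproved Observation there is no written proof to compare against; for the first assertion, the identification $\Map_{\Ccal}(L(d),-)\simeq\Map_{\Dcal}(d,R(-))$ together with colimit-preservation of $R$ (which is exactly what atomicity of $L$ buys) and of $\Map_{\Dcal}(d,-)$ is certainly the intended and only possible argument. For the square, your reduction to representables via the universal property of $\PSh(\Ccal^{\at})$ is the right move, and the computation $L^{\ast}y_{\Ccal}(c)\simeq y_{\Dcal}(R(c))$ is the adjunction identity, as you say.

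The point you flag at the end, however, is not merely something ``to handle with care'': it is an honest extra hypothesis. With the vertical arrows taken to be the colimit-preserving extensions $\lambda_{\Ccal},\lambda_{\Dcal}$ of the inclusions of the atomic objects, your reduction shows the square commutes if and only if the counit $\lambda_{\Dcal}y_{\Dcal}R(c)\to R(c)$ is an equivalence for all $c\in\Ccal^{\at}$ (note that $\lambda_{\Ccal}y_{\Ccal}(c)\simeq c$ needs nothing, since $\lambda_{\Ccal}$ restricted to representables is the inclusion), and this can genuinely fail for presentable $\Dcal$ that are not atomically generated. For instance, take $\Dcal=\Shhyp(\RR)$ and $L=\Gamma_{\sharp}\colon\Shhyp(\RR)\to\Spc$ the shape functor: $L$ is atomic because its right adjoint $R=\Gamma^{\ast}$ is again a left adjoint, but one can check that $\Shhyp(\RR)$ has no atomic objects at all (an atomic $F$ would force $\Map(F,-)$ to coincide with a stalk functor, which fails to preserve infinite products), so $\lambda_{\Dcal}\circ L^{\ast}$ is constant at the initial object while $R(\ast)=\Gamma^{\ast}(\ast)$ is the terminal sheaf. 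What holds unconditionally is the transposed square $y_{\Dcal}\circ R\simeq L^{\ast}\circ y_{\Ccal}$ with the restricted Yoneda functors of \Cref{rem:restricted_yoneda} as verticals; passing to the stated square requires $\Dcal$ to be atomically generated, which is the case everywhere the paper invokes it. So your proof is complete in the setting where the observation is actually used, and your identification of the missing hypothesis is exactly right rather than a cosmetic caveat.
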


The converse is true if $ \Ccal $ and $ \Dcal $ are atomically generated:

\begin{recollection}[{\cite[Lemma 2.6-(3)]{arXiv:2108.01924}}]\label{rec:characterization_of_atomic_functors}
	Let $ L \colon \fromto{\Dcal}{\Ccal} $ be a left adjoint between atomically generated presentable \categories.
	Then $ L $ is atomic if and only if $ L $ preserves atomic objects.
\end{recollection}

In this paper, we repeatedly use the fact that the \category of atomically generated presentable  \categories and atomic functors is equivalent to the \category of idempotent complete \categories:

\begin{notation}
	Write $ \PrLat \subset \PrL $ for the non-full subcategory with objects the atomically generated \categories and morphisms atomic left adjoints.
	Write
	\begin{equation*}
		\Catidem \subset \Catinfty
	\end{equation*}
	for the full subcategory spanned by the idempotent complete \categories.
\end{notation}

\begin{recollection}[{\cite[Proposition 2.7]{arXiv:2108.01924}}]\label{recollection:atomically_generated}
	Consider the functor \smash{$ \PSh \colon \Catidem \to \PrL $} sending a small idempotent complete \category $ \Ccal_0 $ to $ \PSh(\Ccal_0) $ with functoriality given by left Kan extension.
	This functor restricts to an equivalence
	\begin{equation*}
		\PSh \colon \equivto{\Catidem}{\PrLat}  
	\end{equation*}
	with inverse given by
	\begin{equation*}
		(-)^{\at} \colon \equivto{\PrLat}{\Catidem} \period
	\end{equation*}
\end{recollection}

\begin{notation}\label{ntn:Ccal^ex}
	Let $ \Ccal $ be an atomically generated presentable \category.
	To simplify notation later on, we write $ \Ccal^{\ex} \colonequals (\Ccal^{\at})^{\op} $ for the opposite of the full subcategory of $ \Ccal $ spanned by the atomic objects.
	Thus there is a natural equivalence
	\begin{equation*}
		\Ccal \equivalent \Fun(\Ccal^{\ex},\Spc) \period
	\end{equation*}
\end{notation}

The proof of \Cref{intro_thm:properties_of_exodromic_stratified_spaces} relies on the fact that a full subcategory of an atomically generated \category that is closed under limits and colimits is also atomically generated:

\begin{proposition}\label{prop:atomic_generation}
	Let $ \Dcal $ be an atomically generated presentable \category and let $ i \colon \Ccal \hookrightarrow \Dcal $ be the inclusion of a full subcategory.
	If $ \Ccal $ is closed under both limits and colimits in $ \Dcal $, then:
	\begin{enumerate}
		\item\label{prop:atomic_generation.1} The \category $ \Ccal $ is presentable and the inclusion $ i \colon \incto{\Ccal}{\Dcal} $ admits both a left adjoint $ L \colon \fromto{\Dcal}{\Ccal} $ and a right adjoint $ R \colon \fromto{\Dcal}{\Ccal} $.
		
		\item\label{prop:atomic_generation.2} The \category $ \Ccal $ is atomically generated by $ L(\Dcal^{\at}) $.
		
		\item\label{prop:atomic_generation.3} Let $ W_L \subset \Mor(\Dcal) $ be the collection of $ L $-equivalences.
		Let $ W \subset W_L \intersect \Mor(\Dcal^{\at}) $ be a subset of morphisms with the property that $ \Ccal $ coincides with the subcategory of $ W $-local objects of $ \Dcal $.
		Then the functor
		\begin{equation*} 
			L \colon \Dcal^{\at} \to \Ccal^{\at} 
		\end{equation*}
		exhibits $\Ccal^{\at}$ as the idempotent completion of the localization $\Dcal^{\at}[W\inv]$.
	\end{enumerate}
\end{proposition}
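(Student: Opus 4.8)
For part (1), observe that $\Ccal \subset \Dcal$ is closed under all limits and, in particular, under filtered colimits. By the recognition theorem for accessible localizations (\HTTsubsec{5.5.4}), this exhibits $\Ccal$ as an accessible reflective localization of the presentable \category $\Dcal$; in particular $\Ccal$ is presentable and the inclusion $i$ admits a left adjoint $L$. Now that $\Ccal$ is known to be presentable and $i$ preserves all colimits (closure under colimits), the adjoint functor theorem (\HTTsubsec{5.5.2}) produces a right adjoint $R$ to $i$. This is the one place where I would lean on the general machinery rather than argue by hand, since closure under limits and colimits does not obviously yield presentability without it.

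For part (2), the key point is that by part (1) the inclusion $i$ is itself a left adjoint, so $L$ is an \emph{atomic} functor in the sense of \Cref{def:atomic_functor}. Hence \Cref{obs:atomic_functors_preserve_atomic_objects} gives $L(\Dcal^{\at}) \subseteq \Ccal^{\at}$. Moreover $L(\Dcal^{\at})$ generates $\Ccal$ under colimits: for $c \in \Ccal$ the reflection gives $c \simeq L\, i(c)$, and writing $i(c) \in \Dcal \simeq \PSh(\Dcal^{\at})$ (atomic generation of $\Dcal$, via \Cref{obs:atomic_generation_and_idempotent_completion}) as the canonical colimit of atomic objects and applying the colimit-preserving functor $L$ exhibits $c$ as a colimit of objects of $L(\Dcal^{\at})$. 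Finally, a small full subcategory consisting of atomic objects and generating under colimits atomically generates: the left Kan extension $\PSh(L(\Dcal^{\at})) \to \Ccal$ is fully faithful, because its composite with the restricted Yoneda functor is the identity on representables while both functors preserve colimits (the restricted Yoneda functor does so precisely because $L(\Dcal^{\at})$ consists of atomic objects), and its right adjoint is conservative because $L(\Dcal^{\at})$ generates under colimits; a fully faithful left adjoint with conservative right adjoint is an equivalence.

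For part (3), the essential point is to identify the localization $\Dcal^{\at}[W\inv]$. The key observation is that, because $W$ consists of morphisms between atomic (hence, under $\Dcal \simeq \PSh(\Dcal^{\at})$, representable) objects, an object of $\Dcal$ is $W$-local exactly when the corresponding presheaf on $\Dcal^{\at}$ inverts $W$. Writing $\gamma \colon \Dcal^{\at} \to \Dcal^{\at}[W\inv]$ for the localization, restriction $\gamma^{\ast} \colon \PSh(\Dcal^{\at}[W\inv]) \to \PSh(\Dcal^{\at}) = \Dcal$ is fully faithful with essential image exactly the presheaves inverting $W$. Combined with the hypothesis that $\Ccal$ is the full subcategory of $W$-local objects, this identifies $\Ccal \simeq \PSh(\Dcal^{\at}[W\inv])$ as full subcategories of $\Dcal$, and the reflector $\gamma_{!}$ onto this subcategory agrees with $L$ by uniqueness of left adjoints to the inclusion. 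Passing to atomic objects and using \Cref{atomic_presheaf} (the atomic objects of a presheaf \category are the retracts of representables), we obtain $\Ccal^{\at} \simeq \PSh(\Dcal^{\at}[W\inv])^{\at} \simeq \mathrm{Idem}(\Dcal^{\at}[W\inv])$; since $\gamma_{!}$ carries the representable at $x$ to the representable at $\gamma(x)$, tracing through the identifications shows this equivalence is induced by $L$ and sends $x \in \Dcal^{\at}$ to $\gamma(x)$, which is exactly the assertion that $L$ exhibits $\Ccal^{\at}$ as the idempotent completion of $\Dcal^{\at}[W\inv]$.

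The hard part will be part (3), and specifically the full faithfulness it conceals: a head-on computation of mapping spaces in the localization $\Dcal^{\at}[W\inv]$ is intractable, and the whole argument hinges on the observation that $W$-local objects of $\Dcal$ coincide with presheaves on the localized small \category $\Dcal^{\at}[W\inv]$. It is the restriction of $W$ to morphisms between atomic objects, together with the bi-reflectivity from part (1) that makes $L$ atomic, that makes this identification available.
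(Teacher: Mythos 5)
Your parts (2) and (3) follow essentially the same route as the paper. For (2) the argument is identical: $L$ is atomic because $i$ is a two-sided adjoint, full faithfulness of $\PSh(L(\Dcal^{\at})) \to \Ccal$ comes from $L(\Dcal^{\at})$ consisting of atomic objects (the paper cites \HTT{Proposition}{5.1.6.10} for the argument you spell out), and generation under colimits comes from applying $L$ to a colimit presentation of $i(c)$. For (3) the paper verifies the universal property via the three criteria of Cisinski's Proposition 7.1.11 applied to $L \colon \Dcal^{\at} \to L(\Dcal^{\at})$, whereas you argue directly at the level of presheaf categories via $\gamma^{\ast}$; these are the same argument in different packaging, both hinging on the identification of $W$-local objects of $\Dcal \simeq \PSh(\Dcal^{\at})$ with presheaves inverting $W$, which is exactly where the hypothesis $W \subset \Mor(\Dcal^{\at})$ enters.

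The genuine gap is in part (1). The recognition results in \HTT{}{5.5.4} concern localizations at a \emph{small set} of morphisms: given a small set $S$, the $S$-local objects form an accessible reflective localization. Here you are handed only a full subcategory closed under limits and colimits, with no small set of morphisms and, crucially, no a priori accessibility of $\Ccal$; without accessibility you cannot invoke the adjoint functor theorem to produce $L$, and closure under limits and filtered colimits alone is not known (in HTT) to imply presentability. This is precisely the content of the $\infty$-categorical reflection theorem of Ragimov--Schlank, a recent result which is what the paper cites for (1) and which is not available in HTT. So the statement you need is true, but the machinery you point to does not prove it; you must cite the reflection theorem (or reprove it), after which the existence of $R$ follows from the adjoint functor theorem as you say. (Minor slip in the same sentence: closure under limits does not specialize to closure under filtered colimits; the latter is part of the closure-under-colimits hypothesis.)
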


\begin{proof}
	Point (1) is a direct consequence of the reflection theorem of Ragimov--Schlank \cite[Theorem 1.1]{RagimovSchlank}.

	To prove (2), first note that by \Cref{obs:atomic_functors_preserve_atomic_objects}, the functor $ L $ preserves atomic objects.
	Hence \HTT{Proposition}{5.1.6.10} implies that the functor
	\begin{equation*} 
		\PSh(L(\Dcal^{\at})) \to \Ccal 
	\end{equation*}
	given by the left Kan extension of the inclusion $ L(\Dcal^{\at}) \subset \Ccal $ along the Yoneda embedding is fully faithful.
	To complete the proof of (2), by \Cref{lem:atomic_generation_via_conservativity} we need to show that the functors 
	\begin{equation*}
		\set{\Map_{\Ccal}(L(d),-) \colon \Ccal \to \Spc}_{d \in \Dcal^{\at}}
	\end{equation*}
	are jointly conservative.
	By adjunction, $ \Map_{\Ccal}(L(d),-) \equivalent \Map_{\Dcal}(d,i(-)) $.
	Since $ \Dcal $ is atomically generated, \Cref{lem:atomic_generation_via_conservativity} shows that the functors
	\begin{equation*}
		\set{\Map_{\Dcal}(d,-) \colon \Dcal \to \Spc}_{d \in \Dcal^{\at}}
	\end{equation*}
	are jointly conservative; since $ i $ is also conservative, the claim follows.
	
	Now we prove (3).
	Combining (2) with \cref{obs:atomic_generation_and_idempotent_completion} shows that the inclusion $ L(\Dcal^{\at}) \subset \Ccal^{\at}$ exhibits $ \Ccal^{\at} $ as the idempotent completion of $ L(\Dcal^{\at}) $.
	Thus it suffices to prove that the functor
	\begin{equation}\label{eq:atomic_objects}
		L \colon \Dcal^{\at} \to L(\Dcal^{\at})
	\end{equation}
	exhibits $L(\Dcal^{\at})$ as the localization of $\Dcal^{\at}$ at $ W $.
	To see this, we apply the three criteria of \cite[Proposition 7.1.11]{Cisinski_Higher_Category}.
	By definition, the functor \eqref{eq:atomic_objects} is essentially surjective.
	Moreover, upon passing to presheaves, precomposition with \eqref{eq:atomic_objects} is identified with $ i \colon \Ccal \hookrightarrow \Dcal$ via the restricted Yoneda functor from \cref{rem:restricted_yoneda}.
	Hence, precomposition with \eqref{eq:atomic_objects} is fully faithful with image contained in the full subcategory of presheaves $ F \in \PSh(\Dcal^{\at}) $ that invert $ W $.
	Via the restricted Yoneda functor, presheaves $ F \in \PSh(\Dcal^{\at}) $ that invert $ W $ correspond to $ W $-local objects of $ \Dcal $, that is objects of $ \Ccal $. 
	Thus, \cite[Proposition 7.1.11]{Cisinski_Higher_Category} applies and concludes the proof of (3).
\end{proof}


\subsection{Sheaves with coefficients \& tensor products of presentable \texorpdfstring{$\infty$}{∞}-categories}\label{subsec:sheaves_with_coefficients_and_tensor_products_of_presentable_categories}

We now fix our conventions for sheaves with coefficients in a presentable \category.
For this, we make use of the tensor product of presentable \categories; we refer the reader to \cite[\HAsubsec{4.8.1}]{HA} for a background.

\begin{notation}\label{ntn:ShX_E}
	Let $ \X $ be \atopos and let $ \Ecal $ be a presentable \category.
	We write $ \Sh(\X;\Ecal) $ for the tensor product of presentable \categories
	\begin{equation*}
		\Sh(\X;\Ecal) \colonequals \X \tensor \Ecal \period
	\end{equation*}
	Since the tensor product $ (-) \tensor \Ecal $ defines a functor $ \fromto{\PrR}{\PrR} $, the assignment $ \goesto{\X}{\Sh(\X;\Ecal)} $ defines a functor
	\begin{equation*}
		\Sh(-;\Ecal) \colon \fromto{\RTop}{\PrR} \period
	\end{equation*}
\end{notation}

\begin{notation}[(sheaves on \sites)]
	Let $ (\Ccal,\tau) $ be \asite and $ \Ecal $ be a presentable \category. 
	We write
	\begin{equation*} 
		\PSh(\Ccal;\Ecal) \colonequals \Fun(\Ccal^{\op}, \Ecal) 
	\end{equation*}
	for the \category of $ \Ecal $-valued presheaves on $ \Ccal $.
	We also write
	\begin{equation*}
		\Sh_{\tau}(\Ccal;\Ecal) \subset \PSh(\Ccal;\Ecal)
	\end{equation*}
	for the full subcategory spanned by $ \Ecal $-valued presheaves that satisfy $\tau$-descent.
	When $\Ecal = \Spc$, we write
	\begin{equation*}
		\Sh_{\tau}(\Ccal) \colonequals \Sh_{\tau}(\Ccal;\Spc) \period
	\end{equation*}
\end{notation}	

\begin{nul}
	The \categories $\PSh(\Ccal;\Ecal)$ and $\Sh_{\tau}(\Ccal;\Ecal)$ are naturally identified with the tensor products of presentable \categories $\PSh(\Ccal) \tensor \Ecal$ and $\Sh_{\tau}(\Ccal) \tensor \Ecal $ \cite[\SAGthm{Remark}{1.3.1.6} \& \SAGthm{Proposition}{1.3.1.7}]{SAG}.
	This justifies \Cref{ntn:ShX_E}.
\end{nul}

\begin{nul}[(hypersheaves)]\label{nul:hypersheaves}
	Let $ (\Ccal,\tau) $ be \asite.
	In this paper, we often make use of the theory of \textit{hypersheaves}.
	When $ \Ecal $ is the \category of spaces, hypersheaves can be defined intrinsically in the \topos $\Sh_{\tau}(\Ccal)$ as \defn{hypercomplete objects}, that is, objects that are local with respect to $ \infty $-connected maps.
	Hypersheaves thus form a full subcategory \smash{$\Shhyp_{\tau}(\Ccal) \subset \Sh_{\tau}(\Ccal)$}.
	It is then possible to \emph{define} hypersheaves with coefficients in $ \Ecal $ as the tensor product
	\begin{equation*} 
		\Shhyp_{\tau}(\Ccal;\Ecal) \colonequals \Shhyp_{\tau}(\Ccal) \tensor \Ecal \period 
	\end{equation*}
	Each of the inclusions
	\begin{equation*} 
		\Shhyp_{\tau}(\Ccal) \subset \PSh(\Ccal) \andeq \Shhyp_{\tau}(\Ccal) \subset \Sh_{\tau}(\Ccal) 
	\end{equation*}
	admits a left exact left adjoint adjoint.
	We refer the reader unfamiliar with hypercomplete objects and hypercompletion to \cite[\S\S \HTTsubseclink{6.5.2}--\HTTsubseclink{6.5.4}]{HTT} or \cite[\S 3.11]{arXiv:1807.03281} for further reading on the subject.
\end{nul}

\begin{notation}[(sheaves on topological spaces)]\label{ntn:sheaves_on_topological_spaces}
	Let $ X $ be a topological space and let $ \Ecal $ be a presentable \category.
	We write $ \Open(X) $ the poset of open subsets of $ X $, ordered by inclusion.
	We regard $ \Open(X) $ as a site with the covering families given by open covers.
	We write
	\begin{equation*}
		\Sh(X;\Ecal) \colonequals \Sh(\Open(X);\Ecal) \andeq \Shhyp(X;\Ecal) \colonequals \Shhyp(\Open(X);\Ecal) \period
	\end{equation*}
\end{notation}

\begin{notation}[(functoriality)]\label{ntn:functoriality_of_sheaves}
	Let $\flowerstar \colon \X \to \Y$ be a geometric morphism.
	We write $\fupperstar$ for its left exact left adjoint.
	If $\flowerstar$ is an étale geometric morphism, we write $\flowersharp$ for the left adjoint to $\fupperstar$.
	Fix a presentable \category $ \Ecal $.
	Then the functoriality of the tensor product in \smash{$\PrL$} provides us with a colimit-preserving functor
	\begin{equation*}
	 \fupperstar \tensor \cE \colon \fromto{\Sh(\Y;\Ecal)}{\Sh(\X;\Ecal)} \period 
	\end{equation*}
	When there is no risk of confusion, we simply write $\fupperstar$ instead of $\fupperstar \tensor \cE$.
	Similarly, we write $\flowerstar$ for its right adjoint, and we apply a similar convention for $\flowersharp$.
\end{notation}


\subsection{Locally constant objects \& monodromy}\label{subsec:locally_constant_objects_and_monodromy}

We now recall the basics of locally constant objects in \topoi and monodromy.
We also prove a few foundational results that we need later in the paper, but are not available elsewhere.
For more background, we refer the reader to \cites[\HAsec{A.1}]{HA}[\S3.1]{arXiv:2105.12417}.

\begin{notation}[(constant objects and global sections)]
	Let $ \X $ be \atopos.
	We write
	\begin{align*}
		\Gamma_{\X,\ast} \colon \X &\to \Spc \\ 
	\intertext{for the \defn{global sections} functor given by}
		U &\mapsto \Map_{\X}(1_{\X},U) \period
	\end{align*}
	The global sections functor admits a left exact left adjoint $ \Gammaupperstar_{\X} \colon \fromto{\Spc}{\X} $ called the \defn{constant sheaf functor}.
	If the \topos $ \X $ is clear from the context, we write $ \Gammaupperstar $ and $ \Gammalowerstar $ for $ \Gammaupperstar_{\X} $ and $ \Gamma_{\X,\ast} $, respectively.

	Given a presentable \category $ \Ecal $, we say that an $ F $ object of $ \Sh(\X;\Ecal) $ is \defn{constant} if $ F $ lies in the image of the functor
	\begin{equation*}
		\Gammaupperstar \tensor \Ecal \colon \fromto{\Ecal}{\Sh(\X;\Ecal)} \period
	\end{equation*}
\end{notation}

\begin{nul}
	Note that $ \Spc $ is the terminal \topos \HTT{Proposition}{6.3.4.1}, so $ \Gammalowerstar $ is the unique geometric morphism $ \fromto{\X}{\Spc} $.
\end{nul}

\begin{recollection}[(products of \topoi)]\label{rec:products_of_topoi}
	The product in $ \RTop $ is given by the \textit{tensor product} in $ \PrR $; see \cites[\HAthm{Example}{4.8.1.19}]{HA}[Theorem 2.1.5]{arXiv:1802.10425}.
	In particular:
	\begin{enumerate}
		\item\label{rec:products_of_topoi.1} If $ \fupperstar \colon \fromto{\X'}{\X} $ and $ \gupperstar \colon \fromto{\Y'}{\Y} $ are left exact left adjoints between \topoi, then 
		\begin{equation*}
			\fupperstar \tensor \gupperstar \colon \fromto{\X' \tensor \Y'}{\X \tensor \Y}
		\end{equation*}
		is also a left exact left adjoint between \topoi.

		\item\label{rec:products_of_topoi.2} The functor
		\begin{equation*}
			\Gammaupperstar_{\X} \tensor \Gammaupperstar_{\Y} \colon \fromto{\Spc \equivalent \Spc \tensor \Spc}{\X \tensor \Y}
		\end{equation*}
		is the constant sheaf functor.
	\end{enumerate}
\end{recollection}

\begin{definition}[(locally constant objects)]
	Let $ \X $ be \atopos and let $ \Ecal $ be a presentable \category.
	An object $ F \in \Sh(\X;\Ecal) $ is \defn{locally constant} if there exists an effective epimorphism $ \surjto{\coprod_{i \in I} U_i}{1_{\X}} $ such that for each $ i \in I $, the image of $ F $ under the natural pullback functor 
	\begin{equation*}
		\fromto{\Sh(\X;\Ecal)}{\Sh(\X_{/U_i};\Ecal)}
	\end{equation*}
	is a constant object.
	We write
	\begin{equation*}
		\LC(\X;\Ecal) \subset \Sh(\X;\Ecal)
	\end{equation*}
	for the full subcategory spanned by the locally constant objects.
	When $ \Ecal = \Spc $, we simply write $ \LC(\X) \subset \X $ for $ \LC(\X;\Spc) $.
\end{definition}

\begin{observation}
	Given a geometric morphism of \topoi $ \flowerstar \colon \fromto{\X}{\Y} $, the pullback functor $ \fupperstar \colon \fromto{\Y}{\X} $ carries $ \LC(\Y;\Ecal) $ to $ \LC(\X;\Ecal) $.
\end{observation}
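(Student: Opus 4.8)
The plan is to transport a trivializing cover for $ F $ along $ \fupperstar $ and use base change for étale geometric morphisms. Let $ F \in \LC(\Y;\Ecal) $ and fix an effective epimorphism $ \surjto{\coprod_{i \in I} V_i}{1_{\Y}} $ such that for each $ i $ the restriction of $ F $ to $ \Sh(\Y_{/V_i};\Ecal) $ is constant. Set $ U_i \colonequals \fupperstar(V_i) $. Because $ \fupperstar $ is a left exact left adjoint, it preserves finite limits and all colimits, hence \v{C}ech nerves and their colimits; since a morphism is an effective epimorphism exactly when it is the colimit of its \v{C}ech nerve, $ \fupperstar $ preserves effective epimorphisms. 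Using $ \fupperstar(\coprod_i V_i) \equivalent \coprod_i U_i $ and $ \fupperstar(1_{\Y}) \equivalent 1_{\X} $, we obtain an effective epimorphism $ \surjto{\coprod_{i \in I} U_i}{1_{\X}} $, which will serve as the trivializing cover for $ \fupperstar F $.

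Next I would identify the restriction of $ \fupperstar F $ to each slice. Write $ j_i \colon \fromto{\X_{/U_i}}{\X} $ and $ k_i \colon \fromto{\Y_{/V_i}}{\Y} $ for the étale geometric morphisms, so that the restriction functors appearing in the definition of local constancy are $ j_i^{\ast} $ and $ k_i^{\ast} $. Since $ U_i = \fupperstar V_i $, there is an induced geometric morphism $ f_i \colon \fromto{\X_{/U_i}}{\Y_{/V_i}} $ whose inverse image sends $ (\fromto{W}{V_i}) $ to $ (\fromto{\fupperstar W}{\fupperstar V_i}) $, fitting into a square
\[
\begin{tikzcd}
	\X_{/U_i} \arrow[r, "f_i"] \arrow[d, "j_i"'] & \Y_{/V_i} \arrow[d, "k_i"] \\
	\X \arrow[r, "\flowerstar"'] & \Y
\end{tikzcd}
\]
of geometric morphisms. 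This square commutes: both composites $ j_i^{\ast}\fupperstar $ and $ f_i^{\ast}k_i^{\ast} $ send $ W \in \Y $ to $ \fupperstar W \cross \fupperstar V_i $ over $ \fupperstar V_i $, using left-exactness of $ \fupperstar $. By functoriality of $ (-) \tensor \Ecal $ the same commutation holds with $ \Ecal $-coefficients, yielding a natural equivalence $ j_i^{\ast}(\fupperstar F) \equivalent f_i^{\ast}(k_i^{\ast} F) $.

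It then remains to check that $ f_i^{\ast} $ preserves constant objects. As $ \Spc $ is the terminal \topos, the geometric morphisms $ \fromto{\X_{/U_i}}{\Spc} $ and $ \fromto{\Y_{/V_i}}{\Spc} $ are unique and the latter precomposed with $ f_i $ is the former; passing to left adjoints and tensoring with $ \Ecal $ gives $ f_i^{\ast} \circ (\Gammaupperstar \tensor \Ecal) \equivalent \Gammaupperstar \tensor \Ecal $, so $ f_i^{\ast} $ carries constant objects to constant objects. Since $ k_i^{\ast} F $ is constant by hypothesis, the equivalence of the previous paragraph shows that $ j_i^{\ast}(\fupperstar F) $ is constant for every $ i $. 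Together with the effective epimorphism $ \surjto{\coprod_i U_i}{1_{\X}} $, this exhibits $ \fupperstar F $ as locally constant, as desired.

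The only delicate step is the base-change identification $ j_i^{\ast}(\fupperstar F) \equivalent f_i^{\ast}(k_i^{\ast} F) $: one must construct the induced morphism $ f_i $ on slices, check that the resulting square of inverse-image functors commutes, and verify that this persists after tensoring with $ \Ecal $. Preservation of effective epimorphisms and of constant objects are then formal consequences of $ \fupperstar $ being a left exact left adjoint together with the uniqueness of geometric morphisms to the terminal \topos $ \Spc $.
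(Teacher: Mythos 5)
Your proof is correct, and it is precisely the standard argument the authors have in mind: the paper records this statement as an unproved observation, and your write-up supplies the three implicit verifications (preservation of effective epimorphisms by the left exact left adjoint $\fupperstar$, commutation of the induced square of slice $\infty$-topoi with inverse images and with $-\tensor\Ecal$, and preservation of constant objects via uniqueness of the geometric morphism to $\Spc$). No gaps.
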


\noindent This recovers the usual notion of local constancy for (hyper)sheaves on topological spaces:

\begin{example}\label{ex:local_constancy_for_sheaves_on_topological_spaces}
	Let $ X $ be a topological space and let $ \X $ be either $ \Sh(X) $ or $ \Shhyp(X) $.
	An object $ F \in \Sh(\X;\Ecal) $ is locally constant if and only if there exists an open cover $ \{U_i\}_{i \in I} $ of $ X $ such that each restriction $ \restrict{F}{U_i} $ is constant.
	See \cite[Proposition 1.18]{arXiv:2102.12325}.
\end{example}

\begin{definition}[{(monodromic \topos)}]\label{def:monodromic_topos}
	We say that \atopos $ \X $ is \defn{monodromic} or \defn{locally of constant shape} if the constant sheaf functor \smash{$ \Gammaupperstar \colon \fromto{\Spc}{\X} $} admits a left adjoint 
	\begin{equation*}
		\Gammalowersharp \colon \fromto{\X}{\Spc} \period
	\end{equation*}
	In this case, we write $ \Piinfty(\X) \colonequals \Gammalowersharp(1_{\X}) $ and call $ \Piinfty(\X) $ the \defn{shape} of $ \X $.
\end{definition}

The following result of Lurie justifies the terminology in \Cref{def:monodromic_topos}:

\begin{recollection}[(monodromy)]\label{rec:monodromy}
	Let $ \X $ be a monodromic \topos.
	Then the full subcategory $ \LC(\X) \subset \X $ is closed under limits and colimits.
	Moreover, there is a natural equivalence
	\begin{equation*}
		\equivto{\LC(\X)}{\Fun(\Piinfty(\X),\Spc)}
	\end{equation*}
	See \cite[\HAappthm{Proposition}{A.1.6} \& \HAappthm{Theorem}{A.1.15}]{HA}.
	Furthermore, for any presentable \category $ \Ecal $, there is an equivalence
	\begin{equation*}
		\LC(\X) \tensor \Ecal \equivalence \LC(\X;\Ecal) \period
	\end{equation*}
	See \cite[Proposition 3.1.7]{arXiv:2105.12417}.
	In particular, $ \LC(\X;\Ecal) \subset \Sh(\X;\Ecal) $ is closed under limits and colimits.
\end{recollection}	

\begin{example}[(monodromy in topology)]\label{ex:monodrom_in_topology}
	Let $ X $ be a topological space.
	\begin{enumerate}
		\item\label{ex:monodrom_in_topology.1} If $ X $ is locally weakly contractible, then the \topos $ \Shhyp(X) $ is monodromic.
		The functor
		\begin{equation*}
			 \Gammalowersharp \colon \fromto{\Shhyp(X)}{\Spc} 
		\end{equation*}
		is given by extending the functor sending an open $ U \subset X $ to the underlying homotopy type%
		\footnote{Slightly more precisely, the underlying \textit{weak} homotopy type of $ U $.
		That is, the image of $ U $ under the functor $ \Top \to \Spc $ that exhibits the \category of spaces as the \categorical localization of the $ 1 $-category of topological spaces obtained by inverting the weak homotopy equivalences.
		However, since it is standard to do so and we have no need to distinguish between weak and strong homotopy types, we omit the term `weak' in this paper.} 
		of $ U $ along colimits.
		In particular \smash{$ \Piinfty(\Shhyp(X)) $} coincides with the underlying homotopy type of $ X $.
		See \cite[Proposition 2.4]{arXiv:2010.06473}.

		\item\label{ex:monodrom_in_topology.2} If $ X $ is paracompact and \textit{locally of singular shape} in the sense of \HAa{Definition}{A.4.15}, then the \topos $ \Sh(X) $ is monodromic.
		Again, the functor $ \Gammalowersharp \colon \fromto{\Sh(X)}{\Spc} $ is given by extending the functor sending an open $ U \subset X $ to the underlying homotopy type of $ U $ along colimits.
		In particular $ \Piinfty(\Sh(X)) $ coincides with the underlying homotopy type of $ X $.
		See \HAa{Theorem}{A.4.19}.
	\end{enumerate}
\end{example}

An intriguing fact is that any \topos étale over a monodromic \topos is also monodromic:

\begin{observation}\label{obs:X_monodromic_implies_all_slices_are_monodromic}
	Let $ \X $ be a monodromic \topos and let $ U \in \X $. 
	Then the slice \topos $ \X_{/U} $ is monodromic.
	To see this, note that the composite
	\begin{equation*}
		\begin{tikzcd}[sep=2.5em]
			\X_{/U} \arrow[r, "\textup{forget}"] & \X \arrow[r, "\Gamma_{\X,\sharp}"] & \Spc
		\end{tikzcd}
	\end{equation*} 
	is left adjoint to the constant sheaf functor.
	As a consequence, we see that 
	\begin{equation*}
		\Piinfty(\X_{/U}) = \Gamma_{\X,\sharp}(U) \period
	\end{equation*}
\end{observation}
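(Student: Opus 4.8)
The plan is to realize the constant sheaf functor of $ \X_{/U} $ as a composite of two left-exact-left-adjoint functors, each of which happens to admit a left adjoint, and then assemble the left adjoints in reverse order. Concretely, I would prove that $ \Gammaupperstar_{\X_{/U}} = j^{\ast} \circ \Gammaupperstar_{\X} $, where $ j \colon \X_{/U} \to \X $ is the étale geometric morphism determined by $ U $, and then observe that both factors are left adjoints.

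First I would record the standard adjoint structure of the étale geometric morphism $ j \colon \X_{/U} \to \X $. Its left exact left adjoint $ j^{\ast} \colon \X \to \X_{/U} $ is the base-change functor $ W \mapsto (W \cross U \to U) $, and because $ j $ is étale this functor admits a further left adjoint $ j_{\sharp} \colon \X_{/U} \to \X $, namely the forgetful functor $ (V \to U) \mapsto V $. Thus there is an adjoint triple $ j_{\sharp} \dashv j^{\ast} \dashv j_{\ast} $. Next I would use that $ \Spc $ is the terminal \topos: there is a unique geometric morphism $ \X_{/U} \to \Spc $, and it factors as the composite $ \X_{/U} \xrightarrow{j} \X \xrightarrow{\Gamma_{\X,\ast}} \Spc $. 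Since pullbacks compose contravariantly, the pullback of this composite—which is exactly the constant sheaf functor $ \Gammaupperstar_{\X_{/U}} $—is identified with $ j^{\ast} \circ \Gammaupperstar_{\X} \colon \Spc \to \X_{/U} $.

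The key step is then purely formal: a composite of functors admits a left adjoint as soon as each factor does, and this left adjoint is the composite of the factorwise left adjoints taken in the reverse order. Here $ j^{\ast} $ has left adjoint $ j_{\sharp} $ by étaleness, while $ \Gammaupperstar_{\X} $ has left adjoint $ \Gammalowersharp $ precisely because $ \X $ is monodromic by hypothesis. Consequently $ \Gammaupperstar_{\X_{/U}} = j^{\ast} \circ \Gammaupperstar_{\X} $ admits the left adjoint $ \Gamma_{\X,\sharp} \circ j_{\sharp} $, which is exactly the ``forget, then take the shape of $ \X $'' composite appearing in the statement; this shows $ \X_{/U} $ is monodromic. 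Evaluating this left adjoint on the terminal object $ 1_{\X_{/U}} = (\mathrm{id} \colon U \to U) $ gives $ \Gamma_{\X,\sharp}(j_{\sharp}(1_{\X_{/U}})) = \Gamma_{\X,\sharp}(U) $, which yields the shape formula $ \Piinfty(\X_{/U}) = \Gamma_{\X,\sharp}(U) $.

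There is no genuine obstacle in this argument—it is entirely formal. The only points requiring care are the bookkeeping of adjoint directions, namely making sure the forgetful functor is the leftmost adjoint $ j_{\sharp} $ (rather than $ j^{\ast} $ or $ j_{\ast} $) and that the slice's constant sheaf functor is $ j^{\ast} \circ \Gammaupperstar_{\X} $ rather than the opposite composite, together with the invocation of the uniqueness of the geometric morphism to the terminal \topos $ \Spc $ in order to legitimately identify that composite with $ \Gammaupperstar_{\X_{/U}} $.
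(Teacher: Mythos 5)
Your proposal is correct and is essentially the paper's own argument, merely spelled out in more detail: the paper's observation likewise identifies the forgetful-then-$\Gamma_{\X,\sharp}$ composite as the left adjoint to the constant sheaf functor of the slice, which is exactly your composition of $j_{\sharp}$ and $\Gamma_{\X,\sharp}$. The extra bookkeeping you supply (the adjoint triple for the étale morphism and the uniqueness of the geometric morphism to $\Spc$) is the correct justification for the step the paper leaves implicit.
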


We now explain the functoriality of the monodromy equivalence.
To do so, we need the following lemma.

\begin{lemma}\label{lem:left_exact_left_adjoints_between_slices_over_Spc}
	Let $ K,L \in \Spc $ and let
	\begin{equation*}
		\fupperstar \colon \fromto{\Fun(L,\Spc)}{\Fun(K,\Spc)}
	\end{equation*}
	be a functor.
	The following are equivalent:
	\begin{enumerate}
		\item There exists a map of spaces $ f \colon \fromto{K}{L} $ such that $ \fupperstar $ is equivalent to the functor given by precomposition with $ f $.

		\item The functor $ \fupperstar $ preserves limits and colimits.

		\item The functor $ \fupperstar $ is left exact and preserves colimits.
	\end{enumerate}
\end{lemma}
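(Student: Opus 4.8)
The plan is to prove the cycle of implications $(1)\Rightarrow(2)\Rightarrow(3)\Rightarrow(1)$. The first two are immediate: precomposition with a map of spaces computes all (co)limits objectwise, so it preserves both limits and colimits, giving $(1)\Rightarrow(2)$; and a functor preserving all limits is in particular left exact, giving $(2)\Rightarrow(3)$. The content is entirely in $(3)\Rightarrow(1)$. Throughout I would use that, since $K$ and $L$ are spaces (hence $K\simeq K^{\op}$ and $L\simeq L^{\op}$), there are identifications $\Fun(L,\Spc)\simeq\Spc_{/L}$ and $\Fun(K,\Spc)\simeq\Spc_{/K}$ under which an object $N\colon L\to\Spc$ corresponds to a space over $L$ whose total space is the colimit $\colim_{l\in L}N(l)$ of its fibers, and under which the corepresentable $\Map_L(l,-)$ corresponds to the point $l\colon\ast\to L$.

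For $(3)\Rightarrow(1)$, first I would exploit that $f^*$ preserves colimits to encode it by its \emph{kernel}. Setting $M(l,k)\coloneqq f^*(\Map_L(l,-))(k)$, a functor $M\colon L\times K\to\Spc$, the density (co-Yoneda) presentation of $G$ as a colimit of the corepresentables $\Map_L(l,-)$ indexed over its category of elements yields $f^*(G)(k)\simeq\colim_{(l,x)}M(l,k)$ for every $G\in\Fun(L,\Spc)$. The key step is to feed left exactness into this. Since $f^*$ is left exact it preserves the terminal object, and the terminal object of $\Fun(L,\Spc)$ is $\colim_{l\in L}\Map_L(l,-)$ (its value at $l_0$ is $\colim_{l\in L}\Map_L(l,l_0)$, the realization of the contractible slice $L_{/l_0}$). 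Applying the colimit-preserving $f^*$ and evaluating at $k$ gives $\colim_{l\in L}M(l,k)\simeq\ast$ for every $k\in K$. In other words, for each fixed $k$ the object $M(-,k)\in\Fun(L,\Spc)\simeq\Spc_{/L}$ has contractible total space, so it \emph{is} a point of $L$: there is $f(k)\in L$ with $M(-,k)\simeq\Map_L(-,f(k))$.

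It then remains to make this assignment functorial and to conclude. The assignment $k\mapsto M(-,k)$ is a functor $K\to\Fun(L,\Spc)$ factoring through the full subcategory of objects with contractible total space; under $\Spc_{/L}$ this subcategory is equivalent to $L$ itself, so I obtain a genuine map of spaces $f\colon K\to L$ together with a natural equivalence $M(l,k)\simeq\Map_L(l,f(k))$. Plugging this back into the kernel formula and using co-Yoneda once more gives $f^*(G)(k)\simeq\colim_{(l,x)}\Map_L(l,f(k))\simeq G(f(k))$, naturally in $G$; that is, $f^*$ is precomposition with $f$. The main obstacle I anticipate is bookkeeping rather than conceptual: keeping the variances straight in the identifications $\Fun(L,\Spc)\simeq\Spc_{/L}$ and $\Fun(K,\Spc)\simeq\Spc_{/K}$, and promoting the pointwise extraction $k\mapsto f(k)$ to an honest morphism of spaces with all equivalences natural in $(l,k)$. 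I note in passing that the argument uses only that $f^*$ preserves colimits and the terminal object, so these two conditions together are already equivalent to $(1)$; full left exactness is never needed beyond preservation of the terminal object.
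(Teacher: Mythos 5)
Your proof is correct, but the interesting direction is handled quite differently from the paper. For $(3)\Rightarrow(1)$ the paper passes through $(2)$: it observes that the right adjoint of $f^{\ast}$ is a geometric morphism between two \topoi that are both étale over the terminal \topos $\Spc$ (via straightening/unstraightening), and then invokes the two-out-of-three property for étale geometric morphisms \HTT{Corollary}{6.3.5.9} to conclude that $f^{\ast}$ admits a left adjoint; the equivalence $(1)\Leftrightarrow(2)$ is then read off from the equivalence $\Catidem \simeq \PrLat$ of \Cref{recollection:atomically_generated}, using that spaces are idempotent complete. You instead prove $(3)\Rightarrow(1)$ directly by a kernel/co-Yoneda computation: writing $f^{\ast}$ as a colimit of its values on corepresentables, feeding in preservation of the terminal object to see that each $M(-,k)$ has contractible total space in $\Spc_{/L}$ and hence is a point of $L$, and assembling these points into a map $f\colon K\to L$. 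Your route is more elementary and self-contained (it avoids both the étale machinery and the atomic-generation equivalence for the hard implication), and it yields the genuinely sharper observation that preservation of colimits and of the terminal object already forces $(1)$ — full left exactness is not needed. What the paper's route buys is brevity given machinery it has already set up and will reuse throughout: \Cref{recollection:atomically_generated} is the engine behind the whole notion of exodromy, so deducing $(1)\Leftrightarrow(2)$ from it is essentially free there. The remaining work in your write-up — promoting the pointwise identification $M(-,k)\simeq \Map_L(-,f(k))$ to a natural one — is genuinely only bookkeeping, since two colimit-preserving functors out of $\Fun(L,\Spc)$ agree once they agree naturally on corepresentables, which is exactly what your factorization of $k\mapsto M(-,k)$ through $L$ provides.
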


\begin{proof}
	Since every space is an idempotent complete \category (see \Cref{lem:layered_implies_idempotent_complete}), the equivalence (1) $ \Leftrightarrow $ (2) follows from \Cref{recollection:atomically_generated}.
	Clearly (2) $ \Rightarrow $ (3).
	For the remaining implication (3) $ \Rightarrow $ (2), let $ \flowerstar $ denote the right adjoint to $ \fupperstar $.
	By assumption, $ \flowerstar $ is a geometric morphism.
	Note that by the straightening/unstraightening equivalences
	\begin{equation*}
		\Fun(K,\Spc) \equivalent \Spc_{/K} \andeq \Fun(L,\Spc) \equivalent \Spc_{/L} \comma
	\end{equation*}
	the unique geometric morphisms to the terminal \topos
	\begin{equation*}
		\fromto{\Fun(K,\Spc)}{\Spc} \andeq \fromto{\Fun(L,\Spc)}{\Spc}
	\end{equation*}
	are étale.
	Hence \HTT{Corollary}{6.3.5.9} implies that $ \flowerstar $ is an étale geometric morphism; in particular, $ \fupperstar $ admits a left adjoint.
\end{proof}

\begin{corollary}\label{cor:all_morphisms_are_monodromic}
	Let $ \flowerstar \colon \fromto{\X}{\Y} $ be a geometric morphism between monodromic \topoi.
	Then the functor
	\begin{equation*}
		\fupperstar \colon \fromto{\LC(\X)}{\LC(\Y)}
	\end{equation*}
	preserves limits and colimits.
\end{corollary}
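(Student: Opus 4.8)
The plan is to transport the problem through the monodromy equivalence of \Cref{rec:monodromy} and then invoke \Cref{lem:left_exact_left_adjoints_between_slices_over_Spc} to promote preservation of \emph{finite} limits to preservation of \emph{all} limits. Since $\X$ and $\Y$ are monodromic, their shapes $\Piinfty(\X)$ and $\Piinfty(\Y)$ are genuine spaces, and \Cref{rec:monodromy} supplies equivalences $\LC(\X) \equivalent \Fun(\Piinfty(\X),\Spc)$ and $\LC(\Y) \equivalent \Fun(\Piinfty(\Y),\Spc)$, together with the key input that both $\LC(\X) \subset \X$ and $\LC(\Y) \subset \Y$ are closed under all limits and colimits. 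In particular, under these equivalences the functor $\fupperstar$ between locally constant objects is identified with a functor of the form $\Fun(L,\Spc) \to \Fun(K,\Spc)$, where $K$ and $L$ are the two shapes.

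First I would record the two formal properties of $\fupperstar$ that come for free. As the inverse image of a geometric morphism, $\fupperstar \colon \Y \to \X$ is left exact, hence preserves finite limits; as a left adjoint it preserves all colimits; and by the observation following the definition of locally constant objects it carries $\LC(\Y)$ into $\LC(\X)$. Because $\LC(\Y) \subset \Y$ and $\LC(\X) \subset \X$ are closed under finite limits and under all colimits, and these are computed in the ambient \topoi, the restriction of $\fupperstar$ to locally constant objects preserves finite limits and all colimits. Transporting along the monodromy equivalences, the associated functor $\Fun(L,\Spc) \to \Fun(K,\Spc)$ is therefore left exact and colimit-preserving.

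The decisive step is then \Cref{lem:left_exact_left_adjoints_between_slices_over_Spc}: a functor $\Fun(L,\Spc) \to \Fun(K,\Spc)$ between presheaf \categories on spaces that is left exact and preserves colimits must be given by precomposition along a map of spaces, and such a functor automatically preserves \emph{all} limits and colimits. Applying this to our transported functor shows that $\fupperstar$ preserves all limits and colimits, as claimed. The only real obstacle is precisely this upgrade from finite to arbitrary limits: for a general geometric morphism the inverse image functor does not preserve infinite limits, and what rescues the situation is that monodromicity forces $\LC(\X)$ and $\LC(\Y)$ to be presheaf \categories on genuine \emph{spaces}, which is exactly the hypothesis under which \Cref{lem:left_exact_left_adjoints_between_slices_over_Spc} applies. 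Everything else is a routine transport of structure along equivalences.
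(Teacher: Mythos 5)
Your proof is correct and follows exactly the paper's route: the paper likewise deduces closure of $\LC(\X)\subset\X$ and $\LC(\Y)\subset\Y$ under limits and colimits from monodromicity, transports along the monodromy equivalences, and invokes \Cref{lem:left_exact_left_adjoints_between_slices_over_Spc} to upgrade left-exactness plus colimit-preservation to preservation of all limits. You have merely spelled out the intermediate steps that the paper leaves implicit.
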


\begin{proof}
	Since $ \X $ and $ \Y $ are monodromic, $ \LC(\X) \subset \X $ and $ \LC(\Y) \subset \Y $ are closed under limits and colimits.
	The claim now follows from the monodromy equivalences for $ \X $ and $ \Y $ combined with \Cref{lem:left_exact_left_adjoints_between_slices_over_Spc}.
\end{proof}

\begin{notation}
	Write \smash{$ \RTopmon \subset \RTop $} for the full subcategory spanned by the monodromic \topoi.
\end{notation}

\begin{notation}\label{ntn:PrRat}
	Write $ \PrRat \subset \PrR $ for the (non-full) subcategory of $ \PrR $ with objects the atomically generated presentable \categories and morphisms functors that are both left and right adjoints.
\end{notation}

\begin{nul}
	Note that the equivalence $ \PrL \equivalent (\PrR)^{\op} $ given by passing to right adjoints restricts to an equivalence
	\begin{equation*}
		\PrLat \equivalent (\PrRat)^{\op} \period
	\end{equation*}
\end{nul}

\begin{observation}[(functoriality of the shape)]
	The assignment $ \goesto{\X}{\Piinfty(\X)} $ refines to a functor
	\begin{equation*}
		\Piinfty \colon \fromto{\RTopmon}{\Spc \subset \Catidem} \period
	\end{equation*}
	Specifically, this functor is given by the composite
	\begin{equation*}
		\begin{tikzcd}[sep=3em]
			\RTopmon \arrow[r, "\LC(-)"] & (\PrRat)^{\op} \equivalent \PrLat \arrow[r, "(-)^{\ex}", "\sim"'{yshift=0.25ex}] & \Catidem \comma
		\end{tikzcd}
	\end{equation*}
	where the left-hand functor sends $ \X $ to the \category \smash{$ \LC(\X) $} with functoriality given by pullback, and the right-hand functor sends an atomically generated \category $ \Ccal $ to the \category \smash{$ \Ccal^{\ex} = (\Ccal^{\at})^{\op} $} given by the opposite of the subcategory of atomic objects.
\end{observation}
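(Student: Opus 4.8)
The plan is to build the first functor $\LC(-)\colon \RTopmon \to (\PrRat)^{\op}$ by restricting the pullback functoriality of sheaves to locally constant objects, and then to identify the displayed composite with the shape by an objectwise computation. First I would check that the assignment is well defined on objects and morphisms. For a monodromic \topos $\X$, \Cref{rec:monodromy} gives an equivalence $\LC(\X) \simeq \Fun(\Piinfty(\X),\Spc)$; as $\Piinfty(\X)$ is a space, the right-hand side is a presheaf \category and hence atomically generated, so $\LC(\X)$ is an object of $\PrRat$. For a geometric morphism $f\colon \X \to \Y$ between monodromic \topoi, the pullback $\fupperstar$ carries locally constant objects to locally constant objects, and \Cref{cor:all_morphisms_are_monodromic} shows that the restriction $\fupperstar\colon \LC(\Y) \to \LC(\X)$ preserves both limits and colimits. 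Since a colimit-preserving functor between presentable \categories is accessible, $\fupperstar|_{\LC(\Y)}$ is simultaneously a left and a right adjoint, hence a morphism of $\PrRat$ in the direction $\LC(\Y)\to\LC(\X)$ (see \Cref{ntn:PrRat}), i.e. a morphism $\LC(\X)\to\LC(\Y)$ of $(\PrRat)^{\op}$.

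The real content is promoting these pointwise data to a coherent functor, and this is the step I expect to be the main obstacle. I would begin from the pullback functor $\RTop^{\op}\to\CATinfty$ sending $\X$ to $\X$ and a geometric morphism $f\colon\X\to\Y$ to $\fupperstar\colon\Y\to\X$, and restrict it along $\RTopmon^{\op}\hookrightarrow\RTop^{\op}$. Because each full subcategory $\LC(\X)\subset\X$ is stable under every pullback, the family $\{\LC(\X)\}_\X$ is a subfunctor: in terms of the associated cocartesian fibration over $\RTopmon^{\op}$, one passes to the full subcategory spanned fiberwise by the locally constant objects, and fullness together with stability under the transition functors shows that the result is again a cocartesian fibration. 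By the first paragraph this subfunctor takes values in $\PrRat$, so it defines a functor $\RTopmon^{\op}\to\PrRat$, equivalently $\LC(-)\colon \RTopmon\to(\PrRat)^{\op}$. Everything here is formal once the transition functors are known to be biadjoint; the only care needed is the standard verification that a fiberwise-full, stable subfamily of a cocartesian fibration is again cocartesian.

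It remains to identify the composite. Composing $\LC(-)$ with the equivalence $(\PrRat)^{\op}\simeq\PrLat$ obtained by passing to adjoints, and then with $(-)^{\ex}\colon\PrLat\xrightarrow{\sim}\Catidem$ from \Cref{recollection:atomically_generated}, I would compute the value on an object $\X$. Writing $\LC(\X)\simeq\Fun(\Piinfty(\X),\Spc)\simeq\PSh(\Piinfty(\X))$ and using that the atomic objects of a presheaf \category are the retracts of representables (\Cref{atomic_presheaf})---which here are exactly the representables, since the space $\Piinfty(\X)$ is idempotent complete---gives $\LC(\X)^{\at}\simeq\Piinfty(\X)$, whence $\LC(\X)^{\ex}\simeq\Piinfty(\X)^{\op}\simeq\Piinfty(\X)$. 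Thus the composite lands in $\Spc\subset\Catidem$ and agrees on objects with $\Piinfty$. On morphisms, the equivalence $(\PrRat)^{\op}\simeq\PrLat$ replaces $\fupperstar|_{\LC(\Y)}$ by its left adjoint $\LC(\X)\to\LC(\Y)$, whose restriction to atomic objects recovers the induced map $\Piinfty(\X)\to\Piinfty(\Y)$ of shapes, confirming that the composite refines the assignment $\X\mapsto\Piinfty(\X)$.
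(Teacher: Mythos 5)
Your proposal is correct and follows exactly the route the paper intends: the Observation is stated without proof, and its content is precisely the composite you verify — that $\LC(\X)$ lies in $\PrRat$ with biadjoint transition functors (via \Cref{rec:monodromy} and \Cref{cor:all_morphisms_are_monodromic}), that the fiberwise passage to locally constant objects assembles coherently, and that $(-)^{\ex}$ recovers $\Piinfty(\X)$ since the atomic objects of $\Fun(\Piinfty(\X),\Spc)$ are the representables. Your write-up simply supplies the details the paper leaves implicit, and all the steps check out.
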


We conclude with a Künneth formula for the shape of a product of monodromic \topoi.

\begin{recollection}\label{rec:the_equivalence_of_Spc_tensor_Spc_with_Spc}
	The natural equivalence
	\begin{equation*}
		\equivto{\Spc \tensor \Spc}{\Spc}
	\end{equation*}
	is induced by the functor
	\begin{align*}
		\Spc \cross \Spc &\to \Spc \\ 
		(K,L) &\mapsto K \cross L \period
	\end{align*}
\end{recollection}

\begin{observation}
	Let $ \X $ and $ \Y $ be monodromic \topoi.
	Since the inclusions
	\begin{equation*}
		\incto{\LC(\X)}{\X} \andeq \incto{\LC(\Y)}{\Y}
	\end{equation*}
	are both left and right adjoints, the functor
	\begin{equation*}
		\fromto{\LC(\X) \tensor \LC(\Y)}{\X \tensor \Y}
	\end{equation*}
	induced by the functoriality of the tensor product is also fully faithful and both a left and right adjoint.
\end{observation}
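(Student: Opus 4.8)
The plan is to produce both adjoints of $ i_{\X} \tensor i_{\Y} $ by tensoring together the two adjunctions that witness each inclusion being both a left and a right adjoint, and to read off full faithfulness from the resulting counit. Write $ i_{\X} \colon \incto{\LC(\X)}{\X} $ and $ i_{\Y} \colon \incto{\LC(\Y)}{\Y} $, and let $ L_{\X}, R_{\X} $ (resp.\ $ L_{\Y}, R_{\Y} $) be the left and right adjoints of $ i_{\X} $ (resp.\ $ i_{\Y} $), so that $ L_{\X} \dashv i_{\X} \dashv R_{\X} $. Since $ i_{\X} $ is a left adjoint it preserves colimits, and $ L_{\X} $ does too; hence $ L_{\X} \dashv i_{\X} $ is an adjunction internal to $ \PrL $. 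Dually, $ R_{\X} $ preserves limits, so $ i_{\X} \dashv R_{\X} $ is an adjunction internal to $ \PrR $. The same holds with $ \Y $ in place of $ \X $.

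First I would treat the left adjoint. The tensor product underlies the symmetric monoidal structure on $ \PrL $ (\cite[\HAsubsec{4.8.1}]{HA}); in particular $ \tensor \colon \PrL \cross \PrL \to \PrL $ preserves adjunctions between $ 1 $-morphisms. Applying it to $ L_{\X} \dashv i_{\X} $ and $ L_{\Y} \dashv i_{\Y} $ yields an adjunction
\begin{equation*}
	L_{\X} \tensor L_{\Y} \dashv i_{\X} \tensor i_{\Y}
\end{equation*}
in $ \PrL $, so $ i_{\X} \tensor i_{\Y} $ admits the left adjoint $ L_{\X} \tensor L_{\Y} $ and is therefore a right adjoint. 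Symmetrically, using that the equivalence $ \PrL \equivalent (\PrR)^{\op} $ identifies the two symmetric monoidal structures, I would tensor the internal adjunctions $ i_{\X} \dashv R_{\X} $ and $ i_{\Y} \dashv R_{\Y} $ in $ \PrR $ to obtain $ i_{\X} \tensor i_{\Y} \dashv R_{\X} \tensor R_{\Y} $, exhibiting $ i_{\X} \tensor i_{\Y} $ as a left adjoint as well. (Alternatively, being a colimit-preserving functor of presentable \categories, $ i_{\X} \tensor i_{\Y} $ automatically has a right adjoint.)

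For full faithfulness I would exploit the counit of the first adjunction. By functoriality of $ \tensor $ the composite $ (L_{\X} \tensor L_{\Y}) \circ (i_{\X} \tensor i_{\Y}) $ is identified with $ (L_{\X} i_{\X}) \tensor (L_{\Y} i_{\Y}) $, and its counit with $ \epsilon_{\X} \tensor \epsilon_{\Y} $, where $ \epsilon_{\X} \colon L_{\X} i_{\X} \to \mathrm{id} $ and $ \epsilon_{\Y} $ are the counits for $ \X $ and $ \Y $. Since $ i_{\X} $ and $ i_{\Y} $ are fully faithful, these counits are equivalences, hence so is $ \epsilon_{\X} \tensor \epsilon_{\Y} $; thus the counit of $ L_{\X} \tensor L_{\Y} \dashv i_{\X} \tensor i_{\Y} $ is an equivalence and $ i_{\X} \tensor i_{\Y} $ is fully faithful. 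Combining the three points gives the claim.

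The one step that requires care — and the main obstacle — is the assertion that $ \tensor $ \emph{preserves adjunctions}, together with the identification of the tensored counit with the tensor of the original counits. This is a $ 2 $-categorical statement about the symmetric monoidal structure on $ \PrL $: one must know that $ \tensor $ refines to a functor of the homotopy $ 2 $-categories (equivalently, that $ \PrL $ is symmetric monoidal as an $ (\infty,2) $-category), so that it carries an internal adjunction, with its unit and counit, to an internal adjunction with the tensored unit and counit. Once this is granted, the remaining steps are formal, since $ \tensor $ is a functor and therefore preserves composition and equivalences.
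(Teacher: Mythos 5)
Your proposal is correct and is exactly the elaboration of what the paper leaves implicit: the Observation is stated without proof, justified only by ``the functoriality of the tensor product,'' and your argument of tensoring the internal adjunctions $L_{\X} \dashv i_{\X}$ and $i_{\X} \dashv R_{\X}$ in $\PrL$ and $\PrR$ respectively, then reading off full faithfulness from the counit, is the intended content. The $2$-categorical input you flag — that $\tensor$ preserves internal adjunctions together with their units and counits — is genuine but standard, since $(-) \tensor \Ecal$ is defined via functor \categories (so it is functorial on $\Fun^{\mathrm L}(\Ccal,\Dcal)$, not merely on its maximal subgroupoid) and hence refines to a functor of $(\infty,2)$-categories.
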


\begin{proposition}[{(Künneth formula for monodromic \topoi)}]\label{prop:Kunneth_formula_for_monodromic_topoi}
	Let $ \X $ and $ \Y $ be monodromic \topoi.
	Write $ \Gamma_{\X,\sharp} \colon \fromto{\X}{\Spc} $ and $ \Gamma_{\Y,\sharp} \colon \fromto{\Y}{\Spc} $ for the left adjoints to the constant sheaf functors.
	Then:
	\begin{enumerate}
		\item \label{prop:Kunneth_formula_for_monodromic_topoi.1} The functor
		\begin{equation*}
			\Gamma_{\X,\sharp} \tensor \Gamma_{\Y,\sharp} \colon \fromto{\X \tensor \Y}{\Spc \tensor \Spc \equivalent \Spc}
		\end{equation*}
		is left adjoint to the constant sheaf functor $ \fromto{\Spc}{\X \tensor \Y} $.
		In particular, the \topos $ \X \tensor \Y $ is monodromic.

		\item \label{prop:Kunneth_formula_for_monodromic_topoi.2} The natural map $ \fromto{\Piinfty(\X \tensor \Y)}{\Piinfty(\X) \cross \Piinfty(\Y)} $ is an equivalence.

		\item \label{prop:Kunneth_formula_for_monodromic_topoi.3} The natural fully faithful functor
		\begin{equation*}
			\fromto{\LC(\X) \tensor \LC(\Y)}{\X \tensor \Y}
		\end{equation*}
		has image $ \LC(\X \tensor \Y) $.
	\end{enumerate}
\end{proposition}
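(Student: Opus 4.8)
The plan is to establish (1) first and then deduce (2) and (3) from it together with the recollections on monodromy (\Cref{rec:monodromy}) and atomic generation (\Cref{recollection:atomically_generated}). The engine for (1) is that the tensor product $\tensor \colon \PrL \cross \PrL \to \PrL$ is symmetric monoidal, hence compatible with the $2$-categorical structure of $\PrL$, so it carries an adjunction between colimit-preserving functors to an adjunction. Since $\X$ and $\Y$ are monodromic, each constant sheaf functor $\Gammaupperstar_{\X}$ admits a left adjoint $\Gamma_{\X,\sharp}$; the crucial observation is that $\Gammaupperstar_{\X}$ is then \emph{itself} colimit-preserving (being simultaneously a right adjoint to $\Gamma_{\X,\sharp}$ and a left adjoint to global sections), so the adjunction $\Gamma_{\X,\sharp} \dashv \Gammaupperstar_{\X}$ is internal to $\PrL$: its unit and counit are maps of colimit-preserving functors. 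Applying the functoriality of $\tensor$ to this adjunction and to its analogue for $\Y$ produces an adjunction $\Gamma_{\X,\sharp} \tensor \Gamma_{\Y,\sharp} \dashv \Gammaupperstar_{\X} \tensor \Gammaupperstar_{\Y}$. By \enumref{rec:products_of_topoi}{2} the right-hand functor is exactly the constant sheaf functor of $\X \tensor \Y$, which proves (1) and shows $\X \tensor \Y$ is monodromic.

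For (2), item (1) identifies the shape $\Piinfty(\X \tensor \Y)$ with the value of $\Gamma_{\X,\sharp} \tensor \Gamma_{\Y,\sharp}$, post-composed with the equivalence $\Spc \tensor \Spc \simeq \Spc$, on the terminal object $1_{\X \tensor \Y}$. As the constant sheaf functor is left exact, $1_{\X \tensor \Y} = (\Gammaupperstar_{\X} \tensor \Gammaupperstar_{\Y})(\ast)$; plugging this in and using that $\tensor$ is functorial in composition, I would rewrite the answer as $(\Gamma_{\X,\sharp} \circ \Gammaupperstar_{\X}) \tensor (\Gamma_{\Y,\sharp} \circ \Gammaupperstar_{\Y})$ evaluated at the point. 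A short copower computation identifies $\Gamma_{\X,\sharp} \circ \Gammaupperstar_{\X}$ with the endofunctor $\Piinfty(\X) \cross (-)$ of $\Spc$, and likewise for $\Y$; combined with the description of $\Spc \tensor \Spc \simeq \Spc$ in \Cref{rec:the_equivalence_of_Spc_tensor_Spc_with_Spc} this yields $\Piinfty(\X \tensor \Y) \simeq \Piinfty(\X) \cross \Piinfty(\Y)$. It then remains to check that this equivalence is the comparison map induced by the two projections $\X \tensor \Y \to \X$ and $\X \tensor \Y \to \Y$, which I would do by unwinding the functoriality of the shape against the pullbacks $\mathrm{id} \tensor \Gammaupperstar_{\Y}$ and $\Gammaupperstar_{\X} \tensor \mathrm{id}$.

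For (3), the observation preceding the statement supplies a fully faithful functor $j \colon \LC(\X) \tensor \LC(\Y) \to \X \tensor \Y$ that is both a left and a right adjoint; concretely $j = i_{\X} \tensor i_{\Y}$ for the inclusions $i_{\X} \colon \LC(\X) \hookrightarrow \X$ and $i_{\Y} \colon \LC(\Y) \hookrightarrow \Y$. First I would show its image lies in $\LC(\X \tensor \Y)$: writing $p_{\X}, p_{\Y}$ for the projections, $j$ sends the external product of $c \in \LC(\X)$ and $d \in \LC(\Y)$ to $p_{\X}^{\ast}(c) \cross p_{\Y}^{\ast}(d)$, which is locally constant because pullback preserves local constancy and $\LC(\X \tensor \Y)$ is closed under finite limits (by (1) and \Cref{rec:monodromy}). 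Since such external products generate $\LC(\X) \tensor \LC(\Y)$ under colimits and $j$ preserves colimits, $j$ factors through a fully faithful, colimit-preserving functor $\bar{j} \colon \LC(\X) \tensor \LC(\Y) \to \LC(\X \tensor \Y)$. Now \Cref{rec:monodromy} applied to $\X$, $\Y$, and $\X \tensor \Y$, together with the presheaf Künneth equivalence $\PSh(S) \tensor \PSh(T) \simeq \PSh(S \cross T)$ and (2), identifies both the source and the target of $\bar{j}$ with $\Fun(\Piinfty(\X) \cross \Piinfty(\Y), \Spc)$. Under these identifications $\bar{j}$ preserves colimits and carries external products to external products, hence agrees with the identity on a generating family of representables and is therefore an equivalence; so the image of $j$ is all of $\LC(\X \tensor \Y)$.

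The main obstacle is not any single computation but the naturality bookkeeping that recognizes the abstractly-produced equivalences as the stated natural maps: in (1) one must be careful that the constant-sheaf adjunction genuinely lives in $\PrL$ (so that $\Gammaupperstar$ may be transported as a \emph{left} adjoint by $\tensor$), and in (2)--(3) one must verify that the comparison map from the projections, respectively the functor $\bar{j}$, matches the equivalences extracted from the tensor-product formalism. The supporting facts — the copower computation for $\Gamma_{\X,\sharp} \circ \Gammaupperstar_{\X}$, closure of $\LC$ under finite limits, and the reduction to a generating family of external products — are all formal consequences of the results recalled in the excerpt.
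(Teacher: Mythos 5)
Your proof is correct and follows essentially the same route as the paper: part (1) is the same tensor-functoriality argument, and parts (2) and (3) reach the same conclusions by the same formal identifications (the paper evaluates $ \Gamma_{\X,\sharp} \tensor \Gamma_{\Y,\sharp} $ directly on $ 1_{\X} \tensor 1_{\Y} $ rather than via your copower computation of $ \Gamma_{\X,\sharp} \circ \Gammaupperstar_{\X} $, and treats (3) as immediate from (2) together with $ \Fun(\Ccal,\Spc) \tensor \Fun(\Dcal,\Spc) \equivalent \Fun(\Ccal \cross \Dcal,\Spc) $). The extra detail you supply in (3) --- that external products of locally constant objects are locally constant, generate under colimits, and pin down the image --- is exactly the compatibility check the paper's ``immediate'' elides.
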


\begin{proof}
	For (1), note that by the functoriality of the tensor product of presentable \categories, $ \Gamma_{\X,\sharp} \tensor \Gamma_{\Y,\sharp} $ is left adjoint to the functor
	\begin{equation*}
		\Gammaupperstar_{\X} \tensor \Gammaupperstar_{\Y} \colon \fromto{\Spc \equivalent \Spc \tensor \Spc}{\X \tensor \Y} \period
	\end{equation*}
	By \enumref{rec:products_of_topoi}{2}, $ \Gammaupperstar_{\X} \tensor \Gammaupperstar_{\Y} $ is the constant sheaf functor; hence $ \Gamma_{\X,\sharp} \tensor \Gamma_{\Y,\sharp} $ is left adjoint to the constant sheaf functor, as desired.

	For (2), note that by \Cref{rec:products_of_topoi,rec:the_equivalence_of_Spc_tensor_Spc_with_Spc}, the functor 
	\begin{equation*}
		\Gamma_{\X,\sharp} \tensor \Gamma_{\Y,\sharp} \colon \fromto{\X \tensor \Y}{\Spc \tensor \Spc \equivalent \Spc}
	\end{equation*}
	is induced by the functor
	\begin{align*}
		\X \cross \Y &\to \Spc \\ 
		(F,G) &\mapsto \Gamma_{\X,\sharp}(F) \cross \Gamma_{\Y,\sharp}(G) \period
	\end{align*}
	In particular, applying $ \Gamma_{\X,\sharp} \tensor \Gamma_{\Y,\sharp} $ to the terminal object $ 1_{\X \tensor \Y} = 1_{\X} \tensor 1_{\Y} $, we have natural identifications
	\begin{align*}
		\Piinfty(\X \tensor \Y) &= (\Gamma_{\X,\sharp} \tensor \Gamma_{\Y,\sharp})(1_{\X} \tensor 1_{\Y}) \\
		&= \Gamma_{\X,\sharp}(1_{\X}) \cross \Gamma_{\Y,\sharp}(1_{\Y}) \\ 
		&= \Piinfty(\X) \cross \Piinfty(\Y) \period
	\end{align*}

	Item (3) is an immediate consequence of (2) and the formula
	\begin{equation*}
		\Fun(\Ccal,\Spc) \tensor \Fun(\Dcal,\Spc) \equivalent \Fun(\Ccal \cross \Dcal,\Spc) \period \qedhere
	\end{equation*}
\end{proof}


\section{Exit-path \texorpdfstring{$\infty$}{∞}-categories}\label{sec:exit-path_categories}

In this section, we introduce \textit{exodromic stratified \topoi} and their \textit{exit-path \categories}.
See \Cref{def:exit_path}.
These definitions are topos-theoretic generalizations of Clausen and Jansen's definition in the topological setting \cite[Definition 3.5]{arXiv:2108.01924}.

In \cref{subsec:stratified_topoi_and_stratified_spaces}, we start by reviewing the basics of the theory of stratified \topoi introduced in \cite{arXiv:1807.03281}.
In \cref{subsec:constructible_objects_and_exit-path_categories}, we explain the basics of constructible objects in stratified \topoi; we also define exodromic stratified \topoi their exit-path \categories.
In \cref{subsec:exodromic_morphisms}, we discuss stratified morphisms that induce morphisms on the level of exit-path \categories.
In \cref{subsec:exodromy_and_hypercompletion}, we conclude with some results on the interaction between exodromic stratified \topoi and hypercompletion.


\subsection{Stratified \texorpdfstring{$\infty$}{∞}-topoi \& stratified spaces}\label{subsec:stratified_topoi_and_stratified_spaces}

We now recall the theory of stratifications of \topoi introduced in \cite[\S8.2]{arXiv:1807.03281}.
This theory directly generalizes the theory of stratifications of topological spaces, but also applies to more general contexts such as stratifications of schemes and topological stacks.
The starting point for the theory is the observation that hypersheaves on a poset $ P $ equipped with the Alexandroff topology are just functors out of $ P $:

\begin{recollection}[{\cites[Example A.11]{MR4549105}[Example 3.12.15]{arXiv:1807.03281}}]\label{rec:hypersheaves_on_a_poset}
	Let $ \Ecal $ be a presentable \category and let $ P $ be a poset.
	Regard $ P $ as a topological space with the Alexandroff topology.
	Then there is a natural equivalence of \categories
	\begin{equation*}
		\equivto{\Fun(P,\Ecal)}{\Shhyp(P;\Ecal)} \period
	\end{equation*}
\end{recollection}

\begin{warning}
	It is necessary that we work with hypersheaves in \Cref{rec:hypersheaves_on_a_poset}; in general, $ \Sh(P) $ is not hypercomplete.
	See \cite[Example A.13]{MR4549105}.
\end{warning}

\begin{example}[{\cite[Lemma 5.21]{MR4651175}}]\label{ex:sheaves_on_a_noetherian_poset}
	If $ P $ is a noetherian poset, then $ \Sh(P) $ is hypercomplete, hence
	\begin{equation*}
		\Sh(P) \equivalent \Fun(P,\Spc) \period
	\end{equation*}
\end{example}

\begin{definition}[(stratified \topos)]
	Let $ \X $ be \atopos and let $ P $ be a poset.
	A \defn{$ P $-stratification} of $ \X $ is a geometric morphism
	\begin{equation*}
		\slowerstar \colon \fromto{\X}{\Fun(P,\Spc)} \period
	\end{equation*}
	To simplify notation, we often abusively denote a stratified \topos by $ (\X,P) $.
\end{definition}

Morphisms of stratified \topoi are commutative squares.
Here is the easiest way to formulate this.

\begin{notation}
	We write $ \Poset $ for the category of posets.
\end{notation}

\begin{definition}\label{def:StrTop}
	The \defn{\category of stratified \topoi} is the pullback
	\begin{equation*}
		\begin{tikzcd}
			\StrTop \arrow[r] \arrow[d] \arrow[dr, phantom, very near start, "\lrcorner", xshift=-0.75em, yshift=0.25em] & \Poset \arrow[d, "{\Fun(-,\Spc)}"] \\ 
			\Fun([1],\RTop) \arrow[r] & \RTop \period
		\end{tikzcd}
	\end{equation*}
	Here, the bottom horizontal functor sends a geometric morphism $ \slowerstar \colon \fromto{\X}{\Pcal} $ to its target $ \Pcal $.
\end{definition}

\begin{nul}
	Said differently, given stratified \topoi $ \slowerstar \colon \fromto{\X}{\Fun(P,\Spc)}  $ and $ \tlowerstar \colon \fromto{\Y}{\Fun(Q,\Spc)}  $, a \defn{morphism of stratifed \topoi} $ \fromto{(\X,P)}{(\Y,Q)} $ consists of a commutative square of geometric morphisms
	\begin{equation*} 
		\begin{tikzcd}[column sep=4em]
			\X \arrow[r, "\flowerstar"] \arrow[d, "\slowerstar"'] & \Y \arrow[d, "\tlowerstar"] \\
			\Fun(P,\Spc) \arrow[r, "\philowerstar"'] & \Fun(Q,\Spc) 
		\end{tikzcd} 
	\end{equation*}
	such that the pushforward functor $ \philowerstar $ is induced by a map of posets $ \phi \colon \fromto{P}{Q} $ (equivalently, $ \phiupperstar $ preserves limits).
	To simplify notation, we abusively denote a morphism of stratified \topoi by $ \flowerstar \colon \fromto{(\X,P)}{(\Y,Q)} $.
\end{nul}

It is often convenient to pull back a $ P $-stratified \topos to a locally closed subposet of $ P $:

\begin{recollection}[(locally closed subposets)]
	Let $ P $ be a poset.
	Then a subset $ S \subset P $ is locally closed in the Alexandroff topology if and only if $ S $ is an \defn{interval}: given $ p,q \in S $ with $ p \leq q $, $ S $ contains all $ x \in P $ such that $ p \leq x \leq q $.
\end{recollection}

\begin{notation}\label{ntn:inclusion_i_S}
	Let $ \slowerstar \colon \fromto{\X}{\Fun(P,\Spc)} $ be a stratified \topos and let $ i \colon \incto{S}{P} $ be a locally closed subposet.
	We write $ \XS $ for the pullback
	\begin{equation*}
		\begin{tikzcd}[sep=2.25em]
			\XS \arrow[dr, phantom, very near start, "\lrcorner", xshift=-0.5em] \arrow[d] \arrow[r, "i_{S,*}", hooked] & \X  \arrow[d, "\slowerstar"] \\ 
			\Fun(S,\Spc) \arrow[r, hooked, "\ilowerstar"'] & \Fun(P,\Spc) \period
		\end{tikzcd}
	\end{equation*}
	computed in $ \RTop $.
	Observe that $ \iSlowerstar $ and $ \ilowerstar $ define a morphism of stratified \topoi $ \incto{(\XS,S)}{(\X,P)} $.
	For each $ p \in P $, we call $ \X_{p} \colonequals \X_{\{p\}} $ the \defn{$ p $-th stratum} of $ (\X,P) $.	
\end{notation}

\begin{nul}
	Note that if $ S \subset P $ is open, then \smash{$ \iSlowerstar $} is an open immersion of \topoi and if $ S \subset P $ is closed, then \smash{$ \iSlowerstar $} is a closed immersion of \topoi.
	Hence for $ S \subset P $ locally closed, \smash{$ \iSlowerstar $} is a locally closed immersion of \topoi.
	(See \Cref{appendix:complements_on_topoi} for background on locally closed immersions of \topoi.)
\end{nul}

\begin{observation}\label{obs:pulling_back_stratified_geometric_morphisms}
	Let $ (\flowerstar,\phi) \colon \fromto{(\X,P)}{(\Y,Q)} $ be a morphism of stratified \topoi and let $ T \subset Q $ be a locally closed subposet.
	Write $ P_{T} \colonequals \phi\inv(T) $, so that $ P_T $ is a locally closed subposet of $ P $.
	Then we have a commutative cube of \topoi and geometric morphisms
	\begin{equation*}
        \begin{tikzcd}[column sep={15ex,between origins}, row sep={10ex,between origins}]
            \X_{P_T} \arrow[rr, "f_{T,\ast}"] \arrow[dd, hooked]  \arrow[dr, hooked, "i_{P_T,*}" description] & & \Y_{T} \arrow[dd]  \arrow[dr, hooked, "i_{T,*}"] \\
            & \X \arrow[rr, "\flowerstar"{near start}, crossing over] & & \Y \arrow[dd, hooked] \\
            \Fun(P_T,\Spc) \arrow[rr] \arrow[dr, hooked] & & \Fun(T,\Spc) \arrow[dr, hooked] \\
            & \Fun(P,\Spc) \arrow[rr, "\philowerstar"'] \arrow[from=uu, crossing over] & & \Fun(Q,\Spc) 
        \end{tikzcd}
    \end{equation*} 
    In particular, the induced geometric morphism on pullbacks $ f_{T,*} \colon \fromto{\X_{P_T}}{\Y_{T}} $ refines to a morphism of stratified \topoi
    \begin{equation*}
    	(f_{T,*}, (\restrict{\phi}{P_T})_{*}) \colon \fromto{(\X_{P_T},P_T}{(\Y_{T},T)} \period
    \end{equation*}
\end{observation}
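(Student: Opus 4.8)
The plan is to construct $f_{T,\ast}$ directly from the universal property of the pullback defining $\Y_{T}$, and to read off the entire commuting cube from the same universal property; once the poset-level data is in place, everything is formal.

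First I would verify that $P_T \colonequals \phi\inv(T)$ is a locally closed subposet of $P$, which is precisely what makes the pullback $\X_{P_T}$ of \Cref{ntn:inclusion_i_S} available. Since $T$ is an interval in $Q$ and $\phi$ is order-preserving, given $p \leq x \leq q$ with $p,q \in P_T$ we have $\phi(p) \leq \phi(x) \leq \phi(q)$ and $\phi(p),\phi(q) \in T$, so $\phi(x) \in T$ and hence $x \in P_T$; thus $P_T$ is an interval, i.e.\ locally closed. By construction $\phi$ restricts to an order-preserving map $\restrict{\phi}{P_T} \colon \fromto{P_T}{T}$ which, together with the inclusions $\iota \colon \incto{P_T}{P}$ and $\iota' \colon \incto{T}{Q}$, forms a commutative square in $\Poset$. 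Applying $\Fun(-,\Spc)$ to this square produces the bottom face of the cube.

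Next I would produce $f_{T,\ast}$ using that $\Y_{T}$ is the pullback $\Y \cross_{\Fun(Q,\Spc)} \Fun(T,\Spc)$ in $\RTop$. By the universal property of this pullback, a geometric morphism $\fromto{\X_{P_T}}{\Y_{T}}$ is the same datum as a pair of geometric morphisms $\fromto{\X_{P_T}}{\Y}$ and $\fromto{\X_{P_T}}{\Fun(T,\Spc)}$ together with a homotopy identifying their two composites to $\Fun(Q,\Spc)$. For the first I take $\flowerstar \circ i_{P_T,\ast}$, and for the second the structure morphism $\fromto{\X_{P_T}}{\Fun(P_T,\Spc)}$ followed by $(\restrict{\phi}{P_T})_\ast$. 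The required homotopy is a three-step diagram chase: the commutative square of the morphism of stratified \topoi rewrites $\tlowerstar \circ \flowerstar$ as $\philowerstar \circ \slowerstar$; the defining pullback square for $\X_{P_T}$ rewrites $\slowerstar \circ i_{P_T,\ast}$ as $\iota_\ast$ precomposed with the structure morphism; and the image under $\Fun(-,\Spc)$ of the poset square rewrites $\philowerstar \circ \iota_\ast$ as $\iota'_\ast \circ (\restrict{\phi}{P_T})_\ast$. Chaining these three identifications matches the two composites to $\Fun(Q,\Spc)$ and hence yields $f_{T,\ast}$, along with the top face (the square through $\flowerstar$ and $f_{T,\ast}$) and the left face of the cube; the two back faces are the defining pullback squares and the right face is the given stratified-morphism square.

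Finally, the refinement to a morphism of stratified \topoi is immediate from the construction: the projection $\fromto{\Y_{T}}{\Fun(T,\Spc)}$ composed with $f_{T,\ast}$ is, by the universal property, the structure morphism of $\X_{P_T}$ followed by $(\restrict{\phi}{P_T})_\ast$, which is exactly the left face of the cube and hence the square exhibiting $(f_{T,\ast}, (\restrict{\phi}{P_T})_\ast)$ as a morphism in $\StrTop$ (note $(\restrict{\phi}{P_T})_\ast$ is induced by the poset map $\restrict{\phi}{P_T}$, as required). The only step demanding real care is the $\infty$-categorical coherence: the cube is a diagram indexed by $[1]^{3}$, so the three rewrites above must be assembled into a single coherent system of homotopies rather than three separate commuting triangles. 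Rather than check this by hand, I would build the construction structurally as iterated maps into limits in $\RTop$---equivalently, by working inside $\StrTop$, which by \Cref{def:StrTop} is the pullback $\Fun([1],\RTop) \cross_{\RTop} \Poset$, and observing that passage to a locally closed subposet is itself a morphism there---so that all coherences are supplied automatically by the relevant universal properties. I do not expect any essential difficulty beyond this bookkeeping.
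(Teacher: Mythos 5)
Your argument is correct and is exactly the formal universal-property argument the paper implicitly intends: the statement is recorded as an Observation with no proof, and the only content is checking that $P_T$ is an interval (which you do correctly) and then reading $f_{T,\ast}$ and the cube off the defining pullbacks, as you do. Your closing remark about assembling the coherences structurally inside $\RTop$ (or $\StrTop$) rather than face-by-face is the right way to discharge the $[1]^{3}$-indexing issue, so nothing is missing.
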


In this paper, our main examples of stratified \topoi come from stratified topological spaces.

\begin{example}[(stratified \topoi attached to stratified spaces)]
	Let $ s \colon \fromto{X}{P} $ be a stratified space.
	\begin{enumerate}
		\item Then
		\begin{equation*}
			\slowerstarhyp \colon \fromto{\Shhyp(X)}{\Shhyp(P) \equivalent \Fun(P,\Spc)}
		\end{equation*}
		is a $ P $-stratified \topos.

		\item If $ P $ is noetherian, then
		\begin{equation*}
			\slowerstar \colon \fromto{\Sh(X)}{\Sh(P) \equivalent \Fun(P,\Spc)}
		\end{equation*}
		is a $ P $-stratified \topos.
	\end{enumerate}
\end{example}

\begin{example}
	Let $ s \colon \fromto{X}{P} $ be a stratified topological stack in the sense of \cite[Definition 3.1]{arXiv:2308.09550}.
	If $ P $ is noetherian, then $ \slowerstar \colon \fromto{\Sh(X)}{\Fun(P,\Spc)} $ is a $ P $-stratified \topos.
\end{example}

\begin{notation}
	Let $ (X,P) $ be a stratified space and $ S \subset P $ a locally closed subposet.
	Write $ X_{S} \colonequals X \cross_P S $.
	Then $ X_S $ is naturally an $ S $-stratified space.
	Moreover, the inclusions $ \incto{X_S}{X} $ and $ \incto{S}{P} $ define a morphism of stratified spaces $ i_S \colon \incto{(X,S)}{(X,P)} $.
\end{notation}

An important fact is that pulling back to a locally closed subposet commutes with taking (hyper)sheaves:

\begin{lemma}\label{lem:sheaves_on_strata}
	Let $ (X,P) $ be a stratified space and $ S \subset P $ a locally closed subposet.
	\begin{enumerate}
		\item The natural geometric morphism $ \fromto{\Shhyp(X_S)}{\Shhyp(X)_S} $ is an equivalence.

		\item If $ P $ is noetherian, then the natural geometric morphism $ \fromto{\Sh(X_S)}{\Sh(X)_S} $ is an equivalence.
	\end{enumerate} 
\end{lemma}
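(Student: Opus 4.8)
The plan is to prove both parts by the same reduction, handling open and closed subposets separately and then composing. Since the defining pullback squares of \Cref{ntn:inclusion_i_S} satisfy the pasting law in $\RTop$, the construction $\X \mapsto \XS$ is transitive: for a chain of locally closed subposets $S \subset U \subset P$, the locally closed immersion $\Fun(S,\Spc) \hookrightarrow \Fun(P,\Spc)$ factors through $\Fun(U,\Spc)$, and \Cref{obs:pulling_back_stratified_geometric_morphisms} together with the pasting law identifies $\Shhyp(X)_S$ with $(\Shhyp(X)_U)_S$; on the space side one has the matching identity $X_S = (X_U)_S$. Any locally closed $S \subset P$ admits a factorization $S \hookrightarrow U \hookrightarrow P$ with $U \colonequals P \setminus (\overline S \setminus S)$ open in $P$ and $S$ closed in $U$. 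Hence it suffices to prove the lemma when $S$ is open and when $S$ is closed.

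First I would treat the open case. If $S \subset P$ is open, then $\Fun(S,\Spc) \hookrightarrow \Fun(P,\Spc)$ is the open immersion classified by the subterminal object $\chi_S \in \Fun(P,\Spc)$ given by the characteristic function of $S$. Open subtopoi are stable under pullback in $\RTop$ (see \Cref{appendix:complements_on_topoi}), so $\Shhyp(X)_S$ is the open subtopos of $\Shhyp(X)$ classified by $\supperstarhyp \chi_S$. Since $s \colon \fromto{X}{P}$ is continuous, $\supperstarhyp \chi_S$ is the characteristic function of $s\inv(S) = X_S$, i.e. the subterminal object $1_{X_S}$. Finally, slicing a \topos over an open subterminal object is compatible with hypercompletion (the open immersion being étale), so the open subtopos $\Shhyp(X)_{/1_{X_S}}$ is canonically $\Shhyp(X_S)$; this proves the open case, and, verbatim with $\Sh$ in place of $\Shhyp$, the open case of (2).

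Next the closed case. If $S \subset P$ is closed with open complement $U = P \setminus S$, then $\Fun(S,\Spc) \hookrightarrow \Fun(P,\Spc)$ is the closed immersion complementary to the open immersion $\Fun(U,\Spc) \hookrightarrow \Fun(P,\Spc)$. Closed subtopoi are stable under pullback and remain complementary to the pullback of the complementary open, so $\Shhyp(X)_S$ is the closed subtopos of $\Shhyp(X)$ complementary to the open subtopos $\Shhyp(X)_U$, which by the open case is $\Shhyp(X_U) = \Shhyp(X \setminus X_S)$. It then remains to identify the closed subtopos of $\Shhyp(X)$ complementary to $\Shhyp(X \setminus X_S)$ with $\Shhyp(X_S)$. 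For ordinary sheaves this is the classical identification of the closed subtopos $\Sh(X_S) \subset \Sh(X)$, which yields (2) once we use $\Sh(P) \simeq \Fun(P,\Spc)$ for $P$ noetherian (\Cref{ex:sheaves_on_a_noetherian_poset}); but in the hypercomplete setting one must additionally know that hypercompletion is compatible with the recollement attached to the closed subset $X_S$. This is exactly \Cref{prop:hypercompletions_of_recollements}, which I would invoke to conclude.

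I expect this last identification in the hypercomplete case to be the main obstacle: unlike open immersions, closed immersions do not interact transparently with hypercompletion, and one genuinely needs the comparison between the hypercompletion of a recollement and the recollement of the hypercompletions recorded in \Cref{prop:hypercompletions_of_recollements}. Granting it, the closed case follows, and then — via the factorization $S \hookrightarrow U \hookrightarrow P$ and the transitivity of $\Shhyp(X) \mapsto \Shhyp(X)_S$ established at the outset — so does the general locally closed case.
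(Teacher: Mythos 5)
Your proof is correct and follows essentially the same route as the paper: the paper's own proof is a one-line appeal to \Cref{cor:Sh_preserves_pullbacks_along_locally_closed_embeddings} and \Cref{cor:Shhyp_preserves_pullbacks_along_locally_closed_embeddings}, whose proofs in \Cref{appendix:complements_on_topoi} use exactly your decomposition into an open and a closed case via the stability of open/closed immersions under pullback (\Cref{prop:formula_for_pullbacks_along_etale_morphisms_and_closed_immersions}) together with \Cref{prop:hypercompletions_of_recollements}. You have in effect re-derived those appendix corollaries inline, correctly identifying \Cref{prop:hypercompletions_of_recollements} as the one genuinely non-formal input in the hypercomplete closed case.
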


\begin{proof}
	Immediate from \Cref{rec:hypersheaves_on_a_poset}, \Cref{ex:sheaves_on_a_noetherian_poset}, \Cref{cor:Sh_preserves_pullbacks_along_locally_closed_embeddings}, \Cref{cor:Shhyp_preserves_pullbacks_along_locally_closed_embeddings}, and the definitions.
\end{proof}

Another useful fact is that in the noetherian setting, pulling back to strata is jointly conservative:

\begin{lemma}\label{lem:pulling_back_to_strata_is_jointly_conservative_for_noetherian_posets}
	Let $ (\X,P) $ be a stratified \topos.
	If the poset $ P $ is noetherian, then the pullback functors
	\begin{equation*}
		\set{\iupperstar_{p} \colon \fromto{\X}{\X_{p}}}_{p \in P}
	\end{equation*}
	are jointly conservative.
\end{lemma}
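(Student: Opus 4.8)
The plan is to reduce the statement to the case of a \emph{discrete} stratifying poset, where it becomes an instance of extensivity, by refining the stratification along the identity-on-objects map $ P_{\delta} \to P $, where $ P_{\delta} $ denotes the set $ P $ regarded as a discrete poset. First I would observe that restriction along $ P_{\delta} \to P $ defines a left exact colimit-preserving functor
\begin{equation*}
	r^{\ast} \colon \fromto{\Fun(P,\Spc)}{\Fun(P_{\delta},\Spc) \equivalent \textstyle\prod_{p \in P}\Spc} \comma \qquad F \mapsto (F(p))_{p \in P} \comma
\end{equation*}
whose right adjoint (right Kan extension) exhibits $ r^{\ast} $ as the inverse image of a geometric morphism $ r \colon \fromto{\Fun(P_{\delta},\Spc)}{\Fun(P,\Spc)} $. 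Since $ r^{\ast} $ is computed pointwise, it is conservative, so $ r $ is a surjective geometric morphism. Note that for each $ p $ the composite $ \incto{\Fun(\{p\},\Spc)}{\Fun(P_{\delta},\Spc)} \xrightarrow{r} \Fun(P,\Spc) $ is the stratum inclusion $ i_{p,\ast} $, as both have inverse image $ \operatorname{ev}_{p} $.

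Next I would form the base change of $ r $ along the structural geometric morphism $ \slowerstar \colon \fromto{\X}{\Fun(P,\Spc)} $, that is the pullback in $ \RTop $
\begin{equation*}
	\X_{\delta} \colonequals \X \cross_{\Fun(P,\Spc)} \Fun(P_{\delta},\Spc) \comma
\end{equation*}
with projection $ c \colon \fromto{\X_{\delta}}{\X} $. As surjective geometric morphisms are stable under base change in $ \RTop $, the morphism $ c $ is again surjective, so $ c^{\ast} $ is conservative. The other leg makes $ \X_{\delta} $ into a $ P_{\delta} $-stratified \topos, and a pullback-pasting argument (as in \Cref{obs:pulling_back_stratified_geometric_morphisms}) identifies its $ p $-th stratum canonically with $ \X_{p} $, compatibly with $ c $. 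Concretely, writing $ \iota_{p}^{\ast} $ for the $ p $-th stratum-pullback of $ (\X_{\delta},P_{\delta}) $, the equality $ c \circ \iota_{p} = i_{p} $ of geometric morphisms gives $ \iupperstar_{p} = \iota_{p}^{\ast} \circ c^{\ast} $.

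Finally I would settle the discrete case: since $ 1_{\Fun(P_{\delta},\Spc)} = \coprod_{p} e_{p} $ is a decomposition into pairwise disjoint complemented subobjects (with $ e_{p} $ supported at $ p $), and $ \X_{\delta} \to \Fun(P_{\delta},\Spc) $ has left exact colimit-preserving inverse image, this decomposition pulls back to a clopen decomposition of $ 1_{\X_{\delta}} $; infinitary extensivity of \topoi then yields $ \X_{\delta} \equivalent \prod_{p} \X_{p} $ with the $ \iota_{p}^{\ast} $ identified with the projections, which are jointly conservative. Combining the pieces: if $ f $ is a morphism of $ \X $ with $ \iupperstar_{p}(f) $ an equivalence for every $ p $, then $ \iota_{p}^{\ast}(c^{\ast} f) $ is an equivalence for every $ p $, whence $ c^{\ast} f $ is an equivalence by joint conservativity over $ P_{\delta} $, whence $ f $ is an equivalence by conservativity of $ c^{\ast} $.

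The main obstacle I anticipate is the topos-theoretic input that surjectivity is preserved under the base change computing $ \X_{\delta} $ in $ \RTop $, together with the careful identification of the pullback and its strata; the other steps are formal. (An alternative, more hands-on route iterates the open--closed recollement of \Cref{appendix:complements_on_topoi} by peeling off strata, and it is there that the noetherian hypothesis is genuinely needed to make the induction well-founded; the present argument instead uses noetherianity only through the identification $ \Fun(P,\Spc) \equivalent \Sh(P) $ matching the stratum \topoi $ \X_{p} $ with their sheaf-theoretic counterparts.)
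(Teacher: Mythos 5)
Your argument reduces everything to the single assertion that surjective geometric morphisms (those with conservative inverse image) are stable under base change in $\RTop$, and this is precisely where the proof breaks down. Once you have identified $\X_{\delta}$ with $\prod_{p} \X_{p}$ so that $c^{\ast} = (\iupperstar_{p})_{p}$, the statement ``$c^{\ast}$ is conservative'' \emph{is} the lemma; so all of the content has been pushed into the base-change claim. That claim is a theorem for $1$-topoi, but the classical proof runs through the comonadicity of $f^{\ast}$ for a surjection $f$, which in the $1$-categorical case is automatic because cosimplicial totalizations are computed by finite limits and $f^{\ast}$ is left exact. For \topoi this fails: totalizations are not finite limits, a conservative left exact left adjoint need not be comonadic or of effective descent, and pullback-stability of conservative-inverse-image morphisms is not available in the literature (nor anywhere in this paper). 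The situation is closely analogous to the failure of ``enough points'': for a topological space $X$ the joint stalk functor $\Sh(X) \to \prod_{x \in X} \Spc$ is the inverse image of the map from the discretization, and it is famously \emph{not} conservative unless one passes to $\Shhyp(X)$ --- even though the underlying map of spaces is a surjection covered by locally closed points. Your $r \colon \Fun(P_{\delta},\Spc) \to \Fun(P,\Spc)$ is exactly this kind of discretization, so one should expect its base change to lose conservativity in general.

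The decisive symptom is that your argument never uses the noetherian hypothesis: your closing parenthetical claims it enters ``through the identification $\Fun(P,\Spc) \equivalent \Sh(P)$,'' but the paper's definition of a $P$-stratified \topos is a geometric morphism to $\Fun(P,\Spc)$ directly, so no such identification is invoked anywhere in your proof. If the base-change claim were true, the lemma would hold for arbitrary posets, and the authors' hypothesis and induction would be superfluous. The paper's actual proof is your ``alternative, more hands-on route'': cover $P$ by the open subposets $P_{\geq p}$, and prove by noetherian induction that $\iupperstar_{P_{\geq p}}(\phi)$ is an equivalence, using at each stage that $P_{>p} = \bigcup_{q > p} P_{\geq q}$ together with the open--closed recollement of $\X_{P_{\geq p}}$ into $\X_{p}$ and $\X_{P_{>p}}$ (whose two restriction functors are jointly conservative by the general theory of recollements). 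Noetherianity is exactly what makes this induction well-founded. To repair your write-up, you should either carry out that induction or supply a proof of pullback-stability of surjections in the special case at hand --- and the latter would in effect have to reprove the lemma.
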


\begin{proof}
	Let $ \phi $ be a morphism in $ \X $ such that for each $ p \in P $, the morphism $ \iupperstar_p(\phi) $ is an equivalence; we need to show that $ \phi $ is an equivalence.
	For each $ p \in P $, write 
	\begin{equation*}
		P_{\geq p} \colonequals \setbar{q \in P}{q \geq p} \andeq P_{>p} \colonequals P_{\geq p} \sminus \{p\} \period
	\end{equation*}
	Since the open subsets $ \{P_{\geq p}\}_{p \in P} $ cover $ P $, it suffices to show:
	\begin{enumerate}
		\item[($\ast$)]\label{item:noetherian_induction_claim} For each $ p \in P $, the restriction $ \iupperstar_{P_{\geq p}}(\phi) $ is an equivalence in $ \X_{P_{\geq p}} $.
	\end{enumerate}
	We prove (\hyperref[item:noetherian_induction_claim]{$ \ast $}) by noetherian induction on $ p \in P $.
	We need to show that if the restriction \smash{$ \iupperstar_{P_{\geq q}}(\phi) $} is an equivalence for each $ q > p $, then \smash{$ \iupperstar_{P_{\geq p}}(\phi) $} is an equivalence.
	Note that
	\begin{equation*}
		P_{\geq p} \sminus \{p\} = P_{>p} = \Union_{q \in P_{>p}} P_{\geq q} \period
	\end{equation*}
	Hence the inductive hypothesis implies that the restriction \smash{$ \iupperstar_{P_{> p}}(\phi) $} is an equivalence.
	By assumption \smash{$ \iupperstar_{p}(\phi) $} is also an equivalence.
	By recollement, the restriction functors
	\begin{equation*}
		\iupperstar_p \colon \fromto{\X_{P_{\geq p}}}{\X_{p}} \andeq \iupperstar_{P_{> p}} \colon \fromto{\X_{P_{\geq p}}}{\X_{P_{>p}}}
	\end{equation*}
	are jointly conservative, completing the proof.
\end{proof}


\subsection{Constructible objects \& exit-path \texorpdfstring{$\infty$}{∞}-categories}\label{subsec:constructible_objects_and_exit-path_categories}

We now recall the basics of constructible objects of stratified \topoi introduced in \cite[\S9.4]{arXiv:1807.03281}.
We also define exit-path \categories at this level of generality.

\begin{definition}[(constructible objects)]\label{def:constructible_objects}
	Let $ (\X,P) $ be a stratified \topos and let $ \Ecal $ be a presentable \category.
	An object $ F \in \Sh(\X;\Ecal) $ is \defn{$ P $-constructible} if for each $ p \in P $, the restriction $ \iupperstar_p(F) \in \Sh(\X_p;\Ecal) $ is locally constant.
	We write
	\begin{equation*}
		\ConsP(\X;\Ecal) \subset \Sh(\X;\Ecal)
	\end{equation*}
	for the full subcategory spanned by the $ P $-constructible objects. 
	If $ \Ecal = \Spc $, we simply write $ \ConsP(\X) \subset \X $ for $ \ConsP(\X;\Spc) $.
\end{definition}

\begin{remark}
	Our terminology differs from the terminology used in \cite[\S9.4]{arXiv:1807.03281}.
	There, Barwick--Glasman--Haine use the term \textit{formally constructible objects} for what we call constructible objects; their \textit{constructible objects} are formally constructible objects that satisfy additional finiteness hypotheses.
	The reason for this is that \cite{arXiv:1807.03281} is mostly about \topoi coming from algebraic geometry, where these finiteness hypotheses are necessary for a well-behaved theory.
\end{remark}

\begin{observation}
	Given a morphism of stratified \topoi $ \flowerstar \colon \fromto{(\X,P)}{(\Y,Q)} $, the pullback functor $ \fupperstar \colon \fromto{\Y}{\X} $ carries $ \ConsQ(\Y;\Ecal) $ to $ \ConsP(\X;\Ecal) $.
\end{observation}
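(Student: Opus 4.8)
The plan is to verify the defining condition of $ P $-constructibility one stratum at a time and, via base change, reduce it to the fact that pullback of sheaves preserves local constancy. Fix $ G \in \ConsQ(\Y;\Ecal) $. By \Cref{def:constructible_objects}, I must show that for every $ p \in P $ the restriction $ \iupperstar_p(\fupperstar G) $ is a locally constant object of $ \Sh(\X_p;\Ecal) $.

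First I would produce, for each $ p \in P $, a geometric morphism on strata. Set $ q \colonequals \phi(p) $. The composite of $ f $ with the inclusion of stratified \topoi $ \incto{(\X_p,\{p\})}{(\X,P)} $ from \Cref{ntn:inclusion_i_S} is a morphism of stratified \topoi lying over the map of posets $ \fromto{\{p\}}{Q} $ sending $ p \mapsto q $. Applying \Cref{obs:pulling_back_stratified_geometric_morphisms} to this composite with the locally closed subposet $ \{q\} \subset Q $ — whose preimage in $ \{p\} $ is $ \{p\} $ itself, so that $ (\X_p)_{\{p\}} = \X_p $ and $ \Y_{\{q\}} = \Y_q $ — yields a geometric morphism $ f_p \colon \fromto{\X_p}{\Y_q} $ together with a commutative square of geometric morphisms
\begin{equation*}
	\begin{tikzcd}
		\X_p \arrow[r, "f_{p,*}"] \arrow[d, "i_{p,*}"'] & \Y_q \arrow[d, "i_{q,*}"] \\
		\X \arrow[r, "\flowerstar"'] & \Y \period
	\end{tikzcd}
\end{equation*}

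Passing to left adjoints in this commuting square of pushforwards, and tensoring with $ \Ecal $ (using the functoriality of $ (-) \tensor \Ecal $ on $ \PrL $ from \Cref{ntn:functoriality_of_sheaves}, which preserves the commutativity of the square), gives a base-change equivalence $ \iupperstar_p \fupperstar \equivalent f_p^{\ast}\, \iupperstar_q $ of functors $ \fromto{\Sh(\Y;\Ecal)}{\Sh(\X_p;\Ecal)} $. Since $ G $ is $ Q $-constructible, $ \iupperstar_q(G) $ is locally constant on $ \Y_q $; because pullback along a geometric morphism carries locally constant objects to locally constant objects (the observation immediately following the definition of $ \LC(-;\Ecal) $), the object $ f_p^{\ast}(\iupperstar_q G) \equivalent \iupperstar_p(\fupperstar G) $ is locally constant on $ \X_p $. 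As $ p $ was arbitrary, $ \fupperstar G $ is $ P $-constructible.

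The only genuine content is the construction of the stratum-level square and the ensuing base change; everything else is formal. I expect the main point to be obtaining $ f_p $ from \Cref{obs:pulling_back_stratified_geometric_morphisms} and checking that the square really commutes, so that taking left adjoints produces an equivalence $ \iupperstar_p \fupperstar \equivalent f_p^{\ast}\, \iupperstar_q $ rather than merely a comparison map. Once the square is in place, the base change is automatic (pullbacks are left adjoints) and the remaining steps follow directly from the definitions and the preservation of local constancy under pullback.
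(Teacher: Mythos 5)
Your argument is correct and is exactly the evident unwinding that the paper has in mind (the statement is left as an unproved observation there): restrict to a stratum $\X_p$, use the commutative square $\X_p \to \Y_{\phi(p)}$ over $\X \to \Y$ supplied by the compatibility of pullback with locally closed subposets, pass to left adjoints, and invoke the fact that pullback along a geometric morphism preserves local constancy. No gaps; the only point worth stressing, which you do, is that the identification $\iupperstar_p \fupperstar \equivalent f_p^{\ast}\, \iupperstar_{\phi(p)}$ is automatic from commutativity of the square of pushforwards rather than a base-change theorem.
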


\noindent It is often useful to write the \category of constructible objects as a pullback:

\begin{observation}
	The \category $ \ConsP(\X;\Ecal) $ is the pullback
	\begin{equation*}
		\begin{tikzcd}[column sep =4.5em]
			\ConsP(\X;\Ecal) \arrow[r] \arrow[d, hooked] & \prod_{p \in P} \LC(\X_p;\Ecal) \arrow[d, hooked, shorten <= -.75em] \\
			\Sh(\X;\Ecal) \arrow[r, "\prod_{p \in P} \iupperstar_p"'] & \prod_{p \in P} \Sh(\X_p;\Ecal)
		\end{tikzcd} 
	\end{equation*}
\end{observation}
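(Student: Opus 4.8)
The plan is to recognize that the square displays $ \ConsP(\X;\Ecal) $ as the full subcategory of $ \Sh(\X;\Ecal) $ carved out by the defining condition of $ P $-constructibility. The only genuine content is therefore the general categorical principle that a pullback in $ \Catinfty $ along a fully faithful functor is a full subcategory of the source, together with an identification of the relevant essential image.

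First I would verify that the right-hand vertical functor
\[
    \incto{\prod_{p \in P} \LC(\X_p;\Ecal)}{\prod_{p \in P} \Sh(\X_p;\Ecal)}
\]
is fully faithful. For each $ p \in P $ the inclusion $ \LC(\X_p;\Ecal) \subset \Sh(\X_p;\Ecal) $ is fully faithful by the definition of the subcategory of locally constant objects, and an arbitrary product of fully faithful functors is again fully faithful, since mapping spaces in a product are products of mapping spaces.

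Next I would apply the following general fact: for a cospan $ g \colon \fromto{A}{C} $ and a fully faithful $ j \colon \incto{B}{C} $ in $ \Catinfty $, the projection $ \fromto{A \cross_C B}{A} $ is fully faithful with essential image the full subcategory of those $ a \in A $ such that $ g(a) $ lies in the essential image of $ j $. This is a short mapping-space computation: an object of $ A \cross_C B $ is a triple $ (a,b,\alpha) $ with $ \alpha \colon g(a) \equivalent j(b) $, and the mapping space between two such triples is the pullback of the cospan $ \Map_A(a,a') \to \Map_C(g(a),g(a')) \leftarrow \Map_B(b,b') $; fully faithfulness of $ j $ makes the right-hand leg an equivalence, so this pullback is $ \Map_A(a,a') $, and $ a $ lies in the essential image precisely when $ g(a) \equivalent j(b) $ for some $ b $.

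Finally I would specialize this to $ A = \Sh(\X;\Ecal) $, $ g = \prod_p \iupperstar_p $, and $ j $ the fully faithful inclusion from the first step. The pullback is then the full subcategory of $ \Sh(\X;\Ecal) $ spanned by those $ F $ for which $ \iupperstar_p(F) $ is locally constant for every $ p \in P $, which is exactly $ \ConsP(\X;\Ecal) $ by \Cref{def:constructible_objects}; unwinding the identification shows that the top horizontal and left vertical functors of the pullback agree with the restriction and inclusion functors in the square. I do not anticipate any serious obstacle; the single point needing care is to confirm that the pullback is a \emph{full} subcategory rather than merely a faithful one, which is exactly what the fully faithfulness established in the first step provides.
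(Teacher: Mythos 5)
Your proposal is correct and matches the paper's (implicit) reasoning: the paper states this as an observation with no proof precisely because, as you show, the square is just the definition of $P$-constructibility repackaged via the standard fact that a pullback along a fully faithful functor is the full subcategory of objects landing in its essential image. Nothing further is needed.
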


We use similar notation for constructible sheaves on stratified topological spaces.

\begin{notation}
	Let $ (X,P) $ be a stratified topological space and let $ \Ecal $ be a presentable \category.
	\begin{enumerate}
		\item For the natural stratified \topos $ (\X,P) = (\Shhyp(X),P) $, we write
		\begin{equation*}
			\ConsPhyp(X;\Ecal) \colonequals \ConsP(\Xcal;\Ecal) \period
		\end{equation*}

		\item If $ P $ is noetherian, then for the natural stratified \topos $ (\X,P) = (\Sh(X),P) $, we write
		\begin{equation*}
			\ConsP(X;\Ecal) \colonequals \ConsP(\Xcal;\Ecal) \period
		\end{equation*}
	\end{enumerate}
\end{notation}

\noindent \Cref{def:constructible_objects} recovers the usual notion of constructibility:

\begin{observation}
	Let $ (X,P) $ be a stratified topological space and let $ \Ecal $ be a presentable \category.
	In light of \Cref{ex:local_constancy_for_sheaves_on_topological_spaces,lem:sheaves_on_strata}:
	\begin{enumerate}
		\item An object $ F \in \Shhyp(X;\Ecal) $ is $ P $-hyperconstructible in the sense of \cite[Definition 5.2]{arXiv:2010.06473} if and only if $ F $ is $ P $-constructible in the sense of \Cref{def:constructible_objects}.

		\item Assume that $ P $ is noetherian.
		An object $ F \in \Sh(X;\Ecal) $ is $ P $-constructible in the sense of \cite[Definition 5.2]{arXiv:2010.06473} if and only if $ F $ is $ P $-constructible in the sense of \Cref{def:constructible_objects}.
	\end{enumerate}
\end{observation}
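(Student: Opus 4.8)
The plan is to reduce both notions of $P$-constructibility to a single stratumwise comparison and then apply the two cited results one stratum at a time. Both definitions are conditions imposed separately over each $p \in P$: \Cref{def:constructible_objects} asks that the topos-theoretic restriction $\iupperstar_p(F) \in \Sh(\X_p;\Ecal)$ be locally constant, whereas \cite[Definition 5.2]{arXiv:2010.06473} asks that the honest restriction of $F$ to the topological stratum $X_p = X \cross_P \{p\}$ be locally constant. So I would first observe that it suffices to compare these two per-$p$ conditions for a fixed $p$; items (1) and (2) then follow by specializing $\X$ to $\Shhyp(X)$ (unconditionally) and to $\Sh(X)$ (under the noetherian hypothesis on $P$).

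Next I would identify the two strata. The singleton $\{p\}$ is trivially an interval, hence a locally closed subposet of $P$, so \Cref{lem:sheaves_on_strata} applies with $S = \{p\}$ and produces the natural equivalence $\Shhyp(X_p) \equivalent \X_p$ (resp. $\Sh(X_p) \equivalent \X_p$ when $P$ is noetherian). Applying $(-) \tensor \Ecal$, which preserves equivalences, propagates this to an equivalence $\Shhyp(X_p;\Ecal) \equivalent \Sh(\X_p;\Ecal)$ (resp. with $\Sh$ in place of $\Shhyp$), under which I expect $\iupperstar_p(F)$ to correspond to the classical restriction $F|_{X_p}$. With the strata thus identified, I would invoke \Cref{ex:local_constancy_for_sheaves_on_topological_spaces} on the topological space $X_p$: it says that local constancy in the sense of the \topos $\X_p$ coincides with classical local constancy, i.e. with becoming constant on the members of some open cover of $X_p$. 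Chaining the identification of the two restrictions with this comparison of local constancy shows that the per-$p$ conditions agree, and hence that the two notions of $P$-constructibility coincide.

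The only point requiring genuine care, and the main obstacle, is the compatibility claim in the middle step: that under the equivalence of \Cref{lem:sheaves_on_strata} the topos-theoretic restriction functor $\iupperstar_p$ is identified with honest sheaf restriction to $X_p$. This should follow from the fact that the equivalence in \Cref{lem:sheaves_on_strata} is the \emph{natural} geometric morphism induced by the inclusion of the stratum, so that $\iupperstar_p$ and $F \mapsto F|_{X_p}$ fit into the same commuting square; but it is worth spelling out, since without it the stratumwise comparison does not immediately transport across the two definitions. Everything else is a routine matching of definitions.
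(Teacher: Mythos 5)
Your proposal is correct and follows exactly the route the paper intends: the observation is justified in the text solely by citing \Cref{ex:local_constancy_for_sheaves_on_topological_spaces} and \Cref{lem:sheaves_on_strata}, i.e.\ precisely the stratumwise reduction, the identification of $\Shhyp(X_p;\Ecal)$ (resp.\ $\Sh(X_p;\Ecal)$) with $\Sh(\X_p;\Ecal)$, and the comparison of the two notions of local constancy that you spell out. The compatibility of $\iupperstar_p$ with honest restriction, which you rightly flag as the only point needing care, is built into the statement of \Cref{lem:sheaves_on_strata} (the equivalence there is the natural geometric morphism over the inclusion of the stratum), so your argument closes.
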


\begin{example}
	Let $ P $ be a poset.
	Then every hypersheaf on $ P $ is $ P $-constructible, i.e.,
	\begin{equation*}
		\ConsPhyp(P) = \Shhyp(P) \period
	\end{equation*}
	In light of \Cref{rec:hypersheaves_on_a_poset}, we deduce that $ \ConsPhyp(P) \equivalent \Fun(P,\Spc) $.
\end{example}

\begin{convention}\label{conven:hypersheaves_on_a_poset}
	Let $ P $ be a poset.
	We almost always implicitly identify the \categories \smash{$ \Shhyp(P;\Ecal) $}, \smash{$ \ConsPhyp(P) $}, and $ \Fun(P,\Ecal) $.
\end{convention}

For the next result, recall \Cref{ntn:Ccal^ex}.

\begin{lemma}\label{atomic_Cons_on_Poset}
	For every poset $ P $, we have natural equivalences
	\begin{equation*}
		\ConsPhyp(P)^{\ex} \equivalent \Fun(P,\Spc)^{\ex} = P \period \qedhere
	\end{equation*}
\end{lemma}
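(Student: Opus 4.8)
The plan is to reduce the statement to the identification of the atomic objects of a presheaf \category, which is exactly the content of \Cref{recollection:atomically_generated}. The first equivalence is essentially formal: by the preceding example and \Cref{rec:hypersheaves_on_a_poset} there is an equivalence of presentable \categories $\ConsPhyp(P) \equivalent \Fun(P,\Spc)$ (indeed, by \Cref{conven:hypersheaves_on_a_poset} these are routinely identified). Since the assignment $\Ccal \mapsto \Ccal^{\ex} = (\Ccal^{\at})^{\op}$ is intrinsic to a presentable \category, hence invariant under equivalence, this immediately yields $\ConsPhyp(P)^{\ex} \equivalent \Fun(P,\Spc)^{\ex}$. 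So the whole content is the second equality $\Fun(P,\Spc)^{\ex} = P$.

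To prove that, first I would rewrite $\Fun(P,\Spc) = \PSh(P^{\op})$, using the convention $\PSh(\Ccal) = \Fun(\Ccal^{\op},\Spc)$. The key observation is that any poset, viewed as a \category, has only identity endomorphisms and therefore no nontrivial idempotents; in particular $P^{\op}$ is idempotent complete. Consequently \Cref{recollection:atomically_generated} applies with $\Ccal_0 = P^{\op}$ and identifies $\PSh(P^{\op})^{\at}$ with $P^{\op}$ via the Yoneda embedding. (Concretely, this is \Cref{atomic_presheaf} combined with the remark that, over an idempotent complete \category, every retract of a representable presheaf is again representable.)

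Finally I would take opposites: by definition $\Fun(P,\Spc)^{\ex} = (\Fun(P,\Spc)^{\at})^{\op} \equivalent (P^{\op})^{\op} = P$, which finishes the argument.

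This lemma is short, so I do not expect a genuine obstacle; the only thing to get right is the bookkeeping of opposite categories. One must keep track that $\Fun(P,\Spc)$ is presheaves on $P^{\op}$ rather than on $P$, so that its atomic objects recover $P^{\op}$ and the extra opposite built into $(-)^{\ex}$ precisely restores $P$. The idempotent completeness of posets is what guarantees that the atomic objects are exactly the representables, with no idempotent completion intervening.
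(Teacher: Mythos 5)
Your proof is correct and follows essentially the same route as the paper: the paper's proof likewise reduces to the idempotent completeness of $P$ (via \Cref{lem:layered_implies_idempotent_complete}, which is exactly your "only identity endomorphisms" observation) together with \Cref{recollection:atomically_generated} and \Cref{rec:hypersheaves_on_a_poset}. Your extra care with the $P$ versus $P^{\op}$ bookkeeping is a sound unpacking of what the paper leaves implicit.
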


\begin{proof}
	By \Cref{lem:layered_implies_idempotent_complete}, $ P $ is idempotent complete.
	Hence the claim follows from \cref{recollection:atomically_generated,rec:hypersheaves_on_a_poset}.
\end{proof}

The following definition is a generalization of \cites[Definition 3.5]{arXiv:2108.01924}[Definition 3.10]{arXiv:2308.09550}:

\begin{definition}[(exodromic stratified \topos \& exit-path \category)]\label{def:exit_path}
	A stratified \topos
	\begin{equation*}
		\slowerstar \colon \fromto{\X}{\Fun(P,\Spc)}
	\end{equation*}
	is \defn{exodromic} if the following conditions are satisfied:
	\begin{enumerate}
		\item\label{def:exit_path.1} The \category $ \ConsP(\X) $ is atomically generated.

		\item\label{def:exit_path.2} The subcategory $\ConsP(\X) \subset \X $ is closed under both limits and colimits.

		\item\label{def:exit_path.3} The pullback functor $ \supperstar \colon \fromto{\Fun(P,\Spc)}{\X} $ preserves limits.
	\end{enumerate}
	In this case we write
	\begin{equation*}
		\Piinfty(\X,P) \colonequals \ConsP(\X)^{\ex}
	\end{equation*}
	for the \textit{opposite} of the full subcategory of $ \ConsP(\X) $ spanned by atomic objects (see \Cref{ntn:Ccal^ex}).
	We refer to $ \Piinfty(\X,P) $ as the \defn{exit-path \category} of $ (\X,P) $.
\end{definition}

\noindent The importance of the last condition of \Cref{def:exit_path} is that it provides a functor from the exit-path \category of $ (\X,P) $ to the poset $ P $.

\begin{observation}
	Let $ \slowerstar \colon \fromto{\X}{\Fun(P,\Spc)} $ be an exodromic stratified \topos.
	Then the left adjoint
	\begin{equation*}
		\slowersharpcons \colon \ConsP(\X) \to \Fun(P,\Spc)
	\end{equation*}
	to $ \supperstar $ supplied by condition (3) of \cref{def:exit_path} is atomic.
	By \cref{obs:atomic_functors_preserve_atomic_objects,atomic_Cons_on_Poset}, the functor $ \slowersharpcons $ restricts to a functor 
	\begin{equation*}
		s^{\ex} \colon \fromto{\Piinfty(\X,P)}{P} \period
	\end{equation*}
\end{observation}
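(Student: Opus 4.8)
The plan is to realize $\slowersharpcons$ as the first functor in a chain of three adjoints $\slowersharpcons \dashv \supperstar \dashv \slowerstar$, with all three functors taken between $\ConsP(\X)$ and $\Fun(P,\Spc)$, and then to read off atomicity directly from \Cref{def:atomic_functor}: by definition $\slowersharpcons$ is atomic exactly when its right adjoint $\supperstar$ is itself a left adjoint. So the entire content of the statement is that, once corestricted to $\ConsP(\X)$, the pullback $\supperstar$ is simultaneously a left and a right adjoint.

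First I would record that $\supperstar$ lands in constructible objects: since $s$ covers the identity on $P$, pullback preserves $P$-constructibility, and every object of $\Fun(P,\Spc)$ is itself $P$-constructible, so $\supperstar$ corestricts to a functor $\fromto{\Fun(P,\Spc)}{\ConsP(\X)}$. The \category $\ConsP(\X)$ is presentable by condition \enumref{def:exit_path}{1}. Because the inclusion $\ConsP(\X) \subset \X$ is fully faithful and, by condition \enumref{def:exit_path}{2}, closed under both limits and colimits, it preserves and reflects both; hence the corestricted $\supperstar$ preserves all colimits (as the original functor into $\X$ does) and, by condition \enumref{def:exit_path}{3}, preserves all limits.

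Next I would apply the adjoint functor theorem for presentable \categories twice. Preservation of colimits exhibits the corestricted $\supperstar$ as a left adjoint, and the ambient geometric-morphism adjunction $\supperstar \dashv \slowerstar$ restricts to identify its right adjoint with the restriction of $\slowerstar$ to $\ConsP(\X)$. Preservation of limits, together with accessibility (automatic, since $\supperstar$ preserves colimits between presentable \categories), exhibits it as a right adjoint whose left adjoint is the functor $\slowersharpcons$ of the statement. This produces the desired triple of adjunctions; in particular the right adjoint $\supperstar$ of $\slowersharpcons$ is a left adjoint, so $\slowersharpcons$ is atomic by \Cref{def:atomic_functor}.

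The stated consequence then follows formally: by \Cref{obs:atomic_functors_preserve_atomic_objects} an atomic functor preserves atomic objects, so $\slowersharpcons$ restricts to $\fromto{\ConsP(\X)^{\at}}{\Fun(P,\Spc)^{\at}}$; passing to opposite \categories and invoking the identification $\Fun(P,\Spc)^{\ex} = P$ from \Cref{atomic_Cons_on_Poset} yields $s^{\ex} \colon \fromto{\Piinfty(\X,P)}{P}$. I do not expect a genuine obstacle: the only delicate point is the bookkeeping of the second and third paragraphs—checking that the topos-level adjunction $\supperstar \dashv \slowerstar$ restricts cleanly to constructible objects, and that the closure properties of $\ConsP(\X)$ in condition \enumref{def:exit_path}{2} allow both adjoint functor theorems to fire—and this is routine once constructibility is known to be closed under the relevant limits and colimits.
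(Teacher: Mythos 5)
Your proof is correct and follows essentially the same route the paper intends: corestrict $\supperstar$ to $\ConsP(\X)$ using conditions (2) and (3) of \cref{def:exit_path}, observe it preserves both limits and colimits between presentable \categories, obtain the adjoint chain $\slowersharpcons \dashv \supperstar \dashv \slowerstar$, and conclude atomicity of $\slowersharpcons$ directly from \cref{def:atomic_functor}. The passage to $s^{\ex}$ via \cref{obs:atomic_functors_preserve_atomic_objects} and \cref{atomic_Cons_on_Poset} is exactly as in the paper.
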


Now, some important examples.

\begin{example}
	In light of \Cref{rec:monodromy}, a trivially stratified \topos $ \Gammalowerstar \colon \fromto{\X}{\Spc} $ is exodromic if and only if $ \X $ is monodromic in the sense of \Cref{def:monodromic_topos}.
\end{example}

\begin{example}[(exodromy for conically stratified spaces)]\label{ex:conically_stratified_spaces_are_exodromic}
	Let $ (X,P) $ be a conically stratified topological space in the sense of \HAa{Definition}{A.5.5}.
	\begin{enumerate}
		\item If the strata of $ (X,P) $ are locally weakly contractible, then the stratified \topos $ (\Shhyp(X),P) $ is exodromic.
		Moreover, the exit-path \category \smash{$ \Piinfty(\Shhyp(X),P) $} is given by Lurie's simplicial model for exit-paths $ \Sing(X,P) $.
		See \cite[Theorem 5.4.1]{arXiv:2211.05004}.

		\item If $ P $ is noetherian and $ X $ is paracompact and locally of singular shape, then the stratified \topos $ (\Sh(X),P) $ is exodromic.
		Again, the exit-path \category $ \Piinfty(\Sh(X),P) $ is given by Lurie's simplicial model for exit-paths $ \Sing(X,P) $.
		See \HAa{Theorem}{A.9.3}.
	\end{enumerate}
\end{example}

Jansen has also given incredible computations of exit-path \categories of some important compactifications naturally arising in geometry:

\begin{example}[{(the work of Jansen \cites{arXiv:2308.09551}{MR4651892})}]
	\hfill
	\begin{enumerate}
		\item Let $ G $ be a connected reductive linear algebraic group defined over $ \QQ $ whose center is anisotropic over $ \QQ $. 
		Let $ \Gamma \subset G(\QQ) $ be a neat arithmetic subgroup.
		Write $ X $ for the symmetric space of maximal compact subgroups of $ G(\RR) $ with $ \Gamma $-action given by conjugation.
		Jansen showed that the \topos of sheaves on the reductive Borel--Serre compactification $ \Gamma\backslash \Xbar{}^{\RBS} $ is exodromic and gave an explicit description of its exit-path \category.
		See \cite[Theorem 4.3]{MR4651892}.

		\item Let $ g,n \geq 0 $ be such that $ 2g - 2 + n > 0 $.
		Write \smash{$ \Mgnbar $} for the moduli stack of stable genus $ g $ nodal curves with $ n $ marked points (also called the \textit{Deligne--Mumford--Knudsen compactification}).
		Write \smash{$ \Mgnbartop $} for its underlying topological stack.
		The topological stack \smash{$ \Mgnbartop $} has a natural stratification by the poset of stable genus $ g $ dual graphs with $ n $ marked points.
		Jansen showed that the \topos of sheaves on the topological stack \smash{$ \Mgnbartop $} is exodromic.
		Moreover, the exit-path \category is equivalent to the opposite of the Charney--Lee category of stable genus $ g $ curves with $ n $ marked points \cites{MR772131}{MR4248723}{MR2452916}.
		See \cite[Corollary 6.6 \& Theorem 6.7]{arXiv:2308.09551}.
	\end{enumerate}
\end{example}	

Lurie and Tanaka have also provided a remarkable example related to Morse theory:

\begin{example}
	Let $ \Broken $ denote Lurie and Tanaka's moduli stack of \textit{broken lines} \cite{arXiv:1805.09587}. 
	This is a topological stack with a natural stratification by $ \NN^{\op} $.
	Write $ \Deltasurj \subset \DDelta $ for the non-full subcategory containing all objects, but only the surjective maps.
	One of Lurie and Tanaka's main results \cite[Theorem 1.0.5]{arXiv:1805.09587} is that there is an equivalence of \categories
	\begin{equation*}
		\Sh(\Broken) \equivalent \Fun(\Deltasurj,\Spc)
	\end{equation*}
	between sheaves on $ \Broken $ and functors out of $ \Deltasurj $.
	Said differently, every sheaf on $ \Broken $ is constructible, and the stratified \topos $ (\Sh(\Broken),\NN^{\op}) $ is exodromic with exit-path \category $ \Deltasurj $.
\end{example}

Another feature of \Cref{def:exit_path} is that the inclusion of constructible objects admits both a left and right adjoint:

\begin{notation}[(constructibilization)]\label{notation:constructibilization}
	Let $ (\X,P) $ be an exodromic stratified \topos.
	Since $ \ConsP(\X) \subset \X $ is closed under limits and colimits, \cite[Theorem 1.1]{RagimovSchlank} implies that $ \ConsP(\X) $ is presentable and the inclusion
	\begin{equation*}
		i_{\X,P} \colon \incto{\ConsP(\X)}{\X}
	\end{equation*}
	has both a left adjoint $\Lup_{\X,P}$ and a right adjoint $\Rup_{\X,P}$.
	We refer to these adjoints as the \defn{left} and \defn{right constructibilization functors}, respectively.
	In particular, $\ConsP(\X)$ is a localization of $\X$, and it coincides with the full subcategory of $\X$ spanned by $\Lup_{\X,P}$-equivalences.
\end{notation}

\begin{example}[(equational criterion for constructibility)]
	Let $ (X,P) $ be a conically stratified topological space with locally weakly contractible strata.
	Then \cite[Corollary 5.4.7]{arXiv:2211.05004} provides an explicit set of generating $\Lup_{X,P}$-equivalences in terms of conical charts.
	When $P = \ast$, we can take as a generating set all the inclusions $U \subset V$ between weakly contractible open subsets.
\end{example}

If $ (\X,P) $ is exodromic, then the subcategory $ \ConsP(\X) $ is automatically \atopos:

\begin{lemma}\label{lem:left_exact_colocalization_of_a_topos_is_a_topos}
	Let $ \X $ be \atopos and let $ \Y \subset \X $ be a full subcategory.
	If $ \Y $ is presentable and closed under colimits and finite limits in $ \X $, then $ \Y $ is also \atopos.
\end{lemma}

\begin{proof}
	It suffices to show that $ \Y $ satisfies the Giraud--Lurie axioms for \topoi \HTT{Proposition}{6.1.0.6}.
	By assumption $ \Y $ is presentable; the remaining axioms only involve statements about the interaction between colimits and finite limits.
	Since $ \X $ is \atopos, these statements are true in $ \X $.
	By assumption, $ \Y $ is closed under colimits and finite limits in $ \X $, hence these statements also hold in $ \Y $.
\end{proof}

\begin{example}\label{ex:constructible_objects_in_an_exodromic_topos_form_a_topos}
	Let $ (\Xcal,P) $ be an exodromic stratified \topos.
	Combining the observations made in \Cref{notation:constructibilization} with \Cref{lem:left_exact_colocalization_of_a_topos_is_a_topos}, we see that the inclusion $ \ConsP(\X) \subset \X $ admits both a left and a right adjoint and $ \ConsP(\X) $ is \atopos.
\end{example}

A very important fact is that exodromic stratified \topoi are automatically monodromic:

\begin{lemma}[(exodromy implies monodromy)]\label{lem:exodromy_implies_monodromy}
	Let $ \slowerstar \colon \fromto{\X}{\Fun(P,\Spc)} $ be an exodromic stratified \topos.
	Then:
	\begin{enumerate}
		\item\label{lem:exodromy_implies_monodromy.1} The \topos $ \X $ is monodromic.

		\item\label{lem:exodromy_implies_monodromy.2} The full subcategory $ \LC(\X) \subset \ConsP(\X) $ is closed under limits and colimits.
	\end{enumerate} 
\end{lemma}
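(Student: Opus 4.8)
The plan is to derive both claims from a single observation: condition~(3) of \Cref{def:exit_path} forces the constant sheaf functor on $\X$ to preserve limits.

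To prove part~(1), I would first identify the constant sheaf functor $\Gammaupperstar_{\X} \colon \Spc \to \X$ explicitly. Since $\Spc$ is the terminal \topos, the global sections geometric morphism $\Gamma_{\X,\ast} \colon \X \to \Spc$ is the unique one, and hence factors as the composite $\X \xrightarrow{\slowerstar} \Fun(P,\Spc) \xrightarrow{\lim_P} \Spc$, where $\lim_P$ denotes the global sections functor of $\Fun(P,\Spc)$. Passing to left adjoints identifies $\Gammaupperstar_{\X}$ with the composite $\Spc \xrightarrow{\Delta} \Fun(P,\Spc) \xrightarrow{\supperstar} \X$, where $\Delta$ is the constant-diagram functor, i.e.\ the left adjoint of $\lim_P$. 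The functor $\Delta$ is itself a right adjoint (to the colimit functor), hence preserves limits, and $\supperstar$ preserves limits by condition~(3) of \Cref{def:exit_path}; therefore $\Gammaupperstar_{\X}$ preserves limits. Being the left adjoint of a geometric morphism, $\Gammaupperstar_{\X}$ also preserves colimits, and in particular is accessible. As $\X$ is presentable, the adjoint functor theorem then furnishes a left adjoint $\Gammalowersharp \colon \fromto{\X}{\Spc}$, so $\X$ is monodromic.

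To prove part~(2), I would first record the inclusion $\LC(\X) \subseteq \ConsP(\X)$: if $F$ is locally constant, then for every $p \in P$ the restriction $\iupperstar_p(F)$ is locally constant in $\X_p$, since pullback along a geometric morphism preserves local constancy, and hence $F$ is $P$-constructible. Granting part~(1), \Cref{rec:monodromy} shows that $\LC(\X)$ is closed under limits and colimits in $\X$, while condition~(2) of \Cref{def:exit_path} gives the same for $\ConsP(\X) \subseteq \X$. Consequently, for any diagram valued in $\LC(\X) \subseteq \ConsP(\X)$, its limit (resp.\ colimit) formed in $\ConsP(\X)$ coincides with the one formed in $\X$, which in turn lies in $\LC(\X)$; this yields the desired closure of $\LC(\X)$ under limits and colimits inside $\ConsP(\X)$.

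I do not expect a genuine obstacle here: the content is concentrated in the identification $\Gammaupperstar_{\X} \simeq \supperstar \circ \Delta$ and the limit-preservation it yields. The only points requiring care are the elementary check that the constant-diagram functor on $\Fun(P,\Spc)$ preserves limits and the bookkeeping of adjoints and accessibility in the application of the adjoint functor theorem.
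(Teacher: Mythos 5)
Your proof is correct and follows essentially the same route as the paper: both factor $\Gammaupperstar_{\X}$ as the constant-diagram functor into $\Fun(P,\Spc)$ followed by $\supperstar$, deduce limit-preservation from condition (3) of \Cref{def:exit_path}, and obtain the left adjoint $\Gammalowersharp$ via the adjoint functor theorem; part (2) likewise matches the paper's one-line argument that both $\LC(\X)$ and $\ConsP(\X)$ are closed under limits and colimits in $\X$. Your explicit treatment of the adjoint functor theorem and of the inclusion $\LC(\X)\subseteq\ConsP(\X)$ merely spells out steps the paper leaves implicit.
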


\begin{proof}
	First we prove (1).
	In light of \Cref{rec:monodromy}, we need to show that the constant sheaf functor $ \Gammaupperstar \colon \fromto{\Spc}{\X} $ preserves limits.
	Note that $ \Gammaupperstar $ factors as a composite
	\begin{equation*}
		\begin{tikzcd}
			\Spc \arrow[r] & \Fun(P,\Spc) \arrow[r, "\supperstar"] & \ConsP(\X) \arrow[r, hooked] & \X \comma
		\end{tikzcd}
	\end{equation*}
	where the left-most functor is the constant functor.
	The constant functor $ \fromto{\Spc}{\Fun(P,\Spc)} $ preserves limits, and by assumption both $ \supperstar $ and the inclusion $ \ConsP(\X) \subset \X $ preserve limits.
	Hence $ \Gammaupperstar $ preserves limits, as desired.

	For (2), note that both $ \LC(\X) $ and $ \ConsP(\X) $ are closed under limits and colimits in $ \X $.
\end{proof}


\subsection{Exodromic morphisms}\label{subsec:exodromic_morphisms}

We now discuss the functoriality of exit-path \categories.
The main point of this subsection is that given a morphism $ \flowerstar \colon \fromto{(\X,P)}{(\Y,Q)} $ between exodromic stratified \topoi, it is not \textit{a priori} clear if $ \flowerstar $ induces a functor
\begin{equation*}
	\fromto{\Piinfty(\X,P)}{\Piinfty(\Y,Q)}
\end{equation*}
on exit-path \categories.

\begin{observation}[{(constructible $ \ast $-pushforward)}]\label{obs:constructible_pushforward}
	Let $ \flowerstar \colon \fromto{(\X,P)}{(\Y,Q)} $ be a morphism between exodromic stratified \topoi.
	Since the functor \smash{$\fupperstar \colon \fromto{\Y}{\X}$} preserves colimits, we deduce that
	\begin{equation*} 
		\fupperstar \colon \fromto{\ConsQ(\Y)}{\ConsP(\X)} 
	\end{equation*}
	preserves colimits as well.
	In particular, it admits a right adjoint
	\begin{equation*} 
		\fconslowerstar \colon \fromto{\ConsP(\X)}{\ConsQ(\Y)} \period 
	\end{equation*}
	Unraveling the definitions, we see that $\fconslowerstar$ is related to the pushforward functor $\flowerstar$ by the formula
	\begin{equation*}
		\fconslowerstar = \Rup_{\Y,Q} \circ \flowerstar \circ i_{\X,P} \comma 
	\end{equation*}
	where $\Rup_{\Y,Q}$ is the right constructibilization functor of \cref{notation:constructibilization}.
	In particular, if $\flowerstar$ takes $ P $-constructible objects to $Q$-constructible objects, then $\fconslowerstar \equivalent \flowerstar $.
\end{observation}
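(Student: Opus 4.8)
The plan is to read the claim as four linked statements about adjunctions, each resting on the two closure conditions built into \Cref{def:exit_path}.

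First I would check that the restricted functor $\fupperstar \colon \fromto{\ConsQ(\Y)}{\ConsP(\X)}$ preserves colimits. Since $(\Y,Q)$ and $(\X,P)$ are exodromic, condition~(2) of \Cref{def:exit_path} says that $\ConsQ(\Y) \subset \Y$ and $\ConsP(\X) \subset \X$ are closed under colimits, so both inclusions create colimits. The ambient functor $\fupperstar \colon \fromto{\Y}{\X}$ is a left adjoint, hence preserves colimits, and it restricts to the functor in question because $\fupperstar$ carries $Q$-constructible objects to $P$-constructible objects (as recorded earlier). Colimit preservation on the restriction then follows formally from the commuting square of these four functors. Since both $\ConsQ(\Y)$ and $\ConsP(\X)$ are presentable (\Cref{notation:constructibilization}), the adjoint functor theorem \HTT{Corollary}{5.5.2.9} furnishes the right adjoint $\fconslowerstar$.

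The substantive step is the formula. The starting point is the commuting square of functors
\begin{equation*}
	i_{\X,P} \circ \fupperstar \equivalent \fupperstar \circ i_{\Y,Q} \comma
\end{equation*}
which expresses that the restricted pullback followed by the inclusion into $\X$ agrees with the inclusion into $\Y$ followed by the ambient pullback; this is precisely the fact that $\fupperstar$ preserves constructibility. I would then pass to right adjoints of both composites, recalling that the right adjoint of a composite is the composite of the right adjoints in the reverse order. The right-hand side yields $\Rup_{\Y,Q} \circ \flowerstar$ and the left-hand side yields $\fconslowerstar \circ \Rup_{\X,P}$, whence $\fconslowerstar \circ \Rup_{\X,P} \equivalent \Rup_{\Y,Q} \circ \flowerstar$. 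Precomposing with $i_{\X,P}$ and invoking the unit equivalence $\Rup_{\X,P} \circ i_{\X,P} \equivalent \mathrm{id}$ (valid because $i_{\X,P}$ is fully faithful) then gives $\fconslowerstar \equivalent \Rup_{\Y,Q} \circ \flowerstar \circ i_{\X,P}$. The only real pitfall is bookkeeping: one must track the order-reversal of right adjoints carefully so that $i_{\X,P}$ ends up on the correct side.

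The final assertion is immediate. If $\flowerstar$ itself preserves constructibility, then for each $F \in \ConsP(\X)$ the object $\flowerstar(i_{\X,P}(F))$ already lies in the essential image of $i_{\Y,Q}$, so $\Rup_{\Y,Q}$ acts on it as the identity (again by the unit equivalence $\Rup_{\Y,Q} \circ i_{\Y,Q} \equivalent \mathrm{id}$); hence $\fconslowerstar \equivalent \flowerstar$. As the whole argument is formal, I expect no genuine difficulty beyond the adjoint-juggling in the third paragraph.
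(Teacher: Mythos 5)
Your proposal is correct and follows essentially the same route as the paper: the paper's terse "unraveling the definitions" is precisely your argument of passing to right adjoints across the commuting square $i_{\X,P}\circ\fupperstar \equivalent \fupperstar\circ i_{\Y,Q}$ and cancelling $\Rup_{\X,P}\circ i_{\X,P}\equivalent\mathrm{id}$. The adjoint-bookkeeping, including the direction of the order-reversal and the use of full faithfulness of $i_{\X,P}$ and $i_{\Y,Q}$, is handled correctly.
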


The following is a generalization of \cite[Definition 3.5-(3)]{arXiv:2108.01924}:

\begin{definition}\label{def:exodromic_morphism}
	Let $ \flowerstar \colon \fromto{(\X,P)}{(\Y,Q)} $ be a morphism between exodromic stratified \topoi.
	We say that \defn{$ \flowerstar $ is exodromic} if the left adjoint
	\begin{equation*}
		\fupperstar \colon \fromto{\ConsQ(\Y)}{\ConsP(\X)}
	\end{equation*}
	also preserves limits.
	In this case, we denote its left adjoint by
	\begin{equation*}
		\flowersharpcons \colon \fromto{\ConsP(\X)}{\ConsQ(\Y)} \period
	\end{equation*}
	As a consequence of the equivalence \smash{$ \Catidem \equivalent \PrLat $} of \Cref{recollection:atomically_generated}, the functor $ \flowersharpcons $ restricts to a functor
	\begin{equation*} 
		f^{\ex} \colon \Piinfty(\X,P) \to \Piinfty(\Y,Q) \period 
	\end{equation*}
\end{definition}

The following are two important examples of exodromic morphisms:

\begin{example}\label{ex:morphisms_between_posets_are_exodromic}
	Let $ \phi \colon \fromto{P}{Q} $ be a map of posets.
	Equip both $ P $ and $ Q $ with the identity stratifications.
	Then \cref{rec:hypersheaves_on_a_poset} shows that the morphism of stratified \topoi
	\begin{equation*}
		\philowerstar \colon \fromto{(\Fun(P,\Spc),P)}{(\Fun(Q,\Spc),Q)}
	\end{equation*}
	is exodromic.
\end{example}

\begin{example}\label{ex:morphisms_between_trivially_stratified_topoi_are_exodromic}
	Let $ \flowerstar \colon \fromto{\X}{\Y} $ be a geometric morphism of \topoi.
	If $ \X $ and $ \Y $ are monodromic, then \Cref{cor:all_morphisms_are_monodromic} shows that the morphism of trivially stratified \topoi
	\begin{equation*}
		\flowerstar \colon \fromto{(\X,\pt)}{(\Y,\pt)}
	\end{equation*}
	is exodromic.
\end{example}

In fact, we will see that \Cref{def:exodromic_morphism} is superfluous: one of the goals of \cref{sec:stability_properties_of_exodromic_stratified_topoi} is to show that \textit{every} morphism between exodromic stratified \topoi is exodromic. 
However, this is not obvious; see \Cref{thm:all_morphisms_are_exodromic} for details.

We conclude this subsection with a few useful observations about exodromic morphisms.

\begin{observation}\label{warning:constructible_exceptional_pushforward}
	Let $ \flowerstar \colon \fromto{(\X,P)}{(\Y,Q)} $ be a morphism of stratified \topoi.
	Assume the following:
	\begin{enumerate}
		\item\label{warning:constructible_exceptional_pushforward.1} $ (\X,P) $ and $(\Y,Q)$ are exodromic.

		\item\label{warning:constructible_exceptional_pushforward.2} $ \fupperstar \colon \fromto{\Y}{\X} $ admits a left adjoint $ \flowersharp \colon \fromto{\X}{\Y} $.
	\end{enumerate}
	Then $ \flowerstar $ is exodromic.
	Moreover, the functors
	\begin{equation*} 
		\flowersharpcons \colon \fromto{\ConsP(\X)}{\ConsQ(\Y)} \andeq \flowersharp \colon \fromto{\X}{\Y}
	\end{equation*} 
	are related by the formula
	\begin{equation*}  
		\flowersharpcons \equivalent \Lup_{\Y,Q} \circ \flowersharp \circ i_{\X,P} \comma 
	\end{equation*} 
	where $ \Lup_{\Y,Q} $ is the left constructibilization functor of \cref{notation:constructibilization}.
	In particular, if \smash{$ \flowersharp $} carries $ P $-constructible objects to $ Q $-constructible objects, then there is a canonical identification \smash{$ \flowersharpcons \equivalent \flowersharp $}.
\end{observation}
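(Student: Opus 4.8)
The plan is to leverage that hypothesis (\ref*{warning:constructible_exceptional_pushforward.2}) makes the pullback $ \fupperstar \colon \fromto{\Y}{\X} $ simultaneously a left and a right adjoint, and then to transport this along the inclusions of constructible objects, which by exodromy preserve limits.

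First I would observe that $ \fupperstar \colon \fromto{\Y}{\X} $ preserves all colimits (being the left exact left adjoint of the geometric morphism $ \flowerstar $) and all limits (being, by hypothesis (\ref*{warning:constructible_exceptional_pushforward.2}), the right adjoint of $ \flowersharp $). Since $ (\Y,Q) $ and $ (\X,P) $ are exodromic, the inclusions $ i_{\Y,Q} $ and $ i_{\X,P} $ preserve limits, and $ \fupperstar $ carries $ Q $-constructible objects to $ P $-constructible objects. Hence a limit in $ \ConsQ(\Y) $ may be computed in $ \Y $, is sent by $ \fupperstar $ to the corresponding limit in $ \X $, and that limit lies in and computes the limit in $ \ConsP(\X) $. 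This shows that $ \restrict{\fupperstar}{\ConsQ(\Y)} \colon \fromto{\ConsQ(\Y)}{\ConsP(\X)} $ preserves limits, i.e.\ that $ \flowerstar $ is exodromic.

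Next I would produce the left adjoint $ \flowersharpcons $ explicitly by checking that $ \Lup_{\Y,Q} \circ \flowersharp \circ i_{\X,P} $ is left adjoint to $ \restrict{\fupperstar}{\ConsQ(\Y)} $; this simultaneously establishes existence and the asserted formula. Composing the adjunctions $ \Lup_{\Y,Q} \dashv i_{\Y,Q} $, $ \flowersharp \dashv \fupperstar $, and $ i_{\X,P} \dashv \Rup_{\X,P} $, the right adjoint of $ \Lup_{\Y,Q} \circ \flowersharp \circ i_{\X,P} $ is $ \Rup_{\X,P} \circ \fupperstar \circ i_{\Y,Q} $. The crucial simplification is that for $ G \in \ConsQ(\Y) $ the object $ \fupperstar(i_{\Y,Q}(G)) $ is already $ P $-constructible, so the right constructibilization $ \Rup_{\X,P} $ restricts to the identity on it; therefore $ \Rup_{\X,P} \circ \fupperstar \circ i_{\Y,Q} \equivalent \restrict{\fupperstar}{\ConsQ(\Y)} $, and uniqueness of adjoints gives $ \flowersharpcons \equivalent \Lup_{\Y,Q} \circ \flowersharp \circ i_{\X,P} $.

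Finally, I would note that if $ \flowersharp $ already carries $ P $-constructible objects to $ Q $-constructible objects, then $ \flowersharp(i_{\X,P}(F)) $ is $ Q $-constructible for each $ F \in \ConsP(\X) $, and since $ \Lup_{\Y,Q} \circ i_{\Y,Q} $ is canonically equivalent to the identity, $ \Lup_{\Y,Q} $ fixes it, yielding the canonical identification $ \flowersharpcons \equivalent \flowersharp $. The argument is essentially bookkeeping in composing adjunctions; the only real content is the interchange of limits between the constructible subcategories and the ambient \topoi (resting on the exodromy hypotheses) together with the simplification $ \Rup_{\X,P} \circ \fupperstar \circ i_{\Y,Q} \equivalent \restrict{\fupperstar}{\ConsQ(\Y)} $, which uses that $ \fupperstar $ preserves constructibility. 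I anticipate no serious obstacle beyond correctly tracking which functor in play is a left versus a right adjoint.
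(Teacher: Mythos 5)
Your argument is correct and fills in exactly the details the paper leaves implicit (the statement is given as an Observation without proof): the restricted pullback preserves limits because $\fupperstar$ is a two-sided adjoint and both constructible subcategories are closed under limits, and the formula for $\flowersharpcons$ follows by composing the three adjunctions $\Lup_{\Y,Q}\dashv i_{\Y,Q}$, $\flowersharp\dashv \fupperstar$, $i_{\X,P}\dashv \Rup_{\X,P}$ and using that $\fupperstar$ preserves constructibility so that $\Rup_{\X,P}$ acts as the identity on its image. No gaps.
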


\begin{observation}[(pullback functoriality)]\label{obs:meaning_of_respecting_exit-paths}
	Let $ \flowerstar \colon \fromto{(\X,P)}{(\Y,Q)} $ be a morphism between exodromic stratified \topoi.
	If $ \flowerstar $ is exodromic, then \cref{obs:atomic_functors_preserve_atomic_objects} yields a commutative square
	\begin{equation*} 
		\begin{tikzcd}[column sep=4em]
			\Fun(\Piinfty(\Y,Q),\Spc) \arrow[r, "- \of f^{\ex}"] \arrow[d, "\wr"'{xshift=0.25ex}] & \Fun(\Piinfty(\X,P),\Spc) \arrow[d, "\wr"{xshift=-0.25ex}] \\
			\ConsQ(\Y) \arrow[r, "\fupperstar"'] & \ConsP(\X) \comma
		\end{tikzcd} 
	\end{equation*}
	where the vertical equivalences exhibit the exit-path \categories $ \Piinfty(\Y,Q) $ and $ \Piinfty(\X,P) $ as the opposites of the subcategories of atomic objects of the targets.
\end{observation}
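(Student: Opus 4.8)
The plan is to obtain the square directly from \Cref{obs:atomic_functors_preserve_atomic_objects}, once the correct functor is fed into it. The key point is that, because $ \flowerstar $ is exodromic, the pullback $ \fupperstar \colon \fromto{\ConsQ(\Y)}{\ConsP(\X)} $ sits in a triple of adjoints $ \flowersharpcons \dashv \fupperstar \dashv \fconslowerstar $, where $ \fconslowerstar $ is the constructible pushforward of \Cref{obs:constructible_pushforward} and $ \flowersharpcons $ is supplied by \Cref{def:exodromic_morphism}. Consequently the left adjoint $ \flowersharpcons \colon \fromto{\ConsP(\X)}{\ConsQ(\Y)} $ is \emph{atomic} in the sense of \Cref{def:atomic_functor}: its right adjoint is $ \fupperstar $, which is itself a left adjoint (namely to $ \fconslowerstar $).

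First I would apply \Cref{obs:atomic_functors_preserve_atomic_objects} to the atomic functor $ L \colonequals \flowersharpcons $, whose right adjoint is $ R = \fupperstar $, taking $ \Dcal = \ConsP(\X) $ and $ \Ccal = \ConsQ(\Y) $ in the notation there. This produces a commutative square
\begin{equation*}
	\begin{tikzcd}[column sep=4em]
		\PSh(\ConsQ(\Y)^{\at}) \arrow[r] \arrow[d] & \PSh(\ConsP(\X)^{\at}) \arrow[d] \\
		\ConsQ(\Y) \arrow[r, "\fupperstar"'] & \ConsP(\X) \comma
	\end{tikzcd}
\end{equation*}
in which the top horizontal functor is precomposition with the restriction $ \fromto{\ConsP(\X)^{\at}}{\ConsQ(\Y)^{\at}} $ of $ \flowersharpcons $ (well-defined since an atomic functor preserves atomic objects), and the vertical functors are the colimit-preserving extensions of the Yoneda embeddings. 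Then I would identify the corners: the vertical functors are equivalences precisely because $ \ConsP(\X) $ and $ \ConsQ(\Y) $ are atomically generated, which is condition \enumref{def:exit_path}{1} of exodromicity, and under the restricted-Yoneda description of \Cref{rem:restricted_yoneda} these are exactly the equivalences exhibiting the exit-path \categories as opposites of atomic subcategories. Unwinding the identifications $ \Piinfty(\Y,Q) = (\ConsQ(\Y)^{\at})^{\op} $ and $ \Piinfty(\X,P) = (\ConsP(\X)^{\at})^{\op} $ of \Cref{ntn:Ccal^ex} rewrites the presheaf \categories as $ \Fun(\Piinfty(\Y,Q),\Spc) $ and $ \Fun(\Piinfty(\X,P),\Spc) $, and turns the top functor into precomposition $ - \of f^{\ex} $, since $ f^{\ex} $ is by construction the opposite of the restriction of $ \flowersharpcons $ to atomic objects. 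This is exactly the asserted square.

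The content here is essentially bookkeeping rather than a genuine difficulty, so I do not expect a serious obstacle; the one place to be careful is to feed $ \flowersharpcons $---and not $ \fupperstar $---into \Cref{obs:atomic_functors_preserve_atomic_objects} as the atomic functor, and then to track the passage to opposite \categories so that the induced map on presheaf \categories comes out as precomposition with $ f^{\ex} $ in the stated direction. That $ \flowersharpcons $ preserves atomic objects, so that its restriction to atomic subcategories exists, is immediate from \Cref{obs:atomic_functors_preserve_atomic_objects} itself, given that $ \flowersharpcons $ is atomic.
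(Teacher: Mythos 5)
Your proposal is correct and is exactly the argument the paper intends: the observation is stated as a direct application of \Cref{obs:atomic_functors_preserve_atomic_objects} to the atomic functor $\flowersharpcons$ (atomic because its right adjoint $\fupperstar$ is itself a left adjoint to $\fconslowerstar$), with the vertical equivalences coming from atomic generation of $\ConsP(\X)$ and $\ConsQ(\Y)$. Your care in feeding $\flowersharpcons$ rather than $\fupperstar$ into that observation, and in tracking the passage to opposites so the top arrow becomes $-\of f^{\ex}$, is precisely the bookkeeping the paper leaves implicit.
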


\begin{observation}[{($ \sharp $-pushforward functoriality)}]\label{obs:morphisms_that_respect_exit-paths_commute_with_the_Yoneda_embedding}
	As a consequence of \Cref{obs:meaning_of_respecting_exit-paths}, there is also a commutative square
	\begin{equation*}
		\begin{tikzcd}[column sep=4em]
			\Fun(\Piinfty(\X,P),\Spc) \arrow[r, "\flowershriek^{\ex}"] \arrow[d, "\wr"'{xshift=0.25ex}] & \Fun(\Piinfty(\Y,Q),\Spc) \arrow[d, "\wr"{xshift=-0.25ex}] \\
			\ConsP(\X) \arrow[r, "\flowersharpcons"'] & \ConsQ(\Y) \comma
		\end{tikzcd} 
	\end{equation*}
	where $ \flowershriek^{\ex} $ denotes left Kan extension along $ f^{\ex} $.
	Since left Kan extension commutes with the Yoneda embedding, we also deduce that there is a commutative square
	\begin{equation*}
		\begin{tikzcd}[column sep=4em]
			\Piinfty(\X,P)^{\op} \arrow[r, "f^{\ex, \op}"] \arrow[d, hooked] & \Piinfty(\Y,Q)^{\op} \arrow[d, hooked] \\
			\ConsP(\X) \arrow[r, "\flowersharpcons"'] & \ConsQ(\Y) \comma
		\end{tikzcd} 
	\end{equation*}
	where the vertical functors are the inclusions of the subcategories of atomic objects.
\end{observation}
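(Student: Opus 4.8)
The plan is to deduce both squares formally from \Cref{obs:meaning_of_respecting_exit-paths} by passing to left adjoints. Recall that in that observation the top horizontal arrow is the restriction $(-) \circ f^{\ex}$, the bottom horizontal arrow is $\fupperstar$, and the two vertical arrows are the exodromy equivalences, which I will write as $\Phi$ over $\X$ and $\Psi$ over $\Y$. First I would pass to left adjoints along the horizontal arrows: the left adjoint of restriction along $f^{\ex}$ is left Kan extension $\flowershriek^{\ex}$, and by \Cref{def:exodromic_morphism} the left adjoint of $\fupperstar \colon \ConsQ(\Y) \to \ConsP(\X)$ is exactly $\flowersharpcons$. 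Since $\Phi$ and $\Psi$ are equivalences, a commuting square of right adjoints with invertible verticals yields a commuting square of the corresponding left adjoints with the same vertical equivalences. Concretely, writing the commutativity of \Cref{obs:meaning_of_respecting_exit-paths} as $\Phi \circ ((-) \circ f^{\ex}) \simeq \fupperstar \circ \Psi$ and taking left adjoints of both composites produces $\Psi \circ \flowershriek^{\ex} \simeq \flowersharpcons \circ \Phi$, which is the commutativity of the first square.

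For the second square, I would restrict the first one along the Yoneda embeddings. By atomic generation (\Cref{recollection:atomically_generated,obs:atomic_generation_and_idempotent_completion}), the exodromy equivalence $\Phi$ identifies the Yoneda embedding $\Piinfty(\X,P)^{\op} \hookrightarrow \Fun(\Piinfty(\X,P),\Spc)$ with the inclusion $\ConsP(\X)^{\at} \hookrightarrow \ConsP(\X)$ of atomic objects, and similarly $\Psi$ for $\Y$; these inclusions are precisely the vertical arrows of the second square. The key categorical input is that left Kan extension commutes with the Yoneda embedding: $\flowershriek^{\ex}$ carries the representable presheaf at an object $c$ to the representable presheaf at $f^{\ex}(c)$, so that $\flowershriek^{\ex} \circ y \simeq y \circ f^{\ex,\op}$. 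Precomposing the commutativity $\Psi \circ \flowershriek^{\ex} \simeq \flowersharpcons \circ \Phi$ of the first square with the Yoneda embedding of $\Piinfty(\X,P)^{\op}$ and applying this identity then yields exactly the commutativity of the second square.

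The whole argument is a formal consequence of the adjunctions at hand, so I do not expect a serious obstacle. The two points that demand care are (i) verifying that the passage to left adjoints genuinely preserves commutativity, which is exactly where the invertibility of the vertical exodromy equivalences is used, and (ii) tracking the variance correctly so that the restriction of $\flowershriek^{\ex}$ to representable presheaves is governed by $f^{\ex,\op}$ rather than by $f^{\ex}$. Both are routine once the identification of the Yoneda embedding with the inclusion of atomic objects is in hand.
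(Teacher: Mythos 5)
Your argument is correct and is exactly the one the paper intends: pass to left adjoints in the square of \Cref{obs:meaning_of_respecting_exit-paths} (using that the vertical exodromy equivalences are invertible, so the square of left adjoints still commutes), then restrict along the Yoneda embedding, identified via atomic generation with the inclusion of atomic objects, using that left Kan extension sends the (co)representable at $c$ to the one at $f^{\ex}(c)$. The paper records this only as a one-line remark, and your write-up supplies precisely the formal details it leaves implicit, including the correct handling of the variance.
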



\subsection{Exodromy \& hypercompletion}\label{subsec:exodromy_and_hypercompletion}

Let $ (\X,P) $ be an exodromic stratified \topos.
The goal of this subsection is to show that the hypercompletion $ \Xhyp $ with the induced stratification is also exodromic, the \categories $ \ConsP(\X) $ and $ \ConsP(\Xhyp) $ coincide, and the exit-path \categories $ \Piinfty(\X,P) $ and $ \Piinfty(\Xhyp,P) $ coincide.
We do not accomplish this in complete generality, however, we prove that this the case under an additional assumption on $ (\X,P) $; see \Cref{defin:weakly_conical,prop:for_X_weakly_conical_X_exodromic_implies_Xhyp_exodromic}.
This assumption is satisfied, for example, when $ P $ is noetherian and $ \X $ is the \topos of sheaves associated to a conically stratified space for which exodromy is already known.

\begin{notation}
	Let $ \slowerstar \colon \fromto{\X}{\Fun(P,\Spc)} $ be a stratified \topos.
	Then the composite
	\begin{equation*}
		\begin{tikzcd}
			\Xhyp \arrow[r, hooked] & \X \arrow[r, "\slowerstar"] & \Fun(P,\Spc)
		\end{tikzcd}
	\end{equation*}
	defines a $ P $-stratification of $ \Xhyp $.
	We always regard the hypercompletion of a stratified \topos with this induced stratification.
	Also note that since the \topos $ \Fun(P,\Spc) $ is hypercomplete, the stratification
	\begin{equation*}
		\fromto{\Xhyp}{\Fun(P,\Spc)}
	\end{equation*}
	coincides with the geometric morphism $ \slowerstarhyp $ obtained by applying the hypercompletion functor to the stratification $ \slowerstar $.
\end{notation}

We start by showing that if $ (\X,P) $ is exodromic, then every $ P $-constructible object of $ \X $ is hypercomplete.
For this, we need a few lemmas.

\begin{lemma}\label{lem:hypercompletion_commutes_with_taking_strata}
	Let $ (\X,P) $ be a stratified \topos and $ S \subset P $ a locally closed subposet.
	Then the natural geometric morphism
	\begin{equation*}
		\fromto{(\XS)^{\hyp}}{(\Xhyp)_S}
	\end{equation*}	
	is an equivalence of $ S $-stratified \topoi.
\end{lemma}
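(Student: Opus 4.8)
The plan is to construct the comparison morphism abstractly via the universal property of hypercompletion and then verify it is an equivalence by reducing to the cases where $S$ is open and where $S$ is closed. Since $(\XS)^{\hyp}$ is hypercomplete, the composite geometric morphism $(\XS)^{\hyp} \to \XS \to \X$ factors through the hypercompletion $\Xhyp \to \X$. As every topos in sight lies over $\Fun(P,\Spc)$ (which is already hypercomplete by \Cref{rec:hypersheaves_on_a_poset}) and the resulting morphism lands over $\Fun(S,\Spc)$, it factors uniquely through the pullback $(\Xhyp)_S = \Xhyp \times_{\Fun(P,\Spc)} \Fun(S,\Spc)$ computed in $\RTop$ as in \Cref{ntn:inclusion_i_S}. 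This produces the natural morphism $(\XS)^{\hyp} \to (\Xhyp)_S$, and since it is a morphism over $\Fun(S,\Spc)$ by construction, it will automatically be an equivalence of $S$-stratified \topoi once it is shown to be an equivalence of \topoi.

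Next I would reduce to two special cases. Because $S$ is an interval, its up-closure $U$ is an open (up-closed) subposet of $P$ and $S$ is a closed (down-closed) subposet of $U$, so the inclusion factors as $S \hookrightarrow U \hookrightarrow P$ with $U$ open in $P$ and $S$ closed in $U$. Correspondingly $\XS = (\XU)_S$. Granting the open case for $U \subset P$ gives $(\XU)^{\hyp} \simeq (\Xhyp)_U$, and applying the closed case to the stratified \topos $\XU$ gives $((\XU)_S)^{\hyp} \simeq ((\XU)^{\hyp})_S$; chaining these yields $(\XS)^{\hyp} \simeq ((\Xhyp)_U)_S = (\Xhyp)_S$. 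One must check these identifications are compatible with the structure maps to $\Fun(S,\Spc)$ and with the comparison morphism above, but this is routine bookkeeping. It therefore suffices to treat $S$ open and $S$ closed separately.

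For $S$ open, $\iSlowerstar$ is an open immersion of \topoi, so $\XS \simeq \X_{/U}$ is an étale slice, where $U = \supperstar(\mathbf{1}_S) \in \X$ is the subterminal object classifying the open $S$. The claim then follows from the standard compatibility of hypercompletion with étale base change: an object of a slice $\X_{/U}$ is hypercomplete precisely when its underlying object of $\X$ is, so $(\X_{/U})^{\hyp} \simeq (\Xhyp)_{/U} = (\Xhyp)_S$ (see \cite{HTT}). For $S$ closed, $\iSlowerstar$ is a closed immersion, and $\X$ is a recollement of the open complement of $S$ and the closed stratum $\XS$; here I would invoke the compatibility of hypercompletion with recollements established in the appendix, \Cref{prop:hypercompletions_of_recollements}, which exhibits $\Xhyp$ as the recollement of the hypercompletions of the open and closed pieces and in particular yields $(\Xhyp)_S \simeq (\XS)^{\hyp}$.

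The main obstacle is the closed case: this is exactly where the nontrivial input—the interaction between hypercompletion and recollements from \Cref{appendix:complements_on_topoi}—is needed, whereas the open case is formal and the assembly of the two into the locally closed case is only bookkeeping. A secondary point requiring care is tracking the stratifications through the reduction, so that the final identification is genuinely one of $S$-stratified \topoi rather than merely of \topoi.
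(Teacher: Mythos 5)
Your proposal is correct and follows essentially the same route as the paper: the paper deduces the lemma as a special case of \Cref{prop:hypercompletion_commutes_with_pullback_along_locally_closed_immersions} (applied to the defining pullback square of $\XS$ over $\Fun(P,\Spc)$, whose bottom row is already hypercomplete), and the proof of that proposition is exactly your factorization into an open case handled by the slice compatibility $(\X_{/U})^{\hyp} \simeq (\Xhyp)_{/U}$ and a closed case handled by \Cref{prop:hypercompletions_of_recollements}. The only difference is packaging: you inline the reduction at the level of posets $S \subset U \subset P$, while the paper states the general topos-theoretic result first and then specializes.
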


\begin{proof}
	This is a special case of \Cref{prop:hypercompletion_commutes_with_pullback_along_locally_closed_immersions}.
\end{proof}

\begin{lemma}\label{lem:functors_that_are_both_left_and_right_adjoints_preserve_hypercompleteness_and_convergence_of_Postnikov_towers}
	Let $ \X $ and $ \Y $ be \topoi and let $ \fupperstar \colon \fromto{\Y}{\X} $ be a functor that preserves both limits and colimits (i.e., $ \fupperstar $ is the pullback functor in an essential geometric morphism).
	Let $ G \in \Y $.
	\begin{enumerate}
		\item\label{lem:functors_that_are_both_left_and_right_adjoints_preserve_hypercompleteness_and_convergence_of_Postnikov_towers.1} If $ G $ is hypercomplete, then $ \fupperstar(G) $ is hypercomplete.

		\item\label{lem:functors_that_are_both_left_and_right_adjoints_preserve_hypercompleteness_and_convergence_of_Postnikov_towers.2} If $ G $ is the limit of its Postnikov tower, then $ \fupperstar(G) $ is the limit of its Postnikov tower.
	\end{enumerate}
\end{lemma}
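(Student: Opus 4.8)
The plan is to leverage that $\fupperstar$, preserving both limits and colimits between the presentable \categories $\X$ and $\Y$, sits inside a string of adjoints $\flowersharp \dashv \fupperstar \dashv \flowerstar$. Preservation of colimits supplies the right adjoint $\flowerstar \colon \fromto{\X}{\Y}$, while preservation of limits together with accessibility (a colimit-preserving functor between presentable \categories is a left adjoint, hence accessible) supplies, via the adjoint functor theorem \HTT{Corollary}{5.5.2.9}, a left adjoint $\flowersharp \colon \fromto{\X}{\Y}$.

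The single input I would isolate is a preservation property for the two factorization systems. For every $n$, the $n$-connective and $n$-truncated morphisms form an orthogonal factorization system on any \topos. A limit-preserving functor carries $n$-truncated morphisms to $n$-truncated morphisms, since being $n$-truncated is an iterated-diagonal condition phrased through finite limits. Moreover, if $F \dashv G$ and $G$ preserves $n$-truncated morphisms, then $F$ preserves $n$-connective morphisms: for $n$-connective $l$ and $n$-truncated $r$, the lifting problems for $F(l)$ against $r$ correspond under the adjunction to those for $l$ against $G(r)$, which are contractible because $G(r)$ is $n$-truncated. Applying this to $\fupperstar \dashv \flowerstar$ (whose right adjoint $\flowerstar$ preserves limits) shows $\fupperstar$ preserves $n$-connective morphisms; applying it to $\flowersharp \dashv \fupperstar$ (whose right adjoint $\fupperstar$ preserves limits) shows $\flowersharp$ preserves $n$-connective morphisms. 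Letting $n$ vary, both $\fupperstar$ and $\flowersharp$ preserve $\infty$-connective morphisms.

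Granting this, part (1) is a one-line adjunction computation. For an $\infty$-connective $\alpha \colon \fromto{A}{B}$ in $\X$, the adjunction $\flowersharp \dashv \fupperstar$ identifies the restriction map $\fromto{\Map_{\X}(B,\fupperstar(G))}{\Map_{\X}(A,\fupperstar(G))}$ with the map $\fromto{\Map_{\Y}(\flowersharp(B),G)}{\Map_{\Y}(\flowersharp(A),G)}$ induced by $\flowersharp(\alpha)$. Since $\flowersharp(\alpha)$ is $\infty$-connective and $G$ is hypercomplete, this map is an equivalence; as $\alpha$ was arbitrary, $\fupperstar(G)$ is hypercomplete.

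For part (2) I would instead show that $\fupperstar$ commutes with each truncation functor. Being left exact, $\fupperstar$ preserves $n$-truncated objects; applying it to the $(n+1)$-connective unit $\fromto{G}{\tau_{\leq n}G}$ produces an $(n+1)$-connective morphism onto an $n$-truncated object, which exhibits a natural equivalence $\fupperstar(\tau_{\leq n}G) \equivalent \tau_{\leq n}\fupperstar(G)$. Since $\fupperstar$ preserves limits, from $G \equivalent \lim_{n} \tau_{\leq n}G$ we get $\fupperstar(G) \equivalent \lim_{n}\fupperstar(\tau_{\leq n}G) \equivalent \lim_{n}\tau_{\leq n}\fupperstar(G)$, so $\fupperstar(G)$ is the limit of its Postnikov tower. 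The point to be careful about—and the reason the two parts are proved separately rather than deducing the first from the second—is that although a limit of its Postnikov tower is automatically hypercomplete, the converse fails in \topoi with non-convergent Postnikov towers; hence part (1) genuinely requires the left adjoint $\flowersharp$ and its preservation of $\infty$-connective morphisms, which is the only nontrivial step in the argument.
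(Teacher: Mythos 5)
Your argument is correct and is essentially the paper's proof with the two cited inputs unpacked: the paper disposes of part (1) by invoking \HAa{Lemma}{A.2.6} (whose proof is exactly your construction of the extra left adjoint $\flowersharp$ and the lifting argument showing it preserves $\infty$-connective morphisms), and of part (2) by invoking \HTT{Proposition}{5.5.6.28} for the identification $\fupperstar \trun_{\leq n} \equivalent \trun_{\leq n}\fupperstar$, which you re-derive from the $(n+1)$-connective unit, before passing to the limit of the Postnikov tower. No gap; the only difference is that you prove the quoted results rather than citing them.
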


\begin{proof}
	Item (1) is the content of \HAa{Lemma}{A.2.6}.
	For (2), note that since $ \fupperstar $ is a left exact left adjoint, \HTT{Proposition}{5.5.6.28} shows that for each $ n \geq 0 $, we have
	\begin{equation*}
		\fupperstar \trun_{\leq n}^{\Y} \equivalent \trun_{\leq n}^{\X} \fupperstar \period
	\end{equation*}
	Since $ G $ is the limit of its Postnikov tower and $ \fupperstar $ preserves limits, we see that
	\begin{align*}
		\fupperstar(G) &\equivalence \fupperstar\paren{ \lim_{n \in \NN^{\op}}  \trun_{\leq n}^{\Y}(G)} \\ 
		&\equivalence \lim_{n \in \NN^{\op}} \fupperstar\trun_{\leq n}^{\Y}(G) \\
		&\equivalent \lim_{n \in \NN^{\op}} \trun_{\leq n}^{\X} \fupperstar(G) \period \qedhere
	\end{align*}
\end{proof}

\begin{corollary}\label{cor:exodromy_implies_that_constructible_sheaves_are_hypercomplete}
	Let $ (\X,P) $ be an exodromic stratified \topos.
	\begin{enumerate}
		\item\label{cor:exodromy_implies_that_constructible_sheaves_are_hypercomplete.1} If $ F \in \ConsP(\X) $, then $ F $ is the limit of its Postnikov tower in $ \X $.
		In particular, we have
		\begin{equation*}
			\ConsP(\X) \subset \Xhyp \period
		\end{equation*}

		\item\label{cor:exodromy_implies_that_constructible_sheaves_are_hypercomplete.2} We have $ \ConsP(\X) \subset \ConsP(\Xhyp) $ as full subcategories of $ \Xhyp $.

		\item\label{cor:exodromy_implies_that_constructible_sheaves_are_hypercomplete.3} The functor $ \supperstar \colon \fromto{\Fun(P,\Spc)}{\X} $ factors through $ \Xhyp $.

		\item\label{cor:exodromy_implies_that_constructible_sheaves_are_hypercomplete.4} The constant sheaf functor $ \Gammaupperstar \colon \fromto{\Spc}{\X} $ factors through $ \Xhyp \subset \X $.
	\end{enumerate}
\end{corollary}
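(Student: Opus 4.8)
The plan is to establish (1) first and to deduce (2)--(4) from it; the key input throughout is \Cref{lem:functors_that_are_both_left_and_right_adjoints_preserve_hypercompleteness_and_convergence_of_Postnikov_towers}, whose hypothesis---a functor preserving \emph{both} limits and colimits---is met by the inclusion $\ConsP(\X) \hookrightarrow \X$ thanks to condition \enumref{def:exit_path}{2} of exodromy.

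To prove (1), I would use that $\ConsP(\X)$ is atomically generated, so \Cref{ntn:Ccal^ex} gives an equivalence $\ConsP(\X) \simeq \Fun(\Piinfty(\X,P),\Spc)$. The target is a presheaf \topos, hence Postnikov complete: truncations and limits there are computed objectwise and $\Spc$ is Postnikov complete, so every object is the limit of its Postnikov tower. Consider the composite
\[
\Phi \colon \Fun(\Piinfty(\X,P),\Spc) \xrightarrow{\ \sim\ } \ConsP(\X) \hookrightarrow \X .
\]
The equivalence preserves limits and colimits, and the inclusion does so by \enumref{def:exit_path}{2}; hence $\Phi$ preserves both. Writing $F \in \ConsP(\X)$ as $\Phi(G)$ for the corresponding presheaf $G$, and using that $G$ is the limit of its Postnikov tower, \enumref{lem:functors_that_are_both_left_and_right_adjoints_preserve_hypercompleteness_and_convergence_of_Postnikov_towers}{2} shows that $F = \Phi(G)$ is the limit of its Postnikov tower in $\X$. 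Since a limit of a Postnikov tower is hypercomplete (each truncation is hypercomplete and hypercomplete objects are closed under limits), this yields $\ConsP(\X) \subset \Xhyp$.

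For (3), condition \enumref{def:exit_path}{3} says $\supperstar$ preserves limits, and as a left adjoint it preserves colimits; by \Cref{rec:hypersheaves_on_a_poset} every object of $\Fun(P,\Spc) \simeq \Shhyp(P)$ is hypercomplete, so \enumref{lem:functors_that_are_both_left_and_right_adjoints_preserve_hypercompleteness_and_convergence_of_Postnikov_towers}{1} shows $\supperstar$ lands in $\Xhyp$. Item (4) then follows, since (as in the proof of \Cref{lem:exodromy_implies_monodromy}) $\Gammaupperstar$ is the composite of the constant functor $\Spc \to \Fun(P,\Spc)$ with $\supperstar$; alternatively it follows directly from the same lemma applied to $\Gammaupperstar$, which preserves limits by \Cref{lem:exodromy_implies_monodromy} and colimits as a left adjoint (and every object of $\Spc$ is hypercomplete). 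For (2), fix $F \in \ConsP(\X)$, which lies in $\Xhyp$ by (1). By \Cref{lem:hypercompletion_commutes_with_taking_strata} we have $(\Xhyp)_p \simeq (\X_p)^{\hyp}$, and the associated commutative square of geometric morphisms (pullbacks compose) identifies the pullback of the hypercomplete object $F$ to $(\Xhyp)_p$ with the hypercompletion of $\iupperstar_p(F) \in \X_p$. Since $\iupperstar_p(F)$ is locally constant and hypercompletion is the pullback of a geometric morphism, it preserves local constancy; hence $F$ restricts to a locally constant object on every stratum of $\Xhyp$, i.e.\ $F \in \ConsP(\Xhyp)$.

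The main obstacle is (1): one must verify that the Postnikov tower of $F$ converges \emph{in $\X$}, while the only manifestly convergent tower lives in $\ConsP(\X) \simeq \Fun(\Piinfty(\X,P),\Spc)$, where moreover the truncations need not agree with those of $\X$. The resolution is to avoid comparing truncations by hand and instead transport convergence along the limit- and colimit-preserving functor $\Phi$ via \enumref{lem:functors_that_are_both_left_and_right_adjoints_preserve_hypercompleteness_and_convergence_of_Postnikov_towers}{2}; it is exactly the simultaneous preservation of limits and colimits---guaranteed by \enumref{def:exit_path}{2}---that makes this work.
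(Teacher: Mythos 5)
Your proposal is correct and follows essentially the same route as the paper: item (1) is obtained by applying \enumref{lem:functors_that_are_both_left_and_right_adjoints_preserve_hypercompleteness_and_convergence_of_Postnikov_towers}{2} to the limit- and colimit-preserving inclusion $\ConsP(\X) \hookrightarrow \X$, using that $\ConsP(\X) \simeq \Fun(\Piinfty(\X,P),\Spc)$ is Postnikov complete, and (2)--(4) are deduced from it. The only (harmless) cosmetic differences are that the paper proves (2) by noting that the hypercompletion functor, being pullback along the morphism of stratified \topoi $(\Xhyp,P)\to(\X,P)$, preserves constructibility wholesale rather than checking stratum by stratum, and proves (3) by observing that $\supperstar$ factors through $\ConsP(\X)\subset\Xhyp$ rather than invoking the lemma a second time.
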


\begin{proof}
	Recall from \Cref{ex:constructible_objects_in_an_exodromic_topos_form_a_topos} that since $ (\X,P) $ is exodromic, the \category $ \ConsP(\X) $ is \atopos and the inclusion $ \ConsP(\X) \subset \X $ preserves limits and colimits.
	Hence (1) is a special case of \enumref{lem:functors_that_are_both_left_and_right_adjoints_preserve_hypercompleteness_and_convergence_of_Postnikov_towers}{2}.
	For (2), note that the inclusion $ \incto{(\Xhyp,P)}{(\X,P)} $ is a morphism of stratified \topoi.
	Hence the hypercompletion functor $ \fromto{\X}{\Xhyp} $ carries $ \ConsP(\X) $ to $ \ConsP(\Xhyp) $.
	By (1), every object of $ \ConsP(\X) $ is already hypercomplete, hence
	\begin{equation*}
		 \ConsP(\X) \subset \ConsP(\Xhyp) 
	\end{equation*}
	as full subcategories of $ \Xhyp $.

	Item (3) is an immediate consequence of item (1) and the fact that $ \supperstar $ factors through $ \ConsP(\X) $.
	Item (4) is immediate from (3) and the fact that $ \Gammaupperstar $ factors as the composite
	\begin{equation*}
		\begin{tikzcd}
			\Spc \arrow[r] & \Fun(P,\Spc) \arrow[r, "\supperstar"] & \ConsP(\X) \arrow[r, hooked] & \X \comma
		\end{tikzcd}
	\end{equation*}
	where the left-most functor is the constant functor.
\end{proof}

\begin{observation}\label{observation:hyper_constructibilization}
	Let $ (\X,P) $ be an exodromic stratified \topos.
	\Cref{cor:exodromy_implies_that_constructible_sheaves_are_hypercomplete} implies that the left constructibilization functor $\Lup_{\X,P} \colon \X \to \ConsP(\X)$ factors as a the composite of hypercompletion $ \fromto{\X}{\Xhyp} $ with a localization 
	\begin{equation*} 
		\Lup^{\hyp}_{\X,P} \colon \Xhyp \to \ConsP(\X) \period 
	\end{equation*}
	In turn, $\ConsP(\X)$ can be identified with the full subcategory of $ \Xhyp $ spanned by objects that are local with respect to \smash{$\Lup^{\hyp}_{\X,P}$}-equivalences.
	Hence a morphism $\phi \colon F \to G$ in $\X$ is an $\Lup_{\X,P}$-equivalence if and only if its hypercompletion \smash{$\phi^{\hyp}$} is an \smash{$\Lup^{\hyp}_{\X,P}$}-equivalence.
\end{observation}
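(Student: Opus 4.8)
The plan is to derive everything from two facts already in hand: that $\ConsP(\X) \subset \X$ is a reflective localization with reflector $\Lup_{\X,P}$ (\Cref{notation:constructibilization}), and that this reflective subcategory is contained in the hypercompletion, i.e.\ $\ConsP(\X) \subset \Xhyp$ (\enumref{cor:exodromy_implies_that_constructible_sheaves_are_hypercomplete}{1}). Granting these, the statement becomes a formal consequence of the principle that a reflective localization of a \topos which lands inside the hypercompletion descends to a reflective localization of the hypercompletion. Concretely, let $k \colon \ConsP(\X) \hookrightarrow \Xhyp$ and $j \colon \Xhyp \hookrightarrow \X$ denote the (fully faithful) inclusions, so that $i_{\X,P} \equivalent j \circ k$. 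I would then set $\Lup^{\hyp}_{\X,P} \colonequals \Lup_{\X,P} \circ j$ and check, using the full faithfulness of $j$ together with the adjunction $\Lup_{\X,P} \dashv i_{\X,P}$, that $\Lup^{\hyp}_{\X,P}$ is left adjoint to $k$. This exhibits $\ConsP(\X)$ as a reflective localization of $\Xhyp$, and in particular identifies it with the full subcategory of $\Xhyp$ spanned by the $\Lup^{\hyp}_{\X,P}$-local objects.

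The one step with genuine content is the factorization $\Lup_{\X,P} \equivalent \Lup^{\hyp}_{\X,P} \circ (-)^{\hyp}$, where $(-)^{\hyp} \colon \X \to \Xhyp$ denotes hypercompletion. For this I would show that the hypercompletion unit $\eta_X \colon X \to j(X^{\hyp})$ is an $\Lup_{\X,P}$-equivalence for every $X \in \X$. Indeed, $\Xhyp$ is the localization of $\X$ at the $\infty$-connected morphisms, and every constructible object is hypercomplete (again by \Cref{cor:exodromy_implies_that_constructible_sheaves_are_hypercomplete}); hence each $Z \in \ConsP(\X)$ is local with respect to $\eta_X$, so that $\Map_{\X}(\eta_X, Z)$ is an equivalence for all such $Z$. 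This is precisely the assertion that $\eta_X$ is inverted by $\Lup_{\X,P}$. Applying $\Lup_{\X,P}$ to $\eta_X$ then yields a natural equivalence $\Lup_{\X,P}(X) \equivalent \Lup_{\X,P}(j(X^{\hyp})) \equivalent \Lup^{\hyp}_{\X,P}(X^{\hyp})$, which is the desired factorization.

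The remaining claim about morphisms is now immediate: from $\Lup_{\X,P} \equivalent \Lup^{\hyp}_{\X,P} \circ (-)^{\hyp}$ we obtain $\Lup_{\X,P}(\phi) \equivalent \Lup^{\hyp}_{\X,P}(\phi^{\hyp})$ for every morphism $\phi$ in $\X$, whence $\phi$ is an $\Lup_{\X,P}$-equivalence if and only if $\phi^{\hyp}$ is an $\Lup^{\hyp}_{\X,P}$-equivalence. I expect the only real obstacle to be the verification that $\eta_X$ is an $\Lup_{\X,P}$-equivalence; this is exactly where the hypercompleteness of constructible objects is used, while everything else amounts to formal manipulation of reflective localizations.
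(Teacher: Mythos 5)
Your proof is correct and takes the same route the paper intends: the statement is an Observation left without an explicit argument, asserted to follow from \Cref{cor:exodromy_implies_that_constructible_sheaves_are_hypercomplete}, and your writeup is precisely the natural unpacking — define $\Lup^{\hyp}_{\X,P}$ as the restriction of $\Lup_{\X,P}$ to $\Xhyp$, check it is left adjoint to the inclusion $\ConsP(\X) \hookrightarrow \Xhyp$, and use the hypercompleteness of constructible objects to see that the hypercompletion unit is an $\Lup_{\X,P}$-equivalence, giving the factorization and the equivalence criterion. No gaps.
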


Our next goal is to show that if $ \X $ is monodromic, then $ \Xhyp $ is also monodromic and $ \LC(\X) = \LC(\Xhyp) $.
For this, we need the following lemma.

\begin{lemma}\label{lem:inclusion_of_Xhyp_preserves_effective_epimorphisms_and_coproducts}
	Let $ \X $ be \atopos and write $ \ilowerstar \colon \incto{\Xhyp}{\X} $ for the inclusion.
	\begin{enumerate}
		\item\label{lem:inclusion_of_Xhyp_preserves_effective_epimorphisms_and_coproducts.1} A morphism $ f \colon \fromto{U}{V} $ in $ \Xhyp $ is an effective epimorphism if and only if $ \ilowerstar(f) $ is an effective epimorphism in $ \X $.

		\item\label{lem:inclusion_of_Xhyp_preserves_effective_epimorphisms_and_coproducts.2} The functor $ \ilowerstar \colon \incto{\Xhyp}{\X} $ preserves coproducts.

		\item\label{lem:inclusion_of_Xhyp_preserves_effective_epimorphisms_and_coproducts.3} Given an effective epimorphism $ \fromto{\coprod_{\alpha \in A} U_{\alpha}}{1_{\Xhyp}} $ in $ \Xhyp $, the induced map
		\begin{equation*}
			\fromto{\coprod_{\alpha \in A} \ilowerstar(U_{\alpha})}{1_{\X}} 
		\end{equation*}
		is an effective epimorphism in $ \X $.
	\end{enumerate}
\end{lemma}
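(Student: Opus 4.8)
The plan is to prove (1) through the detection of effective epimorphisms on $0$-truncations, to reduce (2) to the statement that a coproduct of hypercomplete objects is again hypercomplete, and to obtain (3) formally from (1) and (2).

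For (1), I would begin by recording two features of $\ilowerstar \colon \incto{\Xhyp}{\X}$. As a right adjoint it preserves limits, so it is left exact; and since every truncated object is hypercomplete, the full subcategories of discrete objects of $\Xhyp$ and of $\X$ coincide and the truncation functors agree, giving a natural equivalence $\ilowerstar \trun_{\leq 0}^{\Xhyp} \simeq \trun_{\leq 0}^{\X} \ilowerstar$. Recalling that a morphism in an $\infty$-topos is an effective epimorphism if and only if its $0$-truncation is an effective epimorphism of discrete objects (see \cite[\HTTsubsec{6.2.3}]{HTT}), the conditions ``$f$ is an effective epimorphism in $\Xhyp$'' and ``$\ilowerstar(f)$ is an effective epimorphism in $\X$'' translate into the identical condition on $\trun_{\leq 0}(f)$ under the identification of discrete objects. (The ``if'' direction can also be seen directly: the hypercompletion functor $(-)^{\hyp}$ is a colimit-preserving left exact localization, hence preserves effective epimorphisms, and $(-)^{\hyp} \circ \ilowerstar$ is the identity on $\Xhyp$.)

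The crux is (2), which I would reduce to the following topos-theoretic claim: the coproduct $Y \colonequals \coprod_{\alpha} \ilowerstar(U_{\alpha})$ of hypercomplete objects of $\X$ is hypercomplete. Granting this, since $(-)^{\hyp}$ preserves colimits and fixes hypercomplete objects we get $\coprod_{\alpha}^{\Xhyp} U_{\alpha} \simeq Y^{\hyp} \simeq Y$, so $\ilowerstar$ preserves the coproduct. To prove the claim, set $D \colonequals \coprod_{\alpha} 1_{\X}$, a discrete (hence hypercomplete) object, with canonical map $p \colon Y \to D$; by disjointness and universality of coproducts the fibre of $p$ over the $\alpha$-th summand is $\ilowerstar(U_{\alpha})$. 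Given an $\infty$-connected morphism $g \colon S \to T$ in $\X$, I would compare $\Map_{\X}(T,Y) \to \Map_{\X}(S,Y)$ by mapping both sides to $\Map_{\X}(-,D)$. On bases the map is an equivalence because $D$ is hypercomplete; on the fibre over a point $\phi \colon T \to D$, the equivalence $\X_{/D} \simeq \prod_{\alpha} \X$ identifies the comparison with $\prod_{\alpha} \bigl( \Map_{\X}(T_{\alpha}, \ilowerstar U_{\alpha}) \to \Map_{\X}(S_{\alpha}, \ilowerstar U_{\alpha}) \bigr)$, where $T_{\alpha} \colonequals T \cross_{D} 1_{\X}$ and $S_{\alpha} \colonequals S \cross_{D} 1_{\X} = S \cross_{T} T_{\alpha}$. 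As $\infty$-connected morphisms are stable under base change, each $S_{\alpha} \to T_{\alpha}$ is $\infty$-connected, so each factor is an equivalence by hypercompleteness of $\ilowerstar(U_{\alpha})$, and a product of equivalences is an equivalence.

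This coproduct computation is where the real work lies: whereas $\ilowerstar$ preserves limits for formal reasons, it has no formal right to preserve coproducts, so one must exploit the disjointness and universality of coproducts in a topos to localise the hypercompleteness of the total space over the discrete base $D$. Finally, (3) is immediate from (1) and (2): by (2) we have $\coprod_{\alpha} \ilowerstar(U_{\alpha}) \simeq \ilowerstar\bigl(\coprod_{\alpha}^{\Xhyp} U_{\alpha}\bigr)$, while $\ilowerstar(1_{\Xhyp}) = 1_{\X}$, so the displayed map is $\ilowerstar$ applied to the given effective epimorphism $\coprod_{\alpha}^{\Xhyp} U_{\alpha} \to 1_{\Xhyp}$, which is an effective epimorphism in $\X$ by (1).
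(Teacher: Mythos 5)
Your proof is correct and follows essentially the same route as the paper's: item (1) via the detection of effective epimorphisms on $0$-truncations (\HTT{Proposition}{7.2.1.14}) together with the identification $\trun_{\leq 0}^{\X}\ilowerstar(f) = \trun_{\leq 0}^{\Xhyp}(f)$, and item (3) formally from (1) and (2). The only divergence is that for item (2) the paper simply cites \SAG{Lemma}{D.6.7.2} (hypercomplete objects are closed under coproducts), whereas you reprove that lemma by fibering the coproduct over the discrete object $\coprod_{\alpha} 1_{\X}$ and using disjointness and universality of coproducts; your argument there is correct and is essentially the standard one.
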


\begin{proof}
	For (1), first assume that $ \ilowerstar(f) $ is an effective epimorphism.
	Then since $ \iupperstar $ preserves effective epimorphisms and $ \ilowerstar $ is fully faithful, $ f \equivalent \iupperstar\ilowerstar(f) $ is also an effective epimorphism.
	Conversely, assume that $ f $ is an effective epimorphism.
	Note that 
	\begin{equation*}
		\trun_{\leq 0}^{\X} \ilowerstar(f) = \trun_{\leq 0}^{\Xhyp}(f) \period
	\end{equation*}
	Since the property of a morphism being an effective epimorphism only depends on the $ 0 $-truncation \HTT{Proposition}{7.2.1.14} and $ f $ is an effective epimorphism, we deduce that $ \ilowerstar(f) $ is an effective epimorphism.

	Item (2) is the content of \SAG{Lemma}{D.6.7.2}.
	Finally, (3) is immediate from (1) and (2).
\end{proof}

\begin{lemma}\label{lem:coincidences_when_every_constant_object_is_hypercomplete}
	Let $ \X $ be \atopos.
	If every constant object of $ \X $ is hypercomplete, then:
	\begin{enumerate}
		\item For each $ U \in \X $, every constant object of $ \X_{/U} $ is hypercomplete.

		\item Every locally constant object of $ \X $ is hypercomplete.

		\item The inclusion $ \incto{\Xhyp}{\X} $ carries $ \LC(\Xhyp) $ to $ \LC(\X) $.

		\item We have $ \LC(\Xhyp) = \LC(\X) $ as full subcategories of $ \X $.
	\end{enumerate}
\end{lemma}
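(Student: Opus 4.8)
The plan is to prove the four statements in the order listed, since each builds on the previous ones, reducing everything to two inputs: the standing hypothesis that every constant object of $\X$ is hypercomplete, and the fact that the inclusion $\ilowerstar\colon\Xhyp\hookrightarrow\X$ (a right adjoint, hence limit-preserving) and its left exact left adjoint $\iupperstar$ (hypercompletion) respect products, coproducts, and effective epimorphisms. I will use throughout that a constant object of a slice $\Y_{/U}$ with value $K$ is $\Gammaupperstar_{\Y}(K)\times U$ equipped with its projection to $U$, so that restriction to a slice is product with the slicing object.

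For (1), a constant object of $\X_{/U}$ is $\pi^{\ast}\Gammaupperstar_{\X}(K)$ for the étale geometric morphism $\pi\colon\X_{/U}\to\X$, so it suffices to show $\pi^{\ast}$ preserves hypercomplete objects. I would argue via the adjunction $\pi_{\sharp}\dashv\pi^{\ast}$: for any $C\in\X$ one has $\Map_{\X_{/U}}(-,\pi^{\ast}C)\simeq\Map_{\X}(\pi_{\sharp}(-),C)$, and since the forgetful functor $\pi_{\sharp}$ preserves effective epimorphisms and pullbacks it preserves $\infty$-connective morphisms, whence mapping into a hypercomplete $C$ inverts them. Taking $C=\Gammaupperstar_{\X}(K)$, hypercomplete by hypothesis, shows $\Gammaupperstar_{\X_{/U}}(K)=\pi^{\ast}\Gammaupperstar_{\X}(K)$ is hypercomplete. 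Item (2) then follows by combining (1) with the locality of hypercompleteness (see \cite{HTT}): if $F\in\LC(\X)$ is trivialized by an effective epimorphism $\coprod_{i}U_{i}\to 1_{\X}$, each restriction $\pi_{i}^{\ast}F$ is a constant object of $\X_{/U_{i}}$, hence hypercomplete by (1), so $F$ is hypercomplete.

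For (3), let $F\in\LC(\Xhyp)$ be trivialized by a cover $\coprod_{\alpha}U_{\alpha}\to 1_{\Xhyp}$ with $F|_{U_{\alpha}}$ constant of value $K_{\alpha}$. By \enumref{lem:inclusion_of_Xhyp_preserves_effective_epimorphisms_and_coproducts}{3} the family $\coprod_{\alpha}\ilowerstar U_{\alpha}\to 1_{\X}$ is an effective epimorphism in $\X$, so it suffices to show that $\ilowerstar F$ restricts to a constant object on each $\ilowerstar U_{\alpha}$. Since $\ilowerstar$ preserves products and slice-restriction is product with the slicing object, this restriction is $\ilowerstar\Gammaupperstar_{\Xhyp}(K_{\alpha})\times\ilowerstar U_{\alpha}$. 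Now $\Gammaupperstar_{\Xhyp}=\iupperstar\Gammaupperstar_{\X}$, and as $\Gammaupperstar_{\X}(K_{\alpha})$ is hypercomplete by hypothesis we have $\ilowerstar\Gammaupperstar_{\Xhyp}(K_{\alpha})=\ilowerstar\iupperstar\Gammaupperstar_{\X}(K_{\alpha})\simeq\Gammaupperstar_{\X}(K_{\alpha})$; hence the restriction is $\Gammaupperstar_{\X}(K_{\alpha})\times\ilowerstar U_{\alpha}$, a constant object of $\X_{/\ilowerstar U_{\alpha}}$, and $\ilowerstar F\in\LC(\X)$.

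Item (4) combines the previous items. The inclusion $\LC(\Xhyp)\subseteq\LC(\X)$ is precisely (3). For the reverse inclusion, (2) gives $\LC(\X)\subseteq\Xhyp$, so given $F\in\LC(\X)$ with trivializing cover $\coprod_{i}U_{i}\to 1_{\X}$ I would hypercomplete the cover: $\iupperstar$ preserves coproducts and effective epimorphisms, so $\coprod_{i}\iupperstar U_{i}\to 1_{\Xhyp}$ is an effective epimorphism in $\Xhyp$, and by left-exactness of $\iupperstar$ the restriction of $F$ to $\iupperstar U_{i}$ in $\Xhyp$ is $\iupperstar(\Gammaupperstar_{\X}(K_{i})\times U_{i})=\Gammaupperstar_{\Xhyp}(K_{i})\times\iupperstar U_{i}$, a constant object; hence $F\in\LC(\Xhyp)$, and we conclude equality. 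The main obstacle is the pair of non-formal inputs feeding (1) and (2): that étale pullback preserves hypercompleteness and that hypercompleteness is local. Both are standard but rest on controlling $\infty$-connective morphisms in slice topoi; the remainder of the argument is formal bookkeeping with the facts that $\ilowerstar$ and $\iupperstar$ respect the relevant (co)limits and that, under the standing hypothesis, constant sheaves are fixed by hypercompletion.
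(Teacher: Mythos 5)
Your proof is correct and follows essentially the same route as the paper: (1) via the fact that étale pullback preserves hypercompleteness (you reprove the mapping-space argument of \HAa{Lemma}{A.2.6} directly where the paper cites its Lemma \ref{lem:functors_that_are_both_left_and_right_adjoints_preserve_hypercompleteness_and_convergence_of_Postnikov_towers}), (2) via locality of hypercompleteness, and (3)--(4) by transporting the trivializing covers along $\ilowerstar$ and $\iupperstar$ using \Cref{lem:inclusion_of_Xhyp_preserves_effective_epimorphisms_and_coproducts} and the coincidence of constant objects in $\X$ and $\Xhyp$. The only difference is that you spell out items (3) and (4) in slightly more detail than the paper, which leaves them as consequences of (1) and (2).
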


\begin{proof}
	For (1), write $ \pupperstar \colon \fromto{\X}{\X_{/U}} $ for the pullback functor.
	Observe that the constant sheaf functor $ \fromto{\Spc}{\X_{/U}} $ factors as a composite
	\begin{equation*}
		\begin{tikzcd}
			\Spc \arrow[r, "\Gammaupperstar"] & \X \arrow[r, "\pupperstar"] & \X_{/U} \period
		\end{tikzcd}
	\end{equation*}
	Since the pullback functor $ \pupperstar $ is both a left and a right adjoint, \Cref{lem:functors_that_are_both_left_and_right_adjoints_preserve_hypercompleteness_and_convergence_of_Postnikov_towers} shows that $ \pupperstar $ preserves hypercompleteness.
	Hence the claim follows from the assumption that every constant object of $ \X $ is hypercomplete.

	For (2), let $ L \in \LC(\X) $ and choose an effective epimorphism $ \surjto{\coprod_{\alpha \in A} U_{\alpha}}{1_{\X}} $ such that for each $ \alpha \in A $, the pullback $ L \cross U_{\alpha} $ is a constant object of $ \X_{/U_{\alpha}} $.
	Then by (1), for each $ \alpha \in A $, the object $ L \cross U_{\alpha} \in \X_{/U_{\alpha}} $ is hypercomplete.
	The claim now follows from the fact that hypercompleteness is a local property \HTT{Remark}{6.5.2.22}.

	For (3), let $ L \in \LC(\Xhyp) $; we wish to show that $ L \in \LC(\X) $.
	Choose an effective epimorphism
	\begin{equation*}
		\phi \colon \surjto{\coprod_{\alpha \in A} U_{\alpha}}{1_{\Xhyp} = 1_{\X}} 
	\end{equation*}
	in $ \Xhyp $ such that for each $ \alpha \in A $, the pullback
	\begin{equation*}
		L \cross U_{\alpha} \in (\Xhyp)_{/U_{\alpha}} = (\X_{/U_{\alpha}})^{\hyp}
	\end{equation*}
	is constant.
	By \enumref{lem:inclusion_of_Xhyp_preserves_effective_epimorphisms_and_coproducts}{3} the effective epimorphism $ \phi \colon \surjto{\coprod_{\alpha \in A} U_{\alpha}}{1_{\X}} $ in $ \Xhyp $ is also an effective epimorphism in the larger \topos $ \X $.
	Hence it suffices to show that each $ L \cross U_{\alpha} $ is also a constant object of the larger \topos $ \X_{/U_{\alpha}} $.
	For this, note that by  (1), every constant object of $ \X_{/U_{\alpha}} $ is hypercomplete.

	Item (4) is immediate from items (2) and (3).
\end{proof}

\begin{proposition}\label{prop:X_monodromic_implies_Xhyp_monodromic}
	Let $ \X $ be a monodromic \topos.
	Then:
	\begin{enumerate}
		\item \label{prop:X_monodromic_implies_Xhyp_monodromic.1} The composite
		\begin{equation*}
			\begin{tikzcd}
				\Xhyp \arrow[r, hooked, "\ilowerstar"] & \X \arrow[r, "\Gammalowersharp"] & \Spc
			\end{tikzcd}
		\end{equation*}
		is left adjoint to the constant hypersheaf functor $ \fromto{\Spc}{\Xhyp} $.
		In particular, $ \Xhyp $ is monodromic.

		\item \label{prop:X_monodromic_implies_Xhyp_monodromic.2} The inclusion $ \incto{\Xhyp}{\X} $ carries $ \LC(\Xhyp) $ to $ \LC(\X) $.
		Moreover, we have $ \LC(\Xhyp) = \LC(\X) $ as full subcategories of $ \X $.

		\item \label{prop:X_monodromic_implies_Xhyp_monodromic.3} The natural map $ \fromto{\Piinfty(\Xhyp)}{\Piinfty(\X)} $ is an equivalence.
	\end{enumerate}
\end{proposition}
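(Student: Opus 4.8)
The plan is to deduce all three items from a single observation: \emph{every constant object of $\X$ is hypercomplete}. Indeed, since $\X$ is monodromic, the constant sheaf functor $\Gammaupperstar_{\X} \colon \fromto{\Spc}{\X}$ admits a left adjoint $\Gamma_{\X,\sharp}$ by \Cref{def:monodromic_topos}, and it always admits the right adjoint $\Gamma_{\X,\ast}$. Thus $\Gammaupperstar_{\X}$ is simultaneously a left and a right adjoint, hence preserves both limits and colimits, so \enumref{lem:functors_that_are_both_left_and_right_adjoints_preserve_hypercompleteness_and_convergence_of_Postnikov_towers}{1} shows that it preserves hypercompleteness; since every object of $\Spc$ is hypercomplete, every constant object $\Gammaupperstar_{\X}(K)$ of $\X$ is hypercomplete. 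In particular the hypothesis of \Cref{lem:coincidences_when_every_constant_object_is_hypercomplete} is satisfied, and item (2) is \emph{exactly} items (3) and (4) of that lemma.

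For item (1), the hypercompleteness of constant objects lets me factor $\Gammaupperstar_{\X}$ through the inclusion $\ilowerstar \colon \incto{\Xhyp}{\X}$, say $\Gammaupperstar_{\X} \simeq \ilowerstar \circ \Gammaupperstar_{\Xhyp}$ for a unique functor $\Gammaupperstar_{\Xhyp} \colon \fromto{\Spc}{\Xhyp}$. Applying the hypercompletion $\iupperstar$ and using that $\ilowerstar$ is fully faithful (whence $\iupperstar \ilowerstar$ is the identity) identifies $\Gammaupperstar_{\Xhyp} \simeq \iupperstar \circ \Gammaupperstar_{\X}$ with the constant hypersheaf functor (the pullback of the unique geometric morphism $\fromto{\Xhyp}{\Spc}$). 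It then remains to verify the adjunction $(\Gamma_{\X,\sharp} \circ \ilowerstar) \dashv \Gammaupperstar_{\Xhyp}$, which is a direct mapping-space computation: for $F \in \Xhyp$ and $K \in \Spc$,
\begin{align*}
	\Map_{\Spc}(\Gamma_{\X,\sharp}(\ilowerstar F), K)
	&\simeq \Map_{\X}(\ilowerstar F, \Gammaupperstar_{\X} K) \\
	&\simeq \Map_{\X}(\ilowerstar F, \ilowerstar \Gammaupperstar_{\Xhyp} K) \\
	&\simeq \Map_{\Xhyp}(F, \Gammaupperstar_{\Xhyp} K),
\end{align*}
the three steps using the adjunction $\Gamma_{\X,\sharp} \dashv \Gammaupperstar_{\X}$, the factorization above, and the full faithfulness of $\ilowerstar$, respectively. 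This exhibits $\Gamma_{\X,\sharp} \circ \ilowerstar$ as left adjoint to the constant hypersheaf functor, so $\Xhyp$ is monodromic.

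Finally, item (3) follows from item (1). Writing $\Gamma_{\Xhyp,\sharp} \simeq \Gamma_{\X,\sharp} \circ \ilowerstar$ for the left adjoint just produced and using that the right adjoint $\ilowerstar$ preserves the terminal object ($\ilowerstar 1_{\Xhyp} \simeq 1_{\X}$), I obtain
\begin{equation*}
	\Piinfty(\Xhyp) = \Gamma_{\Xhyp,\sharp}(1_{\Xhyp}) \simeq \Gamma_{\X,\sharp}(\ilowerstar 1_{\Xhyp}) \simeq \Gamma_{\X,\sharp}(1_{\X}) = \Piinfty(\X) \period
\end{equation*}
That this identification is the asserted natural comparison map can be seen through \Cref{rec:monodromy}: the map is induced by $\iupperstar \colon \fromto{\LC(\X)}{\LC(\Xhyp)}$, which by item (2) is the identity functor on $\LC(\X) = \LC(\Xhyp)$. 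I expect the only genuine subtlety to be the opening step—recognizing that monodromicity forces $\Gammaupperstar_{\X}$ to be a two-sided adjoint, hence to preserve hypercompleteness—after which \Cref{lem:coincidences_when_every_constant_object_is_hypercomplete} does the heavy lifting and the verifications in (1) and (3) are formal.
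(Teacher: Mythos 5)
Your proof is correct and follows essentially the same route as the paper's: item (2) is obtained by citing \Cref{lem:coincidences_when_every_constant_object_is_hypercomplete}, item (1) by the same mapping-space computation after factoring $\Gammaupperstar$ through $\Xhyp$, and item (3) by evaluating the resulting left adjoint at the terminal object / using the identification $\LC(\X) = \LC(\Xhyp)$. The only difference is that you spell out explicitly why constant objects are hypercomplete (via \enumref{lem:functors_that_are_both_left_and_right_adjoints_preserve_hypercompleteness_and_convergence_of_Postnikov_towers}{1}), a step the paper leaves implicit.
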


\begin{proof}
	For (1), note that since $ \Gammaupperstar \colon \fromto{\Spc}{\X} $ factors through $ \Xhyp $, for $ F \in \Xhyp $ and $ K \in \Spc $, we have natural equivalences
	\begin{align*}
		\Map_{\Spc}(\Gammalowersharp \ilowerstar(F),K) &\equivalent \Map_{\X}(\ilowerstar(F),\Gammaupperstar(K)) \\ 
		&\equivalent \Map_{\Xhyp}(F,\Gammaupperstar(K)) \period
	\end{align*}
	Item (2) is a special case of \Cref{lem:coincidences_when_every_constant_object_is_hypercomplete}.
	Finally, by (2), the pullback functor $ \fromto{\LC(\X)}{\LC(\Xhyp)} $ is an equivalence (in fact, the identity).
	Hence (3) follows from the definition of the shape. 
\end{proof}

\begin{warning}\label{warn:an_object_whose_hypercompletion_is_locally_constant_need_not_be_locally_constant}
	Let $ \X $ be a monodromic \topos and $ F \in \X $.
	If the hypercompletion of $ F $ is a locally constant object of $ \Xhyp $, then it is not necessarily the case that $ F $ is a locally constant object of $ \X $.
	
	For example, let $ Q = \prod_{\NN} [0,1] $ be the Hilbert cube.
	Then $ Q $ is locally of singular shape; in fact, any locally compact absolute neighborhood retract is locally of singular shape \cite[Example 2.8]{arXiv:2510.24629}.
	In particular, the \topos $ \Xcal = \Sh(Q) $ is monodromic.
	We claim that there exists an object $ F $ of $ \Sh(Q) $ that is not locally constant, but $ F^{\hyp} $ is the terminal object.
	By \Cref{prop:X_monodromic_implies_Xhyp_monodromic}, every locally constant object of $ \Sh(Q) $ is hypercomplete, so it suffices to provide a sheaf $ F $ that is not terminal, but whose stalks are all terminal.
	 
	Consider the dualizing sheaf $ \upomega_Q $ with $ \ZZ $-coefficients.
	That is, $ \upomega_Q $ is the sheaf of Borel--Moore chains with values in the \category $ \Mod_{\ZZ} $ of $ \ZZ $-module spectra.
	As explained in \HTT{Counterexample}{6.5.4.8}, $ \upomega_Q $ vanishes on a basis of $ Q $, hence has trivial stalks.
	On the other hand, since $ Q $ is compact, the Borel--Moore homology of $ Q $ agrees with ordinary homology, so $ \upomega_Q(Q) \equivalent \ZZ $; in particular, $ \upomega_Q $ is a nontrivial sheaf valued in $ \Mod_{\ZZ} $ whose hypercompletion is $ 0 $.
	Now consider the underlying sheaf of spaces $ F = \Omega^{\infty} \upomega_Q $.
	Since $ \Omega^{\infty} $ commutes with pulling back to open subspaces, we deduce that the sheaf $ F $ vanishes on a basis of $ Q $, hence has terminal stalks.
	However, $ F(Q) = \Omega^{\infty} \ZZ $ is the set of integers, so $ F $ is not the terminal object.
\end{warning}

Let $ (\X,P) $ be an exodromic stratified \topos.
\enumref{cor:exodromy_implies_that_constructible_sheaves_are_hypercomplete}{2} shows that $ \ConsP(\X) \subset \ConsP(\Xhyp) $.
In the case of a trivial stratification, we have just seen that this inclusion is an equality.
For a general stratification, we do not know if this holds; we offer the following simple sufficient condition for this to hold.
This condition covers many concrete cases of interest.

\begin{definition}\label{defin:weakly_conical}
	Let $ (\X,P) $ be a stratified \topos.
	We say that $ (\X,P) $ is \emph{weakly conical} if for every locally closed subset $S \subset P$, the functor
	\begin{equation*}
		i_{S,\ast} \colon \X_S \to \X 
	\end{equation*}
	takes $\ConsS(\X)$ to $\ConsP(\X)$.
\end{definition}

This definition is motivated by the following:

\begin{example}\label{eg:weakly_conical}
	Let $ (X,P) $ be a conically stratified space with locally weakly contractible strata.
	Then \smash{$(\Shhyp(X),P)$} is weakly conical by \cite[Proposition 6.8.1]{arXiv:2211.05004}; this ultimately relies on \cite[Lemma 5.3.4]{arXiv:2211.05004}, which is the hard step needed to prove the exodromy equivalence in the conical setting.
	On the other hand, consider the non-conical stratification of a circle pictured on the right-hand side of \Cref{fig:circle}: in this case, the pushforward of a constant sheaf on the open stratum is not hyperconstructible with respect to the given stratification.
	Thus, this property is a special feature of the conical situation.
\end{example}

\begin{lemma}\label{lem:weakly_conical_equational_criterion}
	Let $ (\X,P) $ be a weakly conical exodromic stratified \topos.
	Let $ \phi \colon F_1 \to F_2$ be a $\Lup_{\X,P}$-equivalence (see \cref{notation:constructibilization}).
	Then for every locally closed subset $S \subset P$, the morphism $i_S^\ast(\phi)$ is an $\Lup_{\X_S,S}$-equivalence.
\end{lemma}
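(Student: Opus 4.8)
The plan is to detect the $\Lup_{\X_S,S}$-equivalence $i_S^\ast(\phi)$ by mapping into the local objects of $\X_S$, namely the $S$-constructible ones, and then to transport the problem back to $\X$ along the adjunction $i_S^\ast \dashv i_{S,\ast}$ of \cref{ntn:inclusion_i_S}, where the hypothesis that $\phi$ is an $\Lup_{\X,P}$-equivalence becomes available. Recall from \cref{notation:constructibilization} that $\ConsP(\X) \subset \X$ is a reflective localization; consequently a morphism of $\X$ is an $\Lup_{\X,P}$-equivalence precisely when mapping into every object of $\ConsP(\X)$ carries it to an equivalence of spaces, and the analogous description holds for $\Lup_{\X_S,S}$-equivalences with $\ConsS(\X_S)$ playing the role of the local objects.

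First I would fix an arbitrary $G \in \ConsS(\X_S)$ and apply the adjunction $i_S^\ast \dashv i_{S,\ast}$ to the morphism $\phi \colon F_1 \to F_2$, yielding a natural identification
\begin{equation*}
	\Map_{\X_S}(i_S^\ast(\phi), G) \equivalent \Map_{\X}(\phi, i_{S,\ast}(G)) \period
\end{equation*}
Here I would invoke the weakly conical hypothesis (\cref{defin:weakly_conical}): since $G$ is $S$-constructible, $i_{S,\ast}(G)$ lies in $\ConsP(\X)$. Because $\phi$ is an $\Lup_{\X,P}$-equivalence and $\ConsP(\X)$ is exactly the subcategory of $\Lup_{\X,P}$-local objects, the right-hand mapping space $\Map_{\X}(\phi, i_{S,\ast}(G))$ is an equivalence. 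Hence so is $\Map_{\X_S}(i_S^\ast(\phi), G)$. As $G \in \ConsS(\X_S)$ was arbitrary, this says precisely that $i_S^\ast(\phi)$ is an $\Lup_{\X_S,S}$-equivalence.

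This argument is short, and there is no hard analytic or combinatorial step: the entire content is the compatibility of the pullback--pushforward adjunction with weak conicality, combined with the detection of constructibilization equivalences by local (constructible) objects. The single point that requires care is that the conclusion is even well-posed, i.e.\ that $\Lup_{\X_S,S}$ is defined and that its equivalences are detected by $\ConsS(\X_S)$; this amounts to knowing that $\ConsS(\X_S) \subset \X_S$ is a reflective localization, which is where one uses that the constructible objects of $(\X_S,S)$ are closed under limits and colimits, exactly as in \cref{notation:constructibilization}.
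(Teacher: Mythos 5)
Your proof is correct and is essentially identical to the paper's own argument: both detect the $\Lup_{\X_S,S}$-equivalence by mapping into an arbitrary $G \in \ConsS(\X_S)$, transpose across the adjunction $i_S^\ast \dashv i_{S,\ast}$, and use weak conicality to place $i_{S,\ast}(G)$ in $\ConsP(\X)$. Your closing remark on well-posedness is a reasonable addition but not needed, since $(\X_S,S)$ is exodromic by the stability theorem and so \cref{notation:constructibilization} applies verbatim.
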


\begin{proof}
	We have to show that for all $G \in \ConsS(\X_S)$, the map $i_S^\ast(\phi)$ induces an equivalence
	\begin{equation*}
		\Map_{\X_S}( i_S^\ast(F_2), G ) \to \Map_{\X_S}( i_S^\ast(F_1), G ) \period
	\end{equation*}
	By adjunction, this follows immediately from the fact that $ \phi $ is a $P$-equivalence and that $i_{S,\ast}(G) \in \ConsP(\X)$.
\end{proof}

\begin{lemma}\label{lem:axiomatic_X_exodromic_implies_Xhyp_exodromic}
	Let $ (\X,P) $ be an exodromic stratified \topos.
	If the inclusion $ \incto{\Xhyp}{\X} $ carries $ \ConsP(\Xhyp) $ to $ \ConsP(\X) $, then:
	\begin{enumerate}
		\item We have $ \ConsP(\Xhyp) = \ConsP(\X) $ as full subcategories of $ \X $.

		\item The stratified \topos $ (\Xhyp,P) $ is exodromic.
		
		\item The natural functor $ \fromto{\Piinfty(\Xhyp,P)}{\Piinfty(\X,P)} $ is an equivalence of \categories.
	\end{enumerate}
\end{lemma}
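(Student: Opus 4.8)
The plan is to deduce all three assertions from the single structural input that constructible objects of an exodromic stratified \topos are already hypercomplete, namely \Cref{cor:exodromy_implies_that_constructible_sheaves_are_hypercomplete}, together with part (1) of the present statement. For (1) I would simply combine two inclusions: \enumref{cor:exodromy_implies_that_constructible_sheaves_are_hypercomplete}{2} supplies $\ConsP(\X) \subset \ConsP(\Xhyp)$, while the standing hypothesis that $\ilowerstar \colon \incto{\Xhyp}{\X}$ carries $\ConsP(\Xhyp)$ into $\ConsP(\X)$ supplies the reverse inclusion. Since $\Xhyp \subset \X$ is fully faithful, both are genuine inclusions of full subcategories of $\X$, so they combine to the equality $\ConsP(\Xhyp) = \ConsP(\X)$.

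For (2) I would verify the three conditions of \Cref{def:exit_path} for $(\Xhyp, P)$. The \emph{crucial} point is to check that limits and colimits in $\ConsP(\Xhyp)$ agree with those in $\ConsP(\X)$; once this is known, atomic generation of $\ConsP(\Xhyp)$ (condition (1)) follows formally from atomic generation of $\ConsP(\X)$, as the two will literally be the same presentable \category. The inclusion $\ilowerstar$ is right adjoint to hypercompletion $\iupperstar$, hence preserves and, being fully faithful, reflects limits; so the limit of a diagram in $\ConsP(\X)$ computed in $\X$ lands in $\ConsP(\X)$, is therefore hypercomplete, and agrees with the limit computed in $\Xhyp$, giving closure under limits. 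For colimits I would use that the colimit in the reflective subcategory $\Xhyp$ is $\iupperstar$ applied to the colimit in $\X$; since the $\X$-colimit of a diagram in $\ConsP(\X)$ already lies in $\ConsP(\X) \subset \Xhyp$ by part (1), it is hypercomplete, hence equals its own hypercompletion, giving closure under colimits with the colimits matching those of $\X$. Finally, condition (3) of \Cref{def:exit_path} holds because $\ilowerstar \circ \supperstarhyp = \supperstar$ by \enumref{cor:exodromy_implies_that_constructible_sheaves_are_hypercomplete}{3}, and $\supperstar$ preserves limits while $\ilowerstar$ reflects them.

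For (3) I would observe that the natural functor is the $i^{\ex}$ associated to the morphism of stratified \topoi $\ilowerstar \colon (\Xhyp, P) \to (\X, P)$, whose pullback on constructible objects is the hypercompletion functor $\iupperstar \colon \ConsP(\X) \to \ConsP(\Xhyp)$. By part (1) every object of $\ConsP(\X)$ is already hypercomplete, so this pullback is the identity; hence the morphism is exodromic (the identity trivially preserves limits) and the induced functor $i^{\ex} \colon \Piinfty(\Xhyp, P) \to \Piinfty(\X, P)$ is the identity, in particular an equivalence. The main obstacle, and the only step requiring genuine care, is the colimit comparison in (2): colimits in a hypercomplete \topos are \emph{not} computed as in the ambient \topos, so the argument relies essentially on \Cref{cor:exodromy_implies_that_constructible_sheaves_are_hypercomplete} to force the two colimit computations to coincide; the rest is bookkeeping.
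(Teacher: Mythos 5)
Your proposal is correct and follows essentially the same route as the paper: part (1) by combining the hypothesis with \enumref{cor:exodromy_implies_that_constructible_sheaves_are_hypercomplete}{2}, part (2) by transporting atomic generation and closure under limits and colimits along the equality $\ConsP(\Xhyp) = \ConsP(\X)$ using that $\Xhyp$ is a reflective localization of $\X$ (your explicit limit/colimit comparison is exactly what the paper compresses into that one phrase), and part (3) as a formal consequence. The only difference is that you spell out the bookkeeping the paper leaves implicit.
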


\begin{proof}
	Since $ (\X,P) $ is exodromic, \enumref{cor:exodromy_implies_that_constructible_sheaves_are_hypercomplete}{2} guarantees that
	\begin{equation*}
		\ConsP(\X) \subset \ConsP(\Xhyp) \period
	\end{equation*}
	Our assumption guarantees that this inclusion is an equality.

	For (2), note that by (1) and the assumption that $ (\X,P) $ is exodromic, the \category $ \ConsP(\Xhyp) $ is atomically generated.
	In light of \enumref{cor:exodromy_implies_that_constructible_sheaves_are_hypercomplete}{3}, all that remains to be shown is that the full subcategory
	\begin{equation*}
		\ConsP(\Xhyp) \subset \Xhyp
	\end{equation*}
	is closed under limits and colimits.
	Again by (1), we have $ \ConsP(\Xhyp) = \ConsP(\X) $.
	Moreover, since $ (\X,P) $ is exodromic, $ \ConsP(\X) \subset \X $ is closed under limits and colimits.
	The claim now follows from the fact that $ \Xhyp $ is a localization of $ \X $.
	
	Item (3) is immediate from items (1) and (2) and the definition of the exit-path \category of an exodromic stratified \topos.
\end{proof}

The following is the main result of this subsection.

\begin{proposition}\label{prop:for_X_weakly_conical_X_exodromic_implies_Xhyp_exodromic}
	Let $ (\X,P) $ be a stratified \topos.
	Assume that $P$ is noetherian and that $ (\X,P) $ is both exodromic and weakly conical.
	Then:
	\begin{enumerate}
		\item The inclusion $ \incto{\Xhyp}{\X} $ carries $ \ConsP(\Xhyp) $ to $ \ConsP(\X) $.
		
		\item The stratified \topos $ (\Xhyp,P) $ is exodromic.
		
		\item The natural functor $ \fromto{\Piinfty(\Xhyp,P)}{\Piinfty(\X,P)} $ is an equivalence of \categories.
	\end{enumerate}
\end{proposition}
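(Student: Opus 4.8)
The plan is to deduce items (2) and (3) from item (1) via \Cref{lem:axiomatic_X_exodromic_implies_Xhyp_exodromic}, so that the entire content is item (1): the inclusion $\ilowerstar \colon \incto{\Xhyp}{\X}$ carries $\ConsP(\Xhyp)$ into $\ConsP(\X)$. The obstruction to a naive argument is precisely \Cref{warn:an_object_whose_hypercompletion_is_locally_constant_need_not_be_locally_constant}: for $F \in \ConsP(\Xhyp)$ each stratum restriction $\iupperstar_p(\ilowerstar F) \in \X_p$ has locally constant hypercompletion, but we must see that it is genuinely locally constant in $\X_p$ itself. Weak conicality and noetherianity are exactly what bridge this gap.

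First I would establish a stratumwise consequence of weak conicality: for every $p$, each constant object of $\X_p$ is hypercomplete. Since a constant object lies in $\LC(\X_p) = \Cons_{\{p\}}(\X_p)$, \Cref{defin:weakly_conical} shows its pushforward along $i_{p,*} \colon \fromto{\X_p}{\X}$ is $P$-constructible, hence hypercomplete in $\X$ by \enumref{cor:exodromy_implies_that_constructible_sheaves_are_hypercomplete}{1}. It then suffices that $i_{p,*}$ reflects hypercompleteness. Factoring the locally closed immersion as $\X_p \to \X_{P_{\geq p}} \to \X$ with the first map closed and the second open (\Cref{obs:pulling_back_stratified_geometric_morphisms}), the open pushforward $o_*$ reflects hypercompleteness because $o^* o_* \simeq \mathrm{id}$ and $o^*$, being both a left and a right adjoint, preserves hypercompleteness (\enumref{lem:functors_that_are_both_left_and_right_adjoints_preserve_hypercompleteness_and_convergence_of_Postnikov_towers}{1}); the closed pushforward $c_*$ reflects it because $c_*$ is a left exact left adjoint — hence preserves $\infty$-connective morphisms — and satisfies $c^! c_* \simeq \mathrm{id}$. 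With constant objects of $\X_p$ hypercomplete, \Cref{lem:coincidences_when_every_constant_object_is_hypercomplete} yields $\LC((\X_p)^{\hyp}) = \LC(\X_p)$ inside $\X_p$; in particular every locally constant object of $(\X_p)^{\hyp} = (\Xhyp)_p$ remains locally constant in $\X_p$.

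To prove (1), fix $F \in \ConsP(\Xhyp)$ and consider the constructibilization unit $\eta \colon \fromto{\ilowerstar F}{\Lup_{\X,P}(\ilowerstar F)}$. Both its source and target are hypercomplete — the source by hypothesis, the target because $\Lup_{\X,P}(\ilowerstar F) \in \ConsP(\X) \subset \Xhyp$ — so $\eta$ is a morphism of $\Xhyp$, and proving it an equivalence shows $\ilowerstar F \in \ConsP(\X)$. As $P$ is noetherian, \Cref{lem:pulling_back_to_strata_is_jointly_conservative_for_noetherian_posets} applied to $(\Xhyp,P)$ reduces this to checking that $\iupperstarhyp_p(\eta)$ is an equivalence for each $p$. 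Using that passing to the stratum $p$ commutes with hypercompletion (\Cref{lem:hypercompletion_commutes_with_taking_strata}), I would identify $\iupperstarhyp_p(\eta)$ with the hypercompletion of $\iupperstar_p(\eta)$.

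The last step combines the two preparations. The unit $\eta$ is a $\Lup_{\X,P}$-equivalence, so by \Cref{lem:weakly_conical_equational_criterion} with $S = \{p\}$ the restriction $\iupperstar_p(\eta)$ is an $\LC(\X_p)$-local equivalence. For any $G \in \LC((\X_p)^{\hyp})$, adjunction identifies $\Map_{(\X_p)^{\hyp}}(\iupperstarhyp_p(\eta), G)$ with $\Map_{\X_p}(\iupperstar_p(\eta), G)$, regarding $G$ in $\X_p$; by the first step $G \in \LC(\X_p)$, so this is an equivalence, whence $\iupperstarhyp_p(\eta)$ is an $\LC((\X_p)^{\hyp})$-local equivalence. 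But its source and target are stratum restrictions of objects of $\ConsP(\Xhyp)$, so they lie in $\LC((\X_p)^{\hyp})$, and a local equivalence between local objects is an equivalence. Thus $\iupperstarhyp_p(\eta)$ is an equivalence for all $p$, so $\eta$ is an equivalence and $\ilowerstar F \in \ConsP(\X)$; items (2) and (3) then follow from \Cref{lem:axiomatic_X_exodromic_implies_Xhyp_exodromic}. I expect the first step — the hypercompleteness of constant objects on strata — to be the main obstacle, as it is the only place that genuinely uses weak conicality and is exactly what defeats \Cref{warn:an_object_whose_hypercompletion_is_locally_constant_need_not_be_locally_constant}.
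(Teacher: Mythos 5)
Your proposal is correct, but it is organized quite differently from the paper's argument, and the difference is worth recording. The paper proves item (1) by noetherian induction on $P$: it reduces to the case where $p$ is minimal, writes $F \in \ConsP(\Xhyp)$ as the fracture square of the recollement of $\Xhyp_p$ and $\Xhyp_{P_{>p}}$, and checks that each of the three remaining corners is local with respect to every $\Lup^{\hyp}_{\X,P}$-equivalence; weak conicality enters twice there, once through the pushforward $i_{S,\ast}$ preserving constructibility and once through \Cref{lem:weakly_conical_equational_criterion}. You instead avoid the fracture square entirely: you test the constructibilization unit $\eta \colon i_\ast F \to \Lup_{\X,P}(i_\ast F)$ stratum by stratum, using \Cref{lem:pulling_back_to_strata_is_jointly_conservative_for_noetherian_posets} for $(\Xhyp,P)$ as the only place noetherianity enters, and \Cref{lem:weakly_conical_equational_criterion} with $S = \{p\}$ as the only place weak conicality enters. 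All the ingredients you invoke (the hypercompleteness of source and target of $\eta$, the identification $\iupperstarhyp_p(\eta) \equivalent (\iupperstar_p(\eta))^{\hyp}$ via \Cref{lem:hypercompletion_commutes_with_taking_strata}, and the fact that an $\LC((\X_p)^{\hyp})$-local equivalence between local objects is invertible) are available, so the argument goes through and is arguably more streamlined than the induction in the paper.

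Two remarks on your first step. First, it is redundant: the statement $\LC((\X_p)^{\hyp}) = \LC(\X_p)$ is exactly \enumref{prop:X_monodromic_implies_Xhyp_monodromic}{2} applied to the stratum $\X_p$, which is monodromic by \enumref{cor:stability_under_pulling_back_to_locally_closed_subposets}{2}; no appeal to weak conicality or to $i_{p,\ast}$ is needed, since for a monodromic \topos the constant sheaf functor is both a left and a right adjoint and hence preserves hypercompleteness. Second, your justification that the closed pushforward $c_\ast$ reflects hypercompleteness is incorrect as stated: $c_\ast$ is \emph{not} a left adjoint, since $\X_{\sminus U}$ is not closed under coproducts or the initial object in $\X$ (e.g.\ $c_\ast$ of the initial object restricts to the terminal object on the open complement). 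The true statement --- that pushforward along a locally closed immersion detects hypercompleteness --- is \Cref{prop:pushforward_along_an_open_immersion_detects_hypercomplteness}, whose proof for the closed case goes through \Cref{lem:pushforward_along_a_closed_immersion_preserves_n-connectedness} rather than through adjoint functor considerations. Since the step is redundant anyway, this does not affect the validity of your proof.
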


\begin{proof}
	First note that by \Cref{lem:axiomatic_X_exodromic_implies_Xhyp_exodromic}, it suffices to prove (1).
	Since $ (\X,P) $ is exodromic, \enumref{cor:exodromy_implies_that_constructible_sheaves_are_hypercomplete}{2} guarantees that
	\begin{equation*}
		\ConsP(\X) \subset \ConsP(\Xhyp) \period
	\end{equation*}
	We prove the other inclusion by noetherian induction, observing that the case $ P = \ast $ has already been dealt with in \cref{prop:X_monodromic_implies_Xhyp_monodromic}-(1).
	Fix $F \in \ConsP(\Xhyp)$ and $p \in P$.
	Set $Q \colonequals P_{\geq p}$.
	Then $ Q $ is an open subset of $P$; in particular $i_{Q}^{\ast}$ preserves hypercomplete objects.
	Thus,
	\begin{equation*}
		i_Q^\ast(F) \equivalent i_Q^{\ast,\hyp}(F) \in \ConsP(\Xhyp_Q) \period 
	\end{equation*}
	In other words, we can assume without loss of generality that $ p $ is a minimal element of $P$.
	
	Now set $S \colonequals P_{>p}$.
	Again, $ S $ is an open subset of $P$.
	Moreover, \smash{$ \Xhyp $} is the recollement of \smash{$\Xhyp_p$} and \smash{$\Xhyp_S$}.
	In particular, for each $ F \in \ConsP(\Xhyp) $, there is a pullback square
	\begin{equation}\label{square:facture_for_Xp_and_complement}
		\begin{tikzcd}[sep=2em]
			F \arrow[r] \arrow[d] \arrow[dr, phantom, very near start, "\lrcorner", xshift=-0.25em, yshift=-0.25em] & i_{p, \ast} i_p^{\ast,\hyp}(F) \arrow[d] \\
			i_{S,\ast} i_S^{\ast}(F) \arrow[r] & i_{p,\ast} i_{p}^{\ast,\hyp} i_{S,\ast} i_S^{\ast}(F) \period
		\end{tikzcd} 
	\end{equation}
	Thanks to \cref{observation:hyper_constructibilization}, it is enough to prove that for every $\Lup^{\hyp}_{\X,P}$-equivalence $ \phi \colon G_1 \to G_2$ in $ \Xhyp $, the object $F$ is $ \phi $-local.
	By virtue of the pullback square \eqref{square:facture_for_Xp_and_complement}, it suffices to prove that the other three terms are $ \phi $-local.
	The inductive hypothesis guarantees that
	\begin{equation*}
		i_S^{\ast}(F) \equivalent i_S^{\ast,\hyp}(F)
	\end{equation*}
	belongs to $\ConsS(\X_S)$.
	Since $ (\X,P) $ is weakly conical, it follows that $i_{S,\ast} i_S^{\ast}(F) \in \ConsP(\X)$; in particular, $ i_{S,\ast} i_S^{\ast}(F) $ is $ \phi $-local.
	As for the other two terms, first recall from \cref{observation:hyper_constructibilization} that $ \phi $, seen as a morphism in $\X$, is an $\Lup_{\X,P}$-equivalence.
	In particular, \cref{lem:weakly_conical_equational_criterion} guarantees that $i_p^\ast(\phi)$ is an $\Lup_{\X_p}$-equivalence.
	Applying \cref{observation:hyper_constructibilization} once more, we deduce that
	\begin{equation*}
		i_p^{\ast,\hyp}(\phi) \equivalent (i_p^\ast(\phi))^{\hyp} 
	\end{equation*}
	is an $\Lup^{\hyp}_{\X_p}$-equivalence as well.
	Thus, it immediately follows from adjunction that $i_{p,\ast} i_p^{\ast,\hyp}(F)$ is $ \phi $-local.
	To conclude, observe that since $i_{S,\ast} i_S^{\ast}(F) \in \ConsP(\X)$, then
	\begin{equation*}
		i_p^\ast i_{S,\ast} i_S^\ast(F) \in \LC(\X_p) = \LC(\Xhyp_p) \period 
	\end{equation*}
	In particular we have
	\begin{equation*}
		i_p^{\ast,\hyp} i_{S,\ast} i_S^\ast(F) = i_p^\ast i_{S,\ast} i_S^\ast(F) \comma 
	\end{equation*}
	and the conclusion follows.
\end{proof}

We conclude with a question about generalizing \Cref{prop:for_X_weakly_conical_X_exodromic_implies_Xhyp_exodromic}.

\begin{question}
	Let $ (\X,P) $ be an exodromic stratified \topos.
	Does the inclusion $ \incto{\Xhyp}{\X} $ carry $ \ConsP(\Xhyp) $ to $ \ConsP(\X) $?
	(If so, then $ (\Xhyp,P) $ is exodromic and $ \equivto{\Piinfty(\Xhyp,P)}{\Piinfty(\X,P)} $.)
\end{question}


\section{Stability properties of exodromic stratified \texorpdfstring{$\infty$}{∞}-topoi}\label{sec:stability_properties_of_exodromic_stratified_topoi}

The goal of this section is to prove the following `stability theorem' for the class of exodromic stratified \topoi:

\begin{theorem}[(stability properties of exodromic stratified \topoi)]\label{thm:stability_properties_of_stratified_topoi}
	\noindent 
	\begin{enumerate}
		\item\label{thm:stability_properties_of_stratified_topoi.2} \emph{Stability under pulling back to locally closed subposets:} If $ (\X,P) $ is an exodromic stratified \topos, then for each locally closed subposet $ S \subset P $, the stratified \topos $ (\XS, S) $ is exodromic and the induced functor
		\begin{equation*}
			\fromto{\Piinfty(\XS,S)}{\Piinfty(\X,P) \cross_P S}
		\end{equation*}
		is an equivalence.
		In particular, the induced functor $ \fromto{\Piinfty(\X,P)}{P} $ is conservative.
		See \Cref{cor:stability_under_pulling_back_to_locally_closed_subposets}. 

		\item\label{thm:stability_properties_of_stratified_topoi.5} \emph{Every morphism between exodromic stratified \topoi is exodromic.}
		See \Cref{thm:all_morphisms_are_exodromic}.

		\item\label{thm:stability_properties_of_stratified_topoi.3} \emph{Stability under coarsening and localization formula:} 
		Let $ (\X,R) $ be an exodromic stratified \topos and let $ \phi \colon \fromto{R}{P} $ be a map of posets. 
		Write $ W_P $ for the collection of morphisms in $ \Piinfty(\X,R) $ that the composite $ \Piinfty(\X,R) \to R \to P $ sends to equivalences.
		Then the stratified \topos $ (\X,P) $ is exodromic and the natural functor $ \fromto{\Piinfty(\X,R)}{\Piinfty(\X,P)} $ induces an equivalence
		\begin{equation*}
			\equivto{\Piinfty(\X,R)[W_P\inv]}{\Piinfty(\X,P)} 
		\end{equation*}
		See \Cref{thm:stability_under_coarsening}.

		\item\label{thm:stability_properties_of_stratified_topoi.4} \emph{van Kampen:} Existence of exit-path \categories can be checked by descent.
		See \Cref{prop:van_Kampen} for a precise formulation.

		\item\label{thm:stability_properties_of_stratified_topoi.5} \emph{Künneth formula:} Let $ (\X,P) $ and $ (\Y,Q) $ be exodromic stratified \topoi.
		If $ P $ and $ Q $ are noetherian, then the stratified \topos $ (\X \tensor \Y, P \cross Q) $ is exodromic and there are natural equivalences of \categories
		\begin{align*}
			\ConsP(\X) \tensor \ConsQ(\Y) &\equivalence \Cons_{P \cross Q}(\X \tensor \Y) \\ 
			\shortintertext{and}
			\Piinfty(\X \tensor \Y, P \cross Q) &\equivalence \Piinfty(\X,P) \cross \Piinfty(\Y,Q) \period
		\end{align*}
		See \Cref{prop:Kunneth_formula_for_exodromic_topoi}.

		\item\label{thm:stability_properties_of_stratified_topoi.6} \emph{Stability of finiteness/compactness:} The property of an exit-path \category being finite (resp., compact) is stable under pulling back to a locally closed subposet, is stable under coarsening, and can be checked on a finite cover.
		See \cref{subsec:stability_properties_of_categorical_finiteness_and_compactness} for a precise formulation.
	\end{enumerate}
\end{theorem}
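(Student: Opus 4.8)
The plan is to deduce all six items from the atomic-generation machinery (\Cref{prop:atomic_generation,recollection:atomically_generated}) together with recollement on strata, using throughout the presheaf description $\ConsP(\X) \equivalent \Fun(\Piinfty(\X,P),\Spc)$, which converts statements about the exit-path \category into statements about limit/colimit-preserving functors between presheaf \categories. I would start with item (1). For $S \subset P$ locally closed I would treat the open and closed cases separately (a locally closed subposet is closed inside an open one) and use the recollement relating $\XS$ to $\X$. The crux is to identify the constructible pullback $\ConsP(\X) \to \ConsS(\XS)$, under the presheaf description, with restriction along the inclusion of the full subcategory of $\Piinfty(\X,P)$ lying over $S$; \Cref{prop:atomic_generation} then gives that $\ConsS(\XS)$ is atomically generated with the expected generators, while \Cref{lem:sheaves_on_strata} compares the sheaf-theoretic and combinatorial pullbacks. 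Conservativity of $\Piinfty(\X,P) \to P$ follows by applying this to singleton strata: the fibre over $p$ is $\Piinfty(\X_p)$, a space and hence an $\infty$-groupoid, so any morphism sent to an identity in $P$ is already invertible.

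Item (2) is the crux. Unwinding \Cref{def:exit_path}, the functor $f^{\ast}\colon \ConsQ(\Y)\to\ConsP(\X)$ is already colimit-preserving and left exact, so ``$f$ exodromic'' amounts to showing that $f^{\ast}$ preserves infinite products (equivalently, that it is restriction along a functor $\Piinfty(\X,P)\to\Piinfty(\Y,Q)$). I would verify this stratum by stratum: the cube of \Cref{obs:pulling_back_stratified_geometric_morphisms} yields a base-change identification $i_p^{\ast} f^{\ast} \equivalent f_p^{\ast} j^{\ast}$, where $f_p\colon \X_p \to \Y_{\phi(p)}$ is a morphism of monodromic topoi. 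On $\LC$ the functor $f_p^{\ast}$ preserves all limits by \Cref{cor:all_morphisms_are_monodromic}, while item (1) exhibits the strata-restrictions $i_p^{\ast}$ and $j^{\ast}$ on constructibles as limit-preserving restrictions of functor \categories. When $P$ is noetherian, \Cref{lem:pulling_back_to_strata_is_jointly_conservative_for_noetherian_posets} closes the argument. The main obstacle is the general (non-noetherian) case, where the strata no longer detect equivalences; here I would reduce to the noetherian situation, either by a van Kampen argument (item (4)) covering $\X$ by pieces with finite posets, or by checking limit-preservation pointwise on $\Piinfty(\X,P)$, using that limits of presheaves are computed objectwise and that the conservative functor $\Piinfty(\X,P)\to P$ controls the atomic objects.

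Item (3) is purely categorical: a $P$-constructible (coarse) object is in particular $R$-constructible (fine), so $\ConsP(\X)\subset\ConsR(\X)$, and I would show this inclusion is closed under limits and colimits. \Cref{prop:atomic_generation} then gives atomic generation of $(\X,P)$ and, crucially, presents $\Piinfty(\X,P)$ as the idempotent completion of a localization $\Piinfty(\X,R)[W^{-1}]$; the role of the appendix on inverting arrows over a poset is to identify $W$ with the declared class $W_P$ and to check the localization is already idempotent complete. Condition (3) of exodromicity is automatic, since the pullback $s^{\ast}$ for $P$ factors as $s^{\ast}$ for $R$ composed with the limit-preserving restriction $\phi^{\ast}$. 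Item (5) generalizes the monodromic Künneth formula \Cref{prop:Kunneth_formula_for_monodromic_topoi}: using noetherianity to identify $\Fun(P\cross Q,\Spc)$ with the tensor product and to secure joint conservativity of strata, I would check that the natural functor $\ConsP(\X)\tensor\ConsQ(\Y)\to\Cons_{P\cross Q}(\X\tensor\Y)$ is an equivalence by testing on the product strata $\X_p\tensor\Y_q$, where it reduces to the $\LC$-Künneth statement; the presheaf identity $\Fun(\Piinfty(\X,P),\Spc)\tensor\Fun(\Piinfty(\Y,Q),\Spc)\equivalent\Fun(\Piinfty(\X,P)\cross\Piinfty(\Y,Q),\Spc)$ then delivers the product formula for exit-path \categories.

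Finally, item (4) is a formal descent statement: the three defining conditions of \Cref{def:exit_path} are each local on $\X$, so exodromicity descends along a cover exactly as atomic generation and closure under limits and colimits do. Item (6) combines the explicit descriptions from (1), (3) and (4): pulling back to $S$ realizes $\Piinfty(\XS,S)$ as a full subcategory (whence finiteness/compactness on a subposet of objects), a finite cover presents $\Piinfty(\X,P)$ as a finite van Kampen colimit, and coarsening presents it as a localization. The delicate case, and the reason the appendix is needed, is that localizing a finite (resp.\ compact) \category at an arbitrary set of arrows need not preserve finiteness; the special geometry of inverting precisely the arrows sent to identities under the conservative functor to $P$ is what lets finiteness and compactness survive the localization. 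Overall, I expect item (2) in its non-noetherian form to be the principal difficulty, with the localization analysis underlying items (3) and (6) the secondary technical hurdle.
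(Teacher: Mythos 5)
Your overall architecture matches the paper's: item (1) via recollement on constructible objects, item (2) by reduction to strata, item (3) via closure of $\ConsP(\X)\subset\ConsR(\X)$ under limits and colimits followed by \Cref{prop:atomic_generation}, item (4) formally, item (5) by reduction to the monodromic K\"unneth formula, and item (6) via the appendix on inverting arrows over a poset. Two places need repair.

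First, in item (2) you hesitate over the non-noetherian case and propose either a van Kampen reduction or a pointwise argument on $\Piinfty(\X,P)$. The second option is the correct one and requires no reduction: once $(\X,P)$ is exodromic, restriction to a stratum $\iupperstar_{P_q}\colon \ConsP(\X)\to\Cons_{P_q}(\X_{P_q})$ is, under the presheaf description, precomposition with the fully faithful inclusion $\Piinfty(\X,P)\cross_P P_q\hookrightarrow\Piinfty(\X,P)$ furnished by item (1); since every object of $\Piinfty(\X,P)$ lies over some $q\in Q$ and equivalences of $\Spc$-valued functors are detected objectwise, these restrictions are jointly conservative on \emph{constructible} objects for an arbitrary poset (this is \Cref{cor:restricting_to_strata_is_jointly_conservative}). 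Noetherianity, and \Cref{lem:pulling_back_to_strata_is_jointly_conservative_for_noetherian_posets}, are not needed. The rest of your stratum-by-stratum reduction then goes through exactly as in \Cref{thm:all_morphisms_are_exodromic}.

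Second, in item (5), ``testing on the product strata'' establishes at best that the fully faithful functor $\ConsP(\X)\tensor\ConsQ(\Y)\to\Cons_{P\cross Q}(\X\tensor\Y)$ is compatible with restriction to strata; it does not yield essential surjectivity, since joint conservativity of the strata restrictions does not let you conclude that an object whose restrictions lie in the image is itself in the image. The paper instead runs a noetherian induction, comparing the recollement decompositions of source and target over the decomposition of $P_{\geq p}$ into $\{p\}$ and $P_{>p}$ and invoking \Cref{lem:comparison_of_recollements_conservativity} to propagate the equivalence from the two pieces to the glued \category. You need some such gluing step; without it the essential surjectivity of $\boxtensor$ is unproven.
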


\Cref{subsec:stability_under_pulling_back_to_locally_closed_subposets} proves (1), \cref{subsec:all_morphisms_are_exodromic} proves (2), \cref{subsec:stability_under_coarsening} proves (3), \cref{subsec:checking_exodromy_locally} proves (4), \cref{subsec:Kunneth_formula} proves (5), and \cref{subsec:stability_properties_of_categorical_finiteness_and_compactness} proves (6).
Before moving on, we also pose two question related to \Cref{thm:stability_properties_of_stratified_topoi}.
First:

\begin{question}\label{quest:is_conservativity_necessary_for_Kunneth}
	Can one prove the Künneth formula without the extra noetherian hypothesis? 
\end{question}

\noindent Second, as noted earlier (see \Cref{obs:X_monodromic_implies_all_slices_are_monodromic}), if $ \X $ is a monodromic \topos and $ U \in \X $, then the slice \topos $ \X_{/U} $ is also monodromic.
We have not listed the analogous stability property for exodromic \topoi in \Cref{thm:stability_properties_of_stratified_topoi}; we do not know if it is true.
Thus we ask:

\begin{question}\label{quest:are_topoi_etale_over_an_exodromic_topos_exodromic}
	Let $ (\X,P) $ be a stratified \topos and $ U \in \X $.
	Then composing the natural geometric morphism $ \fromto{\X_{/U}}{\X} $ with the stratification of $ \X $ gives $ \X_{/U} $ a natural $ P $-stratification.
	Is the stratified \topos $ (\X_{/U},P) $ exodromic? 
\end{question}


\subsection{Stability under pulling back to locally closed subposets}\label{subsec:stability_under_pulling_back_to_locally_closed_subposets}

Let $ (\X,P) $ be an exodromic stratified \topos. 
The purpose of this subsection is to show that for each locally closed subposet $ S \subset P $, the stratified \topos $ (\XS,S) $ is exodromic, the inclusion $ \iSlowerstar \colon \incto{(\XS,S)}{(\X,P)} $ is exodromic, and the natural functor
\begin{equation*}
	\fromto{\Piinfty(\XS,S)}{\Piinfty(\X,P) \cross_P S}
\end{equation*}
is an equivalence (see \Cref{cor:stability_under_pulling_back_to_locally_closed_subposets}).
This result generalizes \cites[Proposition 3.6-(2)]{arXiv:2108.01924}[Proposition 3.13-(1)]{arXiv:2308.09550} to the setting of exodromic stratified \topoi; the proof is essentially the same as theirs, just adapted to our more general setting.
A key step is to show that both constructible objects and functors out of exit-path \categories satisfy \textit{recollement}.
We refer the reader to \cites[\HAsec{A.8}]{HA}[\SAGsec{7.2}]{SAG}[\S6.1]{arXiv:2109.12250}[\S2]{arXiv:2110.06567} for background on recollements.

We start by proving a general recollement result for \categories of functors out of \acategory with a functor to a poset.  

\begin{notation}\label{ntn:fibers_of_a_functor_to_a_poset}
	Let $ F \colon \fromto{\Ccal}{P} $ be a functor from \acategory to a poset.
	Given a full subposet $ S \subset P $, we write $ \Ccal_S \colonequals \Ccal \cross_P S $.
\end{notation}

\begin{observation}\label{obs:C_S_to_C_fully_faithful}
	In the setting of \Cref{ntn:fibers_of_a_functor_to_a_poset}, note that since the inclusion $ S \subset P $ is fully faithful, its basechange $ \fromto{\Ccal_S}{\Ccal} $ is fully faithful with image those objects lying over $ S $.
\end{observation}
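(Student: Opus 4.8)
The plan is to verify the two assertions separately: that $ j \colon \fromto{\Ccal_S}{\Ccal} $ is fully faithful, and that its essential image consists of the objects of $ \Ccal $ lying over $ S $. The only input is that the inclusion $ i \colon \incto{S}{P} $ of a full subposet is fully faithful, and the mechanism is the standard fact that fully faithful functors are stable under base change in $ \Catinfty $.

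First I would establish full faithfulness. Since $ \Ccal_S \colonequals \Ccal \cross_P S $ is a pullback in $ \Catinfty $, mapping spaces are computed as the corresponding pullback of spaces: writing $ \bar x, \bar y \in S $ for the images of objects $ x, y \in \Ccal_S $, one has
\begin{equation*}
	\Map_{\Ccal_S}(x,y) \equivalent \Map_{\Ccal}(jx, jy) \cross_{\Map_P(F jx, F jy)} \Map_S(\bar x, \bar y) \period
\end{equation*}
This holds because the cotensor $ \Fun([1], -) $ and the formation of fibers over a point both preserve limits, so the mapping-space construction carries the pullback of \categories to the corresponding pullback of spaces. Now $ F jx \equivalent i \bar x $ and $ F jy \equivalent i \bar y $ by the defining square, and since $ i $ is fully faithful the induced map $ \fromto{\Map_S(\bar x, \bar y)}{\Map_P(F jx, F jy)} $ is an equivalence. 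Hence the projection $ \fromto{\Map_{\Ccal_S}(x,y)}{\Map_\Ccal(jx, jy)} $ is a base change of an equivalence, and therefore itself an equivalence. Thus $ j $ is fully faithful.

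It remains to compute the essential image, and here I would use crucially that $ P $ is a poset. An object of $ \Ccal_S $ is a triple $ (x, s, \alpha) $ with $ x \in \Ccal $, $ s \in S $, and $ \alpha $ an equivalence $ \fromto{F(x)}{i(s)} $ in $ P $. Since $ P $ is a poset, such an $ \alpha $ is unique up to contractible choice and exists precisely when $ F(x) = s $ as elements of $ P $, that is, precisely when $ F(x) $ lies in $ S $. Consequently the space of lifts of a given $ x \in \Ccal $ to $ \Ccal_S $ is contractible if $ F(x) \in S $ and empty otherwise, so the essential image of $ j $ is exactly the full subcategory of $ \Ccal $ spanned by the objects lying over $ S $. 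Combined with full faithfulness, $ j $ exhibits $ \Ccal_S $ as this full subcategory.

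I expect no genuine obstacle here. The only points requiring a little care are the mapping-space formula for a pullback of \categories (which is standard, following from the preservation of limits by cotensoring with $ [1] $ and by passing to fibers) and the use of the poset hypothesis to reduce the essential-image computation to a statement about objects rather than a homotopy-coherent lifting condition.
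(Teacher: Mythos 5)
Your argument is correct and is exactly the justification the paper leaves implicit: the observation is stated without proof, resting on the standard fact that fully faithful functors are stable under base change, which you verify directly via the mapping-space formula for a pullback of \categories, together with the poset hypothesis to identify the essential image. Both steps are sound and match the intended reasoning.
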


\begin{proposition}\label{prop:recollement_for_functor_to_a_poset}
	Let $ F \colon \fromto{\Ccal}{P} $ be a functor from \acategory to a poset, and let $ Z \subset P $ be a closed subposet with open complement $ U = P \sminus Z $.
	Write $ i \colon \incto{\Ccal_Z}{\Ccal} $ and $ j \colon \incto{\Ccal_U}{\Ccal} $ for the inclusions.
	Then the restriction functors
	\begin{equation*}
		\iupperstar \colon \fromto{\Fun(\Ccal,\Spc)}{\Fun(\Ccal_Z,\Spc)} 
		\andeq 
		\jupperstar \colon \fromto{\Fun(\Ccal,\Spc)}{\Fun(\Ccal_U,\Spc)} 
	\end{equation*}
	exhibit $ \Fun(\Ccal,\Spc) $ as the recollement of $ \Fun(\Ccal_Z,\Spc) $ and $ \Fun(\Ccal_U,\Spc) $.
\end{proposition}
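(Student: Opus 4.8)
The plan is to verify the standard recognition criterion for a recollement (in the sense of \cite[\HAsec{A.8}]{HA}) directly from the sieve/cosieve structure of the decomposition of $ \Ccal $ induced by $ P = Z \sqcup U $. The key combinatorial input is that, since $ F $ is a functor and $ Z $ is closed (a down-set) while $ U $ is open (an up-set) in the Alexandroff topology, the inclusion $ \incto{\Ccal_Z}{\Ccal} $ is a \emph{sieve} (closed under sources of morphisms) and $ \incto{\Ccal_U}{\Ccal} $ is the complementary \emph{cosieve} (closed under targets). Concretely: every object of $ \Ccal $ lies over exactly one of $ Z $, $ U $; there are cross-morphisms from objects over $ Z $ to objects over $ U $; and there are \emph{no} morphisms from an object over $ U $ to an object over $ Z $.

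First I would record the relevant adjoints. Since $ \Ccal_Z $ and $ \Ccal_U $ are small and $ \Spc $ is complete and cocomplete, the restriction functors $ \iupperstar $ and $ \jupperstar $ preserve all limits and colimits (computed pointwise), so in particular they are left exact; moreover each admits a right adjoint given by right Kan extension, which I will write $ i_{\ast} $ and $ j_{\ast} $. Because $ i $ and $ j $ are fully faithful we have $ \iupperstar i_{\ast} \equivalent \mathrm{id} $ and $ \jupperstar j_{\ast} \equivalent \mathrm{id} $, whence $ i_{\ast} $ and $ j_{\ast} $ are fully faithful.

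The two substantive points are joint conservativity and the orthogonality condition. Joint conservativity is immediate: a natural transformation of functors $ \fromto{\Ccal}{\Spc} $ is an equivalence if and only if it is a pointwise equivalence, and since every object of $ \Ccal $ lies in $ \Ccal_Z $ or in $ \Ccal_U $, this can be tested after applying $ \iupperstar $ and $ \jupperstar $. For the orthogonality condition I would compute $ \jupperstar i_{\ast} $: for $ G \in \Fun(\Ccal_Z,\Spc) $ and $ c \in \Ccal_U $, the right Kan extension formula gives $ (i_{\ast}G)(c) $ as the limit of $ G $ over the comma \category of pairs $ (z, \fromto{c}{z}) $ with $ z \in \Ccal_Z $; since $ \Ccal_U $ is a cosieve, no such morphism $ \fromto{c}{z} $ exists, the comma \category is empty, and the limit is the terminal object. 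Hence $ \jupperstar i_{\ast} $ is the constant functor at the terminal object, the unpointed analogue of the condition ``$ j^{\ast} i_{\ast} = 0 $''.

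Finally I would assemble these into the recollement. The fully faithful right adjoints $ i_{\ast},j_{\ast} $ exhibit their essential images as reflective subcategories of $ \Fun(\Ccal,\Spc) $ with left-exact reflectors $ i_{\ast}\iupperstar $ and $ j_{\ast}\jupperstar $; the vanishing $ \jupperstar i_{\ast} \equivalent \ast $ shows the image of $ i_{\ast} $ is contained in the full subcategory of objects whose $ \jupperstar $-restriction is terminal, and joint conservativity upgrades this to an equality (if $ \jupperstar X \equivalent \ast $, then applying $ \iupperstar $ and $ \jupperstar $ to the unit $ \fromto{X}{i_{\ast}\iupperstar X} $ shows it is an equivalence). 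Together with left exactness and joint conservativity this is exactly the data of a recollement, so $ \iupperstar $ and $ \jupperstar $ exhibit $ \Fun(\Ccal,\Spc) $ as the recollement of $ \Fun(\Ccal_Z,\Spc) $ and $ \Fun(\Ccal_U,\Spc) $. The main obstacle is bookkeeping rather than conceptual: one must be careful that in this space-valued, non-pointed setting the orthogonality condition is vanishing \emph{onto the terminal object} rather than a zero object, and that the empty comma \category in the Kan extension computation is correctly identified via the cosieve property. Alternatively, one can phrase the whole argument by recognizing $ \incto{\Fun(\Ccal_U,\Spc)}{\Fun(\Ccal,\Spc)} $ as the open subtopos cut out by the $ (-1) $-truncated object obtained as the left Kan extension of the terminal functor along $ j $, and invoke the general open/closed recollement for \topoi recalled in \Cref{appendix:complements_on_topoi}.
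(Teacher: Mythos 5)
Your proof is correct, and its skeleton is the same as the paper's: both arguments reduce the statement to (a) joint conservativity of $\iupperstar$ and $\jupperstar$, which is immediate because equivalences in $\Fun(\Ccal,\Spc)$ are detected pointwise and every object of $\Ccal$ lies over $Z$ or over $U$, and (b) the fact that $\jupperstar\ilowerstar$ is constant with value the terminal object. The only genuine difference is how (b) is established. You unwind the pointwise formula for the right Kan extension: for $c \in \Ccal_U$ the comma \category of pairs $(z, \fromto{c}{z})$ with $z \in \Ccal_Z$ is empty because $\Ccal_U$ is a cosieve, so the limit is terminal. The paper instead observes that $\emptyset \equivalent \Ccal_U \cross_{\Ccal} \Ccal_Z$, that $i$ is a right fibration (being a basechange of the closed inclusion $\incto{Z}{P}$, cf.\ \Cref{lem:locally_closed_subposets_are_exponentiable}) and hence proper, and then invokes proper basechange to identify $\jupperstar\ilowerstar$ with pull--push through the empty \category. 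These are two phrasings of the same computation --- proper basechange for a right fibration, evaluated at an object $c$, is precisely the statement that the Kan-extension limit is computed over the (here empty) fiber --- so your version is somewhat more elementary and self-contained, while the paper's reuses machinery (properness of right fibrations and the basechange theorem of Clausen--{\O}rsnes Jansen) that is quoted rather than reproved. Your remarks on the unpointed form of the orthogonality condition (vanishing onto the terminal object rather than a zero object) and on the alternative argument via the open subtopos cut out by the $(-1)$-truncated left Kan extension of the terminal functor along $j$ are both accurate.
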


\begin{proof}
	Note that since every object of $ \Ccal $ belongs to either $ \Ccal_U $ or $ \Ccal_Z $ and equivalences in $ \Fun(\Ccal,\Spc) $ are detected pointwise, the functors $ \jupperstar $ and $ \iupperstar $ are jointly conservative.
	Hence the only nontrivial point to check is that the composite $ \jupperstar\ilowerstar $ is constant with value the terminal object of $ \Fun(\Ccal_U,\Spc) $.

	For this, consider the pullback square of \categories
	\begin{equation*}
	    \begin{tikzcd}[sep=2.25em]
	       \emptyset \arrow[dr, phantom, very near start, "\lrcorner", xshift=-0.25em, yshift=0.12em] \arrow[d, "a"'] \arrow[r, "b"] & \Ccal_Z  \arrow[d, hooked, "i"] \\ 
	       \Ccal_U \arrow[r, hooked, "j"'] & \Ccal \period
	    \end{tikzcd}
	\end{equation*}
	Since $ i $ is a right fibration (\Cref{lem:locally_closed_subposets_are_exponentiable}), $ i $ is a \textit{proper} functor in the sense of \cites[Definition 2.22]{arXiv:2108.01924}. 
	(See also \cites[\HTTsubsec{4.1.2}]{HTT}[\S4.4]{MR3931682}.)
	Hence proper basechange \cite[Theorem 2.27]{arXiv:2108.01924} implies that the exchange transformation 
	\begin{equation*}
		\fromto{\jupperstar\ilowerstar}{\alowerstar\bupperstar}
	\end{equation*}
	is an equivalence.
	To complete the proof, notice that the functor
	\begin{equation*}
		\bupperstar \colon \fromto{\Fun(\Ccal_Z,\Spc)}{\Fun(\emptyset,\Spc) \equivalent \pt}
	\end{equation*}
	is the unique functor and the functor $ \alowerstar \colon \fromto{\pt}{\Fun(\Ccal_U,\Spc)} $ picks out the terminal object.
\end{proof}

We now turn to showing that constructible objects satisfy recollement.
Let us introduce a special class of coefficients we are interested in:

\begin{definition}\label{def:compatibility_with_recollements}
	We say that a presentable \category $ \Ecal $ is \defn{compatible with recollements} if for every recollement datum of \topoi
	\begin{equation*}
		\iupperstar \colon \X \to \Z \andeq \jupperstar \colon \X \to \U \comma 
	\end{equation*}
	the induced functors
	\begin{equation*}
		\iupperstar \tensor \Ecal \colon \X \tensor \Ecal \to \Z \tensor \Ecal \andeq \jupperstar \tensor \Ecal \colon \X \tensor \Ecal \to \U \tensor \Ecal 
	\end{equation*}
	exhibit $\X \tensor \Ecal$ as the recollement of $\Z \tensor \Ecal$ and $\U \tensor \Ecal$.
\end{definition}

\begin{recollection}\label{observation:recollement_for_sheaves}
	It follows respectively from \cite[Corollary 2.18 and Proposition 2.26]{arXiv:2108.03545} that if $ \Ecal $ is either compactly assembled or stable, then it is compatible with recollements.
\end{recollection}

\begin{observation}[(see \Cref{rec:open-closed_recollement,prop:formula_for_pullbacks_along_etale_morphisms_and_closed_immersions})]
	Let $ (\X,P) $ be a stratified \topos and let $ Z \subset P $ be a closed subposet with open complement $ U = P \sminus Z $.
	Then the functors
	\begin{equation*}
		\iupperstar_Z \colon \fromto{\X}{\XZ} \andeq \iupperstar_U \colon \fromto{\X}{\X_{U}}
	\end{equation*}
	exhibit $ \X $ as the recollement of $ \XZ $ and $ \XU $.
\end{observation}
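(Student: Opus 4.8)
The plan is to assemble the statement from the open--closed recollement theory and the pullback formulas collected in \Cref{appendix:complements_on_topoi}.

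First I would recall the open--closed recollement of $\Fun(P,\Spc)$ attached to the partition $P = Z \sqcup U$. Since $U$ is up-closed, it is open in the Alexandroff topology, and the restriction functor $\fromto{\Fun(P,\Spc)}{\Fun(U,\Spc)}$ is the pullback along an open immersion of \topoi; concretely, $\Fun(U,\Spc)$ is the open subtopos of $\Fun(P,\Spc)$ classified by the subobject $\chi_U \hookrightarrow 1_{\Fun(P,\Spc)}$ of the terminal object whose value is $\ast$ over $U$ and $\emptyset$ over $Z$. Dually, $\Fun(Z,\Spc)$ is the complementary closed subtopos, and by \Cref{rec:open-closed_recollement} these two restriction functors exhibit $\Fun(P,\Spc)$ as the recollement of $\Fun(Z,\Spc)$ and $\Fun(U,\Spc)$.

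Next I would transport this recollement along the stratification $\supperstar$. By definition $\XU$ and $\XZ$ are the pullbacks of $\X$ along the open immersion $\incto{\Fun(U,\Spc)}{\Fun(P,\Spc)}$ and the closed immersion $\incto{\Fun(Z,\Spc)}{\Fun(P,\Spc)}$, respectively. Applying \Cref{prop:formula_for_pullbacks_along_etale_morphisms_and_closed_immersions}, the pullback along the open immersion is the open subtopos of $\X$ classified by the subobject $\supperstar(\chi_U) \hookrightarrow 1_{\X}$ --- here I use that $\supperstar$ is left exact, so it preserves the terminal object together with its subobject $\chi_U$ --- whereas the pullback along the complementary closed immersion is precisely the complementary closed subtopos of $\X$. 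Hence $\iupperstar_U \colon \fromto{\X}{\XU}$ is an open immersion and $\iupperstar_Z \colon \fromto{\X}{\XZ}$ is its complementary closed immersion. Invoking \Cref{rec:open-closed_recollement} once more, now for $\X$, shows that $\iupperstar_Z$ and $\iupperstar_U$ exhibit $\X$ as the recollement of $\XZ$ and $\XU$.

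The only real subtlety, and the reason the pullback formula of \Cref{prop:formula_for_pullbacks_along_etale_morphisms_and_closed_immersions} is needed, lies in verifying that the fiber product of \topoi along the closed immersion $\incto{\Fun(Z,\Spc)}{\Fun(P,\Spc)}$ recovers exactly the closed complement of the open subtopos $\XU$, rather than some \emph{a priori} larger closed subtopos. This compatibility is ultimately a consequence of the left exactness of $\supperstar$, which carries the open subobject $\chi_U$ and its closed complement in $\Fun(P,\Spc)$ to the corresponding complementary pair in $\X$.
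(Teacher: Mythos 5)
Your argument is correct and is exactly the one the paper intends: the observation carries no proof of its own, only the parenthetical references to \Cref{rec:open-closed_recollement} and \Cref{prop:formula_for_pullbacks_along_etale_morphisms_and_closed_immersions}, and your proposal simply spells out how those two results combine (identify $\Fun(U,\Spc)$ and $\Fun(Z,\Spc)$ as the complementary open and closed subtopoi of $\Fun(P,\Spc)$ classified by $\chi_U$, pull back along the left exact $\supperstar$ to get the complementary pair $\X_{/\supperstar(\chi_U)}$ and $\X_{\sminus\supperstar(\chi_U)}$, and apply the open--closed recollement to $\X$). No gaps.
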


In order to prove recollement for constructible objects, we need the following fact:

\begin{lemma}\label{lem:terminal_object_constructible}
	Let $ \X $ be \atopos and let $ \Ecal $ be a presentable \category.
	Then the terminal object of $ \Sh(\X;\Ecal) $ is constant and hypercomplete.
\end{lemma}

\begin{proof}
	For hypercompleteness, observe that since the inclusion $ \Sh(\X^{\hyp};\Ecal) \inclusion \Sh(\X;\Ecal) $ is a right adjoint, it carries the terminal object to the terminal object; hence the terminal object of $ \Sh(\X;\Ecal) $ is hypercomplete.
	For constancy, choose a small \category $ \Ccal $ and a fully faithful embedding $ i \colon \X \inclusion \PSh(\Ccal) $ with left exact left adjoint.
	Note that the constant functor $\Gammaupperstar \colon \Spc \to \PSh(\Ccal) $ is both a left and a right adjoint.
	By the functoriality of the tensor product, the induced functor $ \Gammaupperstar \tensor \Ecal $ is also both a left and a right adjoint.
	In particular, $ \Gammaupperstar \tensor \Ecal $ preserves the terminal object; hence the terminal object of
	\begin{equation*}
		\Sh(\PSh(\Ccal);\Ecal) \equivalent \PSh(\Ccal;\Ecal)
	\end{equation*}
	is constant.
	Since $ i \tensor \Ecal \colon \Sh(\X;\Ecal) \inclusion \Sh(\PSh(\Ccal);\Ecal) $ is a fully faithful right adjoint, it carries the terminal object to the terminal object; we deduce that terminal object of $ \Sh(\X;\Ecal) $ is constant.
\end{proof}

\begin{lemma}\label{lem:preservation_of_constructibility_and_recollement}
	Let $ (\X,P) $ be a stratified \topos, let $ Z \subset P $ be a closed subposet with open complement $ U = P \sminus Z $, and let $ \Ecal $ be a presentable \category.
	If $ \Ecal $ is compatible with recollements, then:
	\begin{enumerate}
		\item\label{lem:preservation_of_constructibility_and_recollement.1} If $ F \in \Cons_U(\XU;\Ecal) $, then $ i_{U,!}(F) \in \ConsP(\X;\Ecal) $.

		\item\label{lem:preservation_of_constructibility_and_recollement.2} If $ G \in \Cons_Z(\XZ;\Ecal) $, then $ i_{Z,*}(G) \in \ConsP(\X;\Ecal) $.

		\item\label{lem:preservation_of_constructibility_and_recollement.3} The composite $ \iupperstar_U i_{Z,*} \colon \fromto{\ConsZ(\XZ;\Ecal)}{\ConsU(\XU;\Ecal)} $ is constant with value the terminal object.

		\item\label{lem:preservation_of_constructibility_and_recollement.4} The functors 
		\begin{equation*}
			\iupperstar_Z \colon \fromto{\ConsP(\X;\Ecal)}{\ConsZ(\XZ;\Ecal)} \andeq \iupperstar_U \colon \fromto{\ConsP(\X;\Ecal)}{\ConsU(\X_{U};\Ecal)}
		\end{equation*}
		are jointly conservative.
	\end{enumerate}
\end{lemma}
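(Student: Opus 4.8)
The plan is to reduce everything to the recollement structure on $ \Sh(\X;\Ecal) $ and a stratum-by-stratum check of constructibility. The key input is the recollement recalled just before the statement, combined with the hypothesis that $ \Ecal $ is compatible with recollements (\Cref{def:compatibility_with_recollements}): the pair $ (\iupperstar_Z,\iupperstar_U) $ exhibits $ \Sh(\X;\Ecal) $ as the recollement of $ \Sh(\XZ;\Ecal) $ and $ \Sh(\XU;\Ecal) $. From this I extract the standard gluing identities: since $ i_{U,!} $ and $ i_{Z,\ast} $ are fully faithful, $ \iupperstar_U i_{U,!} \simeq \mathrm{id} $ and $ \iupperstar_Z i_{Z,\ast} \simeq \mathrm{id} $, whereas the two cross terms $ \iupperstar_Z i_{U,!} $ and $ \iupperstar_U i_{Z,\ast} $ are the constant functors at the initial and terminal objects, respectively; moreover $ \iupperstar_Z $ and $ \iupperstar_U $ are jointly conservative. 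I also use that for $ p \in U $ (resp. $ p \in Z $) the $ p $-th stratum of $ (\X,P) $ coincides with that of $ (\XU,U) $ (resp. $ (\XZ,Z) $), so that $ \iupperstar_p $ factors through $ \iupperstar_U $ (resp. $ \iupperstar_Z $).

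To prove \eqref{lem:preservation_of_constructibility_and_recollement.1} and \eqref{lem:preservation_of_constructibility_and_recollement.2} I test $ P $-constructibility one stratum at a time via \Cref{def:constructible_objects}. For \eqref{lem:preservation_of_constructibility_and_recollement.1}, take $ F \in \Cons_U(\XU;\Ecal) $. If $ p \in U $, then $ \iupperstar_p i_{U,!}(F) $ is computed using $ \iupperstar_U i_{U,!} \simeq \mathrm{id} $ as the stratum-restriction of $ F $, which is locally constant since $ F $ is $ U $-constructible. If $ p \in Z $, the cross term gives that $ \iupperstar_p i_{U,!}(F) $ is the pullback of the initial object; as stratum pullbacks preserve colimits this is the initial object of $ \Sh(\X_p;\Ecal) $, which is constant because the constant-sheaf functor, being a left adjoint, preserves initial objects. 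For \eqref{lem:preservation_of_constructibility_and_recollement.2}, take $ G \in \Cons_Z(\XZ;\Ecal) $; the case $ p \in Z $ is symmetric using $ \iupperstar_Z i_{Z,\ast} \simeq \mathrm{id} $. The case $ p \in U $ is where the hypothesis enters: $ \iupperstar_p i_{Z,\ast}(G) $ is the pullback of the terminal object along the left-exact functor $ \iupperstar_p $, hence the terminal object of $ \Sh(\X_p;\Ecal) $, and this is locally constant precisely because we assumed the terminal object of $ \Sh(\X;\Ecal) $ is $ P $-constructible.

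Item \eqref{lem:preservation_of_constructibility_and_recollement.3} is then the restriction of the gluing identity $ \iupperstar_U i_{Z,\ast} \simeq (\text{const. terminal}) $ to the full subcategory $ \ConsZ(\XZ;\Ecal) $; its value is the terminal object of $ \Sh(\XU;\Ecal) $, which lies in $ \ConsU(\XU;\Ecal) $ by exactly the terminal-object computation carried out in \eqref{lem:preservation_of_constructibility_and_recollement.2} (applied to the strata $ p \in U $). Item \eqref{lem:preservation_of_constructibility_and_recollement.4} is immediate: the pair $ (\iupperstar_Z,\iupperstar_U) $ is jointly conservative on all of $ \Sh(\X;\Ecal) $ by the recollement, and joint conservativity is inherited by the full subcategory $ \ConsP(\X;\Ecal) $.

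The only genuine subtlety — and the reason the hypothesis on the terminal object appears at all — is the asymmetry between the two cross-term gluing identities. The initial object is automatically constant, since the constant-sheaf functor is a left adjoint and preserves initial objects; the terminal object is preserved by the (left-exact) stratum pullbacks but need \emph{not} be locally constant in general. Consequently \eqref{lem:preservation_of_constructibility_and_recollement.1} is unconditional, while \eqref{lem:preservation_of_constructibility_and_recollement.2} and \eqref{lem:preservation_of_constructibility_and_recollement.3} genuinely require the constructibility of the terminal object. The one place the compatibility-with-recollements hypothesis on $ \Ecal $ is used is in upgrading the recollement and its gluing identities from $ \Spc $-coefficients to $ \Ecal $-coefficients; everything else is formal manipulation of the six recollement functors.
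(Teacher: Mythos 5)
Your proof is correct and follows essentially the same route as the paper's: both reduce to the recollement of $\Sh(\X;\Ecal)$ supplied by the compatibility hypothesis, use the gluing identities $\iupperstar_U i_{U,!} \simeq \id{}$, $\iupperstar_Z i_{Z,\ast} \simeq \id{}$ together with the two cross terms being constant at the initial and terminal objects, and invoke the constructibility hypothesis on the terminal object exactly where you do (making (1) unconditional and (2)--(3) conditional). Your stratum-by-stratum justification of why the initial and terminal objects of the strata are locally constant is slightly more explicit than the paper's, but the argument is the same.
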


\begin{proof}
	All of these claims essentially follow from \cref{observation:recollement_for_sheaves}.
	For (1), note that since $ \iupperstar_U i_{U,!}(F) \equivalent F $, it suffices to show that $ \iupperstar_Z i_{U,!}(F) $ is locally constant on $ \XZ $.
	By recollement, the functor
	\begin{equation*}
		\iupperstar_Z i_{U,!} \colon \fromto{\Sh(\XU;\Ecal)}{\Sh(\XZ;\Ecal)} 
	\end{equation*}
	is constant with value the initial object, which is $U$-constructible.
	For (2), note that since $ \iupperstar_Z i_{Z,*}(G) \equivalent G $, it suffices to show that $ \iupperstar_U i_{Z,*}(G) $ is $U$-constructible on $ \XU $.
	Again by recollement, the functor
	\begin{equation*}
		\iupperstar_U i_{Z,*} \colon \fromto{\Sh(\XZ;\Ecal)}{\Sh(\XU;\Ecal)} 
	\end{equation*}
	is constant with value the terminal object.
	Since $\iupperstar_U \colon \Sh(\X;\Ecal) \to \Sh(\XU;\Ecal)$ is a right adjoint and since the terminal object in $ \Sh(\X;\Ecal) $ is $P$-constructible (\Cref{lem:terminal_object_constructible}), it follows that the terminal object in $\Sh(\XU;\Ecal)$ is $U$-constructible.
	In particular, $ \iupperstar_U i_{Z,*} $ carries $ \ConsZ(\XZ;\Ecal) $ to $ \ConsU(\XU;\Ecal) $, thus proving at the same time (2) and (3).
	Item (4) is immediate from recollement.
\end{proof}

\begin{lemma}\label{lem:lower_shriek_as_kernel}
	Let $ (\X,P) $ be a stratified \topos, let $ U \subset P $ be an open subposet, and let $ \Ecal $ be a presentable \category.
	If $ \Ecal $ is compatible with recollements, then:
	\begin{enumerate}
		\item\label{lem:lower_shriek_as_kernel.1} Write $\emptyset$ for the initial object of $\ConsZ(\XZ;\Ecal)$ and set
		\begin{equation*}
			\ker(\iupperstar_Z) \colonequals \big\{ X \in \ConsP(\X;\Ecal) \mid \iupperstar_Z(X) \equivalent \emptyset \big\} \period 
		\end{equation*}
		Then the induced functor
		\begin{equation*}
			i_{U,!} \colon \ConsU(\XU;\Ecal) \hookrightarrow \ker( \iupperstar_Z ) 
		\end{equation*}
		is an equivalence.
		
		\item\label{lem:lower_shriek_as_kernel.2} Write $\ast$ for the terminal object of $\ConsU(\XU;\Ecal)$ and set
		\begin{equation*}
			\ker(\iupperstar_U) \colonequals \big\{ X \in \ConsP(\X;\Ecal) \mid \iupperstar_U(X) \equivalent \ast \big\} \period 
		\end{equation*}
		Then the induced functor
		\begin{equation*}
			i_{Z,\ast} \colon \ConsZ(\XZ;\Ecal) \hookrightarrow \ker(\iupperstar_U)  
		\end{equation*}
		is an equivalence
	\end{enumerate}
\end{lemma}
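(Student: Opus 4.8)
The plan is to deduce both statements from the recollement of the ambient sheaf \categories, restricted to constructible objects. As in the proof of \Cref{lem:preservation_of_constructibility_and_recollement}, the hypothesis that $\Ecal$ is compatible with recollements ensures that $\iupperstar_Z$ and $\iupperstar_U$ exhibit $\Sh(\X;\Ecal)$ as the recollement of $\Sh(\XZ;\Ecal)$ and $\Sh(\XU;\Ecal)$ (see \Cref{observation:recollement_for_sheaves}). From the recollement formalism (\Cref{appendix:complements_on_topoi}), the functor $i_{U,!}$ is fully faithful with essential image the full subcategory $\{F \in \Sh(\X;\Ecal) \mid \iupperstar_Z(F) \equivalent \emptyset\}$, and the functor $i_{Z,\ast}$ is fully faithful with essential image $\{F \in \Sh(\X;\Ecal) \mid \iupperstar_U(F) \equivalent \ast\}$. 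Moreover, since $i_{U,!}$ and $i_{Z,\ast}$ are fully faithful, the natural equivalences $\iupperstar_U i_{U,!} \equivalent \operatorname{id}$ and $\iupperstar_Z i_{Z,\ast} \equivalent \operatorname{id}$ hold, so that the inverse equivalences onto these essential images are computed by $\iupperstar_U$ and $\iupperstar_Z$ respectively.

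For (1), \enumref{lem:preservation_of_constructibility_and_recollement}{1} shows that $i_{U,!}$ carries $\ConsU(\XU;\Ecal)$ into $\ConsP(\X;\Ecal)$, and by recollement its image lands in $\ker(\iupperstar_Z)$. Being the restriction of a fully faithful functor, the induced functor $i_{U,!} \colon \fromto{\ConsU(\XU;\Ecal)}{\ker(\iupperstar_Z)}$ is again fully faithful, so it remains to establish essential surjectivity. Let $X \in \ker(\iupperstar_Z)$; then $X$ is $P$-constructible with $\iupperstar_Z(X) \equivalent \emptyset$, so $X$ lies in the essential image of $i_{U,!}$ at the level of $\Sh(\X;\Ecal)$ and hence $X \equivalent i_{U,!}(\iupperstar_U(X))$. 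Because $i_U \colon \incto{(\XU,U)}{(\X,P)}$ is a morphism of stratified \topoi, its pullback $\iupperstar_U$ preserves constructibility, so $\iupperstar_U(X) \in \ConsU(\XU;\Ecal)$ and $X$ lies in the essential image, proving (1).

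Part (2) is entirely analogous, using the closed half of the recollement. By \enumref{lem:preservation_of_constructibility_and_recollement}{2} the functor $i_{Z,\ast}$ carries $\ConsZ(\XZ;\Ecal)$ into $\ConsP(\X;\Ecal)$, and by \enumref{lem:preservation_of_constructibility_and_recollement}{3} (equivalently, by recollement) its image lands in $\ker(\iupperstar_U)$; it is fully faithful as before. For essential surjectivity, given $X \in \ker(\iupperstar_U)$ the recollement identification gives $X \equivalent i_{Z,\ast}(\iupperstar_Z(X))$, and $\iupperstar_Z(X) \in \ConsZ(\XZ;\Ecal)$ since the pullback $\iupperstar_Z$ along the morphism of stratified \topoi $i_Z \colon \incto{(\XZ,Z)}{(\X,P)}$ preserves constructibility.

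The argument is essentially formal once \Cref{lem:preservation_of_constructibility_and_recollement} is in hand; the only substantive points are the two essential-surjectivity verifications, and within these the key input is that the putative inverse functors $\iupperstar_U$ and $\iupperstar_Z$ send constructible objects to constructible objects. Since this is precisely the statement that pullback along a morphism of stratified \topoi preserves constructibility, I anticipate no genuine obstacle beyond correctly bookkeeping the recollement adjunctions and matching the two kernels with the essential images of $i_{U,!}$ and $i_{Z,\ast}$.
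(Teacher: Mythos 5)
Your proposal is correct and follows essentially the same route as the paper: full faithfulness is inherited from the ambient recollement of $\Sh(\X;\Ecal)$ together with \Cref{lem:preservation_of_constructibility_and_recollement}, and essential surjectivity reduces to showing $X \equivalent i_{U,!}\iupperstar_U(X)$ (resp.\ $X \equivalent i_{Z,\ast}\iupperstar_Z(X)$) and observing that $\iupperstar_U$ and $\iupperstar_Z$ preserve constructibility. The only cosmetic difference is that you invoke the standard essential-image characterization of the ambient recollement, whereas the paper checks directly that the counit (resp.\ unit) is an equivalence using the joint conservativity of $\iupperstar_Z$ and $\iupperstar_U$ from \enumref{lem:preservation_of_constructibility_and_recollement}{4}; these amount to the same computation.
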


\begin{proof}
	In both cases, it suffices to check essential surjectivity.
	So let $X \in \ker(\iupperstar_Z)$ and consider the counit $\counit \colon i_{U,!} \iupperstar_U(X) \to X$ in $\ConsP(\X;\Ecal)$.
	By \enumref{lem:preservation_of_constructibility_and_recollement}{4}, it suffices to show that $\iupperstar(\counit)$ and $\iupperstar_Z(\counit)$ are equivalences.
	The former follows from the full faithfulness of $i_{U,!}$, and the latter follows from the definition of $\ker(\iupperstar_Z)$.
	For (2), the same argument applies, starting with the unit $\unit \colon F \to i_{Z,\ast} \iupperstar_Z(F)$ in place of the counit.
\end{proof}

\begin{lemma}\label{lem:recollements}
	Let $ (\X,P) $ be a stratified \topos, let $ Z \subset P $ be a closed subposet with open complement $ U = P \sminus Z $, and let $ \Ecal $ be a presentable \category.
	If $ \Ecal $ is compatible with recollements, then:
	\begin{enumerate}
		\item\label{lem:recollements.1} If $\ConsP(\X;\Ecal)$ is presentable, then $\ConsP(\X_Z;\Ecal)$ and $\ConsP(\X_U;\Ecal)$ are also presentable.
		
		\item\label{lem:recollements.2} If $\ConsP(\X;\Ecal)$ is closed under colimits in $ \Sh(\X;\Ecal) $, then the functor $\iupperstar_U \colon \fromto{\ConsP(\X;\Ecal)}{\ConsU(\XU;\Ecal)}$ preserves colimits.
		
		\item\label{lem:recollements.3} If $\ConsP(\X;\Ecal)$ is closed under finite limits in $ \Sh(\X;\Ecal) $, then the functor $\iupperstar_Z \colon \fromto{\ConsP(\X;\Ecal)}{\ConsZ(\XZ;\Ecal)}$ is left exact.
		
		\item\label{lem:recollements.4} If $\ConsP(\X;\Ecal)$ is presentable and closed under colimits and finite limits in $ \Sh(\X;\Ecal) $, then the functors $\iupperstar_Z$ and $\iupperstar_U$ exhibit $\ConsP(\X;\Ecal)$ as the recollement of $\ConsZ(\XZ;\Ecal)$ and $\ConsU(\XU;\Ecal)$.
	\end{enumerate}
\end{lemma}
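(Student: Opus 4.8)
The plan is to deduce all four parts from the two equivalences of \Cref{lem:lower_shriek_as_kernel}, which identify $\ConsU(\XU;\Ecal)$ and $\ConsZ(\XZ;\Ecal)$ with the full subcategories $\ker(\iupperstar_Z)$ and $\ker(\iupperstar_U)$ of $\ConsP(\X;\Ecal)$ via $i_{U,!}$ and $i_{Z,\ast}$. Since \Cref{lem:preservation_of_constructibility_and_recollement} shows that $i_{U,!}$ and $i_{Z,\ast}$ preserve constructibility (while $\iupperstar_U$ and $\iupperstar_Z$ preserve constructibility as pullbacks), the sheaf-level adjunctions $i_{U,!} \dashv \iupperstar_U$ and $\iupperstar_Z \dashv i_{Z,\ast}$ restrict to adjunctions on constructible objects. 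In particular $\iupperstar_U i_{U,!} \simeq \mathrm{id}$ and $\iupperstar_Z i_{Z,\ast} \simeq \mathrm{id}$, so $\ConsU(\XU;\Ecal)$ and $\ConsZ(\XZ;\Ecal)$ are retracts of $\ConsP(\X;\Ecal)$ in $\Catinfty$; moreover $\iupperstar_Z$ is a left adjoint out of $\ConsP(\X;\Ecal)$ (hence preserves colimits) while $\iupperstar_U$ is a right adjoint out of $\ConsP(\X;\Ecal)$ (hence preserves limits).

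To prove (1), I would show each retract is presentable. As presentability of $\ConsP(\X;\Ecal)$ entails accessibility and accessible \categories are stable under retracts, both $\ConsU(\XU;\Ecal)$ and $\ConsZ(\XZ;\Ecal)$ are accessible, so it remains to supply all small (co)limits. For $\ConsU(\XU;\Ecal) \simeq \ker(\iupperstar_Z)$: since $\iupperstar_Z$ preserves colimits and a colimit of initial objects is initial, $\ker(\iupperstar_Z)$ is closed under colimits in the presentable \category $\ConsP(\X;\Ecal)$, hence is cocomplete; being accessible and cocomplete, it is presentable. For $\ConsZ(\XZ;\Ecal) \simeq \ker(\iupperstar_U)$: since $\iupperstar_U$ preserves limits and a limit of terminal objects is terminal, $\ker(\iupperstar_U)$ is closed under limits in $\ConsP(\X;\Ecal)$, and an accessible full subcategory of a presentable \category that is closed under limits is a reflective localization, hence presentable.

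Parts (2) and (3) are dual transports of closure properties along the sheaf-level recollement of \Cref{observation:recollement_for_sheaves}, under which $i_{U,!} \colon \Sh(\XU;\Ecal) \to \Sh(\X;\Ecal)$ is fully faithful and colimit-preserving, while $i_{Z,\ast} \colon \Sh(\XZ;\Ecal) \to \Sh(\X;\Ecal)$ is fully faithful and left exact. For (2), assuming $\ConsP(\X;\Ecal)$ is closed under colimits in $\Sh(\X;\Ecal)$, the functor $\iupperstar_Z$ preserves colimits, so $\ker(\iupperstar_Z)$ is closed under colimits in $\Sh(\X;\Ecal)$; transporting along the fully faithful colimit-preserving $i_{U,!}$ shows $\ConsU(\XU;\Ecal)$ is closed under colimits in $\Sh(\XU;\Ecal)$. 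Then a colimit in $\ConsP(\X;\Ecal)$, being computed in $\Sh(\X;\Ecal)$, is carried by the colimit-preserving $\iupperstar_U$ to the corresponding colimit in $\Sh(\XU;\Ecal)$, which already lies in $\ConsU(\XU;\Ecal)$; hence $\iupperstar_U$ preserves colimits. Part (3) is identical after swapping colimits for finite limits, initial for terminal, the roles of $Z$ and $U$, and $i_{U,!}$ for the fully faithful left-exact $i_{Z,\ast}$, using that $\iupperstar_U$ is left exact at the sheaf level.

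Finally, (4) assembles the previous parts into the recollement axioms (see \cite[\HAsec{A.8}]{HA} and the references cited above for recollements): parts (1) and (2) with the adjoint functor theorem furnish a fully faithful right adjoint to the colimit-preserving $\iupperstar_U$, part (3) exhibits $\iupperstar_Z$ as a left-exact localization with fully faithful right adjoint $i_{Z,\ast}$, \Cref{lem:preservation_of_constructibility_and_recollement} supplies both the joint conservativity of $\iupperstar_Z, \iupperstar_U$ and the fact that $\iupperstar_U i_{Z,\ast}$ is constant at the terminal object, and the left-exactness of $\iupperstar_U$ is inherited from the sheaf level. The main obstacle I anticipate is part (1): one must track the handedness of each adjunction carefully---so that $\iupperstar_Z$ yields closure under colimits and $\iupperstar_U$ closure under limits---and separate the accessibility input (from the retract structure) from the (co)completeness input (from closure under (co)limits), precisely because the inclusion $\ConsP(\X;\Ecal) \hookrightarrow \Sh(\X;\Ecal)$ is not assumed to preserve (co)limits in part (1).
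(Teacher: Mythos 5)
Your treatment of (2), (3), and (4) tracks the paper's argument: both transport the closure hypotheses along the sheaf-level recollement, using that $\iupperstar_U$ preserves colimits and $\iupperstar_Z$ is left exact on $\Sh(\X;\Ecal)$ together with full faithfulness of the constructible subcategories, and both obtain the right adjoint of $\iupperstar_U$ in (4) from (1), (2), and the adjoint functor theorem. (In your sketch of (3) you invoke left-exactness of $\iupperstar_U$ at the sheaf level; the functor you actually need there is $\iupperstar_Z$, but this reads as a slip of notation rather than a substantive error.)

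The genuine gap is in (1), at the step ``accessible $\infty$-categories are stable under retracts.'' This is not a fact you can use. A retract realized by a reflective (resp.\ coreflective) inclusion is exactly a (co)localization, and whether an arbitrary reflective subcategory of a presentable $\infty$-category is again accessible is the orthogonal subcategory problem, which is not provable in ZFC and is tied to large-cardinal principles. Nothing in your setup supplies accessibility of the retraction data either: on constructible objects $\iupperstar_U$ is only known to be a right adjoint, so you cannot split the idempotent $i_{U,!}\iupperstar_U$ inside the $\infty$-category of accessible $\infty$-categories and accessible functors, and treating the section as an accessible functor would presuppose the accessibility of $\ConsU(\XU;\Ecal)$ that you are trying to establish. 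Once accessibility is removed, your remaining (correct) inputs---closure of $\ker(\iupperstar_Z)$ under colimits and of $\ker(\iupperstar_U)$ under limits---do not suffice to conclude presentability. The paper circumvents exactly this point with the $\infty$-categorical reflection theorem of Ragimov--Schlank: it first checks that $\ConsZ(\XZ;\Ecal) \simeq \ker(\iupperstar_U)$ is a reflective subcategory of $\ConsP(\X;\Ecal)$ that is moreover closed under weakly contractible, hence filtered, colimits, so that the reflection theorem yields presentability with no accessibility hypothesis; presentability of $\ConsU(\XU;\Ecal) \simeq \ker(\iupperstar_Z)$ is then deduced from \Cref{lem:lower_shriek_as_kernel}. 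To repair your proof you would need either that theorem or an independent verification that the two kernels are accessible.
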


\begin{proof}
	For (1), notice that \enumref{lem:preservation_of_constructibility_and_recollement}{2} implies that $\ConsZ(\XZ;\Ecal)$ is a localization of $\ConsP(\X;\Ecal)$.
	Moreover, \enumref{lem:lower_shriek_as_kernel}{2} immediately implies that $\ConsZ(\X;\Ecal)$ is closed under weakly contractible colimits inside $\ConsP(\X;\Ecal)$; in particular
	\begin{equation*}
		\ConsZ(\X;\Ecal) \subset \ConsP(\X;\Ecal)
	\end{equation*}
	is closed under filtered colimits.
	Thus, the \categorical reflection theorem \cite[Theorem 1.1]{arXiv:2207.09244} implies that $\ConsZ(X;\Ecal)$ is presentable.
	Then, \enumref{lem:lower_shriek_as_kernel}{1} implies that $\ConsU(\XU;\Ecal)$ is presentable.
	
	Item (2) follows from the given assumption, the full faithfulness of $\ConsU(\XU;\Ecal)$ inside of $\Sh(\XU;\Ecal)$, and the fact that $\iupperstar_U \colon \Sh(\X;\Ecal) \to \Sh(\XU;\Ecal)$ preserves colimits and preserve constructible objects.
	A similar argument shows (3) as well.

	We are left to prove (4).
	In virtue of \cref{lem:preservation_of_constructibility_and_recollement}, all we are left to do is to check that $\iupperstar_U$ admits a right adjoint and that $\iupperstar_Z$ is left exact.
	The first statement follows from (1), (2), and the adjoint functor theorem, while the second follows directly from (3).
\end{proof}

In what follows, we will need to use the fact that given an open immersion of \topoi $ \jlowerstar \colon \incto{\U}{\Y} $, the \topos $ \U $ is naturally identified with the slice $ \Y_{/\jlowershriek(1)} $.
Hence we recall some basic results about slice \categories.

\begin{recollection}\label{rec:slicing_adjunctions}
	Let $ i \colon \incto{\Ccal}{\Dcal} $ be a fully faithful functor of \categories and let $ c \in \Ccal $.
	Then:
	\begin{enumerate}
		\item The induced functor $ i \colon \fromto{\Ccal_{/c}}{\Dcal_{/i(c)}} $ is fully faithful.

		\item If $ i \colon \incto{\Ccal}{\Dcal} $ admits a left adjoint $ L \colon \fromto{\Dcal}{\Ccal} $, then $ i \colon \fromto{\Ccal_{/c}}{\Dcal_{/i(c)}} $ admits a left adjoint given by the induced functor
		\begin{equation*}
			L \colon \fromto{\Dcal_{/i(c)}}{{\Ccal_{/Li(c)}} \equivalent \Ccal_{/c}} \period
		\end{equation*}

		\item If $ i \colon \incto{\Ccal}{\Dcal} $ admits a right adjoint $ R \colon \fromto{\Dcal}{\Ccal} $, then $ i \colon \fromto{\Ccal_{/c}}{\Dcal_{/i(c)}} $ admits a right adjoint given by the induced functor
		\begin{equation*}
			R \colon \fromto{\Dcal_{/i(c)}}{{\Ccal_{/Ri(c)}} \equivalent \Ccal_{/c}} \period
		\end{equation*}
	\end{enumerate}
	See \HTT{Proposition}{5.2.5.1}.
\end{recollection}

\begin{lemma}\label{lem:pullbacks_of_slice_categories}
	Let $ \Dcal $ be \acategory, $ \Ccal \subset \Dcal $ a full subcategory, and $ c \in \Ccal $.
	Then the natural square
	\begin{equation*}
	    \begin{tikzcd}[sep=2.25em]
	       \Ccal_{/c} \arrow[d] \arrow[r, hooked] & \Dcal_{/c}  \arrow[d] \\ 
	       \Ccal \arrow[r, hooked] & \Dcal
	    \end{tikzcd}
	\end{equation*}
	is a pullback square of \categories.
	Here the vertical functors are the forgetful functors.
\end{lemma}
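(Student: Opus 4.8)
The plan is to recognize both $\Ccal_{/c}$ and the pullback $\Dcal_{/c}\times_{\Dcal}\Ccal$ as right fibrations over $\Ccal$ and to check that they classify the same presheaf. Write $i\colon\incto{\Ccal}{\Dcal}$ for the inclusion. First I would recall that the forgetful functor $\Dcal_{/c}\to\Dcal$ is a right fibration, classified under straightening by the representable presheaf $\Map_{\Dcal}(-,c)\colon\Dcal^{\op}\to\Spc$, and likewise that $\Ccal_{/c}\to\Ccal$ is the right fibration classified by $\Map_{\Ccal}(-,c)$ (see \cite{HTT}). Since straightening is compatible with base change — pulling a right fibration back along a functor corresponds to restricting its classifying presheaf — the pulled-back fibration $\Dcal_{/c}\times_{\Dcal}\Ccal\to\Ccal$ is classified by the restriction $i^{*}\Map_{\Dcal}(-,c)=\Map_{\Dcal}(i(-),c)$.

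Under this dictionary, the canonical comparison functor $\Ccal_{/c}\to\Dcal_{/c}\times_{\Dcal}\Ccal$ lying over $\Ccal$ corresponds to the map of presheaves $\Map_{\Ccal}(-,c)\to\Map_{\Dcal}(i(-),c)$ induced by the action of $i$ on mapping spaces. Because $\Ccal\subset\Dcal$ is a \emph{full} subcategory, $i$ is fully faithful, so for every $x\in\Ccal$ the comparison $\Map_{\Ccal}(x,c)\to\Map_{\Dcal}(i(x),i(c))=\Map_{\Dcal}(i(x),c)$ is an equivalence, naturally in $x$. Hence the two right fibrations over $\Ccal$ classify equivalent presheaves, the comparison functor is an equivalence, and the square is cartesian.

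Alternatively, one can argue directly with mapping spaces. Since $i$ is fully faithful, the projection $\Dcal_{/c}\times_{\Dcal}\Ccal\to\Dcal_{/c}$ is fully faithful with essential image the objects $u\colon d\to c$ for which $d$ lies in $\Ccal$; the same full faithfulness, applied to the fiber sequences computing mapping spaces in a slice, shows that $\Ccal_{/c}\to\Dcal_{/c}$ is fully faithful with exactly the same essential image, so the two agree inside $\Dcal_{/c}$. Either way there is no real mathematical content beyond the slogan that a fully faithful functor induces an equivalence on the relevant representable presheaves; I expect the \textbf{main obstacle} to be purely a matter of bookkeeping, namely identifying the geometrically given comparison functor with the induced transformation of classifying presheaves and keeping the handedness straight (right fibrations and overcategories) so that the base-change and straightening compatibilities are invoked correctly.
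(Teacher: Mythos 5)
Your proof is correct, but it takes a genuinely different route from the paper's. The paper argues purely with limits: it assembles a commutative cube whose front and back vertical faces are the defining pullback squares
\begin{equation*}
	\Ccal_{/c} \equivalent \Fun([1],\Ccal) \crosslimits_{\Ccal \cross \Ccal} (\Ccal \cross \{c\})
	\qquad\text{and}\qquad
	\Dcal_{/c} \equivalent \Fun([1],\Dcal) \crosslimits_{\Dcal \cross \Dcal} (\Dcal \cross \{c\}) \comma
\end{equation*}
observes that fullness of $\Ccal \subset \Dcal$ makes the square comparing $\Fun([1],\Ccal) \to \Ccal \cross \Ccal$ with $\Fun([1],\Dcal) \to \Dcal \cross \Dcal$ a pullback, and concludes by pasting pullbacks. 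No straightening is needed. Your main argument instead recognizes $\Ccal_{/c} \to \Ccal$ and $\Dcal_{/c} \crosslimits_{\Dcal} \Ccal \to \Ccal$ as right fibrations classified by $\Map_{\Ccal}(-,c)$ and $\Map_{\Dcal}(i(-),c)$ respectively, and uses full faithfulness of $i$ to identify these presheaves; your fallback via full faithfulness and essential images inside $\Dcal_{/c}$ is also sound. What your route costs is precisely the bookkeeping you flag: invoking compatibility of straightening with base change and matching the geometric comparison functor with the induced map of presheaves. What it buys is a description that generalizes immediately (e.g.\ to slices over arbitrary diagrams, where the representable presheaf is replaced by a limit of representables), whereas the paper's cube is the more elementary and self-contained argument for the statement at hand.
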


\begin{proof}
	Consider the commutative cube
	\begin{equation*}
        \begin{tikzcd}[column sep={12ex,between origins}, row sep={8ex,between origins}]
            \Ccal_{/c} \arrow[rr] \arrow[dd]  \arrow[dr, hooked] & & \Fun([1],\Ccal) \arrow[dd, "{(\source,\target)}"' near end]  \arrow[dr, hooked] \\
            & \Dcal_{/c} \arrow[rr, crossing over] & & \Fun([1],\Dcal) \arrow[dd, "{(\source,\target)}"]  \\
            \Ccal \cross \{c\} \arrow[rr] \arrow[dr, hooked] & & \Ccal \cross \Ccal \arrow[dr, hooked] \\
            & \Dcal \cross \{c\} \arrow[rr] \arrow[from=uu, crossing over] & & \Dcal \cross \Dcal  \period
        \end{tikzcd}
    \end{equation*}
    By definition, the front and back vertical faces are pullbacks.
    Since $ \Ccal \subset \Dcal $ is a full subcategory, the right-hand vertical face is a pullback.
    Hence the left-hand vertical face is also a pullback.
\end{proof}

Let us now give an alternative description of constructible objects in a stratified \topos obtained by pulling back to an open subposet.

\begin{lemma}\label{lem:pullback_description_of_constructible_objects_on_an_open}
	Let $ (\X,P) $ be a stratified \topos and let $ U \subset P $ be an open subposet.
	Then: 
	\begin{enumerate}
		\item\label{lem:pullback_description_of_constructible_objects_on_an_open.1} The square
		\begin{equation*}
		    \begin{tikzcd}[sep=2.25em]
		       \ConsU(\XU) \arrow[d, hooked, "i_{U,!}"'] \arrow[r, hooked] & \XU \arrow[d, hooked, "i_{U,!}"]  \\ 
		       \ConsP(\X) \arrow[r, hooked] & \X
		    \end{tikzcd}
		\end{equation*}
		is a pullback square of \categories.

		\item\label{lem:pullback_description_of_constructible_objects_on_an_open.2} There is a commutative square
		\begin{equation*}
			\begin{tikzcd}[sep=2.25em]
		       \ConsU(\XU) \arrow[d, "\iUlowershriek"', "\wr"{xshift=-0.25ex}] \arrow[r, hooked] & \XU \arrow[d, "\iUlowershriek", "\wr"'{xshift=0.25ex}]  \\ 
		       \ConsP(\X)_{/\iUlowershriek(1)} \arrow[r, hooked] & \X_{/\iUlowershriek(1)}
		    \end{tikzcd}
		\end{equation*}
		where the vertical functors are equivalences and the horizontal functors are the natural inclusions.
	\end{enumerate}
\end{lemma}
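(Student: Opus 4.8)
The plan is to establish part (1) by hand, identifying the pullback of the cospan $\incto{\ConsP(\X)}{\X} \hookleftarrow \XU$ with a full subcategory of $\XU$, and then to deduce part (2) formally from part (1), the identification of the open subtopos $\XU$ with a slice of $\X$, and \Cref{lem:pullbacks_of_slice_categories}. For part (1), recall that $\iUlowershriek \colon \fromto{\XU}{\X}$ is a fully faithful left adjoint to $\iupperstar_U$ and that $\incto{\ConsP(\X)}{\X}$ is fully faithful; hence the fiber product $\ConsP(\X) \cross_{\X} \XU$ is the full subcategory of $\XU$ spanned by those $F$ with $\iUlowershriek(F) \in \ConsP(\X)$, and it suffices to show this subcategory equals $\ConsU(\XU)$. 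The key is a base-change identity over the open strata: for $p \in U$ write $r_p \colon \fromto{\XU}{\X_p}$ for the restriction to the $p$-stratum of $(\XU,U)$, whose target coincides with the $p$-stratum $\X_p$ of $(\X,P)$. Since the stratum inclusion of $\X_p$ into $\X$ factors through $\XU$, transitivity of pullback to strata (\Cref{obs:pulling_back_stratified_geometric_morphisms}) gives $\iupperstar_{p} \equivalent r_p \iupperstar_{U}$; and because $\iUlowershriek$ is fully faithful the unit $\id \to \iupperstar_{U}\iUlowershriek$ is an equivalence, whence
\[
	\iupperstar_{p}\,\iUlowershriek(F) \equivalent r_p(F) \qquad \text{for all } p \in U .
\]

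If $F \in \ConsU(\XU)$, then each $r_p(F)$ is locally constant, while restricting $\iUlowershriek(F)$ to strata in $Z \colonequals P \sminus U$ yields the initial object (a constant object); this is exactly \enumref{lem:preservation_of_constructibility_and_recollement}{1} for $\Ecal = \Spc$, whose hypotheses hold since $\Spc$ is compatible with recollements (\Cref{observation:recollement_for_sheaves}) and the terminal object $1_{\X}$ is $P$-constructible (as $\iupperstar_p(1_\X) \equivalent 1_{\X_p}$ is constant). Thus $\iUlowershriek(F) \in \ConsP(\X)$. Conversely, if $\iUlowershriek(F) \in \ConsP(\X)$, the displayed identity shows $r_p(F) \equivalent \iupperstar_p \iUlowershriek(F)$ is locally constant for every $p \in U$, so $F \in \ConsU(\XU)$. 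This proves the square of part (1) is a pullback.

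For part (2), set $c \colonequals \iUlowershriek(1)$, where $1$ is the terminal object of $\XU$; by the forward direction of part (1), $c \in \ConsP(\X)$. The identification of an open subtopos with a slice gives an equivalence $\iUlowershriek \colon \equivto{\XU}{\X_{/c}}$ under which $\iUlowershriek \colon \fromto{\XU}{\X}$ is carried to the forgetful functor $\fromto{\X_{/c}}{\X}$. Feeding this into the pullback square of part (1) yields $\ConsU(\XU) \equivalent \ConsP(\X) \cross_{\X} \X_{/c}$. On the other hand, \Cref{lem:pullbacks_of_slice_categories}, applied to the full subcategory $\ConsP(\X) \subset \X$ and the object $c \in \ConsP(\X)$, identifies this same fiber product with $\ConsP(\X)_{/c}$. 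Composing gives the left vertical equivalence $\equivto{\ConsU(\XU)}{\ConsP(\X)_{/c}}$; by construction it is compatible with the inclusions into $\XU \equivalent \X_{/c}$, so the square of part (2) commutes with both vertical functors equivalences, as claimed.

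The main obstacle is the base-change identity $\iupperstar_{p}\,\iUlowershriek \equivalent r_p$ over the open strata, i.e.\ verifying that restricting $\iUlowershriek(F)$ to a stratum lying over $U$ recomputes the corresponding stratum of $F$ inside $\XU$ (this is precisely what drives the converse inclusion in part (1)). This is where the geometry of the open immersion enters, via the transitivity of pullback to locally closed subposets and the unit equivalence for the fully faithful functor $\iUlowershriek$. Once it is in place, both directions of part (1) and all of part (2) are formal consequences of recollement, \enumref{lem:preservation_of_constructibility_and_recollement}{1}, and the slice manipulations of \Cref{rec:slicing_adjunctions,lem:pullbacks_of_slice_categories}.
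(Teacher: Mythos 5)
Your proof is correct and follows essentially the same route as the paper: part (1) is the statement that $i_{U,!}$ identifies $\ConsU(\XU)$ with the full subcategory of $\XU$ whose image under $i_{U,!}$ is $P$-constructible, proved via \enumref{lem:preservation_of_constructibility_and_recollement}{1} in one direction and full faithfulness of $i_{U,!}$ (together with the transitivity $\iupperstar_p \equivalent r_p\,\iupperstar_U$, which the paper leaves implicit) in the other; part (2) is then the same combination of the open-immersion slice identification with \Cref{lem:pullbacks_of_slice_categories}. Your explicit verification that the terminal object is $P$-constructible when $\Ecal = \Spc$ is a harmless extra check.
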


\begin{proof}
		For (1), note that it suffices to show that the fully faithful functor
	\begin{equation*}
		i_{U,!} \colon \incto{\ConsU(\XU)}{\ConsP(\X) \intersect i_{U,!}(\XU)}
	\end{equation*}
	is essentially surjective.
	For this, let $ G \in \XU $ be such that $ i_{U,!}(G) $ is $ P $-constructible.
	Write $ Z \colonequals P \sminus U $.
	Then $ \iupperstar_Z i_{U,!}(G) = \emptyset $ and $ \iupperstar_U i_{U,!}(G) $ is $ U $-constructible.
	Hence $ G \in \ConsU(\XU) $, completing the proof.
	
	For (2), note that $ \iUlowerstar \colon \incto{\XU}{\X} $ is an open immersion of \topoi, the exceptional left adjoint $ \iUlowershriek \colon \incto{\XU}{\X} $ induces an equivalence $ \equivto{\XU}{\X_{/\iUlowershriek(1)}} $ fitting into a commutative triangle 
	\begin{equation*}
		\begin{tikzcd}
			\XU \arrow[dr, "\iUlowershriek"', hooked] \arrow[rr, "\sim"{yshift=-0.25em}] & & \X_{/\iUlowershriek(1)} \arrow[dl, hooked', "\text{forget}"] \\ 
			& \X & \phantom{\X_{/\iUlowershriek(1)}} \period
		\end{tikzcd}
	\end{equation*}
	Since $ \iUlowershriek(1) \in \ConsP(\X) $, the claim follows from item (1) combined with \Cref{lem:pullbacks_of_slice_categories}.
\end{proof}

\begin{proposition}[(recollement)]\label{prop:recollement_for_constructible_objects}
	Let $s_\ast \colon \X \to \Fun(P,\Spc)$ be an exodromic stratified \topos and let $ Z \subset P $ be a closed subposet with open complement $ U = P \sminus Z $.
	Then:
	\begin{enumerate}
		\item\label{prop:recollement_for_constructible_objects.2} The functors 
		\begin{equation*}
			\iupperstar_Z \colon \fromto{\ConsP(\X)}{\ConsZ(\XZ)}
			\andeq 
			\iupperstar_U \colon \fromto{\ConsP(\X)}{\ConsU(\XU)} 
		\end{equation*}
		exhibit $ \ConsP(\X) $ as the recollement of $ \ConsZ(\XZ) $ and $ \ConsU(\XU) $.
		
		\item\label{prop:recollement_for_constructible_objects.3} The stratified \topos $ (\XU,U) $ is exodromic, the morphism $ i_{U,\ast} \colon \incto{(\XU,U)}{(\X,P)} $ is exodromic, and the induced functor
		\begin{equation*}
			\fromto{\Piinfty(X_U,U)}{\Piinfty(X,P)_U}
		\end{equation*}
		is an equivalence.

		\item\label{prop:recollement_for_constructible_objects.4} The stratified \topos $ (\XZ,Z) $ is exodromic, the morphism $ i_{Z,\ast} \colon \incto{(\XZ,Z)}{(\X,P)} $ is exodromic, and the induced functor
		\begin{equation*}
			\fromto{\Piinfty(X_Z,Z)}{\Piinfty(X,P)_Z}
		\end{equation*}
		is an equivalence.
	\end{enumerate}
\end{proposition}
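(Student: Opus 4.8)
The plan is to deduce all three items from the recollement lemmas of this subsection, the slice description of constructible objects on an open (\Cref{lem:pullback_description_of_constructible_objects_on_an_open}), and the poset recollement (\Cref{prop:recollement_for_functor_to_a_poset}). Item (1) is essentially immediate: I would apply \Cref{lem:recollements}-(4) with coefficients $\Ecal = \Spc$. The hypotheses hold because $\Spc$ is compactly generated, hence compatible with recollements by \Cref{observation:recollement_for_sheaves}; the terminal object of $\X$ is $P$-constructible by \Cref{cor:terminal_object_constructible}; and, since $(\X,P)$ is exodromic, $\ConsP(\X)$ is presentable and closed under colimits and finite limits in $\X$. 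Before attacking (2) and (3), I would record the basic comparison that the pullback $s^{\ast}\colon \Fun(P,\Spc) \to \ConsP(\X) \equivalent \Fun(\Piinfty(\X,P),\Spc)$ is restriction along $s^{\ex}\colon \Piinfty(\X,P)\to P$: indeed $s_{\ast}$ is an exodromic morphism $(\X,P)\to(\Fun(P,\Spc),P)$ (condition (3) of exodromicity is exactly that $s^{\ast}$ preserves limits on constructibles), so this identification follows from \Cref{obs:meaning_of_respecting_exit-paths}.

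For the open part (2), the slice description does all the work. By base change for the open immersion defining $\XU$ (\Cref{ntn:inclusion_i_S}), the open subobject $\iUlowershriek(1)\subset 1_{\X}$ is $s^{\ast}$ of the subterminal indicator $\chi_U\in\Fun(P,\Spc)$ of $U$; combined with the previous paragraph this gives $\iUlowershriek(1)\equivalent \chi_U\circ s^{\ex}$, the indicator of the full subcategory $\Piinfty(\X,P)_U$ of objects lying over $U$. Then \Cref{lem:pullback_description_of_constructible_objects_on_an_open} yields $\ConsU(\XU)\equivalent \ConsP(\X)_{/\iUlowershriek(1)}\equivalent \Fun(\Piinfty(\X,P),\Spc)_{/\chi}\equivalent \Fun(\Piinfty(\X,P)_U,\Spc)$, the last step because the category of elements of a subterminal indicator is the corresponding full subcategory. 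This presheaf presentation gives condition (1) of \Cref{def:exit_path}, and on atomic objects (using \Cref{recollection:atomically_generated}, noting $\Piinfty(\X,P)_U$ is idempotent complete as a retract-closed full subcategory of $\Piinfty(\X,P)$) it gives the exit-path equivalence $\Piinfty(\XU,U)\equivalent \Piinfty(\X,P)_U$. Closure in $\XU$ (condition (2)) follows from the slice square together with \Cref{lem:pullbacks_of_slice_categories} and the closure of $\ConsP(\X)$ in $\X$; condition (3) follows since under the presentation $s_U^{\ast}$ is restriction along $\Piinfty(\X,P)_U\to U$ composed with the limit-preserving inclusion into $\XU$. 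Finally $\iupperstar_U\colon\ConsP(\X)\to\ConsU(\XU)$ preserves limits (in the recollement it has the left adjoint $i_{U,!}$), so $i_{U,\ast}$ is exodromic and its induced functor is the equivalence just built.

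For the closed part (3), closure of $\ConsZ(\XZ)$ in $\XZ$ is the one genuinely new computation. Here I would use that $i_{Z,\ast}$ exhibits $\XZ$ as a reflective subcategory of $\X$ with reflector $\iupperstar_Z$, and that $\ConsZ(\XZ)=i_{Z,\ast}^{-1}(\ConsP(\X))$: limits in $\XZ$ are computed in $\X$, and colimits in $\XZ$ are the $\iupperstar_Z$-reflections of colimits in $\X$, so the closure of $\ConsP(\X)$ in $\X$ under limits and colimits, together with the fact that both $i_{Z,\ast}$ and $\iupperstar_Z$ preserve constructibility, forces $\ConsZ(\XZ)$ to be closed under limits and colimits in $\XZ$. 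For atomic generation and the exit-path identification — where no slice description is available — I would match the recollement of (1) with the poset recollement of \Cref{prop:recollement_for_functor_to_a_poset} along $s^{\ex}$: both are recollements of $\ConsP(\X)\equivalent\Fun(\Piinfty(\X,P),\Spc)$, and their open restriction functors agree (under the open identification above, $\iupperstar_U$ becomes restriction along $\Piinfty(\X,P)_U\hookrightarrow\Piinfty(\X,P)$). Since the closed part of such a recollement is the kernel $\{X:\iupperstar_U X\equivalent\ast\}$ (\Cref{lem:lower_shriek_as_kernel}), the two closed inclusions then identify $\ConsZ(\XZ)\equivalent\Fun(\Piinfty(\X,P)_Z,\Spc)$, yielding condition (1) and $\Piinfty(\XZ,Z)\equivalent\Piinfty(\X,P)_Z$ and realizing $\iupperstar_Z$ as restriction along $\Piinfty(\X,P)_Z\hookrightarrow\Piinfty(\X,P)$; the latter preserves all limits, so $s_Z^{\ast}$ preserves limits as in the open case and $i_{Z,\ast}$ is exodromic.

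The hard part will be the closed stratum: establishing closure under colimits inside $\XZ$ requires the nonobvious observation that these colimits are reflected from $\X$ and that $\iupperstar_Z$ keeps one inside the constructible subcategory, and the identification $\ConsZ(\XZ)\equivalent\Fun(\Piinfty(\X,P)_Z,\Spc)$ is only accessible indirectly through the recollement comparison. The crux of that comparison is checking that the two open restriction functors genuinely agree as functors, not merely that they have the same kernel; once that is verified, everything for the closed part is determined by the shared recollement structure.
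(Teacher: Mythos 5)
Your proposal is correct and follows essentially the same route as the paper's proof: item (1) via \Cref{lem:recollements}-(4), the open case via the slice description of \Cref{lem:pullback_description_of_constructible_objects_on_an_open} (the paper identifies the image of $i_{U,!}$ with that of left Kan extension along $\Piinfty(\X,P)_U \hookrightarrow \Piinfty(\X,P)$ rather than computing the slice over the subterminal indicator, but these are the same identification), and the closed case by realizing $\ConsZ(\XZ)$ as $\ker(\iupperstar_U)$ and matching the recollement of (1) against \Cref{prop:recollement_for_functor_to_a_poset}. The one point where you are actually more explicit than the paper --- that colimits in $\XZ$ are the $\iupperstar_Z$-reflections of colimits in $\X$, so closure under colimits is inherited from $\ConsP(\X)$ --- is a welcome clarification rather than a deviation, and your final worry about the two open restriction functors agreeing is resolved exactly as you suspect, since they are right adjoint to functors ($i_{U,!}$ and $j_!$) that are identified by construction.
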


\begin{proof}
	Item (1) follows directly from \enumref{lem:recollements}{4}.
	For (2), let us first prove that $\ConsU(\XU)$ is closed under limits and colimits in $\XU$.
	By \enumref{lem:pullback_description_of_constructible_objects_on_an_open}{2}, we have a commutative square
	\begin{equation*}
		\begin{tikzcd}[sep=2.25em]
	       \ConsU(\XU) \arrow[d, "\iUlowershriek"', "\wr"{xshift=-0.25ex}] \arrow[r, hooked] & \XU \arrow[d, "\iUlowershriek", "\wr"'{xshift=0.25ex}]  \\ 
	       \ConsP(\X)_{/\iUlowershriek(1)} \arrow[r, hooked] & \X_{/\iUlowershriek(1)}
	    \end{tikzcd}
	\end{equation*}
	where the vertical functors are equivalences.
	Since $ (\X,P) $ is exodromic, the inclusion $ \ConsP(\X) \subset \X $ admits both a left and right adjoint.
	Hence \Cref{rec:slicing_adjunctions} shows that the inclusion $ \ConsU(\XU) \subset \XU $ admits both a left and right adjoint.

	Write $ s_{U,*} \colon \fromto{\XU}{\Fun(U,\Spc)} $ for the induced stratification and $ j \colon \incto{U}{P} $ for the inclusion.
	All we are left to show is that the \category $ \ConsU(\XU) $ is atomically generated by $ \Piinfty(\X,P)_U $ and that the pullback functor $ \supperstar_U \colon \fromto{\Fun(U,\Spc)}{\ConsU(\XU)} $ preserves limits.
	To see that $ \ConsU(\XU) $ is atomically generated by $ \Piinfty(\X,P)_U $, notice that since $ \iZupperstar \iUlowershriek(1) = \emptyset $ and $ \iUupperstar\iUlowershriek(1) = 1 $, the fully faithful functor 
	\begin{equation*}
		\iUlowershriek \colon \incto{\ConsU(\XU)}{\ConsP(\X) \equivalent \Fun(\Piinfty(\X,P),\Spc)}
	\end{equation*}
	has image those functors $ F \colon \fromto{\Piinfty(\X,P)}{\Spc} $ such that the composite
	\begin{equation*}
		\begin{tikzcd}
			\Piinfty(\X,P)_Z \arrow[r, hooked] & \Piinfty(\X,P) \arrow[r, "F"] & \Spc
		\end{tikzcd}
	\end{equation*}
	is constant with value the initial object.
	Now note that this full subcategory coincides with the image of the fully faithful functor
	\begin{equation*}
		\incto{\Fun(\Piinfty(\X,P)_U,\Spc)}{\Fun(\Piinfty(\X,P),\Spc)}
	\end{equation*}
	given by left Kan extension along the inclusion $ \incto{\Piinfty(\X,P)_U}{\Piinfty(\X,P)} $.

	To see that $ \supperstar_U \colon \fromto{\Fun(U,\Spc)}{\ConsU(\XU)} $ preserves limits, notice that we have a commutative square
	\begin{equation*}
		\begin{tikzcd}[sep=2.25em]
	       \Fun(P,\Spc) \arrow[d, "\supperstar"'] \arrow[r, "\jupperstar"] & \Fun(U,\Spc) \arrow[d, "\supperstar_U"]  \\ 
	       \ConsP(\X) \arrow[r, "\iupperstar_U"'] & \ConsU(\XU) \period
	    \end{tikzcd}
	\end{equation*}
	Since $ \jlowerstar $ is fully faithful, we see that there are equivalences
	\begin{equation*}
		\supperstar_U \equivalent \supperstar_U \jupperstar \jlowerstar \equivalent \iupperstar_U \supperstar \jlowerstar \period
	\end{equation*}
	Since the functors $ \iupperstar_U $, $ \supperstar $, and $ \jlowerstar $ all preserve limits, we deduce that $ \supperstar_U $ preserves limits, as desired.

	For (3), recall from \enumref{lem:lower_shriek_as_kernel}{1} that
	\begin{equation}\label{eq:ConsZ_as_a_kernel}
		\ConsZ(\XZ) \equivalent \ker\Big(
		\iupperstar_U \colon \fromto{\ConsP(\X)}{\ConsU(\XU)}
		\Big) 
		\period
	\end{equation}
	Since $ (\X,P) $ is exodromic by assumption, $ (\XU,U) $ is exodromic by (2), and $ \iupperstar_U $ preserves limits and colimits, we deduce that $ \ConsZ(\XZ) \subset \XZ $ is closed under limits and colimits.
	\Cref{prop:recollement_for_functor_to_a_poset} and the identification \eqref{eq:ConsZ_as_a_kernel} show that the \category $ \ConsZ(\XZ) $ is atomically generated by $ \Piinfty(\X,P)_Z $ and the functor $ \iupperstar_Z \colon \fromto{\ConsP(\X)}{\ConsZ(\X)} $ preserves limits and colimits.
	
	Write $ s_{Z,*} \colon \fromto{\XZ}{\Fun(Z,\Spc)} $ for the induced stratification and $ i \colon \incto{Z}{P} $ for the inclusion.
	All that remains to be shown is that the pullback functor $ \supperstar_Z \colon \fromto{\Fun(Z,\Spc)}{\ConsZ(\XZ)} $ preserves limits. 
	For this, notice that we have a commutative square
	\begin{equation*}
		\begin{tikzcd}[sep=2.25em]
	       \Fun(P,\Spc) \arrow[d, "\supperstar"'] \arrow[r, "\iupperstar"] & \Fun(Z,\Spc) \arrow[d, "\supperstar_Z"]  \\ 
	       \ConsP(\X) \arrow[r, "\iupperstar_Z"'] & \ConsZ(\XZ) \period
	    \end{tikzcd}
	\end{equation*}
	Since $ \ilowerstar $ is fully faithful, we see that there are equivalences
	\begin{equation*}
		\supperstar_Z \equivalent \supperstar_Z \iupperstar \ilowerstar \equivalent \iupperstar_Z \supperstar \ilowerstar \period
	\end{equation*}
	Since the functors $ \iupperstar_Z $, $ \supperstar $, and $ \ilowerstar $ all preserve limits, we deduce that $ \supperstar_Z $ preserves limits, as desired.
\end{proof}

\begin{nul}
	In the setting of \Cref{prop:recollement_for_constructible_objects}, the recollement takes the following form: 
	\begin{equation*}
		\begin{tikzcd}[sep=6em]
			\Cons_Z(\XZ) \arrow[r, "i_{Z,*}"', shift right=2ex, hooked] \arrow[r, "\iZlowersharpcons", shift left=2ex, hooked] & \ConsP(\X) \arrow[l, "\iupperstar_Z" description] \arrow[r, "\iupperstar_U " description]  & \Cons_U(\XU) \period \arrow[l, shift left=2ex, hooked', "\iUlowerstarcons"] \arrow[l, shift right=2ex, hooked', "\iUlowershriek"']
		\end{tikzcd}
	\end{equation*}
	Here the functors \smash{$ i_{Z,*} $, $ \iupperstar_Z $, $ i_{U,!} $, and $ \iupperstar_U $} agree with the ones at the level of the \topoi $ \XZ $, $ \XU $, and $ \X $.
	The functor \smash{$ \iUlowerstarcons $} does not necessarily agree with the pushforward \smash{$ i_{U,\ast} \colon \incto{\X_{U}}{\X} $}, and the functor \smash{$ \iZlowersharpcons $} is `extra' in the sense that it does not come for free from the theory of recollements.
\end{nul}

For the next result, we need the following useful characterization of when a functor of exit-path \categories is fully faithful in terms of the constructible pushforwards:

\begin{lemma}\label{lem:equivalent_conditions_for_full_faithfulness_of_exit-paths}
	Let $ \flowerstar \colon \fromto{(\X,P)}{(\Y,Q)} $ be a morphism between exodromic stratified \topoi.
	If $ \flowerstar $ is exodromic, then the following are equivalent:
	\begin{enumerate}
		\item\label{lem:equivalent_conditions_for_full_faithfulness_of_exit-paths.1} The functor $f^{\ex} \colon  \fromto{\Piinfty(\X,P)}{\Piinfty(\Y,Q)} $ is fully faithful.

		\item\label{lem:equivalent_conditions_for_full_faithfulness_of_exit-paths.2} The functor $ \flowersharpcons \colon \fromto{\ConsP(\X)}{\ConsQ(\Y)} $ is fully faithful.

		\item\label{lem:equivalent_conditions_for_full_faithfulness_of_exit-paths.3} The functor $ \flowerstarcons \colon \fromto{\ConsP(\X)}{\ConsQ(\Y)} $ is fully faithful.
	\end{enumerate}
\end{lemma}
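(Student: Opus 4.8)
The plan is to transport all three conditions to statements about the single functor $f^{\ex}$ by means of the exodromy equivalences $\ConsP(\X) \equivalent \Fun(\Piinfty(\X,P),\Spc)$ and $\ConsQ(\Y) \equivalent \Fun(\Piinfty(\Y,Q),\Spc)$. Since $\flowerstar$ is exodromic, $\fupperstar$ preserves both limits and colimits, so it fits into an adjoint triple $\flowersharpcons \dashv \fupperstar \dashv \flowerstarcons$. Under the exodromy equivalences, \Cref{obs:meaning_of_respecting_exit-paths} identifies $\fupperstar$ with restriction $- \of f^{\ex}$ along $f^{\ex}$, and \Cref{obs:morphisms_that_respect_exit-paths_commute_with_the_Yoneda_embedding} identifies $\flowersharpcons$ with left Kan extension $\flowershriek^{\ex}$ along $f^{\ex}$; consequently $\flowerstarcons$ is right Kan extension along $f^{\ex}$.

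First I would prove $(1) \Leftrightarrow (2)$. The second commutative square of \Cref{obs:morphisms_that_respect_exit-paths_commute_with_the_Yoneda_embedding}, whose vertical functors are the fully faithful inclusions of the subcategories of atomic objects and whose top horizontal functor is $f^{\ex,\op}$, shows that if $\flowersharpcons$ is fully faithful then so is $f^{\ex,\op}$, hence so is $f^{\ex}$; this yields $(2) \Rightarrow (1)$. Conversely, $\flowersharpcons \equivalent \flowershriek^{\ex}$ is left Kan extension along $f^{\ex}$, and left Kan extension along a fully faithful functor is fully faithful, since the unit $\mathrm{id} \to (f^{\ex})^{\ast}\flowershriek^{\ex}$ is then an equivalence (cf.\ \HTT{Proposition}{4.3.2.15}); this yields $(1) \Rightarrow (2)$.

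Next I would prove $(2) \Leftrightarrow (3)$ as an instance of the general fact about an adjoint triple $u \dashv v \dashv w$ (here $u = \flowersharpcons$, $v = \fupperstar$, $w = \flowerstarcons$): the left adjoint $u$ is fully faithful if and only if the right adjoint $w$ is. The key computation is that, naturally in $c$ and $c'$,
\begin{equation*}
	\Map(c, vw(c')) \equivalent \Map(u(c), w(c')) \equivalent \Map(vu(c), c') \comma
\end{equation*}
where the first equivalence uses $u \dashv v$ and the second uses $v \dashv w$. If $u$ is fully faithful, the unit $c \to vu(c)$ is an equivalence, so the right-hand side becomes $\Map(c,c')$; tracing $\mathrm{id}_{vw(c')}$ through the composite and simplifying via the triangle identity for $u \dashv v$ shows that the resulting natural equivalence $\Map(-, vw(c')) \equivalent \Map(-, c')$ is the one corepresented by the counit $vw \to \mathrm{id}$ of $v \dashv w$. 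Hence this counit is an equivalence, i.e.\ $w$ is fully faithful. The reverse implication is symmetric, completing the cycle.

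The main obstacle is precisely the last point: knowing that $vw$ is abstractly equivalent to $\mathrm{id}$ is not enough, because full faithfulness of $w$ is the assertion that the \emph{specific} counit $vw \to \mathrm{id}$ is an equivalence. Verifying that the Yoneda-theoretic equivalence produced above coincides with this counit is the one place requiring genuine care, and it is exactly the triangle identity for $u \dashv v$ that supplies the identification. Everything else is a formal consequence of the exodromy equivalences together with the standard behaviour of Kan extensions along fully faithful functors.
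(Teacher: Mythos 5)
Your proposal is correct and follows essentially the same route as the paper, which disposes of the lemma in one line by invoking the fact that a functor $F$ is fully faithful if and only if either of the Kan extension functors $F_!$, $F_*$ on presheaf categories is fully faithful; your argument simply unpacks that fact, proving $F$ fully faithful $\Leftrightarrow$ $F_!$ fully faithful via \Cref{obs:morphisms_that_respect_exit-paths_commute_with_the_Yoneda_embedding} and the standard behaviour of left Kan extension along fully faithful functors, and $F_!$ fully faithful $\Leftrightarrow$ $F_*$ fully faithful via the general adjoint-triple lemma. Your care about identifying the Yoneda-theoretic equivalence with the actual counit of $v \dashv w$ is well placed, but the overall structure is the paper's argument with the details filled in.
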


\begin{proof}
	Immediate from the fact that a functor $ F \colon \fromto{\Ccal}{\Dcal} $ is fully faithful if and only if either of the functors
	\begin{equation*}
		F_{!}, F_{*} \colon \fromto{\Fun(\Ccal,\Spc)}{\Fun(\Dcal,\Spc)}
	\end{equation*}
	given by left or right Kan extension along $ F $ is fully faithful.
\end{proof}

\noindent By writing a locally closed immersion of posets as the composite of a closed immersion and an open immersion, we deduce the main result of this subsection:

\begin{corollary}[(stability under pulling back to locally closed subposets)]\label{cor:stability_under_pulling_back_to_locally_closed_subposets}
	Let $ (\X,P) $ be an exodromic stratified \topos and let $ S \subset P $ be a locally closed subposet.
	Then: 
	\begin{enumerate}
		\item\label{cor:stability_under_pulling_back_to_locally_closed_subposets.1} The stratified \topos $ (\XS,S) $ is exodromic and the morphism of stratified \topoi $ i_{S,\ast} \colon \incto{(\XS,S)}{(\X,P)} $ is exodromic.

		\item\label{cor:stability_under_pulling_back_to_locally_closed_subposets.2} The \topos $ \XS $ is monodromic. 

		\item\label{cor:stability_under_pulling_back_to_locally_closed_subposets.3} The natural functor \smash{$ \fromto{\Piinfty(\XS,S)}{\Piinfty(\X,P)_S} $} is an equivalence.

		\item\label{cor:stability_under_pulling_back_to_locally_closed_subposets.4} The functors \smash{$ \iSlowersharpcons, \iconsSlowerstar \colon \ConsS(\XS) \to \ConsP(\X) $} are both fully faithful.

		\item\label{cor:stability_under_pulling_back_to_locally_closed_subposets.5} The natural functor $ \fromto{\Piinfty(\X,P)}{P} $ is conservative.
	\end{enumerate}
\end{corollary}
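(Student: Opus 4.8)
The plan is to reduce all five assertions to the open and closed cases already established in \Cref{prop:recollement_for_constructible_objects}. The starting point is the factorization of the locally closed immersion $ \incto{S}{P} $ as an open immersion followed by a closed one. Concretely, set $ U \colonequals \setbar{p \in P}{p \geq s \text{ for some } s \in S} $; this is an up-closed, hence open, subposet of $ P $ containing $ S $. Since $ S $ is an interval, $ S $ is down-closed inside $ U $: if $ p \in U $ and $ q \in S $ satisfy $ p \leq q $, then choosing $ s \in S $ with $ s \leq p $ gives $ s \leq p \leq q $ with $ s,q \in S $, whence $ p \in S $. Thus $ S $ is closed in the open subposet $ U $, and the pasting of pullback squares defining $ \XS $ in \Cref{ntn:inclusion_i_S} (cf. \Cref{obs:pulling_back_stratified_geometric_morphisms}) identifies $ \XS $ with $ (\XU)_S $ as $ S $-stratified \topoi.

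With this factorization in hand, I would argue as follows. Applying \enumref{prop:recollement_for_constructible_objects}{3} to the open subposet $ \incto{U}{P} $ shows that $ (\XU,U) $ is exodromic, that $ i_{U,\ast} $ is exodromic, and that $ \fromto{\Piinfty(\XU,U)}{\Piinfty(\X,P)_U} $ is an equivalence. Now applying \enumref{prop:recollement_for_constructible_objects}{4} to the closed subposet $ \incto{S}{U} $ of the exodromic stratified \topos $ (\XU,U) $ shows that $ (\XS,S) \equivalent ((\XU)_S,S) $ is exodromic, that $ i_{S,\ast} \colon \incto{(\XS,S)}{(\XU,U)} $ is exodromic, and that $ \fromto{\Piinfty(\XS,S)}{\Piinfty(\XU,U)_S} $ is an equivalence. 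This gives part (1): exodromicity of $ (\XS,S) $ is immediate, and $ i_{S,\ast} \colon \incto{(\XS,S)}{(\X,P)} $ is the composite of the exodromic morphisms $ \incto{(\XS,S)}{(\XU,U)} $ and $ \incto{(\XU,U)}{(\X,P)} $, hence exodromic, since the composite of the two limit-preserving pullback functors on constructible objects again preserves limits. Part (2) then follows from \enumref{lem:exodromy_implies_monodromy}{1}. For part (3), I would compose the two exit-path equivalences above with the identification $ \Piinfty(\X,P)_U \cross_U S \equivalent \Piinfty(\X,P)_S $, coming from associativity of fiber products along the chain $ S \subset U \subset P $, to obtain the asserted equivalence $ \fromto{\Piinfty(\XS,S)}{\Piinfty(\X,P)_S} $.

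For part (4), I would invoke \Cref{lem:equivalent_conditions_for_full_faithfulness_of_exit-paths}: since $ i_{S,\ast} $ is exodromic, the functors $ \iSlowersharpcons $ and $ \iconsSlowerstar $ are fully faithful as soon as $ i_S^{\ex} \colon \fromto{\Piinfty(\XS,S)}{\Piinfty(\X,P)} $ is. The latter factors as the equivalence of part (3) followed by the inclusion $ \incto{\Piinfty(\X,P)_S}{\Piinfty(\X,P)} $, which is fully faithful by \Cref{obs:C_S_to_C_fully_faithful}; hence $ i_S^{\ex} $ is fully faithful. For part (5), note that since $ P $ is a poset its only equivalences are identities, so a morphism of $ \Piinfty(\X,P) $ is sent to an equivalence exactly when it lies over some $ \id_p $, that is, when it is a morphism of the fiber $ \Piinfty(\X,P)_{\{p\}} $; it therefore suffices to show that each such fiber is an \groupoid. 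By part (3) this fiber is equivalent to $ \Piinfty(\X_p,\{p\}) $, and since $ \{p\} = \pt $ this is the shape $ \Piinfty(\X_p) $: \Cref{rec:monodromy} gives $ \ConsP(\X_p) = \LC(\X_p) \equivalent \Fun(\Piinfty(\X_p),\Spc) $, and by \Cref{atomic_presheaf} the opposite of its subcategory of atomic objects recovers the space $ \Piinfty(\X_p) $. Every morphism in a space is an equivalence, and the inclusion of the fiber is fully faithful, so the original morphism is an equivalence in $ \Piinfty(\X,P) $.

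Almost all of this is bookkeeping once the factorization is set up; the two points demanding care are the identification $ \XS \equivalent (\XU)_S $ and the compatibility of the exit-path functors along the chain $ S \subset U \subset P $, which is exactly what lets parts (3)–(5) be assembled from the open and closed cases. I expect the conservativity statement (5) to be the most substantive, since it is where the geometric input—that individual strata have groupoidal exit-path \categories—actually enters.
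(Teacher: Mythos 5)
Your proof is correct and follows essentially the same route as the paper's: factor the locally closed inclusion as $S$ closed in an open $U \subset P$, apply the open and closed cases of \Cref{prop:recollement_for_constructible_objects} in turn, chain the resulting exit-path equivalences for (3), deduce (4) from \Cref{obs:C_S_to_C_fully_faithful} and \Cref{lem:equivalent_conditions_for_full_faithfulness_of_exit-paths}, and reduce (5) to the fibers being \groupoids via \Cref{rec:monodromy}. The only differences are cosmetic — you exhibit an explicit choice of $U$ and spell out the identification $\XS \equivalent (\XU)_S$, which the paper leaves implicit.
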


\begin{proof}
	Choose an open subposet $ U \subset P $ containing $ S $ such that $ S $ is closed in $ U $.
	For (1), apply \enumref{prop:recollement_for_constructible_objects}{3} to both the open inclusion $ U \subset P $ and closed inclusion $ S \subset U $.
	Item (2) follows from (1) and \Cref{lem:exodromy_implies_monodromy}.
	For (3), applying \enumref{prop:recollement_for_constructible_objects}{4} to the closed inclusion $ S \subset U $ and the open inclusion $ U \subset P $, we see that there are equivalences
	\begin{align*}
		\Piinfty(\XS,S) &\equivalence \Piinfty(\XU,U) \cross_U S \\
		&\equivalence (\Piinfty(\X,P) \cross_P U) \cross_U S \\
		&\equivalent \Piinfty(\X,P)_S \period
	\end{align*}
	By \cref{obs:C_S_to_C_fully_faithful}, the natural functor $ \Piinfty(\X,P)_S \to \Piinfty(\X,P) $ is fully faithful; hence \Cref{lem:equivalent_conditions_for_full_faithfulness_of_exit-paths} shows that (4) follows from (3).
	For (5), note that by \Cref{rec:characterization_of_conservative_functors_to_posets}, we need to show that each fiber $ \Piinfty(\X,P)_{p} $ is \agroupoid.
	Since each $ p \in P $ is locally closed, item (1) shows that
	\begin{equation*}
		\Piinfty(\X,P)_{p} \equivalent \Piinfty(\X_p,\{p\}) \period
	\end{equation*}
	The conclusion now follows from the fact that $ \Piinfty(\X_p,\{p\}) $ is \agroupoid (\Cref{rec:monodromy}).
\end{proof}


We conclude by recording a few consequences of \Cref{cor:stability_under_pulling_back_to_locally_closed_subposets}.
First, we can describe the objects of the exit-path \category. 

\begin{observation}[{(the objects of $ \Piinfty(\X,P)$)}]\label{obs:objects_of_Exit}
	Let $ (\X,P) $ be an exodromic stratified space.
	\Cref{cor:stability_under_pulling_back_to_locally_closed_subposets} implies that there is a natural identification 
	\begin{equation*}
		\Piinfty(\X,P)^{\equivalent} \equivalent \coprod_{p \in P} \Piinfty(\X_p)
	\end{equation*}
	between the maximal \subgroupoid of $ \Piinfty(\X,P) $ and the coproduct of the shapes of the \topoi $ \X_p $.
\end{observation}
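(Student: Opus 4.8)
The plan is to realize the statement as a formal consequence of \Cref{cor:stability_under_pulling_back_to_locally_closed_subposets}, by passing to maximal subgroupoids in the functor $s^{\ex} \colon \fromto{\Piinfty(\X,P)}{P}$ and computing its fibers over the points of $P$. The two substantive inputs are \enumref{cor:stability_under_pulling_back_to_locally_closed_subposets}{3}, which identifies the fiber $\Piinfty(\X,P)_{\{p\}}$ with $\Piinfty(\X_p,\{p\})$, and \enumref{cor:stability_under_pulling_back_to_locally_closed_subposets}{2}, which guarantees that each $\X_p$ is monodromic.

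First I would use that the maximal subgroupoid functor $(-)^{\equivalent} \colon \fromto{\Catinfty}{\Spc}$ is right adjoint to the inclusion $\incto{\Spc}{\Catinfty}$, so it preserves limits, and in particular pullbacks. Applying it to $s^{\ex}$ produces a map of spaces $\fromto{\Piinfty(\X,P)^{\equivalent}}{P^{\equivalent}}$. Because $P$ is a poset, its only isomorphisms are identities, so $P^{\equivalent}$ is the discrete space on the underlying set of $P$; and a map of spaces with discrete target canonically exhibits its source as the coproduct of its fibers:
\[
	\Piinfty(\X,P)^{\equivalent} \equivalent \coprod_{p \in P} \Piinfty(\X,P)^{\equivalent} \cross_{P^{\equivalent}} \{p\} \period
\]

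Next I would identify the $p$-th summand. Since $(-)^{\equivalent}$ preserves the pullback defining $\Piinfty(\X,P)_{\{p\}} \colonequals \Piinfty(\X,P) \cross_P \{p\}$ and $\{p\}^{\equivalent} = \pt$, there is a natural equivalence $\Piinfty(\X,P)^{\equivalent} \cross_{P^{\equivalent}} \{p\} \equivalent \big(\Piinfty(\X,P)_{\{p\}}\big)^{\equivalent}$. Applying \enumref{cor:stability_under_pulling_back_to_locally_closed_subposets}{3} to the locally closed subposet $\{p\} \subset P$ gives $\Piinfty(\X,P)_{\{p\}} \equivalent \Piinfty(\X_p,\{p\})$. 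As the stratification is trivial, $\Cons_{\{p\}}(\X_p) = \LC(\X_p)$, so $\Piinfty(\X_p,\{p\}) = \LC(\X_p)^{\ex}$, which — since $\X_p$ is monodromic — is the shape $\Piinfty(\X_p)$ by \Cref{rec:monodromy}. A shape is already \agroupoid, hence coincides with its own maximal subgroupoid.

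Chaining the two displays produces the asserted natural equivalence $\Piinfty(\X,P)^{\equivalent} \equivalent \coprod_{p \in P} \Piinfty(\X_p)$. I expect no genuine obstacle here: the whole argument is formal once \Cref{cor:stability_under_pulling_back_to_locally_closed_subposets} is in hand, and the only step deserving a moment's attention is the commutation of $(-)^{\equivalent}$ with the relevant pullback, which is immediate from its being a right adjoint.
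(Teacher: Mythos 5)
Your argument is correct and is exactly the intended one: the paper states this observation as an immediate consequence of \Cref{cor:stability_under_pulling_back_to_locally_closed_subposets}, and your write-up simply fills in the formal steps (decomposing $\Piinfty(\X,P)^{\equivalent}$ over the discrete space $P^{\equivalent}$, identifying the fiber over $p$ with $\Piinfty(\X_p,\{p\})$ via item (3), and recognizing this as the shape $\Piinfty(\X_p)$ via item (2) and \Cref{rec:monodromy}). Nothing is missing.
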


\noindent Second, equivalences of constructible objects can be checked by pulling back to strata:

\begin{corollary}\label{cor:restricting_to_strata_is_jointly_conservative}
	Let $ (\X,P) $ be an exodromic stratified \topos and let $ \{S_{\alpha}\}_{\alpha \in A} $ be a collection of locally closed subposets of $ P $ such that $ \Union_{\alpha \in A} S_{\alpha} = P $.
	Then the restriction functors 
	\begin{equation*}
		\set{ \iupperstar_{S_{\alpha}} \colon \ConsP(\X) \to \Cons_{S_{\alpha}}(\X_{S_{\alpha}}) }_{\alpha \in A}
	\end{equation*}
	are jointly conservative.
\end{corollary}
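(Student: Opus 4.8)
The plan is to transport the statement across the exodromy equivalence and reduce it to the elementary fact that equivalences in a functor \category valued in $ \Spc $ are detected objectwise. Since $ (\X,P) $ is exodromic, there is an equivalence $ \ConsP(\X) \equivalent \Fun(\Piinfty(\X,P),\Spc) $, and by \enumref{cor:stability_under_pulling_back_to_locally_closed_subposets}{1} each stratified \topos $ (\X_{S_\alpha},S_\alpha) $ is also exodromic, so that $ \Cons_{S_\alpha}(\X_{S_\alpha}) \equivalent \Fun(\Piinfty(\X_{S_\alpha},S_\alpha),\Spc) $.

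First I would record that, again by \enumref{cor:stability_under_pulling_back_to_locally_closed_subposets}{1}, each inclusion $ i_{S_\alpha,\ast} \colon \incto{(\X_{S_\alpha},S_\alpha)}{(\X,P)} $ is an exodromic morphism. Hence by \Cref{obs:meaning_of_respecting_exit-paths} the pullback functor $ \iupperstar_{S_\alpha} $ is identified, under the exodromy equivalences above, with restriction of functors along $ i_{S_\alpha}^{\ex} \colon \fromto{\Piinfty(\X_{S_\alpha},S_\alpha)}{\Piinfty(\X,P)} $. By \enumref{cor:stability_under_pulling_back_to_locally_closed_subposets}{3} this functor is an equivalence onto $ \Piinfty(\X,P) \cross_P S_\alpha $, which by \Cref{obs:C_S_to_C_fully_faithful} is the inclusion of the full subcategory of objects lying over $ S_\alpha $. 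Thus each $ \iupperstar_{S_\alpha} $ becomes restriction along a full subcategory inclusion $ \incto{\Piinfty(\X,P) \cross_P S_\alpha}{\Piinfty(\X,P)} $.

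With these identifications, the claim becomes: a natural transformation $ \phi \colon \fromto{F}{G} $ in $ \Fun(\Piinfty(\X,P),\Spc) $ whose restriction to each $ \Piinfty(\X,P) \cross_P S_\alpha $ is an equivalence is itself an equivalence. Since equivalences in a presheaf \category are detected objectwise, it suffices to check that every object $ x $ of $ \Piinfty(\X,P) $ lies in some $ \Piinfty(\X,P) \cross_P S_\alpha $. But by \Cref{obs:objects_of_Exit} (equivalently, via the structure functor $ \fromto{\Piinfty(\X,P)}{P} $) each object $ x $ lies over a unique element $ p \in P $; as the $ S_\alpha $ cover $ P $, we have $ p \in S_\alpha $ for some $ \alpha $, whence $ x \in \Piinfty(\X,P) \cross_P S_\alpha $. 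This closes the argument.

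The only genuine content is the translation in the second step: one must confirm that each $ \iupperstar_{S_\alpha} $ really does correspond to restriction along the full subcategory inclusion of the fiber over $ S_\alpha $, which rests on the exodromicity of the inclusions $ i_{S_\alpha,\ast} $ and the compatibility of the exodromy equivalence with pullback from \Cref{obs:meaning_of_respecting_exit-paths}, together with the identification of exit-path \categories of strata in \enumref{cor:stability_under_pulling_back_to_locally_closed_subposets}{3}. Once this translation is in place, joint conservativity is a formal consequence of objectwise detection of equivalences in $ \Fun(-,\Spc) $ and the covering hypothesis $ \Union_{\alpha \in A} S_\alpha = P $, with no remaining topos-theoretic difficulty.
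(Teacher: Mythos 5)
Your proof is correct and follows essentially the same route as the paper: both identify each restriction functor, via the exodromy equivalence and the stability under pulling back to locally closed subposets, with precomposition along the full inclusion of the fiber of $\Piinfty(\X,P)$ over $S_\alpha$, and then conclude by objectwise detection of equivalences in $\Fun(-,\Spc)$ together with the covering hypothesis. The only cosmetic difference is that the paper first reduces to the single-point strata $\{p\}$ before making this identification, whereas you work with the $S_\alpha$ directly.
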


\begin{proof}
	Since each $ p \in P $ is locally closed, by further restricting to the strata, it suffices to show that the restriction functors 
	\begin{equation*}
		\set{ \iupperstar_{p} \colon \ConsP(\X) \to \LC(\X_{p}) }_{p \in P}
	\end{equation*}
	are jointly conservative.
	By \Cref{cor:stability_under_pulling_back_to_locally_closed_subposets}, the stratified \topos $ (\X_{p},\{p\}) $ is exodromic and the inclusion $ i_{p,\ast} \colon \incto{(\X_{p},\{p\})}{(\X,P)} $ is exodromic.
	Hence the claim follows from the identification of the restriction functor \smash{$ \iupperstar_{p} \colon \ConsP(\X) \to \LC(\X_{p}) $} with the functor
	\begin{equation*}
		\fromto{\Fun(\Piinfty(\X,P),\Spc)}{\Fun(\Piinfty(\X_p),\Spc)}
	\end{equation*}
	given by precomposition with the inclusion $ \incto{\Piinfty(\X_p) \equivalent \Piinfty(\X,P)_p}{\Piinfty(\X,P)} $.
\end{proof}

\noindent Finally, the \category of constructible objects with arbitrary presentable coefficients is still presentable:

\begin{lemma}\label{lem:closure_under_colimits_for_monodromic_strata}
	Let $ (\X,P) $ be a stratified \topos and let $ \Ecal $ be a presentable \category.
	If for each $ p \in P $, the stratum $ \X_p $ is monodromic, then the the \category $\ConsP(\X;\Ecal)$ is presentable and closed under colimits in $ \Sh(\X;\Ecal) $.
\end{lemma}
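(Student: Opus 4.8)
The plan is to work directly from the description of $\ConsP(\X;\Ecal)$ as the pullback
\begin{equation*}
	\begin{tikzcd}
		\ConsP(\X;\Ecal) \arrow[r] \arrow[d, hooked] & \prod_{p \in P} \LC(\X_p;\Ecal) \arrow[d, hooked] \\
		\Sh(\X;\Ecal) \arrow[r, "\prod_p \iupperstar_p"'] & \prod_{p \in P} \Sh(\X_p;\Ecal)
	\end{tikzcd}
\end{equation*}
presented just after \Cref{def:constructible_objects}, and to exhibit this square as a pullback of presentable \categories. The crucial input is the hypothesis that each stratum $\X_p$ is monodromic: by \Cref{rec:monodromy} this guarantees both that $\LC(\X_p;\Ecal) \equivalent \LC(\X_p) \tensor \Ecal$ is presentable and that the inclusion $\LC(\X_p;\Ecal) \subset \Sh(\X_p;\Ecal)$ is closed under colimits (indeed under all limits and colimits).

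First I would verify closure under colimits by hand, since this is immediate and simultaneously records how colimits are computed. Given a small diagram $F_\bullet \colon \fromto{A}{\ConsP(\X;\Ecal)}$ with colimit $F$ formed in $\Sh(\X;\Ecal)$, each pullback functor $\iupperstar_p$ preserves colimits (it is a left adjoint, tensored with $\Ecal$), so $\iupperstar_p(F) \equivalent \colim_{\alpha \in A} \iupperstar_p(F_\alpha)$. As every $\iupperstar_p(F_\alpha)$ lies in $\LC(\X_p;\Ecal)$ and this subcategory is closed under colimits by monodromy, we get $\iupperstar_p(F) \in \LC(\X_p;\Ecal)$ for every $p \in P$; that is, $F \in \ConsP(\X;\Ecal)$.

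For presentability, I would upgrade the pullback square to a diagram in $\PrL$. Small products of presentable \categories are presentable, so $\prod_{p} \Sh(\X_p;\Ecal)$ and $\prod_{p} \LC(\X_p;\Ecal)$ are presentable; the functor $\prod_p \iupperstar_p$ preserves colimits, and the inclusion $\prod_{p} \LC(\X_p;\Ecal) \hookrightarrow \prod_{p} \Sh(\X_p;\Ecal)$ preserves colimits by the computation of the previous paragraph. Hence the cospan lives in $\PrL$, and since $\PrL$ admits small limits that are computed in $\CATinfty$ (\HTT{Proposition}{5.5.3.13}), the pullback $\ConsP(\X;\Ecal)$ is presentable; the projection to $\Sh(\X;\Ecal)$ is then a morphism of $\PrL$, re-proving closure under colimits. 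The main content is thus presentability rather than colimit-closure: the only real point is to confirm that all functors in the square genuinely preserve colimits so that it is a diagram in $\PrL$, after which the general theory of limits in $\PrL$ does the rest.
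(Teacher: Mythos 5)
Your proof is correct and follows essentially the same route as the paper's: both use the pullback square following \Cref{def:constructible_objects}, invoke \Cref{rec:monodromy} to get presentability and colimit-closure of each $\LC(\X_p;\Ecal)$, and conclude via the fact that limits in $\PrL$ are computed in $\CATinfty$ (\HTT{Proposition}{5.5.3.13}). You merely spell out the colimit-closure computation more explicitly than the paper does.
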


\begin{proof}
	By definition, $\ConsP(\X;\Ecal)$ fits into a pullback square of \categories
	\begin{equation*}
		\begin{tikzcd}[column sep =4.5em]
			\ConsP(\X;\Ecal) \arrow[r] \arrow[d, hooked] & \prod_{p \in P} \LC(\X_p;\Ecal) \arrow[d, hooked, shorten <= -.75em] \\
			\Sh(\X;\Ecal) \arrow[r, "\prod_{p \in P} \iupperstar_p"'] & \prod_{p \in P} \Sh(\X_p;\Ecal)
		\end{tikzcd} 
	\end{equation*}
	Since each $\X_p$ is monodromic, by \cref{rec:monodromy}, $\LC(\X_p;\Ecal)$ is presentable and closed under limits and colimits in $\Sh(\X_p;\Ecal)$.
	The fact that the forgetful functor \smash{$ \fromto{\PrL}{\CATinfty} $} preserves limits \HTT{Proposition}{5.5.3.13} completes the proof. 
\end{proof}

\begin{corollary}\label{cor:exodromic_implies_universally_presentable}
	Let $ (\X,P) $ be an exodromic stratified topos.
	Then for any presentable \category $ \Ecal $, the \category $\ConsP(\X;\Ecal)$ is presentable and closed under colimits in $ \Sh(\X;\Ecal) $.
\end{corollary}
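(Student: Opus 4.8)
The plan is to reduce everything to \Cref{lem:closure_under_colimits_for_monodromic_strata}, whose hypothesis is that every stratum $ \X_p $ is monodromic and whose conclusion is exactly the assertion of the corollary: that $ \ConsP(\X;\Ecal) $ is presentable and closed under colimits inside $ \Sh(\X;\Ecal) $, for an arbitrary presentable \category $ \Ecal $. Thus the only thing that needs checking is that each stratum of an exodromic stratified \topos is monodromic.

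First I would fix $ p \in P $ and regard the singleton $ \{p\} \subset P $ as a (locally closed, indeed closed) subposet. Since $ (\X,P) $ is exodromic by assumption, \enumref{cor:stability_under_pulling_back_to_locally_closed_subposets}{1} shows that the stratified \topos $ (\X_p,\{p\}) $ is exodromic; equivalently, \enumref{cor:stability_under_pulling_back_to_locally_closed_subposets}{2} directly gives that the \topos $ \X_p $ is monodromic. (Alternatively, one may apply \Cref{lem:exodromy_implies_monodromy} to the trivially stratified exodromic \topos $ (\X_p,\{p\}) $ and use that a trivially stratified \topos is exodromic precisely when its underlying \topos is monodromic.)

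With every stratum $ \X_p $ now known to be monodromic, I would invoke \Cref{lem:closure_under_colimits_for_monodromic_strata} for the fixed presentable coefficient \category $ \Ecal $. That lemma asserts precisely that $ \ConsP(\X;\Ecal) $ is presentable and closed under colimits in $ \Sh(\X;\Ecal) $, completing the proof. There is no genuine obstacle here: all of the substantive content has already been absorbed into the stability result (\Cref{cor:stability_under_pulling_back_to_locally_closed_subposets}) and into the monodromic-strata lemma, so the argument is a short deduction combining the two.
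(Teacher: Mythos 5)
Your proof is correct and follows exactly the paper's argument: the paper's own proof is the one-line ``Combine \Cref{cor:stability_under_pulling_back_to_locally_closed_subposets,lem:closure_under_colimits_for_monodromic_strata}'', which is precisely your reduction to monodromicity of the strata via \enumref{cor:stability_under_pulling_back_to_locally_closed_subposets}{2} followed by an application of \Cref{lem:closure_under_colimits_for_monodromic_strata}.
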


\begin{proof}
	Combine \Cref{cor:stability_under_pulling_back_to_locally_closed_subposets,lem:closure_under_colimits_for_monodromic_strata}.
\end{proof}


\subsection{All morphisms are exodromic}\label{subsec:all_morphisms_are_exodromic}

We now use \Cref{cor:stability_under_pulling_back_to_locally_closed_subposets} to show that \textit{every} morphsim between exodromic stratified \topoi is exodromic.
We start by proving this in the special case where the target is trivially stratified.

\begin{lemma}\label{lem:morphisms_to_trivially_stratified_topoi_are_exodromic}
	Let $ \flowerstar \colon \fromto{(\X,P)}{(\Y,\pt)} $ be a morphism of stratified \topoi, where the target is trivially stratified.
	If the stratified \topoi $ (\X,P) $ and $ (\Y,\pt) $ are exodromic, then the morphism $ \flowerstar $ is exodromic.
\end{lemma}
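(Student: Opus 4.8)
The plan is to reduce the statement to the trivially stratified (monodromic) case one stratum at a time. Since the target $(\Y,\pt)$ is exodromic and trivially stratified, the \topos $\Y$ is monodromic and $\Cons_{\pt}(\Y) = \LC(\Y)$ (combine \Cref{def:exit_path} with \Cref{rec:monodromy}). By \Cref{obs:constructible_pushforward} the functor $\fupperstar \colon \fromto{\LC(\Y)}{\ConsP(\X)}$ already preserves colimits, so by \Cref{def:exodromic_morphism} it remains only to show that $\fupperstar$ preserves limits. Because $(\X,P)$ is exodromic and $\Y$ is monodromic, the subcategories $\ConsP(\X) \subset \X$ and $\LC(\Y) \subset \Y$ are closed under limits (\Cref{def:exit_path,rec:monodromy}); hence all limits below are computed in the ambient \topoi.

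The main idea is that limits of constructible objects are detected on strata. For each $p \in P$, \Cref{cor:stability_under_pulling_back_to_locally_closed_subposets} tells us that $\X_p$ is monodromic, that $(\X_p,\{p\})$ is exodromic, and that $i_{p,\ast} \colon \incto{(\X_p,\{p\})}{(\X,P)}$ is an \emph{exodromic} morphism; the last point means precisely that
\[
	\iupperstar_p \colon \fromto{\ConsP(\X)}{\LC(\X_p)}
\]
preserves limits. At the same time, applying \Cref{cor:restricting_to_strata_is_jointly_conservative} to the cover of $P$ by its singletons shows that the family $\{\iupperstar_p\}_{p \in P}$ is jointly conservative. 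A jointly conservative family of limit-preserving functors out of a complete \category jointly reflects limits: comparing a given cone with the actual limit in $\ConsP(\X)$, the comparison map becomes an equivalence after each $\iupperstar_p$, hence is an equivalence. Therefore it suffices to prove that each composite $\iupperstar_p \circ \fupperstar$ preserves limits.

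Finally I would identify this composite with a pullback between monodromic \topoi. Writing $g_{p,\ast} \colon \fromto{\X_p}{\Y}$ for the composite of $i_{p,\ast}$ with $\flowerstar$, we have $\iupperstar_p \circ \fupperstar \equivalent g_p^{\ast}$ on locally constant objects, since pullback along a composite is the composite of pullbacks. As both $\X_p$ and $\Y$ are monodromic, \Cref{cor:all_morphisms_are_monodromic} (equivalently \Cref{ex:morphisms_between_trivially_stratified_topoi_are_exodromic}) shows that $g_p^{\ast} \colon \fromto{\LC(\Y)}{\LC(\X_p)}$ preserves limits. Combining this with the joint reflection of limits from the previous paragraph shows that $\fupperstar$ preserves limits, so $\flowerstar$ is exodromic. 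The one genuinely load-bearing point — and the main obstacle — is this stratum-wise reduction: one cannot argue directly, because $\fupperstar \colon \fromto{\Y}{\X}$ is merely left exact and need not preserve infinite limits; it is only after restricting to each monodromic stratum that \Cref{cor:all_morphisms_are_monodromic} upgrades left exactness to preservation of all limits. The two inputs that make the reduction legitimate, namely that the strata restrictions both preserve limits and are jointly conservative, are exactly \Cref{cor:stability_under_pulling_back_to_locally_closed_subposets,cor:restricting_to_strata_is_jointly_conservative}.
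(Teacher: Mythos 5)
Your proof is correct, but it takes a genuinely different route from the paper's. The paper factors the morphism as $(\X,P) \to (\X,\pt) \to (\Y,\pt)$: the second arrow is exodromic by \Cref{ex:morphisms_between_trivially_stratified_topoi_are_exodromic} (both topoi are monodromic and trivially stratified), and the first arrow is exodromic because, by \enumref{lem:exodromy_implies_monodromy}{2}, the inclusion $\LC(\X) \subset \ConsP(\X)$ is closed under limits and colimits. The key observation making this work is that $\fupperstar$ carries $\LC(\Y)$ into $\LC(\X)$ — not merely into $\ConsP(\X)$ — so one can restrict to locally constant objects globally and never touch the stratification of the source. Your argument instead decomposes the source stratum by stratum, invoking the recollement machinery of \Cref{cor:stability_under_pulling_back_to_locally_closed_subposets} (each $\iupperstar_p$ preserves limits) and \Cref{cor:restricting_to_strata_is_jointly_conservative} (joint conservativity), and then reduces to the monodromic case on each stratum via \Cref{cor:all_morphisms_are_monodromic}. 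All of these inputs are established before this lemma in the paper, so there is no circularity, and each step checks out. What the paper's route buys is brevity and independence from \cref{subsec:stability_under_pulling_back_to_locally_closed_subposets}; what your route buys is that it is essentially the proof of the subsequent \Cref{thm:all_morphisms_are_exodromic} specialized to a point target — indeed the paper proves that theorem by exactly your stratum-wise reduction, using the present lemma only to handle the resulting morphisms $(\X_q,P_q) \to (\Y_q,\{q\})$, so your argument in effect inlines the general case rather than isolating the minimal special case.
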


\begin{proof}
	Since $ (\X,P) $ is exodromic, \enumref{lem:exodromy_implies_monodromy}{1} shows that the trivially stratified \topos $ (\X,\pt) $ is exodromic.
	The morphism $ \flowerstar $ factors as a composite
	\begin{equation*}
		\begin{tikzcd}
			(\X,P) \arrow[r] & (\X,\pt) \arrow[r] & (\Y,\pt) \period
		\end{tikzcd}
	\end{equation*}
	By \enumref{lem:exodromy_implies_monodromy}{2}, the left-hand morphism is exodromic, and by \Cref{ex:morphisms_between_trivially_stratified_topoi_are_exodromic} the right-hand morphism is exodromic.
	Hence the composite is exodromic.
\end{proof}

For the following result, we introduce the following variant of \Cref{ntn:inclusion_i_S}.

\begin{notation}\label{ntn:R_p}
	Let $ (\X,R) $ be a stratified \topos and $ \phi \colon \fromto{R}{P} $ be a map of posets.
	Given $ p \in P $, we write $ R_p \colonequals \phi\inv(p) $ for the full subposet of $ R $ given by the fiber of $ \phi $ over $ p $.
	Note that $ \X_p = \X_{R_p} $.
	Hence the stratum $ \X_p $ is naturally a $ R_p $-stratified \topos and the geometric morphism \smash{$ i_{p,\ast} \colon \incto{\X_p}{\X} $} defines a morphism of stratified \topoi $ \incto{(\X_p,R_p)}{(\X,R)} $.
\end{notation}

\begin{theorem}[(all morphisms are exodromic)]\label{thm:all_morphisms_are_exodromic}
	Let $ \flowerstar \colon \fromto{(\X,P)}{(\Y,Q)} $ be a morphism between exodromic stratified \topoi.
	Then $ \flowerstar $ is exodromic.
\end{theorem}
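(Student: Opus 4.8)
The plan is to show that the left adjoint $ \fupperstar \colon \fromto{\ConsQ(\Y)}{\ConsP(\X)} $ preserves limits; by \Cref{obs:constructible_pushforward} it already preserves colimits, so this is precisely what is needed for $ \flowerstar $ to be exodromic. Both source and target are atomically generated, hence presentable and complete. By \Cref{cor:restricting_to_strata_is_jointly_conservative}, the restriction functors $ \set{\iupperstar_p \colon \ConsP(\X) \to \LC(\X_p)}_{p \in P} $ are jointly conservative, and by \enumref{cor:stability_under_pulling_back_to_locally_closed_subposets}{1} each $ \iupperstar_p $ is the pullback of an exodromic morphism, hence preserves limits. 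A functor between complete \categories preserves limits as soon as it does so after composing with a jointly conservative family of limit-preserving functors (apply each member to the canonical comparison map). Thus it suffices to prove that the composite $ \iupperstar_p \of \fupperstar \colon \fromto{\ConsQ(\Y)}{\LC(\X_p)} $ preserves limits for every $ p \in P $.

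Fix $ p \in P $ and set $ q \colonequals \phi(p) \in Q $. Since $ \{q\} $ is locally closed in $ Q $, \Cref{obs:pulling_back_stratified_geometric_morphisms} shows that $ P_q \colonequals \phi\inv(q) $ is a locally closed subposet of $ P $, that $ \X_p = (\X_{P_q})_p $, and that $ \flowerstar $ restricts to a morphism of stratified \topoi $ f_q \colon \fromto{(\X_{P_q},P_q)}{(\Y_q,\{q\})} $ whose target is trivially stratified. Reading off the commutative cube of \Cref{obs:pulling_back_stratified_geometric_morphisms} and then further restricting to the point $ p \in P_q $, the geometric morphism $ f \of i_p \colon \fromto{\X_p}{\Y} $ factors as
\begin{equation*}
	\X_p \hookrightarrow \X_{P_q} \xrightarrow{\ f_q\ } \Y_q \hookrightarrow \Y \period
\end{equation*}
Passing to pullbacks on constructible objects yields an identification
\begin{equation*}
	\iupperstar_p \of \fupperstar \equivalent (\iupperstar_p)_{\X_{P_q}} \of f_q^{\ast} \of \iupperstar_q
\end{equation*}
of functors $ \fromto{\ConsQ(\Y)}{\LC(\X_p)} $, where $ \iupperstar_q \colon \fromto{\ConsQ(\Y)}{\LC(\Y_q)} $ restricts to the $ q $-stratum of $ \Y $ and $ (\iupperstar_p)_{\X_{P_q}} \colon \fromto{\Cons_{P_q}(\X_{P_q})}{\LC(\X_p)} $ restricts to the $ p $-stratum of $ \X_{P_q} $.

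It remains to check that each of the three functors in this factorization preserves limits. By \enumref{cor:stability_under_pulling_back_to_locally_closed_subposets}{1} the inclusions $ i_{q,\ast} $ and the inclusion of the $ p $-stratum into $ \X_{P_q} $ are exodromic morphisms, so both $ \iupperstar_q $ and $ (\iupperstar_p)_{\X_{P_q}} $ preserve limits; here we also use that $ (\X_{P_q},P_q) $ is exodromic, again by \enumref{cor:stability_under_pulling_back_to_locally_closed_subposets}{1}. Finally, $ (\Y_q,\{q\}) $ and $ (\X_{P_q},P_q) $ are exodromic and the target of $ f_q $ is trivially stratified, so \Cref{lem:morphisms_to_trivially_stratified_topoi_are_exodromic} shows that $ f_q $ is exodromic, i.e. $ f_q^{\ast} $ preserves limits. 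Composing, $ \iupperstar_p \of \fupperstar $ preserves limits, and the proof is complete.

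The conceptual content of the argument is funneled entirely into the trivially stratified target case (\Cref{lem:morphisms_to_trivially_stratified_topoi_are_exodromic}), which itself rests on monodromy via \Cref{cor:all_morphisms_are_monodromic}. The main obstacle is therefore not conceptual but organizational: one must set up the factorization of $ \iupperstar_p \of \fupperstar $ through the strata correctly using the cube of \Cref{obs:pulling_back_stratified_geometric_morphisms}, and verify that every intermediate restriction lands in the appropriate \category of constructible objects and there agrees with the corresponding \topos-level pullback (as in \Cref{obs:constructible_pushforward}), so that each factor is genuinely a pullback of an exodromic morphism.
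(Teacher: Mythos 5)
Your proof is correct and follows essentially the same route as the paper: reduce via the jointly conservative, limit-preserving restrictions to strata (\Cref{cor:restricting_to_strata_is_jointly_conservative} and \enumref{cor:stability_under_pulling_back_to_locally_closed_subposets}{1}), factor through the induced map $ f_q $ over a point of $ Q $, and invoke \Cref{lem:morphisms_to_trivially_stratified_topoi_are_exodromic}. The only difference is cosmetic — the paper stops the reduction at the fibers $ P_q \subset P $ rather than descending all the way to individual strata $ p \in P $.
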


\begin{proof}
	By \Cref{cor:restricting_to_strata_is_jointly_conservative}, the functors
	\begin{equation*}
		\set{ \iupperstar_{P_q} \colon \ConsP(\X) \to \Cons_{P_q}(\X_q) }_{q \in Q}
	\end{equation*}
	are jointly conservative.
	Moreover, since the subposet $ P_q \subset P $ is locally closed, by \enumref{cor:stability_under_pulling_back_to_locally_closed_subposets}{1} these functors also preserve limits and colimits.
	Hence it suffices to show that for each $ q \in Q $, the composite \smash{$ \iupperstar_{P_q} \fupperstar $} preserves limits and colimits.
	
	As in \Cref{obs:pulling_back_stratified_geometric_morphisms}, write $ f_{q} \colon \fromto{(\X_{q},P_{q})}{(\Y_q,\{q\})} $ for the induced morphism of stratified \topoi.
	Note that we have a commutative square
	\begin{equation*} 
		\begin{tikzcd}[row sep=3em, column sep=4.5em]
			\ConsQ(\Y) \arrow[r, "\fupperstar"] \arrow[d, "\iupperstar_{q}"'] & \ConsP(\X) \arrow[d, "\iupperstar_{P_q}"] \\
			\LC(\Y_q) \arrow[r, "\fupperstar_q"'] & \Cons_{P_q}(\X_q) \period
		\end{tikzcd} 
	\end{equation*}
	Again by \enumref{cor:stability_under_pulling_back_to_locally_closed_subposets}{1}, the functor \smash{$ \iupperstar_{q} $} preserves limits and colimits.
	To complete the proof, note that by \enumref{cor:stability_under_pulling_back_to_locally_closed_subposets}{1} the stratified \topoi $ (\X_q,P_q) $ and $ (\Y_q, \{q\}) $ are exodromic; hence \Cref{lem:morphisms_to_trivially_stratified_topoi_are_exodromic} shows that functor \smash{$ \fupperstar_q $} preserves limits and colimits.
	Thus $ \fupperstar_q\iupperstar_{q} $ preserves limits and colimits.
\end{proof}

We can now cleanly state the functoriality of exit-path \categories.
For this, recall \Cref{ntn:PrRat,def:StrTop}.

\begin{notation}\label{ntn:StrTopex}
	Write \smash{$ \StrTopex \subset \StrTop $} for the full subcategory spanned by the exodromic stratified \topoi.
\end{notation}

\begin{observation}[(functoriality of exit-path \categories)]
	The assignment $ \goesto{(\X,P)}{\Piinfty(\X,P)} $ refines to a functor
	\begin{equation*}
		\Piinfty(-,-) \colon \fromto{\StrTopex}{\Catidem} \period
	\end{equation*}
	Specifically, this functor is given by the composite
	\begin{equation*}
		\begin{tikzcd}[sep=3em]
			\StrTopex \arrow[r, "\Cons"] & (\PrRat)^{\op} \equivalent \PrLat \arrow[r, "(-)^{\ex}", "\sim"'{yshift=0.25ex}] & \Catidem \comma
		\end{tikzcd}
	\end{equation*}
	where the left-hand functor sends $ (\X,P) $ to the \category \smash{$ \ConsP(\X) $} with functoriality given by pullback, and the right-hand functor sends an atomically generated \category $ \Ccal $ to the \category \smash{$ \Ccal^{\ex} = (\Ccal^{\at})^{\op} $} given by the opposite of the subcategory of atomic objects.
\end{observation}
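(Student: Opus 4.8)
The plan is to realize $\Piinfty(-,-)$ as the displayed composite, so the work is to construct the first arrow $\Cons\colon\StrTopex\to(\PrRat)^{\op}$ as a genuine functor and then to identify the composite on objects and morphisms. For the construction of $\Cons$, I would first build an ambient \emph{inverse-image} functor valued in $\CATinfty$. Dualizing the functor $\Sh(-;\Spc)\colon\RTop\to\PrR$ of \Cref{ntn:ShX_E} along the fiberwise adjunctions $\fupperstar\dashv\flowerstar$, and precomposing with the source projection $\StrTop\to\RTop$ coming from \Cref{def:StrTop}, gives a functor $\StrTop^{\op}\to\CATinfty$ sending $(\X,P)$ to $\X$ and a morphism $f$ to its pullback $\fupperstar$. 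Since $\fupperstar$ carries $Q$-constructible objects to $P$-constructible objects, the full subcategories $\ConsP(\X)\subset\X$ assemble into a subfunctor: concretely, one passes to the cartesian fibration classified by $(\X,P)\mapsto\X$ and restricts to the full subcategory of fiberwise constructible objects, which is again a cartesian subfibration because cartesian edges with constructible target have constructible source. Straightening yields a functor $\StrTop^{\op}\to\CATinfty$, $(\X,P)\mapsto\ConsP(\X)$, still with pullback functoriality.

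It remains to restrict to exodromic stratified \topoi and to check that this functor factors through the non-full subcategory $\PrRat\subset\CATinfty$; as factorization through a subcategory is detected on objects and on $1$-morphisms, two verifications suffice. On objects, $\ConsP(\X)$ is atomically generated by \enumref{def:exit_path}{1}, hence lies in $\PrRat$. On morphisms, given $f\colon\fromto{(\X,P)}{(\Y,Q)}$ between exodromic stratified \topoi, the restricted pullback $\fupperstar\colon\fromto{\ConsQ(\Y)}{\ConsP(\X)}$ preserves colimits (because $\ConsP(\X)$ is closed under colimits by \enumref{def:exit_path}{2} and $\fupperstar$ preserves colimits on all of $\X$) and, crucially, preserves limits because $f$ is exodromic by \Cref{thm:all_morphisms_are_exodromic}. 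Thus $\fupperstar$ is both a left and a right adjoint, i.e.\ a morphism of $\PrRat$, and we obtain the desired functor $\Cons\colon\StrTopex\to(\PrRat)^{\op}$ recording pullback on constructible objects.

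Composing with the equivalence $(\PrRat)^{\op}\equivalent\PrLat$ (the inverse of the equivalence $\PrLat\equivalent(\PrRat)^{\op}$ obtained by passing to adjoints) and with $(-)^{\ex}\colon\PrLat\equivalent\Catidem$ of \Cref{recollection:atomically_generated} produces the candidate for $\Piinfty(-,-)$. On objects it sends $(\X,P)$ to $\ConsP(\X)^{\ex}=\Piinfty(\X,P)$. To identify it on morphisms, observe that under $(\PrRat)^{\op}\equivalent\PrLat$ the morphism recording the bi-adjoint functor $\fupperstar$ corresponds to the \emph{left} adjoint of $\fupperstar$, which is by definition $\flowersharpcons$ (see \Cref{def:exodromic_morphism}); applying $(-)^{\ex}$ then returns the induced functor $f^{\ex}\colon\fromto{\Piinfty(\X,P)}{\Piinfty(\Y,Q)}$. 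Hence the composite is the asserted functoriality $\goesto{f}{f^{\ex}}$, compatibly with the commuting square of \Cref{obs:atomic_functors_preserve_atomic_objects}.

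The genuinely deep input is already in hand: \Cref{thm:all_morphisms_are_exodromic} is exactly what upgrades the structural pullback functoriality (a priori valued only in $\CATinfty$) to a functor valued in the finer subcategory $\PrRat$, where the left--right adjoint symmetry lets one read off the covariant exit-path functoriality after transposing via $(-)^{\ex}$. Consequently the main obstacle is not mathematical but a matter of homotopy-coherent bookkeeping: producing the ambient inverse-image functor and its constructible subfunctor as honest functors of \infcats, rather than merely on objects and $1$-morphisms. This is handled by the (un)straightening formalism precisely as in the functoriality of the shape recorded above, of which the present statement is the stratified refinement.
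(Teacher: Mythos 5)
Your argument is correct and is exactly the route the paper intends: the statement is recorded as an observation precisely because, once \Cref{thm:all_morphisms_are_exodromic} guarantees every pullback $\fupperstar$ on constructible objects preserves limits and colimits (hence lands in $\PrRat$), the functor is just the displayed composite through $(\PrRat)^{\op}\equivalent\PrLat\equivalent\Catidem$, mirroring the earlier construction of the shape functor via $\LC(-)$. Your unstraightening bookkeeping for $\Cons$ and the identification of the action on morphisms with $\flowersharpcons$ and hence $f^{\ex}$ match \Cref{obs:morphisms_that_respect_exit-paths_commute_with_the_Yoneda_embedding}.
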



\subsection{Stability under coarsening}\label{subsec:stability_under_coarsening}

Let $ (\X,R) $ be an exodromic stratified \topos, and let $ \phi \colon \fromto{R}{P} $ be a map of posets.
In this subsection, show that $ (\X,P) $ is also exodromic and express $ \Piinfty(\X,P) $ as a localization of $ \Piinfty(\X,R) $.

\begin{observation}\label{obs:pullback_along_refinement_is_inclusion}
	Let $ (\X,R) $ be a stratified \topos and let $ \phi \colon \fromto{R}{P} $ be a map of posets.
	Since the morphism of stratified \topoi $ \fromto{(\X,R)}{(\X,P)} $ is the identity on the underlying \topos $ \X $, the pullback along $ \fromto{(\X,R)}{(\X,P)} $ is simply the inclusion
	\begin{equation*}
		\incto{\ConsP(\X)}{\ConsR(\X)} \period
	\end{equation*}
\end{observation}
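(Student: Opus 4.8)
The plan is to recognize the coarsening map as an honest morphism of stratified \topoi whose underlying geometric morphism is the identity, and then to invoke the fact that pullback preserves constructibility. First I would unwind \Cref{def:StrTop}. Writing $\slowerstar \colon \fromto{\X}{\Fun(R,\Spc)}$ for the given $R$-stratification, the $P$-stratification of $\X$ is by construction the composite $\philowerstar \circ \slowerstar$, where $\philowerstar \colon \fromto{\Fun(R,\Spc)}{\Fun(P,\Spc)}$ is the pushforward induced by $\phi$. The square with top edge $\mathrm{id}_{\X}$, left edge $\slowerstar$, right edge $\philowerstar\slowerstar$, and bottom edge $\philowerstar$ then commutes tautologically, and its bottom edge is induced by the map of posets $\phi$. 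This is exactly the data of a morphism of stratified \topoi $(\mathrm{id}_{\X},\phi) \colon \fromto{(\X,R)}{(\X,P)}$, and its underlying geometric morphism is $\mathrm{id}_{\X}$.

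The pullback functor attached to this morphism is the pullback of the underlying geometric morphism, namely $\mathrm{id}_{\X}^{\ast} = \mathrm{id}_{\X}$. By the observation following \Cref{def:constructible_objects}, pullback along any morphism of stratified \topoi carries constructible objects to constructible objects; applied to $(\mathrm{id}_{\X},\phi)$, this says that $\mathrm{id}_{\X}$ sends $\ConsP(\X)$ into $\ConsR(\X)$, i.e. $\ConsP(\X) \subseteq \ConsR(\X)$ as full subcategories of $\X$. Consequently the pullback functor is literally the inclusion $\incto{\ConsP(\X)}{\ConsR(\X)}$, which is the assertion.

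The only statement with any content is the containment $\ConsP(\X) \subseteq \ConsR(\X)$---that a sheaf constructible for the coarse stratification is constructible for the finer one---so this is the step I would justify most carefully, and it is the natural candidate for the ``main obstacle,'' though in truth it is formal. If one prefers to avoid citing the general preservation statement, one can argue directly: for $r \in R$ with $p \colonequals \phi(r)$, \Cref{ntn:R_p} identifies the $R$-stratum $\X_r$ with a stratum of the $P$-stratum $\X_p = \X_{R_p}$, so $\iupperstar_r(F)$ is obtained from $\iupperstar_p(F)$ by a further pullback. When $F$ is $P$-constructible the sheaf $\iupperstar_p(F)$ is locally constant, and pullback preserves local constancy, so $\iupperstar_r(F)$ is locally constant and $F$ is $R$-constructible. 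Since every ingredient is a direct consequence of the definitions, I do not expect any genuine difficulty.
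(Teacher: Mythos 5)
Your argument is correct and is essentially the paper's own reasoning: the statement is recorded as an \emph{Observation} precisely because, once one notes that the underlying geometric morphism of $(\X,R)\to(\X,P)$ is $\id{\X}$, the pullback functor is the identity restricted to constructible objects, and the containment $\ConsP(\X)\subseteq\ConsR(\X)$ is the general fact that pullback along a morphism of stratified \topoi preserves constructibility (your direct stratum-by-stratum verification via $\X_r \to \X_p = \X_{R_p}$ is a fine way to make that explicit). There is no gap.
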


\begin{lemma}\label{lem:coarsenings_of_exodromic_stratifications}
	Let $ (\X,R) $ be a stratified \topos and let $ \phi \colon \fromto{R}{P} $ be a map of posets.
	If $ (\X,R) $ is exodromic, then the following conditions are equivalent:
	\begin{enumerate}
		\item\label{lem:coarsenings_of_exodromic_stratifications.1} The stratified \topos $ (\X,P) $ is exodromic.

		\item\label{lem:coarsenings_of_exodromic_stratifications.3} The full subcategory $ \ConsP(\X) \subset \ConsR(\X) $ is closed under both limits and colimits.
	\end{enumerate}
\end{lemma}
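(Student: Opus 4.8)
The plan is to match the three conditions of \Cref{def:exit_path} for the coarsened stratification $(\X,P)$ against the single closure condition of the lemma. The key preliminary, which I would record first since it trivializes \enumref{def:exit_path}{3} and is used in both directions, is that the pullback functor for the $P$-stratification \emph{always} preserves limits, using only that $(\X,R)$ is exodromic. The coarsening morphism $\fromto{(\X,R)}{(\X,P)}$ is the identity on $\X$, so the $P$-stratification is the composite $\philowerstar \circ \slowerstar$, where $\slowerstar \colon \fromto{\X}{\Fun(R,\Spc)}$ is the given stratification and $\philowerstar$ is the pushforward induced by $\phi$; passing to left adjoints, the pullback for the $P$-stratification is $\supperstar \circ \phiupperstar$. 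Since $\phiupperstar$ is precomposition with $\phi$ it preserves limits, and $\supperstar$ preserves limits by \enumref{def:exit_path}{3} for $(\X,R)$; hence so does their composite. This establishes \enumref{def:exit_path}{3} for $(\X,P)$ unconditionally.

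Next I would record the transitivity of closure along the chain of full subcategories $\ConsP(\X) \subset \ConsR(\X) \subset \X$, recalling from \Cref{obs:pullback_along_refinement_is_inclusion} that the first inclusion is exactly pullback along the coarsening. Because $(\X,R)$ is exodromic, the subcategory $\ConsR(\X) \subset \X$ is closed under limits and colimits, so for any diagram valued in $\ConsR(\X)$ these are computed identically in $\ConsR(\X)$ and in $\X$. It follows formally that $\ConsP(\X)$ is closed under limits and colimits inside $\ConsR(\X)$ if and only if it is closed under limits and colimits inside $\X$, i.e. if and only if \enumref{def:exit_path}{2} holds for $(\X,P)$.

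With these two observations the lemma is almost formal. For \enumref{lem:coarsenings_of_exodromic_stratifications}{1} $\Rightarrow$ \enumref{lem:coarsenings_of_exodromic_stratifications}{3}: exodromicity of $(\X,P)$ supplies \enumref{def:exit_path}{2}, and the transitivity of the previous paragraph upgrades closure in $\X$ to closure in $\ConsR(\X)$. For the converse, assume the closure condition \enumref{lem:coarsenings_of_exodromic_stratifications}{3}. Then transitivity gives \enumref{def:exit_path}{2} for $(\X,P)$; \enumref{def:exit_path}{3} is the automatic statement of the first paragraph; and \enumref{def:exit_path}{1}, atomic generation of $\ConsP(\X)$, follows from \Cref{prop:atomic_generation} applied to the inclusion $\incto{\ConsP(\X)}{\ConsR(\X)}$ of a full subcategory closed under limits and colimits into the atomically generated \category $\ConsR(\X)$. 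Assembling the three conditions shows $(\X,P)$ is exodromic.

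The only non-formal input is the appeal to \Cref{prop:atomic_generation} for atomic generation; everything else is the bookkeeping of closure along a two-step inclusion together with the factorization of the pullback. So I expect no serious obstacle here: the genuine difficulty of coarsening — actually verifying that $\ConsP(\X)$ is closed under limits and colimits in $\ConsR(\X)$, and hence that the coarsening is exodromic — is deferred to \Cref{thm:stability_under_coarsening}. This lemma simply repackages that difficulty into a single checkable closure condition.
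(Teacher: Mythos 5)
Your proof is correct and follows essentially the same route as the paper's: (1) $\Rightarrow$ (2) via \Cref{obs:pullback_along_refinement_is_inclusion} and transitivity of closure along $\ConsP(\X) \subset \ConsR(\X) \subset \X$, and (2) $\Rightarrow$ (1) by checking the three conditions of \Cref{def:exit_path}, with atomic generation supplied by \Cref{prop:atomic_generation} and condition (3) by factoring the $P$-pullback through $\phiupperstar$. The only difference is organizational — you verify condition (3) up front and note it is unconditional, whereas the paper does it last — which changes nothing of substance.
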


\begin{proof}
	Note that by \Cref{obs:pullback_along_refinement_is_inclusion} we immediately have (1) $ \Rightarrow $ (2).

	To show is that (2) $ \Rightarrow $ (1), we check the three conditions of \Cref{def:exit_path}.
	First note that since $ (\X,R) $ is exodromic, the \category \smash{$ \ConsR(\X) $} is atomically generated.
	Hence (2) and \Cref{prop:atomic_generation} imply that the full subcategory \smash{$ \ConsP(\X) $} is atomically generated and the inclusion 
	\begin{equation*}
		\ConsP(\X) \subset \ConsR(\X)
	\end{equation*}
	admits both a left and a right adjoint.
	Since $ (\X,R) $ is exodromic, the full subcategory
	\begin{equation*}
		\ConsR(\X) \subset \X
	\end{equation*}
	is closed under limits and colimits; hence \smash{$ \ConsP(\X) \subset \X $} is also closed under limits and colimits.

	Write $ \tlowerstar \colon \fromto{\X}{\Fun(R,\Spc)} $ for the stratification, and $ \slowerstar \colon \fromto{\X}{\Fun(P,\Spc)} $ for the composite $ \philowerstar \tlowerstar $.
	All that remains to be shown is that the pullback functor
	\begin{equation*}
		\supperstar \colon \fromto{\Fun(P,\Spc)}{\ConsP(\X)}
	\end{equation*}
	preserves limits and colimits.
	For this, note that we have a commutative square
	\begin{equation*}
		\begin{tikzcd}[column sep=4.5em, row sep=2.5em]
			\Fun(P,\Spc) \arrow[r, "\phiupperstar"] \arrow[d, "\supperstar"'] & \Fun(R,\Spc) \arrow[d, "\tupperstar"] \\ 
			\ConsP(\X) \arrow[r, hooked] & \ConsR(\X) \period
		\end{tikzcd}
	\end{equation*}
	Here, the bottom horizontal functor is the inclusion, which is also the pullback along the refinement map $ \fromto{(\X,R)}{(\X,P)} $.
	The functor $ \phiupperstar $ preserves limits and colimits; by assumption both the bottom horizontal functor and $ \tupperstar $ preserve limits and colimits.
	Hence $ \supperstar $ also preserves limits and colimits.
\end{proof}

To compute the exit-path \category of a coarsening, we also make use of the following:

\begin{lemma}\label{lem:composite_of_a_localization_with_a_conservative_functor}
	Let $ F \colon \fromto{\Ccal}{\Dcal} $ and $ G \colon \fromto{\Dcal}{\Ecal} $ be functors between \categories.
	Write $ W \subset \Mor(\Ccal) $ for the collection of morphisms that $ GF $ carries to equivalences in $ \Ecal $.
	If $ F $ is a localization and $ G $ is conservative, then $ F $ induces an equivalence 
	\begin{equation*}
		\equivto{\Ccal[W\inv]}{\Dcal} \period
	\end{equation*} 
\end{lemma}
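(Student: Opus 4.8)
The plan is to reduce the statement to the observation that $W$ coincides exactly with the class of morphisms inverted by $F$ itself. Write $W_F \subseteq \Mor(\Ccal)$ for the collection of morphisms that $F$ carries to equivalences in $\Dcal$. By the definition of a localization (e.g. \HTT{Proposition}{5.2.7.12}), the hypothesis that $F$ is a localization means precisely that $F$ exhibits $\Dcal$ as the localization $\Ccal[W_F\inv]$; that is, $F$ induces an equivalence $\equivto{\Ccal[W_F\inv]}{\Dcal}$. Thus it suffices to prove the equality $W = W_F$, for then $\Ccal[W\inv] \equivalent \Ccal[W_F\inv] \equivalent \Dcal$ via the functor induced by $F$.

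First I would check the inclusion $W_F \subseteq W$. This is immediate and uses no hypothesis on $G$: if $w \in W_F$, then $F(w)$ is an equivalence, and since the functor $G$ preserves equivalences, $GF(w) = G(F(w))$ is an equivalence as well, so $w \in W$.

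The reverse inclusion $W \subseteq W_F$ is where conservativity of $G$ enters. Let $w \in W$, so that $GF(w)$ is an equivalence in $\Ecal$. Since $G$ is conservative, it reflects equivalences, so $G(F(w))$ being an equivalence forces $F(w)$ to be an equivalence in $\Dcal$; hence $w \in W_F$. Combining the two inclusions gives $W = W_F$, which completes the argument.

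There is no serious obstacle here: the content is entirely the clean identification $W = W_F$, with one inclusion coming from functoriality and the other from conservativity of $G$. The only point requiring a little care is invoking the correct form of the universal property of a localization — namely that a localization functor is canonically identified with the localization at the class of morphisms it inverts — so that the equivalence $\equivto{\Ccal[W_F\inv]}{\Dcal}$ is available to transport along the equality $W = W_F$.
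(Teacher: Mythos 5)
Your proof is correct and is essentially the same as the paper's: both arguments reduce to showing that $W$ coincides with the class of morphisms inverted by $F$, with one inclusion from functoriality and the other from conservativity of $G$, and then invoke the identification of a localization functor with the localization at the morphisms it inverts.
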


\begin{proof}
	Since $ F $ is a localization, it suffices to show that given a morphism $ f $ in $ \Ccal $, the morphism $ F(f) $ is an equivalence if and only if $ f \in W $.
	To see this, note that since $ G $ is conservative, $ F(f) $ is an equivalence if and only if $ GF(f) $ is an equivalence.
\end{proof}

For the proof of stability under coarsening, recall \Cref{ntn:inclusion_i_S,ntn:R_p}.
We also introduce the following notation:

\begin{notation}\label{ntn:W_P}
	Let $ (\X,R) $ be a stratified \topos and $ \phi \colon \fromto{R}{P} $ be a map of posets.
	If $ (\X,R) $ is exodromic, write $ W_P \subset \Mor(\Piinfty(\X,R)) $ for the collection of morphisms sent to equivalences by the composite 
	\begin{equation*}
		\Piinfty(\X,R) \to R \to P \period
	\end{equation*}
\end{notation}

\begin{theorem}[(stability under coarsening)]\label{thm:stability_under_coarsening}
	Let $ (\X,R) $ be an exodromic stratified \topos, and let $ \phi \colon \fromto{R}{P} $ be a map of posets.
	Then:
	\begin{enumerate}
		\item\label{thm:stability_under_coarsening.1} The stratified \topos $ (\X,P) $ is exodromic.

		\item\label{thm:stability_under_coarsening.2} The natural functor $ \fromto{\Piinfty(\X,R)}{\Piinfty(\X,P)} $ induces an equivalence $ \equivto{\Piinfty(\X,R)[W_P\inv]}{\Piinfty(\X,P)} $.
	\end{enumerate}
\end{theorem}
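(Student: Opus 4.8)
The plan is to deduce (1) from \Cref{lem:coarsenings_of_exodromic_stratifications} and then obtain (2) by identifying $\ConsP(\X)$, under the exodromy equivalence for $(\X,R)$, with the functors on $\Piinfty(\X,R)$ that invert $W_P$. The crucial preliminary observation is that for each $p \in P$ the fiber $R_p \colonequals \phi\inv(p)$ is \emph{convex} in $R$: since $\phi$ is order-preserving and $\{p\}$ is an interval, if $r \leq r'' \leq r'$ with $r,r' \in R_p$ then $p = \phi(r) \leq \phi(r'') \leq \phi(r') = p$, so $r'' \in R_p$. Hence $R_p \subset R$ is locally closed, and $\X_p = \X_{R_p}$.

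For (1), by \Cref{lem:coarsenings_of_exodromic_stratifications} it suffices to show that $\ConsP(\X) \subset \ConsR(\X)$ is closed under limits and colimits. Because $R_p$ is locally closed, \Cref{cor:stability_under_pulling_back_to_locally_closed_subposets} shows that $(\X_{R_p},R_p)$ is exodromic, that $\X_p$ is monodromic, and that the inclusion $i_{R_p,\ast}$ is an exodromic morphism, so that the restriction $\iupperstar_{R_p} \colon \ConsR(\X) \to \Cons_{R_p}(\X_{R_p})$ preserves both limits and colimits. Moreover \enumref{lem:exodromy_implies_monodromy}{2} applied to $(\X_{R_p},R_p)$ shows that $\LC(\X_p) \subset \Cons_{R_p}(\X_{R_p})$ is closed under limits and colimits. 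Since for $F \in \ConsR(\X)$ one has $\iupperstar_{R_p}(F) \in \Cons_{R_p}(\X_{R_p})$, and $F$ is $P$-constructible precisely when $\iupperstar_{R_p}(F) \in \LC(\X_p)$ for all $p$, the subcategory $\ConsP(\X)$ is the intersection over $p \in P$ of the preimages $(\iupperstar_{R_p})\inv(\LC(\X_p))$ inside $\ConsR(\X)$, hence is closed under limits and colimits. This proves (1).

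For (2), the heart is the identification, under $\ConsR(\X) \equivalent \Fun(\Piinfty(\X,R),\Spc)$, of $\ConsP(\X)$ with the full subcategory of functors inverting $W_P$. Using convexity of $R_p$ and \Cref{cor:stability_under_pulling_back_to_locally_closed_subposets}, a morphism of $\Piinfty(\X,R)$ lies in $W_P$ exactly when it lies over a single $P$-stratum, i.e.\ it is a morphism of the fiber $\Piinfty(\X,R)_{R_p} \equivalent \Piinfty(\X_{R_p},R_p)$ for some $p$. Thus a functor $\widehat F$ inverts $W_P$ iff for each $p$ its restriction to $\Piinfty(\X_{R_p},R_p)$ inverts every morphism; comparing with the description of $\iupperstar_{R_p}$ via \Cref{obs:meaning_of_respecting_exit-paths}, this is exactly the condition that $\iupperstar_{R_p}(F)$ be locally constant, i.e.\ that $F$ be $P$-constructible. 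The genuinely nontrivial input here is that local constancy of $\iupperstar_{R_p}(F)$ on the composite stratum $\X_p$ is equivalent to inverting \emph{all} specialization maps of $\Piinfty(\X_{R_p},R_p)$ — equivalently that the shape $\Piinfty(\X_p)$ is the groupoid completion of $\Piinfty(\X_{R_p},R_p)$ — which is the $P = \ast$ case and follows from \enumref{lem:exodromy_implies_monodromy}{2} together with \Cref{prop:atomic_generation} applied to the inclusion $\LC(\X_p) \subset \Cons_{R_p}(\X_{R_p})$.

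Granting this identification, $W_P$ is a class of morphisms of $\ConsR(\X)^{\at} = \Piinfty(\X,R)^{\op}$, and every element of $W_P$ is automatically an $L$-equivalence for the left adjoint $L \colon \ConsR(\X) \to \ConsP(\X)$, since $W_P$-local objects coincide with $L$-local objects. Applying \enumref{prop:atomic_generation}{3} with $\Dcal = \ConsR(\X)$, $\Ccal = \ConsP(\X)$, and $W = W_P$ then exhibits $\Piinfty(\X,P) = \ConsP(\X)^{\ex}$ as the idempotent completion of the localization $\Piinfty(\X,R)[W_P\inv]$. To remove the idempotent completion I would invoke \Cref{appendix:inverting_arrows_over_a_poset} to show that the localization $\Piinfty(\X,R)[W_P\inv]$ of a category over the poset $P$ at a fiberwise class of arrows is already idempotent complete, giving the desired equivalence. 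Equivalently, since precomposition with the natural functor $\Piinfty(\X,R) \to \Piinfty(\X,P)$ is identified via the exodromy equivalences with the fully faithful inclusion $\ConsP(\X) \subset \ConsR(\X)$ (\Cref{obs:pullback_along_refinement_is_inclusion}, \Cref{obs:meaning_of_respecting_exit-paths}), this functor is a localization with inverted class $W_P$; combining the conservativity of $\Piinfty(\X,P) \to P$ from \Cref{cor:stability_under_pulling_back_to_locally_closed_subposets} with \Cref{lem:composite_of_a_localization_with_a_conservative_functor} then identifies the localizing class as precisely $W_P$.

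The main obstacle is the key identification in (2): translating the geometric condition that $\iupperstar_{R_p}(F)$ be locally constant on the composite stratum $\X_p$ into the combinatorial condition of inverting the fiberwise arrows $W_P$, and in particular the $P = \ast$ fact that the shape is the groupoid completion of the exit-path category. The secondary point requiring the appendix is the idempotent completeness of $\Piinfty(\X,R)[W_P\inv]$, which is what upgrades the conclusion of \enumref{prop:atomic_generation}{3} to a genuine equivalence.
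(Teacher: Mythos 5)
Your argument for part (1) is in substance the paper's: the paper also reduces to showing that $\ConsP(\X) \subset \ConsR(\X)$ is closed under limits and colimits (colimits via \Cref{cor:exodromic_implies_universally_presentable}, limits by restricting to the locally closed fibers $R_p$ and using that $\LC(\X_p)$ is closed under limits in $\Cons_{R_p}(\X_{R_p})$ because the strata are monodromic). Your uniform ``intersection of preimages'' packaging is fine and proves the same thing.

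For part (2), the route you present as the ``equivalent'' reformulation in your last paragraph is exactly the paper's proof: apply \Cref{prop:atomic_generation} to exhibit $\Piinfty(\X,P)$ as the idempotent completion of a localization of $\Piinfty(\X,R)$ at the class of morphisms the induced functor inverts, identify that class with $W_P$ using the conservativity of $\fromto{\Piinfty(\X,P)}{P}$ (from \enumref{cor:stability_under_pulling_back_to_locally_closed_subposets}{5}) together with \Cref{lem:composite_of_a_localization_with_a_conservative_functor}, and then discharge the idempotent completion via \Cref{prop:ff_and_localization} and the conservativity of $\fromto{\Piinfty(\X,R)}{R}$. Your primary route, by contrast, wants to verify the hypothesis of \enumref{prop:atomic_generation}{3} head-on by showing that $\ConsP(\X)$ is precisely the class of $W_P$-local objects, reducing to the claim that $\LC(\X_p)$ consists exactly of the functors on $\Piinfty(\X_{R_p},R_p)$ inverting every morphism. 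The justification you offer for that claim --- ``\Cref{prop:atomic_generation} applied to the inclusion $\LC(\X_p) \subset \Cons_{R_p}(\X_{R_p})$'' --- is circular: part (3) of that proposition takes as a \emph{hypothesis} that the subcategory coincides with the $W$-local objects, which for $W$ equal to all morphisms between atomics is precisely the statement you are trying to establish. (Indeed, the statement $\Env(\Piinfty(\X_{R_p},R_p)) \equivalent \Piinfty(\X_p)$ appears in the paper only as a \emph{corollary} of this theorem, applied to $\fromto{R_p}{\pt}$, so it cannot be fed in as an input without a separate argument.) The fix is simply to do what the paper does and what your closing paragraph already sketches: do not attempt to identify the inverted class a priori; take it to be whatever the induced functor inverts, and only afterward use conservativity over $P$ to recognize it as $W_P$. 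With that substitution your proof is complete and agrees with the paper's.
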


\begin{proof}
	First we prove (1).
	Since $ (\X,R) $ is exodromic, \Cref{cor:exodromic_implies_universally_presentable} shows that the subcategory
	\begin{equation*}
		\ConsP(\X) \subset \ConsR(\X)
	\end{equation*}
	is closed under colimits.
	To prove closure under limits, let $ F_{\bullet} \colon A \to \ConsP(\X) $ be a diagram.
	Write
	\begin{equation*} 
		F_{-\infty} \colonequals \lim_{\alpha \in A} F_{\alpha} 
	\end{equation*}
	for the limit computed in \smash{$ \ConsR(\X) $}.
	We have to prove that for each $ p \in P $, the restriction \smash{$ i_p^{\ast}(F_{-\infty}) $} is locally constant.
	Again by \enumref{cor:stability_under_pulling_back_to_locally_closed_subposets}{1}, the functor
	\begin{equation*} 
		i_p^{\ast} \colon \ConsR(\X) \to \Cons_{R_p}(\X_p)
	\end{equation*}
	preserves limits.
	Therefore,
	\begin{equation*} 
		i_p^{\ast}(F_{-\infty}) \equivalent \lim_{\alpha \in A} i_p^{\ast}(F_{\alpha}) \period 
	\end{equation*}
	By assumption, each \smash{$ i_p^{\ast}(F_{\alpha}) $} is a locally constant object of $ \X_p $.
	Since $ \X_p = \X_{R_p} $, by \enumref{cor:stability_under_pulling_back_to_locally_closed_subposets}{2}, the trivially stratified \topos $ (\X_p,\{p\}) $ is exodromic.
	Hence the subcategory
	\begin{equation*}
		\LC(\X_p) \subset \X_p
	\end{equation*}
	is closed under limits (\Cref{rec:monodromy}).
	Therefore, \smash{$ i_p^{\ast}(F_{-\infty}) $} is locally constant, as desired.

	For item (2), note that (1) and \Cref{prop:atomic_generation} imply that the induced functor $ \fromto{\Piinfty(\X,R)}{\Piinfty(\X,P)} $ exhibits $ \Piinfty(\X,P) $ as the idempotent completion of the localization of $ \Piinfty(\X,R) $ at the class of morphisms that the functor $ \fromto{\Piinfty(\X,R)}{\Piinfty(\X,P)} $ carries to equivalences.
	Moreover, \enumref{cor:stability_under_pulling_back_to_locally_closed_subposets}{5} implies that the induced functor $ \fromto{\Piinfty(\X,P)}{P} $ is conservative.
	Hence \Cref{lem:composite_of_a_localization_with_a_conservative_functor} shows that the induced functor
	\begin{equation*}
		\fromto{\Piinfty(\X,R)}{\Piinfty(\X,P)}
	\end{equation*}
	exhibits $ \Piinfty(\X,P) $ as the idempotent completion of the localization $ \Piinfty(\X,R)[W_P\inv] $. 
	\enumref{cor:stability_under_pulling_back_to_locally_closed_subposets}{5} shows that the natural functor $ \fromto{\Piinfty(\X,R)}{R} $ is conservative.
	Thus \Cref{prop:ff_and_localization} shows that $ \Piinfty(\X,R)[W_P\inv] $ is already idempotent complete, concluding the proof.
\end{proof}

\begin{notation}\label{ntn:enveloping_groupoids}
	Write $ \Env \colon \fromto{\Catinfty}{\Spc} $ for the left adjoint to the inclusion $ \Spc \subset \Catinfty $.
	For \acategory $ \Ccal $, we can compute $ \Env(\Ccal) $ as the localization $ \Ccal[\Ccal\inv] $ at all morphisms in $ \Ccal $ \cite[Corollary 2.10]{arXiv:2108.01924}.
\end{notation}

\begin{corollary}
	Let $ (\X,P) $ be an exodromic stratified \topos.
	Then there is a natural equivalence
	\begin{equation*}
		\equivto{\Env(\Piinfty(\X,P))}{\Piinfty(\X)} \period
	\end{equation*} 
\end{corollary}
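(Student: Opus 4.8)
The plan is to deduce this as a direct consequence of the stability-under-coarsening theorem (\Cref{thm:stability_under_coarsening}), applied to the unique map of posets $\fromto{P}{\pt}$, followed by the identification of $\Piinfty(\X,\pt)$ with the shape $\Piinfty(\X)$. First I would invoke \Cref{thm:stability_under_coarsening} with $R = P$ and $\phi \colon \fromto{P}{\pt}$. Since $(\X,P)$ is exodromic, the theorem guarantees that the trivially stratified \topos $(\X,\pt)$ is exodromic and that the natural functor induces an equivalence
\begin{equation*}
	\equivto{\Piinfty(\X,P)[W_\pt\inv]}{\Piinfty(\X,\pt)} \comma
\end{equation*}
where, following \Cref{ntn:W_P}, the set $W_\pt \subset \Mor(\Piinfty(\X,P))$ consists of the morphisms carried to equivalences by the composite $\Piinfty(\X,P) \to P \to \pt$.

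Next I would identify each side. For the left-hand side: because $\pt$ is the terminal poset, its only morphism is an identity, so \emph{every} morphism of $\Piinfty(\X,P)$ lies in $W_\pt$. Thus $\Piinfty(\X,P)[W_\pt\inv]$ is the localization of $\Piinfty(\X,P)$ at all of its morphisms, which by \Cref{ntn:enveloping_groupoids} is exactly $\Env(\Piinfty(\X,P))$. For the right-hand side: by \Cref{def:exit_path} we have $\Piinfty(\X,\pt) = \Cons_{\pt}(\X)^{\ex} = \LC(\X)^{\ex}$, using that $\pt$-constructibility is local constancy. Since $(\X,P)$ is exodromic, \Cref{lem:exodromy_implies_monodromy} shows $\X$ is monodromic, so the monodromy equivalence (\Cref{rec:monodromy}) gives $\LC(\X) \equivalent \Fun(\Piinfty(\X),\Spc)$. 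Because every space is idempotent complete, \Cref{recollection:atomically_generated} yields $\Fun(\Piinfty(\X),\Spc)^{\ex} \equivalent \Piinfty(\X)$, whence $\Piinfty(\X,\pt) \equivalent \Piinfty(\X)$.

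Combining the two identifications with the equivalence supplied by \Cref{thm:stability_under_coarsening} produces the desired equivalence $\equivto{\Env(\Piinfty(\X,P))}{\Piinfty(\X)}$, and naturality follows from the functoriality of each construction entering the argument (the shape, the enveloping groupoid, and the coarsening functor). I do not expect a genuine obstacle here: essentially all of the content is carried by \Cref{thm:stability_under_coarsening}, and the only point requiring a small amount of care is the routine identification of the trivially stratified exit-path \category $\Piinfty(\X,\pt)$ with the topos-theoretic shape $\Piinfty(\X)$ in the second step.
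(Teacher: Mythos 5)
Your proposal is correct and follows exactly the paper's argument: the paper's proof is the one-line "Apply \Cref{thm:stability_under_coarsening} to the map of posets $\fromto{P}{\pt}$," and your unwinding of the two identifications ($W_{\pt}$ is all morphisms, so the localization is $\Env(\Piinfty(\X,P))$; and $\Piinfty(\X,\pt)\equivalent\Piinfty(\X)$ via the monodromy equivalence) is precisely the routine bookkeeping the paper leaves implicit.
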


\begin{proof}
	Apply \Cref{thm:stability_under_coarsening} to the map of posets $ \fromto{P}{\pt} $.
\end{proof}


\subsection{Checking exodromy locally}\label{subsec:checking_exodromy_locally}

We now observe that the existence of an exit-path \category can be checked by descent.
This generalizes \cites[Proposition 3.6-(2)]{arXiv:2108.01924}[Proposition 3.13-(2)]{arXiv:2308.09550} to the setting of stratified \topoi.
We first recall two fundamental facts about \topoi.

\begin{recollection}\label{rec:limits_of_topoi}
	\hfill
	\begin{enumerate}
		\item\label{rec:limits_of_topoi.1} The \category $ \LTop $ has all limits and colimits. 
		Moreover, the forgetful functor $ \fromto{\LTop}{\CATinfty} $ preserves limits.
		See \cite[\HTTthm{Proposition}{6.3.2.3} \& \HTTthm{Corollary}{6.3.4.7}]{HTT}.

		\item\label{rec:limits_of_topoi.2} A colimit in \acategory $ \X $ with pullbacks is \defn{van Kampen} if the functor
		\begin{equation*}
			\fromto{\X^{\op}}{\CATinfty} \comma \quad \goesto{U}{\X_{/U}} 
		\end{equation*}
		transforms it into a limit in $ \CATinfty $.
		A presentable \category $ \X $ is \atopos if and only if all colimits in $ \X $ are van Kampen; see \cites[\HTTthm{Proposition}{5.5.3.13}, \href{http://www.math.ias.edu/~lurie/papers/HTT.pdf\#theorem.6.1.3.9}{Theorem 6.1.3.9(3)}, \& \HTTthm{Proposition}{6.3.2.3}]{HTT}{MR3935451}.
	\end{enumerate}
\end{recollection}

\begin{proposition}[(van Kampen)]\label{prop:van_Kampen}
	Let $ A $ be \acategory and let $ (\X_{\bullet},P_{\bullet}) \colon \fromto{A}{\StrTop} $ be a diagram of stratified \topoi.
	Let $ (\X_{\infty},P_{\infty}) $ be a cone under $ (\X_{\bullet},P_{\bullet}) $.
	Assume that the following conditions are satisfied:
	\begin{enumerate}
		\item For each $ \alpha \in A $, the stratified \topos $ (\X_{\alpha},P_{\alpha}) $ is exodromic.

		\item The natural pullback functors
		\begin{equation*}
			\X_{\infty} \to \lim_{\alpha \in A^{\op}} \X_{\alpha} \andeq \Cons_{P_{\infty}}(\X_{\infty}) \to \lim_{\alpha \in A^{\op}} \Cons_{P_{\alpha}}(\X_{\alpha})
		\end{equation*}
		are equivalences.
	\end{enumerate}
	Then the stratified \topos $ (\X_{\infty},P_{\infty}) $ is exodromic and the natural functor
	\begin{equation*}
		\colim_{\alpha \in A} \Piinfty(\X_{\alpha},P_{\alpha}) \to \Piinfty(\X_{\infty},P_{\infty})
	\end{equation*}
	is an equivalence of \categories.
	Here the colimit is formed in $ \Catidem $.
\end{proposition}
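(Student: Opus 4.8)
The plan is to verify the three conditions of \Cref{def:exit_path} for $ (\X_{\infty},P_{\infty}) $ and then read off the colimit formula, the whole argument resting on the fact that \emph{every} transition morphism in the diagram is exodromic by \Cref{thm:all_morphisms_are_exodromic}. Throughout, for a morphism $ \alpha \to \beta $ in $ A $ write $ g $ for the induced morphism $ (\X_{\alpha},P_{\alpha}) \to (\X_{\beta},P_{\beta}) $ and $ f_{\alpha} $ for the cone leg $ (\X_{\alpha},P_{\alpha}) \to (\X_{\infty},P_{\infty}) $. Since each $ (\X_{\alpha},P_{\alpha}) $ is exodromic, \Cref{thm:all_morphisms_are_exodromic} shows each $ g $ and each $ f_{\alpha} $ is exodromic; hence each pullback $ g^{\ast} $, $ f_{\alpha}^{\ast} $ is both a left and a right adjoint, so it preserves all limits and colimits, and under the identifications $ \Cons_{P_{\alpha}}(\X_{\alpha}) \simeq \Fun(\Piinfty(\X_{\alpha},P_{\alpha}),\Spc) $ it corresponds to precomposition with the induced functor $ g^{\ex} $ (resp.\ $ f_{\alpha}^{\ex} $) by \Cref{obs:meaning_of_respecting_exit-paths}.

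Condition (1) and the colimit formula come together. Using the second hypothesis, the identifications just recalled, and that the transition functors are precomposition with the $ g^{\ex} $, I compute
\begin{align*}
	\Cons_{P_{\infty}}(\X_{\infty}) &\simeq \lim_{\alpha \in A^{\op}} \Cons_{P_{\alpha}}(\X_{\alpha}) \\
	&\simeq \lim_{\alpha \in A^{\op}} \Fun(\Piinfty(\X_{\alpha},P_{\alpha}),\Spc) \\
	&\simeq \Fun\Big(\colim_{\alpha \in A} \Piinfty(\X_{\alpha},P_{\alpha}),\Spc\Big) \comma
\end{align*}
where the colimit is formed in $ \Catinfty $ and I use that $ \Fun(-,\Spc) $ carries colimits of \categories to limits. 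Since $ \Fun(\Ccal,\Spc) \simeq \Fun(\Ccal^{\idem},\Spc) $, the colimit may be replaced by $ \Ccal_{0} \colonequals \colim_{\alpha \in A} \Piinfty(\X_{\alpha},P_{\alpha}) $ formed in $ \Catidem $. Thus $ \Cons_{P_{\infty}}(\X_{\infty}) \simeq \Fun(\Ccal_{0},\Spc) $ is a presheaf \category, hence atomically generated (\Cref{atomic_presheaf}), giving condition (1); moreover $ \Piinfty(\X_{\infty},P_{\infty}) = \Cons_{P_{\infty}}(\X_{\infty})^{\ex} \simeq \Ccal_{0} $. Some bookkeeping identifies this equivalence with the natural comparison functor assembled from the $ f_{\alpha}^{\ex} $, which yields the second assertion of the proposition.

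For condition (2), the first hypothesis gives $ \X_{\infty} \simeq \lim_{\alpha \in A^{\op}} \X_{\alpha} $ with transition functors the left exact left adjoints $ g^{\ast} $. Colimits in such a limit are levelwise, so a colimit of $ P_{\infty} $-constructible objects is levelwise a colimit of constructible objects, hence constructible; thus $ \Cons_{P_{\infty}}(\X_{\infty}) $ is closed under colimits. Closure under limits is the delicate point: the $ g^{\ast} $ preserve only \emph{finite} limits on $ \X_{\alpha} $, so infinite limits in $ \X_{\infty} $ are a priori not levelwise. The key observation is that \emph{on constructible objects} the $ g^{\ast} $ preserve all limits and that $ \Cons_{P_{\alpha}}(\X_{\alpha}) $ is closed under limits in $ \X_{\alpha} $; consequently, for a diagram $ F^{(\bullet)} $ in $ \Cons_{P_{\infty}}(\X_{\infty}) $ the levelwise family $ (\lim_{k} F^{(k)}_{\alpha})_{\alpha} $, with limits formed in $ \X_{\alpha} $ (equivalently in $ \Cons_{P_{\alpha}}(\X_{\alpha}) $), is compatible with the $ g^{\ast} $ and so defines a genuine object of $ \X_{\infty} $. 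A mapping-space computation, using that mapping spaces in $ \lim_{\alpha} \X_{\alpha} $ are the limits of the mapping spaces in the $ \X_{\alpha} $, shows this object is the limit of $ F^{(\bullet)} $ in $ \X_{\infty} $; as it lies in $ \lim_{\alpha} \Cons_{P_{\alpha}}(\X_{\alpha}) = \Cons_{P_{\infty}}(\X_{\infty}) $, the subcategory is closed under limits.

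Condition (3) then follows from the limit description. Since $ s_{\infty} $ is a morphism of stratified \topoi onto the identity-stratified $ (\Fun(P_{\infty},\Spc),P_{\infty}) $, the functor $ \supperstar_{\infty} $ factors through $ \Cons_{P_{\infty}}(\X_{\infty}) $, and by condition (2) the inclusion $ \Cons_{P_{\infty}}(\X_{\infty}) \hookrightarrow \X_{\infty} $ preserves limits, so it suffices to check that $ \supperstar_{\infty} \colon \Fun(P_{\infty},\Spc) \to \Cons_{P_{\infty}}(\X_{\infty}) $ preserves limits. As the target is the limit $ \lim_{\alpha} \Cons_{P_{\alpha}}(\X_{\alpha}) $ whose projections $ f_{\alpha}^{\ast} $ create limits, this reduces to checking that each $ f_{\alpha}^{\ast} \supperstar_{\infty} $ preserves limits; the commuting square attached to $ f_{\alpha} $ identifies this composite with $ \supperstar_{\alpha} \circ \phi_{\alpha}^{\ast} $, where $ \phi_{\alpha} \colon P_{\alpha} \to P_{\infty} $ is the induced map of posets. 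Both $ \phi_{\alpha}^{\ast} $ (pointwise limits) and $ \supperstar_{\alpha} $ (condition (3) for $ (\X_{\alpha},P_{\alpha}) $) preserve limits, completing the verification that $ (\X_{\infty},P_{\infty}) $ is exodromic. The main obstacle is the closure under limits in condition (2): reconciling the non-levelwise infinite limits of the ambient \topos $ \X_{\infty} $ with the levelwise limits of the constructible subcategories is precisely the step that forces one to invoke that every transition morphism is exodromic, i.e.\ \Cref{thm:all_morphisms_are_exodromic}.
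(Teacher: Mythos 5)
Your argument is essentially the paper's own (the paper dismisses the statement as immediate from the definitions and the equivalence $\PrLat \equivalent \Catidem$ of \Cref{recollection:atomically_generated}), and your detailed verifications of conditions (2) and (3) of \Cref{def:exit_path} — levelwise colimits in $\lim_{\alpha}\X_{\alpha}$, the mapping-space argument showing that the levelwise limit of a diagram of constructible families is the limit in $\X_{\infty}$, and the factorization $f_{\alpha}^{\ast}\supperstar_{\infty} \equivalent \supperstar_{\alpha}\phi_{\alpha}^{\ast}$ — are all correct.

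There is, however, one circular step as written: you invoke \Cref{thm:all_morphisms_are_exodromic} at the outset to conclude that each cone leg $f_{\alpha} \colon (\X_{\alpha},P_{\alpha}) \to (\X_{\infty},P_{\infty})$ is exodromic. That theorem requires \emph{both} source and target to be exodromic, and the exodromy of the target $(\X_{\infty},P_{\infty})$ is precisely what you are trying to prove. The invocation is legitimate only for the transition morphisms $g$ internal to the diagram, whose sources and targets are exodromic by hypothesis (1) — and, fortunately, those are the only morphisms your identification $\Cons_{P_{\infty}}(\X_{\infty}) \equivalent \Fun(\Ccal_{0},\Spc)$ and your verifications of conditions (2) and (3) actually use. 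The exodromy of the $f_{\alpha}$, and hence the existence of the functors $f_{\alpha}^{\ex}$ needed to even write down the comparison functor $\colim_{\alpha} \Piinfty(\X_{\alpha},P_{\alpha}) \to \Piinfty(\X_{\infty},P_{\infty})$, should be deduced \emph{after} conditions (1)--(3) have been established, by a second application of \Cref{thm:all_morphisms_are_exodromic}. With the claims reordered in this way the proof is complete and matches the paper's intended argument.
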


\begin{proof}
	Immediate from the definitions and the equivalence $ \PrLat \equivalent \Catidem $ of \Cref{recollection:atomically_generated}.
\end{proof} 

\begin{remark}[(on idempotent completion)]
	Let $ P $ be a poset and write \smash{$ \CatPcons \subset \CatP $} for the full subcategory spanned by those objects such that the specified functor $ \fromto{\Ccal}{P} $ is conservative.
	The forgetful functor
	\begin{equation*}
		\fromto{\CatP}{\Catinfty}
	\end{equation*}
	preserves colimits. 
	The inclusion $ \incto{\CatPcons}{\CatP} $ preserves colimits (\Cref{obs:iota_P}).
	Hence, the forgetful functor
	\begin{equation*}
		\fromto{\CatPcons}{\Catinfty}
	\end{equation*}
 	preserves colimits.
	By \Cref{lem:layered_implies_idempotent_complete}, every object of $ \CatPcons $ is idempotent complete.
	Hence in \Cref{prop:van_Kampen}, if the diagram of stratifying posets is constant, then the colimit in $ \Catinfty $ is already idempotent complete. 
\end{remark}

An important application of \Cref{prop:van_Kampen} is for diagrams obtained by slicing over a diagram in $ \X $ whose colimit is the terminal object.
To explain this, we first explain why local constancy and constructibility can, unsurprisingly, be checked locally.

\begin{lemma}\label{lem:local_constancy_is_local}
	Let $ \X $ be \atopos and let $ U_{\bullet} \colon \fromto{A}{\X} $ be a diagram with $ \colim_{\alpha \in A} U_{\alpha} \equivalent 1_{\X} $.
	Then an object $ L \in \X $ is locally constant if and only if for each $ \alpha \in A $, the restriction $ L \cross U_{\alpha} \in \X_{/U_{\alpha}} $ is locally constant.
\end{lemma}

\begin{proof}
	Since locally constant objects are stable under pullback, the forward implication is clear.
	For the reverse implication, for each $ \alpha \in A $, write $ f_{\alpha,\ast} \colon \X_{/U_{\alpha}} \to \X $ for the induced geometric morphism, so that $ \fupperstar_{\alpha} = (-) \cross U_{\alpha} $.
	Note that the induced map $ \coprod_{\alpha \in A} U_{\alpha} \to 1_{\X} $ is an effective epimorphism.
	By assumption, for each $ \alpha \in A $, the object $ \fupperstar_{\alpha}(L) $ is locally constant; choose objects $ (V_{\beta})_{\beta \in B_{\alpha}} $ of $ \X_{/U_{\alpha}} $ such that the induced map $ \coprod_{\beta \in B_{\alpha}} V_{\beta} \to 1_{\X_{/U_{\alpha}}} $ is an effective epimorphism and the further restriction of $ \fupperstar_{\alpha}(L) $ to each $ \X_{/V_{\beta}} $ is constant. 
	Since the forgetful functor $ f_{\alpha,\sharp} \colon \Xcal_{/U_{\alpha}} \to \X $ preserves colimits and pullbacks, it preserves effective epimorphisms.
	Hence the induced map $ \coprod_{\beta \in B_{\alpha}} V_{\beta} \to U_{\alpha} $ is an effective epimorphism in $ \X $.
	Consequently, the induced map
	\begin{equation*}
		\coprod_{\alpha \in A} \coprod_{\beta \in B_{\alpha}} V_{\beta} \to 1_{\X}
	\end{equation*}
	is an effective epimorphism.
	By construction, the restriction of $ L $ to each $ \X_{/V_{\beta}} $ is constant, so $ L $ is locally constant, as desired.
\end{proof}

\begin{lemma}\label{lem:constructibility_is_local}
	Let $ (\X,P) $ be a stratified \topos and let $ U_{\bullet} \colon \fromto{A}{\X} $ be a diagram with $ \colim_{\alpha \in A} U_{\alpha} \equivalent 1_{\X} $.
	Then:
	\begin{enumerate}
		\item An object $ F \in \X $ is $ P $-constructible if and only if for each $ \alpha \in A $, the restriction $ F \cross U_{\alpha} \in \X_{/U_{\alpha}} $ is $ P $-constructible (with respect to the induced stratification).

		\item The natural functor $ \ConsP(\X) \to \lim_{\alpha \in A^{\op}} \ConsP(\X_{/U_{\alpha}}) $ is an equivalence.
	\end{enumerate}
\end{lemma}

\begin{proof}
	For (1), note that since constructibility is preserved by pullback, the forward implication is clear.
	For the converse, for each $ \alpha \in A $, write $ f_{\alpha,\ast} \colon \X_{/U_{\alpha}} \to \X $ for the induced geometric morphism.
	Let $ p \in P $; we need to show that $ \iupperstar_p(F) $ is locally constant.
	Note that since $ \iupperstar_p $ is preserves colimits, $ \colim_{\alpha \in A} \iupperstar_p(U_{\alpha}) \equivalent 1_{\X_{p}} $.
	By \Cref{lem:local_constancy_is_local}, to check that $ \iupperstar_p(F) $ is a locally constant object of $ \X_{p} $, it suffices to check that the further restriction to each $ (\X_{p})_{/\iupperstar_p(U_{\alpha})} $ is locally constant.

	To see this, observe that by \enumref{prop:formula_for_pullbacks_along_etale_morphisms_and_closed_immersions}{1}, we have pullback squares
	\begin{equation*}
		\begin{tikzcd}[row sep=2.25em, column sep=3.5em]
			(\X_{p})_{/\iupperstar_p(U_{\alpha})} \arrow[dr, phantom, very near start, "\lrcorner", xshift=-1em, yshift=0.25em] \arrow[d, "f_{p,\alpha,*}"'] \arrow[r, "i_{p,\alpha,*}", hooked] & \X_{/U_{\alpha}}  \arrow[d, "f_{\alpha,*}"] \\ 
			\X_{p} \arrow[dr, phantom, very near start, "\lrcorner", xshift=-0.5em] \arrow[d] \arrow[r, "i_{p,*}"{description}, hooked] & \X  \arrow[d, "\slowerstar"] \\ 
			\Fun(\{p\} ,\Spc) \arrow[r, hooked] & \Fun(P,\Spc) \period
		\end{tikzcd}
	\end{equation*}
	in $ \RTop $.
	In particular, the $ p $-th stratum of the induced stratification on $ \X_{/U_{\alpha}} $ is $ (\X_{p})_{/\iupperstar_p(U_{\alpha})} $.
	By assumption, $ \iupperstar_{\alpha,p} \fupperstar_{\alpha}(F) $ is a locally constant object of $ (\X_{p})_{/\iupperstar_p(U_{\alpha})} $.
	By the commutativity of the top square, we deduce that the restriction of $ \iupperstar_p(F) $ to $ (\X_{p})_{/\iupperstar(U_{\alpha})} $ is locally constant, as desired.

	For (2), first note that since colimits in \atopos are van Kampen (\enumref{rec:limits_of_topoi}{2}), the natural functor $ \Xcal \to \lim_{\alpha \in A^{\op}} \X_{/U_{\alpha}} $ is an equivalence.
	Since the assignment $ \alpha \mapsto \ConsP(\X_{/U_{\alpha}}) $ is a diagram of full subcategories of $ \alpha \mapsto \X_{/U_{\alpha}} $, we deduce that $ \lim_{\alpha \in A^{\op}} \ConsP(\X_{/U_{\alpha}}) $ identifies as the full subcategory of $ \Xcal $ spanned by those objects $ F \in \X $ such that for each $ \alpha \in A $, we have $ \fupperstar_{\alpha}(F) \in \ConsP(\X_{/U_{\alpha}}) $ (see \cite[Proposition 2.1]{arXiv:2503.03916}).
	By (1), this full subcategory coincides with $ \ConsP(\X) $.
\end{proof}

\begin{corollary}\label{cor:van_Kampen_internally}
	Let $ (\X,P) $ be a stratified \topos and let $ U_{\bullet} \colon \fromto{A}{\X} $ be a diagram with $ \colim_{\alpha \in A} U_{\alpha} \equivalent 1_{\X} $.
	If for each $ \alpha \in A $, the stratified \topos $ (\X_{/U_{\alpha}},P) $ is exodromic, then the stratified \topos $ (\X,P) $ is exodromic and the natural functor
	\begin{equation*}
		\colim_{\alpha \in A} \Piinfty(\X_{/U_{\alpha}},P) \to \Piinfty(\X,P)
	\end{equation*}
	is an equivalence of \categories.
\end{corollary}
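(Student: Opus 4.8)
The plan is to deduce this from the abstract van Kampen statement \Cref{prop:van_Kampen}, applied to the diagram $\alpha \mapsto (\X_{/U_\alpha},P)$ indexed by $A$ with the \emph{constant} stratifying poset $P$, and with cone $(\X_\infty,P_\infty) = (\X,P)$. The structure maps are the étale geometric morphisms $j_\alpha \colon \fromto{\X_{/U_\alpha}}{\X}$, which are morphisms of stratified \topoi for the induced stratifications, and hypothesis (1) of \Cref{prop:van_Kampen} holds by assumption. Because the diagram of stratifying posets is constant at $P$, the colimit $\colim_\alpha \Piinfty(\X_{/U_\alpha},P)$ may be formed in $\Catinfty$ and is automatically idempotent complete (as in the discussion of idempotent completion following \Cref{prop:van_Kampen}), so it agrees with the colimit in $\Catidem$ produced by that proposition. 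It remains only to verify hypothesis (2): that the pullback functors
\begin{equation*}
	\X \to \lim_{\alpha \in A^{\op}} \X_{/U_\alpha} \andeq \ConsP(\X) \to \lim_{\alpha \in A^{\op}} \ConsP(\X_{/U_\alpha})
\end{equation*}
are equivalences.

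First I would settle the left-hand equivalence, which is van Kampen descent for $\X$ itself. By \enumref{rec:limits_of_topoi}{2} every colimit in $\X$ is van Kampen, so the functor $\goesto{U}{\X_{/U}}$ carries the colimit $\colim_\alpha U_\alpha \equivalent 1_\X$ to a limit; since $\X_{/1_\X} \equivalent \X$, this gives $\X \equivalent \lim_{\alpha \in A^{\op}} \X_{/U_\alpha}$, with comparison functors $\goesto{F}{F \cross U_\alpha} = j_\alpha^\ast(F)$.

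The right-hand equivalence rests on the claim that \emph{$P$-constructibility is local}: an object $F \in \X$ is $P$-constructible if and only if $j_\alpha^\ast(F)$ is $P$-constructible for every $\alpha \in A$. Granting this, since each $\ConsP(\X_{/U_\alpha}) \subset \X_{/U_\alpha}$ is full and the transition functors preserve constructibility, the limit $\lim_{\alpha} \ConsP(\X_{/U_\alpha})$ is the full subcategory of $\X \equivalent \lim_\alpha \X_{/U_\alpha}$ on the objects $F$ with all $j_\alpha^\ast(F)$ constructible, which by the claim is exactly $\ConsP(\X)$. The forward direction of the claim is immediate, as $j_\alpha$ is a morphism of stratified \topoi and pullback preserves constructibility. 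For the converse, fix $p \in P$; I must show $i_p^\ast(F) \in \X_p$ is locally constant. Using the cube of \Cref{obs:pulling_back_stratified_geometric_morphisms} for $j_\alpha$ and the subposet $\{p\} \subset P$, together with the base-change identification of the $p$-th stratum $(\X_{/U_\alpha})_p$ with the slice $(\X_p)_{/i_p^\ast(U_\alpha)}$ (pullback of the étale morphism $j_\alpha$ along $i_p$), the restriction of $i_p^\ast(F)$ to $(\X_p)_{/i_p^\ast(U_\alpha)}$ is $i_p^\ast(j_\alpha^\ast F)$, hence locally constant. Since $\colim_\alpha U_\alpha \equivalent 1_\X$, the map $\fromto{\coprod_\alpha U_\alpha}{1_\X}$ is an effective epimorphism, and as $i_p^\ast$ preserves coproducts and effective epimorphisms, so is $\fromto{\coprod_\alpha i_p^\ast(U_\alpha)}{1_{\X_p}}$. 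Thus $i_p^\ast(F)$ is locally constant on each member of an effective-epimorphism cover of $\X_p$, and, local constancy being a local property, $i_p^\ast(F)$ is itself locally constant.

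The main obstacle is the converse direction of this locality claim — descending local constancy along the cover stratum by stratum. Its crux is the two compatibilities just used: the étale base-change identification $(\X_{/U_\alpha})_p \equivalent (\X_p)_{/i_p^\ast U_\alpha}$ and the persistence of the covering condition under $i_p^\ast$; once these are in hand, stability of local constancy under refinement of covers finishes the claim. With hypothesis (2) verified, \Cref{prop:van_Kampen} simultaneously yields that $(\X,P)$ is exodromic and the asserted equivalence $\fromto{\colim_\alpha \Piinfty(\X_{/U_\alpha},P)}{\Piinfty(\X,P)}$.
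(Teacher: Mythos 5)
Your proposal is correct and follows the same route as the paper: the paper's proof of this corollary is a one-line reduction to \Cref{prop:van_Kampen} together with the van Kampen property of colimits in \atopos (\enumref{rec:limits_of_topoi}{2}), which is exactly your strategy. The additional work you do — verifying that $ \ConsP(\X) \to \lim_{\alpha}\ConsP(\X_{/U_{\alpha}}) $ is an equivalence by checking that $ P $-constructibility is local, via étale base change over each stratum — is a correct elaboration of what the paper declares ``immediate.''
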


\begin{proof}
	Immediate from \Cref{prop:van_Kampen,lem:constructibility_is_local}.
\end{proof}


 \subsection{The Künneth formula}\label{subsec:Kunneth_formula}

 We now prove a \textit{Künneth formula} for the exit-path \category of the product of exodromic stratified \topoi.
 For this subsection, it may be useful to review \Cref{rec:products_of_topoi} on products of \topoi and tensor products of presentable \categories.
 One key input is the Künneth formula in the unstratified setting (\Cref{prop:Kunneth_formula_for_monodromic_topoi}).

 We start by noting that the product of stratified \topoi is naturally stratified:

 \begin{definition}[(stratification of a product)]\label{def:stratification_of_a_product}
 	Let $ \slowerstar \colon \fromto{\X}{\Fun(P,\Spc)} $ and $ \tlowerstar \colon \fromto{\Y}{\Fun(Q,\Spc)} $ be stratified \topoi.
 	We write $ (\X \tensor \Y,P \cross Q) $ for the stratified \topos
 	\begin{equation*}
 		\slowerstar \tensor \tlowerstar \colon \fromto{\X \tensor \Y}{\Fun(P,\Spc) \tensor \Fun(Q,\Spc) \equivalent \Fun(P \cross Q,\Spc)} \period
 	\end{equation*}
 \end{definition}

 \begin{observation}\label{obs:stratification_of_the_product_of_exodromic_topoi}
 	In the setting of \Cref{def:stratification_of_a_product}, assume that $ (\X,P) $ and $ (\Y,Q) $ are exodromic stratified \topoi.
 	Then:
 	\begin{enumerate}
 		\item Since $ \supperstar $ and $ \tupperstar $ preserve limits and colimits,
 		\begin{equation*}
 			\supperstar \tensor \tupperstar \colon \fromto{\Fun(P \cross Q,\Spc)}{\X \tensor \Y}
 		\end{equation*}
 		preserves limits and colimits.

 		\item Since the inclusions $ \incto{\ConsP(\X)}{\X} $ and $ \incto{\ConsQ(\Y)}{\Y} $ are both left and right adjoints, the induced functor
 		\begin{equation*}
 			\fromto{\ConsP(\X) \tensor \ConsQ(\Y)}{\X \tensor \Y} 
 		\end{equation*}
 		is fully faithful and both a left and right adjoint.
 	\end{enumerate}
 \end{observation}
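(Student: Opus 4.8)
The plan is to deduce both claims formally from the bifunctoriality of the Lurie tensor product on $\PrL$, using its compatibility with passage to adjoints and with reflective localizations. The two substantive inputs are the exodromy of $(\X,P)$ and $(\Y,Q)$ — which through \Cref{def:exit_path} and \Cref{notation:constructibilization} supplies all the adjoints we need — together with the basic facts about products of \topoi recorded in \Cref{rec:products_of_topoi}.

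For (1), the first step is to observe that $\supperstar$ is \emph{both} a left and a right adjoint: it is a left adjoint by definition, hence preserves colimits, and condition (3) of \Cref{def:exit_path} says it preserves limits, so (being accessible) it acquires a further left adjoint $s_{\sharp}\colon \fromto{\X}{\Fun(P,\Spc)}$ by the adjoint functor theorem; the same applies to $\tupperstar$, yielding $t_{\sharp}$. I would then invoke the principle already used in the proof of \Cref{prop:Kunneth_formula_for_monodromic_topoi}, namely that the tensor of two left adjoints is left adjoint to the tensor of their right adjoints. Applied to the pair $(s_{\sharp},t_{\sharp})$ this exhibits $s_{\sharp}\tensor t_{\sharp}$ as a left adjoint of $\supperstar\tensor\tupperstar$, so the latter preserves limits; applied to $(\supperstar,\tupperstar)$ it exhibits $s_{\ast}\tensor t_{\ast}$ as a right adjoint of $\supperstar\tensor\tupperstar$, so it preserves colimits.

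For (2), exodromy of $(\X,P)$ makes $\ConsP(\X)\subset\X$ closed under limits and colimits, so by \Cref{notation:constructibilization} the fully faithful inclusion $i_{\X,P}$ admits a left adjoint $\Lup_{\X,P}$ and a right adjoint $\Rup_{\X,P}$; likewise for $i_{\Y,Q}$. That $i_{\X,P}\tensor i_{\Y,Q}$ is both a left and a right adjoint then follows exactly as in (1), with left adjoint $\Lup_{\X,P}\tensor\Lup_{\Y,Q}$ and right adjoint $\Rup_{\X,P}\tensor\Rup_{\Y,Q}$. For full faithfulness I would factor it as $(i_{\X,P}\tensor\mathrm{id}_{\Y})\circ(\mathrm{id}_{\ConsP(\X)}\tensor i_{\Y,Q})$ and handle each factor separately: each inclusion $i$ is a fully faithful right adjoint, i.e.\ the inclusion of a reflective localization, and tensoring such an inclusion with a fixed presentable category again produces a reflective localization inclusion, in particular a fully faithful functor; a composite of fully faithful functors is fully faithful. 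This reproduces, at the level of constructible objects, the unstratified argument given just before \Cref{prop:Kunneth_formula_for_monodromic_topoi}.

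The only step I expect to require real care is the full faithfulness in (2): the tensor product does \emph{not} preserve full faithfulness of arbitrary functors, so the proof must genuinely exploit that the inclusions are (co)reflective localizations. The crux is therefore the lemma that $(-)\tensor\Ecal\colon\fromto{\PrL}{\PrL}$ carries a reflective localization to a reflective localization; granting this, everything else is a formal manipulation of adjunctions through the symmetric monoidal structure on $\PrL$ and its self-duality under $\PrL\simeq(\PrR)^{\op}$ underlying \Cref{rec:products_of_topoi}.
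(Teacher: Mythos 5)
Your argument is correct and is essentially the paper's own (the statement is left as an observation whose justification is exactly the functoriality of the tensor product of presentable \categories applied to adjunctions internal to $\PrL$, as in the unstratified case before \Cref{prop:Kunneth_formula_for_monodromic_topoi}). You have correctly identified and filled the only non-immediate step, namely that full faithfulness survives tensoring because the counit $\Lup_{\X,P}\, i_{\X,P} \to \id$ is an equivalence and is preserved by the functor $(-)\tensor\Ecal$.
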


\begin{lemma}
	Let $ (\X,P) $ and $ (\Y,Q) $ be exodromic stratified \topoi. 
	The inclusion
 	\begin{equation*}
 		\incto{\ConsP(\X) \tensor \ConsQ(\Y)}{\X \tensor \Y}
 	\end{equation*}
 	factors through $ \Cons_{P \cross Q}(\X \tensor \Y) $.
\end{lemma}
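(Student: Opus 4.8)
The plan is to verify directly that every object in the essential image of the inclusion $\incto{\ConsP(\X) \tensor \ConsQ(\Y)}{\X \tensor \Y}$ (which is fully faithful by \Cref{obs:stratification_of_the_product_of_exodromic_topoi}) is $(P \cross Q)$-constructible; by \Cref{def:constructible_objects} this amounts to checking that the restriction of such an object to each stratum $(\X \tensor \Y)_{(p,q)}$ is locally constant. Since $\Cons_{P \cross Q}(\X \tensor \Y) \subset \X \tensor \Y$ is a full subcategory, this containment of essential images is exactly the asserted factorization.

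The key input is an identification of the strata of the product stratification. For $p \in P$ and $q \in Q$, the singleton $\{(p,q)\} \subset P \cross Q$ is the product $\{p\} \cross \{q\}$. Because the product stratification $\slowerstar \tensor \tlowerstar$ is literally the tensor product of $\slowerstar$ and $\tlowerstar$, and the product in $\RTop$ is the tensor product in $\PrR$ (\Cref{rec:products_of_topoi}), pulling back along the point $\incto{\{(p,q)\}}{P \cross Q}$ should be compatible with the tensor decomposition. Concretely, the tensor product of presentable \categories preserves the limits in $\PrR$ that compute the strata (it is separately colimit-preserving, so the corresponding functors on $\PrR \equivalent (\PrL)^{\op}$ preserve the relevant limits), and $\Fun(\{p\},\Spc) \tensor \Fun(\{q\},\Spc) \equivalent \Fun(\{(p,q)\},\Spc)$; together these yield a natural equivalence $(\X \tensor \Y)_{(p,q)} \equivalent \X_p \tensor \Y_q$ under which the stratum restriction functor is identified with $\iupperstar_p \tensor \iupperstar_q$. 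I expect establishing this identification to be the main obstacle, since it requires care about which limits the tensor product preserves; note, however, that it concerns only a single point and so requires no noetherian hypothesis.

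Granting this, the proof concludes quickly. Since $\iupperstar_p$ carries $\ConsP(\X)$ into $\LC(\X_p)$ and $\iupperstar_q$ carries $\ConsQ(\Y)$ into $\LC(\Y_q)$ (\Cref{def:constructible_objects}), functoriality of the tensor product shows that the composite $\incto{\ConsP(\X) \tensor \ConsQ(\Y)}{\X \tensor \Y} \to \X_p \tensor \Y_q$ factors as
\[
	\ConsP(\X) \tensor \ConsQ(\Y) \to \LC(\X_p) \tensor \LC(\Y_q) \to \X_p \tensor \Y_q \period
\]
By \enumref{cor:stability_under_pulling_back_to_locally_closed_subposets}{2} the strata $\X_p$ and $\Y_q$ are monodromic, so \enumref{prop:Kunneth_formula_for_monodromic_topoi}{3} identifies the image of the second functor with $\LC(\X_p \tensor \Y_q)$. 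Hence the restriction of any object of $\ConsP(\X) \tensor \ConsQ(\Y)$ to the stratum $(\X \tensor \Y)_{(p,q)} \equivalent \X_p \tensor \Y_q$ is locally constant. As $(p,q)$ was arbitrary, every such object is $(P \cross Q)$-constructible, giving the desired factorization.
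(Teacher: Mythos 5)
Your proof is correct and follows essentially the same route as the paper's: restrict to each stratum $(p,q)$, observe that the composite factors through $\LC(\X_p) \tensor \LC(\Y_q)$, and conclude via \enumref{prop:Kunneth_formula_for_monodromic_topoi}{3}. The only difference is that you explicitly justify the identification $(\X \tensor \Y)_{(p,q)} \equivalent \X_p \tensor \Y_q$ of the stratum restriction with $\iupperstar_p \tensor \iupperstar_q$, a point the paper's proof uses silently.
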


\begin{proof}
	Let $ (p,q) \in P \cross Q $.
 	Note that by the definition of $ \ConsP(\X) \tensor \ConsQ(\Y) $, the composite 
 	\begin{equation}\label{eq:i_p_tensor_i_q}
 		\begin{tikzcd}[sep=3em]
 			\ConsP(\X) \tensor \ConsQ(\Y) \arrow[r, hooked] & \X \tensor \Y \arrow[r, "\iupperstar_p \tensor \iupperstar_q"] & \X_p \tensor \Y_q
 		\end{tikzcd}
 	\end{equation}
 	factors through $ \LC(\X_p) \tensor \LC(\Y_q) $.
 	By \Cref{prop:Kunneth_formula_for_monodromic_topoi}, we have
 	\begin{equation*}
 		\LC(\X_p) \tensor \LC(\Y_q) = \LC(\X_p \tensor \Y_q)
 	\end{equation*}
 	as full subcategories of $ \X_p \tensor \Y_q $.
 	Hence the functor \eqref{eq:i_p_tensor_i_q} factors through $ \LC(\X_p \tensor \Y_q) $, as desired.
\end{proof}

\begin{proposition}[{(Künneth formula for exodromic stratified \topoi)}]\label{prop:Kunneth_formula_for_exodromic_topoi}
	Let $ \slowerstar \colon \fromto{\X}{\Fun(P,\Spc)} $ and $ \tlowerstar \colon \fromto{\Y}{\Fun(Q,\Spc)} $ be exodromic stratified \topoi.
	If $ P $ and $ Q $ are noetherian, then:
	\begin{enumerate}
		\item The natural fully faithful functor
		\begin{equation*}
			\incto{\ConsP(\X) \tensor \ConsQ(\Y)}{\Cons_{P \cross Q}(\X \tensor \Y)}
		\end{equation*}
		is an equivalence.

		\item The stratified \topos $ (\X \tensor \Y, P \cross Q) $ is exodromic and the natural functor
		\begin{equation*}
			\fromto{\Piinfty(\X \tensor \Y, P \cross Q)}{\Piinfty(\X,P) \cross \Piinfty(\Y,Q)}
		\end{equation*}
		is an equivalence of \categories.
	\end{enumerate}
\end{proposition}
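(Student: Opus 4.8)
The plan is to concentrate all the work on the essential surjectivity of the fully faithful functor
\[ F \colon \ConsP(\X) \tensor \ConsQ(\Y) \hookrightarrow \Cons_{P \cross Q}(\X \tensor \Y) \]
of item (1); granting this, item (2) follows formally. Indeed, by \Cref{obs:stratification_of_the_product_of_exodromic_topoi} the inclusion $ \incto{\ConsP(\X) \tensor \ConsQ(\Y)}{\X \tensor \Y} $ is both a left and a right adjoint, so once $ F $ is an equivalence the subcategory $ \Cons_{P \cross Q}(\X \tensor \Y) $ is closed under limits and colimits in $ \X \tensor \Y $ (condition (2) of \Cref{def:exit_path}); condition (3) holds because $ \supperstar \tensor \tupperstar $ preserves limits (\Cref{obs:stratification_of_the_product_of_exodromic_topoi}); and since
\[ \ConsP(\X) \tensor \ConsQ(\Y) \simeq \Fun(\Piinfty(\X,P),\Spc) \tensor \Fun(\Piinfty(\Y,Q),\Spc) \simeq \Fun(\Piinfty(\X,P) \cross \Piinfty(\Y,Q),\Spc) \]
is a presheaf \category it is atomically generated (condition (1)). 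Passing to $ (-)^{\ex} $ and using $ \Fun(\Ecal,\Spc)^{\ex} \simeq \Ecal $ for idempotent complete $ \Ecal $ (\Cref{recollection:atomically_generated,ntn:Ccal^ex}) — with $ \Ecal = \Piinfty(\X,P) \cross \Piinfty(\Y,Q) $ idempotent complete as a product of idempotent complete exit-path \categories — then identifies $ \Piinfty(\X \tensor \Y, P \cross Q) $ with $ \Piinfty(\X,P) \cross \Piinfty(\Y,Q) $.

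To prove essential surjectivity, first note that the target of $ F $ is presentable and closed under colimits in $ \Sh(\X \tensor \Y) $ by \Cref{lem:closure_under_colimits_for_monodromic_strata}, since each stratum $ \X_p \tensor \Y_q $ is monodromic by \Cref{prop:Kunneth_formula_for_monodromic_topoi}. As $ F $ is fully faithful and colimit-preserving, its essential image is the colimit closure of the objects $ F(c \tensor d) $, where $ c \tensor d $ denotes the image of atomic objects $ c \in \ConsP(\X)^{\at} $ and $ d \in \ConsQ(\Y)^{\at} $ under $ \ConsP(\X) \cross \ConsQ(\Y) \to \ConsP(\X) \tensor \ConsQ(\Y) $; these external tensors are the representables of the product \category and so generate the domain under colimits. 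It therefore suffices to show that the functors $ \set{\Map(F(c \tensor d),-)}_{c,d} $ are jointly conservative on $ \Cons_{P \cross Q}(\X \tensor \Y) $. The key observation is that $ F(c \tensor d) $, regarded as an object of $ \X \tensor \Y $, is the external tensor of the atomic objects $ c \in \X $ and $ d \in \Y $ (the inclusions $ \incto{\ConsP(\X)}{\X} $ and $ \incto{\ConsQ(\Y)}{\Y} $ are atomic functors, hence preserve atomic objects by \Cref{obs:atomic_functors_preserve_atomic_objects}), and that an external tensor of atomic objects is atomic in a tensor product of presentable \categories. Since $ P \cross Q $ is noetherian, \Cref{lem:pulling_back_to_strata_is_jointly_conservative_for_noetherian_posets} shows that the restrictions $ \set{\iupperstar_{(p,q)}}_{(p,q)} $ are jointly conservative. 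Identifying $ (\X \tensor \Y)_{(p,q)} \simeq \X_p \tensor \Y_q $ with $ \iupperstar_{(p,q)} \simeq \iupperstar_p \tensor \iupperstar_q $, and using $ \LC(\X_p \tensor \Y_q) \simeq \LC(\X_p) \tensor \LC(\Y_q) \simeq \Fun(\Piinfty(\X_p) \cross \Piinfty(\Y_q),\Spc) $ from \Cref{prop:Kunneth_formula_for_monodromic_topoi}, the functor $ \Map(c \tensor d, -) $ restricted to $ \Cons_{P \cross Q}(\X \tensor \Y) $ becomes the stalk of $ \iupperstar_{(p,q)}(-) $ at the point of $ \Piinfty(\X_p) \cross \Piinfty(\Y_q) $ determined by $ c $ and $ d $. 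Joint conservativity of all the $ \Map(F(c \tensor d),-) $ then follows by combining joint conservativity of the strata restrictions with joint conservativity of stalks on each presheaf \category $ \LC(\X_p \tensor \Y_q) $.

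The main obstacle is the final compatibility, namely the natural equivalence $ \Map_{\X \tensor \Y}(c \tensor d, Z) \simeq \Map_{\X_p \tensor \Y_q}(\iupperstar_p c \tensor \iupperstar_q d,\, \iupperstar_{(p,q)} Z) $ for $ Z \in \Cons_{P \cross Q}(\X \tensor \Y) $. This rests on two points requiring care. The first is the identification of the strata of a tensor product, $ (\X \tensor \Y)_{(p,q)} \simeq \X_p \tensor \Y_q $ with $ \iupperstar_{(p,q)} \simeq \iupperstar_p \tensor \iupperstar_q $, which I would deduce from \Cref{obs:pulling_back_stratified_geometric_morphisms} applied to the two projections, together with the fact that $ (-) \tensor \Y $ preserves the limits computing restriction to a locally closed subposet (tensoring against a fixed topos is a right adjoint on $ \PrR $). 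The second is the single-variable fact — implicit in the proof of \Cref{cor:restricting_to_strata_is_jointly_conservative} — that an atomic object lying over $ p $ corepresents, after restriction to $ \X_p $, the stalk at its point. The noetherian hypothesis is used only to guarantee joint conservativity of restriction to strata; its removal is precisely \Cref{quest:is_conservativity_necessary_for_Kunneth}.
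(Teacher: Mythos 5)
Your reduction of item (2) to item (1), and the general principle underlying your attack on essential surjectivity --- that in a presentable \category a small set of objects whose corepresentable functors are jointly conservative generates under colimits --- are both fine. The gap is in the proof of joint conservativity itself, and it sits exactly where you flag ``the main obstacle.'' Your justification that the image of $ c \tensor d $ is atomic in $ \X \tensor \Y $ rests on the claim that the inclusion $ \incto{\ConsP(\X)}{\X} $ is an atomic functor, so that it preserves atomic objects by \Cref{obs:atomic_functors_preserve_atomic_objects}. But atomicity of that inclusion in the sense of \Cref{def:atomic_functor} would require its right adjoint $ \Rup_{\X,P} $ of \Cref{notation:constructibilization} to preserve colimits, which is established nowhere and is false in general. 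Concretely, take $ \X = \Shhyp(\Sph{1}) $ trivially stratified: the unique atomic object of $ \LC(\X) \equivalent \Fun(\mathrm{B}\ZZ,\Spc) $ is the free orbit, i.e.\ the universal cover $ \RR \to \Sph{1} $ regarded as an object of $ \Shhyp(\Sph{1}) $, and mapping out of it in $ \X $ computes global sections over $ \RR $ of the pullback, which does not preserve colimits of arbitrary hypersheaves (it already fails on the filtered colimit of the subterminal objects corresponding to the bounded intervals). Since $ c \tensor d $ is therefore not atomic in $ \X \tensor \Y $, you cannot identify $ \Map_{\X \tensor \Y}(c \tensor d, Z) $ with a stalk of $ \iupperstar_{(p,q)}Z $ by the route you describe; and the ``single-variable fact'' you import from \Cref{cor:restricting_to_strata_is_jointly_conservative} is proved there by passing through the exodromy equivalence for $ (\X,P) $, whose two-variable analogue is precisely the statement under proof. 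So the crux of your argument is asserted rather than proved.

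For comparison, the paper avoids computing any mapping spaces in $ \X \tensor \Y $: it runs a double noetherian induction, first with $ Q = \pt $, localizing on $ P $ to reduce to $ P = P_{\geq p} $ and comparing the recollement of $ \ConsP(\X) \tensor \LC(\Y) $ along $ \{p\} $ and $ P_{>p} $ (a recollement because $ \LC(\Y) $ is compactly generated) with that of $ \ConsP(\X \tensor \Y) $; the outer comparison functors are equivalences by the inductive hypothesis, so \enumref{lem:comparison_of_recollements_conservativity}{4} concludes, and one then repeats the induction in $ Q $. This is also where the noetherian hypotheses genuinely enter --- to run the inductions over the open covers $ \{P_{\geq p}\} $ --- not merely to secure joint conservativity of restriction to strata as you suggest. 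To salvage your approach you would need an independent proof that $ \Map_{\X \tensor \Y}(c \tensor d, Z) $ depends only on $ \iupperstar_{(p,q)}Z $ for $ Z \in \Cons_{P \cross Q}(\X \tensor \Y) $, and I do not see how to obtain that without something equivalent to the recollement induction.
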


\begin{proof}
 	We now proceed by noetherian induction.
 	First, let us prove that when $Q = \ast$, the functor we just constructed
 	\[ 
 		\boxtensor \colon \ConsP(\X) \tensor \LC(\Y) \to \ConsP(\X \tensor \Y) 
 	\]
 	is an equivalence.
 	When $P = \ast$, the conclusion follows from \enumref{prop:Kunneth_formula_for_monodromic_topoi}{3}.
 	Otherwise, notice that \cref{lem:commuting_limits_past_tensors} implies that the question is local on $P$.
 	We can therefore reduce ourselves to prove that $\boxtensor$ is an equivalence for posets of the form $P_{\geq p}$.
 	In this case, consider the following diagram:
 	\begin{equation*}
 		\begin{tikzcd}[sep=3em]
			\LC(\X_p) \tensor \LC(\Y) \arrow[d, "\boxtensor"'] & \ConsP(\X) \tensor \LC(\Y) \arrow[d, "\boxtensor"] \arrow[r] \arrow[l] & \Cons_{P_{>p}}(\X_{>p}) \tensor \LC(\Y) \arrow[d, "\boxtensor"] \\
			\LC(\X_p \tensor \Y) & \ConsP(\X \tensor \Y) \arrow[l] \arrow[r] & \Cons_{P_{>p}}(\X_{>p} \tensor \Y) \period
		\end{tikzcd}
 	\end{equation*}
 	Since $\Y$ is monodromic, $\LC(\Y)$ is compactly generated and therefore the top row is a recollement.
 	By \enumref{lem:recollements}{4}, the bottom line is also a recollement.
 	The inductive hypothesis guarantees that the outer vertical functors are equivalences.
 	Therefore, \enumref{lem:comparison_of_recollements_conservativity}{4} implies that the same goes for the middle one.
 	We now repeat the same argument proceeding by noetherian induction on the length of $Q$ and for arbitrary $P$.
 	Reasoning as above, we reduce ourselves to consider the following diagram:
 	\begin{equation*}
	 	\begin{tikzcd}[sep=3em]
	 		\ConsP(\X) \tensor \LC(\Y) \arrow[d, "\boxtensor"'] & \ConsP(\X) \tensor \ConsQ(\Y) \arrow[d, "\boxtensor"] \arrow[r] \arrow[l] & \ConsP(\X) \tensor \Cons_{Q_{>q}}(\Y_{>q}) \arrow[d, "\boxtensor"] \\
	 		\Cons_{P \times \{q\}}(\X \tensor \Y_q) & \Cons_{P \cross Q}(\X \tensor \Y) \arrow[l] \arrow[r] & \Cons_{P \cross Q_{>q}}(\X \tensor \Y_{>q}) \period
	 	\end{tikzcd} 
 	\end{equation*}
	Once again, since $(\X,P)$ is exodromic, $\ConsP(\X)$ is compactly generated and therefore the top row is a recollement.
	The same goes for the bottom row.
	Thus, the conclusion follows from the previous step, the inductive hypothesis and \enumref{lem:comparison_of_recollements_conservativity}{4}.

 	For (2), note that by \Cref{obs:stratification_of_the_product_of_exodromic_topoi}, the pullback functor $ \supperstar \tensor \tupperstar $ preserves limits and colimits.
 	Moreover, by (1), $ \Cons_{P \cross Q}(\X \tensor \Y) $ is atomically generated and closed under limits and colimits in $ \X \tensor \Y $.
 	Hence, $ (\X \tensor \Y, P \cross Q) $ is exodromic.
 	Finally, the equivalence
 	\begin{equation*}
 		\equivto{\ConsP(\X) \tensor \ConsQ(\Y)}{\Cons_{P \cross Q}(\X \tensor \Y)}
 	\end{equation*}
 	shows that 
 	\begin{equation*}
 		\equivto{\Piinfty(\X \tensor \Y, P \cross Q)}{\Piinfty(\X,P) \cross \Piinfty(\Y,Q)} \period \qedhere
 	\end{equation*}
 \end{proof}


\subsection{Stability properties of categorical finiteness \& compactness}\label{subsec:stability_properties_of_categorical_finiteness_and_compactness}

As we'll explain in \cite{HPT-derived_moduli_of_perverse_sheaves}, the compactness of exit-path \categories can be used to prove that moduli stacks of constructible and perverse sheaves are locally geometric.
Hence knowing when a stratified \topos has compact exit-path \category is of great utility.
To complete this section, we explain why the classes of exodromic stratified \topoi with finite or compact exit-path \category are stable under coarsening.
In \cref{sec:applications_and_examples}, we use the results of this subsection to extend the representability results from \cite[\S7]{arXiv:2211.05004} beyond the conical situation.

Recall from \cite[Definition 2.2.1]{PortaTeyssier} the following:

\begin{definition}\label{def:categorical_compactness_and_finiteness}
	Let $ (\X,P) $ be an exodromic stratified \topos.
	We say that $ (\X,P) $ is:
	\begin{enumerate}
		\item \defn{Categorically finite} if $ \Piinfty(\X,P) $ is a finite object of $ \Catinfty $. 
		(See \Cref{recollection_finite_category}.)

		\item \defn{Categorically compact} if $ \Piinfty(\X,P) $ is a compact object of $ \Catinfty $.
	\end{enumerate}
\end{definition}

\begin{lemma}\label{lem:pulling_back_to_a_locally_closed_subposet_preserves_categorical_finiteness_and_compactness}
	Let $ (\X,P) $ be an exodromic stratified \topos and $ S \subset P $ a locally closed subposet.
	If $ (\X,P) $ is categorically finite (resp., compact), then $ (\X_S,S) $ is categorically finite (resp., compact).
\end{lemma}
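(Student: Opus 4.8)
The plan is to reduce the assertion to a purely categorical statement about the exit-path category and its structure functor to $P$, and then to exploit the exponentiability of locally closed subposet inclusions. First, by \enumref{cor:stability_under_pulling_back_to_locally_closed_subposets}{3} the natural functor $\Piinfty(\XS,S) \to \Piinfty(\X,P) \cross_P S$ is an equivalence, and $\Piinfty(\X,P)$ comes equipped with its structure functor $F \colon \Piinfty(\X,P) \to P$ (\Cref{cor:stability_under_pulling_back_to_locally_closed_subposets}). Writing $\Ccal \colonequals \Piinfty(\X,P)$, it therefore suffices to prove the following statement, with no reference to the topos: if $\Ccal$ is a finite (resp.\ compact) object of $\Catinfty$ (\Cref{recollection_finite_category,def:categorical_compactness_and_finiteness}) equipped with a functor $F \colon \Ccal \to P$ to a poset, then for every locally closed subposet $S \subset P$ the pullback $\Ccal \cross_P S$ is finite (resp.\ compact).

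The key input is that the inclusion $S \hookrightarrow P$ of a locally closed subposet is exponentiable (\Cref{lem:locally_closed_subposets_are_exponentiable}; it is a composite of a left and a right fibration), so that the base-change functor $(-) \cross_P S \colon \CatP \to \CatS$ admits a right adjoint and hence preserves all colimits. I would combine this with the fact that the forgetful functors $\CatP \to \Catinfty$ and $\CatS \to \Catinfty$ create colimits, which lets me lift the finiteness (resp.\ compactness) of $\Ccal$ from $\Catinfty$ to the slice. Concretely, presenting $\Ccal$ as a finite colimit of simplices $\Delta^{n}$ in $\Catinfty$ and composing each simplex with $F$ exhibits $(\Ccal, F)$ as the same finite colimit of the objects $(\Delta^{n} \to P)$ in $\CatP$, so $(\Ccal, F)$ is finite in $\CatP$. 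In the compact case, I recall that compact objects of $\Catinfty$ are exactly the retracts of finite ones; if $\Ccal \xrightarrow{a} \Dcal \xrightarrow{b} \Ccal$ is a retraction with $\Dcal$ finite, then equipping $\Dcal$ with the map $F \circ b$ makes both $a$ and $b$ morphisms over $P$ (since $F \circ b \circ a = F$), exhibiting $(\Ccal, F)$ as a retract of the finite object $(\Dcal, F\circ b)$ in $\CatP$.

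Finally I would apply $(-) \cross_P S$. As this functor preserves colimits and (being a functor) retracts, $(\Ccal, F) \cross_P S$ is a finite colimit (resp.\ a retract of a finite colimit) in $\CatS$ of the pullbacks $(\Delta^{n} \to P) \cross_P S$. Each such pullback is the full subcategory of $[n]$ spanned by those vertices whose image lies in $S$; because $S$ is locally closed, hence an interval in $P$, this set of vertices is an interval of $[n]$, so the pullback is again a simplex $\Delta^{m}$ (or empty). Transporting along the colimit-preserving forgetful functor $\CatS \to \Catinfty$, I conclude that $\Ccal \cross_P S$ is a finite colimit of simplices, hence finite (resp.\ a retract of such, hence compact), which together with the first paragraph proves the lemma. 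The main obstacle is the colimit-preservation of base change along $S \hookrightarrow P$, i.e.\ the exponentiability recorded in \Cref{lem:locally_closed_subposets_are_exponentiable}; once that is granted, the only other point requiring care is the interval computation identifying $\Delta^{n} \cross_P S$ with a face, and the routine verification that finiteness and compactness pass through the slice forgetful functors.
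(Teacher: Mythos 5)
Your proposal is correct and follows essentially the same route as the paper: the lemma is deduced there as a special case of \Cref{prop:base_change_lc_preserves_finite_and_compact_categories}, whose proof likewise combines the identification $\Piinfty(\X_S,S)\simeq\Piinfty(\X,P)\cross_P S$, the exponentiability of $S\hookrightarrow P$ (so that $(-)\cross_P S$ preserves colimits and retracts), and the observation that $S\cross_P[n]$ is an interval of $[n]$, hence some $[m]$. The only cosmetic difference is that the paper reduces to the generators $[n]\to P$ with $-1\le n\le 1$ via \Cref{obs:finite_and_compact_objects_of_CatD} rather than to a skeletal presentation by simplices of all dimensions.
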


\begin{proof}
	This is a special case of \Cref{prop:base_change_lc_preserves_finite_and_compact_categories}.
\end{proof}

\begin{lemma}\label{lem:categorical_finiteness_and_compactness_can_be_detected_on_finite_covers}
	Let $ (\X,P) $ be a stratified \topos and let $ U_1, \ldots, U_n \in \X $ be a finite set of objects such that the induced map $ \fromto{U_1 \coproduct \cdots \coproduct U_n}{1_{\X}} $ is an effective epimorphism.
	Assume that for all $ 1 \leq i_1 < \cdots < i_k \leq n $, the stratified \topos \smash{$ (\X_{/U_{i_1} \cross \cdots \cross U_{i_k}},P) $} is exodromic and is categorically finite (resp., compact).
	Then $ (\X,P) $ is exodromic and is categorically finite (resp., compact).
\end{lemma}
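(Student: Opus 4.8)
The plan is to exhibit $ 1_{\X} $ as a colimit of the covering objects and then feed that presentation into the internal van Kampen theorem (\Cref{cor:van_Kampen_internally}), which turns such a presentation into one for the exit-path \category. Write $ \cN $ for the poset of nonempty subsets of $ \{1,\dots,n\} $ ordered by inclusion, and for $ S = \{i_1 < \cdots < i_k\} \in \cN $ set $ W_S \colonequals U_{i_1} \cross \cdots \cross U_{i_k} $, the product taken in $ \X $. Each inclusion $ S \subseteq T $ induces a projection $ W_T \to W_S $, so that $ S \mapsto W_S $ defines a diagram $ W_{\bullet} \colon \cN^{\op} \to \X $. The first and most important step is to check that the effective epimorphism $ \coprod_i U_i \to 1_{\X} $ exhibits $ 1_{\X} $ as the colimit $ \colim_{S \in \cN^{\op}} W_S $; that is, that the nerve of the cover computes the terminal object. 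Here one uses that colimits in \atopos are van Kampen (\enumref{rec:limits_of_topoi}{2}) together with the fact that the Čech nerve of $ \coprod_i U_i \to 1_{\X} $ realizes to $ 1_{\X} $, whose nondegenerate simplices are precisely the products $ W_S $ indexed by $ \cN $.

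With this presentation in hand, the hypothesis furnishes, for each $ S \in \cN $, an exodromic and categorically finite (resp.\ compact) stratified \topos $ (\X_{/W_S},P) $. Applying \Cref{cor:van_Kampen_internally} to the diagram $ W_{\bullet} \colon \cN^{\op} \to \X $ then shows at once that $ (\X,P) $ is exodromic and that the natural functor
\begin{equation*}
	\colim_{S \in \cN^{\op}} \Piinfty(\X_{/W_S},P) \;\to\; \Piinfty(\X,P)
\end{equation*}
is an equivalence, the colimit being formed in $ \Catidem $.

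It remains to deduce categorical finiteness (resp.\ compactness). Since the indexing poset $ \cN^{\op} $ is finite and each $ \Piinfty(\X_{/W_S},P) $ is a finite (resp.\ compact) object of $ \Catinfty $, the exit-path \category $ \Piinfty(\X,P) $ is a \emph{finite} colimit of finite (resp.\ compact) objects. As the finite and the compact objects of $ \Catinfty $ are each closed under finite colimits and retracts (\Cref{recollection_finite_category}), we conclude that $ \Piinfty(\X,P) $ is finite (resp.\ compact), completing the argument.

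The step I expect to be the main obstacle is the first one: establishing that $ 1_{\X} \simeq \colim_{S \in \cN^{\op}} W_S $, i.e.\ replacing the a priori infinite Čech realization by the finite diagram indexed by $ \cN $. This reduction is exactly where the combinatorial hypothesis on strictly increasing indices is consumed: it requires that the higher Čech powers $ U_{i_0} \cross \cdots \cross U_{i_m} $, whose indices may repeat, contribute nothing beyond the $ W_S $ — a collapsing that holds when finite products of the covering objects are idempotent, as for an open cover by subterminal objects. For general objects one is instead forced to work with the full Čech nerve; exodromy still follows from \Cref{cor:van_Kampen_internally}, but controlling categorical finiteness of the resulting infinite geometric realization is the delicate point and demands extra care.
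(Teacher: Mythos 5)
Your overall strategy is the same as the paper's: the proof in the text reads, in its entirety, ``Immediate from \Cref{cor:van_Kampen_internally} and the fact that both finite and compact \categories are closed under finite colimits in $\Catinfty$,'' and your expansion --- present $ 1_{\X} $ as a colimit over the punctured cube $ \cN^{\op} $ of the distinct-index products $ W_S $, apply the internal van Kampen corollary, then invoke closure under finite colimits --- is exactly what that sentence intends. The problem is the step you yourself single out. The identification $ 1_{\X} \simeq \colim_{S \in \cN^{\op}} W_S $ does \emph{not} follow from the Čech nerve realizing to $ 1_{\X} $: by universality of colimits the (semi)simplicial Čech nerve of $ \coprod_i U_i \to 1_{\X} $ has $ m $-th term $ \coprod_{(i_0,\dots,i_m)} U_{i_0} \times \cdots \times U_{i_m} $ over \emph{all} tuples, repetitions included, and the repeated-index summands are not ``degenerate'' --- they collapse onto the $ W_S $ only when each $ U_i $ is $ (-1) $-truncated, so that $ U_i \times U_i \simeq U_i $. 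Already for $ n = 1 $ the colimit over $ \cN^{\op} $ is just $ U_1 $, which is not $ 1_{\X} $ in general. Moreover the gap cannot be repaired for arbitrary objects: take $ \X = \Fun(BG,\Spc) $ with $ G $ a discrete group such that $ BG $ is not a compact object of $ \Spc $ (e.g.\ $ G = \QQ $), $ P = \pt $, and $ U_1 = G $ the free orbit. Then $ \fromto{U_1}{1_{\X}} $ is an effective epimorphism and $ \X_{/U_1} \simeq \Spc $ is exodromic and categorically finite, yet $ \Piinfty(\X,\pt) \simeq BG $ is not even compact. So no argument using only the distinct-index hypotheses can establish the finiteness conclusion at the stated level of generality; the repeated-index slices such as $ \X_{/U_i \times U_i} $ genuinely matter.

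What saves the situation --- and what you correctly guessed --- is that in every application the $ U_i $ are subterminal (the lemma is invoked for finite \emph{open} covers in \enumref{thm:stability_properties_of_exodomic_stratified_spaces}{6}), and for $ (-1) $-truncated objects the presentation $ 1_{\X} \simeq \colim_{S \in \cN^{\op}} W_S $ does hold, by induction on $ n $ from the descent identity $ U \cup V \simeq U \amalg_{U \cap V} V $, valid in any \topos. With that hypothesis added, your argument is complete, modulo the small point (covered by the remark on idempotent completion following \Cref{prop:van_Kampen}) that the colimit in $ \Catidem $ agrees with the colimit in $ \Catinfty $ because the stratifying poset is constant, so finiteness is not lost to idempotent completion. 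In summary: right approach, and you put your finger on exactly the delicate step, but the justification you offer for it (``the nondegenerate simplices are precisely the $ W_S $'') is incorrect for general $ U_i $, and as written your proof establishes the lemma only under the additional assumption that each $ U_i $ is subterminal.
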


\begin{proof}
	Immediate from \Cref{cor:van_Kampen_internally} and the fact that both finite and compact \categories are closed under finite colimits in $ \Catinfty $.
\end{proof}

\begin{proposition}\label{prop:categorical_finiteness_and_compactness_are_stable_under_coarsening}
	Let $ (\X,R) $ be an exodromic stratified \topos and let $ \phi \colon \fromto{R}{P} $ be a map of posets.
	If $ (\X,R) $ is categorically finite (resp., compact), then $ (\X,P) $ is categorically finite (resp., compact).
\end{proposition}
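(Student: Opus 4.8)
The plan is to feed the statement into the coarsening theorem and then isolate a purely categorical fact about localizations. By \Cref{thm:stability_under_coarsening}, the stratified \topos $(\X,P)$ is exodromic and the natural functor exhibits an equivalence $\Piinfty(\X,R)[W_P\inv] \equivalent \Piinfty(\X,P)$, where $W_P \subset \Mor(\Piinfty(\X,R))$ is the class of \Cref{ntn:W_P}. Write $\Ccal \colonequals \Piinfty(\X,R)$ and $q \colon \Ccal \to P$ for the composite $\Ccal \to R \to P$; since $P$ is a poset, $W_P$ is precisely the set of arrows of $\Ccal$ lying over identities of $P$. As $(\X,R)$ is categorically finite (resp.\ compact), $\Ccal$ is a finite (resp.\ compact) object of $\Catinfty$ (\Cref{def:categorical_compactness_and_finiteness}). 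Thus the proposition reduces to the assertion: \emph{for a functor $q \colon \Ccal \to P$ from a finite (resp.\ compact) \category to a poset, the localization $\Ccal[W\inv]$ at the class $W$ of arrows lying over identities is again finite (resp.\ compact).}

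To prove this, recall the pushout presentation of a localization at a set of arrows $S$,
\[
	\Ccal[S\inv] \equivalent \Ccal \amalg_{\coprod_{s \in S} \Delta^1} \coprod_{s \in S} \ast
\]
in $\Catinfty$, whose left leg records the arrows of $S$ and whose right leg collapses each $\Delta^1$ to a point. Because finite (resp.\ compact) \categories are closed under finite colimits in $\Catinfty$ (the fact underlying \Cref{lem:categorical_finiteness_and_compactness_can_be_detected_on_finite_covers}) and $\Delta^1, \ast$ are finite, it suffices to exhibit a \emph{finite} subset $S \subseteq W$ with the same strongly saturated class as $W$: then $\Ccal[W\inv] \equivalent \Ccal[S\inv]$ is a finite pushout of finite (resp.\ compact) \categories.

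The reduction to finitely many arrows is the main obstacle, and it cannot be done by counting path components of mapping spaces: a finite object of $\Catinfty$ may have infinite mapping spaces (the free \category $B\NN$ on one endomorphism is a finite colimit of simplices, yet $\Map(\ast,\ast) = \NN$). Instead, I would use that, $\Ccal$ being a finite colimit of simplices, its homotopy category is generated under composition by the finitely many edges of those simplices; hence every morphism of $\Ccal$ is homotopic to a finite composite $g_k \of \cdots \of g_1$ of generating edges. If such a composite lies in $W$, then $q(g_k) \of \cdots \of q(g_1)$ is an identity of $P$, which, $P$ being a poset, forces every $q(g_i)$ to be an identity, so each generating edge $g_i$ already lies in $W$. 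Taking $S$ to be the finite set of generating edges lying in $W$, we get $S \subseteq W \subseteq \bar S$ (closure under composition), whence $\bar S = \bar W$ and $\Ccal[W\inv] \equivalent \Ccal[S\inv]$ is finite, as desired.

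The remaining delicate point is the compact case, where $\Ccal$ is only a \emph{retract} of a finite \category and the generating-edges argument does not apply verbatim. Here I would either (i) once a finite $S$ with $\bar S = \bar W$ is in hand, reuse the pushout presentation, which is then a finite colimit of compact \categories and hence compact; or (ii) more robustly, run the argument at the level of \Cref{appendix:inverting_arrows_over_a_poset}, where localizations of (conservative) functors to posets are analyzed directly: decompose $\Ccal$ by recollement along the strata of its finite image in $P$, and observe that the fiberwise passage to classifying spaces is governed by $\Env$ of \Cref{ntn:enveloping_groupoids}, which preserves both finiteness and compactness (it sends simplices to the point, and the inclusion $\Spc \subset \Catinfty$ preserves filtered colimits so $\Env$ preserves compact objects). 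Either route reduces to inputs already established, and I expect the finite-generation step to be the crux of the whole argument.
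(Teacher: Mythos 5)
Your reduction is exactly the paper's: both apply \Cref{thm:stability_under_coarsening} and then invoke a purely categorical statement about localizing a finite (resp.\ compact) \category over a poset at the arrows lying over identities --- this is precisely \Cref{prop:inverting_morphisms_fiberwise_preserves_compactness}. Where you diverge is in the proof of that categorical statement. The paper observes that $\Env_P \colon \CatP \to \CatPcons$ is a colimit-preserving left adjoint, so by \Cref{obs:finite_and_compact_objects_of_CatD} it suffices to compute $\Env_P$ on the generators $\sigma \colon [n] \to P$ for $-1 \le n \le 1$, where the answer is $[n]$ or $\pt$ by inspection; the compact case then comes for free because compact objects are retracts of finite ones and any functor preserves retracts. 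Your route instead replaces $W$ by a finite subset $S$ with the same localization and uses the pushout presentation. The finite case of this does work: the homotopy category functor preserves colimits, so $h\Ccal$ is generated under composition by the images of the finitely many edges appearing in a finite colimit presentation, and your poset argument correctly shows that each generating factor of a $W$-morphism already lies in $W$. (The generation claim is true but is asserted rather than proved; it needs that $h$ preserves pushouts and that a pushout of $1$-categories is generated by the images of the two pieces.)

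The genuine gap is the compact case, which you flag but do not close. Route (i) begins ``once a finite $S$ \ldots is in hand,'' which presupposes exactly the finite generation you have just said fails for retracts; and route (ii) --- decompose by recollement along the strata and use that $\Env$ preserves compactness --- is not a proof: the localization $\Ccal[W_P\inv]$ is not assembled from the fiberwise classifying spaces by any evident gluing (controlling that interaction is the whole point of \Cref{appendix:inverting_arrows_over_a_poset}), and knowing $\Env$ of each fiber does not determine the total category. The gap is fillable along your own lines: if $\Ccal$ is a retract of a finite $\Dcal$ via $i \colon \Ccal \to \Dcal$ and $r \colon \Dcal \to \Ccal$, then every morphism $f$ of $h\Ccal$ equals $r(i(f))$ and hence is a composite of the images under $r$ of the finitely many generators of $h\Dcal$, so the finite-$S$ argument goes through verbatim. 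But as written the compact half of the statement is not established, whereas in the paper it is an immediate corollary of the finite case.
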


\begin{proof}
	The fact that $ (\X,P) $ is exodromic follows from the stability of the class of exodromic stratified \topoi under coarsening (\enumref{thm:stability_under_coarsening}{1}).
	By \enumref{thm:stability_under_coarsening}{2}, there is an equivalence
	\begin{equation*}
		\Piinfty(\X,P) \equivalent \Piinfty(\X,R)[W_P\inv] \period
	\end{equation*}
	Since $ \Piinfty(\X,R) $ is a finite (resp., compact), the claim now follows from \Cref{prop:inverting_morphisms_fiberwise_preserves_compactness}.
\end{proof}


\section{Exodromy with coefficients}\label{sec:exodromy_with_coefficients}

This section concerns exodromy with coefficients in \categories other than the \category of spaces.
In \cref{subsec:exodromy_with_coefficients_in_a_presentable_category}, we explain when the exodromy equivalence holds for sheaves with coefficients in more general presentable \categories.
In particular, exodromy with coefficients in $ \Spc $ implies exodromy with coefficients in any compactly assembled \category; see \Cref{lem:exit_path_category_for_compactly_assembled_coefficients_is_automatic}.
\Cref{subsec:exodromy_with_coefficients_in_PrL} treats exodromy with coefficients in the \category \smash{$ \PrL $} of presentable \categories; these results are needed in forthcoming work of the second- and third-named authors \cites{arXiv:2401.12335}{arXiv:2504.05360}.


\subsection{Exodromy with coefficients in a presentable \texorpdfstring{$\infty$}{∞}-category}\label{subsec:exodromy_with_coefficients_in_a_presentable_category}

We are also interested in when the exit-path \category corepresents constructible objects with coefficients in a presentable \category $ \Ecal $.
The following slight generalization of the discussion in \cite[\S6.1]{arXiv:2211.05004} captures this more general situation.

\begin{observation}\label{obs:closure_of_E-exodromic_under_retracts}
	Let $ (\X,P) $ be an exodromic stratified \topos and let $ \Ecal $ be a presentable \category.
	Since the \category \smash{$ \ConsP(\X) $} is presentable and the inclusion
	\begin{equation*}
		\incto{\ConsP(\X)}{\X}
	\end{equation*}
	is both a left and a right adjoint, tensoring with $ \Ecal $ gives a fully faithful functor
	\begin{equation*}\label{eq:inclusion_of_constructible_sheaves_tensor_E}
		\boxtensor \colon \incto{\ConsP(\X) \tensor \Ecal}{\Sh(\X;\Ecal)}
	\end{equation*}
	that is both a left and a right adjoint.
\end{observation}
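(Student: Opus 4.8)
The plan is to reduce the statement to the fact that tensoring with $ \Ecal $ preserves adjunctions internal to $ \PrL $. Write $ i \colon \incto{\ConsP(\X)}{\X} $ for the inclusion, so that $ \boxtensor = i \tensor \Ecal $. Since $ (\X,P) $ is exodromic, \Cref{notation:constructibilization} tells us that $ \ConsP(\X) $ is presentable and that $ i $ admits a left adjoint $ \Lup_{\X,P} $ and a right adjoint $ \Rup_{\X,P} $. In particular $ i $ preserves colimits, so both $ i $ and $ \Lup_{\X,P} $ are morphisms of $ \PrL $, and the reflective adjunction $ \Lup_{\X,P} \dashv i $ is an adjunction internal to $ \PrL $.

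First I would check that $ \boxtensor $ is a left adjoint. As $ i $ is a morphism of $ \PrL $ and $ (-) \tensor \Ecal $ is an endofunctor of $ \PrL $, the functor $ \boxtensor = i \tensor \Ecal \colon \ConsP(\X) \tensor \Ecal \to \X \tensor \Ecal = \Sh(\X;\Ecal) $ is again a morphism of $ \PrL $, hence a colimit-preserving functor between presentable \categories; by the adjoint functor theorem it admits a right adjoint.

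Next I would produce a left adjoint for $ \boxtensor $, using that $ (-) \tensor \Ecal \colon \PrL \to \PrL $ is a functor of \categories and therefore preserves adjunctions between $ 1 $-morphisms. Applying it to $ \Lup_{\X,P} \dashv i $ yields an adjunction $ \Lup_{\X,P} \tensor \Ecal \dashv i \tensor \Ecal = \boxtensor $, so $ \boxtensor $ is also a right adjoint. It is exactly here that the hypothesis that $ i $ is a left adjoint (via $ \Rup_{\X,P} $) gets used: were $ i $ not colimit-preserving, $ \Lup_{\X,P} \dashv i $ would not be an adjunction in $ \PrL $ and could not be tensored.

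Finally, full faithfulness falls out of the same computation. Because $ i $ is fully faithful and right adjoint to $ \Lup_{\X,P} $, the counit $ \Lup_{\X,P} \of i \to \mathrm{id} $ is an equivalence. Applying $ (-) \tensor \Ecal $ and using functoriality, the counit of the tensored adjunction $ \Lup_{\X,P} \tensor \Ecal \dashv \boxtensor $ is identified with $ (\Lup_{\X,P} \of i) \tensor \Ecal \to \mathrm{id}_{\ConsP(\X) \tensor \Ecal} $, which is therefore an equivalence; hence $ \boxtensor $ is fully faithful. I expect the only delicate point is the bookkeeping that tensoring really carries the unit and counit of $ \Lup_{\X,P} \dashv i $ to those of the tensored adjunction, which I would invoke from the functoriality of $ (-) \tensor \Ecal $ on $ \PrL $ rather than verify by hand. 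The naive alternative of tensoring the coreflective adjunction $ i \dashv \Rup_{\X,P} $ does not work, since $ \Rup_{\X,P} $ need not preserve colimits and so $ \Rup_{\X,P} \tensor \Ecal $ need not even be defined.
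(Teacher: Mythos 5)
Your proposal is correct and matches the paper's (implicit) argument: the observation is justified exactly by tensoring the adjunction $\Lup_{\X,P} \dashv i$ internal to $\PrL$ and using that $(-)\tensor\Ecal$ is compatible with adjunctions and with the invertibility of the counit, which is the same standard functoriality-of-the-tensor-product fact the paper invokes here and elsewhere (e.g.\ before the Künneth formula for monodromic \topoi). The only point worth flagging is that "preserves adjunctions" relies on the $2$-functoriality of $(-)\tensor\Ecal$ (visible from the model $\Ccal\tensor\Ecal\simeq\Fun^{\mathrm R}(\Ccal^{\op},\Ecal)$ with functoriality by precomposition), not merely on its being a functor of \categories; your bookkeeping caveat is exactly this and is harmless.
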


\begin{lemma}\label{lem:boxtensor_for_constructible_sheaves}
	Let $ \Ecal $ be a presentable \category, and let $ (\X,P) $ be an exodromic stratified \topos.
	Then the functor
	\begin{equation*}
		\begin{tikzcd}
			\boxtensor \colon \incto{\ConsP(\X) \tensor \Ecal}{\Sh(\X;\Ecal)}
		\end{tikzcd}
	\end{equation*}
	factors through $ \ConsP(\X;\Ecal) \subset \Sh(\X;\Ecal) $.
\end{lemma}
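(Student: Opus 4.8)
The plan is to verify constructibility stalk by stalk. By \Cref{def:constructible_objects}, the functor $ \boxtensor $ factors through $ \ConsP(\X;\Ecal) $ if and only if for every object $ F \in \ConsP(\X) \tensor \Ecal $ and every $ p \in P $, the restriction $ (\iupperstar_p \tensor \Ecal)(\boxtensor(F)) $ is a locally constant object of $ \Sh(\X_p;\Ecal) $. First I would unwind the definition of $ \boxtensor $ from \Cref{obs:closure_of_E-exodromic_under_retracts}: it is the functor $ i_{\X,P} \tensor \Ecal $ obtained by tensoring the inclusion $ i_{\X,P} \colon \incto{\ConsP(\X)}{\X} $ with $ \Ecal $, and by \Cref{ntn:functoriality_of_sheaves} the pullback with $ \Ecal $-coefficients along the stratum inclusion is $ \iupperstar_p \tensor \Ecal $.

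The key step is to produce the relevant commutative square already at the level of spaces and then tensor it with $ \Ecal $. By the definition of $ P $-constructibility, the pullback $ \iupperstar_p \colon \fromto{\X}{\X_p} $ carries $ \ConsP(\X) $ into $ \LC(\X_p) $, so there is a commutative square
\begin{equation*}
	\begin{tikzcd}[sep=2.5em]
		\ConsP(\X) \arrow[r, hooked, "i_{\X,P}"] \arrow[d, "\iupperstar_p"'] & \X \arrow[d, "\iupperstar_p"] \\
		\LC(\X_p) \arrow[r, hooked] & \X_p \period
	\end{tikzcd}
\end{equation*}
I would then argue that this is a square in $ \PrL $: since $ (\X,P) $ is exodromic, $ \ConsP(\X) \subset \X $ is closed under colimits; by \Cref{cor:stability_under_pulling_back_to_locally_closed_subposets} the stratum $ \X_p $ is monodromic, so $ \LC(\X_p) \subset \X_p $ is closed under colimits (\Cref{rec:monodromy}); and $ \iupperstar_p $ preserves colimits. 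Hence the restricted functor $ \iupperstar_p \colon \fromto{\ConsP(\X)}{\LC(\X_p)} $ preserves colimits and is a left adjoint, and all four functors in the square are morphisms of $ \PrL $.

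Finally, applying the functor $ (-) \tensor \Ecal \colon \fromto{\PrL}{\PrL} $ preserves the commutativity of the square. The top arrow becomes $ \boxtensor = i_{\X,P} \tensor \Ecal $, the right arrow becomes $ \iupperstar_p \tensor \Ecal $, and by the last part of \Cref{rec:monodromy} the bottom-left term becomes $ \LC(\X_p) \tensor \Ecal \equivalent \LC(\X_p;\Ecal) $, with the bottom arrow the inclusion into $ \Sh(\X_p;\Ecal) $. Therefore $ (\iupperstar_p \tensor \Ecal)(\boxtensor(F)) $ lies in the image of $ \LC(\X_p;\Ecal) $ for every $ F $ and every $ p $, which is exactly the assertion that $ \boxtensor(F) $ is $ P $-constructible. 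I do not expect a genuine obstacle here: the only points requiring care are the verification that the restricted pullback is a left adjoint (immediate from colimit-closure of both subcategories) and the identification $ \LC(\X_p) \tensor \Ecal \equivalent \LC(\X_p;\Ecal) $, both of which are furnished by \Cref{cor:stability_under_pulling_back_to_locally_closed_subposets} and \Cref{rec:monodromy}.
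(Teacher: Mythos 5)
Your proposal is correct and follows essentially the same route as the paper: both form the commutative square of left adjoints relating $\ConsP(\X) \hookrightarrow \X$ and $\LC(\X_p) \hookrightarrow \X_p$ via $\iupperstar_p$, tensor it with $\Ecal$, and conclude using the identification $\LC(\X_p) \tensor \Ecal \equivalent \LC(\X_p;\Ecal)$ from \Cref{rec:monodromy} together with the monodromicity of the strata (\enumref{cor:stability_under_pulling_back_to_locally_closed_subposets}{2}). The only difference is that you spell out explicitly why the square lives in $\PrL$, a point the paper leaves implicit under ``functoriality of the tensor product.''
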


\begin{proof}	
	The functoriality of the tensor product in \smash{$ \PrL $} implies that for each $ p \in P $, there is a commutative square
	\begin{equation*}
		\begin{tikzcd}[sep=3.5em]
			\ConsP(\X) \tensor \Ecal \arrow[r, hooked] \arrow[d, "\iupperstar_p \tensor \id{\Ecal}"'] & \Sh(\X) \tensor \Ecal \arrow[d, "\iupperstar_p \tensor \id{\Ecal}"] \\ 
			\LC(\X_p) \tensor \Ecal \arrow[r, hooked] & \Sh(\X_p) \tensor \Ecal \period
		\end{tikzcd}
	\end{equation*}
	Since the strata of $ (\X,P) $ are monodromic (\enumref{cor:stability_under_pulling_back_to_locally_closed_subposets}{2}), the natural functor
	\begin{equation*}
		\fromto{\LC(\X_p) \tensor \Ecal}{\LC(\X_p;\Ecal)}
	\end{equation*}
	is an equivalence (\Cref{rec:monodromy}).
	The claim is now immediate.
\end{proof}

\begin{nul}
	In the setting of \Cref{lem:boxtensor_for_constructible_sheaves}, we have a commutative triangle
	\begin{equation}\label{eq:box_product}
		\begin{tikzcd}[column sep=3.5em, row sep=2em]
			\ConsP(\X) \tensor \Ecal \arrow[r, hooked] \arrow[dr, hooked] & \ConsP(\X;\Ecal) \arrow[d, hooked] \\ 
			 & \Sh(\X;\Ecal) \period
		\end{tikzcd}
	\end{equation}
\end{nul}

\begin{definition}\label{def:exit-paths_for_coefficients}
	Let $ \Ecal $ be a presentable \category and let $ (\X,P) $ be a stratified \topos.
	We say that $ (\X,P) $ \defn{is $ \Ecal $-exodromic} if the following conditions are satisfied:
	\begin{enumerate}
		\item\label{def:exit-paths_for_coefficients.1} The stratified \topos $ (\X,P) $ is exodromic.

		\item\label{def:exit-paths_for_coefficients.2} The functor $ \boxtensor \colon \incto{\ConsP(\X) \tensor \Ecal}{\ConsP(\X;\Ecal)} $ is an equivalence.
	\end{enumerate}
\end{definition}

We collect some basic properties of $ \Ecal $-exodromic stratified \topoi.

\begin{observation}\label{obs:stability_of_E-exodromic_under_retracts}
	Let $ (\X,P) $ be an exodromic stratified \topos.
	Since equivalences of \categories are stable under retracts, the class of presentable \categories $ \Ecal $ for which $ (\X,P) $ is $ \Ecal $-exodromic is also stable under retracts.
\end{observation}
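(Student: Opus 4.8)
The plan is to reduce the claim to the fact that equivalences are stable under retracts, exploiting the naturality of the comparison functor $\boxtensor$ in the coefficient variable. First I would note that condition \enumref{def:exit-paths_for_coefficients}{1} of \Cref{def:exit-paths_for_coefficients} does not depend on $\Ecal$ — it merely asserts that $(\X,P)$ is exodromic — and holds by hypothesis. Hence it suffices to prove that condition \enumref{def:exit-paths_for_coefficients}{2}, namely that $\boxtensor \colon \fromto{\ConsP(\X) \tensor \Ecal}{\ConsP(\X;\Ecal)}$ is an equivalence, is stable under passing to retracts of the coefficient category $\Ecal$.

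The key point is that both the source and target of $\boxtensor$ are functorial in $\Ecal \in \PrL$ and that $\boxtensor$ is a natural transformation between them. Indeed, $\goesto{\Ecal}{\ConsP(\X) \tensor \Ecal}$ and $\goesto{\Ecal}{\Sh(\X;\Ecal) = \X \tensor \Ecal}$ are functors $\fromto{\PrL}{\CATinfty}$ by the functoriality of the tensor product of presentable \categories. I would then verify that $\goesto{\Ecal}{\ConsP(\X;\Ecal)}$ is a \emph{natural} full subcategory of $\Sh(\X;\Ecal)$: for a left adjoint $g \colon \fromto{\Ecal}{\Ecal'}$, the induced functor $\id{\X} \tensor g \colon \fromto{\Sh(\X;\Ecal)}{\Sh(\X;\Ecal')}$ carries $\ConsP(\X;\Ecal)$ into $\ConsP(\X;\Ecal')$. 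This is the one genuinely substantive step, and I expect it to be the main obstacle; it follows from the same commutative-square argument as in the proof of \Cref{lem:boxtensor_for_constructible_sheaves}, combining the compatibility $(\iupperstar_p \tensor \id{\Ecal'}) \circ (\id{\X} \tensor g) \equivalent (\id{\X_p} \tensor g) \circ (\iupperstar_p \tensor \id{\Ecal})$ with the monodromy equivalence $\LC(\X_p) \tensor \Ecal \equivalent \LC(\X_p;\Ecal)$ on each stratum, the strata being monodromic by \enumref{cor:stability_under_pulling_back_to_locally_closed_subposets}{2}. Granting this, $\boxtensor$ refines to a functor $\fromto{\PrL}{\Fun([1],\CATinfty)}$ sending $\Ecal$ to the arrow $\boxtensor_\Ecal$.

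With naturality established, I would conclude as follows. A retract of $\Ecal$ in $\PrL$ is a pair of left adjoints $\Ecal \xrightarrow{\iota} \Ecal' \xrightarrow{r} \Ecal$ with $r \circ \iota \equivalent \id{\Ecal}$. Since every functor preserves retract diagrams, applying the functor $\goesto{\Ecal}{\boxtensor_\Ecal}$ exhibits the arrow $\boxtensor_\Ecal$ as a retract of the arrow $\boxtensor_{\Ecal'}$ in $\Fun([1],\CATinfty)$. By assumption $(\X,P)$ is $\Ecal'$-exodromic, so $\boxtensor_{\Ecal'}$ is an equivalence; as equivalences are stable under retracts, $\boxtensor_\Ecal$ is an equivalence as well. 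Therefore $(\X,P)$ is $\Ecal$-exodromic, which is precisely the asserted stability under retracts.
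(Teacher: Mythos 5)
Your proof is correct and takes essentially the same route as the paper, which justifies the observation in one line by noting that equivalences are stable under retracts; you have simply made explicit the naturality of $\boxtensor$ in the coefficient variable that this one-liner implicitly relies on. The substantive step you flag (that $\id{\X} \tensor g$ preserves constructibility, via stratum-wise preservation of local constancy) does go through exactly as you describe, so the retract-of-arrows argument closes the proof.
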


\begin{lemma}\label{lem:exodromy_with_coefficients}
	Let $ \Ecal $ be a presentable \category and let $ (\X,P) $ be a $ \Ecal $-exodromic stratified \topos.
	Then the equivalence
	\begin{align*}
		\boxtensor \colon \ConsP(\X) \tensor \Ecal &\equivalence \ConsP(\X;\Ecal) \\ 
		\intertext{induces a canonical equivalence}
		\Fun(\PiSigma(\X,P), \Ecal) &\equivalent \ConsP(\X;\Ecal) \period
	\end{align*}
\end{lemma}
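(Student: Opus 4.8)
The plan is to chain together three equivalences; once the definition of $\Ecal$-exodromy and the atomic-generation equivalence are in hand, the content of the lemma is entirely formal.

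First I would recall that, since $(\X,P)$ is exodromic, the \category $\ConsP(\X)$ is atomically generated and $\Piinfty(\X,P) = \ConsP(\X)^{\ex}$ is (the opposite of) its small subcategory of atomic objects (\Cref{def:exit_path}). By the defining property of $(-)^{\ex}$ recorded in \Cref{ntn:Ccal^ex}, this yields a natural equivalence
\[
	\ConsP(\X) \equivalent \Fun(\Piinfty(\X,P), \Spc) \period
\]

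Next I would tensor this equivalence with $\Ecal$. As $(-) \tensor \Ecal$ is a functor on presentable \categories, it carries the equivalence above to an equivalence $\ConsP(\X) \tensor \Ecal \equivalent \Fun(\Piinfty(\X,P), \Spc) \tensor \Ecal$. I then apply the standard identification of the tensor product of a presheaf \category with $\Ecal$: for any small \category $\Ccal$ one has $\Fun(\Ccal, \Spc) \tensor \Ecal \equivalent \Fun(\Ccal, \Ecal)$ (this is the $\Ecal$-valued presheaf formula recalled in \cref{subsec:sheaves_with_coefficients_and_tensor_products_of_presentable_categories}; cf. \cite{SAG}). Taking $\Ccal = \Piinfty(\X,P)$ gives
\[
	\ConsP(\X) \tensor \Ecal \equivalent \Fun(\Piinfty(\X,P), \Ecal) \period
\]

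Finally I would invoke the hypothesis that $(\X,P)$ is $\Ecal$-exodromic (\Cref{def:exit-paths_for_coefficients}), which says precisely that $\boxtensor \colon \ConsP(\X) \tensor \Ecal \to \ConsP(\X;\Ecal)$ is an equivalence. Splicing this with the previous display produces the asserted canonical equivalence $\Fun(\Piinfty(\X,P), \Ecal) \equivalent \ConsP(\X;\Ecal)$.

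There is no genuine obstacle here; the substantive input, namely that $\boxtensor$ is an equivalence, is built into the definition of $\Ecal$-exodromy. The only point requiring mild care is that the resulting equivalence is the canonical one induced by $\boxtensor$, rather than some unrelated identification — but this is immediate from the construction, since the middle equivalence is functorial in $\Ecal$ and every map in the chain arises by tensoring a fixed natural equivalence with $\Ecal$.
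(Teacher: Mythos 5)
Your proof is correct and follows essentially the same route as the paper: the paper's own argument is exactly the chain $\ConsP(\X) \tensor \Ecal \equivalent \Fun(\Piinfty(\X,P),\Spc) \tensor \Ecal \equivalent \Fun(\Piinfty(\X,P),\Ecal)$ (the last step being \HA{Proposition}{4.8.1.17}), spliced with the equivalence $\boxtensor$ supplied by the definition of $\Ecal$-exodromy.
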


\begin{proof}
	Indeed, we have the following canonical equivalences:
	\begin{align*}
		\ConsP(\X) \tensor \Ecal & \equivalent \Fun(\PiSigma(\X,P), \Spc) \tensor \Ecal && \\
		& \equivalent \Fun(\PiSigma(\X,P), \Ecal) \period && \text{\HA{Proposition}{4.8.1.17}} 
	\end{align*}
	The conclusion follows.
\end{proof}

We now prove an analogue of \cref{cor:stability_under_pulling_back_to_locally_closed_subposets}.
We first need the following lemma:

\begin{lemma}\label{lem:comparison_of_recollements_conservativity}
	Let $ \Xcal_1 $ and $ \Xcal_2 $ be \categories with finite limits and an inital object.
	Let 
	\begin{equation*}\label{diag:comparison_of_recollements}
		\begin{tikzcd}[column sep=4.5em, row sep=2.5em]
			\Zcal_1 \arrow[d, "F_{\Zcal}"'] & \Xcal_1 \arrow[d, "F"] \arrow[l, "\iupperstar_1"'] \arrow[r, "\jupperstar_1"] &  \Ucal_1 \arrow[d, "F_{\Ucal}"] \\ 
			\Zcal_2 & \Xcal_2 \arrow[l, "\iupperstar_2"] \arrow[r, "\jupperstar_2"'] & \Ucal_2
		\end{tikzcd}
	\end{equation*}
	be a commutative diagram where each of the horizontal rows exhibits $ \Xcal_i $ as the recollement of $ \Zcal_i $ and $ \Ucal_i $.
	\begin{enumerate}
		\item\label{lem:comparison_of_recollements_conservativity.1} If $ F $ is essentially surjective, then $ F_{\Zcal} $ and $ F_{\Ucal} $ are essentially surjective.
		
		\item\label{lem:comparison_of_recollements_conservativity.2} If $F_{\Zcal}$ preserves the initial object, then the right-hand square is horizontally left adjointable.
		In this case, if $ F $ is fully faithful (resp., an equivalence), then the same is true of $F_{\Ucal}$.
		
		\item\label{lem:comparison_of_recollements_conservativity.3} If $F_{\Ucal}$ preserves the terminal object, then the left-hand square is horizontally right adjointable.
		In this case, if $ F $ is fully faithful (resp., an equivalence), then the same is true of $F_{\Zcal}$.
		
		\item\label{lem:comparison_of_recollements_conservativity.4} Assume that $F$ is left exact. 
		If $F_{\Zcal}$ and $F_{\Ucal}$ are equivalences, then $ F $ is also an equivalence
	\end{enumerate}
\end{lemma}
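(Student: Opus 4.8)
The plan is to handle the four assertions in turn, using throughout the standing structure of a recollement: in each row the localizations $\iupperstar_k$ and $\jupperstar_k$ are jointly conservative and essentially surjective, the right adjoints $i_{k,\ast}$, $j_{k,\ast}$ and the left adjoint $j_{k,!}$ are fully faithful, and the vanishing relations $\iupperstar_k j_{k,!} \simeq \emptyset$ and $\jupperstar_k i_{k,\ast} \simeq \ast$ hold. Part (1) is a formal diagram chase: since $\iupperstar_2$ is essentially surjective, any $z \in \Zcal_2$ is of the form $\iupperstar_2 x$; writing $x \simeq F x'$ and using $\iupperstar_2 F \simeq F_{\Zcal}\iupperstar_1$ gives $z \simeq F_{\Zcal}(\iupperstar_1 x')$, so $z$ lies in the essential image of $F_{\Zcal}$, and symmetrically for $F_{\Ucal}$.

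For (2) I would form the left-adjoint mate $j_{2,!}F_{\Ucal} \to F j_{1,!}$ of the square $\jupperstar_2 F \simeq F_{\Ucal}\jupperstar_1$ and check it is an equivalence after $\jupperstar_2$ and $\iupperstar_2$ (these are jointly conservative). After $\jupperstar_2$ both sides reduce to $F_{\Ucal}$; after $\iupperstar_2$ the source is $\iupperstar_2 j_{2,!}F_{\Ucal} \simeq \emptyset$ while the target is $\iupperstar_2 F j_{1,!} \simeq F_{\Zcal}(\iupperstar_1 j_{1,!}) \simeq F_{\Zcal}(\emptyset)$, which equals $\emptyset$ exactly because $F_{\Zcal}$ preserves the initial object --- this is the one place the hypothesis of (2) enters. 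Given this left adjointability, if $F$ is fully faithful then so is $F_{\Ucal}$, via the string $\Map(u,u') \simeq \Map(j_{1,!}u, j_{1,!}u') \simeq \Map(Fj_{1,!}u, Fj_{1,!}u') \simeq \Map(j_{2,!}F_{\Ucal}u, j_{2,!}F_{\Ucal}u') \simeq \Map(F_{\Ucal}u, F_{\Ucal}u')$, using full faithfulness of the $j_{k,!}$ and the mate equivalence; the equivalence case then follows by adding essential surjectivity from (1). Part (3) is the formal dual, with $i_{k,\ast}$ in place of $j_{k,!}$, the relation $\jupperstar_k i_{k,\ast}\simeq\ast$ in place of $\iupperstar_k j_{k,!}\simeq\emptyset$, and ``$F_{\Ucal}$ preserves the terminal object'' in place of ``$F_{\Zcal}$ preserves the initial object.''

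For (4), since $F_{\Zcal}$ and $F_{\Ucal}$ are equivalences they preserve initial and terminal objects, so (2) and (3) apply and hand me the commutations $F j_{1,!} \simeq j_{2,!}F_{\Ucal}$ and $F i_{1,\ast} \simeq i_{2,\ast}F_{\Zcal}$. I would first note that $F$ is conservative: if $F\phi$ is an equivalence then so are $\iupperstar_2 F\phi \simeq F_{\Zcal}\iupperstar_1\phi$ and $\jupperstar_2 F\phi \simeq F_{\Ucal}\jupperstar_1\phi$, hence $\iupperstar_1\phi$ and $\jupperstar_1\phi$ are equivalences and joint conservativity in the first row finishes. The main tool is then the \emph{fracture} pullback square expressing any $x$ as the pullback of $j_{\ast}\jupperstar x \to i_{\ast}\iupperstar j_{\ast}\jupperstar x \leftarrow i_{\ast}\iupperstar x$; since $F$ is left exact it carries this square to a pullback square, in which the vertex $i_{2,\ast}\iupperstar_2 (\cdot)$ is identified via (3). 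Comparing it with the fracture square of the object of $\Xcal_2$ assembled from $F_{\Ucal}\jupperstar_1 x$ and $F_{\Zcal}\iupperstar_1 x$ would then force $F$ to be fully faithful and essentially surjective, hence an equivalence.

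The hard part will be the two remaining vertices, which involve $j_{\ast}$: everything in (4) comes down to the compatibility of the \emph{gluing functors} $\iupperstar_k j_{k,\ast}$, equivalently to right adjointability of the $\jupperstar$-square, i.e. to the mate $F j_{1,\ast} \to j_{2,\ast}F_{\Ucal}$ being an equivalence. By joint conservativity this reduces, after the automatic check on $\jupperstar_2$, to showing that the induced map $F_{\Zcal}\,\iupperstar_1 j_{1,\ast} \to \iupperstar_2 j_{2,\ast}\,F_{\Ucal}$ on gluing functors is an equivalence. I expect to extract this from left exactness of $F$ by feeding the $j_{!}$-- and $i_{\ast}$--commutations of (2) and (3) into the fracture pullback that relates $j_{k,!}$ to $j_{k,\ast}$, $i_{k,\ast}$ and the initial object, and then appealing once more to joint conservativity; this gluing comparison is the crux of the lemma, and it is exactly the step where left exactness of $F$ (rather than mere preservation of the recollement data) must be used.
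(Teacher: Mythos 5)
Your treatments of (1), (2), and (3) coincide with the paper's. For (1) the paper routes through $j_{2,\ast}$ and its full faithfulness rather than the essential surjectivity of $\iupperstar_2$ and $\jupperstar_2$, but the diagram chase is the same; for (2) you construct exactly the same mate $j_{2,!}F_{\Ucal} \to F j_{1,!}$, check it after $j_2^{\ast}$ and $i_2^{\ast}$ by joint conservativity, and invoke preservation of the initial object by $F_{\Zcal}$ at precisely the same point; (3) is the formal dual in both accounts.

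For (4) you diverge from the paper, and there is a genuine gap. The paper disposes of (4) in one line: since $F_{\Zcal}$ and $F_{\Ucal}$ are equivalences they preserve initial and terminal objects, so the adjointability statements of (2) and (3) hold, and the conclusion is quoted from \HAa{Proposition}{A.8.14}. You instead set out to reprove this directly via the fracture square, and you correctly isolate the crux: the compatibility of the gluing functors, i.e.\ that the mate $F j_{1,\ast} \to j_{2,\ast} F_{\Ucal}$ is an equivalence, equivalently that $F_{\Zcal}\, i_1^{\ast} j_{1,\ast} \simeq i_2^{\ast} j_{2,\ast} F_{\Ucal}$. But you do not prove this; you only record the expectation that it follows from left exactness by ``feeding the $j_!$- and $i_\ast$-commutations into the fracture pullback relating $j_!$ to $j_\ast$, $i_\ast$ and the initial object, and appealing once more to joint conservativity.'' That route does not close. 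Applying $F$ to the fracture square of $j_{1,!}u$ and using (2), (3) gives
\begin{equation*}
	j_{2,!}F_{\Ucal}u \;\simeq\; F j_{1,\ast}u \crosslimits_{\,i_{2,\ast}F_{\Zcal} i_1^{\ast} j_{1,\ast}u\,} i_{2,\ast}(\emptyset) \comma
\end{equation*}
and then applying $i_2^{\ast}$ yields $\emptyset \simeq F_{\Zcal} i_1^{\ast} j_{1,\ast}u \times_{F_{\Zcal} i_1^{\ast} j_{1,\ast}u} \emptyset$ (the comparison map is $F_{\Zcal} i_1^{\ast}$ of a unit, hence the identity), a tautology carrying no information about $i_2^{\ast} j_{2,\ast} F_{\Ucal}u$; applying $j_2^{\ast}$ is likewise automatic. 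So joint conservativity only reduces the problem back to the gluing comparison itself, which is exactly the nontrivial content of the statement (it is where the proof of HA A.8.14 does its work) and is not a formal consequence of the commutations you already have. Either carry that argument out in full --- which amounts to reproving A.8.14 --- or, as the paper does, use (2) and (3) to verify the hypotheses of \HAa{Proposition}{A.8.14} and cite it. (Your observation that $F$ is conservative is correct but is not by itself enough to finish.)
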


\begin{proof}
	For (1), we prove that $ F_{\Ucal} $ is essentially surjective; the proof of the essential surjectivity of $ F_{\Zcal} $ is identical.
	Since $ F $ is essentially surjective, given $ u \in \Ucal_2 $ there exists $ x \in \Xcal_1 $ and an equivalence $ j_{2,*}(u) \equivalent F(x) $.
	Hence the full faithfulness of $ j_{2,*} $ and the commutativity of the right-hand square show that
	\begin{equation*} 
		u \equivalent j_2^\ast j_{2,*}(u) \equivalent j_2^\ast( F(x) ) \equivalent F_{\Ucal} (j_1^\ast(x)) \period
	\end{equation*}
	
	We now prove (2); item (3) follows by a dual argument.
	Consider the exchange transformation
	\begin{equation*} 
		\alpha \colon \fromto{j_{2,!} F_{\Ucal}}{F j_{1,!}} \period  
	\end{equation*}
	Since the bottom line is a recollement, to prove that $\alpha$ is an equivalence it suffices to check that $j_2^\ast(\alpha)$ and $i_2^\ast(\alpha)$ are equivalences.
	We first deal with the former.
	Since the right-hand square commutes, we have $ j_2^\ast F j_{1,!} \equivalent F_{\Ucal} j_1^\ast j_{1,!} $, so the conclusion follows from the full faithfulness of both $ j_{1,!} $ and $ j_{2,!} $.
	As for $ i_2^\ast(\alpha) $, recall that the theory of recollements shows that both $ i_2^\ast j_{2,!} $ and $ i_1^\ast j_{1,!} $ are constant with value the initial object.
	Also, since the left-hand square commutes, we have $ i_2^\ast F j_{1,!} \equivalent F_{\Zcal} i_1^\ast j_{1,!} $.
	Since $F_{\Zcal}$ preserves the initial object, it follows that both the source and target of $ i_2^\ast(\alpha) $ are constant with value the initial object; hence $ i_2^\ast(\alpha) $ is an equivalence.
	
	From the horizontal left adjointability of the right-hand square and the full faithfulness of $j_{1,!}$ and $j_{2,!}$, it immediately follows that if $ F $ is fully faithful, then $ F_{\Ucal} $ is also fully faithful.
	Finally, if $ F $ is an equivalence, then we have just seen that $ F_{\Ucal} $ is fully faithful and (1) shows that $ F_{\Ucal} $ is also essentially surjective.
	
	We are left to prove (4). 
	Since $F_{\Zcal}$ and $F_{\Ucal}$ are equivalences, they preserve both the initial and the terminal object.
	Then (4) follows from the above adjointability statements and \HAa{Proposition}{A.8.14}.
\end{proof}

\begin{proposition}\label{prop:E-exodromic_is_stable_under_pulling_back_to_locally_closed_subposets}
	Let $ (\X,P) $ be a stratified \topos and let $ \Ecal $ be a presentable \category.
	Let $S \subset P$ be a locally closed subposet.
	If $ (\X,P) $ is $ \Ecal $-exodromic and $ \Ecal $ is compatible with recollements (\Cref{def:compatibility_with_recollements}), then $(\X_S,S)$ is also $ \Ecal $-exodromic.
\end{proposition}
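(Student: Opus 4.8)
The plan is to verify the two conditions of \Cref{def:exit-paths_for_coefficients} for $(\X_S,S)$. Condition (1) is immediate: since $(\X,P)$ is $\Ecal$-exodromic it is in particular exodromic, so \enumref{cor:stability_under_pulling_back_to_locally_closed_subposets}{1} shows that $(\X_S,S)$ is exodromic. The content is therefore condition (2), that the functor $\boxtensor \colon \Cons_S(\X_S) \tensor \Ecal \to \Cons_S(\X_S;\Ecal)$ of \Cref{lem:boxtensor_for_constructible_sheaves} is an equivalence. Writing the locally closed inclusion $S \subset P$ as an open inclusion followed by a closed inclusion — choose an open $U \subset P$ in which $S$ is closed, and use the identification $(\X_U)_S = \X_S$ — it suffices to treat separately the case of an open subposet and the case of a closed subposet (applying the closed case to the stratified \topos $(\X_U,U)$, which the open case will have shown to be $\Ecal$-exodromic).

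Both cases will come from a single recollement comparison. Fix a closed subposet $Z \subset P$ with open complement $U = P \sminus Z$, and consider the commutative diagram
\begin{equation*}
	\begin{tikzcd}[column sep=3em, row sep=2.5em]
		\ConsZ(\XZ) \tensor \Ecal \arrow[d, "\boxtensor"'] & \ConsP(\X) \tensor \Ecal \arrow[d, "\boxtensor"] \arrow[l, "\iupperstar_Z \tensor \Ecal"'] \arrow[r, "\iupperstar_U \tensor \Ecal"] & \ConsU(\XU) \tensor \Ecal \arrow[d, "\boxtensor"] \\
		\ConsZ(\XZ;\Ecal) & \ConsP(\X;\Ecal) \arrow[l, "\iupperstar_Z"] \arrow[r, "\iupperstar_U"'] & \ConsU(\XU;\Ecal) \period
	\end{tikzcd}
\end{equation*}
whose squares commute by the functoriality of $ (-) \tensor \Ecal $ in $\PrL$. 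The middle vertical functor is an equivalence, since $(\X,P)$ is $\Ecal$-exodromic.

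The next step is to verify that both rows are recollements. For the bottom row I apply \enumref{lem:recollements}{4}: its hypotheses hold because $\Ecal$ is compatible with recollements, the terminal object of $\Sh(\X;\Ecal)$ is $P$-constructible (\Cref{cor:terminal_object_constructible}), and $\ConsP(\X;\Ecal)$ is presentable and closed under colimits in $\Sh(\X;\Ecal)$ (\Cref{cor:exodromic_implies_universally_presentable}) as well as under finite limits (each stratum is monodromic, so $\LC(\X_p;\Ecal)$ is closed under limits in $\Sh(\X_p;\Ecal)$ by \Cref{rec:monodromy}, and $\ConsP(\X;\Ecal)$ is the corresponding pullback). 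For the top row, the key observation is that the $\Spc$-coefficient recollement of \enumref{prop:recollement_for_constructible_objects}{2} is a recollement datum of \emph{topoi}: under the exodromy equivalences identifying $\ConsP(\X)$, $\ConsZ(\XZ)$, and $\ConsU(\XU)$ with $\Fun(\Piinfty(\X,P),\Spc)$, $\Fun(\Piinfty(\X,P)_Z,\Spc)$, and $\Fun(\Piinfty(\X,P)_U,\Spc)$, the functors $\iupperstar_Z$ and $\iupperstar_U$ become restriction along the inclusions of $\Piinfty(\X,P)_Z$ and $\Piinfty(\X,P)_U$ into $\Piinfty(\X,P)$ (\Cref{prop:recollement_for_functor_to_a_poset,obs:meaning_of_respecting_exit-paths}), which are pullbacks of geometric morphisms of presheaf \topoi. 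As $\Ecal$ is compatible with recollements in the sense of \Cref{def:compatibility_with_recollements}, tensoring this topos recollement with $\Ecal$ exhibits the top row as a recollement.

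With both rows recollements, I invoke \Cref{lem:comparison_of_recollements_conservativity}. The vertical box-tensor functors preserve initial and terminal objects: viewed as functors into the ambient $\Sh(\cdot;\Ecal)$ they are simultaneously left and right adjoints (\Cref{obs:closure_of_E-exodromic_under_retracts}, applied to $(\X,P)$, $(\XZ,Z)$, and $(\XU,U)$, each exodromic by \enumref{cor:stability_under_pulling_back_to_locally_closed_subposets}{1}), and the initial and terminal objects of the constructible subcategories agree with those of the sheaf categories (using \Cref{cor:terminal_object_constructible} for the terminal object). Hence \enumref{lem:comparison_of_recollements_conservativity}{2}, with the middle functor an equivalence and the left-hand functor preserving the initial object, shows that the right-hand functor $\boxtensor \colon \ConsU(\XU)\tensor\Ecal \to \ConsU(\XU;\Ecal)$ is an equivalence, settling the open case; dually, \enumref{lem:comparison_of_recollements_conservativity}{3} settles the closed case. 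Composing these along the factorization of $S \subset P$ completes the proof. The main obstacle — and the one genuinely new point — is recognizing the constructible recollement as a bona fide recollement of presheaf \topoi, which is precisely what permits the hypothesis on $\Ecal$ (a statement about recollements of \topoi) to be applied to the top row; everything else is a bookkeeping of the recollement comparison.
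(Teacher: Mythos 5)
Your argument is essentially the paper's own proof: the same recollement-comparison diagram, the same use of \enumref{lem:recollements}{4} (together with \Cref{cor:terminal_object_constructible}) to make the bottom row a recollement, the same invocation of compatibility with recollements for the top row, and the same appeal to \Cref{lem:comparison_of_recollements_conservativity} after checking that $\boxtensor_U$ and $\boxtensor_Z$ preserve the initial and terminal objects. The one place your justification diverges—closure of $\ConsP(\X;\Ecal)$ under finite limits via the pullback description over the strata—silently requires the restrictions $\iupperstar_p \tensor \Ecal$ to be left exact, whereas the paper gets this for free by observing that, since $(\X,P)$ is $\Ecal$-exodromic, the inclusion $\ConsP(\X;\Ecal) \subset \Sh(\X;\Ecal)$ is identified with the limit- and colimit-preserving functor $\ConsP(\X) \tensor \Ecal \to \Sh(\X) \tensor \Ecal$.
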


\begin{proof}
	It is enough to prove that if $U \subset P$ is an open subposet with closed complement $ Z $, then both $(\XU,U)$ and $(\XZ,Z)$ are $ \Ecal $-exodromic.
	First of all, we already know from \cref{cor:stability_under_pulling_back_to_locally_closed_subposets} that these stratified \topoi are exodromic.
	Consider now the following commutative diagram:
	\begin{equation*}
		\begin{tikzcd}[sep=3em]
			\ConsU(\XU) \tensor \Ecal \arrow[d, hooked, "\boxtensor_U"'] & \ConsP(\X) \tensor \Ecal \arrow[l, "\iupperstar_U \tensor \Ecal"'] \arrow[r, "\iupperstar_Z \tensor \Ecal"] \arrow[d, "\boxtensor"] & \ConsZ(\XZ) \tensor \Ecal \arrow[d, hooked, "\boxtensor_Z"] \\
			\ConsU(\XU;\Ecal) & \ConsP(\X;\Ecal) \arrow[l, "\iupperstar_U"] \arrow[r, "\iupperstar_Z"'] & \ConsZ(\XZ;\Ecal) \period
		\end{tikzcd}
	\end{equation*}
	Since $ (\X,P) $ is $ \Ecal $-exodromic, the middle vertical functor is an equivalence.
	Morever, because $ (\X,P) $ is exodromic, the functor
	\begin{equation*}
		\ConsP(\X) \tensor \Ecal \to \Sh(\X) \tensor \Ecal \equivalent \Sh(\X;\Ecal) 
	\end{equation*}
	preserves both limits and colimits.
	Combining \cref{lem:terminal_object_constructible} and \enumref{lem:recollements}{4}, we see that the bottom row exhibits $\ConsP(\X;\Ecal)$ as a recollement of $\ConsU(\XU;\Ecal)$ and $\ConsZ(\XZ;\Ecal)$.
	On the other hand, since $ \Ecal $ is compatible with recollements, the top row is a recollement as well.
	Clearly, $\boxtensor_U$ preserves the initial object.
	On the other hand, since $\boxtensor_Z$ is compatible with the inclusion into
	\begin{equation*}
		\Sh(\XZ) \tensor \Ecal \equivalent \Sh(\XZ;\Ecal)
	\end{equation*}
	and since the terminal object in $\Sh(\XZ;\Ecal)$ is $ Z $-constructible thanks to \cref{lem:terminal_object_constructible}, we conclude that $\boxtensor_Z$ preserves the terminal object as well.
	Thus, \cref{lem:comparison_of_recollements_conservativity} implies that $\boxtensor_U$ and $\boxtensor_Z$ are equivalences.
\end{proof}

To explain why $ \Ecal $-exodromicity can be checked locally, we need descent for the tensor decomposition 
\begin{equation*}
	\ConsP(\X) \tensor \Ecal \equivalent \ConsP(\X;\Ecal) \period
\end{equation*}
For this, we make use of the following lemma.

\begin{lemma}\label{lem:commuting_limits_past_tensors}
	Let $ A $ be a small \category and let $ \Ccal_{\bullet} \colon \fromto{A}{\CATinfty} $ be a diagram of \categories.
	Assume that for each $ \alpha \in A $, the \category $ \Ccal_{\alpha} $ is presentable and that for each morphism $ \fromto{\alpha}{\beta} $ in $ A $, the transition functor $ \fromto{\Ccal_{\alpha}}{\Ccal_{\beta}} $ is both a left and a right adjoint.
	Then:
	\begin{enumerate}
		\item\label{lem:commuting_limits_past_tensors.1} The limits of $ \Ccal_{\bullet} $ when computed in $ \PrR $, $ \PrL $, or $ \CATinfty $ all agree.

		\item\label{lem:commuting_limits_past_tensors.2} For any presentable \category $ \Ecal $, the natural morphism
		\begin{equation*}
			\fromto{\lim_{\alpha \in A} \Ecal \tensor \Ccal_{\alpha}}{ \Ecal \tensor \lim_{\alpha \in A} \Ccal_{\alpha}}
		\end{equation*}
		in $ \PrL $ is an equivalence.
		(Here, both limits are computed in $ \PrL $.)
	\end{enumerate}
\end{lemma}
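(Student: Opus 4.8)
The plan is to deduce both statements from the interplay between the two forgetful functors $\PrL \to \CATinfty$ and $\PrR \to \CATinfty$ together with the duality $\PrL \equivalent (\PrR)^{\op}$ obtained by passing to adjoints. The role of the bi-adjointness hypothesis is precisely that it lets us regard the single underlying diagram $\Ccal_\bullet \colon A \to \CATinfty$ both as a diagram valued in $\PrL$ (using that the transition functors are left adjoints) and as a diagram valued in $\PrR$ (using that they are also right adjoints), with the \emph{same} underlying functors.

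For (1), I would argue as follows. The forgetful functor $\PrL \to \CATinfty$ preserves limits (\HTT{Proposition}{5.5.3.13}), so the limit of $\Ccal_\bullet$ computed in $\PrL$ agrees with the limit computed in $\CATinfty$. Dually, the forgetful functor $\PrR \to \CATinfty$ preserves limits: this follows from \HTT{Theorem}{5.5.3.18} together with the duality $\PrL \equivalent (\PrR)^{\op}$, which identifies limits in $\PrR$ with colimits in $\PrL$ (one passes to left adjoints, computes the colimit in $\PrL$ as an underlying limit, and passes back). Hence the limit of $\Ccal_\bullet$ computed in $\PrR$ also agrees with the limit in $\CATinfty$. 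Since both the $\PrL$- and the $\PrR$-diagram have the same image in $\CATinfty$, all three limits coincide; in particular the projections from the limit are simultaneously left and right adjoints.

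For (2), I would first observe that the tensored diagram $\Ecal \tensor \Ccal_\bullet$ again satisfies the hypotheses of the lemma: each $\Ecal \tensor \Ccal_\alpha$ is presentable, and each transition functor $\id{\Ecal} \tensor F$ is both a left and a right adjoint because $F$ is, using that tensoring a bi-adjoint functor with $\Ecal$ preserves bi-adjointness (as in \Cref{cor:terminal_object_constructible}). Thus part (1) applies to both $\Ccal_\bullet$ and $\Ecal \tensor \Ccal_\bullet$, so the $\PrL$-limits appearing in the statement may equivalently be computed in $\PrR$. Next I would use that the functor $- \tensor \Ecal \colon \PrR \to \PrR$ preserves limits: under the duality $\PrL \equivalent (\PrR)^{\op}$ it corresponds to $- \tensor \Ecal \colon \PrL \to \PrL$, which preserves colimits since the tensor product of presentable \categories is cocontinuous in each variable. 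Applying this limit-preserving functor to the identification of part (1) yields
\[
	\Ecal \tensor \lim_{\alpha \in A} \Ccal_\alpha \equivalent \lim_{\alpha \in A} \big( \Ecal \tensor \Ccal_\alpha \big),
\]
where the right-hand limit, computed in $\PrR$, agrees with the $\PrL$-limit by part (1). A final check that this equivalence is carried by (equivalently, is inverse to) the natural comparison map of the statement then completes the argument.

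The main obstacle is not conceptual but organizational: the care required in tracking the duality $\PrL \equivalent (\PrR)^{\op}$ and the two distinct forgetful functors to $\CATinfty$, which are different on morphisms even though they agree on objects. The one genuinely structural input is that tensoring with $\Ecal$ preserves adjunctions internal to $\PrL$ (so that $\id{\Ecal} \tensor F$ stays bi-adjoint and $- \tensor \Ecal$ dualizes to a limit-preserving functor on $\PrR$); this is exactly the functoriality of the tensor product already invoked in \Cref{cor:terminal_object_constructible}, so no new work is needed there.
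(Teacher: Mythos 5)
Your argument is correct and follows essentially the same route as the paper: part (1) is exactly the observation that both forgetful functors $\PrL \to \CATinfty$ and $\PrR \to \CATinfty$ preserve limits (HTT 5.5.3.13 and 5.5.3.18), and part (2) is deduced from (1), the duality $\PrR \equivalent (\PrL)^{\op}$, and the fact that $\Ecal \tensor (-) \colon \PrR \to \PrR$ preserves limits (HA Remark 4.8.1.24). Your extra step of checking that the tensored diagram $\Ecal \tensor \Ccal_\bullet$ again satisfies the hypotheses of (1) is a point the paper leaves implicit, and it is handled correctly.
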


\begin{proof}
	Item (1) follows from the fact that both of the forgetful functors $ \fromto{\PrL}{\CATinfty} $ and $ \fromto{\PrR}{\CATinfty} $ preserve limits \cite[\HTTthm{Proposition}{5.5.3.13} \& \HTTthm{Theorem}{5.5.3.18}]{HTT}.
	Item (2) follows from (1), the equivalence \smash{$ \PrR \equivalent (\PrL)^{\op} $}, and the fact that the functor
	\begin{equation*}
		\Ecal \tensor (-) \colon \fromto{\PrR}{\PrR}
	\end{equation*}
	preserves limits \HA{Remark}{4.8.1.24}.
\end{proof}

\begin{proposition}\label{prop:E_exodromic_Van_Kampen}
	Let $ \Ecal $ be a presentable \category, let $ A $ be \acategory, and let
	\begin{equation*}
		(\X_{\bullet},P_{\bullet}) \colon \fromto{A}{\StrTop}
	\end{equation*}
	be a diagram of stratified \topoi.
	Let $ (\X_{\infty},P_{\infty}) $ be a cone under $ (\X_{\bullet},P_{\bullet}) $.
	Assume that the following conditions are satisfied:
	\begin{enumerate}
		\item For each $ \alpha \in A $, the stratified \topos $ (\X_{\alpha},P_{\alpha}) $ is $ \Ecal $-exodromic.
		
		\item The natural pullback functors
		\begin{equation*}
		 \X_\infty \to \lim_{\alpha \in A^{\op}} \X_\alpha \andeq \Cons_{P_\infty}(\X_\infty) \to \lim_{\alpha \in A^{\op}} \Cons_{P_\alpha}(\X_\alpha) 
		\end{equation*}
		as well as
		\begin{equation*}
			\Cons_{P_{\infty}}(\X_{\infty};\Ecal) \to \lim_{\alpha \in A^{\op}} \Cons_{P_{\alpha}}(\X_{\alpha};\Ecal)
		\end{equation*}
		are equivalences.
	\end{enumerate}
	Then the stratified \topos $ (\X_{\infty},P_{\infty}) $ is $ \Ecal $-exodromic.
\end{proposition}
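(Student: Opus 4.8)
The plan is to verify the two conditions of \Cref{def:exit-paths_for_coefficients} for $ (\X_{\infty},P_{\infty}) $ in turn. Condition \enumref{def:exit-paths_for_coefficients}{1} is immediate: each $ (\X_{\alpha},P_{\alpha}) $ is $ \Ecal $-exodromic, hence in particular exodromic, and the first two equivalences in hypothesis (2) are exactly the hypotheses of the space-level van Kampen result. So \Cref{prop:van_Kampen} applies directly and shows that $ (\X_{\infty},P_{\infty}) $ is exodromic.

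For condition \enumref{def:exit-paths_for_coefficients}{2}, I would identify the source and target of the comparison functor $ \boxtensor $ of \Cref{lem:boxtensor_for_constructible_sheaves} through a chain of equivalences. First, the space-coefficient part of hypothesis (2) gives $ \Cons_{P_{\infty}}(\X_{\infty}) \equivalent \lim_{\alpha \in A^{\op}} \Cons_{P_{\alpha}}(\X_{\alpha}) $. The transition functors in the diagram $ \alpha \mapsto \Cons_{P_{\alpha}}(\X_{\alpha}) $ are pullbacks along the morphisms $ (\X_{\alpha},P_{\alpha}) \to (\X_{\beta},P_{\beta}) $ of exodromic stratified \topoi; by \Cref{thm:all_morphisms_are_exodromic} each such morphism is exodromic, so every transition functor preserves both limits and colimits and is therefore both a left and a right adjoint. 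This is exactly the hypothesis of \Cref{lem:commuting_limits_past_tensors}, which (applied over $ A^{\op} $) lets me commute the tensor past the limit:
\begin{equation*}
	\Cons_{P_{\infty}}(\X_{\infty}) \tensor \Ecal \equivalent \Bigl(\lim_{\alpha \in A^{\op}} \Cons_{P_{\alpha}}(\X_{\alpha})\Bigr) \tensor \Ecal \equivalent \lim_{\alpha \in A^{\op}} \bigl(\Cons_{P_{\alpha}}(\X_{\alpha}) \tensor \Ecal\bigr) \period
\end{equation*}
The $ \Ecal $-exodromicity of each $ (\X_{\alpha},P_{\alpha}) $ then replaces each term $ \Cons_{P_{\alpha}}(\X_{\alpha}) \tensor \Ecal $ by $ \Cons_{P_{\alpha}}(\X_{\alpha};\Ecal) $, and the third equivalence of hypothesis (2) identifies the resulting limit with $ \Cons_{P_{\infty}}(\X_{\infty};\Ecal) $.

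The remaining, and principal, point is to verify that this composite equivalence is genuinely the natural map $ \boxtensor $ for $ (\X_{\infty},P_{\infty}) $, and not merely some abstract equivalence between the two \categories. I expect this coherence bookkeeping to be the main obstacle. Concretely, I would exhibit $ \boxtensor $ as a morphism of $ A^{\op} $-diagrams: since the functors $ \boxtensor $ are induced by the functoriality of the tensor product in $ \PrL $ (\Cref{obs:closure_of_E-exodromic_under_retracts}), they commute with the limit-preserving pullback functors assembling both sides, so they assemble into a natural transformation with value $ \boxtensor_{\alpha} $ at each $ \alpha $ and value $ \boxtensor_{\infty} $ at the cone point. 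Passing to limits and using that each $ \boxtensor_{\alpha} $ is an equivalence, that the space-coefficient and $ \Ecal $-coefficient limit comparisons are equivalences by hypothesis (2), and that the tensor-limit exchange is an equivalence by \Cref{lem:commuting_limits_past_tensors}, the two-out-of-three property then forces $ \boxtensor_{\infty} $ to be an equivalence, completing the verification of condition \enumref{def:exit-paths_for_coefficients}{2}.
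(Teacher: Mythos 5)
Your proposal is correct and follows essentially the same route as the paper: first \Cref{prop:van_Kampen} for plain exodromicity, then the commutative square comparing $ \Cons_{P_\infty}(\X_\infty)\tensor\Ecal $ with $ \Cons_{P_\infty}(\X_\infty;\Ecal) $ via the limits over $ A^{\op} $, with \Cref{lem:commuting_limits_past_tensors} and \Cref{thm:all_morphisms_are_exodromic} supplying the tensor--limit exchange and two-out-of-three finishing the argument. The naturality point you flag as the principal obstacle is handled in the paper exactly as you propose, by organizing everything into that commutative square from the outset.
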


\begin{proof}
	\Cref{prop:van_Kampen} implies that $ (\X,P) $ is exodromic.
	Consider the following commutative square
	\begin{equation*}
	 \begin{tikzcd}
		\Cons_{P_\infty}(X_\infty) \tensor \Ecal \arrow[r] \arrow[d] & \lim_{\alpha \in A^{\op}} \Cons_{P_\alpha}(X_\alpha) \tensor \Ecal \arrow[d] \\
		\Cons_{P_\infty}(X_\infty; \Ecal) \arrow[r] & \lim_{\alpha \in A^{\op}} \Cons_{P_\alpha}(X_\alpha;\Ecal)
	\end{tikzcd} 
\end{equation*}
	Since each $(\X_\alpha, P_\alpha)$ is $ \Ecal $-exodromic, the left vertical functor is an equivalence.
	Also, by assumption, the bottom horizontal functor is an equivalence.
	Thus it suffices to show that the top horizontal functor is an equivalence.
	By \cref{lem:commuting_limits_past_tensors}, it suffices to show that for every morphism $\alpha \to \beta$ in $A^{\op}$, the pullback functor
	\begin{equation*}
		\Cons_{P_\alpha}(\X_\alpha) \to \Cons_{P_\beta}(\X_\beta) 
	\end{equation*}
	is both a left and a right adjoint.
	By assumption $(\X_\beta, P_\beta) $ and $ (\X_\alpha, P_\alpha)$ are exodromic, so this is an immediate consequence of \Cref{thm:all_morphisms_are_exodromic}.
\end{proof}

\begin{corollary}\label{cor:E_exodromic_Van_Kampen}
	Let $ (\X,P) $ be a stratified \topos and let $ \Ecal $ be a presentable \category.
	Let $ U_{\bullet} \colon \fromto{A}{\X} $ be a diagram with $ \colim_{\alpha \in A} U_{\alpha} \equivalent 1_{\X} $.
	If for each $ \alpha \in A $, the stratified \topos $ (\X_{/U_{\alpha}},P) $ is $ \Ecal $-exodromic, then the stratified \topos $ (\X,P) $ is also $ \Ecal $-exodromic.
\end{corollary}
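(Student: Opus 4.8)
The plan is to apply the van Kampen theorem for $\Ecal$-exodromic stratified \topoi (\Cref{prop:E_exodromic_Van_Kampen}) to the diagram of slice \topoi. Concretely, for each morphism $\fromto{\alpha}{\beta}$ in $A$ the induced map $\fromto{U_\alpha}{U_\beta}$ in $\X$ gives an étale geometric morphism $\fromto{\X_{/U_\alpha}}{\X_{/U_\beta}}$ compatible with the slice stratifications, so $\goesto{\alpha}{(\X_{/U_\alpha},P)}$ refines to a diagram $\fromto{A}{\StrTop}$ with cone $(\X,P)$. Hypothesis (1) of \Cref{prop:E_exodromic_Van_Kampen} holds by assumption, so it remains to verify the three descent equivalences of hypothesis (2) for this cone. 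I would record at the outset that, since each transition morphism is étale, each transition functor $\fromto{\X_{/U_\beta}}{\X_{/U_\alpha}}$ is a pullback and hence both a left and a right adjoint; moreover each slice is $\Ecal$-exodromic, hence exodromic, so by \Cref{thm:all_morphisms_are_exodromic} the induced pullbacks between the \categories of constructible objects are also both left and right adjoints. Thus \Cref{lem:commuting_limits_past_tensors} will be available to commute the relevant limits past $\tensor \Ecal$.

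For the first two equivalences, the identification $\fromto{\X}{\lim_{\alpha \in A^{\op}} \X_{/U_\alpha}}$ is exactly the van Kampen property of the colimit $\colim_{\alpha \in A} U_\alpha \equivalent 1_{\X}$ (\enumref{rec:limits_of_topoi}{2}), and the identification $\fromto{\ConsP(\X)}{\lim_{\alpha \in A^{\op}} \ConsP(\X_{/U_\alpha})}$ is precisely the descent statement established in the proof of \Cref{cor:van_Kampen_internally}: under the equivalence $\X \equivalent \lim_{\alpha \in A^{\op}} \X_{/U_\alpha}$, an object is $P$-constructible if and only if each of its restrictions to the $\X_{/U_\alpha}$ is, because $P$-constructibility is checked stratum by stratum, local constancy is a local condition, and the objects $U_\alpha$ cover $1_\X$.

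The remaining, and only genuinely new, point is the $\Ecal$-coefficient descent $\fromto{\ConsP(\X;\Ecal)}{\lim_{\alpha \in A^{\op}} \ConsP(\X_{/U_\alpha};\Ecal)}$. Here I would first descend all of $\Sh(-;\Ecal)$: applying $\tensor \Ecal$ to $\X \equivalent \lim_{\alpha \in A^{\op}} \X_{/U_\alpha}$ and using \enumref{lem:commuting_limits_past_tensors}{2} (whose hypothesis holds by the first paragraph) gives
\begin{equation*}
	\Sh(\X;\Ecal) \equivalent \paren{\lim_{\alpha \in A^{\op}} \X_{/U_\alpha}} \tensor \Ecal \equivalent \lim_{\alpha \in A^{\op}} \Sh(\X_{/U_\alpha};\Ecal) \period
\end{equation*}
Under this equivalence $\ConsP(\X;\Ecal)$ is the full subcategory of $P$-constructible objects, and the same stratum-by-stratum, local argument as in the $\Spc$ case shows that an $\Ecal$-valued sheaf is $P$-constructible if and only if each of its restrictions to $\X_{/U_\alpha}$ is. This yields the third equivalence, and \Cref{prop:E_exodromic_Van_Kampen} then gives that $(\X,P)$ is $\Ecal$-exodromic. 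The main obstacle is this last constructibility-descent statement with coefficients; its proof rests on the locality of local constancy together with \Cref{lem:commuting_limits_past_tensors}, which lets me reduce the coefficient case to the already-established sheaf-level descent rather than re-examining constructibility by hand.
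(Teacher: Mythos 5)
Your proposal is correct and follows essentially the same route as the paper: reduce to \Cref{prop:E_exodromic_Van_Kampen}, use \Cref{lem:commuting_limits_past_tensors} together with the fact that the étale transition pullbacks are both left and right adjoints to obtain $\Sh(\X;\Ecal) \equivalent \lim_{\alpha \in A^{\op}} \Sh(\X_{/U_{\alpha}};\Ecal)$, and conclude by observing that $P$-constructibility of an $\Ecal$-valued sheaf is detected on the cover. The only cosmetic difference is your extra invocation of \Cref{thm:all_morphisms_are_exodromic}, which the paper does not need since it applies the tensor-commutation lemma only at the level of $\Sh(-;\Ecal)$.
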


\begin{proof}
	By \Cref{rec:limits_of_topoi,prop:E_exodromic_Van_Kampen}, it suffices to show that the natural pullback functor
	\begin{equation}\label{eq:E_exodromic_Van_Kampen}
		\Cons_{P}(\X;\Ecal) \to \lim_{\alpha \in A^{\op}} \Cons_{P}(\X_{/U_{\alpha}};\Ecal)
	\end{equation}
	is an equivalence.
	Notice that for every map $\alpha \to \beta$ in $A^{\op}$, the induced pullback functor
	\begin{equation*}
		\X_{/U_\alpha} \to \X_{/U_\beta} 
	\end{equation*}
	is both a left and a right adjoint.
	Therefore, \cref{lem:commuting_limits_past_tensors} implies that the pullback functor
	\begin{equation*}
		\Sh(\X;\Ecal) \to \lim_{\alpha \in A^{\op}} \Sh(\X_{/U_{\alpha}}; \Ecal) 
	\end{equation*}
	is an equivalence.
	This immediately implies that \eqref{eq:E_exodromic_Van_Kampen} is fully faithful.
	To conclude, it is enough to observe that $F \in \Sh(\X;\Ecal)$ is $P$-constructible if and only if for every $\alpha \in A$, the restriction of $ F $ to $\X_{/U_\alpha}$ is $P$-constructible.
\end{proof}

\begin{recollection}[(compactly assembled \categories)]\label{rec:compactly_assembled}
	A presentable \category $ \Ecal $ is \textit{compactly assembled} if $ \Ecal $ is a retract in \smash{$ \PrL $} of a compactly generated \category \cite[\SAGthm{Definition}{21.1.2.1} \& \SAGthm{Theorem}{21.1.2.18}]{SAG}.
	If $ \Ecal $ is a presentable stable \category, then $ \Ecal $ is compactly assembled if and only if $ \Ecal $ is dualizable in the symmetric monoidal \category of presentable stable \categories and left adjoints equipped with the Lurie tensor product \SAG{Proposition}{D.7.3.1}.
\end{recollection}

\begin{corollary}\label{lem:exit_path_category_for_compactly_assembled_coefficients_is_automatic}
	Let $ (\X,P) $ be a exodromic stratified \topos and let $ \Ecal $ be a presentable \category.
	Then:
	\begin{enumerate}
		\item If $ \Ecal $ is compactly assembled, then $ (\X,P) $ is $ \Ecal $-exodromic.
		
		\item If $ \Ecal $ is stable and $P$ is noetherian, then $ (\X,P) $ is $ \Ecal $-exodromic.
	\end{enumerate}
\end{corollary}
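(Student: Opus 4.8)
The plan is to verify the two conditions of \Cref{def:exit-paths_for_coefficients}. Condition \Cref{def:exit-paths_for_coefficients}-(1) is the hypothesis that $(\X,P)$ is exodromic, so in both cases the real work is to show that the canonical functor $\boxtensor \colon \ConsP(\X) \tensor \Ecal \to \ConsP(\X;\Ecal)$ of \Cref{lem:boxtensor_for_constructible_sheaves} is an equivalence. Since $\boxtensor$ is always fully faithful (\Cref{obs:closure_of_E-exodromic_under_retracts}), only essential surjectivity is at issue. The common base case is that of a single stratum: because every stratum $\X_p$ is monodromic (\enumref{cor:stability_under_pulling_back_to_locally_closed_subposets}{2}), \Cref{rec:monodromy} yields $\LC(\X_p) \tensor \Ecal \equivalent \LC(\X_p;\Ecal)$ for \emph{every} presentable $\Ecal$. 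The two items differ in how one passes from strata to all of $P$.

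For item (1) the idea is to realize $\ConsP(\X)$ as a limit that $- \tensor \Ecal$ preserves. Unwinding \Cref{def:constructible_objects} presents $\ConsP(\X)$ as the pullback in $\PrL$ of
\begin{equation*}
	\X \xrightarrow{\prod_{p \in P} \iupperstar_p} \prod_{p \in P} \X_p \hookleftarrow \prod_{p \in P} \LC(\X_p) \comma
\end{equation*}
both legs being colimit-preserving (the inclusion because $\LC(\X_p) \subset \X_p$ is closed under colimits by \Cref{rec:monodromy}), so that, since $\PrL \to \CATinfty$ preserves limits \HTT{Proposition}{5.5.3.13}, this is also the underlying pullback of \categories, exactly as in \Cref{lem:closure_under_colimits_for_monodromic_strata}. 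The key input is that a compactly assembled $\Ecal$ is dualizable in $\PrL$ (\Cref{rec:compactly_assembled}), whence $- \tensor \Ecal \colon \PrL \to \PrL$ is both a left and a right adjoint and so preserves all limits, in particular the displayed pullback and the products. Applying $- \tensor \Ecal$ and using $\X \tensor \Ecal = \Sh(\X;\Ecal)$, $\X_p \tensor \Ecal = \Sh(\X_p;\Ecal)$ together with the base case identifies $\ConsP(\X) \tensor \Ecal$ with the pullback defining $\ConsP(\X;\Ecal)$; comparing the inclusions into $\Sh(\X;\Ecal)$ shows this identification is $\boxtensor$. Note that no hypothesis on $P$ intervenes. (If one prefers to use only the retract characterization in \Cref{rec:compactly_assembled}, one first reduces to compactly generated $\Ecal$ via \Cref{obs:stability_of_E-exodromic_under_retracts}.)

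For item (2) the coefficient \category $\Ecal$ need no longer be dualizable, so $- \tensor \Ecal$ need not preserve the infinite product above; instead the plan is a noetherian induction on $P$ that replaces the product by binary recollements, mirroring the proof of \Cref{prop:Kunneth_formula_for_exodromic_topoi}. Fixing a minimal element of $P_{\geq p}$, the closed inclusion $\{p\} \subset P_{\geq p}$ with open complement $P_{>p}$ gives, by \Cref{prop:recollement_for_constructible_objects}, a recollement of $\Cons_{P_{\geq p}}(\X_{P_{\geq p}})$ by $\LC(\X_p)$ and $\Cons_{P_{>p}}(\X_{>p})$. I would then compare the $\Ecal$-tensored recollement with the recollement of $\Cons_{P_{\geq p}}(\X_{P_{\geq p}};\Ecal)$: the former is a recollement because $\Ecal$ is stable, hence compatible with recollements (\Cref{observation:recollement_for_sheaves}), the relevant constructible \categories being presheaf \topoi; the latter is a recollement by \enumref{lem:recollements}{4}, whose hypotheses hold since the terminal object is constructible (\Cref{cor:terminal_object_constructible}) and $\ConsP(\X;\Ecal)$ is presentable and closed under colimits (\Cref{cor:exodromic_implies_universally_presentable}), hence also under finite limits because $\Ecal$ is stable. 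The same stability makes the middle comparison functor $\boxtensor$ left exact (it is the corestriction of a right adjoint into $\Sh(\X;\Ecal)$, in which $\ConsP(\X;\Ecal)$ is now closed under finite limits). The base case and the inductive hypothesis applied to $P_{>p} = \bigcup_{q > p} P_{\geq q}$ make the two outer functors equivalences, so \enumref{lem:comparison_of_recollements_conservativity}{4} forces $\boxtensor_{P_{\geq p}}$ to be an equivalence. Finally, the open cover $\{P_{\geq p}\}_{p \in P}$ and the locality afforded by \Cref{lem:commuting_limits_past_tensors} (transition functors between opens are bi-adjoints by \Cref{thm:all_morphisms_are_exodromic}) upgrade the equivalences $\boxtensor_{P_{\geq p}}$ to $\boxtensor$.

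The main obstacle is the non-formal input in item (1): knowing that $- \tensor \Ecal$ preserves the limit presentation of $\ConsP(\X)$, which is precisely the dualizability of $\Ecal$ in $\PrL$ and is the sole place where compact assembly is used. For item (2) the delicate point is instead checking that \emph{both} rows of the comparison square are genuine recollements, i.e. that $\Ecal$-valued constructible objects satisfy recollement; this is where stability of $\Ecal$ (via closure under finite limits and \Cref{observation:recollement_for_sheaves}) and noetherianity of $P$ (to run the induction and to invoke the cover $\{P_{\geq p}\}$) are both essential.
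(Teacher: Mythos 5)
Your treatment of item (2) is essentially the paper's proof: the same noetherian induction over the open cover $\{P_{\geq p}\}_{p \in P}$ reduced via \Cref{cor:E_exodromic_Van_Kampen} and \Cref{lem:commuting_limits_past_tensors}, the same monodromic base case, and the same comparison of the two recollements via \enumref{lem:recollements}{4} for the bottom row, compatibility of the stable $\Ecal$ with recollements for the top row, and \enumref{lem:comparison_of_recollements_conservativity}{4} to conclude. Your remark that the constructible \categories in the top row are presheaf \topoi, so that \Cref{def:compatibility_with_recollements} genuinely applies, is exactly the point needed there; nothing to add.

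Item (1) is where you diverge from the paper, and where there is a gap. The paper reduces to compactly generated $\Ecal$ by \Cref{obs:stability_of_E-exodromic_under_retracts} and then invokes the proof of Ørsnes Jansen's Theorem B.9 verbatim, which is a direct argument in the compactly generated case. You instead want to apply $- \tensor \Ecal$ to the pullback presentation of $\ConsP(\X)$, which requires $- \tensor \Ecal \colon \PrL \to \PrL$ to preserve the (possibly infinite) products $\prod_{p \in P}$ and the pullback, i.e.\ that $\Ecal$ be dualizable in $\PrL$. You cite \Cref{rec:compactly_assembled} for this, but that recollection asserts dualizability only for \emph{stable} compactly assembled \categories, and only in the symmetric monoidal \category of presentable \emph{stable} \categories; it says nothing about dualizability in the unstable $\PrL$. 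The unstable statement --- that a compactly assembled, or even merely compactly generated, presentable \category is dualizable in $\PrL$ --- is the unstable analogue of \SAG{Proposition}{D.7.3.1} and is established nowhere in the paper. Your parenthetical reduction to the compactly generated case does not dissolve the issue, since one must then still prove that compactly generated unstable \categories are dualizable in $\PrL$; note for instance that the inclusion of a compactly generated \category into the presheaf \category on its compact objects is not colimit-preserving (coproducts of left exact functors are not computed pointwise), so the naive ``retract of a presheaf \category in $\PrL$'' argument fails. If you supply a proof or reference for that dualizability statement, your argument for (1) goes through and is arguably cleaner and more conceptual than the paper's; as written, the key step is unjustified, which is precisely why the paper outsources this case to an external direct proof.
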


\begin{proof}
	For (1), note that by \Cref{obs:closure_of_E-exodromic_under_retracts}, it suffices to prove the claim in the case that $ \Ecal $ is compactly generated.
	In this case, the proof of \cite[Theorem B.9]{arXiv:2012.10777} works \textit{verbatim}.
	
	We now prove (2).
	For $p \in P$, we write $\X_{\geq p}$ for $\X_{P_{\geq p}}$.
	Since the sets $\{P_{\geq p}\}_{p\in P}$ form an open cover of $P$, by \cref{cor:E_exodromic_Van_Kampen} it suffices to show that for every $p \in P$ the stratified \topos $(\X_{\geq p}, P_{\geq p})$ is $ \Ecal $-exodromic.
	We prove this statement by noetherian induction.
	When $P$ is  a single element, the conclusion follows from \cref{rec:monodromy}.
	We are then reduced to showing that if for every $ q > p $ the stratified \topos $(\X_{\geq q},P_{\geq q})$ is $ \Ecal $-exodromic, then $(X_{\geq p}, P_{\geq p})$ is also $ \Ecal $-exodromic.
	Note that
	\begin{equation*}
		P_{\geq p} \sminus \{p\} = P_{> p} = \bigcup_{q > p} P_{\geq q} \period 
	\end{equation*}
	Thus, \cref{cor:E_exodromic_Van_Kampen} implies that $(\X_{>p}, P_{>p})$ is $ \Ecal $-exodromic.
	
	Now consider the following diagram:
	\begin{equation*}
		\begin{tikzcd}[sep=3em]
			\LC(\X_p) \tensor \Ecal \arrow[d, "\boxtensor"'] & \ConsP(\X) \tensor \Ecal \arrow[l] \arrow[r] \arrow[d, "\boxtensor"] & \Cons_{P_{>p}}(\X_{>p}) \tensor \Ecal \arrow[d, "\boxtensor"] \\
			\LC(\X_p;\Ecal) & \ConsP(\X;\Ecal) \arrow[r] \arrow[l] & \Cons_{P_{>p}}(\X_{>p};\Ecal) \period
		\end{tikzcd} 
	\end{equation*}
	The inductive hypothesis implies that the exterior vertical functors are equivalences.
	Since $ \Ecal $ is stable, $\ConsP(\X;\Ecal)$ is closed under finite limits in $ \Sh(\X;\Ecal) $.
	Thus, \cref{cor:exodromic_implies_universally_presentable} implies that the assumptions of \enumref{lem:recollements}{4} are satisfied.
	It follows that the bottom line is a recollement.
	Since $ \Ecal $ is stable, it is compatible with recollements; therefore, the top line is also a recollement.
	Thus, \enumref{lem:comparison_of_recollements_conservativity}{4} implies that the middle functor is an equivalence as well.
\end{proof}


\subsection{Exodromy with coefficients in \texorpdfstring{$\PrL$}{Prᴸ}}\label{subsec:exodromy_with_coefficients_in_PrL} 

Let $(\X, P)$ be an exodromic stratified \topos.
Recall that we write $\CATinfty$ for the (very large) \category of large \categories.
Working in a sufficiently large Grothendieck universe, $\CATinfty$ is compactly generated.
Therefore, combining \cref{lem:exodromy_with_coefficients} with \cref{lem:exit_path_category_for_compactly_assembled_coefficients_is_automatic}, we obtain an equivalence
\begin{equation}\label{eq:exodromy_large_cats}
	\ConsP(X ; \CATinfty ) \equivalent \Fun(\Piinfty(\X,P), \CATinfty) \period
\end{equation}
In many situations it is convenient to replace $\CATinfty$ by $\PrL$; however, since $\PrL$ is not itself presentable, one needs some extra care.

\begin{definition}
	Let $ (\X,P) $ be a stratified \topos.
	The \category of $\PrL$-valued sheaves on $\X$ is
	\begin{equation*}
		 \Sh(\X; \PrL) \colonequals \Funlim(\X^{\op}, \PrL) \period 
	\end{equation*}
\end{definition}

\begin{observation}
	Recall from \cite[Proposition 5.5.3.13]{HTT} that the forgetful functor $\PrL \to \CATinfty$ preserves limits.
	Since $\Sh(\X;\CATinfty) \colonequals \X \tensor \CATinfty$, \cite[Proposition 4.8.1.17]{HA} supplies a canonical functor
	\begin{equation*}
		\Sh(\X; \PrL) \to \Sh(\X; \CATinfty) \period 
	\end{equation*}
\end{observation}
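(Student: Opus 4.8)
The plan is to realize the asserted functor as post-composition with the forgetful functor $U \colon \PrL \to \CATinfty$. First I would record the two descriptions of the source and target: by definition $\Sh(\X;\PrL) = \Funlim(\X^{\op},\PrL)$ is the \category of limit-preserving functors $\X^{\op} \to \PrL$, while $\Sh(\X;\CATinfty) = \X \tensor \CATinfty$. Passing to a Grothendieck universe large enough that $\CATinfty$ is presentable (indeed compactly generated, as recalled just above), \HA{Proposition}{4.8.1.17} identifies $\X \tensor \CATinfty$ with the full subcategory $\Fun^{\mathrm R}(\X^{\op},\CATinfty) \subset \Fun(\X^{\op},\CATinfty)$ of functors that admit a left adjoint. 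Because $\X$ is presentable, the representability theorem \HTT{Proposition}{5.5.2.2}, applied pointwise via the usual pointwise criterion for the existence of an adjoint, shows that a functor $\X^{\op} \to \CATinfty$ admits a left adjoint as soon as it preserves small limits; hence this subcategory is exactly $\Funlim(\X^{\op},\CATinfty)$.

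With both sides so described, I would define the functor by the assignment $F \mapsto U \circ F$. Since $U$ preserves limits by \HTT{Proposition}{5.5.3.13} and a composite of limit-preserving functors preserves limits, each $F \in \Funlim(\X^{\op},\PrL)$ is carried to a limit-preserving functor $U \circ F \in \Funlim(\X^{\op},\CATinfty) \equivalent \X \tensor \CATinfty$. Functoriality in $F$ is automatic, as post-composition with a fixed functor induces a functor of functor \categories; this produces the desired functor $\Sh(\X;\PrL) \to \Sh(\X;\CATinfty)$.

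The only genuine subtlety, and the step I would be most careful about, is the universe bookkeeping: one must fix a single enlargement in which $\CATinfty$ is presentable, in which $\PrL$ and $\X \tensor \CATinfty$ both live, and in which the pointwise application of \HTT{Proposition}{5.5.2.2} is valid, so that the identification $\X \tensor \CATinfty \equivalent \Funlim(\X^{\op},\CATinfty)$ and the post-composition construction take place in the same ambient setting. No accessibility hypothesis beyond limit-preservation is needed, precisely because the source $\X^{\op}$ arises from a presentable \category and representability is therefore automatic.
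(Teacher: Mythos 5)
Your proposal is correct and matches the intended argument: the functor is post-composition with the limit-preserving forgetful functor $\PrL \to \CATinfty$, using \HA{Proposition}{4.8.1.17} to identify $\X \tensor \CATinfty$ with $\Funlim(\X^{\op},\CATinfty)$. The detour through right-adjoint functors and \HTT{Proposition}{5.5.2.2} is harmless but unnecessary, since \HA{Proposition}{4.8.1.17} already states the identification directly in terms of limit-preserving functors.
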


\begin{definition}
	Let $(\X, P)$ be a stratified \topos.
	The \category of \defn{$\PrL$-valued $P$-constructible sheaves on $\X$} is the fiber product
	\begin{equation*}
		\ConsP(\X;\PrL) \colonequals \Sh(\X; \PrL) \crosslimits_{\Sh(\X;\CATinfty)} \ConsP(\X;\CATinfty) \period 
	\end{equation*}
\end{definition}

Although the above definition might seem ad hoc (because the restriction to strata are computed in $\CATinfty$ rather than in $\PrL$), it is justified by the following result:

\begin{proposition} \label{prop:exodromy_PrL}
	Let $ (\X,P) $ be an exodromic stratified \topos.
	Then the equivalence \eqref{eq:exodromy_large_cats} induces an adjoint equivalence
	\begin{equation*}
		\Phi \colon \ConsP(\X;\PrL) \leftrightarrows \Fun(\Piinfty(\X,P), \PrL) \colon \Psi \period 
	\end{equation*}
\end{proposition}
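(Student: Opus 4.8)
The plan is to bootstrap from the $\CATinfty$-valued equivalence \eqref{eq:exodromy_large_cats}, which is available for every exodromic $(\X,P)$ by \Cref{lem:exit_path_category_for_compactly_assembled_coefficients_is_automatic}, and to show that the operation of passing to $\PrL$-valued objects is compatible on both sides of that equivalence. Throughout, write $ \cK \colonequals \Piinfty(\X,P) $. The two properties of the inclusion $ \PrL \subset \CATinfty $ I would use are that it preserves small limits \cite[Proposition 5.5.3.13]{HTT} and that it is conservative, since a left adjoint between presentable \categories that is an equivalence of underlying \categories is automatically an equivalence in $ \PrL $. Postcomposition with $ \PrL \to \CATinfty $ therefore induces forgetful functors
\[ v \colon \Sh(\X;\PrL) \to \Sh(\X;\CATinfty) \qquad \text{and} \qquad u \colon \Fun(\cK,\PrL) \to \Fun(\cK,\CATinfty), \]
both of which are conservative because equivalences of functors are detected pointwise.

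First I would unwind the defining fiber product. Since the inclusion $ \ConsP(\X;\CATinfty) \subset \Sh(\X;\CATinfty) $ is fully faithful, the pullback defining $ \ConsP(\X;\PrL) $ realizes it as the full subcategory of $ \Sh(\X;\PrL) = \Funlim(\X^{\op},\PrL) $ spanned by those limit-preserving $ \PrL $-valued functors whose image under $ v $ is $ P $-constructible. Using \eqref{eq:exodromy_large_cats} to replace $ \ConsP(\X;\CATinfty) $ by $ \Fun(\cK,\CATinfty) $, the proposition becomes the assertion that the canonical square with vertices $ \Fun(\cK,\PrL) $, $ \Sh(\X;\PrL) $, $ \Fun(\cK,\CATinfty) $, $ \Sh(\X;\CATinfty) $ — vertical maps $ u $ and $ v $, lower horizontal map the composite of the inverse of \eqref{eq:exodromy_large_cats} with the inclusion of constructible sheaves, and upper horizontal map a $ \PrL $-valued exodromy functor $ \Psi $ still to be produced — is cartesian.

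To construct the two functors, I would use that $(\X,P)$ exodromic forces, via \Cref{lem:exodromy_implies_monodromy} and \Cref{cor:stability_under_pulling_back_to_locally_closed_subposets}, every stratum $ \X_p $ to be monodromic and every pullback $ \iupperstar_p \colon \ConsP(\X) \to \LC(\X_p) $ to be both a left and a right adjoint; more generally \Cref{thm:all_morphisms_are_exodromic} makes all structural pullbacks preserve limits and colimits. Hence the stalk functors, computed by these bi-adjoint pullbacks, send a $ \PrL $-valued constructible sheaf to presentable \categories and send specialization maps to left adjoints; recording these stalks yields $ \Phi \colon \ConsP(\X;\PrL) \to \Fun(\cK,\PrL) $ lifting the $ \CATinfty $-valued stalk equivalence \eqref{eq:exodromy_large_cats} (whose compatibility with pullback to strata is \Cref{obs:meaning_of_respecting_exit-paths}). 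In the reverse direction, sections of a constructible sheaf over an open are limits of stalks along exit paths; since $ \PrL \to \CATinfty $ preserves limits and, by \Cref{cor:exodromic_implies_universally_presentable}, these limits remain presentable, the same recipe applied to a functor $ \cK \to \PrL $ produces a limit-preserving $ \PrL $-valued sheaf, giving $ \Psi $ lifting the lower horizontal map.

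Finally I would check the adjoint equivalence. After applying the conservative functors $ u $ and $ v $, the composites $ \Phi\Psi $ and $ \Psi\Phi $ become the identities by \eqref{eq:exodromy_large_cats}; the unit and counit are natural transformations of $ \PrL $-valued functors whose images under $ u $, $ v $ are equivalences, so conservativity upgrades them to equivalences. I expect the main obstacle to be exactly the failure of $ u $ to be fully faithful: one cannot simply restrict \eqref{eq:exodromy_large_cats} along $ u $ and $ v $, because a natural transformation of $ \CATinfty $-valued functors need not have colimit-preserving components. The crux is thus to match the two ``$ \PrL $-ness'' conditions — presentability of values together with colimit-preservation of transition maps — across \eqref{eq:exodromy_large_cats}, that is, to show a constructible $ \CATinfty $-valued sheaf lifts to $ \PrL $ if and only if the corresponding functor on $ \cK $ does, and likewise on morphisms. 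I would establish this by checking both conditions stalkwise, using the naturality of \eqref{eq:exodromy_large_cats} in pullback to strata together with the fact that a section over an open and its stalks determine one another through limits preserved by $ \PrL \to \CATinfty $. Once this matching holds, the square above is cartesian and $ \Phi, \Psi $ are mutually inverse, as desired.
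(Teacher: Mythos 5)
Your proposal is correct and follows essentially the same route as the paper: bootstrap from the $\CATinfty$-valued equivalence \eqref{eq:exodromy_large_cats}, observe that $\Phi$ lands in $\PrL$-valued functors because it is evaluation at atomic objects of a $\PrL$-valued sheaf, that $\Psi$ lands in $\PrL$-valued sheaves because it is computed by limits and $\PrL \to \CATinfty$ preserves limits (\HTT{Proposition}{5.5.3.13}), and finally upgrade the unit and counit using that the forgetful functor $\PrL \to \CATinfty$ is conservative and full on equivalences. The paper's write-up is slightly more economical in that it realizes $\Phi$ and $\Psi$ directly as restriction and right Kan extension along the inclusion $\Piinfty(\X,P) \hookrightarrow \ConsP(\X)^{\op}$, which sidesteps your stalkwise matching of lifting conditions on general morphisms, but the underlying mechanism is identical.
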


\begin{proof}
	Under the identification
	\begin{equation*}
		\ConsP(\X;\CATinfty) \equivalent \ConsP(\X) \tensor \CATinfty \equivalent \Funlim(\ConsP(\X)^{\op}, \CATinfty) \comma 
	\end{equation*}
	the equivalence \eqref{eq:exodromy_large_cats} is realized by the functor
	\begin{equation*}
		\Phi \colon \Funlim(\ConsP(\X)^{\op}, \CATinfty) \to \Fun(\Piinfty(\X,P), \CATinfty)  
	\end{equation*}
	given by restriction along the inclusion $\Piinfty(\X,P) \hookrightarrow \ConsP(\X)^{\op}$.
	The inverse of $ \Phi $ is the functor
	\begin{equation*}
		\Psi \colon \Fun(\Piinfty(\X,P), \CATinfty) \to \Funlim(\ConsP(\X)^{\op}, \CATinfty) 
	\end{equation*}
	given by right Kan extension along the same inclusion.
	Consider the composite
	\begin{equation*}
		\begin{tikzcd}
			\ConsP(\X;\PrL) \arrow[r] & \ConsP(\X;\CATinfty) \arrow[r, "\Phi"] & \Fun(\Piinfty(\X,P), \CATinfty) \period
		\end{tikzcd} 
	\end{equation*}
	Unraveling the definitions, we see that this functor takes $F \in \ConsP(\X;\PrL)$ seen as a limit-preserving functor
	\begin{equation*}
		F \colon \ConsP(\X)^{\op} \to \PrL 
	\end{equation*}
	to the restriction of $ F $ to $\Piinfty(\X,P)$.
	In particular, this composite factors through $\Fun(\Piinfty(\X,P), \PrL)$.
	Committing a slight abuse of notation, we still denote the resulting functor as
	\begin{equation*}
		\Phi \colon \ConsP(\X;\PrL) \to \Fun(\Piinfty(\X,P), \PrL) \period 
	\end{equation*}
	Similarly, since the forgetful functor $\PrL \to \CATinfty$ preserves limits by \cite[Proposition 5.5.3.13]{HTT} we see that $\Psi$ induces a well defined functor
	\begin{equation*}
		\Psi \colon \Fun(\Piinfty(\X,P), \PrL) \to \ConsP(\X; \PrL) \period 
	\end{equation*}
	Since the pair $(\Phi, \Psi)$ is an adjoint equivalence and the forgetful functor $\PrL \to \CATinfty$ is faithful and full on equivalences, we deduce that unit and counits at the level of $\CATinfty$ induce a unit and a counit transformation at the level of $\PrL$, and therefore that they form an adjoint equivalence.
\end{proof}

\begin{corollary}\label{cor:exodromy_PrL_functoriality}
	Let $\flowerstar \colon (\X,P) \to (\Y,Q)$ be a morphism of exodromic stratified \topoi.
	Then the functor
	\begin{equation*}
		\fupperstar \colon \ConsQ(\Y; \CATinfty) \to \ConsP(\X;\CATinfty)
	\end{equation*}
	induces a well defined functor
	\begin{equation*}
		 \fupperstar \colon \ConsQ(\Y;\PrL) \to \ConsP(\X;\PrL) 
	\end{equation*}
	making the square
	\begin{equation*}
		\begin{tikzcd}[sep=3em]
			\ConsQ(\Y;\PrL) \arrow[r] \arrow[d, "\fupperstar"'] & \Fun(\Piinfty(\Y,Q), \PrL) \arrow[d, "- \of \Piinfty(f)"] \\
			\ConsP(\X;\PrL) \arrow[r] & \Fun(\Piinfty(\X,P), \PrL)
		\end{tikzcd} 
	\end{equation*}
	commutative.
\end{corollary}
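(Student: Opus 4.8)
The plan is to reduce everything to the exodromy equivalence with $\CATinfty$-coefficients and to the fact that, under that equivalence, $\fupperstar$ becomes precomposition with $\Piinfty(f)$, which manifestly preserves the property of taking values in $\PrL$. First I would record the inputs. By \Cref{thm:all_morphisms_are_exodromic} the morphism $f$ is exodromic, so it induces $\Piinfty(f) = f^{\ex} \colon \Piinfty(\X,P) \to \Piinfty(\Y,Q)$. Working in a large enough universe, $\CATinfty$ is compactly generated, so \Cref{lem:exit_path_category_for_compactly_assembled_coefficients_is_automatic} shows that $(\X,P)$ and $(\Y,Q)$ are $\CATinfty$-exodromic, and \Cref{lem:exodromy_with_coefficients} supplies the equivalences \eqref{eq:exodromy_large_cats} for both $\X$ and $\Y$. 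The key point I would establish is that, under these equivalences, the functor $\fupperstar \colon \ConsQ(\Y;\CATinfty) \to \ConsP(\X;\CATinfty)$ is identified with precomposition $- \of \Piinfty(f)$. This is the $\CATinfty$-coefficient version of \Cref{obs:meaning_of_respecting_exit-paths}: since $f$ is exodromic, $\fupperstar$ on $\Spc$-coefficients is the atomic functor restricting to $f^{\ex}$, and tensoring the commutative square of \Cref{obs:meaning_of_respecting_exit-paths} with $\CATinfty$ — using $\Fun(\Piinfty(-),\Spc) \tensor \CATinfty \equivalent \Fun(\Piinfty(-),\CATinfty)$ and $\ConsP(\X) \tensor \CATinfty \equivalent \ConsP(\X;\CATinfty)$ — turns restriction along $f^{\ex}$ into restriction along $\Piinfty(f)$ at the level of $\CATinfty$-valued functors. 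Equivalently, this is immediate from the description of $\Phi$ in \Cref{prop:exodromy_PrL} as restriction along $\Piinfty(\X,P) \hookrightarrow \ConsP(\X)^{\op}$, together with the compatibility of that restriction with the atomic functor $\fupperstar$.

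Next I would transport this to $\PrL$. By \Cref{prop:exodromy_PrL}, $\Phi$ restricts to equivalences $\ConsP(\X;\PrL) \equivalent \Fun(\Piinfty(\X,P),\PrL)$ and likewise for $\Y$, compatibly with the projections to the $\CATinfty$-level (the subcategory inclusion $\PrL \hookrightarrow \CATinfty$ corresponds to postcomposition). Under this, an object of $\ConsQ(\Y;\CATinfty)$, viewed as a functor $\Piinfty(\Y,Q) \to \CATinfty$, lies in the image of $\ConsQ(\Y;\PrL)$ exactly when it factors through $\PrL$, i.e. its values are presentable and the image of every morphism of $\Piinfty(\Y,Q)$ is a left adjoint; a morphism lies in $\ConsQ(\Y;\PrL)$ exactly when it is a pointwise $\PrL$-morphism. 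Since precomposition with $\Piinfty(f)$ merely reindexes a diagram, $G \of \Piinfty(f)$ factors through $\PrL$ whenever $G$ does, and the analogous statement holds for morphisms. Hence $\fupperstar = - \of \Piinfty(f)$ carries $\ConsQ(\Y;\PrL)$ into $\ConsP(\X;\PrL)$, which is precisely the assertion that $\fupperstar$ induces a well-defined functor on $\PrL$-valued constructible sheaves. The resulting functor is visibly the restriction of $\fupperstar$ at the $\CATinfty$-level, because the $\PrL$-level equivalences $\Phi$ are restrictions of the $\CATinfty$-level ones; and the square in the statement commutes because both routes compute $- \of \Piinfty(f)$, compatibly via $\Phi$ with the $\CATinfty$-level commutativity from the previous step.

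The main obstacle is the step identifying $\fupperstar$ with precomposition $- \of \Piinfty(f)$ at the $\CATinfty$-level, i.e. making precise the functoriality of the exodromy equivalence with $\CATinfty$-coefficients; once this is in hand the passage to $\PrL$ is formal, since precomposition trivially preserves the pointwise-presentability and pointwise-left-adjointness conditions cutting out $\PrL$-valued sheaves. A secondary subtlety worth flagging is that one should not attempt to construct a pullback on all of $\Sh(-;\PrL)$: such a functor need not exist, since the pushforward $- \of (\fupperstar)^{\op}$ on $\PrL$-valued sheaves has no reason to admit a limit-preserving left adjoint. It is therefore essential that the argument remain inside the constructible subcategories, where exodromy applies and precomposition along $\Piinfty(f)$ provides the required functor directly.
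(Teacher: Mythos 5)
Your proposal is correct and follows essentially the same route as the paper's proof: invoke \Cref{thm:all_morphisms_are_exodromic}, use compact generation of $\CATinfty$ together with \Cref{lem:exit_path_category_for_compactly_assembled_coefficients_is_automatic} to identify $\fupperstar$ with precomposition along $\Piinfty(f)$ at the $\CATinfty$-level, and then transport to $\PrL$ via \Cref{prop:exodromy_PrL} since precomposition visibly lifts. Your closing caveat about not attempting a pullback on all of $\Sh(-;\PrL)$ matches the warning the paper records immediately after the statement.
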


\begin{proof}
	Recall from \cref{thm:all_morphisms_are_exodromic} that $\flowerstar$ is exodromic.
	Since $\CATinfty$ is compactly generated, it follows from \cref{lem:exit_path_category_for_compactly_assembled_coefficients_is_automatic} that the diagram
	\begin{equation*}
		\begin{tikzcd}[sep=3em]
			\ConsQ(\Y;\CATinfty) \arrow[r] \arrow[d, "\fupperstar"'] & \Fun(\Piinfty(\Y,Q), \CATinfty) \arrow[d, "- \of \Piinfty(f)"] \\
			\ConsP(\X;\CATinfty) \arrow[r] & \Fun(\Piinfty(\X,P), \CATinfty)
		\end{tikzcd} 
	\end{equation*}
	commutes.
	Since the functor $ - \of \Piinfty(f) $ clearly lifts to a functor
	\begin{equation*}
		- \of \Piinfty(f) \colon \Fun(\Piinfty(\Y,Q), \PrL) \to \Fun(\Piinfty(\X,P),\PrL) \comma 
	\end{equation*}
	it follows from \cref{prop:exodromy_PrL} that the same is true of $\fupperstar$.
\end{proof}

\begin{warning}
	The use of constructible sheaves in \Cref{cor:exodromy_PrL_functoriality} is fundamental.
	For instance, the functor
	\begin{equation*}
		\fupperstar \colon \Sh(\Y;\CATinfty) \to \Sh(\X;\CATinfty)
	\end{equation*}
	generally does not carry $\Sh(\Y;\PrL)$ to $\Sh(\X;\PrL)$.
\end{warning}

\begin{notation}
	Let $ \PrLomega \subset \PrL $ for the non-full subcategory with objects compactly generated presentable \categories and morphisms left adjoints that preserve compact objects.
\end{notation}

\begin{nul}
	Recall from \cite[Proposition 2.8.4]{Ayoub_Vezzani_Six_functors_rigid_analytic} that $\PrLomega$ is compactly generated.
	In particular for an exodromic stratified \topos $ (\X,P) $, \cref{lem:exodromy_with_coefficients} with \cref{lem:exit_path_category_for_compactly_assembled_coefficients_is_automatic} provide an adjoint equivalence
	\begin{equation*}
		\Phi^{(\upomega)} \colon \ConsP(\X;\PrLomega) \leftrightarrows \Fun(\Piinfty(X,P), \PrLomega) \colon \Psi^{(\upomega)} \period 
	\end{equation*}
	The natural functor $\PrLomega \to \PrL$ induces by composition a map
	\begin{equation*}
		j \colon \Fun(\Piinfty(\X,P), \PrLomega) \to \Fun(\Piinfty(\X,P), \PrL) \period 
	\end{equation*}
	However, since the functor $ \PrLomega \to \PrL $ does \emph{not} preserve limits, we do not get an induced functor
	\begin{equation*}
		\Sh(\X;\PrLomega) \to \Sh(\X;\PrL) \period 
	\end{equation*}
	On the other hand, we have:
\end{nul}

\begin{corollary}
	There exists a canonical functor
	\begin{equation*}
		\ConsP(\X;\PrLomega) \to \ConsP(\X;\PrL) 
	\end{equation*}
	which makes the square
	\begin{equation*}
		\begin{tikzcd}[sep=3em]
			\ConsP(\X;\PrLomega) \arrow[r, "\Phi_{X,P}^{(\upomega)}"] \arrow[d] & \Fun(\Piinfty(\X,P), \PrLomega) \arrow[d, "j"] \\
			\ConsP(\X;\PrL) \arrow[r, "\Phi_{X,P}"'] & \Fun(\Piinfty(\X,P), \PrL)
		\end{tikzcd} 
	\end{equation*}
	commute.
\end{corollary}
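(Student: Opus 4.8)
The plan is to construct the left-hand vertical functor by transporting the postcomposition map $j$ through the two exodromy equivalences, making no attempt to produce a functor directly at the level of $\Sh(\X;-)$ (which is impossible, as flagged in the text). First I would record that $\PrLomega$ is compactly generated by \cite[Proposition 2.8.4]{Ayoub_Vezzani_Six_functors_rigid_analytic}, hence compactly assembled; thus \Cref{lem:exit_path_category_for_compactly_assembled_coefficients_is_automatic} shows $(\X,P)$ is $\PrLomega$-exodromic, and \Cref{lem:exodromy_with_coefficients} supplies the adjoint equivalence $\Phi^{(\upomega)} \colon \ConsP(\X;\PrLomega) \equivalent \Fun(\Piinfty(\X,P),\PrLomega)$ with inverse $\Psi^{(\upomega)}$. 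On the $\PrL$ side, \Cref{prop:exodromy_PrL} provides the adjoint equivalence $\Phi_{X,P} \colon \ConsP(\X;\PrL) \equivalent \Fun(\Piinfty(\X,P),\PrL)$ with inverse $\Psi$.

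Then I would simply define the desired functor to be the composite
\[
\ConsP(\X;\PrLomega) \xrightarrow{\ \Phi^{(\upomega)}\ } \Fun(\Piinfty(\X,P),\PrLomega) \xrightarrow{\ j\ } \Fun(\Piinfty(\X,P),\PrL) \xrightarrow{\ \Psi\ } \ConsP(\X;\PrL).
\]
This is legitimate precisely because $j$ is mere postcomposition with $\PrLomega \to \PrL$ inside ordinary functor \categories, so the failure of $\PrLomega \to \PrL$ to preserve limits — exactly the obstruction to a functor $\Sh(\X;\PrLomega) \to \Sh(\X;\PrL)$ — never enters the construction; and because $\Psi$, by \Cref{prop:exodromy_PrL}, genuinely lands in the fiber product $\ConsP(\X;\PrL)$ rather than merely in $\Sh(\X;\PrL)$. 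Commutativity of the square is then formal: since $\Phi_{X,P}\circ\Psi \equivalent \id$, we obtain $\Phi_{X,P}\circ\bigl(\Psi\circ j\circ\Phi^{(\upomega)}\bigr) \equivalent j\circ\Phi^{(\upomega)}$, which is exactly the asserted identity $\Phi_{X,P}\circ(\text{left}) \equivalent j\circ\Phi_{X,P}^{(\upomega)}$.

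The one point that deserves care — and which I expect to be the main obstacle — is the word \emph{canonical}: I must check that this composite does not secretly depend on the auxiliary choice of inverse $\Psi$, and that it is compatible with the forgetful structure down to $\CATinfty$. I would address this through the pullback description $\ConsP(\X;\PrL) = \Sh(\X;\PrL)\crosslimits_{\Sh(\X;\CATinfty)}\ConsP(\X;\CATinfty)$. The composite above lifts the already-canonical functor $\ConsP(\X;\PrLomega)\to\ConsP(\X;\CATinfty)$ induced by $\PrLomega\to\CATinfty$ together with the equivalence \eqref{eq:exodromy_large_cats}, the required compatibility reducing to the commuting triangle $\PrLomega\to\PrL\to\CATinfty$ of forgetful functors. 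By the universal property of the fiber product this pins the functor down uniquely, so that no genuine choice is made and the construction is canonical in the intended sense.
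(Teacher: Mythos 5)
Your construction is exactly the paper's: the left vertical functor is defined as the composite $\Psi_{X,P}\circ j\circ\Phi_{X,P}^{(\upomega)}$, and commutativity of the square follows formally from $\Phi_{X,P}\circ\Psi_{X,P}\simeq\id$. The extra paragraph on canonicity via the fiber-product description is additional care the paper does not spell out, but it does not change the argument.
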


\begin{proof}
	Thanks to \cref{prop:exodromy_PrL}, it is enough to define the left vertical map as $\Psi_{X,P} \circ j \circ \Phi_{X,P}^{(\upomega)}$.
\end{proof}


\section{Applications \& examples}\label{sec:applications_and_examples}

In this section, we apply the stability properties of \cref{sec:stability_properties_of_exodromic_stratified_topoi} to stratified \topoi arising from topology.
In \cref{subsec:consequences_for_stratified_topological_spaces}, we introduce the topological context for our results and state the stability theorem in this context (\Cref{thm:stability_properties_of_exodomic_stratified_spaces}).
Importantly, as a consequence of \Cref{thm:stability_properties_of_stratified_topoi} and the exodromy theorem for conically stratified spaces \cite{arXiv:2211.05004}, we deduce that for any stratified space $ (X,P) $ that \textit{locally} admits a conical refinement, the stratified \topos \smash{$ (\Shhyp(X),P) $} is exodromic (see \Cref{prop:properties_of_locally_conically_refineable_stratified_spaces}).
Many examples fall into this framework; see \cref{subsec:locally_conically_refineable_stratifications-examples}.
Of particular interest are stratified spaces coming from subanalytic geometry and real algebraic geometry. 
Under mild assumptions, we prove that in these geometric settings, the exit-path \categories are finite (\Cref{thm:subanalytic_stratified_spaces_are_conically_refineable,thm:algebraic_stratified_spaces_are_conically_refineable_and_categorically_finite}).
For use in a future paper, in \cref{subsec:criterion_for constructibility_with_respect_to_a_coarsening}, given an exodromic stratified \topos $ (\X,R) $ and map of posets $ \phi \colon \fromto{R}{P} $, we provide a recognition criterion for when $ R $-constructible objects are $ P $-constructible.
In \cref{subsec:relationship_to_Luries_simplicial_model_for_exit-paths}, we conclude by posing some questions about the relationship between our work and Lurie's simplicial model for exit-path \categories in the setting of conically refineable stratifications.


\subsection{Consequences for stratified topological spaces}\label{subsec:consequences_for_stratified_topological_spaces}

To fix a topological context to apply \Cref{thm:stability_properties_of_stratified_topoi}, we make the following definition.

\begin{definition}\label{def:exodromic_stratified_space}
	Let $ \Ecal $ be a presentable \category.
	We say that a stratified topological space $ s \colon \fromto{X}{P} $ is \defn{$ \Ecal $-exodromic} if the stratified \topos
	\begin{equation*}
		\slowerstarhyp \colon \fromto{\Shhyp(X)}{\Fun(P,\Spc)}
	\end{equation*}
	is $ \Ecal $-exodromic.
	In this case, we write
	\begin{equation*}
		\Piinfty(X,P) \colonequals \Piinfty(\Shhyp(X),P) \period
	\end{equation*}
\end{definition}

\noindent We also have the topological version of \Cref{def:categorical_compactness_and_finiteness}:

\begin{definition}\label{def:categorical_compactness_and_finiteness_in_topology}
	Let $ (X,P) $ be an exodromic stratified space.
	We say that $ (X,P) $ is:
	\begin{enumerate}
		\item \defn{Categorically finite} if $ \Piinfty(X,P) $ is a finite object of $ \Catinfty $. 
		(See \Cref{recollection_finite_category}.)

		\item \defn{Categorically compact} if $ \Piinfty(X,P) $ is a compact object of $ \Catinfty $.
	\end{enumerate}
\end{definition}

The following class of presentable \categories is well-behaved from the perspective of exodromy in topology:

\begin{definition}\label{def:admissible_category}
	Let $ P $ be a poset.
	We say that a presentable \category $ \Ecal $ is \defn{$ P $-admissible} if for every conically $ P $-stratified space $ (X,P) $ the hyperrestriction functors
	\begin{equation*}
		\left\{ \iupperstarhyp_p \colon \Shhyp(X;\Ecal) \to \Shhyp(X_p;\Ecal) \right\}_{p \in P}
	\end{equation*}
	are jointly conservative.
	We say that a presentable \category $ \Ecal $ is \defn{admissible} if for every poset $ P $, the \category $ \Ecal $ is $ P $-admissible.
\end{definition}

\begin{example}[{\cites[Lemma 5.21]{arXiv:2010.06473}[Lemma 2.12]{arXiv:2108.03545}}]
	Let $ \Ecal $ be a presentable \category.
	\begin{enumerate}
		\item If $ \Ecal $ is compactly assembled, then $ \Ecal $ is admissible.

		\item If $ \Ecal $ is stable or \atopos, then for every noetherian poset $ P $, the \category $ \Ecal $ is $ P $-admissible.
	\end{enumerate}
\end{example}

\begin{example}[{\cite[Theorem 5.17 \& Remark 5.18]{arXiv:2211.05004}}]\label{ex:conically_stratified_spaces_admit_exit-path_categories_for_admissible_coefficients}
	Let $ (X,P) $ be a conically stratified space with locally weakly contractible strata and let $ \Ecal $ be a $ P $-admissible \category.
	Then $ (X,P) $ is $ \Ecal $-exodromic.
\end{example}

When the strata of $ (X,P) $ are locally weakly contractible, we get a particularly nice description of the objects of the exit-path \category:

\begin{observation}[{(the objects of $ \Piinfty(X,P)$)}]\label{obs:objects_of_Exit_with_locally_weakly_contractible_strata}
	Let $ (X,P) $ be an exodromic stratified space with locally weakly contractible strata.
	Combining \enumref{ex:monodrom_in_topology}{1} with \Cref{obs:objects_of_Exit}, we see that there is a natural identification 
	\begin{equation*}
		\Piinfty(X,P)^{\equivalent} \equivalent \coprod_{p \in P} \Piinfty(X_p)
	\end{equation*}
	between the maximal \subgroupoid of $ \Piinfty(\X,P) $ and the coproduct of the underlying homotopy types of the strata of $ (X,P) $.

	Hence each point $ x \in X $ gives rise to an object \smash{$ [x] \in \ConsPhyp(X) $}, and every object of $ \Piinfty(X,P) $ is of this form.
	Moreover, it follows from the functoriality of the monodromy equivalence that the functor
	\begin{equation*}
		\fromto{\ConsPhyp(X)}{\Spc}
	\end{equation*}
	corepresented by $ [x] $ is equivalent to the stalk functor $ \xupperstar \colon \fromto{\ConsPhyp(X)}{\Spc} $.
	As a consequence, given a $ P $-hyperconstructible hypersheaf $ F $, every morphism $ \fromto{[x]}{[y]} $ gives rise to a \textit{specialization map} $ \fromto{\xupperstar F}{\yupperstar F} $ on stalks.
\end{observation}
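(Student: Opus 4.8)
The plan is to verify the four assertions in turn, each by substituting results established earlier. First I would identify the maximal \subgroupoid. Since $(X,P)$ is exodromic, \Cref{obs:objects_of_Exit} already provides a natural equivalence between $\Piinfty(X,P)^{\equivalent}$ and the coproduct $\coprod_{p \in P} \Piinfty(\X_p)$ of the shapes of the strata $\X_p = \Shhyp(X)_{\{p\}}$. It then suffices to identify each shape $\Piinfty(\X_p)$ with the homotopy type of $X_p$: \Cref{lem:sheaves_on_strata} gives an equivalence $\Shhyp(X_p) \equivalent \X_p$, and since each stratum is locally weakly contractible, \enumref{ex:monodrom_in_topology}{1} identifies $\Piinfty(\Shhyp(X_p))$ with the underlying homotopy type of $X_p$. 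Substituting gives the stated identification.

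Granting this, the description of objects is immediate. A point $x \in X$ lies in a unique stratum $X_p$ and defines a point of the homotopy type $\Piinfty(\X_p)$, hence a connected component of $\Piinfty(X,P)^{\equivalent}$, i.e. an object $[x]$ of $\Piinfty(X,P)$; by \Cref{ntn:Ccal^ex} this is the same datum as an atomic object of $\ConsPhyp(X)$. Essential surjectivity of the coproduct equivalence shows conversely that every object of $\Piinfty(X,P)$ is equivalent to some $[x]$.

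The crux is the identification of the corepresented functor with the stalk. The exodromy equivalence $\ConsPhyp(X) \equivalent \Fun(\Piinfty(X,P),\Spc)$ is realized by the restricted Yoneda functor of \Cref{rem:restricted_yoneda}; under it the functor corepresented by the atomic object $[x]$ becomes evaluation at $[x] \in \Piinfty(X,P)$ (cf. \Cref{atomic_presheaf}). It therefore remains to identify evaluation at $[x]$ with $\xupperstar$. I would factor $\xupperstar$ as the restriction $\iupperstar_p \colon \fromto{\ConsPhyp(X)}{\LC(\X_p)}$ followed by the stalk at $x$ within the stratum. \Cref{cor:restricting_to_strata_is_jointly_conservative}, together with \enumref{cor:stability_under_pulling_back_to_locally_closed_subposets}{3}, identifies $\iupperstar_p$ with precomposition along the inclusion $\incto{\Piinfty(\X_p) \equivalent \Piinfty(X,P)_p}{\Piinfty(X,P)}$, while the functoriality of monodromy (\Cref{rec:monodromy}) identifies the stalk at $x$ in $\LC(\X_p) \equivalent \Fun(\Piinfty(\X_p),\Spc)$ with evaluation at $[x] \in \Piinfty(\X_p)$. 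Composing, $\xupperstar$ is evaluation at the image of $[x]$ in $\Piinfty(X,P)$, as claimed. The specialization maps then follow formally: a morphism $\fromto{[x]}{[y]}$ in $\Piinfty(X,P)$ is, by definition, a morphism of atomic objects $\fromto{[y]}{[x]}$ in $\ConsPhyp(X)$, and precomposition gives a natural transformation $\fromto{\xupperstar}{\yupperstar}$ whose value at a hypersheaf $F$ is the desired map $\fromto{\xupperstar F}{\yupperstar F}$.

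The main obstacle is the bookkeeping in this last identification: one must track the several $(-)^{\op}$ hidden in $\Piinfty(X,P) = \ConsPhyp(X)^{\ex}$ and in the restricted Yoneda embedding, and check that the monodromy identification of stalks with point-evaluation is compatible, naturally in the sheaf, with the identification of $\iupperstar_p$ with restriction to the fiber $\Piinfty(X,P)_p$. Once these compatibilities are pinned down, every remaining step is a direct appeal to a prior result.
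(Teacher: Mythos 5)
Your proposal is correct and follows essentially the same route the paper intends: the paper's Observation cites exactly \enumref{ex:monodrom_in_topology}{1}, \Cref{obs:objects_of_Exit}, and ``functoriality of the monodromy equivalence,'' and your argument is a faithful unpacking of those pointers (including the factorization of $\xupperstar$ through $\iupperstar_p$, which is the content of the proof of \Cref{cor:restricting_to_strata_is_jointly_conservative}). The only addition worth noting is your explicit appeal to \Cref{lem:sheaves_on_strata} to identify $\X_p$ with $\Shhyp(X_p)$, which the paper leaves implicit.
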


The stability theorem for exodromic stratified \topoi has the following topological consequence:

\begin{theorem}[(stability properties of exodromic stratified spaces)]\label{thm:stability_properties_of_exodomic_stratified_spaces}
	\noindent 
	\begin{enumerate}
		\item\label{thm:stability_properties_of_exodomic_stratified_spaces.2} \emph{Stability under pulling back to locally closed subposets:} If $ (X,P) $ is an exodromic stratified space, then for each locally closed subposet $ S \subset P $, the stratified space $ (X_S, S) $ is exodromic and the induced functor
		\begin{equation*}
			\fromto{\Piinfty(X_S,S)}{\Piinfty(X,P)_S}
		\end{equation*}
		is an equivalence.
		In particular, the induced functor $ \fromto{\Piinfty(X,P)}{P} $ is conservative.

		\item\label{thm:stability_properties_of_exodomic_stratified_spaces.3} \emph{Stability under coarsening and localization formula:} 
		Let $ (X,R) $ be an exodromic stratified space and let $ \phi \colon \fromto{R}{P} $ be a map of posets. 
		Then $ (X,P) $ is exodromic and there is a natural equivalence
		\begin{equation*}
			\equivto{\Piinfty(X,R)[W_P\inv]}{\Piinfty(X,P)} \period 
		\end{equation*}

		\item\label{thm:stability_properties_of_exodomic_stratified_spaces.5} \emph{Functoriality:} The exodromy equivalence is functorial in all stratified maps between exodromic stratified spaces.

		\item\label{thm:stability_properties_of_exodomic_stratified_spaces.4} \emph{van Kampen:}
		Let $ (X,P) $ be a stratified space and let
		\begin{equation*}
			U_\bullet \colon \Deltainjop \to \Top_{/X} 
		\end{equation*}
		be an semi-simplicial étale hypercovering of $ X $.
		If for each $ n \geq 0 $, the stratified space $ (U_n,P) $ is exodromic, then the stratified space $ (X,P) $ is exodromic.
		Moreover, the natural functor
		\begin{equation*}
			\colim_{[n] \in \Deltainjop} \Piinfty(U_n,P) \to \Piinfty(X,P)
		\end{equation*}
		is an equivalence of \categories.

		\item\label{thm:stability_properties_of_exodomic_stratified_spaces.6} \emph{Stability of finiteness/compactness:} Let $ (X,P) $ be a stratified space.
		\begin{enumerate}[{\upshape (a)}]
			\item If $ (X,P) $ is exodromic and categorically finite (resp., compact), then for any locally closed subposet $ S \subset P $, the stratified space $ (X_S,S) $ is exodromic and categorically finite (resp., compact).

			\item Let $ U_1, \ldots, U_n $ be a \emph{finite} open cover of $ X $.
			Assume that each intersection $ (U_{i_1} \intersect \cdots \intersect U_{i_k},P) $ admits an refinement which is exodromic and categorically finite (resp., compact).
			Then $ (X,P) $ is exodromic and categorically finite (resp., compact).
		\end{enumerate}
	\end{enumerate}
\end{theorem}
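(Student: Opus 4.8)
The plan is to deduce each of the five items as the topological shadow of the corresponding statement in the topos-theoretic stability theorem \Cref{thm:stability_properties_of_stratified_topoi}. By \Cref{def:exodromic_stratified_space}, a stratified space $(X,P)$ is exodromic precisely when the stratified \topos $(\Shhyp(X),P)$ is, and $\Piinfty(X,P)$ is by definition $\Piinfty(\Shhyp(X),P)$; hence the assignment $(X,P) \mapsto (\Shhyp(X),P)$ carries exodromic stratified spaces into $\StrTopex$. Everything then follows once the relevant topological operations are matched with their topos-theoretic counterparts. The two pieces of glue I would use are: (i) \Cref{lem:sheaves_on_strata}, which identifies $\Shhyp(X_S)$ with $\Shhyp(X)_S$ as $S$-stratified \topoi; and (ii) for an étale map $U \to X$, the identification $\Shhyp(U) \simeq \Shhyp(X)_{/\tU}$, where $\tU \in \Shhyp(X)$ is the hypersheaf represented by $U$, compatible with the $P$-stratifications induced by $U \to X \to P$.

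Items (1)--(3) are then direct transcriptions. For (1), \Cref{lem:sheaves_on_strata} identifies $(\Shhyp(X_S),S)$ with $(\Shhyp(X)_S,S)$, so \Cref{cor:stability_under_pulling_back_to_locally_closed_subposets} supplies both exodromicity and the equivalence $\Piinfty(X_S,S) \simeq \Piinfty(X,P)_S$, whence conservativity of $\Piinfty(X,P) \to P$. For (2), coarsening the space $(X,R)$ along $\phi \colon R \to P$ is exactly coarsening the stratified \topos $(\Shhyp(X),R)$ — the underlying \topos is unchanged — so \Cref{thm:stability_under_coarsening} yields exodromicity of $(X,P)$ together with the localization formula. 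For (3), a stratified map $f \colon (X,P) \to (Y,Q)$ induces a morphism of stratified \topoi $(\Shhyp(X),P) \to (\Shhyp(Y),Q)$, which is exodromic by \Cref{thm:all_morphisms_are_exodromic}; functoriality of the exit-path \category then comes from restricting the functor $\Piinfty(-,-) \colon \StrTopex \to \Catidem$ along $(X,P) \mapsto (\Shhyp(X),P)$.

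The one item demanding genuine translation work is (4). Given the semi-simplicial étale hypercovering $U_\bullet \colon \Deltainjop \to \Top_{/X}$, I would first pass to the representing hypersheaves $\tU_\bullet \colon \Deltainjop \to \Shhyp(X)$. The defining property of a hypercover in the hypercomplete \topos $\Shhyp(X)$ is precisely that the natural map $\colim_{[n] \in \Deltainjop} \tU_n \to 1_{\Shhyp(X)}$ is an equivalence, which is exactly the colimit hypothesis of the internal van Kampen statement. The glue identification $\Shhyp(U_n) \simeq \Shhyp(X)_{/\tU_n}$ (with stratifications matching because $U_n \to X$ is étale) shows that $(U_n,P)$ being exodromic is the same as $(\Shhyp(X)_{/\tU_n},P)$ being exodromic, so \Cref{cor:van_Kampen_internally} applies verbatim and produces both exodromicity of $(X,P)$ and the equivalence $\colim_{[n]} \Piinfty(U_n,P) \simeq \Piinfty(X,P)$. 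The main obstacle here — the only step not formally contained in \Cref{thm:stability_properties_of_stratified_topoi} — is verifying that an étale hypercovering of the topological space really does furnish a van Kampen colimit diagram in $\Shhyp(X)$ with the correct induced stratifications; this is where \Cref{rec:limits_of_topoi} (all colimits in a \topos are van Kampen) and the étale-slice identification do the work.

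Finally, (5) combines the preceding items with the finiteness/compactness results of \cref{subsec:stability_properties_of_categorical_finiteness_and_compactness}. Part (a) is immediate from (1) together with \Cref{lem:pulling_back_to_a_locally_closed_subposet_preserves_categorical_finiteness_and_compactness}. For part (b), given the finite open cover $U_1,\dots,U_n$, I would translate it into the finite family $\tU_1,\dots,\tU_n \in \Shhyp(X)$ whose coproduct is an effective epimorphism onto $1_{\Shhyp(X)}$, noting that $\Shhyp(X)_{/\tU_{i_1} \times \cdots \times \tU_{i_k}} \simeq \Shhyp(U_{i_1} \cap \cdots \cap U_{i_k})$. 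The hypothesis that each intersection admits an exodromic, categorically finite (resp.\ compact) refinement then feeds through \Cref{prop:categorical_finiteness_and_compactness_are_stable_under_coarsening}, showing each $(\Shhyp(U_{i_1} \cap \cdots \cap U_{i_k}),P)$ is exodromic and categorically finite (resp.\ compact); \Cref{lem:categorical_finiteness_and_compactness_can_be_detected_on_finite_covers} assembles these over the finite cover to conclude.
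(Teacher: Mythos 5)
Your proposal is correct and follows exactly the route the paper takes: each item is deduced as a special case of the corresponding topos-theoretic result (\Cref{cor:stability_under_pulling_back_to_locally_closed_subposets}, \Cref{thm:stability_under_coarsening}, \Cref{thm:all_morphisms_are_exodromic}, \Cref{cor:van_Kampen_internally}, and the finiteness results of \cref{subsec:stability_properties_of_categorical_finiteness_and_compactness}), using \Cref{lem:sheaves_on_strata} and the \'etale-slice identification as the glue. The paper's own proof is a one-line citation of these same results, so your write-up simply makes explicit the translation steps (in particular the fact that a semi-simplicial \'etale hypercovering yields a colimit diagram of representing hypersheaves in the hypercomplete \topos $\Shhyp(X)$) that the paper leaves implicit.
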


\begin{proof}
	Item (1) is a special case of \Cref{cor:stability_under_pulling_back_to_locally_closed_subposets}, item (2) is a special case of \Cref{thm:stability_under_coarsening}, item (3) is a special case of \Cref{thm:all_morphisms_are_exodromic}, item (4) is a special case of \Cref{cor:van_Kampen_internally}, and item (5) follows from \Cref{lem:pulling_back_to_a_locally_closed_subposet_preserves_categorical_finiteness_and_compactness,lem:categorical_finiteness_and_compactness_can_be_detected_on_finite_covers,prop:categorical_finiteness_and_compactness_are_stable_under_coarsening}.
\end{proof}

\noindent Provided $ X $ is also locally weakly contractible, the classifying space of the exit-path \category of $ (X,P) $ coincides with the underlying homotopy type of $ X $:

\begin{corollary}\label{cor:Env_of_Exit_recovers_the_underlying_homotopy_type}
	Let $ (X,P) $ be an exodromic stratified space.
	If $ X $ locally weakly contractible, then the space $ \Env(\Piinfty(X,P)) $ is naturally equivalent to the underlying homotopy type of $ X $. 
\end{corollary}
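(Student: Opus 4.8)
The plan is to reduce the statement to the already-established topos-theoretic identification of the enveloping groupoid of an exit-path \category with the shape of the ambient \topos, and then to invoke the computation of that shape for $\Shhyp(X)$ when $X$ is locally weakly contractible. Concretely, since $(X,P)$ is an exodromic stratified space, by \Cref{def:exodromic_stratified_space} the stratified \topos $(\Shhyp(X),P)$ is exodromic and $\Piinfty(X,P) = \Piinfty(\Shhyp(X),P)$; everything then takes place inside this \topos.

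First I would apply \Cref{thm:stability_under_coarsening} to the unique map of posets $\fromto{P}{\pt}$, taking $\X = \Shhyp(X)$. This coarsens the stratification to the trivial one and exhibits the natural functor $\fromto{\Piinfty(\Shhyp(X),P)}{\Piinfty(\Shhyp(X),\pt)}$ as the localization of $\Piinfty(\Shhyp(X),P)$ at the class $W_{\pt}$ of morphisms sent to equivalences by the composite $\Piinfty(\Shhyp(X),P) \to P \to \pt$. Since the terminal poset $\pt$ carries only identity morphisms, $W_{\pt}$ is the collection of \emph{all} morphisms of $\Piinfty(\Shhyp(X),P)$; by \Cref{ntn:enveloping_groupoids}, localizing \acategory at all of its morphisms computes $\Env$, and $\Piinfty(\Shhyp(X),\pt)$ is the shape $\Piinfty(\Shhyp(X))$. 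This yields a natural equivalence $\equivto{\Env(\Piinfty(X,P))}{\Piinfty(\Shhyp(X))}$.

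Second, I would identify $\Piinfty(\Shhyp(X))$ with the underlying homotopy type of $X$. This is exactly \enumref{ex:monodrom_in_topology}{1}: the hypothesis that $X$ is locally weakly contractible guarantees that $\Shhyp(X)$ is monodromic and that its shape is the underlying homotopy type of $X$. Composing the two equivalences gives the claim. The argument is a direct specialization of existing results, so there is no serious obstacle; the only points requiring care are bookkeeping ones, namely verifying that $W_{\pt}$ is all of $\Mor(\Piinfty(X,P))$ (immediate, as above) and confirming naturality of the asserted equivalence, which follows from the functoriality of $\Env$, the functoriality of the shape $\Piinfty$ on $\RTopmon$, and the functoriality of the homotopy-type assignment.
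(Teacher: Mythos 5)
Your proposal is correct and matches the paper's argument essentially verbatim: the paper likewise applies the coarsening/localization formula (\Cref{thm:stability_under_coarsening}, via \enumref{thm:stability_properties_of_exodomic_stratified_spaces}{3}) to the map $ \fromto{P}{\pt} $, observes that inverting $ W_{\pt} = \Mor(\Piinfty(X,P)) $ computes $ \Env $, and then identifies the shape of $ \Shhyp(X) $ with the underlying homotopy type of $ X $ via \enumref{ex:monodrom_in_topology}{1}. No gaps.
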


\begin{proof}
	Note that \enumref{thm:stability_properties_of_exodomic_stratified_spaces}{3} shows that there is a natural equivalence
	\begin{equation*}
		\equivto{\Env(\Piinfty(X,P))}{\Piinfty(X,\pt)}
	\end{equation*}
	between the space obtained by inverting all morphisms in $ \Piinfty(X,P) $ and the shape of the \topos \smash{$ \Shhyp(X) $}.
	To conclude, recall that since $ X $ is locally weakly contractible, by \enumref{ex:monodrom_in_topology}{1}, the shape of \smash{$ \Shhyp(X) $} is naturally equivalent to the underlying homotopy type of $ X $.
\end{proof}

We conclude this subsection with some remarks about the stability theorem.

\begin{remark}
	\Cref{thm:stability_properties_of_stratified_topoi} also applies to other topological contexts.
	For example, given a topological space or stack $ X $ stratified by a noetherian poset $ P $, Jansen \cites{arXiv:2308.09551}{arXiv:2308.09550}{arXiv:2012.10777} and Clausen--Jansen \cite{arXiv:2108.01924} consider the stratified \topos $ (\Sh(X),P) $.
	\Cref{thm:stability_properties_of_stratified_topoi} applies in that setting as well, giving a variant of \Cref{thm:stability_properties_of_exodomic_stratified_spaces} for sheaves rather than hypersheaves.
	In that context, many of these results were already proven by Clausen--Jansen and Jansen; see \cites[Proposition 3.6]{arXiv:2108.01924}[Propositions 3.13 \& 3.20]{arXiv:2308.09550}.
\end{remark}

\begin{remark}[(the Künneth formula)]\label{rem:possible_failure_of_the_Kunneth_formula_in_topology}
	Let $ (X,P) $ and $ (Y,Q) $ be exodromic stratified spaces.
	The astute reader may have noticed that, unlike in \Cref{thm:stability_properties_of_stratified_topoi}, in \Cref{thm:stability_properties_of_exodomic_stratified_spaces} we have not stated that $ (X \cross Y,P \cross Q) $ is exodromic. 
	Neither have we stated that there is a Künneth formula
	\begin{equation*}
		\Piinfty(X \cross Y,P \cross Q) \equivalent \Piinfty(X,P) \cross \Piinfty(Y,Q) \period
	\end{equation*}
	This is because, in complete generality, we do not know if this is true.
	
	The issue is the following: there are natural colimit-preserving functors
	\begin{equation}\label{eq:Kunneth_comparison_for_sheaves}
		\Sh(X) \tensor \Sh(Y) \to \Sh(X \cross Y) \andeq \Shhyp(X) \tensor \Shhyp(Y) \to \Shhyp(X \cross Y) \comma
	\end{equation}
	however, in general neither of these functors need be an equivalence.
	In particular, in the topological setting, we do not immediately deduce a Künneth formula from \Cref{prop:Kunneth_formula_for_exodromic_topoi}.
	Nonetheless, Künneth formulas still hold in many contexts. 
	For example, if $ X $ is locally compact Hausdorff, then the left-hand functor in \eqref{eq:Kunneth_comparison_for_sheaves} is an equivalence \HTT{Proposition}{7.3.1.11}.
	So if $ X $ is locally compact Hausdorff and both $ \Sh(X) $ and $ \Sh(Y) $ are hypercomplete, then \Cref{thm:stability_properties_of_stratified_topoi} implies the Künneth formula for the exit-path \category of $ (X \cross Y, P \cross Q) $.
	For another important example, in \cref{subsec:locally_conically_refineable_stratifications-formal_properties} we show that if $ (X,P) $ and $ (Y,Q) $ locally admit refinements by conical stratifications, then we have a Künneth formula.
	See \Cref{prop:Kunneth_formula_for_locally_conically_refineable_stratifications}.
\end{remark}


\subsection{Locally conically refineable stratifications: formal properties}\label{subsec:locally_conically_refineable_stratifications-formal_properties}

Recall that if $ (X,P) $ is a conically stratified space, then for any open subset $ U \subset X $, the stratified space $ (U,P) $ is also conically stratified.
It is not clear if our definition of an exodromic stratified space is stable under passage to open subsets (cf. \Cref{quest:are_topoi_etale_over_an_exodromic_topos_exodromic}).
So we introduce the following strengthening of exodromicity that applies to many examples from geometry.  

\begin{definition}
	Let $ \Ecal $ be a presentable \category.
	A stratified space $ (X,P) $ is \defn{locally $ \Ecal $-exodromic} if there exists a basis $ \Bcal \subset \Open(X) $ such that for each $ U \in \Bcal $, the stratified space $ (U,P) $ is $ \Ecal $-exodromic.
\end{definition}

\begin{example}\label{ex:conically_stratified_spaces_are_locally_exodromic}
	Let $ (X,P) $ be a conically stratified space with locally weakly contractible strata and let $ \Ecal $ be a $ P $-admissible presentable \category in the sense of \Cref{def:admissible_category}.
	Then $ (X,P) $ is locally $ \Ecal $-exodromic.
\end{example}

\noindent In light of \Cref{thm:stability_properties_of_exodomic_stratified_spaces}, we have the following stability properties of locally exodromic stratifications:

\begin{proposition}\label{prop:basic_properties_of_locally_exodromic_stratifications}
	Let $ \Ecal $ be a presentable \category and $ (X,P) $ a stratified space.
	\begin{enumerate}
		\item\label{prop:basic_properties_of_locally_exodromic_stratifications.1} If $ (X,P) $ is locally $ \Ecal $-exodromic, then $ (X,P) $ is $ \Ecal $-exodromic.

		\item\label{prop:basic_properties_of_locally_exodromic_stratifications.2} If there exists an open cover $ \U $ of $ X $ such that for each $ U \in \U $, the stratified space $ (U,P) $ is locally $ \Ecal $-exodromic, then $ (X,P) $ is locally $ \Ecal $-exodromic. 

		\item\label{prop:basic_properties_of_locally_exodromic_stratifications.3} If $ (X,P) $ is locally $ \Ecal $-exodromic, then for any open subset $ U \subset X $, the stratified space $ (U,P) $ is locally $ \Ecal $-exodromic.

		\item\label{prop:basic_properties_of_locally_exodromic_stratifications.4} Assume that $ \Ecal $ is compatible with recollements.
		If $ (X,P) $ is locally $ \Ecal $-exodromic, then for any locally closed subposet $ S \subset P $, the stratified space $ (X_S,S) $ is locally $ \Ecal $-exodromic. 

		\item\label{prop:basic_properties_of_locally_exodromic_stratifications.5} If $ (X,P) $ is locally $ \Ecal $-exodromic, then for any map of posets $ \phi \colon \fromto{P}{P'} $, the stratified space $ (X,P') $ is locally $ \Ecal $-exodromic. 
	\end{enumerate}
\end{proposition}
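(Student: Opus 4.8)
The plan is to treat (2), (3), and (5) as formal manipulations of bases, to deduce (4) from the locally-closed stability already available, and to concentrate the real work on (1). Throughout write $(Y,P)$ for the stratified space attached to an open $Y \subseteq X$, and recall that $\Shhyp(X)_{/U} \simeq \Shhyp(U)$ for $U$ open.

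For (3), if $\Bcal$ is a basis of $X$ by $\Ecal$-exodromic opens and $U \subseteq X$ is open, then $\{V \in \Bcal : V \subseteq U\}$ is a basis of $U$ whose members are $\Ecal$-exodromic, so $(U,P)$ is locally $\Ecal$-exodromic. For (2), choosing for each $U \in \U$ a basis $\Bcal_U$ of $U$ by $\Ecal$-exodromic opens, the union $\bigcup_{U \in \U} \Bcal_U$ is a basis of $X$ by $\Ecal$-exodromic opens. For (5), fix a witnessing basis $\Bcal$; each $U \in \Bcal$ is in particular exodromic, so $(U,P')$ is exodromic by \Cref{thm:stability_under_coarsening}. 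To upgrade this to $\Ecal$-exodromicity I would use that, by \Cref{lem:coarsenings_of_exodromic_stratifications} and \Cref{prop:atomic_generation}, the inclusion $\Cons_{P'}(U) \subseteq \ConsP(U)$ admits both adjoints; tensoring with $\Ecal$ in $\PrL$ then realizes $\Cons_{P'}(U) \tensor \Ecal$ as a full subcategory of $\ConsP(U) \tensor \Ecal \simeq \ConsP(U;\Ecal)$ whose image, checked on strata via $\LC(-) \tensor \Ecal \simeq \LC(-;\Ecal)$ (\Cref{rec:monodromy}), is exactly $\Cons_{P'}(U;\Ecal)$. The same basis then witnesses that $(X,P')$ is locally $\Ecal$-exodromic.

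For (4), fix a basis $\Bcal$ of $\Ecal$-exodromic opens. For each $U \in \Bcal$ the stratified space $(U \cross_P S, S)$ is $\Ecal$-exodromic by \Cref{prop:E-exodromic_is_stable_under_pulling_back_to_locally_closed_subposets}, where the hypothesis that $\Ecal$ is compatible with recollements is used. Since $\Bcal$ is a basis of $X$, the family $\{U \cap X_S : U \in \Bcal\}$ is a basis of the subspace $X_S = X \cross_P S$, and under the identification $\Shhyp(U \cap X_S) \simeq \Shhyp(U)_S$ of \Cref{lem:sheaves_on_strata} these coincide with the $(U \cross_P S, S)$. Hence $X_S$ has a basis of $\Ecal$-exodromic opens.

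The heart is (1), for which the tool is the internal van Kampen theorem \Cref{cor:E_exodromic_Van_Kampen}: it suffices to produce a diagram $U_\bullet \colon A \to \Shhyp(X)$ with $\colim_\alpha U_\alpha \simeq 1_{\Shhyp(X)}$ and with $(U_\alpha,P)$ exodromic for all $\alpha$. If the witnessing basis $\Bcal$ is \emph{closed under finite intersections}, then $A = \Bcal$ with $U_\bullet$ the inclusion works: the objects $U \in \Bcal$ are subterminal in $\Shhyp(X)$, closed under binary products (finite intersections) and covering $1_{\Shhyp(X)}$, so the Čech nerve of $\coprod_{U \in \Bcal} U \to 1_{\Shhyp(X)}$ is cofinally indexed by $\Bcal$ and $\colim_{U \in \Bcal} U \simeq 1_{\Shhyp(X)}$. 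The main obstacle, and the step I expect to require the most care, is to arrange that the basis may be taken closed under finite intersections \emph{without losing $\Ecal$-exodromicity}, since a priori a finite intersection $W = U_0 \cap \cdots \cap U_n$ of basis opens is only known to be locally $\Ecal$-exodromic. I would dispatch this by a transfinite gluing argument resting on two clean consequences of \Cref{cor:E_exodromic_Van_Kampen}: (i) a directed union of $\Ecal$-exodromic opens is $\Ecal$-exodromic, since directed colimits of subterminal objects compute unions and no intersection terms enter; and (ii) a Mayer--Vietoris statement, that if $V$, $V'$, and $V \cap V'$ are $\Ecal$-exodromic then so is $V \cup V'$, obtained by applying van Kampen to the two-element cover, whose Čech nerve involves only $V$, $V'$, and $V \cap V'$. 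Well-ordering $\Bcal$ and forming the increasing unions $V_\alpha = \bigcup_{\beta < \alpha} U_\beta$, the limit stages are handled by (i) and the successor stages $V_{\alpha+1} = V_\alpha \cup U_\alpha$ by (ii); the delicate combinatorial point is to organize this as a (double) transfinite induction that closes up on the finite intersections of basis elements, using that each required intersection is itself a directed union of simpler intersections. Once (1) is established, $(X,P)$ is $\Ecal$-exodromic, completing the proof.
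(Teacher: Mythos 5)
Your treatments of (2) and (3) are exactly the paper's (the union of bases is a basis; restrict the basis), and (4) is the paper's argument as well: apply \Cref{prop:E-exodromic_is_stable_under_pulling_back_to_locally_closed_subposets} to each basis element and use $\Shhyp(U\cap X_S)\simeq\Shhyp(U)_S$. Your sketch of (5) is in the right direction, though the containment $\Cons_{P'}(U;\Ecal)\subseteq\Cons_{P'}(U)\tensor\Ecal$ is the nontrivial one and deserves the full-faithfulness argument rather than just "checked on strata".

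The genuine problem is (1). Your transfinite Mayer--Vietoris scheme does not close up: the successor step needs $V_\alpha\cap U_\alpha$ to be $\Ecal$-exodromic, and this open set is a union of the pairwise intersections $U_\beta\cap U_\alpha$, which are only known to be \emph{locally} $\Ecal$-exodromic. Running the same induction on this union requires the triple intersections, and so on; there is no base case and no well-founded measure, because a $k$-fold intersection of basis elements is just some open set, and proving it $\Ecal$-exodromic from its cover by basis elements is precisely statement (1) again. Your auxiliary facts (i) and (ii) are individually correct, but they cannot be assembled into a terminating induction as described.

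The missing observation — and what makes the paper's proof a one-liner — is that no intersections are needed at all: apply \Cref{cor:E_exodromic_Van_Kampen} directly to the diagram $U_\bullet\colon\Bcal\to\Shhyp(X)$ indexed by the basis poset itself. Its colimit is the terminal object: for each $x\in X$ the stalk of $\colim_{U\in\Bcal}U$ at $x$ is the classifying space of the poset $\setbar{U\in\Bcal}{x\in U}$, which is cofiltered (precisely because $\Bcal$ is a basis) and hence weakly contractible, so the map $\colim_{U\in\Bcal}U\to 1$ is a stalkwise equivalence, hence $\infty$-connected, hence an equivalence in the hypercomplete \topos $\Shhyp(X)$. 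Since each slice $(\Shhyp(X)_{/U},P)\simeq(\Shhyp(U),P)$ is $\Ecal$-exodromic by hypothesis, the corollary applies. Note that hypercompleteness is genuinely used here: the colimit of the basis diagram computed in presheaves (or even in non-hypercomplete sheaves) need not be terminal — already for the basis of open balls in $\RR^2\sminus\{0\}$ there is no ball containing an annulus around the puncture.
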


\begin{proof}
	Item (1) is immediate from the fact that $ \Ecal $-exodromicity can be checked locally (\Cref{cor:E_exodromic_Van_Kampen}).
	Items (2) and (3) are immediate from the definitions.
	Item (4) follows from the definitions and the stability of $ \Ecal $-exodromicity under pulling back to locally closed subposets (\Cref{prop:E-exodromic_is_stable_under_pulling_back_to_locally_closed_subposets}).
	Item (5) follows from the definitions and the stability of $ \Ecal $-exodromicity under coarsenings (\enumref{thm:stability_properties_of_exodomic_stratified_spaces}{3}).
\end{proof}

For the examples in the rest of this subsection, it is convenient to introduce the following definition.

\begin{definition}
	Let $ s \colon \fromto{X}{P} $ be a stratified space.
	\begin{enumerate}
		\item A \defn{conical refinement} of $ (X,P) $ is the data of a conical stratification $ t \colon \fromto{X}{R} $ of $ X $ with locally weakly contractible strata and a map of posets $ \phi \colon \fromto{R}{P} $ such that $ s = \phi t $.
		We say that $ (X,P) $ is \defn{conically refineable} if there exists a conical refinement of $ (X,P) $.

		\item We say that $ (X,P) $ is \defn{locally conically refineable} if there exists an open cover $ \U $ of $ X $ such that for each $ U \in \U $, the stratified space $ (U,P) $ is conically refineable.
	\end{enumerate}
\end{definition}

First observe that locally conically refineable stratified spaces have locally weakly contractible strata (hence \Cref{obs:objects_of_Exit_with_locally_weakly_contractible_strata} applies).
In fact, even more is true; we introduce the following definition to axiomatize the categorical features of the exit-path \category of a locally conically refineable stratified space.

\begin{definition}\label{def:locally_cone-like_stratified_space}
	We say that a stratified space $ (X,P) $ is \defn{locally cone-like} if the following conditions are satisfied:
	\begin{enumerate}
		\item\label{def:locally_cone-like_stratified_space.1} The stratified space $ (X,P) $ is locally exodromic.

		\item\label{def:locally_cone-like_stratified_space.2} The strata of $ X $ are locally weakly contractible.

		\item\label{def:locally_cone-like_stratified_space.3} Every point $ x \in X $ admits a fundamental system of open neighborhoods $ \Ucal_x $ such that for each $ U \in \Ucal_x $, the object $ x \in \Piinfty(U,P) $ is initial.
	\end{enumerate}
\end{definition}

\begin{lemma}\label{lem:conically_stratified_spaces_are_locally_weakly_contractible_and_locally_cone-like}
	Let $ (X,P) $ be a conically stratified space with locally weakly contractible strata.
	Then:
	\begin{enumerate}
		\item\label{lem:conically_stratified_spaces_are_locally_weakly_contractible_and_locally_cone-like.1} The topological space $ X $ is locally weakly contractible.

		\item\label{lem:conically_stratified_spaces_are_locally_weakly_contractible_and_locally_cone-like.2} The stratified space $ (X,P) $ is locally cone-like.
	\end{enumerate}
\end{lemma}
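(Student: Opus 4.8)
The plan is to single out one family of conical charts that witnesses both conclusions, and to reduce everything to the assertion that the cone point is initial in the exit-path \category of such a chart. First I would set up the charts. By \HAa{Definition}{A.5.5}, every point $ x \in X $ lying in the stratum $ X_p $ has arbitrarily small open neighborhoods of the form $ U \cong Z \cross C(Y) $, where $ Z $ is an open subset of $ X_p $ containing the image $ z_0 $ of $ x $, the space $ Y $ is $ P_{>p} $-stratified, $ C(Y) $ is the open cone (with cone point $ v $ lying in stratum $ p $), and $ x $ corresponds to $ (z_0,v) $. Since $ X_p $ is locally weakly contractible, I may shrink $ Z $ to a weakly contractible open containing $ z_0 $ and shrink the cone radius; the resulting charts still form a neighborhood basis $ \Ucal_x $ of $ x $. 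Each such $ (U,P) $ is again conically stratified with locally weakly contractible strata (its strata are open in the strata of $ X $), so \Cref{ex:conically_stratified_spaces_are_exodromic} shows that $ (\Shhyp(U),P) $ is exodromic with $ \Piinfty(U,P) \equivalent \Sing(U,P) $. In particular $ (X,P) $ is locally $ \Spc $-exodromic (\Cref{ex:conically_stratified_spaces_are_locally_exodromic}), which is condition (\ref*{def:locally_cone-like_stratified_space.1}) of \Cref{def:locally_cone-like_stratified_space}, while condition (\ref*{def:locally_cone-like_stratified_space.2}) is the hypothesis. Everything then rests on the claim that is condition (\ref*{def:locally_cone-like_stratified_space.3}).

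The key claim is that for $ U \in \Ucal_x $ the point $ x = (z_0,v) $ is initial in $ \Piinfty(U,P) $, and I expect this to be the main obstacle, since it is exactly where the conical geometry enters. There are two equivalent routes. Working in Lurie's simplicial model $ \Sing(U,P) $, one shows the cone point is initial in $ \Sing(C(Y),P_{\geq p}) $ — the straight-line scaling toward $ v $ produces the required coherently terminal system of exit-paths — and combines this with the weak contractibility of $ Z $, which makes $ z_0 $ initial in $ \Piinfty(Z,\{p\}) $, to conclude that $ (z_0,v) $ is initial. Alternatively, unwinding the exodromy equivalence, $ x $ is initial in $ \Piinfty(U,P) $ if and only if the corepresenting atomic object $ [x] $ agrees with the terminal object $ 1 \in \ConsPhyp(U) $; since $ [x] $ corepresents the stalk $ \xupperstar $ and $ 1 $ corepresents global sections, this amounts to the assertion that for every $ F \in \ConsPhyp(U) $ the restriction $ \fromto{\Map_{\ConsPhyp(U)}(1,F)}{\xupperstar F} $ is an equivalence, i.e. that on a conical chart with weakly contractible $ Z $ the vertex stalk computes global sections. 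I would establish this by invoking the conical analysis of \cite{arXiv:2211.05004} (the same input underlying \Cref{eg:weakly_conical} and \cite[Lemma 5.3.4]{arXiv:2211.05004}).

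Granting the claim, part (\ref*{lem:conically_stratified_spaces_are_locally_weakly_contractible_and_locally_cone-like.2}) is immediate, as conditions (\ref*{def:locally_cone-like_stratified_space.1})--(\ref*{def:locally_cone-like_stratified_space.3}) then hold with the family $ \Ucal_x $. For part (\ref*{lem:conically_stratified_spaces_are_locally_weakly_contractible_and_locally_cone-like.1}), recall that a \category with an initial object has contractible classifying space, so $ \Env(\Piinfty(U,P)) \equivalent \ast $. Applying the consequence of \Cref{thm:stability_under_coarsening} for the map $ \fromto{P}{\pt} $, namely $ \equivto{\Env(\Piinfty(\X,P))}{\Piinfty(\X)} $, to the exodromic \topos $ \X = \Shhyp(U) $ yields $ \Piinfty(\Shhyp(U)) \equivalent \Env(\Piinfty(U,P)) \equivalent \ast $; thus each $ U \in \Ucal_x $ is weakly contractible (its hypersheaf \topos has trivial shape). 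Since the sets $ \Ucal_x $ form a neighborhood basis as $ x $ ranges over $ X $, the space $ X $ is locally weakly contractible. The point to emphasize is that this uses only the topos-theoretic identity $ \Env \circ \Piinfty(-,P) \simeq \Piinfty(-) $, which does not presuppose local weak contractibility; hence there is no circularity with \Cref{ex:monodrom_in_topology}.
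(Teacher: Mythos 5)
Your overall strategy is the same as the paper's: produce, around each point $ x $, a fundamental system of conical charts $ U \cong Z \cross C(Y) $ in which $ x $ is initial in $ \Piinfty(U,P) $, obtain local cone-likeness directly from \Cref{def:locally_cone-like_stratified_space}, and deduce local weak contractibility from $ \Env(\Piinfty(U,P)) \equivalent \ast $. The paper simply outsources the initiality claim to \cite[Proposition 2.1.18]{PortaTeyssier} rather than re-proving it; your sketch of that claim (straight-line scaling toward the cone point combined with weak contractibility of $ Z $, equivalently the identification of $ [x] $ with the terminal object of $ \ConsPhyp(U) $) is precisely the content of that citation, so this part is fine.

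The one step that does not close as written is the last one. From $ \Env(\Piinfty(U,P)) \equivalent \Piinfty(\Shhyp(U)) \equivalent \ast $ you conclude that $ U $ is weakly contractible ``because its hypersheaf \topos has trivial shape.'' But weak contractibility here means triviality of the \emph{singular} homotopy type, and identifying the shape of $ \Shhyp(U) $ with the singular homotopy type of $ U $ is exactly \enumref{ex:monodrom_in_topology}{1}, whose hypothesis is that $ U $ is locally weakly contractible --- the very statement being proved. So the circularity you dismiss does not sit in the identity $ \Env \of \Piinfty(-,P) \equivalent \Piinfty(-) $ (which is indeed purely topos-theoretic) but in the translation from trivial shape to weak contractibility. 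The paper avoids this by invoking \cite[Corollary 6.2.7]{PortaTeyssier}, which identifies $ \Env(\Piinfty(U,P)) $ with the underlying homotopy type of $ U $ for conically stratified $ U $ via the simplicial model, with no local weak contractibility assumed. Alternatively, item (1) is elementary from your charts: $ C(Y) $ deformation retracts onto its cone point, so $ U \cong Z \cross C(Y) $ is homotopy equivalent to the weakly contractible $ Z $.
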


\begin{proof}
	First recall that conically stratified spaces with locally weakly contractible strata are locally exodromic.
	We prove both items simultaneously.
	By \cite[Proposition 2.1.18]{PortaTeyssier}, every point $ x \in X $ admits a fundamental system of open neighborhoods $ \Ucal_x $ such that for each $ U \in \Ucal_x $, the object $ x $ is initial in \smash{$ \Piinfty(U,P) $}.
	For any such $ U $, \cite[Corollary 6.2.7]{PortaTeyssier} provides a canonical equivalence
	\begin{equation*} 
		\Piinfty(U) \equivalent \Env(\Piinfty(U,P)) \equivalent \ast \comma 
	\end{equation*}
	where $ \Piinfty(U) $ denotes the underlying homotopy type of $ U $.
	Therefore, each $ U $ is weakly contractible, i.e., $ X $ is locally weakly contractible.
\end{proof}

We now analyze the stability properties of the class of locally cone-like stratified spaces.
To start, we need a lemma.

\begin{lemma}\label{lem:localizations_preserve_initial_objects}
	Let $ L \colon \fromto{\Ccal}{\Dcal} $ be a functor of \categories that exhibits $ \Dcal $ as the localization of $ \Ccal $ at a collection of morphisms.
	If $ c \in \Ccal $ is initial, then $ L(c) \in \Dcal $ is initial.
\end{lemma}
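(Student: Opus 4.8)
The plan is to show directly that $L(c)$ corepresents the constant functor with value the terminal space, i.e.\ that $\Map_{\Dcal}(L(c),d)$ is contractible for every $d \in \Dcal$. The key observation is that a localization functor $L \colon \fromto{\Ccal}{\Dcal}$ is a left adjoint: its right adjoint is the fully faithful inclusion $R \colon \incto{\Dcal}{\Ccal}$ that identifies $\Dcal$ with the full subcategory of $W$-local objects of $\Ccal$. This is the standard description of a localization as a reflective localization, and it is exactly the picture used throughout the paper (for instance in \Cref{notation:constructibilization} and \Cref{prop:atomic_generation}).

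Granting this, the proof is a one-line adjunction computation. For any $d \in \Dcal$, I would write
\begin{equation*}
	\Map_{\Dcal}(L(c),d) \equivalent \Map_{\Ccal}(c,R(d)) \equivalent \ast \comma
\end{equation*}
where the first equivalence is the adjunction $L \dashv R$ and the second uses that $c$ is initial in $\Ccal$, so that $\Map_{\Ccal}(c,-)$ is constant with value the terminal space. Since this holds naturally in $d$, the object $L(c)$ is initial in $\Dcal$.

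The only genuine content is the justification that a functor exhibiting $\Dcal$ as a localization of $\Ccal$ admits a fully faithful right adjoint; I would cite the standard theory of localizations of \categories (e.g.\ \HTT{Proposition}{5.2.7.4} or \cite[\S7.1]{Cisinski_Higher_Category}, which is already invoked in the proof of \Cref{prop:atomic_generation}). I expect no real obstacle here: the subtlety, if any, is purely to confirm that "localization at a collection of morphisms" in the sense used in the statement is the reflective/adjoint notion of localization rather than a more naive one, but since the localizations appearing in this paper all arise from accessible reflective subcategories (via the reflection theorem of Ragimov--Schlank), the right adjoint is automatically available. Once the adjunction is in hand, initiality of $L(c)$ is immediate, and in fact the same argument shows more generally that a left adjoint preserves initial objects.
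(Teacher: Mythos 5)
There is a genuine gap, and it sits exactly where you flagged the ``only'' subtlety: the claim that a functor exhibiting $\Dcal$ as the localization of $\Ccal$ at a collection of morphisms admits a fully faithful right adjoint is false in the generality of the statement, and false in precisely the situations where the paper uses this lemma. Here ``localization at a collection of morphisms'' means the Dwyer--Kan localization $\Ccal[W\inv]$, characterized by the universal property of inverting $W$ and computed as a pushout (\Cref{rec:localizations_as_pushouts}); it is \emph{not} the reflective notion of \HTT{Proposition}{5.2.7.4}. A reflective localization would realize $\Dcal$ as a full subcategory of $\Ccal$, which generally fails: for the groupoid completion $\Ccal \to \Env(\Ccal) = \Ccal[\Ccal\inv]$ the mapping spaces genuinely change, and in \Cref{ex:circle} the localization of the poset \eqref{diag:noncommutative_square} at $\textcolor{goodyellow}{\bullet} \to \textcolor{goodblue}{\bullet}$ is a noncommutative triangle, which cannot be a full subcategory of a poset. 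The lemma is applied to localizations of small exit-path \categories of the form $\fromto{\Piinfty(U,R)}{\Piinfty(U,R)[W_P\inv]}$, and these are of this non-reflective kind. The appeal to Ragimov--Schlank is a red herring: that theorem concerns full subcategories of presentable \categories closed under limits and colimits, not localizations of small \categories at a set of arrows. Without the right adjoint $R$, the adjunction computation $\Map_{\Dcal}(L(c),d) \equivalent \Map_{\Ccal}(c,R(d))$ has no meaning, and there is no comparably direct formula for mapping spaces in a Dwyer--Kan localization.

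The conclusion is nevertheless true, and the paper's proof routes around this entirely: an object $c$ is initial if and only if the functor $c \colon \fromto{\pt}{\Ccal}$ picking it out is limit-cofinal; Dwyer--Kan localization functors are limit-cofinal (\cite[Proposition 5.13]{MR4074276}, or \cite[\S7.1]{Cisinski_Higher_Category}); and limit-cofinal functors compose. If you want to keep an adjunction-flavored argument, you would first have to \emph{prove} that the particular localization at hand is reflective, which is exactly what fails here.
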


\begin{proof}
	Recall that for \acategory $ \Ecal $, an object $ e \in \Ecal $ is initial if and only if the functor $ e \colon \fromto{\pt}{\Ecal} $ that picks out $ e $ is a limit-cofinal functor.
	Since $ L $ is a localization, $ L \colon \fromto{\Ccal}{\Dcal} $ is limit-cofinial \cite[Proposition 5.13]{MR4074276}.
	Hence the composite
	\begin{equation*}
		\begin{tikzcd}
			\pt \arrow[r, "c"] & \Ccal \arrow[r, "L"] & \Dcal
		\end{tikzcd}
	\end{equation*}
	is limit-cofinal.
\end{proof}

\begin{lemma}\label{lem:stability_properties_of_locally_cone-like_stratifed_spaces}
	\hfill
	\begin{enumerate}
		\item\label{lem:stability_properties_of_locally_cone-like_stratifed_spaces.1} Let $ (X,P) $ be a locally cone-like stratified space.
		Then for each locally closed subposet $ S \subset P $, the stratified space $ (X_S,S) $ is locally cone-like.

		\item\label{lem:stability_properties_of_locally_cone-like_stratifed_spaces.2} Let $ (X,R) $ be a locally cone-like stratified space and $ \phi \colon \fromto{R}{P} $ is a map of posets.
		Then the stratified space $ (X,P) $ is locally cone-like.

		\item\label{lem:stability_properties_of_locally_cone-like_stratifed_spaces.3} If $ (X,P) $ is a stratified space and $ \{U_{\alpha}\}_{\alpha \in A} $ is an open cover of $ X $ such that each stratified space $ (U_{\alpha},P) $ is locally cone-like, then $ (X,P) $ is locally cone-like.
	\end{enumerate}
\end{lemma}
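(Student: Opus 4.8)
The plan is to verify the three defining conditions of \Cref{def:locally_cone-like_stratified_space}---local exodromicity, local weak contractibility of strata, and the fundamental-system condition (\ref*{def:locally_cone-like_stratified_space.3})---for each of the three constructions, leaning on the stability properties already established in \Cref{thm:stability_properties_of_exodomic_stratified_spaces} and \Cref{prop:basic_properties_of_locally_exodromic_stratifications}. For condition (\ref*{def:locally_cone-like_stratified_space.1}) I would invoke \enumref{prop:basic_properties_of_locally_exodromic_stratifications}{4}, (\ref*{prop:basic_properties_of_locally_exodromic_stratifications.5}), and (\ref*{prop:basic_properties_of_locally_exodromic_stratifications.2}) respectively: pulling back to a locally closed subposet, coarsening along $\phi$, and gluing along an open cover all preserve local $\Ecal$-exodromicity (taking $\Ecal = \Spc$). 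Condition (\ref*{def:locally_cone-like_stratified_space.2}) on strata requires that the strata of $(X_S,S)$, $(X,P)$, and $X$ remain locally weakly contractible. For (\ref{lem:stability_properties_of_locally_cone-like_stratifed_spaces.1}) the strata of $X_S$ are exactly the strata $X_p$ for $p \in S$, so local weak contractibility is inherited directly; for (\ref{lem:stability_properties_of_locally_cone-like_stratifed_spaces.2}) the strata of $(X,P)$ are \emph{unions} of strata of $(X,R)$, which is the only subtle point (see below); for (\ref{lem:stability_properties_of_locally_cone-like_stratifed_spaces.3}) it is local and hence immediate.

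The main work is condition (\ref*{def:locally_cone-like_stratified_space.3}), the existence of a fundamental system of neighborhoods $\Ucal_x$ in which $x$ is initial in the relevant exit-path \category. For (\ref{lem:stability_properties_of_locally_cone-like_stratifed_spaces.1}), given $x \in X_S$ and the fundamental system $\Ucal_x$ for $(X,P)$ in which $x$ is initial in $\Piinfty(U,P)$, I would restrict to $S$: by \enumref{thm:stability_properties_of_exodomic_stratified_spaces}{1} we have $\Piinfty(U_S,S) \equivalent \Piinfty(U,P)_S$, the fiber over the subposet $S$. Since $x$ is initial in $\Piinfty(U,P)$ and lies over a point of $S$, and $S \subset P$ is locally closed (hence the inclusion $\Piinfty(U,P)_S \hookrightarrow \Piinfty(U,P)$ is fully faithful by \Cref{obs:C_S_to_C_fully_faithful}), one checks that $x$ remains initial in the fiber; the key observation is that every object of $\Piinfty(U,P)_S$ receives a unique map from $x$ because $x$ is globally initial, and these maps automatically lie in the full subcategory. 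For (\ref{lem:stability_properties_of_locally_cone-like_stratifed_spaces.2}), given the fundamental system for $(X,R)$, I would apply \enumref{thm:stability_properties_of_exodomic_stratified_spaces}{2}: $\Piinfty(U,P)$ is the localization $\Piinfty(U,R)[W_P\inv]$, and since $x$ is initial in $\Piinfty(U,R)$, \Cref{lem:localizations_preserve_initial_objects} shows its image is initial in $\Piinfty(U,P)$. This last case is where the freshly proven \Cref{lem:localizations_preserve_initial_objects} is tailor-made to apply, so I would flag it as the pivotal input.

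For the strata issue in (\ref{lem:stability_properties_of_locally_cone-like_stratifed_spaces.2}): I would argue local weak contractibility of the $P$-strata by reducing to local weak contractibility \emph{of the space itself}, which is not changed by coarsening since the underlying space $X$ is fixed, together with the observation that a $P$-stratum $X_p = X_{R_p}$ is an $R_p$-stratified space that is itself locally conically refineable (being locally closed in a locally cone-like space, via part (\ref{lem:stability_properties_of_locally_cone-like_stratifed_spaces.1})). Applying \enumref{lem:conically_stratified_spaces_are_locally_weakly_contractible_and_locally_cone-like}{1} to a conical refinement of $X_p$ would then give local weak contractibility of the stratum as a space. Part (\ref{lem:stability_properties_of_locally_cone-like_stratifed_spaces.3}) I expect to be entirely formal: each of the three conditions in \Cref{def:locally_cone-like_stratified_space} is local on $X$ (condition (\ref*{def:locally_cone-like_stratified_space.1}) by \enumref{prop:basic_properties_of_locally_exodromic_stratifications}{2}, and conditions (\ref*{def:locally_cone-like_stratified_space.2}) and (\ref*{def:locally_cone-like_stratified_space.3}) being manifestly local), so I would simply note that an open cover of each $U_\alpha$ by witnessing neighborhoods assembles to a cover of $X$. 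The main obstacle is thus the bookkeeping around initiality in fibers and localizations in (\ref{lem:stability_properties_of_locally_cone-like_stratifed_spaces.1}) and (\ref{lem:stability_properties_of_locally_cone-like_stratifed_spaces.2}); everything else is a direct appeal to the stability theorem.
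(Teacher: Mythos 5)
Your treatment of condition (\ref*{def:locally_cone-like_stratified_space.3}) of \Cref{def:locally_cone-like_stratified_space} is exactly the paper's argument: for part (1), restrict the fundamental system to $X_S$ and use that $\Piinfty(U_S,S)\to\Piinfty(U,P)$ is fully faithful, so an object that is initial in the ambient \category and lies in the full subcategory is initial there; for part (2), use the localization formula together with \Cref{lem:localizations_preserve_initial_objects}; part (3) is formal. The paper declares conditions (\ref*{def:locally_cone-like_stratified_space.1}) and (\ref*{def:locally_cone-like_stratified_space.2}) to be the non-issue and only checks (\ref*{def:locally_cone-like_stratified_space.3}), so on the substantive points your proof coincides with theirs. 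One caveat about your extra argument for local weak contractibility of the coarsened strata in part (2): you assert that $X_p=X_{R_p}$ is ``locally conically refineable, being locally closed in a locally cone-like space, via part (1),'' but part (1) only yields that $(X_{R_p},R_p)$ is \emph{locally cone-like}, which is a strictly weaker, purely categorical condition; no conical refinement is supplied by the hypotheses, so the appeal to \enumref{lem:conically_stratified_spaces_are_locally_weakly_contractible_and_locally_cone-like}{1} applied to ``a conical refinement of $X_p$'' does not go through as written. Since the paper itself treats this condition as immediate and offers no argument, this does not put you at odds with their proof, but as stated that step is not justified and would need to be repaired (e.g., by arguing directly from the locally cone-like structure of $(X_{R_p},R_p)$) rather than by importing the conically refineable machinery.
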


\begin{proof}
	For (1), the only nontrivial condition to check is \enumref{def:locally_cone-like_stratified_space}{3}.
	Let $ x \in X_S $ and let $ \Ucal_x $ be a fundamental system of open neighborhoods of $ x $ in $ X $ such that for each $ U \in \Ucal_x $, the object $ x \in \Piinfty(U,P) $ is initial.
	Write
	\begin{equation*}
		\Ucal_{x,S} \colonequals \setbar{U_S}{U \in \Ucal_x} \period
	\end{equation*}
	Notice that $ U_S = U \intersect X_S $ and $ \Ucal_{x,S} $ is a fundamental system of open neighborhoods of $ x $ in $ X_S $.
	By \enumref{thm:stability_properties_of_exodomic_stratified_spaces}{2}, for each $ U \in \Ucal_x $, the natural functor 
	\begin{equation*}
		\fromto{\Piinfty(U_S,S)}{\Piinfty(U,P)}
	\end{equation*}
	is fully faithful.
	Since $ x \in \Piinfty(U_S,S) $ and $ x $ is initial in the larger \category $ \Piinfty(U,P) $, we deduce that $ x $ is also initial in $ \Piinfty(U_S,S) $.

	For (2), again the only nontrivial condition to check is \enumref{def:locally_cone-like_stratified_space}{3}.
	Let $ x \in X $ and let $ \Ucal_x $ be a fundamental system of open neighborhoods of $ x $ in $ X $ such that for each $ U \in \Ucal_x $, the object $ x \in \Piinfty(U,R) $ is initial.
	Then \Cref{lem:localizations_preserve_initial_objects} shows that $ x \in \Piinfty(U,P) $ is also initial.

	Item (3) is immediate from the definitions.
\end{proof}

Now we record the fundamental properties of the class of locally conically refineable stratified spaces.

\begin{proposition}[(properties of locally conically refineable stratified spaces)]\label{prop:properties_of_locally_conically_refineable_stratified_spaces}
	\hfill
	\begin{enumerate}
		\item\label{prop:properties_of_locally_conically_refineable_stratified_spaces.1} Let $ (X,P) $ be a stratified space and let $ \Ecal $ be an admissible presentable \category.
		If $ (X,P) $ is locally conically refineable, then $ (X,P) $ is locally $ \Ecal $-exodromic. 

		\item\label{prop:properties_of_locally_conically_refineable_stratified_spaces.2} Let $ (X,P) $ be a locally conically refineable stratified space.
		Then for each open subspace $ U \subset X $, the stratified space $ (U,P) $ is locally conically refineable.

		\item\label{prop:properties_of_locally_conically_refineable_stratified_spaces.3} Let $ (X,P) $ be a locally conically refineable stratified space.
		Then for each locally closed subposet $ S \subset P $, the stratified space $ (X_S,S) $ is locally conically refineable.

		\item\label{prop:properties_of_locally_conically_refineable_stratified_spaces.4} Let $ (X,R) $ be a locally conically refineable stratified space and $ \phi \colon \fromto{R}{P} $ is a map of posets.
		Then the stratified space $ (X,P) $ is locally conically refineable.

		\item\label{prop:properties_of_locally_conically_refineable_stratified_spaces.5} If $ (X,P) $ is a stratified space and $ \{U_{\alpha}\}_{\alpha \in A} $ is an open cover of $ X $ such that each stratified space $ (U_{\alpha},P) $ is locally conically refineable, then $ (X,P) $ is locally conically refineable.

		\item\label{prop:properties_of_locally_conically_refineable_stratified_spaces.6} If $ (X,P) $ is locally conically refineable, then $ X $ is locally weakly contractible.
		Moreover, the space
		\begin{equation*}
			\Env(\Piinfty(X,P))
		\end{equation*}
		is naturally equivalent to the underlying homotopy type of $ X $.

		\item\label{prop:properties_of_locally_conically_refineable_stratified_spaces.7} If $ (X,P) $ is a locally conically refineable stratified space, then $ (X,P) $ is locally cone-like.
	\end{enumerate}
\end{proposition}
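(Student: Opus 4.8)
The plan is to prove the seven items by reducing each one to the behavior of a chosen conical refinement on the members of an open cover, and then invoking the stability results already established for local $\Ecal$-exodromicity (\Cref{prop:basic_properties_of_locally_exodromic_stratifications}) and for locally cone-like stratified spaces (\Cref{lem:conically_stratified_spaces_are_locally_weakly_contractible_and_locally_cone-like} and \Cref{lem:stability_properties_of_locally_cone-like_stratifed_spaces}). The organizing observation is that a conical refinement $t \colon X \to R$ of $(X,P)$, with coarsening $\phi \colon R \to P$, restricts well: the strata are \emph{unchanged} when passing to subposets or to open subspaces, so they remain locally weakly contractible, and conicality is preserved under restriction to open subspaces and, via the local cone structure, under restriction to locally closed subposets. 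I would fix once and for all an open cover $\{V\}$ of $X$ together with conical refinements $(V,R_V)$ and coarsenings $\phi_V \colon R_V \to P$ with $s|_V = \phi_V t_V$.

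First I would dispatch the purely combinatorial items (5), (2), (4). For (5), each locally conically refineable $(U_\alpha,P)$ supplies an open cover of $U_\alpha$ by conically refineable opens; these are open in $X$ and their union covers $X$. For (2), given an open $U \subset X$, the family $\{U \cap V\}$ covers $U$, and restricting the conical stratification $(V,R_V)$ to the open subspace $U \cap V$ (retaining $\phi_V$) exhibits $(U \cap V,P)$ as conically refineable. For (4), given a coarsening $\phi \colon R \to P$ and conical refinements $(V,R'_V,\psi_V \colon R'_V \to R)$ of the $(V,R)$, the composite $\phi \circ \psi_V \colon R'_V \to P$ exhibits the same conical stratification of $V$ as a refinement of $(V,P)$, whence $(X,P)$ is locally conically refineable.

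For (3), let $S \subset P$ be locally closed. Writing $X_S = X \times_P S$ and $V_S = V \times_{R_V} \phi_V^{-1}(S)$, note that $\phi_V^{-1}(S)$ is locally closed in $R_V$ (the preimage of an interval under a monotone map is an interval). So it suffices to show that restriction of a conical stratification with locally weakly contractible strata to a locally closed subposet is again conical with locally weakly contractible strata; then $\phi_V|\colon \phi_V^{-1}(S) \to S$ refines $(V_S,S)$, and gluing gives (3). Decomposing the locally closed subposet as an open subposet followed by a closed one, the open case is the restriction to an open subspace (immediate), while the closed case is read off from the local model $\RR^n \times C(Y)$: a down-closed subposet necessarily contains the cone point and cuts out $\RR^n \times C(Y_W)$ for a closed, hence conical, subspace $Y_W \subset Y$, so conicality is preserved by induction on the local depth. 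Since the strata are a subcollection of the original strata, they remain locally weakly contractible. I expect this geometric verification to be the main obstacle, though it is essentially the standard inductive unwinding of the cone charts.

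The remaining items follow formally. For (1), each conical refinement $(V,R_V)$ is locally $\Ecal$-exodromic by \Cref{ex:conically_stratified_spaces_are_locally_exodromic} (since an admissible $\Ecal$ is $R_V$-admissible); stability of local $\Ecal$-exodromicity under coarsening along $\phi_V$ (\enumref{prop:basic_properties_of_locally_exodromic_stratifications}{5}) gives that $(V,P)$ is locally $\Ecal$-exodromic, and gluing over the cover (\enumref{prop:basic_properties_of_locally_exodromic_stratifications}{2}) yields $(X,P)$. For (6), taking $\Ecal = \Spc$ in (1) and applying \enumref{prop:basic_properties_of_locally_exodromic_stratifications}{1} shows $(X,P)$ is exodromic; each $V$ is locally weakly contractible by \enumref{lem:conically_stratified_spaces_are_locally_weakly_contractible_and_locally_cone-like}{1}, and since local weak contractibility is a local property, so is $X$, whence \Cref{cor:Env_of_Exit_recovers_the_underlying_homotopy_type} identifies $\Env(\Piinfty(X,P))$ with the underlying homotopy type of $X$. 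For (7), each $(V,R_V)$ is locally cone-like by \enumref{lem:conically_stratified_spaces_are_locally_weakly_contractible_and_locally_cone-like}{2}, coarsening along $\phi_V$ preserves this by \enumref{lem:stability_properties_of_locally_cone-like_stratifed_spaces}{2}, and gluing by \enumref{lem:stability_properties_of_locally_cone-like_stratifed_spaces}{3} shows $(X,P)$ is locally cone-like.
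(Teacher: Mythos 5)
Your proof is correct and follows essentially the same route as the paper's: items (2), (4), (5) by direct manipulation of refinements over an open cover, item (1) via \Cref{prop:basic_properties_of_locally_exodromic_stratifications} and the conical case, item (6) via \Cref{cor:Env_of_Exit_recovers_the_underlying_homotopy_type}, and item (7) via \Cref{lem:conically_stratified_spaces_are_locally_weakly_contractible_and_locally_cone-like} and \Cref{lem:stability_properties_of_locally_cone-like_stratifed_spaces}. The only divergence is item (3): the paper simply localizes to the case of a global conical refinement and cites \cite[Lemma 2.1.11]{arXiv:2211.05004} for the fact that restricting a conical stratification with locally weakly contractible strata to a locally closed subposet again yields such a stratification, whereas you re-derive that fact by hand. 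Your sketch of the closed case is fine in substance, but the appeal to ``induction on the local depth'' is unnecessary (and would be delicate for stratifications of unbounded depth): in Lurie's definition a conical chart $Z \times C(Y)$ only requires $Y$ to be a $P_{>p}$-stratified space, not itself conically stratified, so intersecting the chart with $X_S$ immediately produces a chart $Z \times C(Y_{S_{>p}})$ of the required form with no inductive step.
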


\begin{proof}
	Item (1) follows from \Cref{prop:basic_properties_of_locally_exodromic_stratifications} and the fact that conically stratified spaces with locally weakly contractible strata are $ \Ecal $-exodromic.

	For (2), note that since the statement is local, it suffices to prove the claim when $ (X,P) $ admits a global conical refinement $ (X,R) $.
	Now note that since $ (X,R) $ is conically stratified, for any open subset $ U \subset X $, the stratified space $ (U,R) $ is also conical. 

	For (3), note that since the statement is local, it suffices to prove the claim when $ (X,P) $ admits a global conical refinement $ (X,R) $.
	In this case, \cite[Lemma 2.1.11]{arXiv:2211.05004} shows that the stratified space $ (X_S,R_S) $ is conical with locally weakly contractible strata.
	To conclude, note that $ (X_S,R_S) $ is a refinement of $ (X_S,S) $.

	Items (4) and (5) are immediate from the definitions.
	For (6), note that by \enumref{lem:conically_stratified_spaces_are_locally_weakly_contractible_and_locally_cone-like}{1}, $ X $ admits an open cover by locally weakly contractible topological spaces.
	Hence the claim is a special case of \Cref{cor:Env_of_Exit_recovers_the_underlying_homotopy_type}.
	Item (7) follows from the fact that conically stratified spaces are locally cone-like (\enumref{lem:conically_stratified_spaces_are_locally_weakly_contractible_and_locally_cone-like}{2}) and the stability properties of locally cone-like stratified spaces (\Cref{lem:stability_properties_of_locally_cone-like_stratifed_spaces}).
\end{proof}

We conclude this subsection with a Künneth formula for the exit-path \category of a product of locally conically refineable stratified spaces.
Due the issues mentioned in \Cref{rem:possible_failure_of_the_Kunneth_formula_in_topology}, our proof does not rely on the Künneth formula for exodromic stratified \topoi (\Cref{prop:Kunneth_formula_for_exodromic_topoi}).
Instead, we make use of the localization formula for the exit-path \category of a coarsening and the following lemma.

\begin{lemma}\label{lem:localization_product}
	Let $ \Ccal_1 $ and $ \Ccal_2 $ be \categories and let $ W_i \subset \Mor(\Ccal_i) $ be collections of morphisms.
	Then the natural functor
	\begin{equation*} 
		(\Ccal_1\cross \Ccal_2)[(W_1 \cross W_2)\inv] \to \Ccal_1[W_1\inv] \cross \Ccal_2[W_2\inv]
	\end{equation*}
	is an equivalence.
\end{lemma}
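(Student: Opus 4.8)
The plan is to deduce the lemma from the universal property of localizations together with the cartesian closedness of $\Catinfty$. Throughout I assume, as holds in all of our applications (where each $W_i$ is the preimage of the equivalences under a functor to a poset, cf. \Cref{ntn:W_P}), that each $W_i$ contains the identity morphisms of $\Ccal_i$; here $W_1 \cross W_2 \subseteq \Mor(\Ccal_1 \cross \Ccal_2)$ denotes the collection of morphisms $(w_1,w_2)$ with $w_i \in W_i$. The comparison functor is the one induced by the product $\gamma_1 \cross \gamma_2$ of the localization functors $\gamma_i \colon \Ccal_i \to \Ccal_i[W_i\inv]$: since $\gamma_i$ inverts $W_i$, the product $\gamma_1 \cross \gamma_2$ inverts $W_1 \cross W_2$ and therefore factors through a functor
\[
\Phi \colon (\Ccal_1 \cross \Ccal_2)[(W_1 \cross W_2)\inv] \to \Ccal_1[W_1\inv] \cross \Ccal_2[W_2\inv] \period
\]

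First I would reduce to a statement about functor \categories. For an arbitrary \category $\Ecal$, the universal property of localization identifies $\Fun(\Ccal_i[W_i\inv], \Ecal)$ with the full subcategory of $\Fun(\Ccal_i, \Ecal)$ on the $W_i$-inverting functors, and the exponential equivalence $\Fun(\Ccal_1 \cross \Ccal_2, \Ecal) \equivalent \Fun(\Ccal_1, \Fun(\Ccal_2, \Ecal))$ holds because $\Catinfty$ is cartesian closed. Composing these, precomposition with $\gamma_1 \cross \gamma_2$ exhibits $\Fun(\Ccal_1[W_1\inv] \cross \Ccal_2[W_2\inv], \Ecal)$ as the full subcategory of $\Fun(\Ccal_1 \cross \Ccal_2, \Ecal)$ spanned by those $G$ such that (i) $G(x, w_2)$ is an equivalence for every $x \in \Ccal_1$ and $w_2 \in W_2$, and (ii) $G(w_1, y)$ is an equivalence for every $w_1 \in W_1$ and $y \in \Ccal_2$: condition (i) encodes that $G$ lands in the $W_2$-inverting functors $\Ccal_2 \to \Ecal$, and condition (ii) encodes $W_1$-invertibility in $\Fun(\Ccal_2, \Ecal)$, using that a natural transformation is an equivalence if and only if it is a pointwise equivalence. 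On the other hand, precomposition with the localization $\gamma \colon \Ccal_1 \cross \Ccal_2 \to (\Ccal_1 \cross \Ccal_2)[(W_1 \cross W_2)\inv]$ exhibits $\Fun((\Ccal_1 \cross \Ccal_2)[(W_1 \cross W_2)\inv], \Ecal)$ as the full subcategory of $\Fun(\Ccal_1 \cross \Ccal_2, \Ecal)$ on the functors inverting $W_1 \cross W_2$.

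Next I would check that these two full subcategories coincide. Since each $W_i$ contains identities, every morphism $(w_1, \mathrm{id}_y)$ and $(\mathrm{id}_x, w_2)$ lies in $W_1 \cross W_2$, so any functor inverting $W_1 \cross W_2$ satisfies (i) and (ii). Conversely, given (i) and (ii), for any morphism $(w_1, w_2) \colon (x_1, x_2) \to (y_1, y_2)$ the factorization
\[
(w_1, w_2) = (\mathrm{id}_{y_1}, w_2) \circ (w_1, \mathrm{id}_{x_2})
\]
writes it as a composite of two morphisms inverted by $G$, so $G(w_1, w_2)$ is an equivalence. Hence the two full subcategories agree. Since $\Phi \circ \gamma \equivalent \gamma_1 \cross \gamma_2$, we have $(\gamma_1 \cross \gamma_2)^\ast \equivalent \gamma^\ast \circ \Phi^\ast$; as both $\gamma^\ast$ and $(\gamma_1 \cross \gamma_2)^\ast$ are fully faithful with the same essential image (the common full subcategory identified above), it follows that $\Phi^\ast$ is an equivalence for every $\Ecal$. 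Because a functor of \categories is an equivalence precisely when precomposition with it induces an equivalence $\Fun(-, \Ecal)$ for all $\Ecal$, we conclude that $\Phi$ is an equivalence.

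The point that requires care — and the reason the hypothesis that $W_i$ contains identities cannot be dropped — is exactly the factorization $(w_1, w_2) = (\mathrm{id}_{y_1}, w_2) \circ (w_1, \mathrm{id}_{x_2})$: without the mixed-identity morphisms available to be inverted the statement fails (for instance, localizing $[1] \cross [1]$ at its single diagonal arrow does not produce $\pt \cross \pt \equivalent \pt$). An alternative, equally short route would bypass functor \categories altogether: write each localization as the pushout $\Ccal_i[W_i\inv] \equivalent \Ccal_i \amalg_{\coprod_{W_i} [1]} \coprod_{W_i} \pt$ and use that $- \cross \Dcal$ preserves colimits (again by cartesian closedness) to compute the product of localizations one factor at a time. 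The same identity-factorization bookkeeping reappears there when matching up which morphisms have been inverted, so I expect the functor-category argument above to be the cleanest to write down.
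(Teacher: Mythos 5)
Your argument is correct, and it is worth knowing that the paper's entire proof of this lemma is the single line ``This is an immediate consequence of \kerodon{02LV}'' --- so what you have written is, in effect, a self-contained proof of that cited Kerodon result rather than a different route to the lemma. Your reduction via the universal property of localization together with the exponential equivalence $\Fun(\Ccal_1 \cross \Ccal_2, \Ecal) \equivalent \Fun(\Ccal_1, \Fun(\Ccal_2,\Ecal))$, the identification of the two full subcategories of $\Fun(\Ccal_1 \cross \Ccal_2,\Ecal)$, and the final Yoneda-type step are all sound, and this is essentially how the cited result is proved. The one substantive point you raise --- that the statement requires each $W_i$ to contain the identity morphisms (or, equivalently, requires the mixed morphisms $(w_1,\id)$ and $(\id,w_2)$ to be available for the factorization $(w_1,w_2) = (\id_{y_1},w_2)\circ(w_1,\id_{x_2})$) --- is genuine: your $[1]\cross[1]$ example localized at the single diagonal arrow is a valid counterexample to the unqualified statement. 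The paper leaves this hypothesis implicit, but it is harmless in the only place the lemma is used (\Cref{prop:Kunneth_formula_for_locally_conically_refineable_stratifications}), where $W_P$ and $W_Q$ are the preimages of the equivalences under functors to posets as in \Cref{ntn:W_P} and hence contain all identities. Your alternative sketch via the pushout presentation of \Cref{rec:localizations_as_pushouts} and the fact that $-\cross\Dcal$ preserves colimits would also work, but as you say it requires the same bookkeeping, so the functor-category argument is the better write-up.
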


\begin{proof}
	This is an immediate consequence of \kerodon{02LV}.
\end{proof}

\begin{proposition}[(Künneth formula for locally conically refineable stratifications)]\label{prop:Kunneth_formula_for_locally_conically_refineable_stratifications}
	Let $ (X,P) $ and $ (Y,Q) $ be locally conically refineable stratified spaces.
	Then:
	\begin{enumerate}
		\item The product stratified space $ (X \cross Y, P \cross Q) $ is locally conically refineable.

		\item The natural functor
		\begin{equation*} 
			\Piinfty(X \cross Y, P \cross Q) \to \Piinfty(X,P) \cross \Piinfty(Y,Q) 
		\end{equation*}
		is an equivalence of \categories.

		\item The natural functor
		\begin{equation*} 
			\boxtensor \colon \ConsPhyp(X) \tensor \ConsQhyp(Y) \to \Conshyp_{P \cross Q}(X \cross Y)
		\end{equation*}
		is an equivalence of \categories.
	\end{enumerate}
\end{proposition}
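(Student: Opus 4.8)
The plan is to bootstrap everything from the conical case, where Lurie's simplicial model is available, and then to descend to coarsenings via the localization formula. The recurring geometric input I would record first is that a product of two conical stratifications with locally weakly contractible strata is again conical with locally weakly contractible strata: locally the factors look like $ Z_1 \cross C(L_1) $ and $ Z_2 \cross C(L_2) $, and since $ C(L_1) \cross C(L_2) \cong C(L_1 \star L_2) $ is again an open cone on the join, the product admits conical charts; its strata are products $ X_r \cross Y_s $ of locally weakly contractible spaces, hence locally weakly contractible. Granting this, part (1) is quick: cover $ X $ by opens $ U $ admitting conical refinements $ (U,R_U) $ and $ Y $ by opens $ V $ admitting conical refinements $ (V,S_V) $. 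Then $ \{U \cross V\} $ is an open cover of $ X \cross Y $, and $ (U \cross V, R_U \cross S_V) $ is a conical refinement of $ (U \cross V, P \cross Q) $ along the poset map $ R_U \cross S_V \to P \cross Q $. Thus each $ (U \cross V, P \cross Q) $ is conically refineable, and \enumref{prop:properties_of_locally_conically_refineable_stratified_spaces}{5} upgrades this to the assertion that $ (X \cross Y, P \cross Q) $ is locally conically refineable (in particular exodromic, so the comparison functor in (2) is defined).

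For (2) I would first treat the globally conically refineable case, i.e. assume given global conical refinements $ (X,R) \to (X,P) $ and $ (Y,S) \to (Y,Q) $. By the product-of-cones input, $ (X \cross Y, R \cross S) $ is conical with locally weakly contractible strata, so by the conical exodromy theorem \enumref{ex:conically_stratified_spaces_are_exodromic}{1} its exit-path \category is computed by the simplicial model, $ \Piinfty(X \cross Y, R \cross S) \equivalent \Sing(X \cross Y, R \cross S) $, and likewise for the factors. Unwinding \HAa{Definition}{A.6.2}, an $ n $-simplex of $ \Sing(X \cross Y, R \cross S) $ is a pair of maps $ \abs{\Delta^n} \to X $, $ \abs{\Delta^n} \to Y $ whose stratifications over $ R $ and $ S $ are each refined by $ [n] $; hence $ \Sing(X \cross Y, R \cross S) \cong \Sing(X,R) \cross \Sing(Y,S) $ as simplicial sets, and the projections recover the natural comparison functor. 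This yields the conical Künneth equivalence $ \Piinfty(X \cross Y, R \cross S) \equivalent \Piinfty(X,R) \cross \Piinfty(Y,S) $. Now apply the localization formula \enumref{thm:stability_properties_of_exodomic_stratified_spaces}{3} to the three coarsenings $ R \to P $, $ S \to Q $, and $ R \cross S \to P \cross Q $; writing $ W_P, W_Q, W_{P \cross Q} $ for the associated classes of morphisms, the conical Künneth equivalence carries $ W_{P \cross Q} $ onto $ W_P \cross W_Q $, since a pair of arrows maps to an equivalence in $ P \cross Q $ exactly when each coordinate does. Therefore
\begin{align*}
	\Piinfty(X \cross Y, P \cross Q) &\equivalent \Piinfty(X \cross Y, R \cross S)[W_{P \cross Q}\inv] \\
	&\equivalent \big(\Piinfty(X,R) \cross \Piinfty(Y,S)\big)[(W_P \cross W_Q)\inv] \\
	&\equivalent \Piinfty(X,R)[W_P\inv] \cross \Piinfty(Y,S)[W_Q\inv] \\
	&\equivalent \Piinfty(X,P) \cross \Piinfty(Y,Q) \comma
\end{align*}
where the third equivalence is \Cref{lem:localization_product}, and one checks this composite is the natural comparison functor.

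To remove the global hypothesis I would use descent. Choosing covers as in (1), on each $ U \cross V $ both factors are globally conically refineable, so the previous paragraph applies there. Via van Kampen \enumref{thm:stability_properties_of_exodomic_stratified_spaces}{4}, $ \Piinfty(X \cross Y, P \cross Q) $ is the colimit of $ \Piinfty(U \cross V, P \cross Q) $ over the Čech nerve of the cover $ \{U \cross V\} $, while $ \Piinfty(X,P) $ and $ \Piinfty(Y,Q) $ are colimits over the Čech nerves of $ \{U\} $ and $ \{V\} $. Since $ \Catinfty $ is cartesian closed, products commute with colimits in each variable; combined with the fact that the Čech nerve of the product cover is the diagonal of the external product of the two Čech nerves and that the diagonal is colimit-cofinal, this identifies $ \Piinfty(X,P) \cross \Piinfty(Y,Q) $ with the van Kampen colimit, compatibly with the comparison functor (which is natural by the functoriality \enumref{thm:stability_properties_of_exodomic_stratified_spaces}{5}). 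This proves (2) in general. Part (3) is then formal: under the exodromy equivalences the three \categories of constructible hypersheaves are presheaf \categories on the respective exit-path \categories, and $ \Fun(\cC,\Spc) \tensor \Fun(\cD,\Spc) \equivalent \Fun(\cC \cross \cD,\Spc) $ (as in \enumref{prop:Kunneth_formula_for_monodromic_topoi}{3}); feeding in the equivalence of (2) exhibits $ \boxtensor $ as an equivalence.

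The main obstacle I anticipate is the local-to-global step for (2): matching the van Kampen colimit attached to the product cover with the external product of the two factor colimits. The geometric inputs (products of cones are cones, the simplicial product formula) are essentially combinatorial and localized, so the genuine subtlety is the bookkeeping that the diagonal of the bisimplicial Čech object computes the product of the two exit-path \categories and that this identification is natural in the projections; this is where I would spend most of the care, as opposed to the formal localization manipulations, which are immediate once the conical Künneth formula is in hand.
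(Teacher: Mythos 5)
Your proposal is correct and follows essentially the same route as the paper: reduce to the globally conically refineable case, use the simplicial model to get the Künneth formula for the conical refinements, identify $W_{P\cross Q}$ with $W_P\cross W_Q$, and conclude with \Cref{lem:localization_product}; part (3) is then the formal $\Fun(\Ccal,\Spc)\tensor\Fun(\Dcal,\Spc)\equivalent\Fun(\Ccal\cross\Dcal,\Spc)$ step. The local-to-global bookkeeping you flag as the main obstacle is dispatched in the paper in one line by choosing semi-simplicial hypercoverings indexed by $\Deltainjop$ and invoking siftedness, so that $\Deltainjop$-indexed colimits commute with finite products in $\Catinfty$ — which is exactly the cofinality-of-the-diagonal point you were worried about.
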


\begin{proof}
	Item (1) is immediate from the definitions and the fact that a product of conically stratified spaces is still conically stratified.

	For (2), let
	\begin{equation*} 
		U_\bullet \colon \Deltainjop \to \Top_{/X}  \andeq V_\bullet \colon \Deltainjop \to \Top_{/Y} 
	\end{equation*}
	be open semi-simplicial hypercoverings of $ X $ and $ Y $ respectively, such that for each $ n \geq 0 $ the stratified spaces $(U_n, P)$ and $(V_n,Q)$ are conically refineable.
	Since $ \Deltainj $ is sifted, $ \Deltainj $-indexed colimits commute with finite products in $ \Catinfty $; hence \enumref{thm:stability_properties_of_exodomic_stratified_spaces}{4} shows that the natural functor
	\begin{equation*} 
		\colim_{[n] \in \Deltainj} \Piinfty(U_n, P) \cross \Piinfty(V_n, Q) \to \Piinfty(X \cross Y, P \cross Q) 
	\end{equation*}
	is an equivalence.
	We can therefore assume that $ (X,P) $ and $ (Y,Q) $ are (globally) conically refineable.

	Let $ (X,P') $ and $ (Y,Q') $ be conical refinements of $ (X,P) $ and $ (Y,Q) $, respectively.
	Then $ (X \cross Y, P' \cross Q') $ is conical and thus it is a conical refinement of $ (X \cross Y, P \cross Q) $.
	It follows from \cite[Theorem 5.4.1]{PortaTeyssier} and the explicit geometrical definition of the exit-path \category that the natural functor
	\begin{equation*} 
		\Piinfty(X \cross Y, P' \cross Q') \to \Piinfty(X,P') \cross \Piinfty(Y,Q') 
	\end{equation*}
	is an equivalence.
	Unraveling the definitions, we see that $W_{P \cross Q} = W_{P} \cross W_Q$ as collection of morphisms in $ \Piinfty(X,P') \cross \Piinfty(Y,Q') $.
	The conclusion now follows from \Cref{lem:localization_product}.

	Item (3) is immediate from (2) and the fact that the functor $\Fun(-,\Spc)$ carries products of \categories to tensor products in $ \PrL $.
\end{proof}


\subsection{Locally conically refineable stratifications: examples}\label{subsec:locally_conically_refineable_stratifications-examples}

We give some examples of locally conically refineable (hence locally exodromic) stratifications.

\begin{notation}[(simplicial complexes)]
	Let $ (V,S) $ be an simplicial complex, and regard $ S $ as a poset ordered by inclusion.
	Write $ \Delta^{(V,S)} $ for the geometric realization of $ (V,S) $.
	There is a natural stratification $ \fromto{\Delta^{(V,S)}}{S} $ with locally contractible strata; see \HAa{Definition}{A.6.7}.
\end{notation}

\begin{example}\label{ex:stratifications_of_locally_finite_simplicial_complexes_are_conically_refineable}
	Let $ (V,S) $ be a \textit{locally finite} simplicial complex and let $ \Ecal $ be an admissible presentable \category.
	Then the natural stratification $ \fromto{\Delta^{(V,S)}}{S} $ is conical \HAa{Proposition}{A.6.8}.
	Moreover, \HAa{Theorem}{A.6.10} shows that
	\begin{equation}\label{Pi_infty_simplicial_complex}
		\Piinfty(\Delta^{(V,S)},S) \equivalent S \period
	\end{equation}
	By \Cref{prop:properties_of_locally_conically_refineable_stratified_spaces}, we see that for any map of posets $ \fromto{S}{P} $, the stratified space $ (\Delta^{(V,S)},P) $ is locally $ \Ecal $-exodromic.
	That is, any stratified space admitting a refinement by a locally finite triangulation is locally $ \Ecal $-exodromic.
\end{example}

\begin{observation}\label{finite_geometric_realization_simplicial_complex}
	In light of \eqref{Pi_infty_simplicial_complex}, given a locally finite simplicial complex $ (V,S) $, the stratified space $ (\Delta^{(V,S)}, S) $ is categorically finite if and only if the set $ S $ is finite.
\end{observation}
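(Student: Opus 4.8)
The plan is to reduce the statement to a purely categorical fact about the poset $ S $ and then treat the two implications separately. By the identification \eqref{Pi_infty_simplicial_complex}, the exit-path \category $ \Piinfty(\Delta^{(V,S)},S) $ is equivalent to $ S $, viewed as an object of $ \Catinfty $ via its nerve. Hence, by \Cref{def:categorical_compactness_and_finiteness_in_topology}, the stratified space $ (\Delta^{(V,S)},S) $ is categorically finite precisely when $ S $ is a finite object of $ \Catinfty $, and it suffices to show that a poset $ S $ is a finite object of $ \Catinfty $ if and only if its underlying set is finite.

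First I would treat the case where $ S $ is a finite poset. Then its nerve is a finite simplicial set: the nondegenerate $ n $-simplices are the strictly increasing chains $ p_0 < \cdots < p_n $ in $ S $, of which there are only finitely many. Writing the nerve as the colimit of its nondegenerate simplices exhibits $ S $ as a finite colimit in $ \Catinfty $ of the \categories $ [n] $. Since the $ [n] $ are finite objects of $ \Catinfty $ and finite objects are stable under finite colimits (\Cref{recollection_finite_category}), this gives that $ S $ is finite.

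Conversely, suppose $ S $ is a finite object of $ \Catinfty $; recall (\Cref{recollection_finite_category}) that $ S $ is then in particular compact. I would express $ S $ as the filtered colimit $ \colim_{i} S_i $ of its full subposets $ S_i \subset S $ on finite subsets, directed by inclusion. Because each transition functor is fully faithful, this filtered colimit is computed as the evident union, and therefore recovers $ S $ in $ \Catinfty $. Compactness of $ S $ then forces the identity $ \id{S} $ to lie in the image of
\begin{equation*}
	\colim_{i} \Map_{\Catinfty}(S,S_i) \equivalent \Map_{\Catinfty}\bigl(S, \textstyle\colim_i S_i\bigr) \comma
\end{equation*}
so there is a finite subposet $ S_i $, a functor $ f \colon \fromto{S}{S_i} $, and the inclusion $ \iota \colon \incto{S_i}{S} $ with $ \iota f \equivalent \id{S} $. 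The functor $ \iota $ is fully faithful, and as a retraction of the identity it is essentially surjective; hence $ \iota $ is an equivalence and $ S \equivalent S_i $ has finite underlying set.

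The main obstacle is the verification, in the converse direction, that the filtered colimit of the finite full subposets genuinely computes $ S $ inside $ \Catinfty $ rather than only in $ \Poset $; this rests on the fact that filtered colimits of fully faithful functors are computed as unions of objects and mapping spaces (equivalently, that the inclusion of $1$-categories into $ \Catinfty $ preserves filtered colimits), which also guarantees that the colimit remains a poset. Granting this, the passage from compactness to an honest retraction onto a finite subposet, and the upgrade of that retraction to an equivalence via full faithfulness, are routine, and the forward direction is immediate once one notes that a finite poset has a finite nerve.
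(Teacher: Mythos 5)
Your argument is correct. The paper records this as an unproved observation, so the intended reasoning is presumably the short one: by \eqref{Pi_infty_simplicial_complex} the question reduces to whether the poset $ S $ is a finite object of $ \Catinfty $, and your forward direction (a finite poset has a nerve with finitely many nondegenerate simplices, hence is finite by \Cref{recollection_finite_category}) is exactly that reasoning. For the converse you take a genuinely different, more roundabout route: you weaken finiteness to compactness, exhaust $ S $ by its finite full subposets, and extract a retraction onto some $ S_i $ which you then upgrade to an equivalence using full faithfulness. This is valid --- the only point needing care, which you correctly flag, is that the filtered colimit of the $ S_i $ along fully faithful inclusions really computes $ S $ in $ \Catinfty $ --- and it has the merit of proving the stronger statement that a categorically \emph{compact} poset is finite. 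The more direct argument available from \Cref{recollection_finite_category} is that a finite object of $ \Catinfty $ is categorically equivalent to a simplicial set with finitely many nondegenerate simplices, hence has only finitely many equivalence classes of objects; since a poset has no nontrivial isomorphisms, its underlying set is then finite. Either way the observation holds.
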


\begin{example}\label{ex:Favero-Huang_tree_stratification_is_conically_refineable}
	The \textit{tree stratification} of a finite simplicial complex considered by Favero--Huang \cite[\S4.4]{arXiv:2205.03730} is conically refineable, hence locally exodromic.
	Moreover, \enumref{thm:stability_properties_of_exodomic_stratified_spaces}{6} and \Cref{finite_geometric_realization_simplicial_complex} show that the tree stratification is categorically finite. 
\end{example}

One source of locally exodromic stratifications comes from subanalytic stratifications of real analytic spaces.
Recall that subanalytic stratifications need not be conical; see \Cref{fig:circle}.

\begin{definition}
	Let $ X $ be a topological space.
	We say that a stratification $ \fromto{X}{P} $ is \defn{locally finite} if for every point $ x \in X $, there is an open neighborhood $ U $ of $ x $ such that $ U $ intersects only finitely many strata of $ (X,P) $.
\end{definition}


\begin{definition}\label{def:subanalytic_stratified_space}
	A \textit{subanalytic stratified space} is the data of a triple $ (M,X,P) $ where $ M $ is a smooth real analytic space, $ X \subset M $ is a locally closed subanalytic subset, and $ X \to P $ is a locally finite stratification by subanalytic subsets of $ M $.
\end{definition}

Subanalytic stratified spaces provide many examples of (locally) categorically finite stratified spaces:

\begin{definition}\label{def:local_categorical_compactness}
	Let $ (X,P) $ be a locally exodromic stratified space.
	We say that $ (X,P) $ is \defn{locally categorically finite (resp., compact)} if there exists an open cover $ \Ucal $ such that for each $ U \in \Ucal $, the exodromic stratified space $ (U,P) $ is categorically finite (resp., compact).
\end{definition}

\begin{theorem}\label{thm:subanalytic_stratified_spaces_are_conically_refineable}
	Let $ (M,X,P) $ be a subanalytic stratified space.
	Then:
	\begin{enumerate}
		\item\label{thm:subanalytic_stratified_spaces_are_conically_refineable.1} The stratified space $ (X,P) $ admits a refinement by a locally finite triangulation. 

		\item\label{thm:subanalytic_stratified_spaces_are_conically_refineable.2} For any admissible \category $ \Ecal $, the stratified space $ (X,P) $ is locally $ \Ecal $-exodromic.

		\item\label{thm:subanalytic_stratified_spaces_are_conically_refineable.3} If $ X $ is compact, then $ (X,P) $ admits a refinement by a \emph{finite} triangulation.
		Hence $ (X,P) $ is categorically finite.

		\item\label{thm:subanalytic_stratified_spaces_are_conically_refineable.4} The stratified space $ (X,P) $ is locally categorically finite.
		
		\item If $U \Subset X$ is a relatively compact subanalytic open subset, then $(U,P)$ is categorically finite.
	\end{enumerate}
\end{theorem}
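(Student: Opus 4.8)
The plan is to reduce to the compact case handled in \enumref{thm:subanalytic_stratified_spaces_are_conically_refineable}{3} by triangulating the closure $\overline{U}$ compatibly with $U$. Since $U \Subset X$, the closure $\overline{U}$ is a compact subanalytic subset of $X$; as the stratification $s \colon \fromto{X}{P}$ is locally finite and $\overline{U}$ is compact, only finitely many strata meet $\overline{U}$, so the relevant family of subanalytic subsets is finite. First I would invoke the subanalytic triangulation theorem (as in the proofs of items \enumref{thm:subanalytic_stratified_spaces_are_conically_refineable}{1} and \enumref{thm:subanalytic_stratified_spaces_are_conically_refineable}{3}) applied to the compact subanalytic set $\overline{U}$ and the finite family $\setbar{X_p \cap \overline{U}}{p \in P} \cup \{U\}$: this produces a finite simplicial complex $(V,S)$, a map of posets $\phi \colon \fromto{S}{P}$, and a subanalytic homeomorphism $h \colon \equivto{\Delta^{(V,S)}}{\overline{U}}$ compatible with the stratifications (so $h(\sigma^\circ) \subseteq X_{\phi(\sigma)}$) and carrying $U$ onto a union of open simplices.

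Next, I would set $S' \colonequals \setbar{\sigma \in S}{h(\sigma^\circ) \subseteq U}$. Because $U$ is open in $\overline{U}$ and a union of open simplices, $S'$ is an open (equivalently, upward-closed) subposet of $S$, and $h$ restricts to a stratified homeomorphism $\equivto{\Delta^{(V,S)}_{S'}}{U}$. By \Cref{ex:stratifications_of_locally_finite_simplicial_complexes_are_conically_refineable} and \Cref{finite_geometric_realization_simplicial_complex}, the finite simplicial complex $(\Delta^{(V,S)},S)$ is exodromic and categorically finite, with $\Piinfty(\Delta^{(V,S)},S) \equivalent S$. Pulling back to the locally closed subposet $S'$, \enumref{thm:stability_properties_of_exodomic_stratified_spaces}{2} gives $\Piinfty(\Delta^{(V,S)}_{S'},S') \equivalent S'$, and \enumref{thm:stability_properties_of_exodomic_stratified_spaces}{6} ensures this restriction remains categorically finite. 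Transporting along $h$, the stratified space $(U,S')$ is exodromic and categorically finite. (In fact $(U,S')$ is a genuine conical refinement of $(U,P)$, since the restriction of the conical stratification $(\Delta^{(V,S)},S)$ to the locally closed subposet $S'$ stays conical by \cite[Lemma 2.1.11]{arXiv:2211.05004}.)

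Finally, the $P$-stratification of $U$ factors as the composite of the $S'$-stratification with $\phi|_{S'} \colon \fromto{S'}{P}$, so $(U,P)$ is the coarsening of the exodromic, categorically finite stratified space $(U,S')$ along $\phi|_{S'}$. Stability of categorical finiteness under coarsening (\Cref{prop:categorical_finiteness_and_compactness_are_stable_under_coarsening}, applied to the stratified \topos $(\Shhyp(U),S')$; see also \enumref{thm:stability_properties_of_exodomic_stratified_spaces}{3}) then shows that $(U,P)$ is exodromic and categorically finite, as desired.

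The hard part will be the triangulation step: producing a single finite subanalytic triangulation of $\overline{U}$ that is simultaneously compatible with all the strata $X_p \cap \overline{U}$ and with the open set $U$, and verifying that $U$ corresponds to an open subposet $S'$. This is where the subanalytic geometry (Hironaka--Łojasiewicz triangulation) does the real work; once $U$ is identified with a locally closed restriction of a finite simplicial complex, the remaining steps are formal consequences of the stability theorem.
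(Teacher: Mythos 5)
Your argument is correct, but it reaches the compact case by a different route than the paper. The paper's proof never asks the triangulation to be compatible with $U$ itself: it extends the stratification $U \to P$ to a finite stratification $\overline{U} \to P^{\lhd}$ by declaring the boundary $\overline{U} \setminus U$ to be a single new minimal stratum, notes that $(\overline{U}, P^{\lhd})$ is then a compact subanalytic stratified space, applies item (3) to conclude it is categorically finite, and finally restricts along the open subposet $P \subset P^{\lhd}$ via \enumref{thm:stability_properties_of_exodomic_stratified_spaces}{6}. You instead re-run the triangulation argument with $U$ adjoined to the family of subanalytic subsets, identify $(U,P)$ with a coarsening of the restriction of a finite simplicial complex to the open subposet $S'$ of simplices whose interiors lie in $U$, and then invoke stability under pulling back to locally closed subposets and under coarsening. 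Both routes rest on the same ingredients; the paper's version is shorter because the poset-level extension $P \subset P^{\lhd}$ makes compatibility with $U$ automatic, whereas yours must additionally arrange that $U$ is a union of open simplices and check that $S'$ is upward-closed (which you do, correctly, using openness of $U$). A small dividend of your route is that it produces an explicit conical refinement $(U,S')$ of $(U,P)$, rather than only one of $(\overline{U},P^{\lhd})$.
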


\begin{proof}
	Item (1) follows from \cite[\S 1.7]{GM} combined with \cite{Goreski_triangulation}.
	Item (2) follows from (1) and \Cref{prop:properties_of_locally_conically_refineable_stratified_spaces}. 
	For (3), note that by (1), the stratified space $ (X,P) $ admits a  triangulation by a locally finite simplicial complex $ (\Delta^{(V,S)}, S) $. 
	Since $ X $ is compact, the poset $ S $ is finite.
	The final statement in (3) follows from \enumref{thm:stability_properties_of_exodomic_stratified_spaces}{6} and \Cref{finite_geometric_realization_simplicial_complex}.

	Now we prove (4).
	At the cost of shrinking $ M $, we can assume that $ X $ is closed in $ M $.
	Let $ x\in X $ and let $ B\subset M $ be a small ball centered at $ x $ such that $ X \intersect B $ intersects only finitely many strata.
	We claim that $ (X \intersect B,P) $ is categorically finite.
	Note that since $ X \intersect B $ intersects only finitely many strata, we may assume that $ P $ is finite.
	Extend $ X \intersect B\to P $ to a finite stratification $ B\to P^{\rhd} $ sending $ B\setminus (X \intersect B)$ to the terminal object of $P^{\rhd}$.
	Since $ P $ is closed in $P^{\rhd}$, \enumref{thm:stability_properties_of_exodomic_stratified_spaces}{6} reduces the claim to the case where $ X = B $.
	We thus need to show that $ (B,P^{\rhd}) $ is categorically finite.
	Write $ Q \colonequals P^{\rhd} $ and extend $ B\to Q $ to a finite stratification $ \Bbar\to Q^{\lhd} $ by sending $ \partial \Bbar$ to the initial object of $ Q^{\lhd} $.
	Recall that subanalytic subsets are closed under finite intersections, closures, and complements; as a consequence, the stratification $ (\Bbar, Q^{\lhd}) $ is also subanalytic.
	Since $ Q $ is open in $ Q^{\lhd} $, \enumref{thm:stability_properties_of_exodomic_stratified_spaces}{6} reduces the claim to the case where $ X = \Bbar $.
	An application of (3) now shows that $ (\Bbar,Q^{\lhd}) $ is categorically finite.
	
	Finally, we prove (5).
	The closure $\Ubar$ is again a subanalytic (see e.g., the discussion following \cite[Definition 3.1]{Bierstone_Milman}), and it is compact by assumption.
	In particular, it intersects only finitely many strata.
	As before, we can thus assume that $ P $ is finite.
	Extend $ U \to P$ to a finite stratification \smash{$ \Ubar \to P^{\lhd}$} sending the boundary \smash{$\partial \Ubar \colonequals \Ubar \sminus U$} to the initial object of $P^{\lhd}$.
	Then $P$ is open in $P^{\lhd}$, so \enumref{thm:stability_properties_of_exodomic_stratified_spaces}{6} reduces us to verify that $ (\Ubar,P)$ is categorically finite, and this follows directly from (3).
\end{proof}

\begin{example}\label{ex:Favero-Huang-Bondal-Ruan_stratification_is_locally_conically_refineable}
	The \textit{Bondal--Ruan stratification} of the $ n $-torus considered by Favero--Huang \cites{MR2278891}[\S5.2]{arXiv:2205.03730} is subanalytic, hence locally exodromic, categorically finite, and locally categorically finite.
\end{example}

Stratifications of real algebraic varieties are especially well-behaved:

\begin{definition}\label{def:algebraic_stratified_space}
	An \textit{algebraic stratified space} is the data of a stratified space $ (X,P) $ where $ X $ is (the real points of) an algebraic variety over $ \RR $ and $ X \to P $ is a finite stratification by Zariski locally closed subsets.
\end{definition}

\begin{warning}
	Unlike a subanalytic stratified space, an algebraic stratified space $ (X,P) $ is not presented as a subspace of a smooth algebraic variety.
	Note that if $ X $ is singular, such a presentation may not exist.
\end{warning}

\begin{theorem}\label{thm:algebraic_stratified_spaces_are_conically_refineable_and_categorically_finite}
	Let $ (X,P) $ be an algebraic stratified space.
	Then:
	\begin{enumerate}
		\item\label{thm:algebraic_stratified_spaces_are_conically_refineable_and_categorically_finite.1} If $ X $ is affine, $ (X,P) $ admits a categorically finite conical refinement $ (X,R) $ with $ R $ finite.
		Hence $ (X,P) $ is categorically finite.

		\item\label{thm:algebraic_stratified_spaces_are_conically_refineable_and_categorically_finite.2} The stratified space $ (X,P) $ is locally conically refineable.

		\item\label{thm:algebraic_stratified_spaces_are_conically_refineable_and_categorically_finite.3} For any admissible \category $ \Ecal $, the stratified space $ (X,P) $ is locally $ \Ecal $-exodromic and locally categorically finite. 

		\item\label{thm:algebraic_stratified_spaces_are_conically_refineable_and_categorically_finite.4} The stratified space $ (X,P) $ is categorically finite.
	\end{enumerate}
\end{theorem}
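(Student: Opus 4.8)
The plan is to reduce the whole statement to the affine case (1), which I would in turn deduce from the compact subanalytic situation already handled in \Cref{thm:subanalytic_stratified_spaces_are_conically_refineable}. Suppose first that $X$ is affine and choose a closed embedding $X \subseteq \RR^n \subseteq \bbP^n(\RR)$. Let $\Xbar \subseteq \bbP^n(\RR)$ be the projective closure of $X$. Since $\bbP^n(\RR)$ is a compact smooth real analytic manifold and $\Xbar$ is a real algebraic set (hence semialgebraic, hence subanalytic and closed), the real locus $\Xbar$ is compact. Writing $H_{\infty}$ for the hyperplane at infinity, the locus $Z \colonequals \Xbar \cap H_{\infty}$ is Zariski closed in $\Xbar$ and $X = \Xbar \setminus Z$ is open. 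I would then extend $X \to P$ to a finite stratification $\Xbar \to P^{\lhd}$ that sends $Z$ to the freshly adjoined initial (minimal) object, so that $P$ is an open up-closed subposet of $P^{\lhd}$ and $\Xbar_P = X$. Each stratum of $P$ is Zariski locally closed in the open subvariety $X$, hence Zariski locally closed in $\Xbar \subseteq \bbP^n(\RR)$ and therefore subanalytic, while $Z$ is Zariski closed and subanalytic; as the stratification is finite, the triple $(\bbP^n(\RR),\Xbar,P^{\lhd})$ is a subanalytic stratified space in the sense of \Cref{def:subanalytic_stratified_space}.

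Now I would apply \enumref{thm:subanalytic_stratified_spaces_are_conically_refineable}{3}: since $\Xbar$ is compact, it admits a refinement by a \emph{finite} triangulation $(\Xbar,S)$ and $(\Xbar,P^{\lhd})$ is categorically finite. Restricting everything along the open inclusion $P \subseteq P^{\lhd}$, the triangulation restricts to a finite triangulation $(X,S_P)$ which (using that restricting a conical stratification to a locally closed subposet stays conical, \cite[Lemma 2.1.11]{arXiv:2211.05004}) is a conical refinement of $(X,P)$ with locally weakly contractible strata and $S_P$ finite; setting $R \colonequals S_P$ gives the asserted categorically finite conical refinement with $R$ finite (categorical finiteness of $(X,R)$ by \Cref{finite_geometric_realization_simplicial_complex}). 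Finally $(X,P) = (\Xbar_P,P)$ is categorically finite either by restricting the categorically finite $(\Xbar,P^{\lhd})$ along the open subposet $P$ via \enumref{thm:stability_properties_of_exodomic_stratified_spaces}{6}(a), or by stability under coarsening along $R \to P$ (\Cref{prop:categorical_finiteness_and_compactness_are_stable_under_coarsening}). This proves (1).

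The remaining items are formal consequences. Choosing an affine Zariski open cover $\{U_i\}$ of $X$, each $(U_i,P)$ is an affine algebraic stratified space and hence conically refineable by (1); as the $U_i$ are open in the analytic topology and cover $X$, \enumref{prop:properties_of_locally_conically_refineable_stratified_spaces}{5} gives that $(X,P)$ is locally conically refineable, which is (2). For (3), local $\Ecal$-exodromicity for admissible $\Ecal$ is then \enumref{prop:properties_of_locally_conically_refineable_stratified_spaces}{1}, while local categorical finiteness holds because the affine cover $\{U_i\}$ itself witnesses \Cref{def:local_categorical_compactness}, each $(U_i,P)$ being categorically finite by (1). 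For (4) I would use that a variety over $\RR$ is quasi-compact and separated: pick a \emph{finite} affine Zariski open cover $U_1,\dots,U_n$ of $X$. By separatedness every intersection $U_{i_1}\cap\cdots\cap U_{i_k}$ is again affine, so by (1) it admits a conical refinement that is exodromic and categorically finite. The finite-cover descent criterion \enumref{thm:stability_properties_of_exodomic_stratified_spaces}{6}(b) then shows $(X,P)$ is exodromic and categorically finite.

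The main obstacle is the affine case (1): I must verify carefully that projective closure together with the boundary extension genuinely produces a subanalytic stratified space — in particular that all Zariski-locally-closed strata are subanalytic inside $\bbP^n(\RR)$ and that $X$ is recovered as the locally closed locus $\Xbar_P$ over the open subposet $P$ — and that the finite triangulation of the compact $\Xbar$ restricts to an honest conical refinement of $(X,P)$. Once (1) is secured, items (2)–(4) are purely formal applications of the stability theorem for exodromic stratified spaces and the subanalytic case.
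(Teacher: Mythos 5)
Your proof is correct and follows essentially the same route as the paper: reduce the affine case to the compact subanalytic theorem by passing to the projective closure and adjoining the boundary as a new minimal stratum, then deduce (2)--(4) from the local-to-global, restriction, and coarsening machinery. The only cosmetic differences are that you stratify $ \Xbar $ by $ P^{\lhd} $ whereas the paper stratifies all of $ \PP^n $ by $ (P^{\rhd})^{\lhd} $ (both land in the same place), and that your fallback citation via \enumref{thm:stability_properties_of_exodomic_stratified_spaces}{6}(a) is the right one for the categorical finiteness of $ (X,S_P) $ --- the restriction of a triangulation to a locally closed subposet is no longer literally a triangulation, so \Cref{finite_geometric_realization_simplicial_complex} does not apply directly.
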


\begin{proof}
	For (1), let us view $ X $ as a closed subset of $ \AA^n $.
	Let $ \Xbar $ be the closure of $ X $ in $ \PP^n $.
	Define $ Q \colonequals (P^{\rhd})^{\lhd} $ and let us extend $ X\to P $ as a stratification $ \PP^n \to Q $ by sending \smash{$ \Xbar \setminus X $}  to the initial object of $ Q $ and \smash{$ \PP^n\setminus \Xbar $} to the terminal object of $ Q $.
	Then, $ (\PP^n,Q) $ is a compact subanalytic stratified space.
	By \enumref{thm:subanalytic_stratified_spaces_are_conically_refineable}{3}, $ (\PP^n,Q) $ admits a refinement $ Q' \to Q $ by a finite triangulation.
	Thus, $ (\PP^n,Q') $ is conically stratified with locally weakly contractible strata.
	Moreover, \Cref{finite_geometric_realization_simplicial_complex} shows that $ (\PP^n,Q') $ is categorically finite.
	Since $ P \subset Q $ is locally closed, $ (X,Q'_P) $ is also conically stratified with locally weakly contractible strata.
	Moreover, \Cref{prop:base_change_lc_preserves_finite_and_compact_categories} shows that $ (X,Q'_P) $ is categorically finite.
	Finally, since $ Q $ is finite, so is $ Q'_P $.

	Item (2) is an immediate consequence of (1).
	Item (3) follows from (1) and \Cref{prop:properties_of_locally_conically_refineable_stratified_spaces}.
	Since $ X $ admits a finite cover by affine subsets whose iterated intersections are again affine, (4) follows from (1) and \enumref{thm:stability_properties_of_exodomic_stratified_spaces}{6}.
\end{proof}


\subsection{A criterion for constructibility with respect to a coarsening}\label{subsec:criterion_for constructibility_with_respect_to_a_coarsening}

Let $ (X,R) $ be an exodromic stratified space with locally weakly contractible strata and let $ \phi \colon \fromto{R}{P} $ be a map of posets.
It is often useful to have a geometric recognition criterion for when an $ R $-hyperconstructible hypersheaf is $ P $-hyperconstructible.
The goal of this subsection is to explain such a criterion: an $ R $-hyperconstructible hypersheaf $ F $ on $ X $ is $ P $-hyperconstructible if and only if for each morphism $ \gamma \colon \fromto{x}{y} $ in the exit-path \category $ \Piinfty(X,R) $ that lies in a single stratum of the coarser stratification $ (X,P) $, the induced specialization map on stalks
\begin{equation*}
	\fromto{\yupperstar F}{\xupperstar F}
\end{equation*}
is an equivalence.%
\footnote{We do not make use of this result in the present paper, but need it in future work.}
This criterion is an easy consequence of the exodromy equivalence and localization formula for the exit-path \category of a coarsening.

\begin{notation}[(cospecialization maps)]\label{ntn:cospecialization_maps}
	Let $ \Ecal $ be a presentable \category and let $ (\X,R) $ be an $ \Ecal $-exodromic stratified \topos.
	\begin{enumerate}
		\item Write
		\begin{equation*}
			[-] \colon \incto{\Piinfty(\X,R)^{\op}}{\ConsR(\X)} \comma \quad \goesto{x}{[x]}
		\end{equation*}
		for the inclusion of the subcategory of atomic objects.
		For each $ E \in \Ecal $ and $ x \in \Piinfty(\X,R) $, we write $[x] \tensor E$ for the canonical object in
		\begin{equation*} 
			\ConsR(\X) \tensor \Ecal \equivalence \ConsR(\X;\Ecal) \period 
		\end{equation*}
		
		\item Given a morphism $ \gamma \colon \fromto{x}{y} $ in $ \Piinfty(\X,R) $, we write
		\begin{equation*} 
			\cosp_{R}^{\gamma} \colonequals [\gamma] \colon [y] \to [x] 
		\end{equation*}
		for the corresponding morphism in $ \ConsR(\X) $.
		We refer to $ \cosp_{R}^{\gamma} $ as the \defn{cospecialization map} associated to $ \gamma $.
		Again, for general $ \Ecal $ and for each $E \in \Ecal$, we write \smash{$ \cosp_R^{\gamma} \tensor \id{E} $} for the corresponding morphism in \smash{$\ConsR(\X;\Ecal)$}.
	\end{enumerate}
\end{notation}

\begin{observation}[(specialization maps)]\label{obs:specalization_maps}
	Let $ (X,R) $ be an exodromic stratified space with locally weakly contractible strata.
	In light of \Cref{obs:objects_of_Exit_with_locally_weakly_contractible_strata}, given a $ R $-hyper\-constructible hypersheaf $ F $ and a morphism $ \gamma \colon \fromto{x}{y} $ in $ \Piinfty(X,R) $, applying $\Map(-,F)$ to the cospecialization map
	\begin{equation*}
		\cosp_{R}^{ \gamma} \colon [y] \to [x] 
	\end{equation*}
	yields a \textit{specialization map} $ \fromto{\xupperstar F}{\yupperstar F} $ on stalks.
\end{observation}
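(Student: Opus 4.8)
The plan is to show that the Observation is a direct unwinding of the corepresentability established in \Cref{obs:objects_of_Exit_with_locally_weakly_contractible_strata}, combined with the contravariance of the mapping-space functor. First I would recall that, since $(X,R)$ is exodromic with locally weakly contractible strata, each point $x \in X$ determines an atomic object $[x] \in \ConsRhyp(X)$, and that under the exodromy equivalence $\ConsRhyp(X) \equivalent \Fun(\Piinfty(X,R),\Spc)$ the functor $\ConsRhyp(X) \to \Spc$ corepresented by $[x]$ is precisely the stalk functor. That is, there is an equivalence $\Map_{\ConsRhyp(X)}([x],F) \equivalent \xupperstar F$ natural in $F \in \ConsRhyp(X)$, and likewise $\Map_{\ConsRhyp(X)}([y],F) \equivalent \yupperstar F$. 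This identification is the only non-formal ingredient, and it is supplied by the functoriality of the monodromy equivalence recorded in \Cref{obs:objects_of_Exit_with_locally_weakly_contractible_strata}.

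Next I would observe that the cospecialization map $\cosp_{R}^{\gamma} = [\gamma]$ is, by \Cref{ntn:cospecialization_maps}, the image of $\gamma \colon \fromto{x}{y}$ under the contravariant inclusion $[-] \colon \incto{\Piinfty(X,R)^{\op}}{\ConsRhyp(X)}$ of the subcategory of atomic objects; hence it is a morphism $\cosp_{R}^{\gamma} \colon [y] \to [x]$ in $\ConsRhyp(X)$, as displayed. Applying the contravariant functor $\Map_{\ConsRhyp(X)}(-,F)$ to this morphism therefore produces a map $\Map_{\ConsRhyp(X)}([x],F) \to \Map_{\ConsRhyp(X)}([y],F)$. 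Under the two natural equivalences from the previous step, this is exactly a map $\fromto{\xupperstar F}{\yupperstar F}$, which we declare to be the specialization map attached to $\gamma$; naturality in $F$ is inherited from the naturality of the corepresentability equivalences.

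Since every step is a formal manipulation of the already-established corepresentability, there is no genuine obstacle here: the substantive content lies entirely in \Cref{obs:objects_of_Exit_with_locally_weakly_contractible_strata}, which in turn rests on the functoriality of monodromy. The only point that demands a little bookkeeping is the interplay of the two variance conventions — that $[-]$ reverses the direction of $\gamma$, and that $\Map(-,F)$ reverses it a second time — so that the resulting specialization map indeed runs from $\xupperstar F$ to $\yupperstar F$ and not in the opposite direction. Tracking these two reversals carefully is what guarantees the stated direction of the specialization map.
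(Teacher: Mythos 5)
Your proposal is correct and is precisely the unwinding the paper intends: the statement is recorded as an Observation with no separate proof, resting exactly on the corepresentability of stalks by the atomic objects $[x]$ from \Cref{obs:objects_of_Exit_with_locally_weakly_contractible_strata} and on the variance conventions of \Cref{ntn:cospecialization_maps}. Your bookkeeping of the two direction reversals (one from $[-]$ being defined on $\Piinfty(X,R)^{\op}$, one from the contravariance of $\Map(-,F)$) is the only content, and you have it right.
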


\begin{recollection}\label{rec:presheaves_on_a_localization}
	Let $ \Dcal_0 $ be a small \category and let $ W \subset \Mor(\Dcal_0) $ be a class of morphisms.
	Write $ L \colon \fromto{\Dcal_0}{\Dcal_0[W\inv]} $ for the localization functor.
	Then, by the definition of localization, the induced pullback functor
	\begin{equation*}
		\Lupperstar \colon \fromto{\PSh(\Dcal_0[W\inv])}{\PSh(\Dcal_0)}
	\end{equation*}
	is fully faithful with image those $ F \colon \fromto{\Dcal_0^{\op}}{\Spc} $ that carry morphisms in $ W $ to equivalences.
\end{recollection}

\begin{proposition}\label{prop:presheaves_on_a_localization_via_locality}
	Let $ \Dcal_0 $ be a small \category, $ W \subset \Mor(\Dcal_0) $ a class of morphisms, and $ \Ecal $ a presentable \category.
	Write $ L \colon \fromto{\Dcal_0}{\Dcal_0[W\inv]} $ for the localization functor.
	Then:
	\begin{enumerate}
		\item Let $ F \in \PSh(\Dcal_0;\Ecal) $ and let $ f $ be a morphism in $ \Dcal_0 $.
		Then the full subcategory of $ \Ecal $ spanned by those objects $ E \in \Ecal $ such that $ F $ is $ \yo(f) \tensor \id{E} $-local is closed under colimits and retracts.

		\item An object $ F \in \PSh(\Dcal_0;\Ecal) $ is in the image of the fully faithful pullback functor
		\begin{equation*}
			\Lupperstar \colon \incto{\PSh(\Dcal_0[W\inv];\Ecal)}{\PSh(\Dcal_0;\Ecal)}
		\end{equation*}
		if and only if for each $ w \in W $ and $ E \in \Ecal $, the object $ F $ is $ \yo(w) \tensor \id{E} $-local.
	\end{enumerate}
\end{proposition}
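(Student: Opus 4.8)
The plan is to reduce both statements to a single mapping-space computation, after which part (1) becomes a formal remark about colimits and part (2) follows from the universal property of the localization $L$ together with the Yoneda lemma.

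\textbf{Step 1 (the key formula).} First I would record, for every object $ c \in \Dcal_0 $ and every $ E \in \Ecal $, a natural equivalence
\begin{equation*}
	\Map_{\PSh(\Dcal_0;\Ecal)}(\yo(c) \tensor E, F) \equivalent \Map_{\Ecal}(E, F(c)) \period
\end{equation*}
This follows from the observation that the evaluation functor $ \mathrm{ev}_c \colon \PSh(\Dcal_0;\Ecal) \to \Ecal $ preserves limits and admits as a left adjoint the functor $ \goesto{E}{\yo(c) \tensor E} $. Indeed, since $ (-) \tensor E $ preserves colimits and $ \yo(c) $ is the left Kan extension of the point along $ \incto{\{c\}}{\Dcal_0^{\op}} $, the object $ \yo(c) \tensor E $ is the copower $ \goesto{d}{\Map_{\Dcal_0}(d,c) \tensor E} $, which represents $ \goesto{F}{\Map_{\Ecal}(E, F(c))} $ by the enriched co-Yoneda lemma. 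Consequently, for a morphism $ f \colon \fromto{a}{b} $ in $ \Dcal_0 $, the object $ F $ is $ \yo(f) \tensor \id{E} $-local if and only if the map
\begin{equation*}
	\Map_{\Ecal}(E, F(f)) \colon \fromto{\Map_{\Ecal}(E, F(b))}{\Map_{\Ecal}(E, F(a))}
\end{equation*}
is an equivalence.

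\textbf{Step 2 (part (1)).} With this reformulation, part (1) is immediate. Fixing $ f $ and $ F $, the full subcategory in question consists of those $ E $ at which the natural transformation $ \Map_{\Ecal}(-, F(f)) $ between the functors $ \Map_{\Ecal}(-, F(b)) $ and $ \Map_{\Ecal}(-, F(a)) $ is an equivalence. Since $ \Map_{\Ecal}(-, Y) \colon \fromto{\Ecal^{\op}}{\Spc} $ carries colimits in $ \Ecal $ to limits in $ \Spc $, and equivalences in $ \Spc $ are stable under limits, this subcategory is closed under colimits; stability of equivalences under retracts gives closure under retracts.

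\textbf{Step 3 (part (2)).} For part (2) I would first identify the essential image of $ \Lupperstar $. By the universal property of the localization $ L \colon \fromto{\Dcal_0}{\Dcal_0[W\inv]} $, restriction along $ L^{\op} $ exhibits $ \PSh(\Dcal_0[W\inv];\Ecal) $ as the full subcategory of $ \PSh(\Dcal_0;\Ecal) $ spanned by those functors $ \fromto{\Dcal_0^{\op}}{\Ecal} $ that carry every morphism in $ W $ to an equivalence in $ \Ecal $; this is the $ \Ecal $-coefficient analogue of \Cref{rec:presheaves_on_a_localization}. It then remains to match this condition with the locality condition: for $ w \in W $, the morphism $ F(w) $ is an equivalence in $ \Ecal $ if and only if $ \Map_{\Ecal}(E, F(w)) $ is an equivalence for every $ E \in \Ecal $ (the Yoneda embedding of $ \Ecal $ is conservative), and by Step 1 the latter says exactly that $ F $ is $ \yo(w) \tensor \id{E} $-local for every $ E $. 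Ranging over all $ w \in W $ concludes the proof. The only real content is Step 1 — correctly unwinding the phrase ``$ F $ is $ \yo(f) \tensor \id{E} $-local'' into the transparent statement that $ \Map_{\Ecal}(E, F(f)) $ is an equivalence, via the adjunction $ \mathrm{ev}_c \dashv (\yo(c) \tensor -) $; once this is in hand, Steps 2 and 3 are formal.
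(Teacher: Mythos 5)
Your proof is correct and follows the same route the paper takes: the paper's proof is simply ``Immediate from \Cref{rec:presheaves_on_a_localization} and the definitions,'' and your Step 1 (the adjunction $\mathrm{ev}_c \dashv (\yo(c) \tensor -)$ translating $\yo(f)\tensor\id{E}$-locality into ``$\Map_{\Ecal}(E,F(f))$ is an equivalence'') together with Steps 2--3 is exactly the unwinding being left to the reader. No gaps.
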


\begin{proof}
	Immediate from \Cref{rec:presheaves_on_a_localization} and the definitions.
\end{proof}

\begin{corollary}\label{cor:P-constructibility_via_locality}
	Let $ \Ecal $ be a presentable \category, let $ (\X,R) $ be an $ \Ecal $-exodromic stratified \topos, let $ \phi \colon \fromto{R}{P} $ be a map of posets, let $ F \in \ConsR(\X;\Ecal) $, and let $ \gamma \colon \fromto{x}{y} $ be a morphism in $ \Piinfty(\X,R) $.
	Then:
	\begin{enumerate}
		\item\label{cor:P-constructibility_via_locality.1} The full subcategory of $ \Ecal $ spanned by those objects $ E \in \Ecal $ such that $ F $ is $ (\cosp_R^\gamma \tensor \id{E}) $-local is closed under colimits and retracts.

		\item\label{cor:P-constructibility_via_locality.2} The $ R $-constructible object $ F $ is $ P $-constructible if and only if for each $ \gamma \in W_P $ and $ E \in \Ecal $, the object $ F $ is $\cosp_{R}^{\gamma} \tensor \id{E} $-local.
	\end{enumerate}
\end{corollary}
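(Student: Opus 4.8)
The plan is to transport the whole statement across the exodromy equivalence with coefficients and then invoke \Cref{prop:presheaves_on_a_localization_via_locality}. Since $(\X,R)$ is $\Ecal$-exodromic, \Cref{lem:exodromy_with_coefficients} provides an equivalence $\ConsR(\X;\Ecal)\equivalent\Fun(\Piinfty(\X,R),\Ecal)=\PSh(\Dcal_0;\Ecal)$, where $\Dcal_0\colonequals\Piinfty(\X,R)^{\op}$; write $\tilde F\colon\Piinfty(\X,R)\to\Ecal$ for the functor corresponding to $F$. Under this equivalence the atomic object $[x]$ is the representable $\yo(x)$, and for $\gamma\colon x\to y$ the cospecialization map $\cosp_R^{\gamma}\colon[y]\to[x]$ is $\yo(\gamma^{\op})$, so that $\cosp_R^{\gamma}\tensor\id{E}=\yo(\gamma^{\op})\tensor\id{E}$. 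With these identifications, part (1) is exactly \Cref{prop:presheaves_on_a_localization_via_locality}-(1) applied to the morphism $\gamma^{\op}$ of $\Dcal_0$.

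For part (2) I would show that each of the two conditions is equivalent to the single assertion that $\tilde F$ inverts every morphism in $W_P$. On the locality side, \Cref{thm:stability_under_coarsening} identifies the natural functor $L\colon\Piinfty(\X,R)\to\Piinfty(\X,P)$ with the localization $\Piinfty(\X,R)\to\Piinfty(\X,R)[W_P\inv]$; passing to opposites gives $\Dcal_0[W\inv]\equivalent\Piinfty(\X,P)^{\op}$ for $W\colonequals W_P$ regarded in $\Dcal_0$. Thus \Cref{prop:presheaves_on_a_localization_via_locality}-(2) shows that $F$ is $\cosp_R^{\gamma}\tensor\id{E}$-local for all $\gamma\in W_P$ and all $E\in\Ecal$ if and only if $\tilde F$ inverts $W_P$ (equivalently, factors through $\Piinfty(\X,P)$).

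The substantive step is to match this with $P$-constructibility. By \Cref{def:constructible_objects}, $F$ is $P$-constructible iff for each $p\in P$ the restriction $\iupperstar_{R_p}(F)$ is locally constant on $\X_{R_p}$ (using that the $p$-th stratum of $(\X,P)$ is $\X_{R_p}$). First I would record the identity $W_P=\bigcup_{p\in P}\Mor(\Piinfty(\X,R)_{R_p})$: a morphism $\gamma\colon x\to y$ lies in $W_P$ precisely when its source and target lie over a common $p$, i.e. when it is a morphism of the full subcategory $\Piinfty(\X,R)_{R_p}$. For fixed $p$, \Cref{cor:stability_under_pulling_back_to_locally_closed_subposets} gives that $(\X_{R_p},R_p)$ is exodromic with $\Piinfty(\X_{R_p},R_p)\equivalent\Piinfty(\X,R)_{R_p}$, that $\X_{R_p}$ is monodromic, and (via \Cref{obs:meaning_of_respecting_exit-paths} for the exodromic inclusion $i_{R_p}$) that $\iupperstar_{R_p}$ is restriction of $\tilde F$ along $\Piinfty(\X,R)_{R_p}\hookrightarrow\Piinfty(\X,R)$; in particular $\iupperstar_{R_p}(F)$ lands in the essential image of $\boxtensor\colon\Cons_{R_p}(\X_{R_p})\tensor\Ecal\hookrightarrow\Cons_{R_p}(\X_{R_p};\Ecal)$.

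The key point — which I expect to be the main obstacle, since $\Ecal$ is \emph{not} assumed compatible with recollements, so the strata are not known to be $\Ecal$-exodromic — is to control the local constancy condition with $\Ecal$-coefficients. The resolution is that local constancy is a condition about $\X_{R_p}$ as a topos rather than about its $R_p$-stratification: because $\X_{R_p}$ is monodromic, \Cref{rec:monodromy} gives $\LC(\X_{R_p};\Ecal)\equivalent\LC(\X_{R_p})\tensor\Ecal$ unconditionally, and \Cref{thm:stability_under_coarsening} applied to $R_p\to\pt$ identifies $\LC(\X_{R_p})\equivalent\Fun(\Env(\Piinfty(\X,R)_{R_p}),\Spc)$. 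Tensoring with $\Ecal$ and applying \Cref{prop:presheaves_on_a_localization_via_locality}-(2) once more (now with $W$ the class of all morphisms) shows that $\iupperstar_{R_p}(F)$ is locally constant iff $\tilde F|_{\Piinfty(\X,R)_{R_p}}$ inverts every morphism. Ranging over $p$ and using the identity for $W_P$, this says exactly that $F$ is $P$-constructible iff $\tilde F$ inverts $W_P$, closing the loop with the locality condition and proving part (2) (and, as a byproduct, that $(\X,P)$ is itself $\Ecal$-exodromic).
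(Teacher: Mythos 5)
Your proposal is correct and follows the same route as the paper, which proves this corollary in one line by combining the $\Ecal$-exodromy equivalence, the localization formula of \Cref{thm:stability_under_coarsening}, and \Cref{prop:presheaves_on_a_localization_via_locality}. The stratum-by-stratum verification you supply — that for $F \in \ConsR(\X) \tensor \Ecal$, being $P$-constructible is equivalent to the corresponding functor inverting $W_P$, using monodromicity of the strata and $\LC(\X_{R_p};\Ecal) \equivalent \LC(\X_{R_p}) \tensor \Ecal$ rather than any $\Ecal$-exodromy of the strata — is exactly the step the paper treats as immediate, and your treatment of it (including the byproduct that $(\X,P)$ is $\Ecal$-exodromic) is sound.
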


\begin{proof}
	In light of the exodromy equivalence and the localization formula for the exit-path \category of a coarsening (\Cref{thm:stability_under_coarsening}), this result is a special case of \Cref{prop:presheaves_on_a_localization_via_locality}.
\end{proof}


\subsection{Relationship to Lurie's simplicial model for exit-paths}\label{subsec:relationship_to_Luries_simplicial_model_for_exit-paths}

We conclude with some remarks and questions regarding the relationship between the exit-path \category in the conically refineable setting and Lurie's simplicial model for exit-paths $ \Sing(X,R) $.
See \cites[\HAappthm{Definition}{A.6.2}]{HA}[\S2]{arXiv:2211.05004} for background on the simplicial model.

\begin{recollection}
	Let $ (X,R) $ be a conically stratified space with locally weakly contractible strata.
	Then Lurie's exit-path simplicial set $ \Sing(X,R) $ is \acategory \HAa{Theorem}{A.6.4}.
	Moreover, $ (X,R) $ is exodromic in the sense of \cref{def:exodromic_stratified_space} and \cite[Theorem 5.4.1]{PortaTeyssier} implies that there is an equivalence of \categories 
	\begin{equation*}
		\Piinfty(X,R) \equivalent \Sing(X,R) \period 
	\end{equation*}
	That is, \cite[Theorem 5.4.1]{PortaTeyssier} provides an \textit{explicit} simplicial model for the exit-path \category.
\end{recollection}

\begin{observation}\label{obs:simplicial_model_for_exit-paths}
	Let $ (X,R) $ be a conically stratified space with locally weakly contractible strata and let $ \phi \colon \fromto{R}{P} $ be a map of posets.
	In general, the exit-path simplicial set $ \Sing(X,P) $ need not be \acategory.
	Write \smash{$ \widetilde{\Sing}(X,P) $} for the fibrant replacement of $ \Sing(X,P) $ in the Joyal model structure on simplicial sets over (the nerve of) $ P $.
	By construction, the composite
	\begin{equation*}
		\begin{tikzcd}
			\Piinfty(X,R) \equivalent \Sing(X,R) \arrow[r] & \Sing(X,P) \arrow[r] & \widetilde{\Sing}(X,P)
		\end{tikzcd}
	\end{equation*}
	carries all morphisms in $ W_P $ to equivalences.
	By \Cref{thm:stability_properties_of_exodomic_stratified_spaces} and the universal property of the localization, this induces a functor
	\begin{equation*}
		\begin{tikzcd}
			\Piinfty(X,P) \equivalent \Piinfty(X,R)[W_P\inv] \arrow[r] & \widetilde{\Sing}(X,P) \period
		\end{tikzcd}
	\end{equation*}
	Moreover, \cite[Lemma 2.5.2]{arXiv:1811.01119} and \enumref{thm:stability_properties_of_exodomic_stratified_spaces}{2} imply that for each $ p \in P $, the induced map on strata
	\begin{equation*}
		\Piinfty(X,P) \cross_{P} \{p\} \to \widetilde{\Sing}(X,P) \cross_{P} \{p\} 
	\end{equation*}
	is an equivalence of \groupoids.
\end{observation}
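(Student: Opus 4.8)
The statement packages a construction together with three assertions: that the displayed composite inverts $ W_P $, that it therefore descends to a functor $ \fromto{\Piinfty(X,P)}{\widetilde{\Sing}(X,P)} $, and that this functor is a fiberwise equivalence over each $ p \in P $. The plan is to treat these in order, the first two being essentially formal and the third requiring a comparison of two descriptions of the fiber.

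\textbf{Inverting $ W_P $.} Under the identification $ \Piinfty(X,R) \equivalent \Sing(X,R) $, a morphism $ \gamma $ in $ W_P $ is an exit-edge $ \fromto{|\Delta^1|}{X} $ whose image under $ \fromto{\Sing(X,R)}{R} $ followed by $ \fromto{R}{P} $ is degenerate. By the definition of the exit-path simplicial set, the source of $ \gamma $ lies in some $ R $-stratum $ r_0 $ and the rest of the edge in an $ R $-stratum $ r_1 \geq r_0 $ with $ \phi(r_0) = \phi(r_1) = p $; hence the entire edge lies in the single $ P $-stratum $ X_p $. I would then observe that the fiber $ \Sing(X,P) \cross_P \{p\} $ is the ordinary singular complex $ \Sing(X_p) $, a Kan complex, in which every edge is an equivalence. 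Since the restriction of $ \fromto{\Sing(X,P)}{\widetilde{\Sing}(X,P)} $ to this fiber is a functor between \infcats (its target is fibrant and its source is a Kan complex) and functors preserve equivalences, $ \gamma $ is carried to an equivalence. This is the content of the phrase ``by construction'' and uses nothing beyond the definitions.

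\textbf{Descending the functor.} Both $ \Sing(X,R) \equivalent \Piinfty(X,R) $, which is an \infcat by \HAa{Theorem}{A.6.4}, and $ \widetilde{\Sing}(X,P) $, which is fibrant hence an \infcat, are \infcats, so the composite is a genuine functor. Having shown that it inverts $ W_P $, the universal property of the localization factors it through $ \Piinfty(X,R)[W_P\inv] $; by the localization formula \enumref{thm:stability_properties_of_exodomic_stratified_spaces}{3} this target is $ \Piinfty(X,P) $, producing the asserted functor over $ P $.

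\textbf{Fiberwise equivalence.} This is the crux. On the source, stability under pulling back to locally closed subposets (\enumref{thm:stability_properties_of_exodomic_stratified_spaces}{2}) gives $ \Piinfty(X,P) \cross_P \{p\} \equivalent \Piinfty(X_p,\{p\}) $, and since $ X_p $ is locally weakly contractible, the monodromy equivalence together with \enumref{ex:monodrom_in_topology}{1} identifies this with the singular complex $ \Sing(X_p) $. On the target, \cite[Lemma 2.5.2]{arXiv:1811.01119} identifies $ \widetilde{\Sing}(X,P) \cross_P \{p\} $ with a Joyal-fibrant replacement of the fiber $ \Sing(X,P) \cross_P \{p\} = \Sing(X_p) $, which is already a Kan complex and so is again $ \Sing(X_p) $. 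The remaining task --- and the only real obstacle --- is to check that the induced map on fibers is compatible with these two identifications, i.e. that the triangle relating the two copies of $ \Sing(X_p) $ to $ \widetilde{\Sing}(X,P) \cross_P \{p\} $ commutes up to homotopy. I expect to obtain this from the naturality of the comparison $ \Sing(-,-) \equivalent \Piinfty(-,-) $ with respect to the stratum inclusion $ \incto{X_p}{X} $: both the simplicial construction and the $ \ConsPhyp $-theoretic exit-path functor are compatible with restriction to a locally closed stratum, so the two embeddings of $ \Sing(X_p) $ into the fiber are canonically identified. Granting this compatibility, the fiber map is an equivalence of \groupoids, completing the proof.
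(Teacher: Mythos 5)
Your proposal is correct and follows essentially the same route as the paper's inline reasoning: the same unpacking of ``by construction'' (an edge in $W_P$ lands in the Kan complex $\Sing(X,P)\cross_P\{p\} = \Sing(X_p)$, hence becomes invertible in the fibrant target), the same appeal to the localization formula for the descent, and the same two citations (\enumref{thm:stability_properties_of_exodomic_stratified_spaces}{2} for the source fiber, \cite[Lemma 2.5.2]{arXiv:1811.01119} for the target fiber). The compatibility of the two identifications of the fiber with $\Sing(X_p)$, which you rightly flag as the one nonformal point and which the paper elides, does go through as you suggest — e.g.\ by noting that both are induced by the single localization $\Sing(X_p,R_p)\to\Sing(X_p)$ restricted from $\fromto{\Piinfty(X,R)}{\widetilde{\Sing}(X,P)}$.
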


\begin{nul}
	Note that \textit{if} the functor $ \fromto{\Piinfty(X,P)}{\widetilde{\Sing}(X,P)} $ is an equivalence of \categories, then \Cref{prop:properties_of_locally_conically_refineable_stratified_spaces} implies that there is an equivalence of \categories
	\begin{equation*}
		\ConsPhyp(X) \equivalent \Fun(\Sing(X,P),\Spc) \period
	\end{equation*}
	That is, even though Lurie's exit-path simplicial set $ \Sing(X,P) $ may not be \acategory, $ \Sing(X,P) $ still corepresents hyperconstructible hypersheaves. 
\end{nul}

\begin{question}
	In the setting of \Cref{obs:simplicial_model_for_exit-paths}, is the functor
	\begin{equation*}
		\fromto{\Piinfty(X,P)}{\widetilde{\Sing}(X,P)}
	\end{equation*}
	an equivalence of \categories?
	If not, what are some mild conditions on the stratified space $ (X,P) $ that guarantee that this functor is an equivalence?
\end{question}

\appendix


\section{Inverting arrows over a poset}\label{appendix:inverting_arrows_over_a_poset}

Let $ P $ be a poset.
In \Cref{thm:stability_properties_of_stratified_topoi}, we are interested in the following situation: we have \acategory $ \Ccal $ and functor $ F \colon \fromto{\Ccal}{P} $, and we want to form the localization of $ \Ccal $ at the set $ W_P $ of morphisms that $ F $ carries to identities in $ P $.
There are two goals of this appendix.
First, we show that for each $ p \in P $, the fiber of $ \Ccal[W_P\inv] $ over $ p $ coincides with the classifying space of the fiber $ \Ccal \cross_{P} \{p\} $; see \Cref{prop:ff_and_localization}. 
From this we deduce that the natural functor \smash{$ \fromto{\Ccal[W_P\inv]}{P} $} is conservative and that \smash{$ \Ccal[W_P\inv] $} is idempotent complete.
Second, we show that if $ \Ccal $ is finite (resp., compact), then the localization $ \Ccal[W_P\inv] $ is also finite (resp., compact).
See \Cref{prop:inverting_morphisms_fiberwise_preserves_compactness}.

In \cref{subsec:layered_categories}, we review some basic facts about \categories with a conservative functor to a poset.
\Cref{subsec:strata_of_localizations} proves structural results about the localization $ \Ccal[W_P\inv] $.
In \cref{subsec:compactness_of_categories_with_a_conservative_functor_to_a_poset}, we explain various characterizations of finiteness and compactness in the \category of \categories with a conservative functor to the poset $ P $.
We use these characterizations to prove stability properties of finite and compact \categories with over $ P $.


\subsection{Layered \texorpdfstring{$\infty$}{∞}-categories}\label{subsec:layered_categories}

We start by collecting background material about the types of \categories that arise as exit-path \categories of stratified spaces.

\begin{recollection}\label{rec:characterization_of_conservative_functors_to_posets}
	Let $ F \colon \fromto{\Ccal}{P} $ be a functor from \acategory to a poset.
	The following are equivalent:
	\begin{enumerate}
		\item\label{rec:characterization_of_conservative_functors_to_posets.1} The functor $ F \colon \fromto{\Ccal}{P} $ is conservative.

		\item\label{rec:characterization_of_conservative_functors_to_posets.2} For each $ p \in P $, the fiber $ \Ccal \cross_{P} \{p\} $ is \agroupoid. 
	\end{enumerate}
\end{recollection}

\begin{recollection}\label{rec:layered_categories}
	Let $ \Ccal $ be \acategory.
	The following are equivalent:
	\begin{enumerate}
		\item\label{rec:layered_categories.1} There exists a poset $ P $ and a conservative functor $ \fromto{\Ccal}{P} $.
		
		\item\label{rec:layered_categories.2} For each $ x \in \Ccal $, every endomorphism $ \fromto{x}{x} $ is an equivalence. 
	\end{enumerate}
	If these equivalent conditions are satisfied, we say that $ \Ccal $ is a \defn{layered \category}.
	By the stratified homotopy hypothesis, \acategory $ \Ccal $ is layered if and only if $ \Ccal $ is equivalent to the exit-path \category of a stratified space; see \cite[Theorem 0.1.1]{arXiv:1811.01119} for a precise formulation of this result.
\end{recollection}

\noindent An important fact is that layered \categories are idempotent complete.
For this, recall \Cref{ntn:enveloping_groupoids}.

\begin{lemma}\label{lem:layered_implies_idempotent_complete}
	Let $ \Ccal $ be layered \category.
	Then:
	\begin{enumerate}
		\item\label{lem:layered_implies_idempotent_complete.1} If $ e \colon \fromto{x}{x} $ is a morphism in $ \Ccal $ such that there exists an equivalence $ e^2 \equivalent e $, then $ e \equivalent \id{x} $.

		\item\label{lem:layered_implies_idempotent_complete.2} The \category $ \Ccal $ is idempotent complete.
	\end{enumerate}
\end{lemma}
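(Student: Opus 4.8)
The plan is to deduce both statements directly from the defining property of a layered \category recalled in \Cref{rec:layered_categories}, namely that every endomorphism of every object is an equivalence.

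For \ref{lem:layered_implies_idempotent_complete.1}, I would first observe that $e \colon x \to x$ is an endomorphism, hence --- since $\Ccal$ is layered --- an equivalence; fix an inverse $e\inv$. The hypothesized equivalence $e^2 \equivalent e$ then yields, after composing on the left with $e\inv$, a chain of equivalences $\id{x} \equivalent e\inv \circ e \equivalent e\inv \circ e^2 \equivalent e$ in the mapping space $\Map_{\Ccal}(x,x)$. This is a one-line cancellation argument and carries no subtlety.

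For \ref{lem:layered_implies_idempotent_complete.2}, recall that idempotent completeness amounts to the splitting of every idempotent $F \colon \mathrm{Idem} \to \Ccal$, where $\mathrm{Idem}$ is the universal (coherent) idempotent \cite[\HTTsubsec{4.4.5}]{HTT}. Given such an $F$, write $x \colonequals F(\ast)$ for its underlying object and $e \colon x \to x$ for its underlying idempotent endomorphism. Part \ref{lem:layered_implies_idempotent_complete.1} applies to $e$, since the witness $e^2 \equivalent e$ is part of the data of $F$, so $e \equivalent \id{x}$. I would then invoke the fact that an idempotent whose underlying endomorphism is an equivalence splits, with splitting given by $x$ itself; equivalently, such an $F$ is equivalent to the identity idempotent on $x$, which manifestly splits. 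Hence every idempotent in $\Ccal$ splits, and $\Ccal$ is idempotent complete.

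The main obstacle is precisely this last step: the datum of $F$ is not just the pair $(x,e)$ together with a single witness $e^2 \equivalent e$, but an infinite tower of higher coherences encoded by $\mathrm{Idem}$, so knowing $e \equivalent \id{x}$ at the level of a single $1$-morphism does not \emph{formally} trivialize $F$ on its own. To bridge this gap I would appeal to the coherence results of \cite[\HTTsubsec{4.4.5}]{HTT}, which guarantee that an idempotent with invertible underlying morphism is equivalent to the constant idempotent and therefore admits a colimit, namely its splitting. An alternative, more self-contained route would use the conservative functor $\pi \colon \Ccal \to P$ of \Cref{rec:layered_categories}: the composite $\pi F$ is an idempotent in a poset, hence automatically split, and one argues that a splitting downstairs combined with the triviality of $e$ forces a splitting of $F$ upstairs. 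I expect the citation to \cite{HTT} to be the cleaner of the two.
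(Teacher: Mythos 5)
Your proposal is correct and follows essentially the same route as the paper: part (1) is the identical cancellation argument, and for part (2) the coherence gap you flag is bridged in the paper exactly by the fact you cite — since every endomorphism is invertible, the idempotent $\Idem \to \Ccal$ factors through the maximal sub\-groupoid and hence through $\Env(\Idem)$, which is contractible by \HTT{Lemma}{4.4.5.10}, so the idempotent is constant and splits. The only cosmetic difference is that the paper does not route through part (1) at all; the factorization through $\Ccal^{\equivalent}$ is immediate from layeredness.
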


\begin{proof}
	For (1), note that since $ \Ccal $ is layered, the morphism $ e $ is an equivalence.
	Since $ e^2 \equivalent e $, the fact that $ e $ is invertible implies that $ e \equivalent \id{x} $.
	For (2), observe that since $ \Ccal $ is layered, every idempotent $ e \colon \Idem \to \Ccal $ factors through the maximal \subgroupoid $ \Ccal^{\equivalent} $ of $ \Ccal $.
	Hence $ e $ descends to a functor $ \Env(\Idem) \to \Ccal^{\equivalent} $.
	Since $ \Env(\Idem) $ is contractible \HTT{Lemma}{4.4.5.10}, we conclude that $ e $ splits.
\end{proof}

We conclude this subsection by recalling an alternative description of \categories with a conservative functor to a fixed poset in terms of links.

\begin{notation}
	Let $ P $ be a poset and write \smash{$ \CatPcons \subset \CatP $} for the full subcategory spanned by those objects such that the specified functor $ \fromto{\Ccal}{P} $ is conservative.
\end{notation}

\begin{recollection}\label{rec:categories_with_a_conservative_functor_to_a_poset}
    Let $ P $ be a poset.
    Write $ \sd(P) $ for the poset of nonempty linearly ordered finite subsets of $ P $, ordered by inclusion. 
    The poset $ \sd(P) $ is referred to as the \defn{subdivision} of $ P $.
    Consider the \defn{nerve} functor
     \begin{align*}
        \Nup_P \colon \CatP  &\longrightarrow \Fun(\sd(P)^{\op},\Spc) \\
            [\Ccal \to P] &\longmapsto \brackets{\{p_0 < \cdots < p_n\} \mapsto \Map_{\CatP }(\{p_0 < \cdots < p_n\}, \Ccal)} \period
    \end{align*}
    The spaces $ \Nup_P(\Ccal)(\{p_0 < \cdots < p_n\}) $ are called the \defn{$ n $-links} of $ \Ccal $.
    
    Barwick--Glasman--Haine proved that the restriction \smash{$ \Nup_P \colon \CatPcons \to \Fun(\sd(P)^{\op},\Spc) $} to \categories with a conservative functor to $ P $     is a fully faithful right adjoint.
    Moreover, they identified the image as the full subcategory spanned by those $ X \colon \sd(P)^{\op} \to \Spc $ such that for every nonempty linearly ordered finite subset $ \{p_0 < \cdots < p_n \} \subset P $, the natural map
    \begin{equation*}
        X\{p_0 < \cdots < p_n \} \to X\{p_0 < p_1\} \crosslimits_{X\{p_1\}} X\{p_1 < p_2\} \crosslimits_{X\{p_2\}} \cdots \crosslimits_{X\{p_{n-1}\}} X\{p_{n-1} < p_n\}
    \end{equation*}
    is an equivalence.
    See \cite[Theorem 2.7.4]{arXiv:1807.03281}.
    Note that this characterization implies the functors $ \CatPcons \to \Spc $ given taking $ 0 $-links and $ 1 $-links are jointly conservative.
\end{recollection}


\subsection{Strata of localizations}\label{subsec:strata_of_localizations}

\noindent The purpose of this subsection is to prove a fundamental proposition about the types of localizations that appear in \enumref{thm:stability_properties_of_stratified_topoi}{3}.
To state it, we need to fix some notation.

\begin{notation}\label{ntn:restriction_subposet}
	Let $ F \colon \fromto{\Ccal}{P} $ be a functor from \acategory to a poset.
    \begin{enumerate}
    	\item\label{ntn:restriction_subposet.1} Given a subposet $ S\subset P $, we write $ F_S \colon \Ccal_S\to S $ for the basechange of $ F \colon \Ccal \to P $ to $ S $.

    	\item\label{ntn:restriction_subposet.2} We write $ W_P \subset \Mor(\Ccal) $ for the set of morphisms in $ \Ccal $ that $ F $ sends to equivalences (i.e., identities) in $ P $.
    \end{enumerate}
    By construction, functor $ F $ uniquely extends to a functor \smash{$ \fromto{\Ccal[W_P\inv]}{P} $}. 
\end{notation}

\begin{proposition}\label{prop:ff_and_localization}
	Let $ F \colon \Ccal \to P $ be a functor between \categories where $ P $ is a poset.
	Then: 
	\begin{enumerate}
		\item\label{prop:ff_and_localization.1} For each locally closed subposet $ S \subset P $, the induced functor $ \Ccal_S[W_S^{-1}] \to \Ccal[W_P^{-1}]_S $ is an equivalence.

		\item\label{prop:ff_and_localization.2} The induced functor \smash{$ \fromto{\Ccal[W_P\inv]}{P} $} is conservative.
		In particular, the \category \smash{$ \Ccal[W_P\inv] $} is idempotent complete.
	\end{enumerate}
\end{proposition}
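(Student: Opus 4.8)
\textbf{Proof strategy for \Cref{prop:ff_and_localization}.}
The plan is to establish the two statements in order, deducing the second from the first. For item (1), the key observation is that localization interacts well with base change along locally closed subposets of a poset. Since a locally closed subposet decomposes as a closed subposet of an open subposet (or vice versa), and the operations of passing to fibers over open/closed subposets are well-understood via the fibration structure of $F$, I would first reduce to the cases where $S$ is open in $P$ and where $S$ is closed in $P$. The heart of the matter is that inverting the class $W_P$ of fiberwise morphisms commutes with restricting to $\Ccal_S$; concretely, I would show that the square
\begin{equation*}
	\begin{tikzcd}[sep=2.25em]
		\Ccal_S \arrow[d] \arrow[r] & \Ccal \arrow[d] \\
		\Ccal_S[W_S\inv] \arrow[r] & \Ccal[W_P\inv]
	\end{tikzcd}
\end{equation*}
exhibits the bottom row as the base change of the top localization along $S \subset P$. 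Because $\Cref{lem:locally_closed_subposets_are_exponentiable}$ guarantees that inclusions of locally closed subposets are exponentiable (equivalently, the relevant fibrations are proper or smooth), pullback along $S \subset P$ is compatible with the formation of the localization; the essential input is that a morphism in $\Ccal_S$ lies in $W_S$ if and only if it lies in $W_P$ when viewed in $\Ccal$, which is immediate from \Cref{ntn:restriction_subposet} since $F_S$ is simply the restriction of $F$.

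For item (2), I would argue that the extended functor $\fromto{\Ccal[W_P\inv]}{P}$ is conservative by invoking \Cref{rec:characterization_of_conservative_functors_to_posets}: it suffices to show that for each $p \in P$, the fiber $\Ccal[W_P\inv]_p$ is \agroupoid. Applying item (1) with $S = \{p\}$ (a locally closed, indeed closed-in-open, singleton subposet), we obtain an equivalence
\begin{equation*}
	\Ccal[W_P\inv]_p \equivalent \Ccal_p[W_{\{p\}}\inv] \period
\end{equation*}
But over the one-point poset $\{p\}$, \emph{every} morphism of $\Ccal_p$ lies in $W_{\{p\}}$, so the localization $\Ccal_p[W_{\{p\}}\inv]$ is precisely $\Env(\Ccal_p)$, the classifying space of $\Ccal_p$ obtained by inverting all morphisms (see \Cref{ntn:enveloping_groupoids}). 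This is a space, hence \agroupoid, which gives conservativity. The final clause, that $\Ccal[W_P\inv]$ is idempotent complete, then follows directly: a \category admitting a conservative functor to a poset is layered (\Cref{rec:layered_categories}), and every layered \category is idempotent complete by \enumref{lem:layered_implies_idempotent_complete}{2}.

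The main obstacle I anticipate is establishing item (1) cleanly, specifically verifying that localization commutes with the pullback along $S \subset P$. The subtlety is that localization is a colimit-type (left-adjoint) construction while base change to a closed subposet involves a right-adjoint flavor; these do not commute for formal reasons, so the argument must genuinely exploit the exponentiability/properness of $S \subset P$ furnished by \Cref{lem:locally_closed_subposets_are_exponentiable}. I expect the open case to be the more transparent one, since pullback to an open subposet is itself a localization (restriction to a sieve) and localizations compose, whereas the closed case requires the proper base change machinery analogous to \Cref{prop:recollement_for_functor_to_a_poset}. Once both cases are in hand, assembling the general locally closed case by factoring $S \subset P$ as closed-in-open is routine.
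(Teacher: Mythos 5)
Your proposal is correct and follows essentially the same route as the paper: part (1) is proved by observing that the localization is a pushout and that base change along the exponentiable inclusion $S \subset P$ (\Cref{lem:locally_closed_subposets_are_exponentiable}) preserves it, with the identification of $W_S$ as the restriction of $W_P$; part (2) then follows by taking $S = \{p\}$, noting the fiber is $\Env(\Ccal_p)$, and invoking \Cref{rec:characterization_of_conservative_functors_to_posets} and \enumref{lem:layered_implies_idempotent_complete}{2}. The only cosmetic difference is that the paper does not factor $S$ as closed-in-open but runs the cube argument uniformly for locally closed $S$, since exponentiability already covers that case.
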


Since localizations do not generally commute with pullbacks, \Cref{prop:ff_and_localization} is not completely formal.
To prove \Cref{prop:ff_and_localization}, we recall the following description of localizations.

\begin{recollection}[(localizations as pushouts)]\label{rec:localizations_as_pushouts}
	Let $ \Ccal $ be \acategory and let $ W \subset \Mor(\Ccal) $ be a class of morphisms. 
	The localization $ \Ccal[W\inv] $ can be defined as the pushout
	\begin{equation*}
        \begin{tikzcd}[column sep=2.5em]
            \coprod_{w \in W} [1] \arrow[r] \arrow[d] \arrow[dr, phantom, very near end, "\ulcorner", xshift=0.25em, yshift=-0.25em] & \Ccal \arrow[d] \\
            \coprod_{w \in W} \pt \arrow[r] & \Ccal[W\inv] \period
        \end{tikzcd}
    \end{equation*} 
    Here, the top horizontal functor is the induced by the functors $ \fromto{[1]}{\Ccal} $ that pick out each morphism $ w \in W $.
\end{recollection}

\noindent Hence \Cref{prop:ff_and_localization} amounts to commuting the pullback $ S \cross_P (-) $ past the pushout defining the localization $ \Ccal[W_P\inv] $.
To explain why we can do this, we recall some categorical notions.

\begin{recollection}
	A functor $ F \colon \fromto{\Ccal}{\Dcal} $ is an \defn{exponentiable fibration} if the right adjoint pullback functor
	\begin{equation*}
		\Ccal \cross_{\Dcal} (-) \colon \fromto{\Cat_{\infty,/\Dcal}}{\Cat_{\infty,/\Ccal}}
	\end{equation*}
	is also a left adjoint.
	Note that the class of exponentiable fibrations is closed under basechange.
\end{recollection}

\begin{example}[{\cite[Lemma 2.15]{MR4074276}}]\label{ex:cocartesian_fibrations_are_exponentiable}
	Cartesian and cocartesian fibrations are exponentiable fibrations.
	In particular, right and left fibrations are exponentiable fibrations.

	Recall that for any \category $ \Ccal $, the unique functor $ \fromto{\Ccal}{\pt} $ is both a cartesian and a cocartesian fibration.
	In this case, the right adjoint to $ \Ccal \cross (-) \colon \fromto{\Catinfty}{\Cat_{\infty,/\Ccal}} $ is given by sending $ \fromto{\Bcal}{\Ccal} $ to the \category of sections $ \Fun_{/\Ccal}(\Ccal,\Bcal) $.
\end{example}

\begin{lemma}\label{lem:locally_closed_subposets_are_exponentiable}
	Let $ P $ be a poset.
	\begin{enumerate}
		\item\label{lem:locally_closed_subposets_are_exponentiable.1} If $ U \subset P $ is an open subposet, then the inclusion $ \incto{U}{P} $ is a left fibration.

		\item\label{lem:locally_closed_subposets_are_exponentiable.2} If $ Z \subset P $ is a closed subposet, then the inclusion $ \incto{Z}{P} $ is a right fibration.

		\item\label{lem:locally_closed_subposets_are_exponentiable.3} If $ S \subset P $ is a locally closed subposet, then the inclusion $ \incto{S}{P} $ is an exponentiable fibration.
	\end{enumerate}
\end{lemma}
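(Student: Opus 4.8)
The plan is to prove the three statements in order, using the fact that exponentiable fibrations are closed under basechange (stated in the recollection) and that $\infty$-groupoids can be recognized via open and closed subposet inclusions. Throughout, I would work with the characterization of open subposets as upward-closed subsets (containing all $x \geq p$ whenever they contain $p$) and closed subposets as downward-closed subsets.

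\textbf{Items (1) and (2).} For the open subposet $U \subset P$, I would show the inclusion $\incto{U}{P}$ is a left fibration by checking the lifting property directly. A functor $\incto{S}{P}$ between posets is a left fibration precisely when, for every object $p \in S$ and every morphism $\fromto{p}{q}$ in $P$, there is a (necessarily unique, since $P$ is a poset) lift with source $p$; concretely, this says that if $p \in U$ and $p \leq q$ in $P$, then $q \in U$ and the arrow $\fromto{p}{q}$ lies in $U$. This is exactly the condition that $U$ is upward-closed, i.e., open in the Alexandroff topology. Dually, for a closed (downward-closed) subposet $Z$, the inclusion is a right fibration: given $q \in Z$ and $\fromto{p}{q}$ in $P$, we need $p \in Z$, which holds because $Z$ is downward-closed. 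Since $P$ is a poset, all the required lifts are unique, so there is nothing to check at the level of higher lifting data beyond the $1$-categorical statement; I would phrase this cleanly using the nerve and the standard characterization of left/right fibrations over the nerve of a poset. These two items follow from unwinding the definitions.

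\textbf{Item (3).} Given a locally closed subposet $S \subset P$, I would use the standard decomposition: $S$ is closed in some open subposet of $P$, or equivalently open in some closed subposet. Explicitly, write $U$ for the upward closure of $S$ (an open subposet, by minimality of the upward-closed set containing $S$), so that $S$ is closed in $U$; then the inclusion $\incto{S}{P}$ factors as
\begin{equation*}
	\incto{S}{U} \quad\text{followed by}\quad \incto{U}{P},
\end{equation*}
where $\incto{S}{U}$ is a right fibration by item (2) applied inside $U$, and $\incto{U}{P}$ is a left fibration by item (1). By \Cref{ex:cocartesian_fibrations_are_exponentiable}, both right and left fibrations are exponentiable fibrations. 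Since exponentiable fibrations are closed under composition, the composite $\incto{S}{P}$ is an exponentiable fibration, as desired.

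\textbf{Main obstacle.} The only genuinely substantive point is verifying that the factorization through the upward closure realizes $S$ as \emph{closed} in the open subposet $U$; I would need to confirm that the upward closure $U = \setbar{q \in P}{q \geq s \text{ for some } s \in S}$ is open and that $S$ is downward-closed within $U$, which uses that $S$ is an interval (locally closed), i.e., $p \leq x \leq q$ with $p,q \in S$ forces $x \in S$. This interval condition is precisely what guarantees $S = U \cap Z$ for the appropriate closed $Z$, and hence that $S$ is closed in $U$. Once this set-theoretic bookkeeping is in place, the fibration statements and closure of exponentiable fibrations under composition finish the argument with no further difficulty. I should also note that closure of exponentiable fibrations under composition is a standard fact I would either cite or verify quickly from the definition (the pullback functors compose).
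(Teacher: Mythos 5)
Your proof is correct and, for item (3), matches the paper's argument exactly: factor the locally closed inclusion through the upward closure as a closed inclusion followed by an open inclusion, invoke \Cref{ex:cocartesian_fibrations_are_exponentiable}, and use closure of exponentiable fibrations under composition (your verification that the interval condition makes $S$ downward-closed in its upward closure is the right bookkeeping). For items (1) and (2) you verify the unique-lifting condition directly, whereas the paper instead exhibits $\incto{U}{P}$ as the basechange of $\incto{\{1\}}{\{0<1\}}$ along the characteristic map $\chi_U$ and uses stability of left fibrations under pullback (with (2) obtained by passing to opposites); both are one-line arguments and the difference is purely cosmetic.
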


\begin{proof}
	For (1), first observe that the inclusion $ \incto{\set{1}}{\set{0 < 1}} $ is a left fibration.
	Let $ \chi_U \colon \fromto{P}{\set{0 < 1}} $ be the map sending $ U $ to $ 1 $ and $ P \sminus U $ to $ 0 $.
	Then we have a pullback square
	\begin{equation*}
	    \begin{tikzcd}[sep=2.25em]
	       U \arrow[dr, phantom, very near start, "\lrcorner", xshift=-0.25em, yshift=0.12em] \arrow[d] \arrow[r, hooked] & P  \arrow[d, "\chi_U"] \\ 
	       \set{1} \arrow[r, hooked] & \set{0 < 1} \period
	    \end{tikzcd}
	\end{equation*}
	The claim now follows from the fact that the class of left fibrations is closed under basechange.

	Item (2) follows from (1) by passing to opposite posets.
	Item (3) follows from (1), (2), \Cref{ex:cocartesian_fibrations_are_exponentiable}, and the fact that exponentiable fibrations are closed under composition.
\end{proof}

\begin{proof}[Proof of \Cref{prop:ff_and_localization}]
	For (1), consider the commutative diagram
	\begin{equation*}
        \begin{tikzcd}[column sep={12ex,between origins}, row sep={8ex,between origins}]
            \coprod_{w \in W_S} [1] \arrow[rr] \arrow[dd, hooked]  \arrow[dr] & & \Ccal_S \arrow[dd, hooked]  \arrow[dr] \\
            & \coprod_{w \in W_S} \pt \arrow[rr, crossing over] & & \Ccal[W_P\inv]_S \arrow[dd, hooked] \arrow[rr] & & S \arrow[dd, hooked]  \\
            \coprod_{w \in W_P} [1] \arrow[rr] \arrow[dr] & & \Ccal \arrow[dr] \\
            & \coprod_{w \in W_P} \pt \arrow[rr] \arrow[from=uu, crossing over, hooked] & & \Ccal[W_P\inv] \arrow[rr] & & P \period
        \end{tikzcd}
    \end{equation*}
    Notice that by \Cref{rec:localizations_as_pushouts}, the bottom face is a pushout.
    Moreover, all of the vertical faces are pullbacks.
    Since the inclusion $ \incto{S}{P} $ is an exponentiable fibration (\Cref{lem:locally_closed_subposets_are_exponentiable}), the top face is also a pushout; again applying \Cref{rec:localizations_as_pushouts} completes the proof.

    For (2), note that by \Cref{rec:characterization_of_conservative_functors_to_posets}, to show that \smash{$ \fromto{\Ccal[W_P\inv]}{P} $} is conservative, we need to show that each fiber $ \Ccal[W_{P}\inv]_{p} $ is \agroupoid.
	To see this, note that for each $ p \in P $, part (1) provides an identification 
	\begin{equation*}
		\Ccal[W_P^{-1}]_{p} \equivalent \Ccal_p[W_p\inv] \period
	\end{equation*}
	To complete the proof, observe that $ W_p $ is the set of \textit{all} morphisms in $ \Ccal_p $.
\end{proof}


\subsection{Compactness}\label{subsec:compactness_of_categories_with_a_conservative_functor_to_a_poset}

The goal of this subsection is to characterize the compact objects of $ \CatP $ as well as the compact objects of the full subcategory spanned by the conservative functors $ \fromto{\Ccal}{P} $ (\Cref{lem:compact_objects_of_CatD,cor:compactness_for_categories_with_a_conservative_functor_to_a_poset}). 
We then use this to explain why the assingment $ \goesto{\Ccal}{\Ccal[W_P\inv]} $ and pulling back to a locally closed subposet $ S \subset P $ both preserve compactness; see \Cref{prop:inverting_morphisms_fiberwise_preserves_compactness,prop:base_change_lc_preserves_finite_and_compact_categories}.
We begin by introducing some notation.

\begin{recollection}[{(finite \& compact \categories)}]\label{recollection_finite_category}
	Write $ \Catfin  \subset  \Catinfty $ for the smallest full subcategory closed under pushouts and containing the \categories $ \emptyset $, $ \pt $, and $ [1] $.
	\Acategory $ \Ccal $ is \defn{finite} if \smash{$ \Ccal \in \Catfin $}.
	In particular, \smash{$ \Catfin $} is closed under finite colimits in $ \Catinfty $.
	Equivalently, \acategory $ \Ccal $ is finite if and only if $ \Ccal $ is categorically equivalent to a simplicial set with only finitely many nondegenerate simplicies \cite[Corollary 2.3]{Volpe_Verdier_duality}.

	Importantly, the full subcategory \smash{$ \Catomega \subset \Catinfty $} of compact \categories is the smallest full subcategory containing \smash{$ \Catfin $} and closed under retracts.
\end{recollection}

We now establish some pleasant features of the inclusion \smash{$ \CatPcons \subset \CatP $}.
See \cite[\S2.2]{arXiv:1807.03281} for a related discussion.

\begin{observation}
	Let $ P $ be a poset.
	Then \Cref{prop:ff_and_localization} implies that the functor
	\begin{equation*}
		\fromto{\CatP}{\CatPcons}
	\end{equation*}
	given by the assignment $ \goesto{\Ccal}{\Ccal[W_P\inv]} $ is left adjoint to the inclusion.
\end{observation}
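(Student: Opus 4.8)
The plan is to show the localization functor $ \goesto{\Ccal}{\Ccal[W_P\inv]} $ is left adjoint to the inclusion $ \incto{\CatPcons}{\CatP} $ by verifying the defining universal property of a reflective localization. Concretely, for $ \Ccal \in \CatP $ with structure map $ F \colon \fromto{\Ccal}{P} $ and $ \Dcal \in \CatPcons $ with conservative structure map $ \fromto{\Dcal}{P} $, I would establish a natural equivalence of mapping spaces
\begin{equation*}
	\Map_{\CatP}(\Ccal,\Dcal) \equivalent \Map_{\CatP}(\Ccal[W_P\inv],\Dcal) \period
\end{equation*}

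First I would note that, by \Cref{prop:ff_and_localization}-(\ref*{prop:ff_and_localization.2}), the localization $ \Ccal[W_P\inv] $ genuinely lies in $ \CatPcons $: its structure functor to $ P $ is conservative. This is the key input that makes the assignment land in the correct subcategory, so the adjunction statement even typechecks. Then the main step is to observe that a functor over $ P $, say $ G \colon \fromto{\Ccal}{\Dcal} $ commuting with the maps to $ P $, automatically sends every morphism of $ W_P $ to an equivalence in $ \Dcal $: indeed if $ w \in W_P $ then $ F(w) $ is an identity in $ P $, so $ G(w) $ lies over an identity of $ P $ and hence is carried to an equivalence by the conservative functor $ \fromto{\Dcal}{P} $; conservativity then forces $ G(w) $ itself to be an equivalence. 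Thus every morphism in $ \Map_{\CatP}(\Ccal,\Dcal) $ inverts $ W_P $, and by the universal property of localization such functors factor uniquely (up to coherent homotopy) through $ \fromto{\Ccal}{\Ccal[W_P\inv]} $. Unwinding this over the slice $ \CatP $ gives the desired natural equivalence of mapping spaces, exhibiting the adjunction.

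Since this is essentially a formal consequence of the universal property of localizations combined with the single substantive fact supplied by \Cref{prop:ff_and_localization}, I do not expect any serious obstacle. The one point requiring a little care is bookkeeping the slice structure: mapping spaces in $ \CatP $ are fibers of $ \Map_{\Catinfty}(-,-) $ over the relevant map to $ P $, and one must check that the factorization through $ \Ccal[W_P\inv] $ is compatible with the structure maps to $ P $. This compatibility is immediate from the fact that $ F $ extends uniquely to $ \fromto{\Ccal[W_P\inv]}{P} $ (\Cref{ntn:restriction_subposet}), so the triangle over $ P $ commutes by construction. The verification of naturality in both variables is routine and follows from the functoriality of localization and of the slice construction.
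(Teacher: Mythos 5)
Your argument is correct and is exactly the one the paper intends: item (2) of \Cref{prop:ff_and_localization} guarantees that $\Ccal[W_P\inv]$ lands in $\CatPcons$, and the rest is the universal property of localization applied over $P$, using that any functor over $P$ to a conservatively stratified target automatically inverts $W_P$. Nothing further is needed.
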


\noindent We introduce a more convenient notation for this left adjoint.

\begin{notation}
	Given a poset $ P $, write $ \Env_P \colon \fromto{\CatP}{\CatPcons} $ for the left adjoint to the inclusion.
\end{notation}
	
\begin{observation}\label{obs:iota_P}
	The inclusion \smash{$ \CatPcons \subset \CatP $} also admits a right adjoint
	\begin{equation*}
		\iotaup_P \colon \fromto{\CatP}{\CatPcons}
	\end{equation*}
	defined as follows.
	Given a functor $ F \colon \fromto{\Ccal}{P} $, let $ \iotaup_P(\Ccal) \subset \Ccal $ be the largest subcategory containing all objects such that the composite
	\begin{equation*}
		\begin{tikzcd}
			\iotaup_P(\Ccal) \arrow[r] & \Ccal \arrow[r, "F"] & P
		\end{tikzcd}
	\end{equation*}
	is conservative.
	Equivalently, $ \iotaup_P(\Ccal) \subset \Ccal $ is the subcategory containing all objects such that a morphism $ f \colon \fromto{x}{y} $ in $ \Ccal $ lies in $ \iotaup_P(\Ccal) $ if and only if one of the following disjoint conditions is satisfied:
	\begin{enumerate}
		\item The morphism $ f $ is an equivalence in $ \Ccal $.

		\item The elements $ F(x) $ and $ F(y) $ of the poset $ P $ are not equal.
	\end{enumerate}
\end{observation}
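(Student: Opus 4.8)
The claim in \Cref{obs:iota_P} is that the inclusion $\CatPcons \subset \CatP$ admits a right adjoint $\iotaup_P$, described concretely by taking the largest subcategory on which the composite to $P$ is conservative. The plan is to verify that the stated construction $\iotaup_P(\Ccal) \subset \Ccal$ is well-defined and genuinely computes the right adjoint.

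First I would check that the two descriptions of $\iotaup_P(\Ccal)$ agree and that the second one actually defines a subcategory. The key observation is that the two conditions on a morphism $f \colon \fromto{x}{y}$—being an equivalence, or having $F(x) \neq F(y)$ in $P$—together carve out exactly the morphisms that the composite $\iotaup_P(\Ccal) \to P$ sends to \emph{non-identity} maps or to equivalences. Since $P$ is a poset, a morphism $g$ in $P$ is an equivalence precisely when its source and target are equal, so a functor to $P$ is conservative if and only if every morphism sent to an identity is itself an equivalence. To confirm the stated morphism class is closed under composition, I would take composable $f \colon x \to y$ and $g \colon y \to z$ in $\iotaup_P(\Ccal)$ and do a short case analysis: if both are equivalences the composite is an equivalence; if $F(x) \neq F(z)$ we are in case (2); the only thing to rule out is $f,g$ with $F(x)=F(z)$ but not both equivalences, which forces $F(x) = F(y) = F(z)$ by antisymmetry of the poset order (since $F(x) \le F(y) \le F(z) = F(x)$), whence both $f$ and $g$ lie in $\iotaup_P(\Ccal)$ only if they are equivalences, making the composite an equivalence. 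This antisymmetry step is the crux that makes the subcategory well-defined, and I expect it to be the main (though mild) obstacle.

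Next I would establish the universal property. Given $\Dcal \in \CatPcons$ with structure functor $G \colon \Dcal \to P$ conservative, and any morphism $H \colon \Dcal \to \Ccal$ in $\CatP$ (so $F H \simeq G$), I claim $H$ factors uniquely through $\iotaup_P(\Ccal)$. The point is that $H$ carries every morphism of $\Dcal$ into the stated class: for a morphism $d \colon a \to b$ in $\Dcal$, either $G(a) \neq G(b)$—in which case $F(H a) \neq F(H b)$ and we land in case (2)—or $G(a) = G(b)$, which by conservativity of $G$ forces $d$ to be an equivalence, hence $H(d)$ is an equivalence and we land in case (1). Therefore $H$ factors through the largest such subcategory $\iotaup_P(\Ccal)$, and the factorization is unique because $\iotaup_P(\Ccal) \hookrightarrow \Ccal$ is the inclusion of a (replete) subcategory, hence a monomorphism in $\Catinfty$. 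This factorization, natural in $\Dcal$, exhibits the desired adjunction $\Map_{\CatP}(\Dcal, \Ccal) \simeq \Map_{\CatPcons}(\Dcal, \iotaup_P(\Ccal))$.

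Finally, to be fully rigorous about ``largest subcategory'' I would note that subcategories of $\Ccal$ containing a fixed set of objects and a composition-closed, equivalence-containing set of morphisms form a poset under inclusion, and the stated morphism class is the largest one making the composite conservative; this is well-defined since the union of any family of conservative-composite subcategories is again one. Since the whole argument is formal once the composition-closure case analysis is dispatched, the only genuine content is the antisymmetry observation, and the rest follows by unwinding the universal property of a right adjoint. I would keep the write-up brief, emphasizing the case analysis and the factorization, as both verifications are short.
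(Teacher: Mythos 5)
Your proposal is correct and fills in exactly the verification the paper leaves implicit: the paper states this as an Observation with no proof beyond the construction itself, and your argument — composition-closure of the morphism class via antisymmetry of $P$, conservativity of the restricted functor, and the factorization of any $H \colon \Dcal \to \Ccal$ over $P$ through the subcategory when $\Dcal \to P$ is conservative — is the intended justification. The antisymmetry case analysis is indeed the only point of substance, and the uniqueness of the factorization via the inclusion being a monomorphism of \categories is handled correctly.
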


\begin{observation}\label{obs:iotaPC_to_C_have_same_maximal_subgroupoids}
	By definition, that the inclusion $ \fromto{\iotaup_P(\Ccal)}{\Ccal} $ restricts to an equivalence on maximal \subgroupoids.
	Note, moreover, that the inclusion $ \iotaup_P(\Ccal) \to \Ccal $ induces an equivalence on $ 0 $-links and $ 1 $-links, in the sense of \Cref{rec:categories_with_a_conservative_functor_to_a_poset}.
\end{observation}



In order to understand when $ \Env_P(\Ccal) $ is compact, we make use of the following two general fact:

\begin{recollection}[{\HTT{Proposition}{5.5.7.2}}]\label{rec:adjoint_triple_preservation_of_compactness}
	Let $ \adjto{\fupperstar}{\Dcal}{\Ccal}{\flowerstar} $ be an adjunction between \categories that admit filtered colimits.
	If $ \flowerstar $ preserves filtered colimits, then $ \fupperstar $ preserves compact objects.
	As a consequence, if $ \fupperstar $ admits a further left adjoint $ \flowersharp $, then $ \flowersharp $ preserves compact objects.
\end{recollection}


\begin{lemma}\label{lem:compact_objects_of_CatD}
	Let $ \Dcal $ be \acategory.
	An object $ F \colon \fromto{\Ccal}{\Dcal} $ of $ \Cat_{\infty,/\Dcal} $ is compact if and only if the \category $ \Ccal $ is compact in $ \Catinfty $. 
\end{lemma}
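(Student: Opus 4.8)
The claim is that for a functor $F\colon \Ccal \to \Dcal$, the object $(\Ccal, F)$ is compact in $\Cat_{\infty,/\Dcal}$ if and only if $\Ccal$ is compact in $\Catinfty$. Let me plan a proof.

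First I would handle the \emph{if} direction. Suppose $\Ccal$ is compact in $\Catinfty$. The forgetful functor $\fromto{\Cat_{\infty,/\Dcal}}{\Catinfty}$ admits a left adjoint given by $\goesto{\Ccal_0}{(\Ccal_0 \cross \Dcal \to \Dcal)}$; more relevantly, I want to exploit that the forgetful functor is \emph{conservative and preserves (filtered) colimits}, since colimits in a slice \category are computed in the underlying \category $\Catinfty$. To test compactness of $(\Ccal,F)$ directly, take a filtered diagram $(\Ccal_\alpha, F_\alpha)_{\alpha \in A}$ in $\Cat_{\infty,/\Dcal}$ with colimit $(\Ccal_\infty, F_\infty)$. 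Using the slice formula, there is a natural fiber sequence (over $\Map_{\Catinfty}(\Ccal, \Ccal_\infty)$) presenting
\begin{equation*}
	\Map_{\Cat_{\infty,/\Dcal}}\big((\Ccal,F),(\Ccal_\infty,F_\infty)\big) \equivalent \Map_{\Catinfty}(\Ccal,\Ccal_\infty) \crosslimits_{\Map_{\Catinfty}(\Ccal,\Dcal)} \{F\} \period
\end{equation*}
The plan is to commute the filtered colimit past this pullback: since $\Ccal$ is compact, $\Map_{\Catinfty}(\Ccal,-)$ preserves filtered colimits, and filtered colimits commute with finite limits (in particular pullbacks) in $\Spc$. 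Since the colimit $\Ccal_\infty$ in the slice is computed as the colimit in $\Catinfty$, this shows $\Map_{\Cat_{\infty,/\Dcal}}((\Ccal,F),-)$ preserves the filtered colimit, i.e. $(\Ccal,F)$ is compact.

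Next the \emph{only if} direction. Suppose $(\Ccal,F)$ is compact in $\Cat_{\infty,/\Dcal}$. I would use the base-change/forgetful adjunction more carefully. The forgetful functor $U\colon \fromto{\Cat_{\infty,/\Dcal}}{\Catinfty}$ has a left adjoint $L\colon \goesto{\Ccal_0}{\Ccal_0 \cross \Dcal}$ (the functor sending $\Ccal_0$ to the projection $\fromto{\Ccal_0 \cross \Dcal}{\Dcal}$), since $\Map_{\Cat_{\infty,/\Dcal}}(\Ccal_0 \cross \Dcal, (\Ecal,G)) \equivalent \Map_{\Catinfty}(\Ccal_0, \Ecal)$ by the universal property of the product over $\Dcal$. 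Crucially $U$ preserves filtered colimits (indeed all colimits), so by \Cref{rec:adjoint_triple_preservation_of_compactness} the left adjoint $L$ preserves compact objects; but this controls compactness of objects of the form $\Ccal_0 \cross \Dcal$, not of a general $(\Ccal,F)$. To extract compactness of $\Ccal$ itself I would instead argue that $U$ \emph{reflects} compactness on objects in its essential image of sections: concretely, the identity section exhibits $(\Ccal,\id{}) \in \Cat_{\infty,/\Ccal}$, and base change along $F$ together with the product decomposition lets me recover $\Ccal = U(\Ccal,F)$ as a retract built from slice data. The cleanest route is: since $U$ preserves filtered colimits and is conservative, and since the counit/unit of the $(L \dashv U)$ adjunction splits $\Ccal$ off as a retract inside $U L \Ccal = \Ccal \cross \Dcal$ only when $\Dcal$ has a terminal object — so I avoid this and argue directly on mapping spaces as in the first paragraph, run in reverse. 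Given the pullback formula above, compactness of the left-hand side for all filtered $(\Ccal_\alpha, F_\alpha)$, specialized to diagrams of the form $(\Ccal_\alpha \cross \Dcal \to \Dcal)$ with $\Ccal_\alpha$ filtered in $\Catinfty$, forces $\Map_{\Catinfty}(\Ccal, \colim \Ccal_\alpha) \equivalent \colim \Map_{\Catinfty}(\Ccal,\Ccal_\alpha)$, which is exactly compactness of $\Ccal$ in $\Catinfty$.

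The main obstacle is the \emph{only if} direction: making the pullback formula genuinely yield compactness of $\Ccal$ in $\Catinfty$ requires choosing a test family in $\Cat_{\infty,/\Dcal}$ whose underlying filtered colimit is an arbitrary filtered colimit in $\Catinfty$ while keeping the base-point fiber over $\{F\}$ under control. I expect the right bookkeeping is to take the constant family at $\Dcal$ in the second factor (i.e. work with $\goesto{\Ccal_\alpha}{\Ccal_\alpha \cross \Dcal}$) so that the fiber over $\{F\}$ contributes only via the $\Dcal$-component and the $\Ccal_\alpha$-dependence is isolated in the $\Map_{\Catinfty}(\Ccal, \Ccal_\alpha)$ factor; then filtered colimits commuting with the finite limit finishes the argument. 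Once this is set up, both directions follow from the single observation that mapping spaces out of $(\Ccal,F)$ in the slice are computed by a pullback of mapping spaces in $\Catinfty$, together with the interchange of filtered colimits and finite limits in $\Spc$.
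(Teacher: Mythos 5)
Your ``if'' direction is the same as the paper's: both use the pullback formula
\begin{equation*}
	\Map_{\Cat_{\infty,/\Dcal}}\big((\Ccal,F),(\Ccal_\infty,F_\infty)\big) \equivalent \Map_{\Catinfty}(\Ccal,\Ccal_\infty) \crosslimits_{\Map_{\Catinfty}(\Ccal,\Dcal)} \{F\}
\end{equation*}
together with compactness of $\Ccal$ and the fact that the fixed cospan legs pass through the filtered colimit (the paper phrases this via universality of colimits in $\Spc$). That part is fine.

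In the ``only if'' direction you assert something false before recovering: the functor $\goesto{\Ccal_0}{(\Ccal_0 \cross \Dcal \to \Dcal)}$ is \emph{not} a left adjoint to the forgetful functor, and the mapping-space formula you give for it, $\Map_{\Cat_{\infty,/\Dcal}}(\Ccal_0 \cross \Dcal, (\Ecal,G)) \equivalent \Map_{\Catinfty}(\Ccal_0,\Ecal)$, is wrong --- already for $\Ccal_0 = \pt$ the left-hand side is the space of sections of $G$, not $\Ecal^{\equivalent}$. The correct statement has the opposite variance: $\Dcal \cross (-)$ is the \emph{right} adjoint of the forgetful functor, since $\Map_{\Cat_{\infty,/\Dcal}}((\Ecal,G), \Dcal \cross \Ccal_0) \equivalent \Map_{\Catinfty}(\Ecal,\Ccal_0)$. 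This is not just a discarded detour: the corrected adjunction is exactly what makes your final test-family argument work. Testing $(\Ccal,F)$ against the diagram $\goesto{\alpha}{\Dcal \cross \Ccal_\alpha}$ computes $\colim_\alpha \Map_{\Catinfty}(\Ccal,\Ccal_\alpha)$ on one side precisely \emph{because} $\Dcal\cross(-)$ is right adjoint to the forgetful functor, and identifying the colimit of that diagram with $\Dcal \cross \colim_\alpha \Ccal_\alpha$ requires knowing that $\Dcal \cross (-)$ preserves filtered colimits in $\Catinfty$ --- the one input you wave at (``filtered colimits commuting with the finite limit'') but do not justify. The paper supplies it by observing that $\fromto{\Dcal}{\pt}$ is an exponentiable fibration (\Cref{ex:cocartesian_fibrations_are_exponentiable}), so that $\Dcal\cross(-)$ sits in an adjoint triple and \Cref{rec:adjoint_triple_preservation_of_compactness} immediately gives that the forgetful functor preserves compact objects; your argument, once the variance is fixed and the exponentiability input is cited, is just the unwound version of that one-line proof.
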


\begin{proof}
	Since the unique functor $ \fromto{\Dcal}{\pt} $ is an exponentiable fibration (\Cref{ex:cocartesian_fibrations_are_exponentiable}), \Cref{rec:adjoint_triple_preservation_of_compactness} shows that the forgetful functor $ \fromto{\Cat_{\infty,/\Dcal}}{\Catinfty} $ preserves compact objects.
	Hence all that remains to be proven is that if $ \Ccal \in \Catinfty $ is compact, then $ F \colon \fromto{\Ccal}{\Dcal} $ is compact in $ \Cat_{\infty,/\Dcal} $.
	For this, consider a filtered diagram $ \Dcal_{\bullet} \colon \fromto{A}{\Cat_{\infty,/\Dcal}} $.
	Note that we have a pullback square
	\begin{equation*}
	    \begin{tikzcd}[sep=2.25em]
	       \Map_{\Cat_{\infty,/\Dcal}}(\Ccal,\textstyle\colim_{\alpha \in A} \Dcal_{\alpha}) \arrow[dr, phantom, very near start, "\lrcorner", xshift=-3em, yshift=0.12em] \arrow[d] \arrow[r] & \Map_{\Catinfty}(\Ccal,\textstyle\colim_{\alpha \in A} \Dcal_{\alpha})  \arrow[d] \\ 
	       \set{F} \arrow[r] & \Map_{\Catinfty}(\Ccal,\Dcal) \period
	    \end{tikzcd}
	\end{equation*}
	Since $ \Ccal $ is compact in $ \Catinfty $, the natural map 
	\begin{equation*}
		\colim_{\alpha \in A} \Map_{\Catinfty}(\Ccal, \Dcal_{\alpha}) \to \Map_{\Catinfty}(\Ccal,\textstyle\colim_{\alpha \in A} \Dcal_{\alpha}) 
	\end{equation*}
	is an equivalence.
	The fact that colimits are universal in $ \Spc $ completes the proof.
\end{proof}

\begin{lemma}\label{lem:iotatP_preserves_filtered_colimits}
	Let $ P $ be a poset.
	Then:
	\begin{enumerate}
		\item\label{lem:iotatP_preserves_filtered_colimits.1} The functor $ \iotaup_P \colon \fromto{\CatP}{\CatPcons} $ preserves filtered colimits.

		\item\label{lem:iotatP_preserves_filtered_colimits.2} The inclusion $ \incto{\CatPcons}{\CatP} $ preserves compact objects.

		\item\label{lem:iotatP_preserves_filtered_colimits.3} The functor $ \Env_P \colon \fromto{\CatP}{\CatPcons} $ preserves compact objects.
	\end{enumerate}
\end{lemma}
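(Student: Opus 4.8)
The plan is to prove the three items of \Cref{lem:iotatP_preserves_filtered_colimits} in order, with each later item leaning on the earlier ones together with the adjunctions already established. For item (1), I would argue that $\iotaup_P$ preserves filtered colimits by reducing to the two defining properties of $\iotaup_P(\Ccal)$ recorded in \Cref{obs:iota_P}: it has the same maximal \subgroupoid as $\Ccal$ (\Cref{obs:iotaPC_to_C_have_same_maximal_subgroupoids}), and a non-equivalence $f\colon\fromto{x}{y}$ lies in $\iotaup_P(\Ccal)$ exactly when $F(x)\neq F(y)$ in $P$. Given a filtered diagram $\Ccal_\bullet\colon\fromto{A}{\CatP}$ with colimit $\Ccal_\infty$, I would compare $\iotaup_P(\colim_\alpha \Ccal_\alpha)$ with $\colim_\alpha \iotaup_P(\Ccal_\alpha)$, both computed in $\CatP$; since the forgetful functor $\fromto{\CatP}{\Catinfty}$ preserves colimits, this is a comparison of filtered colimits of \categories. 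On maximal \subgroupoids both sides agree because $(-)^{\equivalent}$ preserves filtered colimits and $\iotaup_P$ preserves maximal \subgroupoids. On morphisms, the point is that a morphism of $\Ccal_\infty$ comes from some $\Ccal_\alpha$, and whether it is an equivalence (resp. whether its endpoints map to distinct elements of $P$) can be detected at a finite stage because filtered colimits of \categories detect equivalences and mapping spaces are computed as filtered colimits; hence the condition cutting out $\iotaup_P$ is compatible with the colimit.

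For item (2), I would combine item (1) with \Cref{rec:adjoint_triple_preservation_of_compactness}. The inclusion $\incto{\CatPcons}{\CatP}$ sits in an adjoint triple: by \Cref{obs:iota_P} it admits the right adjoint $\iotaup_P$, and by the observation preceding the lemma it admits the left adjoint $\Env_P$. Writing the inclusion as $\fupperstar$ in the notation of \Cref{rec:adjoint_triple_preservation_of_compactness}, its right adjoint is $\iotaup_P$, which preserves filtered colimits by item (1); hence the inclusion preserves compact objects. (Both $\CatP$ and $\CatPcons$ admit filtered colimits since $\CatP$ is a slice of the presentable \category $\Catinfty$ and $\CatPcons$ is closed under filtered colimits inside it, again by item (1) applied to constant or directly by the adjunction.)

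For item (3), I would again apply \Cref{rec:adjoint_triple_preservation_of_compactness}, but now reading the adjoint triple one step to the left. Take $\fupperstar\colonequals\incto{\CatPcons}{\CatP}$, which we have just seen preserves filtered colimits and whose right adjoint is $\iotaup_P$. Since $\fupperstar$ additionally admits a \emph{left} adjoint, namely $\Env_P$, the final clause of \Cref{rec:adjoint_triple_preservation_of_compactness} applies verbatim and shows that $\Env_P$ preserves compact objects. Concretely, $\Env_P$ is the functor sending $\Ccal$ to $\Ccal[W_P\inv]$, so this is exactly the statement that localizing at $W_P$ preserves compactness.

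I expect the only real work to be in item (1). The appeal to \Cref{rec:adjoint_triple_preservation_of_compactness} in items (2) and (3) is formal once the adjoint triple $\Env_P\dashv\text{incl}\dashv\iotaup_P$ and the filtered-colimit preservation of $\iotaup_P$ are in hand. The main obstacle is verifying carefully that the subcategory defining $\iotaup_P(\Ccal_\infty)$ matches the colimit of the $\iotaup_P(\Ccal_\alpha)$ at the level of morphisms: one must confirm that both the ``is an equivalence'' condition and the ``endpoints lie over distinct poset elements'' condition are stable and co-stable under passing to a filtered colimit, using that the map to the (constant) poset $P$ is compatible with the colimit and that equivalences in a filtered colimit of \categories are detected at finite stages. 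This is where I would spend the care, but it is bookkeeping rather than a genuine difficulty.
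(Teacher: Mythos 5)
Your proposal is correct and follows essentially the same route as the paper: item (1) is proved by the same direct analysis of a filtered colimit, checking that both the ``is an equivalence'' case (via $(-)^{\equivalent}$ preserving filtered colimits and \Cref{obs:iotaPC_to_C_have_same_maximal_subgroupoids}) and the ``distinct images in $P$'' case are detected at a finite stage, and items (2) and (3) are then formal consequences of \Cref{rec:adjoint_triple_preservation_of_compactness} applied to the adjoint triple $\Env_P \dashv \mathrm{incl} \dashv \iotaup_P$. The only work is in (1), exactly as you predicted.
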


\begin{proof}
	For (1), let $ \Ccal_{\bullet} \colon \fromto{A}{\CatP} $ be a filtered diagram.
	Note that by \Cref{rec:categories_with_a_conservative_functor_to_a_poset}, to prove that 
	\begin{equation*}
		\textstyle \colim_{\alpha \in A} \iotaup_P(\Ccal_{\alpha}) \longrightarrow \iotaup_P(\textstyle\colim_{\alpha \in A} \Ccal_{\alpha})
	\end{equation*}
	is an equivalence, it suffices to check that this map induces an equivalence on $ 0 $-links and $ 1 $-links.
	By \Cref{obs:iotaPC_to_C_have_same_maximal_subgroupoids}, it suffices to check that the natural map
	\begin{equation*}
		\colim_{\alpha \in A} \iotaup_P(\Ccal_{\alpha}) \longrightarrow \colim_{\alpha \in A} \Ccal_{\alpha}
	\end{equation*}
	induces an equivalence on $ 0 $-links and $ 1 $-links.
	For this, let $ \Sigma \subset P $ be a nonempty linearly ordered subset of cardinality $ \leq 2 $.
	Then $ \Sigma $ is a finite \category, hence \Cref{lem:compact_objects_of_CatD} shows that $ \Sigma $ is a compact object of $ \CatP $.
	Again applying \Cref{obs:iotaPC_to_C_have_same_maximal_subgroupoids}, we see that
	\begin{align*}
		\Map_{\CatP}(\Sigma,\textstyle \colim_{\alpha \in A} \iotaup_P(\Ccal_{\alpha})) &\equivalent \colim_{\alpha \in A} \Map_{\CatP}(\Sigma,\iotaup_P(\Ccal_{\alpha})) \\
		&\equivalence \colim_{\alpha \in A} \Map_{\CatP}(\Sigma,\Ccal_{\alpha}) \\
		&\equivalent \Map_{\CatP}(\Sigma,\textstyle \colim_{\alpha \in A} \Ccal_{\alpha}) \comma
	\end{align*}
	as desired.

	Finally, observe that \Cref{rec:adjoint_triple_preservation_of_compactness} shows that (1) implies (2) and (3).
\end{proof}

We can now give a characterization of the compact objects of $ \CatPcons $.

\begin{corollary}\label{cor:compactness_for_categories_with_a_conservative_functor_to_a_poset}
	Let $ P $ be a poset and let $ F \colon \fromto{\Ccal}{P} $ be a conservative functor from \acategory.
	Then the following are equivalent:
	\begin{enumerate}
		\item\label{cor:compactness_for_categories_with_a_conservative_functor_to_a_poset.1} The object $ F \colon \fromto{\Ccal}{P} $ of $ \CatPcons $ is compact.

		\item\label{cor:compactness_for_categories_with_a_conservative_functor_to_a_poset.2} The object $ F \colon \fromto{\Ccal}{P} $ of $ \CatP $ is compact.

		\item\label{cor:compactness_for_categories_with_a_conservative_functor_to_a_poset.3} The \category $ \Ccal $ is a compact object of $ \Catinfty $.
	\end{enumerate}
\end{corollary}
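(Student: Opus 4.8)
The plan is to prove the chain of implications $(3)\Rightarrow(2)\Rightarrow(1)$ and $(1)\Rightarrow(3)$, leveraging the adjunctions established in \Cref{lem:iotatP_preserves_filtered_colimits} together with the characterization of compact objects of $\CatP$ from \Cref{lem:compact_objects_of_CatD}. The equivalence $(2)\Leftrightarrow(3)$ is essentially immediate: taking $\Dcal = P$ in \Cref{lem:compact_objects_of_CatD} shows that $F\colon \fromto{\Ccal}{P}$ is a compact object of $\CatP$ if and only if $\Ccal$ is a compact object of $\Catinfty$. This disposes of two of the three required equivalences at once.

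It remains to connect compactness in the full subcategory $\CatPcons$ with compactness in the ambient $\CatP$. First I would prove $(1)\Rightarrow(2)$. For this, recall from \enumref{lem:iotatP_preserves_filtered_colimits}{1} that the right adjoint $\iotaup_P \colon \fromto{\CatP}{\CatPcons}$ to the inclusion preserves filtered colimits. By \Cref{rec:adjoint_triple_preservation_of_compactness}, since $\iotaup_P$ is a right adjoint preserving filtered colimits, its left adjoint---namely the inclusion $\incto{\CatPcons}{\CatP}$---preserves compact objects. This is exactly the content of \enumref{lem:iotatP_preserves_filtered_colimits}{2}. Hence if $F\colon\fromto{\Ccal}{P}$ is a compact object of $\CatPcons$, its image under the inclusion, which is $F$ itself (viewed in $\CatP$), is a compact object of $\CatP$; this gives $(1)\Rightarrow(2)$.

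For the reverse direction $(2)\Rightarrow(1)$, I would use the other adjunction. The functor $\Env_P \colon \fromto{\CatP}{\CatPcons}$ is left adjoint to the inclusion, and \enumref{lem:iotatP_preserves_filtered_colimits}{3} asserts that $\Env_P$ preserves compact objects. Now the key observation is that since $F\colon\fromto{\Ccal}{P}$ is already conservative, the unit of the $(\Env_P, \text{incl})$-adjunction is an equivalence: $\Env_P$ is a reflection onto the full subcategory $\CatPcons$, so $\Env_P(F) \equivalent F$ for objects already lying in $\CatPcons$. Therefore, if $F$ is compact as an object of $\CatP$, then $\Env_P(F) \equivalent F$ is compact as an object of $\CatPcons$, giving $(2)\Rightarrow(1)$.

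Assembling these, the three statements are equivalent. I expect no serious obstacle here, as all the hard work has been front-loaded into \Cref{lem:compact_objects_of_CatD,lem:iotatP_preserves_filtered_colimits}; the only subtlety worth stating carefully is that $\Env_P$ restricts to the identity (up to equivalence) on $\CatPcons$, so that applying it to a conservative $F$ does not change the object whose compactness we are testing. This is precisely what makes the reflection allow us to transport compactness from $\CatP$ back into the subcategory $\CatPcons$.
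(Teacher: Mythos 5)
Your proof is correct and follows essentially the same route as the paper: the equivalence $(1)\Leftrightarrow(2)$ via the fact that both the inclusion $\incto{\CatPcons}{\CatP}$ and its left adjoint $\Env_P$ preserve compact objects (together with $\Env_P$ restricting to the identity on the reflective subcategory), and $(2)\Leftrightarrow(3)$ via \Cref{lem:compact_objects_of_CatD}. No gaps.
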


\begin{proof}
	The fact that both the inclusion \smash{$ \incto{\CatPcons}{\CatP} $} and its left adjoint $ \Env_P $ preserve compact objects (\Cref{lem:iotatP_preserves_filtered_colimits}) shows that (1) $ \Leftrightarrow $ (2).
	\Cref{lem:compact_objects_of_CatD} shows that (2) $ \Leftrightarrow $ (3).
\end{proof}

\begin{remark}
	\Cref{cor:compactness_for_categories_with_a_conservative_functor_to_a_poset} was mentioned in \cite[Remark 2.14]{arXiv:2206.02728}.
\end{remark}

Finiteness is also a well-behaved notion in $ \CatP $:

\begin{definition}
	Given \acategory $ \Dcal $, we say that an object $ F \colon \fromto{\Ccal}{\Dcal} $ of $ \Cat_{\infty,/\Dcal} $ is \defn{finite} if the \category $ \Ccal $ is finite.

	Given a poset $ P $, we say that an object $ F \colon \fromto{\Ccal}{P} $ of $ \CatPcons $ is finite if the \category $ \Ccal $ is finite.
\end{definition}

\begin{notation}
	For the sake of convenience, let us write $ [-1] \colonequals \emptyset $ for the empty poset.
\end{notation}

\begin{observation}\label{obs:finite_and_compact_objects_of_CatD}
	Let $ \Dcal $ be \acategory.
	Then the full subcategory
	\begin{equation*}
		\CatfinD \subset \Cat_{\infty,/\Dcal}
	\end{equation*}
	spanned by the finite objects is the smallest subcategory closed under pushouts and containing all objects of the form $ \sigma \colon \fromto{[n]}{\Dcal} $ where $ -1 \leq n \leq 1 $.
	Similarly,
	\begin{equation*}
		\CatomegaD \subset \Cat_{\infty,/\Dcal}
	\end{equation*}
	is the smallest full subcategory containing $ \CatfinD $ and closed under retracts.
\end{observation}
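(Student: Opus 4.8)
The plan is to deduce both statements from the corresponding facts about $\Catfin \subset \Catinfty$ and $\Catomega \subset \Catinfty$ recorded in \Cref{recollection_finite_category}, transported across the forgetful functor $U \colon \Cat_{\infty,/\Dcal} \to \Catinfty$, $(F \colon \Ccal \to \Dcal) \mapsto \Ccal$. The two properties of $U$ that I would invoke repeatedly are that $U$ creates colimits (as does the forgetful functor from any slice \category) and that $U$ is conservative; together these show that a square in $\Cat_{\infty,/\Dcal}$ is a pushout if and only if its image under $U$ is.

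For the first statement, write $\mathcal S$ for the smallest collection of objects of $\Cat_{\infty,/\Dcal}$ that contains the generators $\sigma \colon [n] \to \Dcal$ with $-1 \leq n \leq 1$ and is closed under pushouts. The inclusion $\mathcal S \subseteq \CatfinD$ is immediate, since each generator has finite underlying \category and $\CatfinD$ is closed under pushouts (because $U$ creates colimits and $\Catfin$ is closed under pushouts). For the reverse inclusion I would consider the class $\mathcal T \subseteq \Catinfty$ of those \categories $\Ccal$ with the property that every functor $\Ccal \to \Dcal$ defines an object of $\mathcal S$. By inspection $\mathcal T$ contains $\emptyset$, $\pt$, and $[1]$, as the functors out of these to $\Dcal$ are precisely the generators; and $\mathcal T$ is closed under pushouts: given a pushout $\Ccal \simeq \Bcal \sqcup_{\Acal} \Ccal'$ in $\Catinfty$ with $\Acal,\Bcal,\Ccal' \in \mathcal T$ and a functor $F \colon \Ccal \to \Dcal$, restricting $F$ along the structure maps exhibits the span as a span in $\Cat_{\infty,/\Dcal}$, and since $U$ detects pushouts this presents $F$ as a pushout of objects of $\mathcal S$, so $F \in \mathcal S$. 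By \Cref{recollection_finite_category} we get $\Catfin \subseteq \mathcal T$, which says exactly that every finite object lies in $\mathcal S$, i.e. $\CatfinD \subseteq \mathcal S$.

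For the second statement, \Cref{lem:compact_objects_of_CatD} identifies $\CatomegaD$ with the objects $F \colon \Ccal \to \Dcal$ for which $\Ccal$ is compact. Writing $\mathcal R$ for the smallest full subcategory containing $\CatfinD$ and closed under retracts, the inclusion $\mathcal R \subseteq \CatomegaD$ follows because finite \categories are compact and a retract of a compact object of $\Catinfty$ is compact. The content is the reverse inclusion: given $F \colon \Ccal \to \Dcal$ with $\Ccal$ compact, I must exhibit $F$ as a retract of a finite object. By \Cref{recollection_finite_category}, $\Ccal$ is a coherent retract of a finite \category $\Ccal'$, i.e. there is a functor $\rho \colon \mathrm{Ret} \to \Catinfty$ out of the walking retract $\mathrm{Ret}$ sending the retract object $X$ to $\Ccal$, the ambient object $Y$ to $\Ccal'$, and the retraction to some $r$. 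I would then take $F' \colonequals F \circ \rho(r)$ and lift $\rho$ to a retract diagram in $\Cat_{\infty,/\Dcal}$ displaying $F$ as a retract of the finite object $F'$.

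The crux, and the only step I expect to require care, is producing this lift coherently. Using the identity $\Fun(\mathrm{Ret}, \Cat_{\infty,/\Dcal}) \simeq \Fun(\mathrm{Ret}, \Catinfty)_{/\underline{\Dcal}}$ (a diagram over $\Dcal$ in a slice is a diagram equipped with a natural transformation to the constant diagram), lifting $\rho$ with prescribed value $F$ at $X$ amounts to giving a natural transformation $\eta \colon \rho \Rightarrow \underline{\Dcal}$ with $\eta_X \simeq F$. Here I would use that $X$ is \emph{terminal} in $\mathrm{Ret}$: this yields a natural transformation $\id{\mathrm{Ret}} \Rightarrow \underline{X}$, which upon whiskering with $\rho$ gives $\beta \colon \rho \Rightarrow \underline{\Ccal}$ with $\beta_X = \id{\Ccal}$ and $\beta_Y = \rho(r)$. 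Setting $\eta \colonequals \underline{F} \circ \beta$ then produces a coherent natural transformation with $\eta_X \simeq F$ and $\eta_Y \simeq F'$, hence a functor $\mathrm{Ret} \to \Cat_{\infty,/\Dcal}$ exhibiting $F$ as a retract of $F'$, with $U(F') = \Ccal'$ finite. This gives $F \in \mathcal R$ and so $\CatomegaD \subseteq \mathcal R$. The remaining points — that the generated (full) subcategories agree with $\CatfinD$ and $\CatomegaD$ on the nose, and the standard verification that $U$ creates colimits — I would treat as routine.
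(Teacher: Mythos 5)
Your argument is correct; the paper states this as an observation with no proof, and your writeup supplies exactly the implicit justification (the forgetful functor $\Cat_{\infty,/\Dcal}\to\Catinfty$ creates colimits and is conservative, combined with \Cref{recollection_finite_category} and \Cref{lem:compact_objects_of_CatD}). The one genuinely delicate point — lifting a retract diagram from $\Catinfty$ to the slice with prescribed value $F$ at the retract vertex — is handled correctly via the terminality of that vertex in $\mathrm{Ret}$.
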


We conclude by recording some important operations that preserve finiteness and compactness.

\begin{proposition}\label{prop:inverting_morphisms_fiberwise_preserves_compactness}
	Let $ F \colon \fromto{\Ccal}{P} $ be a functor from \acategory to a poset.
	If $ \Ccal $ is a finite (resp., compact) object of $ \Catinfty $, then the \category
	\begin{equation*}
		\Env_P(\Ccal) = \Ccal[W_P\inv]
	\end{equation*}
	is a finite (resp., compact) object of $ \Catinfty $. 
\end{proposition}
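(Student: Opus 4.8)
The statement asserts that $\Env_P$ preserves finiteness and compactness. The compact case follows almost immediately from \Cref{cor:compactness_for_categories_with_a_conservative_functor_to_a_poset} together with \enumref{lem:iotatP_preserves_filtered_colimits}{3}: if $\Ccal$ is compact in $\Catinfty$, then $F\colon \Ccal \to P$ is a compact object of $\CatP$ by that corollary, and since $\Env_P$ preserves compact objects, $\Env_P(\Ccal) = \Ccal[W_P\inv]$ is a compact object of $\CatPcons$; applying the corollary once more (in the conservative setting) shows that the underlying \category $\Ccal[W_P\inv]$ is compact in $\Catinfty$. So the entire content of the proposition is concentrated in the finiteness assertion.

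For finiteness I would argue by induction on the finite-colimit presentation of $\Ccal$ supplied by \Cref{recollection_finite_category}, using the pushout description of localizations from \Cref{rec:localizations_as_pushouts}. The key observation is that $\Env_P$, being a left adjoint, preserves all colimits; hence it suffices to check that $\Env_P$ carries the three generating finite \categories $\emptyset$, $\pt$, and $[1]$ (equipped with \emph{arbitrary} functors to $P$) to finite \categories, and then invoke that $\Catfin$ is closed under pushouts. For $\emptyset \to P$ and $\pt \to P$ the functor is already conservative (there are no non-identity endomorphisms to invert), so $\Env_P$ acts as the identity and the output is finite. The only case requiring thought is $[1] \to P$: writing the two objects as $x \mapsto p$ and $y \mapsto q$ with $p \leq q$, if $p \neq q$ the map is already conservative and $\Env_P([1]) = [1]$ is finite; if $p = q$, the single non-identity morphism lies in $W_P$ and must be inverted, so $\Env_P([1]) = [1][W_P\inv]$ is the localization of $[1]$ at its unique non-identity arrow, which is the contractible groupoid $\pt$, again finite.

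The subtle point in this induction is that $\Env_P$ is a \emph{relative} localization: although it preserves pushouts of \categories-over-$P$, one must confirm that a finite pushout presentation of $\Ccal$ can be promoted to a pushout presentation of the pair $F\colon \Ccal \to P$ in $\CatP$. This is automatic because the forgetful functor $\fromto{\CatP}{\Catinfty}$ preserves colimits (it is a left adjoint, or see the discussion in \Cref{rec:limits_of_topoi}), so any expression of $\Ccal$ as an iterated pushout of $\emptyset$, $\pt$, $[1]$ lifts canonically to $\CatP$ by equipping each stage with its induced map to $P$; the generators then appear with \emph{some} functor to $P$, which is exactly the generality treated in the previous paragraph. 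Thus $\Env_P(\Ccal)$ is built from finitely many copies of $\emptyset$, $\pt$, $[1]$ via finite pushouts, placing it in $\Catfin$. The main obstacle, therefore, is not any deep categorical input but simply being careful that the generating \categories arrive with arbitrary (not necessarily conservative) structure maps to $P$, and handling the two cases $p = q$ and $p \neq q$ for the edge $[1]$; once this base case is settled, closure of $\Catfin$ under finite colimits \Cref{recollection_finite_category} and colimit-preservation of the left adjoint $\Env_P$ finish the argument.
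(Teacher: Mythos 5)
Your proof is correct and the finiteness argument — using that the left adjoint $\Env_P$ preserves pushouts to reduce to the generators $[n]\to P$ for $-1 \leq n \leq 1$, then handling $[1]\to P$ by the constant/non-constant dichotomy — is exactly the paper's proof. The only divergence is the compact case, which the paper deduces from the finite case via closure under retracts (\Cref{obs:finite_and_compact_objects_of_CatD}) while you route through \enumref{lem:iotatP_preserves_filtered_colimits}{3}; your route also works, but its first step should cite \Cref{lem:compact_objects_of_CatD} rather than \Cref{cor:compactness_for_categories_with_a_conservative_functor_to_a_poset}, since the latter assumes the structure functor is conservative, which a general $F\colon\Ccal\to P$ is not.
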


\begin{proof}
	In light of \Cref{obs:finite_and_compact_objects_of_CatD}, it suffices to show that $ \Env_{P} $ preserves finite objects.
	Moreover, to prove this, it suffices to show that for $ -1 \leq n \leq 1 $ and each map of posets $ \sigma \colon \fromto{[n]}{P} $, the localization $ \End_P([n]) $ is finite.
	If $ n = -1 $ or $ n = 0 $, then $ \Env_P([n]) = [n] $, so the claim is clear.

	If $ n = 1 $, then there are two cases.
	First, if the map $ \sigma \colon \fromto{[1]}{P} $ is constant, then the class $ W_{P} $ consists of all morphisms in $ P $, hence $ \Env_{P}([1]) \equivalent \pt $ is finite.
	Second, if the map $ \sigma \colon \fromto{[1]}{P} $ is \textit{not} constant, then the class $ W_{P} $ consists of only the identity morphisms in $ P $, hence $ \Env_{P}([1]) \equivalent [1] $ is finite.
\end{proof}

\begin{proposition}\label{prop:base_change_lc_preserves_finite_and_compact_categories}
	Let $ P $ be a poset and let $ S \subset P $ be a locally closed subposet.
	Then the basechange functor 
	\begin{equation*}
		S \cross_P (-) \colon \CatP \to  \Cat_{\infty,/S}
	\end{equation*} 
	preserves finite and compact objects.
\end{proposition}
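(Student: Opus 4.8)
The plan is to reduce the statement about finiteness to the generators of $\CatfinD$ described in \Cref{obs:finite_and_compact_objects_of_CatD}, and then handle compactness using the behavior of exponentiable fibrations. First I would recall that by \Cref{lem:locally_closed_subposets_are_exponentiable}-(\ref*{lem:locally_closed_subposets_are_exponentiable.3}), the inclusion $\incto{S}{P}$ is an exponentiable fibration, and the class of exponentiable fibrations is stable under base change. Consequently the base change functor $S \cross_P (-) \colon \fromto{\CatP}{\CatS}$ is itself a left adjoint (since pulling back along an exponentiable fibration is a left adjoint), and in particular it preserves all colimits—most importantly pushouts.

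For finiteness, I would argue as follows. By \Cref{obs:finite_and_compact_objects_of_CatD}, the full subcategory $\CatfinD \subset \Cat_{\infty,/\Dcal}$ of finite objects is the smallest subcategory closed under pushouts and containing the objects $\sigma \colon \fromto{[n]}{\Dcal}$ for $-1 \leq n \leq 1$. Since $S \cross_P(-)$ preserves pushouts, it suffices to check that the base change of each generator $\sigma \colon \fromto{[n]}{P}$ (for $-1 \leq n \leq 1$) lands in $\CatfinS$. For $n = -1$ the pullback is empty, and for $n = 0$ the pullback is either empty or a point depending on whether $\sigma(0) \in S$; both are finite. For $n = 1$, the fiber product $S \cross_P [1]$ is computed by restricting the arrow $\sigma(0) \leq \sigma(1)$ to those endpoints lying in $S$: since $S$ is an interval (a locally closed subposet in the Alexandroff topology), the result is one of $\emptyset$, $[0]$, or $[1]$, and hence is finite. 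Thus $S \cross_P(-)$ sends generators to finite objects and preserves pushouts, so it preserves all finite objects.

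For compactness, I would combine the finiteness case with the characterization of compact objects of $\CatP$. By \Cref{cor:compactness_for_categories_with_a_conservative_functor_to_a_poset} (applied in the source and target, or directly by \Cref{lem:compact_objects_of_CatD}), an object of $\Cat_{\infty,/P}$ is compact exactly when its total category is compact in $\Catinfty$, and likewise over $S$. Since $\Catomega$ is the idempotent completion of $\Catfin$ (\Cref{recollection_finite_category}), every compact category is a retract of a finite one; because $S \cross_P(-)$ preserves finite objects and, being a functor, preserves retracts, it therefore preserves compact objects as well. Alternatively, and more cleanly, I would invoke \Cref{rec:adjoint_triple_preservation_of_compactness}: the functor $S \cross_P(-)$ is a left adjoint whose right adjoint itself preserves filtered colimits (this is exactly the content of $\incto{S}{P}$ being exponentiable, giving the desired adjoint triple), so $S \cross_P(-)$ preserves compact objects.

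The main obstacle I anticipate is correctly identifying the base change of the single generating arrow $\sigma\colon\fromto{[1]}{P}$ and confirming that it is always finite; this hinges on the fact that a locally closed subposet is an interval, so restricting a $1$-simplex never produces anything worse than $[1]$. The rest of the argument is formal once we know $S\cross_P(-)$ preserves pushouts and retracts, which follows from exponentiability of $\incto{S}{P}$.
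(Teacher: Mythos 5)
Your proof is correct and follows essentially the same route as the paper: use exponentiability of $\incto{S}{P}$ to get colimit-preservation, reduce finiteness to the generators $\sigma\colon\fromto{[n]}{P}$ for $-1\leq n\leq 1$ (the paper phrases the last step uniformly by noting $S\cross_P[n]$ is a locally closed subposet of $[n]$, hence isomorphic to some $[m]$), and deduce compactness from the retract-closure characterization of $\Catomega$. The only caveat is that your ``cleaner'' alternative for compactness via \Cref{rec:adjoint_triple_preservation_of_compactness} is actually the weaker argument, since exponentiability alone does not tell you that the right adjoint of $S\cross_P(-)$ preserves filtered colimits; your primary retract argument is the one to keep.
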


\begin{proof}
	Since the inclusion $ \incto{S}{P} $ is an exponentiable fibration (\Cref{lem:locally_closed_subposets_are_exponentiable}), the functor $ S \cross_P (-) $ preserves colimits.
	Hence by \Cref{obs:finite_and_compact_objects_of_CatD}, it suffices to prove that $ S \cross_P (-) $ preserves finite objects.
	Moreover, to prove this, it suffices to show that for $ -1 \leq n \leq 1 $ and each map of posets $ \sigma \colon \fromto{[n]}{P} $, the basechange $ S \cross_P [n] $ is finite.
	To conclude, observe that since $ S \subset P $ is locally closed, $ S \cross_P [n] \subset [n] $ is also locally closed; hence, there exists $ -1 \leq m \leq n $ such that $ S \cross_P [n] \equivalent [m] $.
\end{proof}

The following application of \Cref{prop:base_change_lc_preserves_finite_and_compact_categories} is not needed in the present paper, but is quite useful:

\begin{lemma}
	Let $ \Ccal $ and $ \Dcal $ be \categories.
	Then the join $ \Ccal \join \Dcal $ is finite (resp., compact) if and only if both $ \Ccal $ and $ \Dcal $ are finite (resp., compact).
\end{lemma}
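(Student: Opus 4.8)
The plan is to prove the two implications by different means: the forward (\emph{only if}) direction is a direct application of \Cref{prop:base_change_lc_preserves_finite_and_compact_categories}, while the reverse (\emph{if}) direction rests on expressing the join as a finite colimit built out of products of $\Ccal$ and $\Dcal$.

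For the forward direction, recall that the join carries a canonical functor $\pi \colon \fromto{\Ccal \join \Dcal}{\pt \join \pt = [1]}$, induced by $\fromto{\Ccal}{\pt}$ and $\fromto{\Dcal}{\pt}$, whose fibers recover the two factors: $\Ccal \simeq \{0\} \cross_{[1]} (\Ccal \join \Dcal)$ and $\Dcal \simeq \{1\} \cross_{[1]} (\Ccal \join \Dcal)$. By \Cref{lem:compact_objects_of_CatD}, together with the definition of finiteness in a slice (see \Cref{obs:finite_and_compact_objects_of_CatD}), the object $\pi$ of $\Cat_{\infty,/[1]}$ is finite (resp., compact) if and only if $\Ccal \join \Dcal$ is finite (resp., compact) in $\Catinfty$. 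Since $\{0\} \subset [1]$ is closed and $\{1\} \subset [1]$ is open, both are locally closed subposets; so \Cref{prop:base_change_lc_preserves_finite_and_compact_categories} shows that base change along $\incto{\{0\}}{[1]}$ and $\incto{\{1\}}{[1]}$ carries the finite (resp., compact) object $\pi$ to the finite (resp., compact) objects $\Ccal$ and $\Dcal$ of $\Cat_{\infty,/\pt} = \Catinfty$. This gives that $\Ccal$ and $\Dcal$ are finite (resp., compact).

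For the reverse direction, I would use the standard decomposition of the join as the pushout in $\Catinfty$
\begin{equation*}
	\Ccal \join \Dcal \;\simeq\; (\Ccal \amalg \Dcal) \amalg_{(\Ccal \cross \Dcal) \amalg (\Ccal \cross \Dcal)} (\Ccal \cross \Dcal \cross [1]) \comma
\end{equation*}
where the first copy of $\Ccal \cross \Dcal$ includes as $\Ccal \cross \Dcal \cross \{0\}$ and projects to $\Ccal$, while the second includes as $\Ccal \cross \Dcal \cross \{1\}$ and projects to $\Dcal$, so that the generic edge of the cylinder becomes the unique morphism $\fromto{c}{d}$ in the join. Since the gluing map has image $\Ccal \cross \Dcal \cross \partial[1]$ and is therefore a monomorphism, this is a homotopy pushout and computes the colimit in $\Catinfty$. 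To run the argument I first record that products preserve finiteness and compactness: because $\Ccal \cross (-)$ preserves finite colimits and retracts, the class of $\Dcal$ with $\Ccal \cross \Dcal$ finite (resp., compact) is closed under finite colimits and retracts and contains $\emptyset$, $\pt$, and $[1]$ (the last because $\Ccal \cross [1]$ is finite resp. compact, by the same reasoning applied to $(-) \cross [1]$), hence contains all of $\Catfin$ (resp., $\Catomega$) by \Cref{recollection_finite_category}. Granting this, if $\Ccal$ and $\Dcal$ are finite (resp., compact), then $\Ccal \amalg \Dcal$, $\Ccal \cross \Dcal$, and $\Ccal \cross \Dcal \cross [1]$ are all finite (resp., compact), so the displayed pushout exhibits $\Ccal \join \Dcal$ as a finite colimit of finite (resp., compact) categories; since $\Catfin$ (resp., $\Catomega$) is closed under finite colimits in $\Catinfty$, the join is finite (resp., compact).

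I expect the main obstacle to be justifying the pushout decomposition of the join at the level of $\infty$-categories, rather than merely of simplicial sets: one must check that the indicated simplicial pushout agrees with (the nerve of) the join and that, being formed along a cofibration in the Joyal model structure, it models the $\infty$-categorical colimit. This is a routine but slightly delicate simplicial verification. The products lemma, by contrast, is a formal consequence of the cartesian closedness of $\Catinfty$ and the inductive descriptions of $\Catfin$ and $\Catomega$, and should present no difficulty.
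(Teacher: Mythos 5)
Your proposal is correct and follows essentially the same route as the paper: the paper also deduces the direction ``join finite/compact $\Rightarrow$ factors finite/compact'' from \Cref{prop:base_change_lc_preserves_finite_and_compact_categories} applied to the canonical functor $\fromto{\Ccal \join \Dcal}{[1]}$, and the converse from the presentation of $\Ccal \join \Dcal$ as the colimit in $\Catinfty$ of the zigzag $\Ccal \leftarrow \Ccal \cross \Dcal \cross \{0\} \hookrightarrow \Ccal \cross \Dcal \cross [1] \hookleftarrow \Ccal \cross \Dcal \cross \{1\} \to \Dcal$ (your pushout is just this colimit rewritten), together with stability of finite/compact \categories under finite products and finite colimits. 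Your elaboration of the closure-under-products step, which the paper asserts without proof, is a harmless addition.
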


\begin{proof}
	By definition, the join $ \Ccal \join \Dcal $ is the colimit in $ \Catinfty $ of the diagram
	\begin{equation*}
		\begin{tikzcd}[column sep={8ex,between origins}, row sep={8ex,between origins}]
			 & \Ccal \cross \Dcal \cross \{0\} \arrow[dl] \arrow[dr, hooked] & & \Ccal \cross \Dcal \cross \{1\} \arrow[dl, hooked'] \arrow[dr] & \\
			\Ccal & & \Ccal \cross \Dcal \cross {[1]} & & \Dcal \comma
		\end{tikzcd}
	\end{equation*}
	where the outermost functors are the projections.
	Furthermore, the unique functors $ \fromto{\Ccal}{\{0\}} $ and $ \fromto{\Dcal}{\{1\}} $ induce a functor
	\begin{equation*}
		\begin{tikzcd}
			\Ccal \join \Dcal \arrow[r] & \{0\} \join \{1\} \equivalent [1] 
		\end{tikzcd}
	\end{equation*}
	with fibers $ (\Ccal \join \Dcal)_0 \equivalent \Ccal $ and $ (\Ccal \join \Dcal)_1 \equivalent \Dcal $.
	In particular, the forward implication follows from the fact that finite (resp., compact) \categories are stable under finite products and finite colimits.
	The reverse implcation follows from \Cref{prop:base_change_lc_preserves_finite_and_compact_categories} applied to the induced functor $ \fromto{\Ccal \join \Dcal}{[1]} $.
\end{proof}

\noindent Of particular interest are cones:
	
\begin{corollary}\label{compactness_and_cone}
	Let $ \Ccal $ be \acategory.
	Then $ \Ccal $ is finite (resp., compact) if and only if the cone $ \Ccal^{\lhd} $ is finite (resp., compact). 
\end{corollary}


\section{Complements on \texorpdfstring{$\infty$}{∞}-topoi}\label{appendix:complements_on_topoi}

The purpose of this appendix is to prove some fundamental results about \topoi that are used in the main body of the paper.
In \cref{subsec:open_and_closed_subtopoi}, we recall the basics of étale geometric morphisms as well as open and closed immersions of \topoi.
In \cref{subsec:hypercompleteness_and_etale_geometric_morphisms}, we explain how hypercompletion interacts with étale geometric morphisms.
In \cref{subsec:the_hypercompletion_of_a_recollement}, we prove that the hypercompletion of a recollement of \topoi is still a recollement (\Cref{prop:hypercompletions_of_recollements}).
We then use this to explain how hypercompletion interacts with locally closed immersions of \topoi (\Cref{cor:pulling_back_a_locally_closed_immersion_along_hypercompletion,lem:hypercompletion_commutes_with_taking_strata}).


\subsection{Open and closed subtopoi}\label{subsec:open_and_closed_subtopoi}

In this subsection, we recall the notions of open and closed immersions of \topoi and how they give rise to recollements.
In order to discuss open immersions, we start with the more general notion of \textit{étale} geometric morphisms.
For more background on étale geometric morphisms, the reader should consult \cite[\HTTsubsec{6.3.5}]{HTT}.

\begin{recollection}[(étale geometric morphisms)]
	Let $ \X $ be \atopos and $ U \in \X $.
	Then the overcategory $ \X_{/U} $ is \atopos.
	Moreover, the forgetful functor $ \plowersharp \colon \fromto{\X_{/U}}{\X} $ admits a right adjoint $ \pupperstar \colon \fromto{\X}{\X_{/U}} $ given by the assignment $ \goesto{X}{X \cross U} $.
	Since colimits are universal in $ \X $, the functor $ \pupperstar $ admits a further right adjoint $ \plowerstar \colon \fromto{\X_{/U}}{\X} $.
	See \HTT{Proposition}{6.3.5.1}.
	We always regard the \topos $ \X_{/U} $ as \atopos over $ \X $ via the natural geometric morphism $ \plowerstar \colon \fromto{\X_{/U}}{\X} $.

	Let $ \elowerstar \colon \fromto{\W}{\X} $ be a geometric morphism of \topoi.
	Then the following conditions are equivalent:
	\begin{enumerate}
		\item There exists an object $ U \in \X $ and an equivalence $ \equivto{\W}{\X_{/U}} $ of \topoi over $ \X $.

		\item The functor $ \eupperstar $ admits a left adjoint $ \elowersharp \colon \fromto{\W}{\X} $ and the induced functor
		\begin{equation*}
			\elowersharp \colon \fromto{\W}{\X_{/\elowersharp(1_{\W})}}
		\end{equation*}
		is an equivalence of \categories.

		\item The functor $ \eupperstar $ admits a conservative left adjoint $ \elowersharp \colon \fromto{\W}{\X} $ and for all maps $ \fromto{X}{Z} $ in $ \X $, objects $ Y \in \W $, and maps $ \fromto{\elowersharp(Y)}{Z} $, the natural map
		\begin{equation}\label{eq:etale_projection_formula}
			\elowersharp\paren{\eupperstar(X) \crosslimits_{\eupperstar(Z)} Y} \to X \cross_{Z} \elowersharp(Y) 
		\end{equation}
		is an equivalence.
		This equivalence is referred to as the \textit{étale projection formula}.
	\end{enumerate}
	See \HTT{Proposition}{6.3.5.11}.
	We call a geometric morphism satisfying these equivalent conditions an \defn{étale} geometric morphism.
\end{recollection}

\textit{Open immersions} are a special case of étale geometric morphisms:

\begin{lemma}\label{lem:equivalent_characterization_of_open_immersions}
	Let $ \jlowerstar \colon \fromto{\U}{\X} $ be a geometric morphism of \topoi.
	Then the following conditions are equivalent:
	\begin{enumerate}
		\item There exists a $ (-1) $-truncated object $ U \in \X $ and an equivalence $ \equivto{\U}{\X_{/U}} $ of \topoi over $ \X $.

		\item The geometric morphism $ \jlowerstar \colon \fromto{\U}{\X} $ is étale and $ \jlowersharp(1_{\U}) \in \X $ is $ (-1) $-truncated.

		\item The geometric morphism $ \jlowerstar \colon \fromto{\U}{\X} $ is étale and the functor $ \jlowersharp $ is fully faithful.

		\item The geometric morphism $ \jlowerstar \colon \fromto{\U}{\X} $ is étale and the functor $ \jlowerstar $ is fully faithful.
	\end{enumerate}
\end{lemma}

\begin{proof}
	Clearly (1) $ \Leftrightarrow $ (2).
	The equivalence (3) $ \Leftrightarrow $ (4) is a consequence of the fact that given a triple of adjoints $ \flowersharp \leftadjoint \fupperstar \leftadjoint \flowerstar $ between arbitrary \categories, the extreme left adjoint $ \flowersharp $ is fully faithful if and only if the extreme right adjoint $ \flowerstar $ is fully faithful.

	To see that (2) $ \Leftrightarrow $ (3), we first make some observations; assume that $ \jlowerstar $ is étale.
	Note that $ \jlowersharp $ is fully faithful if and only if the unit $ \id{\Ucal} \to \jupperstar\jlowersharp $ is an equivalence.
	Since $ \jlowersharp $ is conservative, this is the case if and only if the induced natural transformation $ \jlowersharp \to \jlowersharp \jupperstar \jlowersharp $ is an equivalence.
	Let $ V \in \Ucal $; by the étale projection formula \eqref{eq:etale_projection_formula} applied to $ X = \jlowersharp(V) $, $ Y = 1_{\Ucal} $, and $ Z = 1_{\Xcal} $, we see that
	\begin{equation*}
		\jlowersharp \jupperstar \jlowersharp(V) \equivalent \jlowersharp(V) \cross \jlowersharp(1_{\Ucal}) \period
	\end{equation*} 
	Moreover, unwinding definitions shows that, under this identification, the map $ \jlowersharp(V) \to \jlowersharp \jupperstar \jlowersharp(V) $ induced by the unit is identified with the graph
	\begin{equation*}
		\gamma_V \colon \jlowersharp(V) \to \jlowersharp(V) \cross \jlowersharp(1_{\Ucal}) 
	\end{equation*} 
	of the map $ \jlowersharp(V) \to \jlowersharp(1_{\Ucal}) $ induced by the unique map $ V \to 1_{\Ucal} $.
	Also note that the map $ \gamma_V $ fits into a pullback square
	\begin{equation*}
	    \begin{tikzcd}[sep=2.25em]
	       \jlowersharp(V) \arrow[d] \arrow[r, "\gamma_V"] \arrow[dr, phantom, very near start, "\lrcorner", xshift=-0.5em, yshift=0.25em] & \jlowersharp(V) \cross \jlowersharp(1_{\Ucal}) \arrow[d]  \\ 
	       \jlowersharp(1_{\Ucal}) \arrow[r, "\Delta"'] & \jlowersharp(1_{\Ucal}) \cross \jlowersharp(1_{\Ucal}) \period
	    \end{tikzcd}
	\end{equation*}

	To see that (2) $ \Rightarrow $ (3), note that if $ \jlowersharp(1_{\Ucal}) $ is $ (-1) $-truncated, then the diagonal $ \Delta \colon \jlowersharp(1_{\Ucal}) \to \jlowersharp(1_{\Ucal}) \cross \jlowersharp(1_{\Ucal}) $ is an equivalence; hence by pullback, for each $ V \in \Ucal $, the map $ \gamma_V $ is an equivalence.
	Conversely, note that $ \gamma_{1_{\Ucal}} $ is the diagonal, so if $ \jlowersharp $ is fully faithful, $ \Delta $ is an equivalence, i.e., $ \jlowersharp(1_{\Ucal}) $ is $ (-1) $-truncated.
\end{proof}

\begin{recollection}[(open immersions)]
	We call a geometric morphism satisfying the equivalent conditions of \Cref{lem:equivalent_characterization_of_open_immersions} an \defn{open immersion} of \topoi.
	For open immersions of \topoi, we write $ \jlowershriek \colonequals \jlowersharp $.
\end{recollection}

\begin{recollection}[(closed immersions)]
	Let $ \X $ be \atopos and let $ U \in \X $ be a $ (-1) $-truncated object.
	We write
	\begin{equation*}
		\X_{\sminus U} \subset \X
	\end{equation*}
	for the full subcategory spanned by those objects $ F $ such that the projection $ \pr_2 \colon \fromto{F \cross U}{U} $ is an equivalence. 
	The inclusion $ \X_{\sminus U} \subset \X $ is accessible and admits a left exact left adjoint \HTT{Proposition}{7.3.2.3}.
	In particular, $ \X_{\sminus U} $ is \atopos and the inclusion $ \incto{\X_{\sminus U}}{\X} $ is a geometric morphism.
	We call the \topos $ \X_{\sminus U} $ the \defn{closed complement} of the open subtopos $ \X_{/U} $.

	We say that a geometric morphism of \topoi $ \ilowerstar \colon \fromto{\Z}{\X} $ is a \defn{closed immersion} if there exists a $ (-1) $-truncated object $ U \in \X $ such that $ \ilowerstar $ factors through $ \X_{\sminus U} $ and restricts to an equivalence $ \ilowerstar \colon \equivto{\Z}{\X_{\sminus U}} $.
\end{recollection}

\begin{definition}
	Let $ \flowerstar \colon \fromto{\X}{\Y} $ be a geometric morphism of \topoi.
	We say that $ \flowerstar $ is a \defn{locally closed immersion} if there exists a factorization $ \flowerstar \equivalent \jlowerstar \ilowerstar $ where $ \ilowerstar $ is a closed immersion and $ \jlowerstar $ is an open immersion. 
\end{definition}

\begin{recollection}\label{rec:locally_closed_immersions_of_topological_spaces_induce_locally_closed_immersions_of_topoi}
	Let $ X $ be a topological space and let $ j \colon \incto{U}{X} $ be an open subspace with closed complement $ i \colon \incto{Z}{X} $.
	Also write $ U \in \Sh(X) $ for the sheaf represented by the open subset $ U \subset X $.
	Then:
	\begin{enumerate}
		\item The geometric morphism $ \jlowerstar \colon \incto{\Sh(U)}{\Sh(X)} $ is an open immersion that identifies $ \Sh(U) $ with $ \Sh(X)_{/U} $.

		\item The geometric morphism $ \ilowerstar \colon \incto{\Sh(Z)}{\Sh(X)} $ is a closed immersion that identifies $ \Sh(Z) $ with $ \Sh(X)_{\sminus U} $.
		See \HTT{Corollary}{7.3.2.10}.
	\end{enumerate}
	As a consequence, locally closed immersions of topological spaces induce locally closed immersions of \topoi of sheaves.
\end{recollection}

The key feature of open and closed immersions is that they give rise to recollements:

\begin{recollection}[{(open-closed recollement \HAa{Proposition}{A.8.15})}]\label{rec:open-closed_recollement}
	Let $ \X $ be \atopos and $ U \in \X $ a $ (-1) $-truncated object.
	Write $ \ilowerstar \colon \incto{\X_{\sminus U}}{\X} $ and $ \jlowerstar \colon \fromto{\X_{/U}}{\X} $ for the complementary closed and open geometric morphisms.
	Then the functors
	\begin{equation*}
		\iupperstar \colon \fromto{\X}{\X_{\sminus U}} \andeq \jupperstar \colon \fromto{\X}{\X_{/U}}
	\end{equation*}  
	exhibit $ \X $ as the recollement of $ \X_{\sminus U} $ and $ \X_{/U} $.
\end{recollection}

\noindent In light of \Cref{rec:locally_closed_immersions_of_topological_spaces_induce_locally_closed_immersions_of_topoi,rec:open-closed_recollement}, we see: 

\begin{example}\label{ex:open-closed_recollement_for_sheaves_on_topological_spaces}
	Let $ X $ be a topological space and let $ i \colon \incto{Z}{X} $ be a closed subspace with open complement $ j \colon \incto{U}{X} $.
	Then the functors
	\begin{equation*}
		\iupperstar \colon \fromto{\Sh(X)}{\Sh(Z)} \andeq \jupperstar \colon \fromto{\Sh(X)}{\Sh(U)}
	\end{equation*} 
	exhibit $ \Sh(X) $ as the recollement of $ \Sh(Z) $ and $ \Sh(U) $. 
\end{example}

Étale geometric morphisms and closed immersions also behave well under basechange.

\begin{proposition}\label{prop:formula_for_pullbacks_along_etale_morphisms_and_closed_immersions}
	Let $ \flowerstar \colon \fromto{\X}{\Y} $ be a geometric morphism of \topoi and let $ V \in \Y $.
	Then:
	\begin{enumerate}
		\item\label{prop:formula_for_pullbacks_along_etale_morphisms_and_closed_immersions.1} The induced square
		\begin{equation*}
			\begin{tikzcd}
				\X_{/\fupperstar(V)} \arrow[r] \arrow[d] & \X \arrow[d, "\flowerstar"] \\ 
				\Y_{/V} \arrow[r] & \Y
			\end{tikzcd}
		\end{equation*}
		is a pullback square in $ \RTop $.

		\item\label{prop:formula_for_pullbacks_along_etale_morphisms_and_closed_immersions.2} If $ V $ is $ (-1) $-truncated, then the induced square
		\begin{equation*}
			\begin{tikzcd}
				\X_{\sminus \fupperstar(V)} \arrow[r, hooked] \arrow[d] & \X \arrow[d, "\flowerstar"] \\ 
				\Y_{\sminus V} \arrow[r, hooked] & \Y
			\end{tikzcd}
		\end{equation*}
		is a pullback square in $ \RTop $.
	\end{enumerate}
\end{proposition}

\begin{proof}
	For (1), see \HTT{Remark}{6.3.5.8}.
	For (2), see \HTT{Proposition}{7.3.2.12}.
\end{proof}

\begin{nul}
	As a consequence of \Cref{prop:formula_for_pullbacks_along_etale_morphisms_and_closed_immersions} the properties being étale, an open immersion, a closed immersion, or a locally closed immersion are all stable under basechange in $ \RTop $.
\end{nul}

In general, the functor sending a topological space $ X $ to the \topos $ \Sh(X) $ does not preserve pullbacks. 
However, \Cref{prop:formula_for_pullbacks_along_etale_morphisms_and_closed_immersions} implies that the assignment $ \goesto{X}{\Sh(X)} $ \textit{does} preserve pullbacks along locally closed immersions: 

\begin{corollary}\label{cor:Sh_preserves_pullbacks_along_locally_closed_embeddings}
	Let
	\begin{equation*}
	    \begin{tikzcd}[sep=2.25em]
	       S \arrow[d] \arrow[r, hooked, "\ibar"] \arrow[dr, phantom, very near start, "\lrcorner", xshift=-0.25em, yshift=0.25em] & X \arrow[d, "f"]  \\ 
	       T \arrow[r, hooked, "i"'] & Y
	    \end{tikzcd}
	\end{equation*}
	be a pullback square of topological spaces where $ i $ is a locally closed immersion.
	Then the induced square of \topoi
	\begin{equation*}
	    \begin{tikzcd}[sep=2.25em]
	       \Sh(S) \arrow[d] \arrow[r, hooked, "\ibarlowerstar"] & \Sh(X) \arrow[d, "\flowerstar"]  \\ 
	       \Sh(T) \arrow[r, hooked, "\ilowerstar"'] & \Sh(Y)
	    \end{tikzcd}
	\end{equation*}
	is a pullback square in $ \RTop $.
\end{corollary}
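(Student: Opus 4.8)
The plan is to reduce the statement to the two cases already established in \Cref{prop:formula_for_pullbacks_along_etale_morphisms_and_closed_immersions}, namely pullback along an open immersion (an instance of the étale case) and pullback along a closed immersion. Since $i \colon \incto{T}{Y}$ is a locally closed immersion of topological spaces, by definition $T$ is closed in some open subspace $V \subset Y$; write $T = \Zbar \intersect V$ with $\Zbar$ closed in $Y$, or equivalently factor $i$ as a closed immersion $\incto{T}{V}$ followed by an open immersion $\incto{V}{Y}$. By \Cref{rec:locally_closed_immersions_of_topological_spaces_induce_locally_closed_immersions_of_topoi}, this topological factorization induces a factorization of $\ilowerstar$ as a closed immersion of \topoi followed by an open immersion of \topoi, with $\Sh(V)$ identified with the open subtopos $\Sh(Y)_{/V}$ and $\Sh(T)$ with the closed complement $\Sh(V)_{\sminus (T \subset V)}$.

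First I would set up the corresponding factorization on the source. Let $W \colonequals f\inv(V)$, an open subspace of $X$, and note that $S = f\inv(T) = \ibar\inv(\text{---})$ sits inside $W$ as the closed subspace $S = \Zbar_X \intersect W$ where $\Zbar_X = f\inv(\Zbar)$. Thus $\ibar$ also factors as a closed immersion $\incto{S}{W}$ followed by an open immersion $\incto{W}{X}$, and these are compatible with $f$ in the obvious way, giving a commutative diagram of topological spaces in which the left square (closed immersions) and the right square (open immersions) are each individually cartesian. This is a purely point-set computation: $f\inv$ commutes with intersections and with the operations of taking the closed/open parts of a locally closed subset, so both squares are pullbacks of topological spaces.

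Next I would pass to \topoi and apply \Cref{prop:formula_for_pullbacks_along_etale_morphisms_and_closed_immersions} to each square separately. For the open square, $\incto{W}{X}$ and $\incto{V}{Y}$ are open immersions, so by \Cref{rec:locally_closed_immersions_of_topological_spaces_induce_locally_closed_immersions_of_topoi}-(1) they identify $\Sh(W) \equivalent \Sh(X)_{/W}$ and $\Sh(V) \equivalent \Sh(Y)_{/V}$; moreover $W = f\inv(V)$ means $\Sh(W) \equivalent \Sh(X)_{/\fupperstar(V)}$ under the geometric morphism $\flowerstar$, so \enumref{prop:formula_for_pullbacks_along_etale_morphisms_and_closed_immersions}{1} shows the induced square of \topoi is a pullback in $\RTop$. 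For the closed square, $\incto{S}{W}$ and $\incto{T}{V}$ are closed immersions corresponding to the same $(-1)$-truncated object of $\Sh(V)$ and its pullback to $\Sh(W)$; hence \enumref{prop:formula_for_pullbacks_along_etale_morphisms_and_closed_immersions}{2} applied to the geometric morphism $\fupperstar|_W \colon \fromto{\Sh(W)}{\Sh(V)}$ shows that the induced square is a pullback in $\RTop$.

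Finally I would conclude by pasting. The square of \topoi in the statement is the horizontal composite of the closed-immersion square (on the left) with the open-immersion square (on the right), and a composite of two cartesian squares is cartesian. Since $\RTop$ has all limits and the relevant squares are pullbacks by the two previous steps, the pasting lemma in $\RTop$ gives that the outer square is a pullback, which is exactly the claim. The main obstacle I anticipate is not any deep categorical input — that is entirely supplied by \Cref{prop:formula_for_pullbacks_along_etale_morphisms_and_closed_immersions} — but rather the bookkeeping in the first step: verifying carefully that the chosen closed/open factorization of $i$ pulls back along $f$ to the analogous factorization of $\ibar$, i.e.\ that $f\inv$ genuinely commutes with the "take the closure inside the ambient open" operation so that both the left and right squares of topological spaces are honestly cartesian. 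Once that point-set verification is in place, the $\infty$-categorical conclusion is a formal pasting argument.
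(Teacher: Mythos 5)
Your proposal is correct and follows essentially the same route as the paper: factor $i$ as a closed immersion into an open subspace followed by an open immersion, pull this factorization back along $f$, apply \Cref{rec:locally_closed_immersions_of_topological_spaces_induce_locally_closed_immersions_of_topoi} and the two cases of \Cref{prop:formula_for_pullbacks_along_etale_morphisms_and_closed_immersions}, and paste. The point-set verification you worry about is immediate from continuity (one only needs $S=f\inv(T)$ closed in $W=f\inv(V)$, no closures required), so there is no gap.
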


\begin{proof}
	Note that by factoring $ i $ as a closed immersion followed by an open immersion, it suffices to treat the cases of closed immersions and open immersions separately.
	Since $ S = f^{-1}(T) $, the claim is immediate from \Cref{rec:locally_closed_immersions_of_topological_spaces_induce_locally_closed_immersions_of_topoi} and \Cref{prop:formula_for_pullbacks_along_etale_morphisms_and_closed_immersions}.
\end{proof}


\subsection{Hypercompleteness \& étale geometric morphisms}\label{subsec:hypercompleteness_and_etale_geometric_morphisms}

The purpose of this subsection is to prove the following characterization the hypercomplete objects of the slice \topos over a hypercomplete object.

\begin{proposition}\label{prop:hypercomplete_objects_of_a_slice}
	Let $ \X $ be \atopos and let $ U \in \X $ be a hypercomplete object.
	Write $ \elowerstar \colon \fromto{\X_{/U}}{\X} $ for the natural geometric morphism.
	For an object $ [p \colon \fromto{X}{U}] \in \X_{/U} $, the following are equivalent:
	\begin{enumerate}
		\item The object $ p \colon \fromto{X}{U} $ is a hypercomplete object of $ \X_{/U} $.

		\item The object $ X $ is a hypercomplete object of $ \X $.
	\end{enumerate}

	In particular, there is a natural identification
	\begin{equation*}
		(\Xhyp)_{/U} = (\X_{/U})^{\hyp}
	\end{equation*}
	as full subcategories of $ \X_{/U} $.
\end{proposition}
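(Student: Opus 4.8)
The plan is to prove the equivalence of conditions (1) and (2) by relating hypercompleteness in $\X_{/U}$ to hypercompleteness in $\X$ via the étale geometric morphism $\elowerstar$. Recall that $\eupperstar \colon \fromto{\X}{\X_{/U}}$ sends $X \goesto X \cross U$, its left adjoint $\elowersharp$ is the forgetful functor, and $\elowersharp$ is conservative with $\elowersharp(1_{\X_{/U}}) = U$. The key structural facts I would use are: (i) $\elowersharp$ preserves and detects $\infty$-connected morphisms (equivalently, $\elowersharp$ and $\eupperstar$ interact well with truncation), and (ii) since $U$ is hypercomplete, the forgetful functor $\elowersharp$ carries hypercomplete objects of $\X_{/U}$ to hypercomplete objects of $\X$, and conversely.

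First I would establish the implication (1) $\Rightarrow$ (2). Suppose $p \colon \fromto{X}{U}$ is hypercomplete in $\X_{/U}$. I would argue that $\elowersharp(p) = X$ is hypercomplete in $\X$ by showing $\elowersharp$ preserves hypercompleteness. The cleanest route: $\elowersharp$ is both a left and a right adjoint (étale morphisms have the extra left adjoint $\elowersharp$ to $\eupperstar$, and $\eupperstar$ itself has a right adjoint $\elowerstar$), so $\elowersharp$ preserves both limits and colimits. By \Cref{lem:functors_that_are_both_left_and_right_adjoints_preserve_hypercompleteness_and_convergence_of_Postnikov_towers}-(\ref*{lem:functors_that_are_both_left_and_right_adjoints_preserve_hypercompleteness_and_convergence_of_Postnikov_towers.1}), such a functor preserves hypercompleteness. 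This gives (1) $\Rightarrow$ (2) immediately, and it is the easy direction.

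For the converse (2) $\Rightarrow$ (1), suppose $X$ is hypercomplete in $\X$; I want $p \colon \fromto{X}{U}$ hypercomplete in $\X_{/U}$. The plan is to test against $\infty$-connected morphisms: I must show that for every $\infty$-connected map $f$ in $\X_{/U}$, the object $p$ is $f$-local. Since $\elowersharp$ is a left exact left adjoint that is moreover conservative, it detects $\infty$-connectedness, so $\elowersharp(f)$ is $\infty$-connected in $\X$; using the mapping-space adjunction $\Map_{\X_{/U}}(\elowersharp\text{-domain}, p)$ versus $\Map_{\X}$, and the fact that $U$ itself is hypercomplete (so that the fibers interact correctly), I would reduce $f$-locality of $p$ in $\X_{/U}$ to a locality statement about $X$ in $\X$ against an $\infty$-connected map, which holds by hypothesis. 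Here the hypercompleteness of $U$ enters essentially: it ensures that the structure map $p$ does not contribute an obstruction coming from the base. I expect this direction to be the main obstacle, precisely because one must carefully track how $\infty$-connectedness of a morphism \emph{over} $U$ relates to $\infty$-connectedness of its total space, and verify that hypercompleteness of $U$ closes the gap.

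Finally, the last sentence follows formally: an object $[p \colon \fromto{X}{U}]$ lies in $(\X_{/U})^{\hyp}$ iff it is hypercomplete in $\X_{/U}$, which by the equivalence (1) $\Leftrightarrow$ (2) holds iff its total space $X$ is hypercomplete in $\X$, i.e., iff $X \in \Xhyp$, which is exactly the condition that $[p] \in (\Xhyp)_{/U}$ under the identification of $(\Xhyp)_{/U}$ with the full subcategory of $\X_{/U}$ on objects whose domain is hypercomplete. Thus the two full subcategories of $\X_{/U}$ coincide, completing the proof.
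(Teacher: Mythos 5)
Your argument for (1) $\Rightarrow$ (2) has a genuine gap. You claim that the forgetful functor $\elowersharp \colon \fromto{\X_{/U}}{\X}$ is both a left and a right adjoint and hence preserves limits and colimits, so that \Cref{lem:functors_that_are_both_left_and_right_adjoints_preserve_hypercompleteness_and_convergence_of_Postnikov_towers} applies. This is false: in the adjoint triple $\elowersharp \dashv \eupperstar \dashv \elowerstar$ it is $\eupperstar$, not $\elowersharp$, that is both a left and a right adjoint. The forgetful functor $\elowersharp$ preserves colimits and pullbacks but not terminal objects (it sends $\id{U}$ to $U$, not to $1_{\X}$), so it is not a right adjoint and does not preserve limits. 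Worse, the conclusion you want from it --- that $\elowersharp$ preserves hypercompleteness --- is simply not true without the hypothesis that $U$ is hypercomplete: the terminal object $\id{U}$ of $\X_{/U}$ is always hypercomplete, yet $\elowersharp(\id{U}) = U$ need not be. Your proof of this direction never uses the hypercompleteness of $U$, which is the telltale sign that something is wrong. The correct argument (which is what the paper does) tests $X$ against an $\infty$-connected $\phi \colon \fromto{V}{V'}$ in $\X$ using the fiber sequence $\Map_{\X_{/U}}(V',X) \to \Map_{\X}(V',X) \to \Map_{\X}(V',U)$: hypercompleteness of $U$ makes the map of bases an equivalence, and hypercompleteness of $X$ in $\X_{/U}$ handles the fibers, because $\phi$ equipped with any structure maps to $U$ remains $\infty$-connected in $\X_{/U}$ (here one uses that the conservative, pullback- and colimit-preserving functor $\elowersharp$ \emph{reflects} $\infty$-connectedness, \Cref{cor:forgetful_functor_reflects_n-connectedness}).

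Your sketch of (2) $\Rightarrow$ (1) is essentially the paper's argument for that direction (same fiber-sequence reduction, same use of reflection of $\infty$-connectedness along $\elowersharp$), though the justification ``$\elowersharp$ is a left exact left adjoint'' should be replaced by ``$\elowersharp$ is conservative and preserves pullbacks and geometric realizations,'' since $\elowersharp$ is not left exact. The final deduction of $(\Xhyp)_{/U} = (\X_{/U})^{\hyp}$ from the equivalence of (1) and (2) is fine. In short: the direction you dismissed as immediate is the one that fails as written, and it needs the same mapping-space analysis as the other direction.
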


\begin{corollary}\label{cor:etale_topos_over_a_hypercomplete_topos_is_hypercomplete}
	Let $ \X $ be \atopos and let $ U \in \X $.
	If $ \X $ is hypercomplete, then the \topos $ \X_{/U} $ is hypercomplete.
\end{corollary}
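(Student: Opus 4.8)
The final statement (Corollary~\ref{cor:etale_topos_over_a_hypercomplete_topos_is_hypercomplete}) is an immediate consequence of \Cref{prop:hypercomplete_objects_of_a_slice}, so I would deduce it directly from that proposition. First I would observe that if $\X$ is hypercomplete, then \emph{every} object of $\X$ is hypercomplete; in particular the terminal object $1_{\X}$ is hypercomplete. This places us in the hypothesis of \Cref{prop:hypercomplete_objects_of_a_slice} with $U = 1_{\X}$, but more to the point we want to conclude that all of $\X_{/U}$ is hypercomplete for an \emph{arbitrary} $U \in \X$, so I would not specialize $U$.

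The cleanest route is via the identification furnished by the ``in particular'' clause of \Cref{prop:hypercomplete_objects_of_a_slice}. The plan is to note that when $\X$ is hypercomplete we have $\Xhyp = \X$ as subcategories of $\X$. Applying the final identification of \Cref{prop:hypercomplete_objects_of_a_slice} then gives
\begin{equation*}
	(\X_{/U})^{\hyp} = (\Xhyp)_{/U} = \X_{/U} \period
\end{equation*}
Here the first equality is the displayed identity in \Cref{prop:hypercomplete_objects_of_a_slice}, and the second uses $\Xhyp = \X$. Since $(\X_{/U})^{\hyp} = \X_{/U}$ as full subcategories of $\X_{/U}$, the \topos $\X_{/U}$ equals its own hypercompletion, i.e.\ $\X_{/U}$ is hypercomplete, which is exactly the claim.

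Alternatively, and equivalently, I could argue objectwise using the main equivalence of \Cref{prop:hypercomplete_objects_of_a_slice}: for any object $[p \colon X \to U]$ of $\X_{/U}$, it is hypercomplete if and only if $X$ is a hypercomplete object of $\X$; since $\X$ is hypercomplete this holds for every $X$, hence every object of $\X_{/U}$ is hypercomplete, so $\X_{/U}$ is hypercomplete. Care must be taken that the hypothesis ``$U$ hypercomplete'' required by \Cref{prop:hypercomplete_objects_of_a_slice} is met: this is automatic, since under the assumption that $\X$ is hypercomplete every object of $\X$—in particular $U$—is hypercomplete.

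There is essentially no obstacle here, as the corollary is a formal specialization of the preceding proposition. The only point demanding a moment's attention is the justification that ``$\X$ hypercomplete'' implies ``every object of $\X$ is hypercomplete,'' which is immediate from the definition of a hypercomplete \topos as one in which every object is hypercomplete (equivalently $\Xhyp = \X$); once this is in hand, invoking \Cref{prop:hypercomplete_objects_of_a_slice} closes the argument.
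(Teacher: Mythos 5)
Your proof is correct and is exactly the argument the paper intends: the corollary is stated without proof as an immediate consequence of \Cref{prop:hypercomplete_objects_of_a_slice}, using precisely the observation that $U$ is automatically hypercomplete when $\X$ is, so that $(\X_{/U})^{\hyp} = (\Xhyp)_{/U} = \X_{/U}$. Nothing is missing.
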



To prove \Cref{prop:hypercomplete_objects_of_a_slice}, we need a few technical lemmas.
The first is a slight refinement of the statement of \HAa{Lemma}{A.2.6}:

\begin{lemma}\label{lem:lowersharp_preserves_n-connectedness}
	Let $ \elowerstar \colon \fromto{\W}{\X} $ be a geometric morphism of \topoi.
	Assume that $ \eupperstar $ admits a left adjoint $ \elowersharp \colon \fromto{\W}{\X} $.
	Then: 
	\begin{enumerate}
		\item For each $ -2 \leq n \leq \infty $, the functor $ \elowersharp $ preserves $ n $-connected maps.

		\item The functor $ \eupperstar \colon \fromto{\X}{\W} $ preserves hypercomplete objects.
	\end{enumerate}
\end{lemma}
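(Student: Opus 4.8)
The plan is to reduce part~(1) to a statement about the forgetful functor of a slice topos, and then deduce part~(2) formally by adjunction; this is essentially the argument of \HAa{Lemma}{A.2.6}, refined so as to track each finite connectivity $ n $. Since $ \eupperstar $ admits a left adjoint $ \elowersharp $, the geometric morphism $ \elowerstar $ is étale by the characterization of étale geometric morphisms (\HTT{Proposition}{6.3.5.11}). Thus, setting $ U \colonequals \elowersharp(1_{\W}) $, there is an equivalence $ \W \equivalent \X_{/U} $ of \topoi over $ \X $ under which $ \elowersharp $ is identified with the forgetful functor $ \plowersharp \colon \fromto{\X_{/U}}{\X} $. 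It therefore suffices to prove that $ \plowersharp $ preserves $ n $-connected maps for $ -2 \le n \le \infty $. The two features of $ \plowersharp $ that I would use are: (i) being a left adjoint, $ \plowersharp $ preserves small colimits, and in particular preserves (and detects) effective epimorphisms, since a morphism of $ \X_{/U} $ is an effective epimorphism exactly when its image in $ \X $ is one; and (ii) $ \plowersharp $ preserves pullbacks, because pullbacks are connected limits and connected limits in a slice topos are computed in the ambient topos. In particular $ \plowersharp $ preserves the diagonal of every morphism.

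Granting (i) and (ii), part~(1) follows by induction on $ n $ via the recursive characterization of connectivity (\cite[\HTTsubsec{6.5.1}]{HTT}): the base case is a connectivity condition with no content, and for the inductive step a morphism $ f \colon \fromto{A}{B} $ is $ (n+1) $-connected if and only if $ f $ is an effective epimorphism and its diagonal $ \fromto{A}{A \cross_B A} $ is $ n $-connected. If $ f $ is $ (n+1) $-connected, then $ \plowersharp(f) $ is an effective epimorphism by (i), while by (ii) the diagonal of $ \plowersharp(f) $ is identified with $ \plowersharp $ applied to the diagonal of $ f $, hence is $ n $-connected by the inductive hypothesis; therefore $ \plowersharp(f) $ is $ (n+1) $-connected. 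The case $ n = \infty $ is immediate, since an $ \infty $-connected map is precisely one that is $ n $-connected for every finite $ n $.

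Part~(2) is then formal. Let $ Z \in \X $ be hypercomplete and let $ f \colon \fromto{A}{B} $ be an $ \infty $-connected map in $ \W $; I must show that $ \eupperstar(Z) $ is local with respect to $ f $. By the adjunction $ \elowersharp \dashv \eupperstar $, the restriction map $ \fromto{\Map_{\W}(B,\eupperstar Z)}{\Map_{\W}(A,\eupperstar Z)} $ is identified with the map $ \fromto{\Map_{\X}(\elowersharp B, Z)}{\Map_{\X}(\elowersharp A, Z)} $ induced by $ \elowersharp(f) $. By part~(1) the map $ \elowersharp(f) $ is $ \infty $-connected in $ \X $, and since $ Z $ is hypercomplete it is local with respect to $ \infty $-connected maps (\cite[\HTTsubsec{6.5.2}]{HTT}); hence this map is an equivalence. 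As $ f $ was an arbitrary $ \infty $-connected map of $ \W $, the object $ \eupperstar(Z) $ is hypercomplete.

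The main obstacle is the pullback-preservation statement (ii): this is exactly where the étale hypothesis is indispensable, since a general left adjoint need not preserve finite limits, and indeed $ \elowersharp $ does not preserve the terminal object. Reducing to the forgetful functor of a slice is what makes the preservation of pullbacks (and hence of diagonals) transparent, and it is the only non-formal input in the argument.
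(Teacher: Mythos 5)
Your argument breaks at the very first step. The existence of a left adjoint $\elowersharp$ to $\eupperstar$ does \emph{not} imply that $\elowerstar$ is étale: the characterization \HTT{Proposition}{6.3.5.11} (quoted in \cref{subsec:open_and_closed_subtopoi}) requires the left adjoint to be \emph{conservative} and to satisfy a base-change formula, and mere existence of a left adjoint is strictly weaker. Monodromic \topoi (\Cref{def:monodromic_topos}) are the standard counterexamples: for $ X = \RR $ the constant sheaf functor $ \Gammaupperstar \colon \fromto{\Spc}{\Shhyp(X)} $ admits the left adjoint $ \Gammalowersharp $, yet $ \fromto{\Shhyp(X)}{\Spc} $ is not étale (étaleness would force $ \Shhyp(X) \equivalent \Fun(K,\Spc) $ with $ K \equivalent \ast $ the shape of $ X $, which is absurd). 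The lemma is in fact needed precisely in such non-étale situations --- for instance \Cref{lem:functors_that_are_both_left_and_right_adjoints_preserve_hypercompleteness_and_convergence_of_Postnikov_towers} applies it to an arbitrary functor preserving limits and colimits, such as the inclusion $ \incto{\ConsP(\X)}{\X} $ --- so under your implicit extra hypothesis the statement would be too weak for its intended use. The remainder of your argument is correct \emph{for the forgetful functor of a slice \topos}, but your step (ii), preservation of pullbacks by $ \elowersharp $, is exactly what fails for a general left adjoint of this kind, and the reduction to the slice case is not available.

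For comparison: the paper gives no proof, deferring to \HAa{Lemma}{A.2.6}, whose hypotheses are exactly those of the lemma (no étaleness). The argument that works avoids pullbacks entirely and uses orthogonality: in \atopos the $ n $-connected morphisms are precisely the morphisms left orthogonal to all $ n $-truncated morphisms (the ($ n $-connected, $ n $-truncated) factorization system); by adjunction $ \elowersharp(u) \perp v $ if and only if $ u \perp \eupperstar(v) $, and $ \eupperstar $, being left exact, preserves $ n $-truncated morphisms. Hence $ \elowersharp $ preserves $ n $-connected morphisms for every finite $ n $, and the case $ n = \infty $ follows by intersecting. Your deduction of part (2) from part (1) by adjunction is correct as written.
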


\begin{lemma}\label{lem:conservative_functors_reflect_limits}
	Let $ F \colon \fromto{\Ccal}{\Dcal} $ be a functor between \categories.
	\begin{enumerate}
		\item\label{lem:conservative_functors_reflect_limits.1} Let $ \Ical $ be \acategory.
		Assume that $ \Ccal $ and $ \Dcal $ admit $ \Ical $-shaped colimits and that $ F $ preserves $ \Ical $-shaped colimits.
		If $ F $ is conservative, then $ F $ reflects $ \Ical $-shaped colimits.

		\item\label{lem:conservative_functors_reflect_limits.2} Assume that $ \Ccal $ and $ \Dcal $ admit pullbacks and geometric realizations of simplicial objects and that $ F $ preserves pullbacks and geometric realizations.
		If $ F $ is conservative, then $ F $ reflects effective epimorphisms.
	\end{enumerate}
\end{lemma}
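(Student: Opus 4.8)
The plan is to derive both parts from the general principle that a \emph{conservative} functor which \emph{preserves} a given class of colimits automatically \emph{reflects} that class.

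For (1), I would consider a cocone $\bar{p}\colon \Ical^{\rhd}\to\Ccal$, writing $p$ for its restriction to $\Ical$ and $c$ for its value at the cone point, and assume that $F\bar{p}$ is a colimit cocone in $\Dcal$. Since $\Ccal$ admits $\Ical$-shaped colimits, I would choose an actual colimit cocone over $p$ with cone point $c'$; the universal property of this colimit then supplies a comparison map $\phi\colon c'\to c$ compatible with the two cocone structures on $p$. Applying $F$, the object $Fc'$ is a colimit of $Fp$ (because $F$ preserves $\Ical$-shaped colimits) and $Fc$ is a colimit of $Fp$ (by hypothesis), and $F\phi$ is precisely the canonical comparison map between these two colimit cocones under $Fp$, hence an equivalence. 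As $F$ is conservative, $\phi$ is an equivalence, so $\bar{p}$ is itself a colimit cocone. This proves that $F$ reflects $\Ical$-shaped colimits.

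For (2), I would use the characterization that a morphism $f\colon \fromto{X}{Y}$ is an effective epimorphism if and only if the canonical map $\colim_{\Delta^{\op}}\check{C}_\bullet(f)\to Y$ from the geometric realization of its Čech nerve is an equivalence (\HTT{Corollary}{6.2.3.5}). The Čech nerve $\check{C}_\bullet(f)$ is assembled from the iterated fiber products $X\times_Y\cdots\times_Y X$, which exist since $\Ccal$ has pullbacks. Because $F$ preserves pullbacks, $F$ carries this augmented simplicial object to the Čech nerve of $Ff$ in $\Dcal$, i.e. $F(\check{C}_\bullet(f))\equivalent \check{C}_\bullet(Ff)$; because $F$ also preserves geometric realizations, it carries the comparison map $\colim_{\Delta^{\op}}\check{C}_\bullet(f)\to Y$ to the comparison map $\colim_{\Delta^{\op}}\check{C}_\bullet(Ff)\to FY$. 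Assuming $Ff$ is an effective epimorphism, the latter map is an equivalence, and conservativity of $F$ then forces the former to be an equivalence as well, so $f$ is an effective epimorphism. (Alternatively, once the two Čech nerves are identified, (2) can be read off from (1) applied with $\Ical=\Delta^{\op}$.)

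I do not expect a serious obstacle here, since both arguments instantiate the same formal pattern. The only point demanding care is the bookkeeping in (2): verifying that $F$ preserving pullbacks genuinely identifies $F(\check{C}_\bullet(f))$ with $\check{C}_\bullet(Ff)$ as augmented simplicial objects, so that the two realization comparison maps correspond under $F$. This is routine once one recalls that every level of the Čech nerve is a finite limit built out of pullbacks, and that the face and degeneracy maps are induced functorially from the same data.
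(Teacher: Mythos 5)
Your proposal is correct and follows essentially the same route as the paper: part (1) via the canonical comparison map out of the actual colimit plus conservativity, and part (2) via the Čech-nerve characterization of effective epimorphisms, which the paper dispatches in one line as a consequence of (1). Your more detailed bookkeeping for (2) is accurate and fills in exactly what the paper leaves implicit.
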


\begin{proof}
	For (1), let $ X_{\bullet} \colon \fromto{\Ical^{\rhd}}{\Ccal} $ be a diagram, and assume that the composite diagram $ F \of X_{\bullet} \colon \fromto{\Ical^{\rhd}}{\Dcal} $ is a colimit diagram.
	Write $ X_{\infty} $ for the value of the cone point and let $ \lambda \colon \fromto{\colim_{i \in \Ical} X_i}{X_{\infty}} $ denote the natural map.
	Then $ F(\lambda) $ factors as a composite of natural maps
	\begin{equation*}
		\begin{tikzcd}
			F\paren{\colim_{i \in \Ical} X_i} \arrow[r] & \colim_{i \in \Ical} F(X_i) \arrow[r] & F(X_{\infty}) \period
		\end{tikzcd}
	\end{equation*}
	Since $ F $ preserves colimits, the left-hand map is an equivalence; since $ F \of X_{\bullet} $ is a colimit diagram, the right-hand map is also an equivalence.
	Since $ F $ is conservative, we deduce that $ \lambda $ is an equivalence, i.e., that $ X_{\bullet} $ is a colimit diagram, as desired.

	Item (2) is immediate from the definition of an effective epimorphism combined with item (1) and its dual.
\end{proof}

\begin{lemma}\label{lem:n-connectedness_can_be_checked_on_a_conservative_family_of_geometric_morphisms}
	Let $ \X $ be \atopos and let $ \{\fupperstar_{\alpha} \colon \fromto{\X}{\X_{\alpha}}\}_{\alpha \in A} $ be a jointly conservative family of functors between \topoi that each preserve pullbacks and geometric realizations of simplicial objects.
	Let $ -2 \leq n \leq \infty $ and let $ \phi \colon \fromto{U}{V} $ be a morphism in $ \X $.
	Then the following are equivalent:
	\begin{enumerate}
		\item The morphism $ \phi $ is $ n $-connected.

		\item For each $ \alpha \in A $, the morphism $ \fupperstar_{\alpha}(\phi) $ is $ n $-connected.
	\end{enumerate}
\end{lemma}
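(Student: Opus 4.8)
The statement to prove is \Cref{lem:n-connectedness_can_be_checked_on_a_conservative_family_of_geometric_morphisms}: for a jointly conservative family $\{\fupperstar_\alpha \colon \fromto{\X}{\X_\alpha}\}_{\alpha \in A}$ of functors preserving pullbacks and geometric realizations, and $-2 \leq n \leq \infty$, a morphism $\phi \colon \fromto{U}{V}$ in $\X$ is $n$-connected if and only if each $\fupperstar_\alpha(\phi)$ is $n$-connected. The plan is to induct on $n$, reducing $n$-connectedness of a map to iterated statements about effective epimorphisms of truncations (or diagonals), and then invoking \Cref{lem:conservative_functors_reflect_limits}-(\ref*{lem:conservative_functors_reflect_limits.2}) to transfer the effective-epimorphism condition across the family.

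First I would set up the recollection that a map $\phi \colon \fromto{U}{V}$ being $n$-connected admits an inductive characterization: $\phi$ is $(-1)$-connected if and only if it is an effective epimorphism, and $\phi$ is $n$-connected (for $n \geq 0$) if and only if $\phi$ is an effective epimorphism and its diagonal $\fromto{U}{U \crosslimits_V U}$ is $(n-1)$-connected; the base case $n = -2$ is vacuous (every map is $(-2)$-connected). Since each $\fupperstar_\alpha$ preserves pullbacks, it commutes with the formation of these iterated diagonals, so the inductive step will reduce cleanly. The forward direction (1) $\Rightarrow$ (2) is the easy half: each $\fupperstar_\alpha$ is a left exact left adjoint in the intended application, but here we only need that it preserves pullbacks and geometric realizations, hence preserves effective epimorphisms, and by induction preserves $n$-connectedness.

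For the reverse direction (2) $\Rightarrow$ (1), I would argue by induction on $n$. The crux is the $n = -1$ case: assuming each $\fupperstar_\alpha(\phi)$ is an effective epimorphism, I must deduce that $\phi$ is an effective epimorphism. This is exactly where joint conservativity enters. The family $\{\fupperstar_\alpha\}$ is jointly conservative and each member preserves pullbacks and geometric realizations; one can assemble these into a single conservative functor $\fromto{\X}{\prod_\alpha \X_\alpha}$ preserving pullbacks and geometric realizations, and then apply \Cref{lem:conservative_functors_reflect_limits}-(\ref*{lem:conservative_functors_reflect_limits.2}) to conclude that this functor reflects effective epimorphisms. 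For the inductive step $n \geq 0$: joint conservativity together with preservation of pullbacks gives that $\fupperstar_\alpha$ applied to the diagonal of $\phi$ is the diagonal of $\fupperstar_\alpha(\phi)$, which is $(n-1)$-connected by hypothesis; the inductive hypothesis then yields that the diagonal of $\phi$ is $(n-1)$-connected, and combined with the $n=-1$ case applied to $\phi$ itself, we conclude $\phi$ is $n$-connected. Finally, the case $n = \infty$ follows since an $\infty$-connected map is precisely one that is $n$-connected for every finite $n$.

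\textbf{Main obstacle.} The principal subtlety is the reduction in the reverse direction to a \emph{single} conservative functor to which \Cref{lem:conservative_functors_reflect_limits} applies: one must verify that the product functor $\fromto{\X}{\prod_\alpha \X_\alpha}$ inherits preservation of pullbacks and geometric realizations from the individual $\fupperstar_\alpha$ (immediate, since limits and colimits in a product are computed componentwise) and that joint conservativity of the family is equivalent to conservativity of the product functor. A secondary point requiring care is bookkeeping the connectivity indices through the diagonal recursion so that the inductive hypothesis is applied at the correct level $n-1$; this is routine but must be stated precisely to avoid an off-by-one error at the $n=0$ boundary.
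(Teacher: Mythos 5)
Your proposal is correct and follows essentially the same route as the paper's proof: both reduce the $\infty$ case to finite $n$, assemble the family into a single conservative functor to the product $\prod_{\alpha} \X_{\alpha}$ (where limits and colimits are computed componentwise), invoke \Cref{lem:conservative_functors_reflect_limits} to reflect effective epimorphisms in the base case $n = -1$, and induct on $n$ via the diagonal, using preservation of pullbacks to identify $\fupperstar(\Delta_{\phi})$ with $\Delta_{\fupperstar(\phi)}$. No gaps.
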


\begin{proof}
	Since functors that preserve pullbacks and geometric realizations of simplicial objects preserve $ n $-connectedness, (1) $ \Rightarrow $ (2).
	For the implication (2) $ \Rightarrow $ (1), first note a morphism $ \phi $ is $ \infty $-connected map if and only if for each $ n < \infty $, the morphism $ \phi $ is $ n $-connected.
	So it suffices to treat the case of finite $ n $.
	Write $ \Y $ for the product of \categories $ \prod_{\alpha \in A} \X_{\alpha} $ and $ \fupperstar \colon \fromto{\X}{\Y} $ for the functor induced by the functors $ \fupperstar_{\alpha} \colon \fromto{\X}{\X_{\alpha}} $ by the universal property of the product.
	Note that $ \Y $ is \atopos and since limits and colimits in $ \Y $ are computed levelwise, $ \fupperstar $ also preserves pullbacks and effective epimorphisms.
	Moreover, the statement (2) is equivalent to the statement:
	\begin{enumerate}
		\setcounter{enumi}{2}

		\item The morphism $ \fupperstar(\phi) $ is $ n $-connected.
	\end{enumerate}
	So we instead prove that (3) $ \Rightarrow $ (1).

	We prove the claim by induction on $ n $.
	The case $ n = -2 $ is clear; every morphism is $ (-2) $-connected.
	For the case $ n = -1 $, recall that a morphism $ \phi $ is $ (-1) $-connected if and only if $ \phi $ is an effective epimorphism.
	The claim now follows from \enumref{lem:conservative_functors_reflect_limits}{2}.

	For the inductive step, assume that $ n \geq 0 $, and that we know that for all $ k \leq n $, the functor $ \fupperstar \colon \fromto{\X}{\Y} $ reflects $ k $-connectedness.
	Let $ \phi \colon \fromto{U}{V} $ be a morphism of $ \X $ such that $ \fupperstar(\phi) $ is $ n $-connected.
	That is $ \fupperstar(\phi) $ is an effective epimorphism and the diagonal
	\begin{equation*}
		\Delta_{\fupperstar(\phi)} \colon \fromto{\fupperstar(U)}{\fupperstar(U) \crosslimits_{\fupperstar(V)} \fupperstar(U)}
	\end{equation*}
	is $ (n-1) $-connected.
	By the base case, $ \phi $ is an effective epimorphism.
	Moreover, since $ \fupperstar $ preserves pullbacks,
	\begin{equation*}
		\Delta_{\fupperstar(\phi)} \equivalent \fupperstar(\Delta_{\phi}) \period
	\end{equation*}
	The inductive hypothesis then show that $ \Delta_{\phi} $ is $ (n-1) $-connected.
	Thus $ \phi $ is $ n $-connected, as desired.
\end{proof}

\begin{corollary}\label{cor:forgetful_functor_reflects_n-connectedness}
	Let $ \elowerstar \colon \fromto{\W}{\X} $ be an étale geometric morphism of \topoi and let $ \phi $ be a morphism in $ \W $.
	Then for each $ -2 \leq n \leq \infty $, the morphism $ \phi $ in $ \W $ is $ n $-connected if and only if $ \elowersharp(\phi) $ is $ n $-connected.
\end{corollary}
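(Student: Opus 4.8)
The plan is to deduce the corollary directly from \Cref{lem:n-connectedness_can_be_checked_on_a_conservative_family_of_geometric_morphisms}, applied to the \emph{singleton} family consisting of $ \elowersharp $ itself. Concretely, I would take the source \topos in that lemma to be $ \W $ and the jointly conservative family to be $ \{\elowersharp \colon \fromto{\W}{\X}\} $; the lemma then yields that a morphism $ \phi $ of $ \W $ is $ n $-connected if and only if $ \elowersharp(\phi) $ is $ n $-connected, which is exactly the assertion. So the entire content of the corollary is contained in the reflection lemma, and the only work is to verify that $ \elowersharp $ satisfies its hypotheses.

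First I would verify conservativity. Since $ \elowerstar $ is étale, the characterization of étale geometric morphisms recalled above identifies $ \W $ with $ \X_{/U} $ for $ U \colonequals \elowersharp(1_{\W}) $, under which $ \elowersharp $ becomes the forgetful functor $ \plowersharp \colon \fromto{\X_{/U}}{\X} $. By condition (3) in that recollection, $ \elowersharp $ is conservative, so the singleton family is (jointly) conservative.

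Next I would check that $ \elowersharp $ preserves pullbacks and geometric realizations of simplicial objects. Preservation of geometric realizations, and indeed of all colimits, is immediate because $ \elowersharp $ is a left adjoint (to $ \eupperstar $). Preservation of pullbacks follows from the fact that the forgetful functor out of a slice \category creates connected limits, and pullbacks are connected; equivalently, pullbacks in $ \X_{/U} $ are computed on underlying objects in $ \X $. Note that $ \W $ and $ \X $ are both \topoi, as required by the lemma.

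With these three hypotheses in hand, the conclusion is immediate from \Cref{lem:n-connectedness_can_be_checked_on_a_conservative_family_of_geometric_morphisms}. There is no genuine obstacle here: the only points to confirm are that the étale left adjoint $ \elowersharp $ is a conservative functor between \topoi that preserves pullbacks and colimits, all of which are standard consequences of étaleness. (Alternatively, the forward implication is already \Cref{lem:lowersharp_preserves_n-connectedness}, whose first part asserts that $ \elowersharp $ preserves $ n $-connected maps; but routing both implications through the reflection lemma is cleaner and avoids treating the two directions separately.)
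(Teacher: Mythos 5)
Your proposal is correct and is essentially identical to the paper's proof, which likewise observes that $\elowersharp$ is a conservative left adjoint preserving pullbacks and then invokes \Cref{lem:n-connectedness_can_be_checked_on_a_conservative_family_of_geometric_morphisms} for the singleton family $\{\elowersharp\}$. Your verifications of the hypotheses (conservativity from the étale characterization, preservation of colimits as a left adjoint, preservation of pullbacks since the forgetful functor out of a slice creates connected limits) are all sound and simply spell out what the paper leaves implicit.
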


\begin{proof}
	Since the forgetful functor $ \elowersharp \colon \fromto{\W}{\X} $ is a conservative left adjoint that preserves pullbacks, this is a special case of \Cref{lem:n-connectedness_can_be_checked_on_a_conservative_family_of_geometric_morphisms}.
\end{proof}

Now we are ready to prove \Cref{prop:hypercomplete_objects_of_a_slice}.

\begin{proof}[Proof of \Cref{prop:hypercomplete_objects_of_a_slice}]
	We start by proving that (1) $ \Rightarrow $ (2).
	Let $ \phi \colon \fromto{V}{V'} $ be an $ \infty $-connected map in $ \X $.
	We need to show that $ \Map_{\X}(-,X) $ inverts $ \phi $.
	Consider the commutative square
	\begin{equation*}\label{eq:mapping_spaces_in_slices}
		\begin{tikzcd}[sep=3em]
	      	\Map_{\X}(V',X) \arrow[d, "-\of \phi"'] \arrow[r, "p \of -"] & \Map_{\X}(V',U) \arrow[d, "-\of \phi"] \\ 
	    	\Map_{\X}(V,X) \arrow[r, "p \of -"'] & \Map_{\X}(V,U) \period
		\end{tikzcd}
	\end{equation*}
	Since $ \phi $ is $ \infty $-connected and $ U $ is hypercomplete, the right-hand vertical map is an equivalence.
	Thus to show that the left-hand vertical map is an equivalence, it suffices to show that for each map $ q \colon \fromto{V'}{U} $, the induced map on horizontal fibers is an equivalence.

	For this, regard $ V $ and $ V' $ as objects of $ \X_{/U} $ via the structure maps $ q\phi $ and $ q $, respectively; then $ \phi $ defines a map
	\begin{equation*}
		[q\phi \colon V \to U] \to [q \colon V' \to U]
	\end{equation*}
	in $ \X_{/U} $.
	By the definition of the mapping spaces in an overcategory, we have a commutative square
	\begin{equation}\label{eq:mapping_spaces_in_slices}
		\begin{tikzcd}[sep=3em]
	      	\Map_{\X_{/U}}(V',X) \arrow[d, "-\of \phi"'] \arrow[r, "\sim"{yshift=-0.25em}] &  \{q\} \crosslimits_{\Map_{\X}(V',U)} \Map_{\X}(V',X) \arrow[d] \\
	      	\Map_{\X_{/U}}(V,X) \arrow[r, "\sim"{yshift=-0.25em}]  & \{q\phi \} \crosslimits_{\Map_{\X}(V,U)} \Map_{\X}(V,X) \comma
		\end{tikzcd}
	\end{equation}
	where the horizontal maps are equivalences and the vertical maps are given by precomposition with $ \phi $.
	Since $ \phi $ is an $ \infty $-connected map in $ \X $, by \Cref{cor:forgetful_functor_reflects_n-connectedness}, $ \phi $ is also an $ \infty $-connected map when regarded as a map $ \fromto{V}{V'} $ in $ \X_{/U} $.
	Since $ X $ is a hypercomplete object of $ \X_{/U} $, we deduce that the left-hand vertical map in \eqref{eq:mapping_spaces_in_slices} is an equivalence.
	Thus the right-hand vertical map is also an equivalence, as desired.

	Now we prove that (2) $ \Rightarrow $ (1).
	Assume that $ X $ is hypercomplete when regarded as an object of $ \X $.
	Let $ \phi \colon \fromto{V}{V'} $ be an $ \infty $-connected map in $ \X_{/U} $, and write $ q \colon \fromto{V}{V'} $ for the structure map.
	We need to show that the functor \smash{$ \Map_{\X_{/U}}(-,X) $} inverts $ \phi $.
	Again consider the square \eqref{eq:mapping_spaces_in_slices}.
	Since $ \phi $ is an $ \infty $-connected map of $ \X_{/U} $, \Cref{cor:forgetful_functor_reflects_n-connectedness} shows that $ \phi $ is also $ \infty $-connected when regarded as a map of $ \X $.
	Since $ U $ and $ X $ are hypercomplete when regarded as objects of $ \X $, the right-hand vertical map in \eqref{eq:mapping_spaces_in_slices} is an equivalence; hence the left-hand vertical map is also an equivalence, as desired.
\end{proof}


\subsection{The hypercompletion of a recollement}\label{subsec:the_hypercompletion_of_a_recollement}

This subsection has two goals.
The first is to show that the hypercompletion of a recollement of \topoi is still a recollement (\Cref{prop:hypercompletions_of_recollements}).
The second is to show that hypercompletion preserves pullbacks along locally closed immersions of \topoi (\Cref{prop:hypercompletion_commutes_with_pullback_along_locally_closed_immersions}). 

We begin by using \Cref{prop:hypercomplete_objects_of_a_slice} to describe the hypercomplete objects of a locally closed subtopos.
To do this, we first observe that the pushforward along a closed immersion preserves $ \infty $-connectedness and detects hypercompleteness. 

\begin{lemma}\label{lem:pushforward_along_a_closed_immersion_preserves_n-connectedness}
	Let $ \ilowerstar \colon \fromto{\Z}{\X} $ be a closed immersion of \topoi and $ \phi $ a map in $ \Z $.
	For each $ -2 \leq n \leq \infty $, the following are equivalent:
	\begin{enumerate}
		\item The map $ \phi $ is an $ n $-connected map of $ \Z $.

		\item The map $ \ilowerstar(\phi) $ is an $ n $-connected map of $ \X $.
	\end{enumerate}
\end{lemma}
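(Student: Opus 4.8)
The statement is that pushforward along a closed immersion $\ilowerstar \colon \Z \to \X$ both preserves and detects $n$-connectedness. The key structural fact I would exploit is the open-closed recollement (\Cref{rec:open-closed_recollement}): writing $\jlowerstar \colon \U \to \X$ for the complementary open immersion, every object of $\X$ is determined by its restrictions $\iupperstar$ and $\jupperstar$, and a map in $\X$ is $n$-connected if and only if its restrictions along these two jointly conservative, pullback-and-colimit-preserving functors are $n$-connected. This is precisely the situation covered by \Cref{lem:n-connectedness_can_be_checked_on_a_conservative_family_of_geometric_morphisms} applied to the family $\{\iupperstar, \jupperstar\}$.

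First I would prove (1) $\Rightarrow$ (2). Assume $\phi$ is $n$-connected in $\Z$. By \Cref{lem:n-connectedness_can_be_checked_on_a_conservative_family_of_geometric_morphisms} applied to the jointly conservative family $\iupperstar, \jupperstar \colon \X \to \Z, \U$, it suffices to check that $\iupperstar\ilowerstar(\phi)$ and $\jupperstar\ilowerstar(\phi)$ are $n$-connected. For the first, recall that $\iupperstar\ilowerstar \equivalent \id_{\Z}$ since $\ilowerstar$ is fully faithful, so $\iupperstar\ilowerstar(\phi) \equivalent \phi$ is $n$-connected by hypothesis. For the second, the theory of recollements gives that the composite $\jupperstar\ilowerstar$ is constant with value the terminal object of $\U$; hence $\jupperstar\ilowerstar(\phi)$ is an equivalence, and every equivalence is $\infty$-connected, hence $n$-connected. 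This handles the forward direction.

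For (2) $\Rightarrow$ (1), assume $\ilowerstar(\phi)$ is $n$-connected in $\X$. Since $\iupperstar$ is a left exact left adjoint, it preserves $n$-connectedness, so $\iupperstar\ilowerstar(\phi)$ is $n$-connected; but as above $\iupperstar\ilowerstar(\phi) \equivalent \phi$, giving the claim directly. This direction is in fact the easier one and does not even require the recollement machinery, only full faithfulness of $\ilowerstar$ together with the fact that the left exact left adjoint $\iupperstar$ preserves $n$-connected maps.

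I do not anticipate a serious obstacle here; the content is entirely bookkeeping with the recollement. The one point requiring slight care is confirming that $\jupperstar\ilowerstar$ is constant with value the terminal object (rather than merely that $\jupperstar\ilowerstar$ vanishes in some other sense), which is a standard feature of the open-closed recollement and is the closed-immersion analogue of the constancy statement used repeatedly elsewhere in the paper (cf. the proof of \Cref{prop:recollement_for_functor_to_a_poset}). Once that is in hand, both implications follow immediately from the cited lemmas.
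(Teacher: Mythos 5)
Your proof is correct and follows essentially the same route as the paper: both directions use the jointly conservative family $\{\iupperstar,\jupperstar\}$ together with \Cref{lem:n-connectedness_can_be_checked_on_a_conservative_family_of_geometric_morphisms}, the identities $\iupperstar\ilowerstar\equivalent\id$ and the constancy of $\jupperstar\ilowerstar$ for the forward direction, and preservation of $n$-connectedness by the left exact left adjoint $\iupperstar$ for the converse. No gaps.
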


\begin{proof}
	First we show that (1) $ \Rightarrow $ (2).
	Let $ \jlowerstar \colon \incto{\U}{\X} $ denote the open complement of $ \Z $.
	Since $ \iupperstar $ and $ \jupperstar $ are jointly conservative, by \Cref{lem:n-connectedness_can_be_checked_on_a_conservative_family_of_geometric_morphisms} we need to show that if $ \phi $ is $ n $-connected, then $ \iupperstar\ilowerstar(\phi) $ and $ \jupperstar\ilowerstar(\phi) $ are $ n $-connected.
	Since $ \ilowerstar $ is fully faithful, $ \iupperstar\ilowerstar(\phi) \equivalent \phi $.
	Thus our assumption on $ \phi $ says that $ \iupperstar\ilowerstar(\phi) $ is $ n $-connected.
	Also, $ \jupperstar\ilowerstar $ is constant with value the terminal object, hence $ \jupperstar\ilowerstar(\phi) $ is an equivalence.

	To see that (2) $ \Rightarrow $ (1), note that since $ \phi \equivalent \iupperstar \ilowerstar(\phi) $, the claim immediately follows from the fact that $ \iupperstar $ preserves $ n $-connected maps.
\end{proof}

\begin{lemma}\label{lem:fully_faithful_geometric_morphisms_that_preserve_infty-connectedness_detect_hypercompleteness}
	Let $ \ilowerstar \colon \fromto{\Scal}{\X} $ be a fully faithful geometric morphism of \topoi.
	If $ \ilowerstar $ preserves $ \infty $-connected maps, then an object $ F \in \Scal $ is hypercomplete if and only if $ \ilowerstar(F) \in \X $ is hypercomplete.
\end{lemma}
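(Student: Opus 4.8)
The plan is to use the intrinsic characterization of hypercomplete objects as those that are local with respect to $ \infty $-connected maps (see \Cref{nul:hypersheaves}), and to translate this locality condition across the two adjunctions relating mapping spaces in $ \Scal $ and $ \X $. First I would record three elementary facts about the geometric morphism $ \ilowerstar \colon \fromto{\Scal}{\X} $. Since the pullback $ \iupperstar $ is a left exact left adjoint, it preserves pullbacks and geometric realizations of simplicial objects, hence preserves $ n $-connected maps for every $ -2 \leq n \leq \infty $ (compare the argument in \Cref{lem:pushforward_along_a_closed_immersion_preserves_n-connectedness}); in particular $ \iupperstar $ preserves $ \infty $-connected maps. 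The full faithfulness of $ \ilowerstar $ gives, for all $ W, F \in \Scal $, a natural equivalence
\begin{equation*}
	\Map_{\Scal}(W,F) \equivalent \Map_{\X}(\ilowerstar W, \ilowerstar F) \comma
\end{equation*}
while the adjunction $ \iupperstar \dashv \ilowerstar $ gives, for all $ W' \in \X $, a natural equivalence
\begin{equation*}
	\Map_{\X}(W', \ilowerstar F) \equivalent \Map_{\Scal}(\iupperstar W', F) \period
\end{equation*}

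For the implication that $ F $ hypercomplete implies $ \ilowerstar(F) $ hypercomplete, let $ \phi \colon \fromto{U}{V} $ be an $ \infty $-connected map in $ \X $. Using the adjunction equivalence $ \Map_{\X}(-, \ilowerstar F) \equivalent \Map_{\Scal}(\iupperstar(-), F) $, the map $ \fromto{\Map_{\X}(V, \ilowerstar F)}{\Map_{\X}(U, \ilowerstar F)} $ is identified with $ \fromto{\Map_{\Scal}(\iupperstar V, F)}{\Map_{\Scal}(\iupperstar U, F)} $. Since $ \iupperstar $ preserves $ \infty $-connected maps, $ \iupperstar(\phi) $ is $ \infty $-connected, and since $ F $ is hypercomplete this latter map is an equivalence. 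Hence $ \ilowerstar(F) $ is local with respect to $ \phi $, so it is hypercomplete. Note that this direction does not use the hypothesis that $ \ilowerstar $ preserves $ \infty $-connectedness; it is automatic for any fully faithful geometric morphism.

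For the converse, assume $ \ilowerstar(F) $ is hypercomplete in $ \X $, and let $ \phi \colon \fromto{U}{V} $ be an $ \infty $-connected map in $ \Scal $. Using the full faithfulness equivalence $ \Map_{\Scal}(-,F) \equivalent \Map_{\X}(\ilowerstar(-), \ilowerstar F) $, the map $ \fromto{\Map_{\Scal}(V,F)}{\Map_{\Scal}(U,F)} $ is identified with $ \fromto{\Map_{\X}(\ilowerstar V, \ilowerstar F)}{\Map_{\X}(\ilowerstar U, \ilowerstar F)} $. By hypothesis $ \ilowerstar(\phi) $ is $ \infty $-connected in $ \X $, so the hypercompleteness of $ \ilowerstar(F) $ makes this map an equivalence; hence $ F $ is local with respect to $ \phi $ and therefore hypercomplete. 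The argument is essentially formal once the locality characterization of hypercompleteness is in place; the only genuine input is the hypothesis on $ \ilowerstar $, which is used precisely in this converse direction, and the routine verification that $ \iupperstar $ preserves $ \infty $-connectedness, so there is no serious obstacle.
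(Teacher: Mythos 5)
Your proof is correct and follows essentially the same route as the paper: the converse direction is verbatim the paper's argument (full faithfulness identifies the mapping spaces, and the hypothesis on $ \ilowerstar $ supplies the $ \infty $-connectedness of $ \ilowerstar(\phi) $), while your forward direction simply spells out the standard fact, which the paper cites without proof, that pushforwards preserve hypercompleteness via the adjunction and the left exactness of $ \iupperstar $.
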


\begin{proof}
	Since pushforwards preserve hypercompleteness, it suffices to show that if $ \ilowerstar(F) $ is hypercomplete, then $ F $ is hypercomplete. 
	Let $ \phi \colon \fromto{V}{V'} $ be an $ \infty $-connected map of $ \Scal $.
	By assumption, the morphism $ \ilowerstar(\phi) $ is also $ \infty $-connected.
	Since $ \ilowerstar(F) $ is hypercomplete, we deduce that the induced map
	\begin{equation*}
		-\of \ilowerstar(\phi) \colon \fromto{\Map_{\X}(\ilowerstar(V'),\ilowerstar(F))}{\Map_{\X}(\ilowerstar(V),\ilowerstar(F))}
	\end{equation*}
	is an equivalence.
	Since $ \ilowerstar $ is fully faithful, the map
	\begin{equation*}
		-\of \phi \colon \fromto{\Map_{\Scal}(V',F)}{\Map_{\Scal}(V,F)}
	\end{equation*}
	is also an equivalence.
\end{proof}

\begin{proposition}\label{prop:pushforward_along_an_open_immersion_detects_hypercomplteness}
	Let $ \ilowerstar \colon \incto{\Scal}{\X} $ be a locally closed immersion of \topoi.
	Then an object $ F \in \Scal $ is hypercomplete if and only if $ \ilowerstar(F) \in \X $ is hypercomplete.
\end{proposition}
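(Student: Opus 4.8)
The plan is to reduce to the two building blocks of a locally closed immersion and treat each separately. First I would observe that the ``only if'' direction is free: $\ilowerstar$ is the pushforward of a geometric morphism, and pushforwards preserve hypercomplete objects (this is exactly the fact invoked at the start of the proof of \Cref{lem:fully_faithful_geometric_morphisms_that_preserve_infty-connectedness_detect_hypercompleteness}). So only the ``if'' direction requires work: assuming $\ilowerstar(F)$ is hypercomplete in $\X$, I must show that $F$ is hypercomplete in $\Scal$. By the definition of a locally closed immersion I can factor $\ilowerstar \equivalent j_\ast k_\ast$, where $k_\ast \colon \Scal \to \U$ is a closed immersion and $j_\ast \colon \U \to \X$ is an open immersion, so that $\ilowerstar(F) \equivalent j_\ast k_\ast(F)$.

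For the open part I would avoid trying to detect hypercompleteness through $j_\ast$ directly. Instead, recall that an open immersion is \'etale, so the pullback $j^\ast$ admits a left adjoint $j_!$; hence \Cref{lem:lowersharp_preserves_n-connectedness} shows that $j^\ast$ preserves hypercomplete objects. Since $j_\ast$ is fully faithful, $j^\ast j_\ast \equivalent \operatorname{id}$, so applying $j^\ast$ to the hypercomplete object $\ilowerstar(F) = j_\ast k_\ast(F)$ shows that $k_\ast(F) \equivalent j^\ast j_\ast k_\ast(F)$ is hypercomplete in $\U$. This step is where the argument is genuinely asymmetric between the two cases: for the open immersion I transport hypercompleteness along the pullback, whereas for the closed immersion I must use the hands-on connectivity lemma.

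For the closed part I would invoke the two preparatory lemmas directly. The morphism $k_\ast$ is fully faithful and, by \Cref{lem:pushforward_along_a_closed_immersion_preserves_n-connectedness} (with $n = \infty$), preserves $\infty$-connected maps. Therefore \Cref{lem:fully_faithful_geometric_morphisms_that_preserve_infty-connectedness_detect_hypercompleteness} applies to $k_\ast$ and shows that $F$ is hypercomplete in $\Scal$ precisely when $k_\ast(F)$ is hypercomplete in $\U$; combined with the previous paragraph this yields that $F$ is hypercomplete, completing the ``if'' direction.

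The main obstacle to watch for is the temptation to apply \Cref{lem:fully_faithful_geometric_morphisms_that_preserve_infty-connectedness_detect_hypercompleteness} to the composite $\ilowerstar$ in one stroke: that would require $\ilowerstar$, and hence $j_\ast$, to preserve $\infty$-connected maps, and open pushforwards do \emph{not} do this in general (the failure is visible in non-hypercomplete topoi through the gluing functor $\iupperstar j_\ast$ of the recollement, which is left exact but need not preserve effective epimorphisms). The resolution is precisely the observation above that for the open immersion one should carry hypercompleteness along the pullback $j^\ast$ — which does preserve it, being \'etale — rather than along $j_\ast$, so that the delicate connectivity argument is needed only for the closed immersion, where \Cref{lem:pushforward_along_a_closed_immersion_preserves_n-connectedness} supplies it.
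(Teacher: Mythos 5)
Your proposal is correct and follows essentially the same route as the paper's own proof: reduce to the "if" direction, factor the locally closed immersion as a closed immersion followed by an open immersion, handle the open case by transporting hypercompleteness along the pullback (via \Cref{lem:lowersharp_preserves_n-connectedness}, using that open immersions are étale), and handle the closed case via \Cref{lem:pushforward_along_a_closed_immersion_preserves_n-connectedness} together with \Cref{lem:fully_faithful_geometric_morphisms_that_preserve_infty-connectedness_detect_hypercompleteness}. Your closing remark about why the connectivity lemma cannot be applied to the composite in one stroke correctly pinpoints the asymmetry the paper's two-case argument is designed around.
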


\begin{proof}
	Since pushforwards preserve hypercompleteness, it suffices to show that if $ \ilowerstar(F) $ is hypercomplete, then $ F $ is hypercomplete. 
	By writing $ \ilowerstar $ as the composite of a closed immersion followed by an open immersion, we are reduced to treating the cases where $ \ilowerstar $ is a closed or an open immersion.

	If $ \ilowerstar $ is an open immersion, note that by \Cref{lem:lowersharp_preserves_n-connectedness}, the functor $ \iupperstar $ preserves hypercompletenss.
	Since $ \ilowerstar $ is fully faithful and $ \ilowerstar(F) $ is hypercomplete, we deduce that $ \iupperstar\ilowerstar(F) \equivalent F $ is hypercomplete.
	
	If $ \ilowerstar $ is a closed immersion, then \Cref{lem:pushforward_along_a_closed_immersion_preserves_n-connectedness} shows that $ \ilowerstar $ preserves $ \infty $-connected maps.
	The claim now follows from \Cref{lem:fully_faithful_geometric_morphisms_that_preserve_infty-connectedness_detect_hypercompleteness}.
\end{proof}

\begin{corollary}\label{cor:every_locally_closed_subtopos_of_a_hypercomplete_topos_is_hypercomplete}
	Let $ \ilowerstar \colon \incto{\Scal}{\X} $ be a locally closed immersion of \topoi.
	If $ \X $ is hypercomplete, then $ \Scal $ is hypercomplete.
\end{corollary}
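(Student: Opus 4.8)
The final statement, Corollary~\ref{cor:every_locally_closed_subtopos_of_a_hypercomplete_topos_is_hypercomplete}, is an immediate consequence of the preceding Proposition~\ref{prop:pushforward_along_an_open_immersion_detects_hypercomplteness}. The plan is to reduce the hypercompleteness of $\Scal$ to the hypercompleteness of every object of $\Scal$, and then transport the question across the fully faithful pushforward $\ilowerstar$ into the ambient \topos $\X$, where hypercompleteness is free by assumption.

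\begin{proof}
	By definition, \atopos is hypercomplete if and only if every one of its objects is hypercomplete.
	So let $ F \in \Scal $; we must show that $ F $ is hypercomplete.
	Since $ \X $ is hypercomplete, the object $ \ilowerstar(F) \in \X $ is hypercomplete.
	Hence \Cref{prop:pushforward_along_an_open_immersion_detects_hypercomplteness} implies that $ F $ is hypercomplete, as desired.
\end{proof}

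There is essentially no obstacle here: the entire content has been packaged into Proposition~\ref{prop:pushforward_along_an_open_immersion_detects_hypercomplteness}, which establishes the two-way criterion that $ F \in \Scal $ is hypercomplete precisely when $ \ilowerstar(F) $ is hypercomplete. The only thing one must be slightly careful about is invoking the correct direction of that criterion (the ``if $ \ilowerstar(F) $ is hypercomplete, then $ F $ is hypercomplete'' implication) together with the standard fact that hypercompleteness of \atopos is detected objectwise. I would present it as a one-line deduction rather than reproving anything, since unwinding the definition of a hypercomplete \topos and applying the proposition is all that is needed.
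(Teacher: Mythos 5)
Your proof is correct and is exactly the intended argument: the paper states this corollary immediately after \Cref{prop:pushforward_along_an_open_immersion_detects_hypercomplteness} with no separate proof, precisely because it follows by applying that proposition objectwise as you do. Nothing to add.
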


	

We are now ready to show that the hypercompletion of a recollement remains a recollement:

\begin{proposition}\label{prop:hypercompletions_of_recollements}
	Let $ \X $ be \atopos and let $ U \in \X $ be a $ (-1) $-truncated object.
	Write $ \ilowerstar \colon \incto{\X_{\sminus U}}{\X} $ and $ \jlowerstar \colon \incto{\X_{/U}}{\X} $ for the natural geometric morphisms.
	Then:
	\begin{enumerate}
		\item\label{prop:hypercompletions_of_recollements.1} There are natural identifications
		\begin{equation*}
			(\X_{/U})^{\hyp} = (\Xhyp)_{/U} \andeq (\X_{\sminus U})^{\hyp} = (\Xhyp)_{\sminus U}
		\end{equation*}
		as full subcategories of $ \X $.

		\item\label{prop:hypercompletions_of_recollements.2} The functors
		\begin{equation*}
			\iupperstarhyp \colon \fromto{\Xhyp}{(\X_{\sminus U})^{\hyp}} \andeq \jupperstarhyp \colon \fromto{\Xhyp}{(\X_{/U})^{\hyp}}
		\end{equation*}
		exhibit $ \Xhyp $ as the recollement of $ (\X_{\sminus U})^{\hyp} $ and $ (\X_{/U})^{\hyp} $.
	\end{enumerate}
\end{proposition}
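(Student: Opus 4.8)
The plan is to prove both parts using the two main technical inputs already established: \Cref{prop:hypercomplete_objects_of_a_slice}, which identifies the hypercomplete objects of an étale slice, and \Cref{prop:pushforward_along_an_open_immersion_detects_hypercomplteness}, which says that for a locally closed immersion $\ilowerstar \colon \incto{\Scal}{\X}$, an object $F \in \Scal$ is hypercomplete iff $\ilowerstar(F)$ is hypercomplete. Part (1) will follow quickly from these. For the identification $(\X_{/U})^{\hyp} = (\Xhyp)_{/U}$, first I would observe that since $U$ is $(-1)$-truncated, it is in particular hypercomplete, so \Cref{prop:hypercomplete_objects_of_a_slice} directly gives $(\X_{/U})^{\hyp} = (\Xhyp)_{/U}$ as full subcategories of $\X_{/U}$, hence of $\X$. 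For the closed piece, I would use \Cref{prop:pushforward_along_an_open_immersion_detects_hypercomplteness} applied to the closed immersion $\ilowerstar \colon \incto{\X_{\sminus U}}{\X}$: an object $G \in \X_{\sminus U}$ is hypercomplete iff $\ilowerstar(G)$ is hypercomplete in $\X$, i.e. lies in $\Xhyp$. Since $\ilowerstar$ is fully faithful with essential image the full subcategory $\X_{\sminus U} \subset \X$ of objects $F$ with $\pr_2 \colon F \cross U \to U$ an equivalence — a condition unaffected by restricting to $\Xhyp$ — this gives exactly $(\X_{\sminus U})^{\hyp} = (\Xhyp)_{\sminus U}$.

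For part (2), the plan is to verify the two defining conditions of a recollement for the functors $\iupperstarhyp$ and $\jupperstarhyp$: that they are jointly conservative, and that the composite $\jupperstarhyp \ilowerstarhyp$ is constant with value the terminal object. The cleanest route is to relate the hypercomplete functors to the ordinary ones via the hypercompletion localization $\fromto{\X}{\Xhyp}$. I would note that $\jupperstarhyp$ and $\iupperstarhyp$ are obtained from $\jupperstar$ and $\iupperstar$ by applying hypercompletion; concretely, under the identifications of part (1), the restriction functors $\iupperstarhyp$ and $\jupperstarhyp$ agree with the restrictions of the corresponding functors on $\X$ to the full subcategory $\Xhyp$. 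Joint conservativity on $\Xhyp$ then follows immediately from joint conservativity of $\iupperstar,\jupperstar$ on the larger \topos $\X$ (\Cref{rec:open-closed_recollement}), since a morphism of $\Xhyp$ inverted by both restrictions is inverted by $\iupperstar$ and $\jupperstar$ in $\X$, hence is an equivalence.

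The condition that $\jupperstarhyp \ilowerstarhyp$ is constant with value the terminal object I would deduce from the analogous fact in $\X$. The pushforward $\ilowerstarhyp$ agrees with $\ilowerstar$ on hypercomplete objects (pushforwards preserve hypercompleteness, and on the closed piece $\ilowerstar$ is fully faithful), and $\jupperstarhyp$ is the hypercompletion of $\jupperstar$. Since $\jupperstar\ilowerstar$ is constant at the terminal object $1_{\X_{/U}}$, and the terminal object of a \topos is hypercomplete (it corepresents the empty limit), I would conclude that $\jupperstarhyp\ilowerstarhyp$ is constant at the terminal object $1_{(\X_{/U})^{\hyp}}$, which equals $1_{\X_{/U}}$ since hypercompletion preserves the terminal object. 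I would then invoke the characterization of recollements (e.g. via the jointly conservative pair together with the vanishing of the cross term) to conclude.

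The main obstacle I anticipate is the careful bookkeeping in identifying the hypercomplete restriction functors $\iupperstarhyp, \jupperstarhyp$ with the restrictions of the unhypercompleted functors, and ensuring the adjoints behave correctly after hypercompletion; in particular one must check that the left exact left adjoints $\jupperstarhyp$ and $\iupperstarhyp$ genuinely preserve the relevant limits and that the fully faithful right adjoints $\ilowerstarhyp, \jlowerstarhyp$ land in $\Xhyp$. The substantive content — that hypercompletion does not destroy the open–closed decomposition — is entirely carried by \Cref{prop:hypercomplete_objects_of_a_slice,prop:pushforward_along_an_open_immersion_detects_hypercomplteness}, so the recollement axioms should reduce to formal consequences of their counterparts in $\X$ once the functors are correctly matched up. I would be careful to phrase the final step using the recollement recognition already implicit in \Cref{rec:open-closed_recollement} rather than reproving it from scratch.
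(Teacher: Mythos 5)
Your part (1) is essentially the paper's argument: the open piece is exactly \Cref{prop:hypercomplete_objects_of_a_slice}, and the closed piece is \Cref{prop:pushforward_along_an_open_immersion_detects_hypercomplteness} together with the observation that the condition defining $\X_{\sminus U}$ is insensitive to passing to $\Xhyp$ (because $\Xhyp \subset \X$ is closed under finite products and $U$ is hypercomplete). For part (2), though, the paper takes a much shorter route than you do: since $U$ is a $(-1)$-truncated object of the \topos $\Xhyp$, \Cref{rec:open-closed_recollement} applied to $\Xhyp$ itself already exhibits $\Xhyp$ as the recollement of $(\Xhyp)_{\sminus U}$ and $(\Xhyp)_{/U}$, and part (1) identifies these pieces with $(\X_{\sminus U})^{\hyp}$ and $(\X_{/U})^{\hyp}$. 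No comparison with the recollement of $\X$ is needed.

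Your route for (2) contains a genuine gap in the conservativity step. You assert that $\iupperstarhyp$ and $\jupperstarhyp$ agree with the restrictions of $\iupperstar$ and $\jupperstar$ to $\Xhyp$. This is correct for $\jupperstar$, which is both a left and a right adjoint and hence preserves hypercompleteness by \Cref{lem:functors_that_are_both_left_and_right_adjoints_preserve_hypercompleteness_and_convergence_of_Postnikov_towers}, but it is not justified for $\iupperstar$: the localization $\iupperstar \colon \fromto{\X}{\X_{\sminus U}}$ is only a left adjoint and need not carry hypercomplete objects to hypercomplete objects. What is true is that $\iupperstarhyp$ is the composite of $\iupperstar$ restricted to $\Xhyp$ with the hypercompletion localization of $\X_{\sminus U}$; consequently, knowing that $\iupperstarhyp(\phi)$ is an equivalence only tells you that $\iupperstar(\phi)$ is $\infty$-connected, not that it is an equivalence, so your appeal to the joint conservativity of the pair $(\iupperstar,\jupperstar)$ on $\X$ does not go through as written. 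The step can be repaired --- from \Cref{lem:n-connectedness_can_be_checked_on_a_conservative_family_of_geometric_morphisms} one deduces that $\phi$ is $\infty$-connected, hence an equivalence since its source and target are hypercomplete --- but the cleanest fix is the paper's: work entirely inside $\Xhyp$ and never compare with the recollement of $\X$. Your cross-term computation and your treatment of $\ilowerstarhyp$ are fine.
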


\begin{proof}
	For (1), note that the left-hand identification is a special case of \Cref{prop:hypercomplete_objects_of_a_slice}.
	For the right-hand identification, note that \Cref{prop:pushforward_along_an_open_immersion_detects_hypercomplteness} implies that
	\begin{equation*}
		(\X_{\sminus U})^{\hyp} = \Xhyp \intersect \X_{\sminus U}
	\end{equation*}
	as full subcategories of $ \X $.
	Since $ U $ is hypercomplete and $ \Xhyp \subset \X $ is closed under finite products, unpacking definitions we see that
	\begin{equation*}
		\Xhyp \intersect \X_{\sminus U} = (\Xhyp)_{\sminus U} \period
	\end{equation*}

	Finally, (2) is an immediate consequence of (1) and the open-closed recollement associated to a $ (-1) $-truncated object.
\end{proof}

\begin{example}\label{ex:open-closed_recollement_for_hypersheaves_on_topological_spaces}
	Let $ X $ be a topological space and let $ i \colon \incto{Z}{X} $ be a closed subspace with open complement $ j \colon \incto{U}{X} $.
	From \Cref{ex:open-closed_recollement_for_sheaves_on_topological_spaces,prop:hypercompletions_of_recollements}, we deduce that the functors
	\begin{equation*}
		\iupperstarhyp \colon \fromto{\Shhyp(X)}{\Shhyp(Z)} \andeq \jupperstarhyp \colon \fromto{\Shhyp(X)}{\Shhyp(U)}
	\end{equation*} 
	exhibit $ \Shhyp(X) $ as the recollement of $ \Shhyp(Z) $ and $ \Shhyp(U) $. 
\end{example}

In the remainder of this subsection, we use \Cref{prop:hypercompletions_of_recollements} to prove some compatibilities between hypercompletion and pulling back along locally closed immersions.
Note that since the inclusion of hypercomplete \topoi into all \topoi does not preserve limits, these results do not immediately follow from formal considerations.

\begin{corollary}\label{cor:pulling_back_a_locally_closed_immersion_along_hypercompletion}
	Let $ \ilowerstar \colon \incto{\Scal}{\X} $ be a locally closed immersion of \topoi.
	Then the natural square
	\begin{equation*}
		\begin{tikzcd}[sep=3em]
			\Scal^{\hyp} \arrow[r, hooked, "\ilowerstar^{\hyp}"] \arrow[d, hooked] & \Xhyp \arrow[d, hooked] \\ 
			\Scal \arrow[r, hooked, "\ilowerstar"'] & \X
		\end{tikzcd}
	\end{equation*}
	is a pullback square in $ \RTop $.
\end{corollary}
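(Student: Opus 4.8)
The plan is to reduce to the cases of open and closed immersions, dispatch each using \Cref{prop:formula_for_pullbacks_along_etale_morphisms_and_closed_immersions} together with the identifications already established for slices and complements, and then paste. Throughout, write $h_\ast \colon \Xhyp \to \X$ for the hypercompletion geometric morphism, so that $h_\ast$ is the inclusion $\incto{\Xhyp}{\X}$ and $h^\ast = (-)^{\hyp}$ is the hypercompletion functor. Since $h^\ast$ is left exact and every $(-1)$-truncated object is truncated, hence hypercomplete, for a $(-1)$-truncated $U \in \X$ the image $h^\ast(U)$ is again $(-1)$-truncated in $\Xhyp$ and the unit $\fromto{U}{h_\ast h^\ast(U)}$ is an equivalence; I will freely identify $h^\ast(U)$ with $U$ regarded as an object of $\Xhyp$.

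First I would treat the open case. Let $\jlowerstar \colon \incto{\X_{/U}}{\X}$ be an open immersion, determined by a $(-1)$-truncated $U \in \X$. Applying part~(1) of \Cref{prop:formula_for_pullbacks_along_etale_morphisms_and_closed_immersions} to $h_\ast \colon \Xhyp \to \X$ and $V = U$ produces a pullback square in $\RTop$ whose top-left corner is $(\Xhyp)_{/h^\ast(U)} = (\Xhyp)_{/U}$. By \Cref{prop:hypercomplete_objects_of_a_slice} this coincides with $(\X_{/U})^{\hyp}$, so the square is precisely the hypercompletion square of $\jlowerstar$. The closed case is identical: for a closed immersion $\incto{\X_{\sminus U}}{\X}$ with complement $U$, part~(2) of \Cref{prop:formula_for_pullbacks_along_etale_morphisms_and_closed_immersions} applied to $h_\ast$ and $U$ gives a pullback square with top-left corner $(\Xhyp)_{\sminus U}$, which equals $(\X_{\sminus U})^{\hyp}$ by \enumref{prop:hypercompletions_of_recollements}{1}; again this is the desired hypercompletion square.

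Finally I would assemble the general case. Factor $\ilowerstar$ as a closed immersion $k_\ast \colon \incto{\Scal}{\Y}$ followed by an open immersion $\jlowerstar \colon \incto{\Y}{\X}$. By functoriality of hypercompletion this yields a commutative diagram whose two horizontal subrectangles are the hypercompletion squares of $k_\ast$ (formed inside $\Y$) and of $\jlowerstar$, both of which are pullbacks by the two cases above. Pasting pullback squares, the outer rectangle---which is exactly the hypercompletion square of $\ilowerstar = \jlowerstar k_\ast$---is a pullback in $\RTop$.

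The main point requiring care is bookkeeping rather than any hard new input: one must check that the abstract pullback squares furnished by \Cref{prop:formula_for_pullbacks_along_etale_morphisms_and_closed_immersions} agree, as squares of \emph{natural} geometric morphisms, with the hypercompletion squares after the identifications of \Cref{prop:hypercomplete_objects_of_a_slice} and \enumref{prop:hypercompletions_of_recollements}{1}. Concretely, one tracks that the bottom and right legs are the given immersion and the hypercompletion of $\X$, that the top leg is $\ilowerstar^{\hyp}$, and that the left leg is the hypercompletion of $\Scal$; each identification is immediate from the construction, using the equivalence $h^\ast(U) \simeq U$ recorded above. I expect the only step needing a brief argument to be the compatibility of the closed-then-open factorization with hypercompletion, so that the horizontal pasting of the two squares is legitimate.
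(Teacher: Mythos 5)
Your proposal is correct and follows essentially the same route as the paper: factor the locally closed immersion as a closed immersion followed by an open immersion, handle each case by combining the explicit pullback description of \Cref{prop:formula_for_pullbacks_along_etale_morphisms_and_closed_immersions} with the identifications $(\X_{/U})^{\hyp} = (\Xhyp)_{/U}$ and $(\X_{\sminus U})^{\hyp} = (\Xhyp)_{\sminus U}$ from \enumref{prop:hypercompletions_of_recollements}{1}, and then paste the two pullback squares. The extra bookkeeping you flag (identifying $h^{\ast}(U)$ with $U$ for $(-1)$-truncated $U$) is correct and implicit in the paper's argument.
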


\begin{proof}
	By factoring $ \ilowerstar $ as the composite of a closed immersion followed by an open immersion, it suffices to treat the cases of closed and open immersions separately.
	These cases follow from \enumref{prop:hypercompletions_of_recollements}{1} and the explicit description of the pullbacks along open and closed immersions of \topoi (\Cref{prop:formula_for_pullbacks_along_etale_morphisms_and_closed_immersions}).
\end{proof}

\begin{proposition}\label{prop:hypercompletion_commutes_with_pullback_along_locally_closed_immersions}
	Let
	\begin{equation*}
		\begin{tikzcd}[sep=3em]
			\Scal \arrow[r, hooked, "\ibarlowerstar"] \arrow[d] \arrow[dr, phantom, very near start, "\lrcorner", xshift=-0.25em, yshift=0.25em] & \X \arrow[d] \\ 
			\Tcal \arrow[r, hooked, "\ilowerstar"'] & \Y
		\end{tikzcd}
	\end{equation*}
	be a pullback square of \topoi where $ \ilowerstar $ is a locally closed immersion.
	Then the induced square
	\begin{equation*}
		\begin{tikzcd}[sep=3em]
			\Scal^{\hyp} \arrow[r, hooked, "\ibarlowerstar^{\hyp}"] \arrow[d] & \Xhyp \arrow[d] \\ 
			\Tcal^{\hyp} \arrow[r, hooked, "\ilowerstarhyp"'] & \Yhyp
		\end{tikzcd}
	\end{equation*}
	is also a pullback square in $ \RTop $.
\end{proposition}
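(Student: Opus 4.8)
The plan is to deduce the statement from two applications of \Cref{cor:pulling_back_a_locally_closed_immersion_along_hypercompletion}, combined with repeated use of the pasting law for pullback squares in $\RTop$.

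To begin, observe that since $\ilowerstar$ is a locally closed immersion and the class of locally closed immersions is stable under basechange in $\RTop$ (a consequence of \Cref{prop:formula_for_pullbacks_along_etale_morphisms_and_closed_immersions}), the geometric morphism $\ibarlowerstar$ obtained from the given pullback square is also a locally closed immersion. Thus \Cref{cor:pulling_back_a_locally_closed_immersion_along_hypercompletion} applies to both $\ilowerstar$ and $\ibarlowerstar$, showing that the two squares
\[
\begin{tikzcd}[sep=2.5em]
\Scal^{\hyp} \arrow[r, hooked, "\ibarlowerstar^{\hyp}"] \arrow[d, hooked] & \Xhyp \arrow[d, hooked] \\
\Scal \arrow[r, hooked, "\ibarlowerstar"'] & \X
\end{tikzcd}
\qquad
\begin{tikzcd}[sep=2.5em]
\Tcal^{\hyp} \arrow[r, hooked, "\ilowerstarhyp"] \arrow[d, hooked] & \Yhyp \arrow[d, hooked] \\
\Tcal \arrow[r, hooked, "\ilowerstar"'] & \Y
\end{tikzcd}
\]
are pullbacks in $\RTop$.

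Next I would stack pullback squares vertically. Pasting the left square above on top of the given pullback square, the outer rectangle
\[
\begin{tikzcd}[sep=2.5em]
\Scal^{\hyp} \arrow[r] \arrow[d] & \Xhyp \arrow[d] \\
\Scal \arrow[r] \arrow[d] & \X \arrow[d] \\
\Tcal \arrow[r] & \Y
\end{tikzcd}
\]
is a pullback, which identifies $\Scal^{\hyp} \simeq \Tcal \crosslimits_{\Y} \Xhyp$ compatibly with the projections to $\Xhyp$ (via $\ibarlowerstar^{\hyp}$) and to $\Tcal$. Symmetrically, using that the composite $\Xhyp \to \Y$ factors as $\Xhyp \to \Yhyp \to \Y$ by naturality of hypercompletion, I would paste the defining pullback square of $\Tcal^{\hyp} \crosslimits_{\Yhyp} \Xhyp$ on top of the right square above; this produces a pullback rectangle identifying $\Tcal^{\hyp} \crosslimits_{\Yhyp} \Xhyp \simeq \Tcal \crosslimits_{\Y} \Xhyp$, again compatibly with the projections to $\Xhyp$ and to $\Tcal$.

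Combining the two identifications yields $\Scal^{\hyp} \simeq \Tcal^{\hyp} \crosslimits_{\Yhyp} \Xhyp$. The final step, and the one requiring the most care, is to check that this equivalence is carried by the canonical comparison map associated to the commuting top face, so that the top square is genuinely exhibited as a pullback rather than merely equivalent to one abstractly. This amounts to verifying that the two projections out of $\Scal^{\hyp}$ used in the first pasting agree with those coming from the top face: the projection to $\Xhyp$ is $\ibarlowerstar^{\hyp}$ by construction, while the two maps $\Scal^{\hyp} \to \Scal \to \Tcal$ and $\Scal^{\hyp} \to \Tcal^{\hyp} \to \Tcal$ coincide because the left face of the hypercompletion cube commutes by naturality. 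Since both pasting rectangles respect the projections to $\Xhyp$ and $\Tcal$, the comparison map is identified with the identity of $\Tcal \crosslimits_{\Y} \Xhyp$, hence is an equivalence, completing the proof.
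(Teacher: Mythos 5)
Your argument is correct and is essentially the paper's proof: the paper assembles the same data into a commutative cube whose front face is the given pullback and whose top and bottom faces are the squares supplied by \Cref{cor:pulling_back_a_locally_closed_immersion_along_hypercompletion}, and then invokes the gluing lemma for pullbacks, which is exactly the two pasting computations you carry out by hand. Your explicit remark that $\ibarlowerstar$ is again a locally closed immersion by stability under basechange, and your check that the resulting equivalence is the canonical comparison map, are the (implicit) commutativity inputs of the paper's cube, so nothing is missing.
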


\begin{proof}
	Consider the commutative cube of \topoi
	\begin{equation*}
        \begin{tikzcd}[column sep={12ex,between origins}, row sep={8ex,between origins}]
            \Scal^{\hyp} \arrow[rr, "\ibarlowerstar^{\hyp}", hooked] \arrow[dd]  \arrow[dr, hooked] & & \Xhyp \arrow[dd]  \arrow[dr, hooked] \\
            & \Scal \arrow[rr, "\ibarlowerstar"{near start}, hooked, crossing over] & & \X \arrow[dd]  \\
            \Tcal^{\hyp} \arrow[rr, "\ilowerstarhyp"'{near end}, hooked] \arrow[dr, hooked] & & \Yhyp \arrow[dr, hooked] \\
            & \Tcal \arrow[rr, "\ilowerstar"', hooked] \arrow[from=uu, crossing over] & & \Y  \period
        \end{tikzcd}
    \end{equation*}
    By assumption, the front vertical face is a pullback square.
    Since $ \ilowerstar $ and $ \ibarlowerstar $ are locally closed immersions, \Cref{cor:pulling_back_a_locally_closed_immersion_along_hypercompletion} shows that the top and bottom horizontal faces are pullback squares.
    By the gluing lemma for pullbacks, the back vertical face is also a pullback square.
\end{proof}

In general, the functor sending a topological space $ X $ to the \topos \smash{$ \Shhyp(X) $} does not preserve pullbacks. 
However, the assignment \smash{$ \goesto{X}{\Shhyp(X)} $} \textit{does} preserve pullbacks along locally closed immersions: 

\begin{corollary}\label{cor:Shhyp_preserves_pullbacks_along_locally_closed_embeddings}
	Let
	\begin{equation*}
	    \begin{tikzcd}[sep=2.25em]
	       S \arrow[d] \arrow[r, hooked, "\ibar"] \arrow[dr, phantom, very near start, "\lrcorner", xshift=-0.25em, yshift=0.25em] & X \arrow[d, "f"]  \\ 
	       T \arrow[r, hooked, "i"'] & Y
	    \end{tikzcd}
	\end{equation*}
	be a pullback square of topological spaces where $ i $ is a locally closed immersion.
	Then the induced square of \topoi
	\begin{equation*}
	    \begin{tikzcd}[sep=2.25em]
	       \Shhyp(S) \arrow[d] \arrow[r, hooked, "\ibarlowerstar^{\hyp}"] & \Shhyp(X) \arrow[d, "\flowerstar^{\hyp}"]  \\ 
	       \Shhyp(T) \arrow[r, hooked, "\ilowerstarhyp"'] & \Shhyp(Y)
	    \end{tikzcd}
	\end{equation*}
	is a pullback square in $ \RTop $.
\end{corollary}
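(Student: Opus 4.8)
The plan is to deduce this directly from its non-hypercomplete counterpart, \Cref{cor:Sh_preserves_pullbacks_along_locally_closed_embeddings}, together with the compatibility of hypercompletion with pullbacks along locally closed immersions. Both ingredients are already available: \Cref{cor:Sh_preserves_pullbacks_along_locally_closed_embeddings} produces the desired pullback square at the level of the sheaf \topoi $ \Sh(-) $, while \Cref{prop:hypercompletion_commutes_with_pullback_along_locally_closed_immersions} asserts that hypercompletion carries a pullback square of \topoi one of whose legs is a locally closed immersion to another pullback square in $ \RTop $. The whole argument is therefore a matter of chaining these two results.

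First I would observe that the given locally closed immersion $ i \colon \incto{T}{Y} $ of topological spaces induces a locally closed immersion $ \ilowerstar \colon \incto{\Sh(T)}{\Sh(Y)} $ of \topoi; this is exactly the content of \Cref{rec:locally_closed_immersions_of_topological_spaces_induce_locally_closed_immersions_of_topoi}. With this in hand, \Cref{cor:Sh_preserves_pullbacks_along_locally_closed_embeddings} applies to the original pullback square of spaces and shows that the induced square of sheaf \topoi with vertices $ \Sh(S) $, $ \Sh(X) $, $ \Sh(T) $, $ \Sh(Y) $ is a pullback square in $ \RTop $ whose lower horizontal morphism $ \ilowerstar $ is a locally closed immersion.

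Next I would feed this sheaf-level pullback square into \Cref{prop:hypercompletion_commutes_with_pullback_along_locally_closed_immersions}, taking $ \Scal = \Sh(S) $, $ \X = \Sh(X) $, $ \Tcal = \Sh(T) $, and $ \Y = \Sh(Y) $. The proposition then yields that the hypercompleted square is again a pullback square in $ \RTop $. To finish, it only remains to match notation: by \Cref{ntn:sheaves_on_topological_spaces} we have $ \Scal^{\hyp} = \Shhyp(S) $ and likewise for the other three vertices, while the hypercompletions of the geometric morphisms $ \ibarlowerstar $, $ \ilowerstar $, and $ \flowerstar $ are precisely $ \ibarlowerstar^{\hyp} $, $ \ilowerstarhyp $, and $ \flowerstar^{\hyp} $.

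Since both inputs are already proved, I do not anticipate any substantive obstacle; the only point requiring (routine) care is the bookkeeping in the last step, namely confirming that the hypercompletion of each geometric morphism appearing in the sheaf-level square agrees with the correspondingly decorated morphism in the statement.
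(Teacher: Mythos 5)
Your proposal is correct and is exactly the paper's argument: apply \Cref{cor:Sh_preserves_pullbacks_along_locally_closed_embeddings} to obtain the pullback square before hypercompletion, then invoke \Cref{prop:hypercompletion_commutes_with_pullback_along_locally_closed_immersions} to conclude. The notational bookkeeping you flag at the end is indeed routine and the paper does not even comment on it.
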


\begin{proof}
	By \Cref{cor:Sh_preserves_pullbacks_along_locally_closed_embeddings}, the claim is true \textit{before} hypercompletion.
	\Cref{prop:hypercompletion_commutes_with_pullback_along_locally_closed_immersions} shows that the claim remains true after hypercompletion.
\end{proof}


\DeclareFieldFormat{labelnumberwidth}{#1}
\printbibliography[keyword=alph]
\DeclareFieldFormat{labelnumberwidth}{{#1\adddot\midsentence}}
\printbibliography[heading=none, notkeyword=alph]

\end{document}